\def\Luoma#1{\uppercase\expandafter{\romannumeral#1}}
\def\luoma#1{\romannumeral#1}
\newtheorem{mythm}{Theorem}[section]
\newtheorem{mylem}[mythm]{Lemma}
\newtheorem{myprop}[mythm]{Proposition}
\newtheorem{mycor}[mythm]{Corollary}
\theoremstyle{definition}
\newtheorem{mydefn}[mythm]{Definition}
\theoremstyle{remark}
\newtheorem{myrem}[mythm]{Remark}
\newtheorem{mypara}[mythm]{}
\newcommand{\bb}{\mathbb}
\newcommand{\ca}{\mathcal}
\newcommand{\ak}{\mathfrak}
\newcommand{\scr}{\mathscr}
\newcommand{\mbf}{\mathbf}
\newcommand{\mrm}{\mathrm}
\newcommand{\trm}{\textrm}
\newcommand{\tit}{\textit}
\def\op#1{\mathop{\mathrm{#1}}}
\newcommand{\ho}{\mrm{Hom}}
\newcommand{\ke}{\mrm{Ker}}
\newcommand{\id}{\mrm{id}}
\newcommand{\spec}{\op{Spec}}
\newcommand{\colim}{\op{colim}}
\newcommand{\ob}{\mrm{Ob}}
\newcommand{\rr}{\mrm{R}}
\newcommand{\dl}{\mrm{L}}
\newcommand{\iso}{\stackrel{\sim}{\longrightarrow}}
\newcommand{\plim}{\varprojlim}
\newcommand{\ash}{\mrm{a}}
\newcommand{\psh}{\mrm{p}}
\newcommand{\ssh}{\mrm{s}}
\newcommand{\oppo}{\mrm{op}}
\newcommand{\al}{\mrm{al}}
\newcommand{\frob}{\mrm{Frob}}
\newcommand{\sch}{\mbf{Sch}}
\newcommand{\schqcqs}{\mbf{Sch}^\mrm{coh}}
\newcommand{\dd}{\mbf{D}}
\newcommand{\module}{\trm{-}\mbf{Mod}}
\newcommand{\alg}{\trm{-}\mbf{Alg}}
\newcommand{\perf}{\trm{-}\mbf{Perf}}
\newcommand{\locsys}{\mbf{LocSys}}
\newcommand{\et}{\mrm{\acute{e}t}}
\newcommand{\fet}{\mrm{f\acute{e}t}}
\newcommand{\proet}{\mrm{pro\acute{e}t}}
\newcommand{\profet}{\mrm{prof\acute{e}t}}
\newcommand{\fal}{\mbf{E}}
\newcommand{\falh}{\mbf{I}}
\newcommand{\falb}{\overline{\scr{B}}}
\newcommand{\falhb}{\scr{O}}
\title[Cohomological Descent for Faltings' $p$-adic Hodge Theory and Applications]{Cohomological Descent for Faltings' $p$-adic Hodge Theory and Applications}
\author{Tongmu He}
\date{\today}
\address{Tongmu He, Institut des Hautes \'Etudes Scientifiques, 35 route de Chartres, 91440 Bures-sur-Yvette, France}
\email{hetm15@ihes.fr}
\numberwithin{equation}{mythm}
\begin{document}
\maketitle

\begin{abstract}
	Faltings' approach in $p$-adic Hodge theory can be schematically divided into two main steps: firstly, a local reduction of the computation of the $p$-adic \'etale cohomology of a smooth variety over a $p$-adic local field to a Galois cohomology computation and then, the establishment of a link between the latter and differential forms. These relations are organized through Faltings ringed topos whose definition relies on the choice of an integral model of the variety, and whose good properties depend on the (logarithmic) smoothness of this model. Scholze's generalization for rigid analytic varieties has the advantage of depending only on the variety (i.e. the generic fibre). Inspired by Deligne's approach to classical Hodge theory for singular varieties, we establish a cohomological descent result for the structural sheaf of Faltings topos, which makes it possible to extend Faltings' approach to any integral model, i.e. without any smoothness assumption. An essential ingredient of our proof is a descent result of perfectoid algebras in the arc-topology due to Bhatt and Scholze. 
	
	As an application of our cohomological descent, using a variant of de Jong's alteration theorem for morphisms of schemes due to Gabber-Illusie-Temkin, we generalize Faltings' main $p$-adic comparison theorem to any proper and finitely presented morphism of coherent schemes over an absolute integral closure of $\mathbb{Z}_p$ (without any assumption of smoothness) for torsion \'etale sheaves (not necessarily finite locally constant). As a second application, we prove a local version of the relative Hodge-Tate filtration as a consequence of the global version constructed by Abbes-Gros.
\end{abstract}
\footnotetext{\emph{2020 Mathematics Subject Classification} 14F30 (primary).\\Keywords: cohomological descent, Faltings topos, v-topology, $p$-adic Hodge theory, comparison.}

\tableofcontents

\section{Introduction}

\begin{mypara}
	Faltings and Scholze's approaches to $p$-adic Hodge theory share several similarities. The most recent approach, that of Scholze, generalizes Faltings' main techniques from schemes to adic spaces. Nevertheless, beyond the analogies, there is no thread connecting the two. The main difficulty stems from the difference between the nature of their keystones, namely the Faltings topos for Faltings' approach and the pro-\'etale topos of an adic space for Scholze's approach. Faltings' approach has the advantage of only using schemes and their classical \'etale topoi. But it depends on the choice of an integral model of the $p$-adic variety, which intervenes in the very definition of Faltings topos and whose (log-)smoothness seems necessary for the good properties of this one. On the other hand, Scholze's approach which uses adic spaces and their pro-\'etale topoi, does not depend on any integral model. 
	
	The initial goal of this work is to make Faltings' approach ``free of integral models''. For this, we establish a cohomological descent result for Faltings ringed topos. Along the way, we introduce a variant for the v-topology which satisfies good cohomological descent properties and which can be regarded as a scheme theoretic analogue of the v-topos of an adic space. In particular, we establish a cohomological descent result from this topos to Faltings topos. It is an analogue of the cohomological descent from the v-topos to the pro-\'etale topos of an adic space established by Scholze \cite{scholze2021diamond}. We give two applications of our cohomological descent result. Firstly, we extend Faltings' main $p$-adic comparison theorem (which we refer to as ``Faltings' comparison theorem'' for short in the rest of the introduction), both in the absolute and the relative cases, to general integral models without any smoothness condition. Faltings' comparison theorem was generalized by Scholze to rigid analytic varieties, first in the smooth case and then in the general case. Our application is an analogue of this last generalization. Even in the smooth case, Faltings' comparison theorem and Scholze's generalization cannot be directly deduced from each other. Secondly, we prove a local version of the relative Hodge-Tate filtration as a consequence of the global version constructed by Abbes and Gros \cite{abbes2020suite} and our cohomological descent result. We would like to mention a third interesting application of our result to the $p$-adic Simpson correspondence given by Xu \cite{xu2022higgs}.
\end{mypara}

\begin{mypara}
	Faltings' proof of the Hodge-Tate decomposition illustrates his approach in $p$-adic Hodge theory and the role of his ringed topos. Let $K$ be a complete discrete valuation field of characteristic $0$ with algebraically closed residue field of characteristic $p>0$. We fix an algebraic closure $\overline{K}$ of $K$ and denote by $\widehat{\overline{K}}$ the $p$-adic completion of $\overline{K}$. For a proper smooth $K$-scheme $X$, Tate conjectured that there is a canonical $G_K=\mrm{Gal}(\overline{K}/K)$-equivariant decomposition, now called the Hodge-Tate decomposition (\cite[Remark, page 180]{tate1967p}),
	\begin{align}
		H^n_\et(X_{\overline{K}},\bb{Q}_p)\otimes_{\bb{Q}_p} \widehat{\overline{K}}=\bigoplus_{0\leq q\leq n} H^q(X,\Omega_{X/K}^{n-q})\otimes_{K}\widehat{\overline{K}}(q-n),
	\end{align}
	where $\widehat{\overline{K}}(q-n)$ is the $(q-n)$-th Tate twist of $\widehat{\overline{K}}$. This conjecture was settled by Faltings \cite{faltings1988p,faltings2002almost} and Tsuji \cite{tsuji1999p,tsuji2002semi} independently, and had been generalized to rigid analytic settings by Scholze \cite{scholze2013hodge}. There is also a version for non-proper smooth varieties showed by Faltings. Let $X^\circ$ be an open subset of $X$ whose complement is a normal crossings divisor $D$. Then, there is a canonical $G_K$-equivariant decomposition
	\begin{align}
		H^n_\et(X^\circ_{\overline{K}},\bb{Q}_p)\otimes_{\bb{Q}_p} \widehat{\overline{K}}=\bigoplus_{0\leq q\leq n} H^q(X,\Omega_{X/K}^{n-q}(\log D))\otimes_{K}\widehat{\overline{K}}(q-n).
	\end{align}
\end{mypara}

\begin{mypara}\label{para:notation-intro}
	One of the applications of our main result in this article is a generalization of the Hodge-Tate decomposition to the relative case. Let $(f,g):(X'^{\triangleright}\to X')\to (X^\circ\to X)$ be a morphism of open immersions of coherent schemes over $\spec(K)\to \spec(\ca{O}_K)$ (``coherent'' stands for ``quasi-compact and quasi-separated''). We assume that the following conditions hold:
	\begin{enumerate}
		\renewcommand{\labelenumi}{{\rm(\theenumi)}}
		\item The associated log schemes $(X',\scr{M}_{X'}), (X,\scr{M}_X)$ endowed with compactifying log structures are adequate (a technical condition which holds if the open immersions $X'^{\triangleright}\to X', X^\circ\to X$ are semi-stable over $\ca{O}_K$, cf. \ref{para:notation-log}).
		\item The morphism of log schemes $(X',\scr{M}_{X'})\to (X,\scr{M}_X)$ is smooth and saturated.
		\item The morphism of schemes $g:X'\to X$ is projective.
		\item The scheme $X=\spec(R)$ is affine and there exist finitely many nonzero divisors $f_1,\dots, f_r$ of $R[1/p]$ such that the divisor $D=\sum_{i=1}^r \mrm{div}(f_i)$ on $X_K$ has support $X_K\setminus X^\circ_K$ and that at each strict henselization of $X_K$ those elements $f_i$ contained in the maximal ideal form a subset of a regular system of parameters (in particular, $D$ is a normal crossings divisor on $X_K$).
	\end{enumerate}
	For any coherent $X^\circ_K$-scheme $Y$, we define a pro-finite \'etale $Y$-scheme
	\begin{align}
		Y_{\infty}&=\lim_{n} Y[T_1,\dots,T_r]/(T_1^{n}-f_1,\dots,T_r^{n}-f_r).
	\end{align}
\end{mypara}

\begin{mythm}[{cf. \ref{thm:main} and \ref{thm:acyclic}}]\label{thm:main-intro}
	Under the assumptions in {\rm\ref{para:notation-intro}}, let $U$ be an affine scheme pro-\'etale over $X$ and let $V$ be a pro-finite \'etale $U^\circ_{\overline{K},\infty}$-scheme (where $U^\circ=X^\circ\times_X U$) satisfying the following conditions:
	\begin{enumerate}
		\renewcommand{\labelenumi}{{\rm(\theenumi)}}
		\item The integral closure of $U$ in $V$ is the spectrum of an $\ca{O}_{\overline{K}}$-algebra $A$ which is almost pre-perfectoid in the sense of {\rm\ref{defn:pre-alg}}.
		\item For any integers $n\geq 0$ and $k\geq 0$, the pullback $(\rr^n f_{\et*}\bb{Z}/p^k\bb{Z})|_{V_{\et}}$ is a constant sheaf.
	\end{enumerate}
	Let $\overline{x}$ be a geometric point of $V$. Then, for any integer $n\geq 0$, there is a canonical finite decreasing filtration $(\mrm{fil}^q)_{q\in\bb{Z}}$ on $H^n_\et(X'^{\triangleright}_{\overline{x}},\bb{Z}_p)\otimes_{\bb{Z}_p} \widehat{A}[1/p]$ and a canonical isomorphism for each $q\in \bb{Z}$,
	\begin{align}\label{eq:1.4.1}
		\mrm{gr}^q(H^n_\et(X'^{\triangleright}_{\overline{x}},\bb{Z}_p)\otimes_{\bb{Z}_p} \widehat{A}[\frac{1}{p}])\cong H^q(X',\Omega^{n-q}_{(X',\scr{M}_{X'})/(X,\scr{M}_X)})\otimes_R \widehat{A}[\frac{1}{p}](q-n),
	\end{align}
	where $\mrm{gr}^q$ denotes the graded piece $\mrm{fil}^q/\mrm{fil}^{q+1}$.
	Moreover, if $U^\circ$ and $V$ are connected and if the function field of $V$ is a Galois extension of that of $U^\circ$ with Galois group $\Gamma$, then the filtration $(\mrm{fil}^q)_{q\in\bb{Z}}$ and the isomorphisms \eqref{eq:1.4.1} are $\Gamma$-equivariant.
\end{mythm}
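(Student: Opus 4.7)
The plan is to deduce the local statement from the global relative Hodge--Tate filtration constructed by Abbes--Gros on the Faltings topos, combined with the cohomological descent theorem of the present paper (Theorem \ref{thm:main}) and the acyclicity computation (Theorem \ref{thm:acyclic}) that identifies sections of the structural sheaf $\falb$ at suitable objects with the given almost pre-perfectoid algebra.

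First, applied to the morphism $(f,g):(X'^{\triangleright}\to X')\to (X^\circ\to X)$ satisfying the log-smoothness, saturation and projectivity assumptions of \ref{para:notation-intro}, the global construction of Abbes--Gros produces, on the Faltings topos $E_{X^\circ\to X}$, a canonical finite decreasing filtration on the higher relative cohomology sheaves of $\falb$ along $(f,g)$ whose graded pieces are canonically isomorphic, after inverting $p$, to $R^{q}g_{*}\Omega^{n-q}_{(X',\scr{M}_{X'})/(X,\scr{M}_X)}\otimes_{\ca{O}_X}\falb[1/p](q-n)$. This is a statement intrinsic to the Faltings topos of $(X^\circ \to X)$ and uses adequacy of the log structures only.

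Next, I evaluate this global filtration at the object of the Faltings topos attached to the pair $(V\to U)$. Here the two conditions on $(V,U)$ intervene: condition~(1) together with Theorem \ref{thm:acyclic} identifies $\Gamma((V\to U),\falb)$ with $\widehat{A}$ up to an almost isomorphism that becomes a genuine isomorphism after inverting $p$, so that the graded pieces evaluate to $H^{q}(X',\Omega^{n-q}_{(X',\scr{M}_{X'})/(X,\scr{M}_X)})\otimes_{R}\widehat{A}[1/p](q-n)$; condition~(2), combined with the cohomological descent of Theorem \ref{thm:main} and the standard comparison between the Faltings cohomology of a constant sheaf and the \'etale cohomology of the geometric generic fibre, identifies the value at $(V\to U)$ of the higher direct image of $\bb{Z}/p^{k}\bb{Z}$ along $(f,g)$ with $H^{n}_{\et}(X'^{\triangleright}_{\overline{x}},\bb{Z}/p^{k}\bb{Z})$. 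Taking the limit over $k$ and inverting $p$ yields the filtration on $H^{n}_{\et}(X'^{\triangleright}_{\overline{x}},\bb{Z}_p)\otimes_{\bb{Z}_p}\widehat{A}[1/p]$ together with the desired isomorphism \eqref{eq:1.4.1} on graded pieces.

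For the last assertion, if $V\to U^\circ$ is a Galois pro-finite \'etale cover with group $\Gamma$, the section $(V\to U)$ of the Faltings topos carries a canonical $\Gamma$-action, and both the Abbes--Gros filtration and the identifications provided by Theorems \ref{thm:main} and \ref{thm:acyclic} are functorial; hence the filtration and the isomorphisms are $\Gamma$-equivariant. The main obstacle is the second paragraph: without a cohomological descent statement relating the Faltings topos of an arbitrary (possibly singular) integral model to the actual $p$-adic \'etale cohomology at an object $(V\to U)$ that does not come from a log-smooth chart, one could not legitimately evaluate the Abbes--Gros global filtration at such a section. This is precisely the role played by Theorem \ref{thm:main} of this paper, and the rest of the argument is essentially a formal combination of global and local inputs.
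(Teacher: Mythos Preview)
Your overall strategy matches the paper's: pull the global Abbes--Gros filtration (\ref{thm:rel-hodge-tate}) on the $p$-adic Faltings ringed site back to the pro-\'etale version via $\breve{\nu}^*$, apply $\rr\Gamma(\fal_{V\to U}^{\proet,\bb{N}},-)$, compute both the filtered object and the graded pieces using Faltings acyclicity of $(V\to U)$ (together with \ref{prop:acyclic-diff} for the coherent side), and deduce $\Gamma$-equivariance from functoriality.

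Two points deserve correction. First, you have a reference confusion: \ref{thm:main} is not a cohomological descent theorem but the body version of the very statement you are proving. The descent results are \ref{thm:falh-proet} and \ref{cor:falh-et}, and they enter only indirectly, as the input to the proof of \ref{thm:acyclic}; once $(V\to U)$ is known to be Faltings acyclic, the remainder of the argument uses only that acyclicity, not descent directly. Second, your diagnosis of the ``main obstacle'' is off. The model $X$ is assumed adequate in \ref{para:notation-intro}, so no singular integral model arises here, and the Abbes--Gros filtration can of course be formally evaluated at any object of the Faltings site. The difficulty is to \emph{compute} the result: prior to this paper one could do so only at objects coming from adequate charts (cf.\ \ref{rem:ab-question}), and the new contribution is precisely \ref{thm:acyclic}, which enlarges the class of Faltings-acyclic objects to all $(V\to U)$ with $U^V$ the spectrum of an almost pre-perfectoid algebra. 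Relatedly, condition~(2) already makes the restricted sheaf $j^{-1}\psi_*\bb{L}^{(n)}_k$ constant on $\fal_{V\to U}^{\proet}$ with value $H^n_{\et,k}$; Faltings acyclicity then directly gives the almost isomorphism $H^n_{\et,k}\otimes_{\bb{Z}_p} A \to \rr\Gamma(\fal_{V\to U}^{\proet}, H^n_{\et,k}\otimes\falb/p^k\falb)$, so no separate ``comparison between Faltings cohomology of a constant sheaf and \'etale cohomology'' is invoked.
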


\begin{myrem}\label{rem:main-intro}
	The objects $V\to U$ satisfying the conditions in \ref{thm:main-intro} form a topological generating family of the pro-\'etale Faltings site of $X^\circ_{\overline{K}}\to X$ (see the proof of \ref{thm:acyclic}).
\end{myrem}

\begin{mypara}
	This local \emph{relative Hodge-Tate filtration} stems from the global relative Hodge-Tate filtration constructed by Abbes-Gros \cite{abbes2020suite}. Their filtration takes place on the Faltings topos associated to $X^\circ_{\overline{K}}\to X$. In the first version of their work, they asked for an explicit local version. Scholze and Caraiani \cite{caraiani2017generic} constructed independently a relative Hodge-Tate filtration for proper smooth morphisms of smooth adic spaces, and Scholze announced that he can give a local version, answering the question of Abbes-Gros. Our construction is obtained by applying our cohomological descent result for Faltings ringed topos to the global relative Hodge-Tate filtration of Abbes-Gros. In a new version of their manuscript, Abbes-Gros gave a third construction of the local Hodge-Tate filtration in a slightly more restrictive framework, using a cohomological descent result which is a special case of ours.
\end{mypara}

\begin{mypara}
	Faltings ringed topos plays a central role in the proof of the Hodge-Tate decomposition. Let $X^\circ\to X$ be an open immersion of coherent schemes over $\spec(K)\to \spec(\ca{O}_K)$ such that the associated log scheme $(X,\scr{M}_X)$ endowed with compactifying log structure is adequate. We set $Y=X_{\overline{K}}$. The Faltings ringed site $(\fal_{Y \to X}^\et,\falb)$ was constructed by Faltings and developed by Abbes-Gros \cite[\Luoma{6}]{abbes2016p}. Faltings designed it as a bridge between the $p$-adic \'etale cohomology of $Y$ and differential forms of $X$. Concretely, these links are established through natural morphisms of sites
	\begin{align}
		Y_\et\stackrel{\psi}{\longrightarrow} \fal_{Y \to X}^\et \stackrel{\sigma}{\longrightarrow} X_\et
	\end{align}
	which satisfy the following properties:
	\begin{enumerate}
		\renewcommand{\labelenumi}{{\rm(\theenumi)}}
		\item (Faltings' comparison theorem, \cite[Thm.8, page 223]{faltings2002almost}, \cite[4.8.13]{abbes2020suite}). Assume that $X$ is proper over $\ca{O}_K$. For any finite locally constant abelian sheaf $\bb{F}$ on $Y_\et$, there exists a canonical morphism
		\begin{align}\label{eq:1.2.1}
			\rr\Gamma(Y_\et,\bb{F})\otimes^\dl_{\bb{Z}}\ca{O}_{\overline{K}}\longrightarrow \rr\Gamma(\fal_{Y \to X}^\et,\psi_*\bb{F}\otimes_{\bb{Z}}\falb),
		\end{align}
		which is an almost isomorphism, that is, the cohomology groups of its cone are killed by $p^r$ for any rational number $r>0$.
		\item (Faltings' computation of Galois cohomology, \cite[6.3.8]{abbes2020suite}). There exists a canonical homomorphism of $\ca{O}_X\otimes_{\ca{O}_K}\ca{O}_{\overline{K}}$-modules
		\begin{align}
			\Omega^q_{(X,\scr{M}_X)/(S,\scr{M}_S)}\otimes_{\ca{O}_K}\ca{O}_{\overline{K}}/p^n\ca{O}_{\overline{K}}\longrightarrow \rr^q\sigma_*(\falb/p^n\falb)
		\end{align}
		whose kernel and cokernel are killed by $p^r$ for any rational number $r>\frac{2\dim(Y)+1}{p-1}$.
	\end{enumerate}
Observing that $\bb{Z}/p^n\bb{Z}=\psi_*(\bb{Z}/p^n\bb{Z})$, Faltings deduced the Hodge-Tate decomposition from the degeneration and splitting of the Cartan-Leray spectral sequence for the composed functor $\rr\Gamma(X_\et,-)\circ \rr\sigma_*$, later named the Hodge-Tate spectral sequence by Scholze. Using de Jong's alteration theorem, one can deduce the Hodge-Tate decomposition for a general proper smooth $K$-scheme by reducing to the case where it admits a semi-stable model (cf. \cite[A5]{tsuji2002semi}).

\end{mypara}

\begin{mypara}
	In order to state our cohomological descent result, we recall now the definition of the Faltings site associated to a morphism of coherent schemes $Y\to X$ (cf. \ref{defn:falsite}). Let $\fal_{Y\to X}^\et$ be the category of morphisms of coherent schemes $V\to U$ over $Y\to X$, i.e. commutative diagrams
	\begin{align}
		\xymatrix{
			V\ar[r]\ar[d]& U\ar[d]\\
			Y\ar[r]& X
		}
	\end{align}
	such that $U$ is \'etale over $X$ and that $V$ is finite \'etale over $Y\times_X U$. We endow  $\fal_{Y\to X}^\et$ with the topology generated by the following types of families of morphisms
	\begin{enumerate}
		\item[\rm{(v)}] $\{(V_m \to U) \to (V \to U)\}_{m \in M}$, where $M$ is a finite set and $\coprod_{m\in M} V_m\to V$ is surjective;
		\item[\rm{(c)}] $\{(V\times_U{U_n} \to U_n) \to (V \to U)\}_{n \in N}$, where $N$ is a finite set and $\coprod_{n\in N} U_n\to U$ is surjective.
	\end{enumerate}
	Consider the presheaf $\falb$ on $\fal_{Y\to X}^\et$ defined by
	\begin{align}
		\falb(V \to U) = \Gamma(U^V , \ca{O}_{U^V}),
	\end{align}
	where $U^V$ is the integral closure of $U$ in $V$. It is indeed a sheaf of rings, the structural sheaf of $\fal_{Y\to X}^\et$ (cf. \ref{prop:falb-sheaf}).
\end{mypara}

\begin{mypara}
	Recall that the cohomological descent of \'etale cohomology along proper hypercoverings can be generalized as follows: for a coherent $S$-scheme, we endow the category of coherent $S$-schemes $\schqcqs_{/S}$ with Voevodsky's \emph{h-topology} which is generated by \'etale coverings and proper surjective morphisms of finite presentation. Then, for any torsion abelian sheaf $\ca{F}$ on $S_\et$, denoting by $a:(\schqcqs_{/S})_{\mrm{h}}\to S_\et$ the natural morphism of sites, the adjunction morphism $\ca{F}\to \rr a_*a^{-1}\ca{F}$ is an isomorphism. 
	
	This result remains true for a finer topology, the \emph{v-topology}. A morphism of coherent schemes $T\to S$ is called a v-covering if for any morphism $\spec(A)\to S$ with $A$ a valuation ring, there exists an extension of valuation rings $A\to B$ and a lifting $\spec(B)\to T$. In fact, a v-covering is a limit of h-coverings (cf. \ref{lem:descendv-cov}). We will describe the cohomological descent for $\falb$ using a new site built from the v-topology, which can be regarded as a scheme theoretic analogue of the v-site of adic spaces (cf. \cite[8.1, 14.1, 15.5]{scholze2021diamond}).
\end{mypara}

\begin{mydefn}[{cf. \ref{defn:falh-v-top}}]\label{defn:intro-falh-v-top}
	Let $S^\circ\to S$ be an open immersion of coherent schemes such that $S$ is integrally closed in $S^\circ$. We define a site $\falh_{S^\circ \to S}$ as follows: 
	\begin{enumerate}
		\renewcommand{\labelenumi}{{\rm(\theenumi)}}
		\item The underlying category is formed by coherent $S$-schemes $T$ which are integrally closed in $S^\circ\times_S T$.
		\item The topology is generated by covering families $\{T_i\to T\}_{i\in I}$ in the v-topology.
	\end{enumerate}
	We call $\falh_{S^\circ \to S}$ the \emph{v-site of $S^\circ$-integrally closed coherent $S$-schemes}, and we call the sheaf $\falhb$ on $\falh_{S^\circ \to S}$ associated to the presheaf $T\mapsto \Gamma(T,\ca{O}_T)$ the structural sheaf of $\falh_{S^\circ \to S}$.
\end{mydefn}

\begin{mypara}
	Let $p$ be a prime number, $\overline{\bb{Z}_p}$ the integral closure of $\bb{Z}_p$ in an algebraic closure $\overline{\bb{Q}_p}$ of $\bb{Q}_p$. We take $S^\circ=\spec(\overline{\bb{Q}_p})$ and $S=\spec(\overline{\bb{Z}_p})$.
	Consider a diagram of coherent schemes
	\begin{align}
		\xymatrix{
			Y\ar[r]\ar[d]& X^Y\ar[r]\ar[d]& X\\
			\spec(\overline{\bb{Q}_p})\ar[r]&\spec(\overline{\bb{Z}_p})&
		}
	\end{align}
	where $X^Y$ is the integral closure of $X$ in $Y$ and the square is Cartesian (we don't impose any condition on the regularity or finiteness of $Y$ or $X$). The functor $\varepsilon^+: \fal_{Y\to X}^\et\to \falh_{Y \to X^{Y}}$ sending $V\to U$ to $U^V$ defines a natural morphism of ringed sites
	\begin{align}
		\varepsilon: (\falh_{Y \to X^{Y}},\falhb)\longrightarrow (\fal_{Y\to X}^\et,\falb).
	\end{align}
	Our cohomological descent results are stated as follows, which can be regarded as a scheme theoretic analogue of the cohomological descent result for the pro-\'etale site of an adic space (cf. \cite[8.8, 14.7, 15.5]{scholze2021diamond}):
\end{mypara}

\begin{mythm}[{Cohomological descent for Faltings ringed sites, cf. \ref{cor:falh-et}}]\label{thm:intro-falh-et}
	For any finite locally constant abelian sheaf $\bb{L}$ on $\fal_{Y\to X}^\et$, the canonical morphism
	\begin{align}
		\bb{L}\otimes_{\bb{Z}} \falb\longrightarrow \rr\varepsilon_*(\varepsilon^{-1}\bb{L}\otimes_{\bb{Z}}\falhb)
	\end{align}
	is an almost isomorphism.
\end{mythm}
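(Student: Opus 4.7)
The plan is to reduce, by standard d\'evissage, to the statement that the canonical map $\falb \to \rr\varepsilon_*\falhb$ is an almost isomorphism, and then to establish this by evaluating both sides on a topologically generating family of (almost) pre-perfectoid objects of $\fal_{Y\to X}^\et$, ultimately invoking the v-/arc-descent theorem of Bhatt--Scholze for perfectoid rings.

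First, since $\bb{L}$ is finite locally constant, I may pass to a covering on which $\bb{L}$ becomes constant with value a finite abelian group $M$, and decompose $M$ as a direct sum of cyclic groups $\bb{Z}/n\bb{Z}$. Combining the derived projection formula
\[
\rr\varepsilon_*\bigl(\varepsilon^{-1}\bb{L}\otimes^\dl_{\bb{Z}}\falhb\bigr)\;\cong\;\bb{L}\otimes^\dl_{\bb{Z}}\rr\varepsilon_*\falhb
\]
with the exact triangles $\falb \stackrel{n}{\longrightarrow}\falb \to \falb\otimes^\dl_{\bb{Z}}\bb{Z}/n\bb{Z}$ (and their analogues for $\falhb$), the claim reduces to showing that the unit $\falb \to \rr\varepsilon_*\falhb$ is an almost isomorphism, the passage between derived and underived tensor with $\bb{Z}/n\bb{Z}$ being routine in the almost world.

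Next, by general properties of sheafification, this almost isomorphism can be checked on any topologically generating family of $\fal_{Y\to X}^\et$. As pointed out in Remark \ref{rem:main-intro}, one such family is provided by the pairs $(V\to U)$ of the form considered in Theorem \ref{thm:main-intro}, with $U$ affine pro-\'etale over $X$ and $V$ a pro-finite \'etale cover of $U^\circ_{\overline{K},\infty}$, so that the integral closure $U^V = \spec(A)$ is the spectrum of an almost pre-perfectoid $\overline{\bb{Z}_p}$-algebra $A$. On such an object, $\falb(V\to U)=A$, while $(\rr\varepsilon_*\falhb)(V\to U)$ is computed by the v-cohomology $\rr\Gamma(\falh_{Y\to X^Y}/\spec(A),\falhb)$ of the slice site.

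The main obstacle is thus to prove that this v-cohomology is almost isomorphic to $A$ in degree zero and almost vanishes in positive degrees. My strategy is to compare $\falh_{Y\to X^Y}/\spec(A)$ with the full v-site of coherent $\spec(A)$-schemes: using that v-covers are cofiltered limits of h-covers (cf. \ref{lem:descendv-cov}) and that the operation of taking the $Y$-integral closure is functorial and preserves v-covers, every v-covering of $\spec(A)$ admits a refinement by $Y$-integrally closed coherent $\spec(A)$-schemes, and the structural presheaf identifies with $\falhb$ on this refined subsite. The problem reduces to computing the v-cohomology of the structural sheaf on the full v-site of $\spec(A)$, which, after $p$-adic completion, falls under Bhatt--Scholze's arc-descent theorem for perfectoid rings and yields the desired vanishing (almost). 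The delicate point, which I expect to be the technical heart of the argument, is controlling the interaction between $p$-adic completion, the almost-zero machinery, and the integral-closure structural sheaf, so that Bhatt--Scholze's computation on genuine perfectoid rings transports cleanly back to the uncompleted almost pre-perfectoid algebra $A$ and to the restricted site $\falh_{Y\to X^Y}/\spec(A)$.
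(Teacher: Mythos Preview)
Your proposal has two genuine gaps.

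\textbf{The integral reduction is in the wrong direction.} Your projection-formula argument correctly shows that the theorem would follow from the integral statement $\falb \to \rr\varepsilon_*\falhb$ being an almost isomorphism, but you then try to prove this integral statement via Bhatt--Scholze. Their arc-descent theorem (Theorem~\ref{thm:arc-descent-perf} in the paper) is a statement about the \v{C}ech complex \emph{modulo $\pi^n$}; there is no integral version, because the fibre products in $\falh_{Y\to X^Y}$ are scheme-theoretic (uncompleted) and the resulting \v{C}ech complex is not almost exact before reducing mod~$p^n$. Your phrase ``after $p$-adic completion'' does not bridge this: one cannot recover the uncompleted statement from the completed or mod-$p^n$ one. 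The paper avoids this entirely by localising so that $\bb{L}$ is constant with value $\bb{Z}/p^n\bb{Z}$ and then working directly with $\falb/p^n\falb \to \rr\varepsilon_*(\falhb/p^n\falhb)$, never touching the integral map.

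\textbf{The generating family lives on the wrong site.} You invoke Remark~\ref{rem:main-intro} to produce a pre-perfectoid generating family of $\fal_{Y\to X}^\et$, but that remark explicitly concerns the \emph{pro-\'etale} Faltings site: the objects $(V\to U)$ with $U$ pro-\'etale over $X$ and $V$ pro-finite \'etale over $U^\circ_{\overline{K},\infty}$ are not objects of $\fal_{Y\to X}^\et$ at all. This is not a technicality one can wave away; the whole point of introducing $\fal_{Y\to X}^\proet$ (Definition~\ref{defn:profalsite}) is that pre-perfectoid covers are genuinely pro-objects. The paper factors $\varepsilon$ as $\falh_{Y\to X^Y}\stackrel{\epsilon}{\to}\fal_{Y\to X}^\proet\stackrel{\nu}{\to}\fal_{Y\to X}^\et$, proves the mod-$p^n$ statement for $\epsilon$ on the pro-\'etale site (Theorem~\ref{thm:falh-proet}, using the pre-perfectoid generating family of Proposition~\ref{prop:pre-perf-base} and the mod-$p^n$ acyclicity of Proposition~\ref{prop:coh-vanish-preperf}), and then descends along $\nu$ via Corollary~\ref{cor:nu-falb}. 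Your sketch is missing this passage through the pro-\'etale site entirely.
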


\begin{mycor}[{cf. \ref{cor:resolution}}]\label{cor:intro-resolution}
	For any proper hypercovering $X_\bullet\to X$, if $a:\fal_{Y_\bullet\to X_\bullet}^\et\to \fal_{Y\to X}^\et$ denotes the augmentation of simplicial site where $Y_\bullet=Y\times_X X_\bullet$, then the canonical morphism
	\begin{align}
		\bb{L}\otimes_{\bb{Z}} \falb\longrightarrow \rr a_*(a^{-1}\bb{L}\otimes_{\bb{Z}}\falb_\bullet)
	\end{align}
	is an almost isomorphism.
\end{mycor}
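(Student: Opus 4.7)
The plan is to reduce the statement to a v-topology descent by pushing everything through the morphism $\varepsilon$ of Theorem~\ref{thm:intro-falh-et}. Write $S=X^Y$ and $S_n=X_n^{Y_n}$ for each $n$. By the universal property of integral closure applied to the composition $Y_n\to Y\to X^Y$, each $S_n$ is naturally an object of $\falh_{Y\to S}$, and the simplicial scheme $S_\bullet=X_\bullet^{Y_\bullet}$ is augmented over $S$. We thus have a commutative square of morphisms of sites
\begin{align*}
\xymatrix{
\falh_{Y_\bullet\to S_\bullet}\ar[r]^-{\varepsilon_\bullet}\ar[d]_-{b} & \fal^\et_{Y_\bullet\to X_\bullet}\ar[d]^-{a}\\
\falh_{Y\to S}\ar[r]^-{\varepsilon} & \fal^\et_{Y\to X}.
}
\end{align*}

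First I would invoke Theorem~\ref{thm:intro-falh-et} for $\varepsilon$ and, level by level, for each $\varepsilon_n$, yielding almost isomorphisms $\bb{L}\otimes_\bb{Z}\falb\to\rr\varepsilon_*(\varepsilon^{-1}\bb{L}\otimes_\bb{Z}\falhb)$ and $a^{-1}\bb{L}\otimes_\bb{Z}\falb_\bullet\to\rr\varepsilon_{\bullet*}(\varepsilon_\bullet^{-1}a^{-1}\bb{L}\otimes_\bb{Z}\falhb_\bullet)$. Using the base change identities $\varepsilon\circ b=a\circ\varepsilon_\bullet$ and $\varepsilon_\bullet^{-1}a^{-1}=b^{-1}\varepsilon^{-1}$, applying $\rr a_*$ to the second almost isomorphism and comparing with $\rr\varepsilon_*\rr b_*$ reduces the corollary to the analogous v-side statement, namely that the canonical map
\begin{align*}
\bb{L}'\otimes_\bb{Z}\falhb\longrightarrow\rr b_*(b^{-1}\bb{L}'\otimes_\bb{Z}\falhb_\bullet)
\end{align*}
is an (almost, in fact genuine) isomorphism, where $\bb{L}'=\varepsilon^{-1}\bb{L}$ is locally constant on $\falh_{Y\to S}$.

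To finish, I would show that $S_\bullet\to S$ is a hypercovering for the v-topology on $\falh_{Y\to S}$: the proper hypercovering hypothesis on $X_\bullet\to X$ gives that the coskeletal maps $X_n\to(\mathrm{cosk}_{n-1}X_\bullet)_n$ are proper, surjective and of finite presentation, hence v-coverings by Lemma~\ref{lem:descendv-cov}, and compatibility of the integral closure construction with the simplicial structure and with v-coverings transfers this to $S_\bullet\to S$. Since $\falhb$ is by construction a v-sheaf and $\bb{L}'$ is locally constant, the statement displayed above is then plain cohomological descent for the v-hypercovering $S_\bullet\to S$. The principal obstacle is precisely this last compatibility check: showing that the coskeletal transition maps remain v-coverings after termwise integral closure, equivalently that any valuation-ring test morphism $\spec(A)\to(\mathrm{cosk}_{n-1}S_\bullet)_n$ factors through $S_n$ after an extension of $A$. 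Once this is in hand, the rest of the argument is purely formal.
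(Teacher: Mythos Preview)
Your proposal is correct and follows essentially the same route as the paper's proof of \ref{cor:resolution}: push through $\varepsilon$ and $\varepsilon_\bullet$ using \ref{cor:falh-et} levelwise to reduce to cohomological descent for the v-hypercovering $S_\bullet\to S$ in $\falh_{Y\to X^Y}$. The ``principal obstacle'' you flag is handled in the paper by observing that $X\mapsto X^{X^\circ}$ is a right adjoint (\ref{para:fun-sigma-psi-N}), hence preserves finite limits and in particular coskeleta, and sends v-coverings to coverings by \ref{prop:normal-v-cov}; your citation of \ref{lem:descendv-cov} for ``proper surjective $\Rightarrow$ v-covering'' should be \ref{lem:arc-parc}.(\ref{lem:arc-parc-v}).
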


The key ingredient of our proof of \ref{thm:intro-falh-et} is the descent of perfectoid algebras in the arc-topology (a topology finer than the v-topology) due to Bhatt-Scholze \cite[8.10]{bhattscholze2019prisms} (cf. \ref{thm:arc-descent-perf}). The analogue in characteristic $p$ of \ref{thm:intro-falh-et} is Gabber's computation of the cohomology of the structural sheaf in the h-topology (cf. \ref{sec:arc-descent}). Theorem \ref{thm:intro-falh-et} allows us to descend important results for Faltings sites with nice models to Faltings sites associated to general models. On the other hand, its proof shows how to compute the cohomologies of Faltings ringed sites locally. Using Abhyankar's lemma, one can treat the open case which in the generic fibre is the complement of a normal crossings divisor.

\begin{mycor}[{cf. \rm\ref{thm:acyclic}}]\label{cor:intro-acyclic}
	Under the assumptions in {\rm\ref{thm:main-intro}} and with the same notation, for any integer $n>0$, the canonical morphism
	\begin{align}
		A/p^n A\longrightarrow \rr\Gamma(\fal_{V\to U}^\et,\falb/p^n\falb)
	\end{align}
	is an almost isomorphism.
\end{mycor}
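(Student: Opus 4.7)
The plan is to reduce the statement to a v-cohomology computation via Theorem~\ref{thm:intro-falh-et}, and then invoke the arc-descent of almost pre-perfectoid algebras of Bhatt--Scholze.

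Applying Theorem~\ref{thm:intro-falh-et} directly to the diagram $V \to U^V \to U$ with $\bb L = \bb Z/p^n\bb Z$ on $\fal^\et_{V\to U}$, one obtains an almost isomorphism $\falb/p^n\falb \aliso \rr\varepsilon_*(\falhb/p^n\falhb)$ of complexes on $\fal^\et_{V\to U}$. Taking derived global sections, and using that by definition $\falb(V\to U) = \Gamma(U^V,\ca O_{U^V}) = A$, the claim reduces to showing that
\begin{align*}
	A/p^nA \longrightarrow \rr\Gamma\bigl(\falh_{V\to U^V},\, \falhb/p^n\falhb\bigr) \quad \text{is an almost isomorphism.}
\end{align*}
Since $\spec(A) = U^V$ is the terminal object of $\falh_{V\to U^V}$ (as $U^V$ is integrally closed in $V$ by construction), the question becomes one of almost acyclicity of the structural sheaf $\falhb/p^n\falhb$ in the v-topology at the terminal object.

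To prove this almost acyclicity, I would use the hypothesis that $A$ is almost pre-perfectoid, together with the arc-descent of almost perfectoid rings of Bhatt--Scholze — the key input already used to prove Theorem~\ref{thm:intro-falh-et}. Concretely, I would construct a v-hypercovering of $\spec(A)$ within $\falh_{V\to U^V}$ whose terms are spectra of almost pre-perfectoid $\ca O_{\overline K}$-algebras (such hypercoverings exist by the stability of the almost pre-perfectoid condition under pro-finite-\'etale extensions, cf.~\ref{defn:pre-alg}), and then verify via arc-descent that the associated \v{C}ech complex for $\falhb/p^n\falhb$ is almost exact with $A/p^nA$ sitting in degree zero.

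The main obstacle is the last step: one must check carefully that almost pre-perfectoid refinements are cofinal among v-covers inside $\falh_{V\to U^V}$ (as opposed to the larger v-site of all $\spec(A)$-schemes), and that the integral-closure condition carving out $\falh_{V\to U^V}$ is preserved under the refinements used. Once this compatibility is in place, arc-descent modulo $p^n$ directly yields the desired almost isomorphism.
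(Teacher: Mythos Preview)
Your proposal has a genuine gap at the very first step: Theorem~\ref{thm:intro-falh-et} cannot be applied directly to the morphism $V\to U$. That theorem (see \ref{cor:falh-et}, proved via \ref{thm:falh-proet} and \ref{prop:coh-vanish-preperf}) is stated under the hypotheses of \ref{para:notation}, which require $Y$ to be the \emph{full} generic fibre $(X^Y)_\eta$ of its integral closure. In the situation of \ref{thm:main-intro}, however, $V$ is pro-finite \'etale over $U^\circ_{\overline K,\infty}$, where $U^\circ = X^\circ\times_X U$ is the complement in $U_K$ of a nonempty normal-crossings divisor $D$. Thus $V$ is in general a \emph{strict} open subset of $(U^V)_\eta$, and neither the descent theorem nor your v-site computation applies as written: the site you form, $\falh_{V\to U^V}$, is not the site $\falh_{(U^V)_\eta\to U^V}$ handled by \ref{prop:coh-vanish-preperf}. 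The obstacle you flag at the end --- cofinality of pre-perfectoid refinements among v-covers --- is not the real issue; that part is already dealt with once one is in the correct setting.

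What is missing is Abhyankar's lemma. The paper shows (\ref{prop:abhyankar}, \ref{cor:abhyankar}) that, because $V$ lies over the infinite Kummer tower $U^\circ_{\overline K,\infty}$ (which already contains all roots of $f_1,\dots,f_r$), every finite \'etale cover of $V$ extends to a finite \'etale cover of the full generic fibre. This yields an equivalence of ringed sites $(\fal^\proet_{V\to U},\falb)\simeq(\fal^\proet_{V'\to U},\falb)$ with $V'=(U^V)_\eta$. Now $V'\to U$ does satisfy the hypotheses of \ref{para:notation}, and the cohomological-descent argument combined with the arc-descent computation on the v-site (\ref{prop:fal-coh}) gives the result directly. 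Your sketch of the arc-descent portion is essentially correct once this reduction is in place, but the reduction is not a formality: without it, the Faltings site $\fal^\et_{V\to U}$ has genuinely more vertical covers (those ramified along $D$) than $\fal^\et_{V'\to U}$, and one has no a priori control over the integral closures they produce.
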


Thus, we apply the derived functor $\rr\Gamma(\fal_{V\to U}^\et,-)$ to the global relative Hodge-Tate filtration defined on the Faltings ringed site by Abbes-Gros, and then we obtain the local version \ref{thm:main-intro}.

\begin{mypara}
	On the other hand, we use \ref{thm:intro-falh-et} to generalize Faltings' comparison theorem in the absolute case. Let $A$ be a valuation ring extension of $\overline{\bb{Z}_p}$ with algebraically closed fraction field. Consider a Cartesian square of coherent schemes
	\begin{align}
		\xymatrix{
			Y\ar[r]\ar[d]& X\ar[d]\\
			\spec(A[\frac{1}{p}])\ar[r]&\spec(A)
		}
	\end{align}
\end{mypara}

\begin{mythm}[{Faltings' comparison theorem in the absolute case, cf. \ref{thm:fal-comp-abs}}]\label{thm:intro-fal-comp-abs}
	Assume that $X$ is proper of finite presentation over $A$. Then, for any finite locally constant abelian sheaf $\bb{F}$ on $Y_\et$, there exists a canonical morphism
	\begin{align}\label{eq:1.6.1}
		\rr\Gamma (Y_\et, \bb{F})\otimes^{\dl}_{\bb{Z}}A \longrightarrow \rr\Gamma (\fal_{Y \to X}^\et, \psi_*\bb{F}\otimes_{\bb{Z}}\falb),
	\end{align}
	which is an almost isomorphism.
\end{mythm}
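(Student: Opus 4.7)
I would combine three ingredients: the classical Faltings comparison theorem for proper adequate log-smooth models (property (1) of the excerpt, \cite[4.8.13]{abbes2020suite}), the cohomological descent for Faltings ringed sites along proper hypercoverings (Corollary~\ref{cor:intro-resolution}), and the Gabber-Illusie-Temkin variant of de Jong's alteration theorem, which produces a proper hypercovering of a proper finitely presented $X$ whose terms admit strict semi-stable (hence adequate log-smooth) models.

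\textbf{Reductions.} A finite \'etale cover $V\to Y$ trivializing $\bb{F}$ defines a covering object of type (v) in $\fal^\et_{Y\to X}$; Cartan-Leray on both sides of \eqref{eq:1.6.1} reduces to the case of constant coefficients $\bb{F}=\bb{Z}/p^n\bb{Z}$. Next, writing $A$ as a filtered colimit of its finitely generated $\overline{\bb{Z}_p}$-subalgebras $A_\lambda$ and descending the finitely presented morphism $(Y\to X)$ to $(Y_\lambda\to X_\lambda)$ over some $A_\lambda$, the continuity of \'etale and Faltings cohomology along cofiltered limits of coherent schemes with affine transition morphisms allows me to work over a Noetherian absolutely integrally closed valuation ring, which is the setting in which the classical theorem is available.

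\textbf{Alteration and descent.} I apply Gabber-Illusie-Temkin to $X\to\spec(A_\lambda)$ to obtain a proper hypercovering $X_\bullet\to X$ whose terms $X_n$ each admit a proper strict semi-stable model with adequate compactifying log structure. Set $Y_\bullet=Y\times_X X_\bullet$. Corollary~\ref{cor:intro-resolution} then yields that the natural morphism $\psi_*\bb{F}\otimes_\bb{Z}\falb\longrightarrow \rr a_*(a^{-1}(\psi_*\bb{F})\otimes_\bb{Z}\falb_\bullet)$ is an almost isomorphism, while classical proper cohomological descent for $p$-torsion \'etale sheaves yields $\rr\Gamma(Y_\et,\bb{F})\simeq \rr\Gamma(Y_{\bullet,\et},\bb{F}_\bullet)$. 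Both sides of \eqref{eq:1.6.1} thereby become totalizations of cosimplicial complexes built from the pairs $(Y_n\to X_n)$, with the derived tensor by the $\bb{Z}$-flat valuation ring $A$ commuting with totalization.

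\textbf{Level-wise comparison and main obstacle.} At each simplicial level, the classical Faltings comparison theorem provides an almost isomorphism
\begin{align*}
\rr\Gamma(Y_{n,\et},\bb{F}_n)\otimes^\dl_\bb{Z}\ca{O}_{\overline{K}}\longrightarrow \rr\Gamma(\fal^\et_{Y_n\to X_n},\psi_*\bb{F}_n\otimes_\bb{Z}\falb_n),
\end{align*}
natural in $(Y_n,X_n)$; these assemble into a morphism of totalizations which, after base change $\ca{O}_{\overline{K}}\to A$, is exactly \eqref{eq:1.6.1} and is almost isomorphic level-by-level, hence almost isomorphic after totalization. The main obstacle is to arrange the Gabber-Illusie-Temkin hypercovering so that \emph{every} term $X_n$ simultaneously satisfies the adequacy hypothesis and so that the augmentations remain h-coverings to which Corollary~\ref{cor:intro-resolution} genuinely applies; this requires the absolute finitely presented form of the alteration theorem combined with the noetherian approximation of Step~2, plus a judicious choice of compactifying log structure on each $Y_n\subset X_n$. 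A secondary technical point is to construct \eqref{eq:1.6.1} functorially (from the ring map $A\to\falb$ and the identification $\rr\Gamma(Y_\et,\bb{F})=\rr\Gamma(\fal^\et_{Y\to X},\psi_*\bb{F})$) so that it is compatible with pullback along $X_\bullet\to X$.
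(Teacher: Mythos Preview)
Your overall architecture (alteration + level-wise comparison + hypercovering descent via Corollary~\ref{cor:intro-resolution}) is the right shape, but two of your reduction steps do not go through, and the paper's proof differs from yours at exactly those points.

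\textbf{The Noetherian approximation fails.} You propose to write $A$ as a filtered colimit of finitely generated $\overline{\bb{Z}_p}$-subalgebras $A_\lambda$ and claim this reduces you to a ``Noetherian absolutely integrally closed valuation ring''. But a Noetherian valuation ring is a field or a DVR, and a DVR cannot have algebraically closed fraction field; so no nontrivial such ring exists. More to the point, the $A_\lambda$ are neither valuation rings nor absolutely integrally closed, and the classical absolute comparison theorem (property~(1) of the excerpt) requires $X$ proper over the ring of integers of a complete discrete valuation field, not over an arbitrary finitely generated $\overline{\bb{Z}_p}$-algebra. So after your approximation you are in no position to invoke it.

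\textbf{The paper uses the relative theorem, not the absolute one.} The paper's workaround is the following (see Propositions~\ref{prop:val-lim-ss} and~\ref{prop:fal-comp-hypercov}): first reduce via Lemma~\ref{lem:fal-comp-reduce-val} to the case where $A$ has height~$1$; then write $\spec(A[1/p])\to\spec(A)$ itself as a cofiltered limit of strictly semi-stable pairs $U_\lambda\to T_\lambda$ over discrete valuation rings $\ca{O}_{K_\lambda}$ (Lemma~\ref{thm:alteration-ok}); then use Gabber--Illusie--Temkin to alter $X$ to $X'$ so that $X'\to\spec(A)$ is the limit of projective log-smooth saturated morphisms $X'_\lambda\to T_\lambda$. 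At each finite stage one applies the \emph{relative} Faltings comparison theorem (Theorem~\ref{thm:abess-gros}) to the morphism $(X'_{\lambda,\overline{\eta_\lambda}}\to X'_\lambda)\to(T_{\lambda,\overline{\eta_\lambda}}\to T_\lambda)$ of adequate pairs over $\ca{O}_{K_\lambda}$, and then passes to the limit using~\ref{prop:limit-fal-sites}. This is the missing idea in your sketch: approximate the \emph{morphism} $X\to\spec(A)$ by morphisms of finite-type schemes over DVRs, rather than approximating $A$ alone, and feed those approximations into the relative (not absolute) comparison theorem. Once this is done for $X'$, one bootstraps to a hypercovering $X_\bullet\to X$ with the same property (Corollary~\ref{cor:fal-comp-hypercov}) and concludes via Corollary~\ref{cor:resolution} exactly as you outline.

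Your reduction to constant coefficients via Cartan--Leray along $(V\to X)\to(Y\to X)$ is fine but unnecessary: the paper carries $\psi_*\bb{F}$ throughout, using~\ref{lem:loc-sys-trans-bc} to identify its pullbacks along the hypercovering.
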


We remark that the natural morphism $\psi:Y_\et\to \fal_{Y \to X}^\et$ induces an equivalence of the categories of finite locally constant abelian sheaves on $Y_\et$ and $\fal_{Y \to X}^\et$ (cf. \ref{prop:loc-sys-trans}),
\begin{align}
	\xymatrix{
		\locsys(Y_\et)\ar@<0.5ex>[r]^-{\psi_*}&\locsys(\fal_{Y \to X}^\et)\ar@<0.5ex>[l]^-{\psi^{-1}}.
	}
\end{align}
As a continuation of the work of Abbes-Gros, the canonical morphism \eqref{eq:1.6.1} (refered as \emph{Faltings' comparison morphism}) is constructed using the acyclicity of $\psi$ for $\bb{F}$, i.e. $\psi_*\bb{F}=\rr\psi_*\bb{F}$ (so that $\rr\Gamma (Y_\et, \bb{F})=\rr\Gamma (\fal_{Y \to X}^\et, \psi_*\bb{F})$), which is a consequence of Achinger's result on $K(\pi,1)$-schemes (cf. \ref{defn:easy-fal-comp-mor} and \ref{cor:achinger}). We also propose a new way to construct Faltings' comparison morphism in the derived category of almost modules using our cohomological descent result \ref{thm:intro-falh-et}, which avoids using the acyclicity of $\psi$. Indeed, there are natural morphisms of sites 
\begin{align}\label{diam:1.7.1}
	\xymatrix{
		(\schqcqs_{/Y})_{\mrm{v}}\ar[r]^-{a}\ar[d]_-{\Psi} &Y_\et\ar[d]^-{\psi}\\
		\falh_{Y \to X^Y}\ar[r]^-{\varepsilon} & \fal_{Y \to X}^\et
	}
\end{align}
and $\Psi$ is acyclic for any torsion abelian sheaf $\ca{F}$ on $Y_\et$, i.e. $\Psi_*(a^{-1}\ca{F})=\rr\Psi_*(a^{-1}\ca{F})$, which allows more general coefficients and whose proof is much easier than that of $\psi$ (cf. \ref{prop:Psi-const-equal}). We remark that this new construction won't give us a ``real morphism'' \eqref{eq:1.6.1} but a canonical morphism in the derived category of almost modules (cf. \ref{para:comparison-mor-rel}).

We briefly explain the strategy for proving \ref{thm:intro-fal-comp-abs}:
\begin{enumerate}
	\renewcommand{\labelenumi}{{\rm(\theenumi)}}
	\item Firstly, we use de Jong-Gabber-Illusie-Temkin's alteration theorem for morphisms of schemes \cite[\Luoma{10}.3]{gabber2014travaux} to obtain a proper surjective morphism of finite presentation $X'\to X$ such that the morphism $X'\to \spec(A)$ is the cofiltered limit of a system of ``nice'' morphisms $X'_\lambda\to T_\lambda$ of ``nice'' models over $\ca{O}_{K_\lambda}$, where $K_\lambda$ is a finite extension of $\bb{Q}_p$ (cf. \ref{prop:val-lim-ss}).
	\item Then, we can apply Faltings' comparison theorem in the relative case to the ``nice'' morphisms $X'_\lambda\to T_\lambda$ (formulated by Faltings \cite[Thm.6, page 266]{faltings2002almost} and proved by Abbes-Gros \cite[5.7.3]{abbes2020suite}, cf. \ref{thm:abess-gros}). By a limit argument, we get the comparison theorem for $X'$.
	\item Finally, using our cohomological descent result \ref{cor:intro-resolution}, we deduce the comparison theorem for $X$.
\end{enumerate}

\begin{mypara}
	The site $\falh_{Y\to X^Y}$ is also appropriate to globalize Faltings' comparison theorem. Consider a Cartesian square of coherent schemes
	\begin{align}
		\xymatrix{
			Y'\ar[r]\ar[d]& X'\ar[d]\\
			Y\ar[r]&X
		}
	\end{align}
	where $Y\to X$ is Cartesian over $\spec(\overline{\bb{Q}_p})\to \spec(\overline{\bb{Z}_p})$. In particular, there is a natural morphism of ringed sites by the functoriality of \eqref{diam:1.7.1},
	\begin{align}
		f_{\falh}:(\falh_{Y' \to X'^{Y'}},\falhb')\longrightarrow (\falh_{Y \to X^Y},\falhb).
	\end{align}
\end{mypara}
\begin{mythm}[{cf. \ref{thm:hard-comp-rel}}]\label{thm:intro-hard-comp-rel}
	Assume that $X'\to X$ is proper of finite presentation. Let $\ca{F}'$ be a torsion abelian sheaf on $Y'_\et$ and $\scr{F}'=\Psi'_*a'^{-1}\ca{F}'$ {\rm(cf. \eqref{diam:1.7.1})}. Then, the canonical morphism
	\begin{align}\label{eq:intro-rel-comp}
		(\rr f_{\falh *}\scr{F}')\otimes_{\bb{Z}}^\dl \falhb \longrightarrow \rr f_{\falh *}(\scr{F}'\otimes_{\bb{Z}} \falhb')
	\end{align}
	is an almost isomorphism.
\end{mythm}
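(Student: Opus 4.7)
The strategy is to reduce the relative statement to the absolute comparison theorem \ref{thm:intro-fal-comp-abs} by testing the claimed almost isomorphism on a sufficiently fine generating family of the target site $\falh_{Y\to X^Y}$. Since the claim concerns complexes of $\falhb$-modules modulo the almost category, it suffices to check it after sections on a topologically generating family. By the very definition of the v-topology underlying $\falh$, such a family is provided by the affine schemes $T=\spec(A)\to X^Y$ where $A$ is an absolutely integrally closed valuation ring extending $\overline{\bb{Z}_p}$; such a $T$ is automatically integrally closed in $T\times_{X^Y}Y$, and $\falhb(T)=A$. These play the role of the ``v-points'' of $\falh_{Y\to X^Y}$.

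The key geometric step is then a base-change identification of the slice. Writing $T^{\circ}=T\times_{X^Y}Y=\spec(A[1/p])$, $X'_T=X'\times_X T$, $Y'_T=Y'\times_Y T^{\circ}$, and $(X'_T)^{Y'_T}$ for the integral closure of $X'_T$ in $Y'_T$, I expect a natural equivalence of ringed topoi
\begin{align*}
(\falh_{Y'\to X'^{Y'}}/T,\ \falhb'|_T)\ \simeq\ (\falh_{Y'_T\to (X'_T)^{Y'_T}},\ \falhb_T),
\end{align*}
which is the scheme-theoretic analogue of the base-change compatibility of Scholze's v-site (cf.\ \cite[15.6]{scholze2021diamond}). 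Granting this, the functoriality of the formation $\ca{F}\mapsto \Psi_*a^{-1}\ca{F}$ identifies the evaluation of \eqref{eq:intro-rel-comp} at $T$ with the ``absolute relative'' claim: for the proper finitely presented morphism $X'_T\to\spec(A)$ and $\scr{F}'_T=\Psi'_{T,*}a'^{-1}_T\ca{F}'_T$, the canonical map
\begin{align*}
\rr\Gamma(\falh_{Y'_T\to (X'_T)^{Y'_T}},\,\scr{F}'_T)\otimes^{\dl}_{\bb{Z}}A\ \longrightarrow\ \rr\Gamma(\falh_{Y'_T\to (X'_T)^{Y'_T}},\,\scr{F}'_T\otimes_{\bb{Z}}\falhb_T)
\end{align*}
is an almost isomorphism.

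To finish, I would apply the cohomological descent \ref{thm:intro-falh-et} through $\varepsilon_T:\falh_{Y'_T\to (X'_T)^{Y'_T}}\to\fal^\et_{Y'_T\to X'_T}$: the right-hand side then becomes almost isomorphic to $\rr\Gamma(\fal^\et_{Y'_T\to X'_T},\psi_{T,*}\ca{F}'_T\otimes\falb_T)$, while the left-hand side reduces to $\rr\Gamma(Y'_{T,\et},\ca{F}'_T)\otimes^{\dl}A$ using the acyclicity of $\Psi'_T$ for $a'^{-1}_T\ca{F}'_T$ from \ref{prop:Psi-const-equal}. The absolute comparison theorem \ref{thm:intro-fal-comp-abs} applied to $X'_T\to\spec(A)$ now yields the conclusion when $\ca{F}'$ is finite locally constant, and a standard devissage (filtered colimits of constructible subsheaves, each filtered by pushforwards of finite locally constant sheaves) extends the result to arbitrary torsion $\ca{F}'$, both sides commuting with the relevant colimits by properness of $X'\to X$. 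The main obstacle is the localization/base-change step in the middle paragraph: while it is heuristically clear from the analogy with Scholze's v-site, a rigorous verification requires showing that both the integral-closure operation $T'\mapsto T'^{T'\times_{X^Y}Y}$ and v-covers are stable under pullback along the morphism $T\to X^Y$ — once this functoriality is secured, the rest of the argument is a formal combination of \ref{thm:intro-falh-et} and \ref{thm:intro-fal-comp-abs}.
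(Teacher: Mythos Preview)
Your overall architecture matches the paper's: reduce to spectra of absolutely integrally closed valuation rings, invoke the absolute comparison theorem there, and then d\'evissage from finite locally constant coefficients to arbitrary torsion ones. However, the sentence ``it suffices to check it after sections on a topologically generating family'' is not a valid general principle for morphisms in a derived category, and spectra of single valuation rings do not literally form a generating family (only products of them do, by \ref{prop:prod-val-cov}). What the paper actually uses is the sheafification criterion \ref{prop:v-site-stalk}: one identifies $\rr^q f_{\falh*}\scr{F}'$ and $\rr^q f_{\falh*}(\scr{F}'\otimes\falhb')$ as sheafifications of explicit presheaves (\ref{lem:comparison-mor-rel}), checks that these presheaves satisfy the limit-preserving condition \ref{prop:v-site-stalk}.(\ref{prop:v-site-stalk-limit}) via \ref{prop:limit-fal-sites}, and only then reduces to valuation rings. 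This is the rigorous substitute for your ``v-points'' heuristic.

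Relatedly, the paper avoids the topos-theoretic slice identification you flag as the main obstacle. Rather than proving an equivalence of ringed topoi over $T$, \ref{lem:comparison-mor-rel} directly shows (for $\bb{F}'$ finite locally constant) that the stalk-presheaf at $\widetilde{X}$ of the left side of \eqref{eq:intro-rel-comp} is $H^q_\et(\widetilde{Y'},\widetilde{\bb{F}'})\otimes R$ and that of the right side is almost $H^q(\fal^\et_{\widetilde{Y'}\to\widetilde{X'}},\widetilde{\psi'}_*\widetilde{\bb{F}'}\otimes\falb')$, with the comparison map identified with the one from \eqref{eq:fal-abs-comp}; then \ref{thm:fal-comp-abs} finishes the finite locally constant case (\ref{lem:hard-comp-rel-loc}). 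The d\'evissage to torsion coefficients proceeds through pushforwards along finite morphisms of finite presentation (\ref{lem:hard-comp-rel-finite}, using \ref{lem:finite-fp}), then constructible sheaves via the embedding into such pushforwards (\ref{lem:hard-comp-rel-constr}), and finally filtered colimits (\ref{thm:hard-comp-rel}); your sketch of this step is correct but the intermediate pushforward case requires its own argument.
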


We remark that if $\ca{F}'=\bb{Z}/p^n\bb{Z}$ then $\scr{F}'=\bb{Z}/p^n\bb{Z}$ (cf. \ref{prop:Psi-const-equal}), and that $\rr^q f_{\falh *}\scr{F}'$ is the sheafification of \'etale cohomologies of $Y'$ over $Y$ with coefficient $\ca{F}'$ in the v-topology (cf. \ref{lem:falh-presheaf}). Very roughly speaking, objects of $\falh_{Y \to X^Y}$ are ``locally'' the spectrums of valuation rings, and the ``stalks'' of \eqref{eq:intro-rel-comp} are Faltings' comparison morphisms \eqref{eq:1.6.1} when $\ca{F}'$ is finite locally constant (cf. \ref{lem:comparison-mor-rel}). Theorem \ref{thm:intro-hard-comp-rel} can be regarded as a scheme theoretical analogue of Scholze's comparison theorem for $p$-adic \'etale cohomology of a morphism of rigid analytic varieties \cite[3.13]{scholze2013perfsurv}.

Finally, we generalize Faltings' comparison theorem in the relative case using \ref{thm:intro-falh-et} and \ref{thm:intro-hard-comp-rel}.
\begin{mythm}[{Faltings' comparison theorem in the relative case, cf. \ref{thm:fal-comp-rel} and \ref{rem:fal-comp-rel}}]\label{thm:intro-fal-comp-rel}
	Assume that $Y'\to Y$ is smooth and that $X'\to X$ is proper of finite presentation. Then, for any finite locally constant abelian sheaf $\bb{F}'$ on $Y'_\et$, there exists a canonical morphism
	\begin{align}
		\xymatrix{
		(\rr\psi_*\rr f_{\et *} \bb{F}')\otimes^{\dl}_{\bb{Z}}\falb \longrightarrow \rr f_{\fal *} (\psi'_*\bb{F}'\otimes_{\bb{Z}}\falb'),
		}
	\end{align}
	which is an almost isomorphism, and	where $f_\et:Y'_\et\to Y_\et$ and $f_\fal:\fal_{Y' \to X'}^\et\to \fal_{Y \to X}^\et$ are the natural morphisms of sites. In particular, there exists a canonical morphism
	\begin{align}
		(\psi_*\rr^q f_{\et *} \bb{F}')\otimes_{\bb{Z}}\falb \longrightarrow \rr^q f_{\fal *} (\psi'_*\bb{F}'\otimes_{\bb{Z}}\falb'),
	\end{align}
	which is an almost isomorphism, for any integer $q$.
\end{mythm}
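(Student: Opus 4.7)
The plan is to deduce the comparison on the Faltings ringed site $\fal^\et$ from its v-topology analogue (Theorem \ref{thm:intro-hard-comp-rel}) by pushing forward along the cohomological descent $\rr\varepsilon_*$ furnished by Theorem \ref{thm:intro-falh-et}. I would work with the commutative cube of ringed sites obtained by stacking the square \eqref{diam:1.7.1} for $Y\to X$ on top of its analogue for $Y'\to X'$, connected by the pushforwards $f_\et$, $f_\fal$, $f_\falh$ and a v-version $f_v$. A preliminary remark: since $X'\to X$ is proper of finite presentation and $Y'\to Y$ is smooth, base change makes $f_\et:Y'_\et\to Y_\et$ proper and smooth; because $Y$ lives over $\overline{\bb{Q}_p}$, smooth and proper base change ensures $\rr^q f_{\et *}\bb{F}'$ is finite locally constant on $Y_\et$ and vanishes for $q$ large, so $\ca{G}:=\rr\psi_*\rr f_{\et *}\bb{F}'$ is a bounded complex with finite locally constant cohomology sheaves on $\fal^\et$ (via \ref{prop:loc-sys-trans}).

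First, I would apply Theorem \ref{thm:intro-falh-et} to the finite locally constant sheaf $\psi'_*\bb{F}'$ on $\fal^\et_{Y'\to X'}$: combining the commutativity of \eqref{diam:1.7.1} with the acyclicity of $\Psi'$ for torsion sheaves (Proposition \ref{prop:Psi-const-equal}) identifies $\varepsilon'^{-1}\psi'_*\bb{F}'$ with $\scr{F}'=\Psi'_*a'^{-1}\bb{F}'$, yielding an almost isomorphism $\psi'_*\bb{F}'\otimes_\bb{Z}\falb' \aliso \rr\varepsilon'_*(\scr{F}'\otimes_\bb{Z}\falhb')$. Applying $\rr f_{\fal *}$ and using $\rr\varepsilon_*\rr f_{\falh *}=\rr f_{\fal *}\rr\varepsilon'_*$ transports this to $\rr f_{\fal *}(\psi'_*\bb{F}'\otimes\falb') \aliso \rr\varepsilon_*\rr f_{\falh *}(\scr{F}'\otimes\falhb')$, and Theorem \ref{thm:intro-hard-comp-rel} almost identifies the right-hand side with $\rr\varepsilon_*\bigl((\rr f_{\falh *}\scr{F}')\otimes^\dl_\bb{Z}\falhb\bigr)$. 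A further cube chase using Proposition \ref{prop:Psi-const-equal} for $\Psi$ together with v-to-\'etale base change for $\rr f_{v*}$ on torsion sheaves yields $\rr f_{\falh *}\scr{F}' = \varepsilon^{-1}\ca{G}$, so the proof is reduced to the derived projection formula
\begin{equation*}
\ca{G}\otimes^\dl_\bb{Z}\falb \aliso \rr\varepsilon_*\bigl(\varepsilon^{-1}\ca{G}\otimes^\dl_\bb{Z}\falhb\bigr).
\end{equation*}

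The main obstacle, and what I expect to be the technical heart of the argument, is precisely this extension of Theorem \ref{thm:intro-falh-et} from finite locally constant coefficients to the bounded complex $\ca{G}$. I would devissage $\ca{G}$ along its Postnikov tower: each graded piece is, up to shift, the finite locally constant sheaf $\psi_*\rr^q f_{\et *}\bb{F}'$ on $\fal^\et$, to which Theorem \ref{thm:intro-falh-et} applies directly. A finite spectral sequence in the derived category of almost $\falb$-modules, convergent thanks to the boundedness of $\ca{G}$, then reassembles the pieces; handling the derived tensor products with the non-flat structural sheaves $\falb,\falhb$ in the almost category, and tracking the compatibility of the Postnikov filtration with the constructions along the cube, is the crux. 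The ``In particular'' assertion will then follow from the displayed almost isomorphism by taking $q$-th cohomology sheaves.
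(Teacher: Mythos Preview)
Your route is the paper's own (compare \S\ref{para:comparison-mor-rel} and the proof of Theorem~\ref{thm:fal-comp-rel}): descend to $\falh$ via $\varepsilon$, invoke the v-site comparison (Theorem~\ref{thm:intro-hard-comp-rel}), and extend the cohomological descent of Theorem~\ref{thm:intro-falh-et} from a single finite locally constant sheaf to the bounded complex $\ca{G}$ by d\'evissage along its cohomology sheaves (this is exactly Remark~\ref{rem:falh-et}). But two of your steps are justified incorrectly, and the missing ingredient is the same in both.

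You claim that $\ca{G}=\rr\psi_*\rr f_{\et*}\bb{F}'$ has finite locally constant cohomology ``via \ref{prop:loc-sys-trans}'', and that $\rr f_{\falh*}\scr{F}'\cong\varepsilon^{-1}\ca{G}$ follows from a cube chase using only Proposition~\ref{prop:Psi-const-equal} and v-proper base change. Neither holds without an input you never invoke: the vanishing $\rr^q\psi_*=0$ for $q>0$ on finite locally constant abelian sheaves on $Y_\et$. Proposition~\ref{prop:loc-sys-trans} only says that $\psi_*$ and $\psi^{-1}$ are inverse equivalences on $\locsys$; it says nothing about higher direct images. The required acyclicity is condition~(\ref{thm:fal-comp-rel-achinger}) of Theorem~\ref{thm:fal-comp-rel}, supplied in the introduction's setting by Achinger's $K(\pi,1)$ theorem (Corollary~\ref{cor:achinger}; see Remark~\ref{rem:fal-comp-rel}). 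Without it the Cartan--Leray spectral sequence for $\rr\psi_*$ applied to $\rr f_{\et*}\bb{F}'$ does not degenerate, so $H^q(\ca{G})$ need not equal $\psi_*\rr^q f_{\et*}\bb{F}'$ and your d\'evissage has nothing finite locally constant to start from. The identification $\varepsilon^{-1}\ca{G}\cong\rr f_{\falh*}\scr{F}'$ likewise goes through $\scr{H}=\rr\Psi_*a^{-1}\rr f_{\et*}\bb{F}'$: one has $\rr f_{\falh*}\scr{F}'\cong\scr{H}$ by proper base change (Lemma~\ref{lem:falh-presheaf}), but $\varepsilon^{-1}\ca{G}\cong\scr{H}$ reduces cohomology-by-cohomology to Lemma~\ref{lem:loc-sys-trans-2} only once the Achinger vanishing has collapsed the spectral sequence for $\ca{G}$. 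A minor side remark: $\falb$ and $\falhb$ are flat over $\bb{Z}$ (see \S\ref{para:comparison-mor-rel}), so there is no non-flatness obstacle in the d\'evissage.
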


\begin{mypara}
	The paper is structured as follows. In section \ref{sec:v-site}, we establish the foundation of the site $\falh_{S^\circ \to S}$, where proposition \ref{prop:Psi-const-equal} discussing the cohomological properties of $\Psi:(\schqcqs_{/S^\circ})_{\mrm{v}}\to\falh_{S^\circ \to S}$ is the key to our new construction of Faltings' comparison morphism (cf. \ref{para:comparison-mor-rel}). Sections \ref{sec:arc-descent} and \ref{sec:perfectoid} are devoted to a detailed proof of the arc-descent for perfectoid algebras following Bhatt-Scholze \cite[8.10]{bhattscholze2019prisms}. Since we use the language of schemes, the terminology ``pre-perfectoid'' is introduced for those algebras whose $p$-adic completions are perfectoid. In sections \ref{sec:covanishing} and \ref{sec:faltings-site}, we review the definition and some basic properties of Faltings sites and we introduce a pro-version of Faltings site to evaluate the structural sheaf on the spectrums of pre-perfectoid algebras. Then, we prove our cohomological descent result in section \ref{sec:coh-descent}. In section \ref{sec:log-geo}, we review de Jong-Gabber-Illusie-Temkin's alteration theorem and apply it to schemes over a valuation ring of height $1$. Section \ref{sec:abs-comp} is devoted to proving our generalization of Faltings' comparison theorem in the absolute case. In section \ref{sec:rel-comp}, we give a new construction of Faltings' comparison morphism and our generalization of Faltings' comparison theorem in the relative case. Finally, we deduce from the global relative Hodge-Tate filtration an explicit local version in section \ref{sec:loc-hodge-tate-fil}.
\end{mypara}


\subsection*{Acknowledgements}
This work is part of my thesis prepared at Universit\'e Paris-Saclay and Institut des Hautes \'Etudes Scientifiques. 
I would like to express my sincere gratitude to my doctoral supervisor, Ahmed~ Abbes, for his guidance to this project, his thorough review of this work and his plenty of helpful suggestions on both research and writing.

\section{Notation and Conventions}

\begin{mypara}\label{para:notation-val}
	We fix a prime number $p$ throughout this paper. For any monoid $M$, we denote by $\frob:M \to M$ the map sending an element $x$ to $x^p$ and we call it the \emph{Frobenius} of $M$. For a ring $R$, we denote by $R^\times$ the group of units of $R$. 
	A ring $R$ is called \emph{absolutely integrally closed} if any monic polynomial $f\in R[T]$ has a root in $R$ (\cite[\href{https://stacks.math.columbia.edu/tag/0DCK}{0DCK}]{stacks-project}). We remark that quotients, localizations and products of absolutely integrally closed rings are still absolutely integrally closed.
	
	Recall that a valuation ring is a domain $V$ such that for any element $x$ in its fraction field, if $x\notin V$ then $x^{-1}\in V$. The family of ideals of $V$ is totally ordered by the inclusion relation (\cite[\Luoma{6}.\textsection1.2, Thm.1]{bourbaki2006commalg5-7}). In particular, a radical ideal of $V$ is a prime ideal. Moreover, any quotient of $V$ by a prime ideal and any localization of $V$ are still valuations rings (\cite[\href{https://stacks.math.columbia.edu/tag/088Y}{088Y}]{stacks-project}). We remark that $V$ is normal, and that $V$ is absolutely integrally closed if and only if its fraction field is algebraically closed.
	An \emph{extension of valuation rings} is an injective and local homomorphism of valuation rings.
\end{mypara}

\begin{mypara}\label{para:notation-intclos}
	Following \cite[\Luoma{6}.1.22]{sga4-2}, a \emph{coherent} scheme (resp. morphism of schemes) stands for a quasi-compact and quasi-separated scheme (resp. morphism of schemes). For a coherent morphism $Y \to X$ of schemes, we denote by $X^Y$ the integral closure of $X$ in $Y$ (\cite[\href{https://stacks.math.columbia.edu/tag/0BAK}{0BAK}]{stacks-project}). For an $X$-scheme $Z$, we say that $Z$ is \emph{$Y$-integrally closed} if $Z=Z^{Y \times_X Z}$.
\end{mypara}

\begin{mypara} 
	Throughout this paper, we fix two universes $\bb{U}$ and $\bb{V}$ such that the set of natural numbers $\bb{N}$ is an element of $\bb{U}$ and that $\bb{U}$ is an element of $\bb{V}$  (\cite[\Luoma{1}.0]{sga4-1}). In most cases, we won't emphasize this set theoretical issue. Unless stated otherwise, we only consider $\bb{U}$-small schemes and we denote by $\sch$ the category of $\bb{U}$-small schemes, which is a $\bb{V}$-small category.
\end{mypara}
	
\begin{mypara} \label{para:finite-complete}
	Let $C$ be a category. We denote by $\widehat{C}$ the category of presheaves of $\bb{V}$-small sets on $C$. If $C$ is a $\bb{V}$-site (\cite[\Luoma{2}.3.0.2]{sga4-1}), we denote by $\widetilde{C}$ the topos of sheaves of $\bb{V}$-small sets on $C$. We denote by $h^C : C \to \widehat{C}$, $x\mapsto h_x^C$ the Yoneda embbeding (\cite[\Luoma{1}.1.3]{sga4-1}), and by $\widehat{C} \to \widetilde{C}$, $\ca{F} \mapsto \ca{F}^\ash$ the sheafification functor (\cite[\Luoma{2}.3.4]{sga4-1}). Unless stated otherwise, a site in this paper stands for a site where \emph{all finite limits are representable}.
\end{mypara}

\begin{mypara}\label{para:topos}
	Let $u^+ : C \to D$ be a functor of categories. We denote by $u^{\psh} : \widehat{D} \to \widehat{C}$ the functor that associates to a presheaf $\ca{G}$ of $\bb{V}$-small sets on $D$ the presheaf $u^{\psh} \ca{G} = \ca{G} \circ u^+$. If $C$ is $\bb{V}$-small and $D$ is a $\bb{V}$-category, then $u^{\psh}$ admits a left adjoint $u_{\psh}$ \cite[\href{https://stacks.math.columbia.edu/tag/00VC}{00VC}]{stacks-project} and a right adjoint ${}_{\psh}u$ \cite[\href{https://stacks.math.columbia.edu/tag/00XF}{00XF}]{stacks-project} (cf. \cite[\Luoma{1}.5]{sga4-1}). So we have a sequence of adjoint functors
	\begin{align}\label{eq:threeadj}
		u_{\psh},\ u^{\psh},\ {}_{\psh}u .
	\end{align}
	If moreover $C$ and $D$ are $\bb{V}$-sites, then we denote by $u_{\ssh} , u^{\ssh} , {}_{\ssh}u$ the functors of the topoi $\widetilde{C}$ and $\widetilde{D}$ of sheaves of $\bb{V}$-small sets induced by composing the sheafification functor with the functors $u_{\psh} , u^{\psh} , {}_{\psh}u$ respectively. As we only consider finite complete sites, we say that the functor $u^+$ gives a \emph{morphism of sites}, if $u^+$ is left exact and preserves covering families (\cite[\Luoma{4}.4.9.2]{sga4-1}). Then, we denote by 
	\begin{align}
		u=(u^{-1}, u_*) : \widetilde{D} \to \widetilde{C}
	\end{align}
	the associated morphism of topoi, where $u^{-1}=u_{\ssh}$ and $u_*=u^{\ssh}=u^{\psh}|_{\widetilde{D}}$. If moreover $u$ is a morphism of ringed topoi, then we denote by $u^*=\ca{O}_{\widetilde{D}}\otimes_{u^{-1}\ca{O}_{\widetilde{C}}}u^{-1}$ the pullback functor of modules. We remark that the notation here, adopted by \cite{stacks-project}, is slightly different with that in \cite{sga4-1} (cf. \cite[\href{https://stacks.math.columbia.edu/tag/0CMZ}{0CMZ}]{stacks-project}).
\end{mypara}

\begin{mypara}\label{para:isogeny}
	Let $C$ be an additive category. The \emph{catgory of objects of $C$ up to isogeny} (cf. \cite[\Luoma{3}.6]{abbes2016p}) is the category $C_{\bb{Q}}$ with a functor $F:C\to C_{\bb{Q}}$ (called the localization functor) such that $\ob(C_{\bb{Q}})=\ob(C)$ and that
	\begin{align}
		\ho_{C_{\bb{Q}}}(M_{\bb{Q}},N_{\bb{Q}})=\ho_C(M,N)\otimes_{\bb{Z}}\bb{Q},
	\end{align}
	where we denote by $M_{\bb{Q}}$ the image of an object $M$ of $C$ via $F$ in $C_{\bb{Q}}$.
	
	For a ringed site $(C,\ca{O})$, we denote by $\ca{O}\module_{\bb{Q}}$ the category of $\ca{O}$-modules up to isogeny, whose objects are called \emph{$\ca{O}_{\bb{Q}}$-modules}. It is an abelian category and the localization functor $\ca{O}\module\to \ca{O}\module_{\bb{Q}}$ sends injective objects to injective objects. We remark that if $\ca{O}$ is a $\bb{Q}$-module, then $\ca{O}\module\to \ca{O}\module_{\bb{Q}}$ is an equivalence. A morphism of ringed sites $u:(C,\ca{O})\to (C', \ca{O}')$ induces a pair of adjoint functors 
	\begin{align}
		u^*:\ca{O}'\module_{\bb{Q}}\to \ca{O}\module_{\bb{Q}},\ u_*:\ca{O}\module_{\bb{Q}}\to \ca{O}'\module_{\bb{Q}}.
	\end{align}
	The derived functor $\rr u_{*}$ commutes with the localization functor.
\end{mypara}

\section{The v-site of Integrally Closed Schemes}\label{sec:v-site}

\begin{mydefn}\label{defn:h-v-arc}
	Let $X \to Y$ be a quasi-compact morphism of schemes. 
	\begin{enumerate}
		\item We say that $X \to Y$ is a \emph{v-covering}, if for any valuation ring $V$ and any morphism $\spec(V) \to Y$, there exists an extension of valuation rings $V\to W$ (\ref{para:notation-val}) and a commutative diagram (cf. \cite[\href{https://stacks.math.columbia.edu/tag/0ETN}{0ETN}]{stacks-project})
		\begin{align}\label{eq:1.1.1}
			\xymatrix{
				\spec(W) \ar[r]\ar[d]& X\ar[d]\\
				\spec(V) \ar[r] & Y
			}
		\end{align}
		\item Let $\pi$ be an element of $\Gamma(Y,\ca{O}_Y)$. We say that $X \to Y$ is an \emph{arc-covering} (resp. \emph{$\pi$-complete arc-covering}), if for any valuation ring (resp. $\pi$-adically complete valuation ring) $V$ of height $\leq 1$ and any morphism $\spec(V) \to Y$, there exists an extension of valuation rings (resp. $\pi$-adically complete valuation rings) $V\to W$ of height $\leq 1$ and a commutative diagram \eqref{eq:1.1.1} (cf. \cite[1.2]{bhattmathew2020arc}, \cite[2.2.1]{cesnaviciusscholze2019purity}). \label{defn:h-v-arc-arc}
		\item We say that $X \to Y$ is an \emph{h-covering}, if it is a v-covering and locally of finite presentation (cf. \cite[\href{https://stacks.math.columbia.edu/tag/0ETS}{0ETS}]{stacks-project}).
	\end{enumerate}	
\end{mydefn}
We note that an arc-covering is simply a $0$-complete arc-covering.

\begin{mylem}\label{lem:arc-parc-basic}
	Let $Z\stackrel{g}{\longrightarrow}Y \stackrel{f}{\longrightarrow}X$ be quasi-compact morphisms of schemes, $\pi \in \Gamma(X,\ca{O}_X)$, $\tau \in \{$h, v, $\pi$-complete arc$\}$.
	\begin{enumerate}
		\renewcommand{\labelenumi}{{\rm(\theenumi)}}
		\item If $f$ is a $\tau$-covering, then any base change of $f$ is also a $\tau$-covering.
		\item If $f$ and $g$ are $\tau$-coverings, then $f \circ g$ is also a $\tau$-covering.
		\item If $f\circ g$ is a $\tau$-covering (and if $f$ is locally of finite presentation when $\tau=\trm{h}$), then $f$ is also a $\tau$-covering.
	\end{enumerate}
\end{mylem}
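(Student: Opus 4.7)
All three assertions follow by direct diagram chasing from the valuative criteria in Definition~\ref{defn:h-v-arc}; the plan is to exhibit, in each case, the required lift $\spec(W)$ by invoking the hypothesis exactly once (for parts (1) and (3)) or twice (for part (2)).

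For part (1), suppose $f:Y\to X$ is a $\tau$-covering and let $X'\to X$ be arbitrary; write $f':Y':=Y\times_X X'\to X'$ for the base change. Given any valuation ring $V$ of the appropriate type and a morphism $\spec(V)\to X'$, compose with $X'\to X$ and apply the $\tau$-covering hypothesis on $f$ to obtain an extension $V\to W$ (in the same class: general, or of height $\leq 1$ and $\pi$-adically complete) together with $\spec(W)\to Y$ over $\spec(V)\to X$. The universal property of the fiber product then produces the desired lift $\spec(W)\to Y'$ over $\spec(V)\to X'$. Quasi-compactness of $f'$ is preserved under arbitrary base change, and for $\tau=\mathrm{h}$, local finite presentation is likewise preserved.

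For part (2), given $\spec(V)\to X$, first apply the $\tau$-covering property of $f$ to produce an extension $V\to W$ and a lift $\spec(W)\to Y$, then apply the $\tau$-covering property of $g$ to $\spec(W)\to Y$ to produce a further extension $W\to W'$ and a lift $\spec(W')\to Z$. The composite $V\to W'$ is again an extension of valuation rings, and it remains in the required class since extensions of height $\leq 1$ $\pi$-adically complete valuation rings compose to one of the same kind (heights do not grow and the $\pi$-adic completeness condition is imposed term-by-term). Quasi-compactness and, in the h-case, local finite presentation are stable under composition.

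For part (3), given $\spec(V)\to X$, apply the $\tau$-covering hypothesis on $f\circ g$ to find $V\to W$ and $\spec(W)\to Z$; post-composition with $g:Z\to Y$ immediately yields $\spec(W)\to Y$ lifting $\spec(V)\to X$ through $f$, which is exactly the valuative lifting property for $f$. The subtle point — and the one step where I expect genuine obstacles — is auxiliary: one must verify that $f$ itself satisfies the integrability properties demanded by Definition~\ref{defn:h-v-arc} (quasi-compactness, and, for $\tau=\mathrm{h}$, local finite presentation). The $\tau=\mathrm{h}$ case is exactly why the statement needs the extra hypothesis on $f$, since local finite presentation of $f\circ g$ does not imply it for $f$ without further assumptions on $g$. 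Quasi-compactness of $f$ is forced by the valuative lifting conclusion combined with the standard fact that a morphism of schemes admitting the valuative lifting property from all valuation rings (resp.\ all height-$\leq 1$ $\pi$-adically complete valuation rings) on a quasi-compact target must itself be quasi-compact, which one may invoke from the cited Stacks Project entries on v- and h-coverings.
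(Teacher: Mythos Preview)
Your approach is the same as the paper's (which says only ``It follows directly from the definitions''), and your explicit unpacking of the valuative criteria in parts (1), (2), and the lifting half of (3) is correct.

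There is, however, a genuine error in your final paragraph for part (3). You attempt to deduce quasi-compactness of $f$ from the valuative lifting property, invoking a ``standard fact'' that such a lifting property forces quasi-compactness. No such fact exists: the identity morphism of any scheme satisfies every valuative lifting criterion but need not be quasi-compact. Fortunately you do not need this: reread the hypotheses of the lemma, which begin ``Let $Z\stackrel{g}{\longrightarrow}Y \stackrel{f}{\longrightarrow}X$ be \emph{quasi-compact} morphisms of schemes''. Quasi-compactness of $f$ is assumed from the outset, so your entire last sentence should be deleted and replaced with the observation that $f$ is quasi-compact by hypothesis, and (for $\tau=\mathrm{h}$) locally of finite presentation by the extra hypothesis in the statement.
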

\begin{proof}
	It follows directly from the definitions.
\end{proof}

\begin{mypara}\label{para:h-v-arc-topology}
	Let $\schqcqs$ be the category of coherent $\bb{U}$-small schemes, $\tau \in \{\trm{h, v, arc}\}$. We endow $\schqcqs$ with the $\tau$-topology generated by the pretopology formed by families of morphisms $\{X_i \to X\}_{i \in I}$ with $I$ finite such that $\coprod_{i \in I} X_i \to X$ is a $\tau$-covering, and we denote the corresponding site by $\schqcqs_\tau$. It is clear that a morphism $Y \to X$ (which is locally of finite presentation if $\tau=\trm{h}$) is a $\tau$-covering if and only if $\{Y \to X\}$ is a covering family in $\schqcqs_\tau$ by \ref{lem:arc-parc-basic} and \cite[\Luoma{2}.1.4]{sga4-1}.
	
	For any coherent $\bb{U}$-small scheme $X$, we endow the category $\schqcqs_{/X}$ of objects of $\schqcqs$ over $X$ with the topology induced by the $\tau$-topology of $\schqcqs$, i.e. the topology generated by the pretopology formed by families of $X$-morphisms $\{Y_i \to Y\}_{i \in I}$ with $I$ finite such that $\coprod_{i \in I} Y_i \to Y$ is a $\tau$-covering (\cite[\Luoma{3}.5.2]{sga4-1}). For any sheaf $\ca{F}$ of $\bb{V}$-small abelian groups on the site $(\schqcqs_{/X})_\tau$, we denote its $q$-th cohomology by $H^q_\tau(X, \ca{F})$.
	
\end{mypara}

\begin{mylem}\label{lem:arc-parc}
	Let $f: X \to Y$ be a quasi-compact morphism of schemes, $\pi \in \Gamma(Y,\ca{O}_Y)$.
	\begin{enumerate}
		\renewcommand{\labelenumi}{{\rm(\theenumi)}}
		\item If $f$ is proper surjective or faithfully flat, then $f$ is a v-covering.\label{lem:arc-parc-v}
		\item If $f$ is an h-covering and $Y$ is affine, then there exists a proper surjective morphism $Y' \to Y$ of finite presentation and a finite affine open covering $Y' = \bigcup_{r=1}^n Y'_r$ such that $Y'_r \to Y$ factors through $f$ for each $r$. \label{lem:arc-parc-h}
		\item If $f$ is an h-covering and if there exists a directed inverse system $(f_\lambda: X_\lambda \to Y_\lambda)_{\lambda \in \Lambda}$ of finitely presented morphisms of coherent schemes with affine transition morphisms $\psi_{\lambda'\lambda}: X_{\lambda'} \to X_\lambda$ and $\phi_{\lambda'\lambda}: Y_{\lambda'} \to Y_\lambda$ such that $X=\lim X_\lambda$, $Y=\lim Y_\lambda$ and that $f_\lambda$ is the base change of $f_{\lambda_0}$ by $\phi_{\lambda\lambda_0}$ for some index $\lambda_0 \in \Lambda$ and any $\lambda \geq \lambda_0$, then there exists an index $\lambda_1\geq \lambda_0$ such that $f_\lambda$ is an h-covering for any $\lambda \geq \lambda_1$.\label{lem:arc-parc-limit-h}
		\item If $f$ is a v-covering, then it is a $\pi$-complete arc-covering.\label{lem:arc-parc-arc}
		\item Let $\pi'$ be another element of $\Gamma(Y,\ca{O}_Y)$ which divides $\pi$. If $f$ is a $\pi$-complete arc-covering, then it is a $\pi'$-complete arc-covering.\label{lem:arc-parc-pi}
		\item If $\spec(B) \to \spec(A)$ is a $\pi$-complete arc-covering, then the morphism $\spec(\widehat{B}) \to \spec(\widehat{A})$ between the spectrums of their $\pi$-adic completions is also a $\pi$-complete arc-covering.\label{lem:arc-parc-complete}
	\end{enumerate}
\end{mylem}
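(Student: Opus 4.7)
The six assertions split naturally: parts (1)--(3) are classical results about the h- and v-topologies, while (4)--(6), concerning $\pi$-complete arc-coverings, are specific to the present setting.

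For (1), the proper surjective case reduces by base change to $\spec(V)$ and an application of the valuative criterion of properness: pick a point of $X\times_Y\spec(V)$ above the generic point of $\spec(V)$, extend $V$ to a valuation ring inside the corresponding residue field extension, and lift. The faithfully flat case is similar, but one picks a point above the \emph{closed} point of $\spec(V)$ and uses that every local ring is dominated by a valuation ring (Chevalley) to obtain the required extension of $V$. Part (2) is the standard local structure theorem for h-coverings on affine targets: any h-covering can be refined by a proper surjective morphism of finite presentation followed by a finite affine open covering. Part (3) follows from (2) by a routine limit argument from \cite[EGA IV.8]{}, spreading out the proper surjective refinement and its associated affine open cover over a sufficiently large index $\lambda$.

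The heart of the lemma lies in (4)--(6). For (4), given a $\pi$-adically complete valuation ring $V$ of height $\leq 1$ with $\spec(V)\to Y$, the v-covering property provides an extension of valuation rings $V\to W$ together with a lift $\spec(W)\to X$. I would then cut $W$ down, by passing to an appropriate quotient-localisation corresponding to a coarsening of the valuation via convex subgroups of its value group, to a height $\leq 1$ valuation ring $W'$ still dominating $V$; finally, the $\pi$-adic completion of $W'$ remains a valuation ring of height $\leq 1$ and gives the required $\pi$-adically complete extension. For (5), the key observation is that on a height $\leq 1$ valuation ring in which both $\pi$ and $\pi'$ are nonzero non-units, the $\pi$-adic and $\pi'$-adic topologies both coincide with the valuation topology; since $\pi'$ divides $\pi$, if the image $\pi_V$ of $\pi$ in $V$ is nonzero then so is $\pi'_V$, so I pass to the $\pi$-adic completion $\widehat{V}^\pi$, apply the $\pi$-complete arc-covering hypothesis to $\widehat{V}^\pi\to Y$, and note that the resulting $W$ is automatically $\pi'$-adically complete. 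The degenerate cases (when $\pi_V$ or $\pi'_V$ is zero or a unit) are handled separately via triviality of the corresponding topology or an additional $\pi'$-adic completion of the output. For (6), I compose $\spec(V)\to\spec(\widehat{A})$ with $\spec(\widehat{A})\to\spec(A)$, apply the hypothesis to obtain $V\to W$ with a lift $B\to W$, and observe that since $W$ is $\pi$-adically complete, the lift factors uniquely through $\widehat{B}$.

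The main obstacle is the case analysis and the valuation-ring manipulations in (4) and (5); once one internalises the fact that on a height $\leq 1$ valuation ring the $\pi$-adic topology for any nonzero non-unit $\pi$ agrees with the valuation topology, and that cutting down to height $\leq 1$ via convex subgroups preserves domination, these arguments become straightforward.
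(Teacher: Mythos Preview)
Your proposal is correct and, for parts (1)--(4) and (6), essentially matches the paper's proof: parts (1) and (2) are cited from the Stacks Project, part (3) is exactly the limit argument you sketch (descending the refinement from (2) via \cite[8.8.2, 8.10.5]{ega4-3}), part (4) is your quotient-localisation-then-complete argument (the paper phrases the cut-down in terms of the maximal prime over $0$ and minimal prime over $\ak{m}_V$, which is the prime-ideal translation of your convex-subgroup language), and part (6) is identical.

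The one place where the paper is noticeably cleaner is (5). You organise the argument around the observation that on a height~$\leq 1$ valuation ring the $\pi$- and $\pi'$-adic topologies agree with the valuation topology when both are nonzero non-units, which forces you into a case analysis for the degenerate situations. The paper instead uses the single general fact that any $\pi'$-adically complete ring is automatically $\pi$-adically complete whenever $(\pi)\subseteq(\pi')$ (this is \cite[\href{https://stacks.math.columbia.edu/tag/090T}{090T}]{stacks-project}, valid for any ring, no height restriction): so $V$ is already $\pi$-complete, one applies the hypothesis directly to $V$, and then replaces the resulting $W$ by its $\pi'$-adic completion. This avoids all case distinctions and makes the step in your argument where you ``pass to the $\pi$-adic completion $\widehat{V}^\pi$'' redundant, since $\widehat{V}^\pi=V$ from the start.
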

\begin{proof}
	(\ref{lem:arc-parc-v}), (\ref{lem:arc-parc-h}) are proved in \cite[\href{https://stacks.math.columbia.edu/tag/0ETK}{0ETK}, \href{https://stacks.math.columbia.edu/tag/0ETU}{0ETU}]{stacks-project} respectively.
	
	(\ref{lem:arc-parc-limit-h}) To show that one can take $\lambda_1\geq \lambda_0$ such that $f_{\lambda_1}$ is an h-covering, we may assume that $Y_{\lambda_0}$ is affine by replacing it by a finite affine open covering by \ref{lem:arc-parc-basic} and (\ref{lem:arc-parc-v}). Thus, applying (\ref{lem:arc-parc-h}) to the h-covering $f$ and using \cite[8.8.2, 8.10.5]{ega4-3}, there exists an index $\lambda_1\geq \lambda_0$, a proper surjective morphism $Y'_{\lambda_1} \to Y_{\lambda_1}$ and a finite affine open covering $Y'_{\lambda_1}=\bigcup_{r=1}^n Y'_{r\lambda_1}$ such that the morphisms $Y'_r\to Y' \to Y$ are the base changes of the morphisms $Y'_{r\lambda_1}\to Y'_{\lambda_1} \to Y_{\lambda_1}$ by the transition morphism $Y\to Y_{\lambda_1}$, and that $Y'_{r\lambda_1} \to Y_{\lambda_1}$ factors through $X_{\lambda_1}$. This shows that $f_{\lambda_1}$ is an h-covering by \ref{lem:arc-parc-basic} and (\ref{lem:arc-parc-v}).
	
	(\ref{lem:arc-parc-arc}) With the notation in \eqref{eq:1.1.1}, if $V$ is a $\pi$-adically complete valuation ring of height $\leq 1$ with maximal ideal $\ak{m}$, then since the family of prime ideals of $W$ is totally ordered by the inclusion relation (\ref{para:notation-val}), we take the maximal prime ideal $\ak{p}\subseteq W$ over $0 \subseteq V$ and the minimal prime ideal $\ak{q} \subseteq W$ over $\ak{m} \subseteq V$. Then, $\ak{p} \subseteq \ak{q}$ and $W' = (W/\ak{p})_{\ak{q}}$ over $V$ is an extension of valuation rings of height $\leq 1$. Since $\pi \in \ak{m}$ and $W'$ is of height $\leq 1$, the $\pi$-adic completion $\widehat{W'}$ is still a valuation ring extension of $V$ of height $\leq 1$ (cf. \cite[\Luoma{6}.\textsection5.3, Prop.5]{bourbaki2006commalg5-7}), which proves (\ref{lem:arc-parc-arc}).
	
	(\ref{lem:arc-parc-pi})  Since a $\pi'$-adically complete valuation ring $V$ is also $\pi$-adically complete  (\cite[\href{https://stacks.math.columbia.edu/tag/090T}{090T}]{stacks-project}), there exists a lifting $\spec(W) \to X$ for any morphism $\spec(V) \to Y$. After replacing $W$ by its $\pi'$-adic completion, the conclusion follows.
	
	(\ref{lem:arc-parc-complete}) Let $V$ be a $\pi$-adically complete valuation ring of height $\leq 1$. Given a morphism $\widehat{A} \to V$, there exists a lifting $B \to W$ where $V\to W$ is an extension of $\pi$-adically complete valuation rings of height $\leq 1$. It is clear that $B \to W$ factors through $\widehat{B}$, which proves (\ref{lem:arc-parc-complete}).
\end{proof}

\begin{mypara}\label{para:fp-h-site}
	Let $X$ be a coherent scheme, $\sch^{\mrm{fp}}_{/X}$ the full subcategory of $\schqcqs_{/X}$ formed by finitely presented $X$-schemes. We endow it with the topology generated by the pretopology formed by families of morphisms $\{Y_i \to Y\}_{i \in I}$ with $I$ finite such that $\coprod_{i \in I} Y_i \to Y$ is an h-covering, and we denote the corresponding site by $(\sch^{\mrm{fp}}_{/X})_{\mrm{h}}$. It is clear that this topology coincides with the topologies induced from $(\schqcqs_{/X})_{\mrm{v}}$ and from $(\sch^{\mrm{coh}}_{/X})_{\mrm{h}}$. The inclusion functors $(\sch^{\mrm{fp}}_{/X})_{\mrm{h}}\stackrel{\xi^+}{\longrightarrow}(\schqcqs_{/X})_{\mrm{h}}\stackrel{\zeta^+}{\longrightarrow}(\schqcqs_{/X})_{\mrm{v}}$ define morphisms of sites (\ref{para:topos})
	\begin{align}
		(\schqcqs_{/X})_{\mrm{v}}\stackrel{\zeta}{\longrightarrow}(\schqcqs_{/X})_{\mrm{h}} \stackrel{\xi}{\longrightarrow} (\sch^{\mrm{fp}}_{/X})_{\mrm{h}}.
	\end{align}
\end{mypara}

\begin{mylem}\label{lem:descendv-cov}
	Let $X$ be a coherent scheme. Then, for any covering family $\ak{U}=\{Y_i \to Y\}_{i \in I}$ in $(\schqcqs_{/X})_{\mrm{v}}$ with $I$ finite, 
	\begin{enumerate}
		\renewcommand{\theenumi}{\roman{enumi}}
		\renewcommand{\labelenumi}{{\rm(\theenumi)}}
		\item there exists a directed inverse system $(Y_\lambda)_{\lambda\in \Lambda}$ of finitely presented $X$-schemes with affine transition morphisms such that $Y=\lim Y_\lambda$, and
		\item for each $i \in I$, there exists a directed inverse system $(Y_{i\lambda})_{\lambda\in \Lambda}$ of finitely presented $X$-schemes with affine transition morphisms over the inverse system $(Y_\lambda)_{\lambda\in \Lambda}$ such that $Y_i = \lim Y_{i \lambda}$, and
		\item for each $\lambda\in \Lambda$, the family $\ak{U}_\lambda=\{Y_{i\lambda} \to Y_\lambda\}_{i \in I}$ is a covering in $(\sch^{\mrm{fp}}_{/X})_{\mrm{h}}$.
	\end{enumerate}
\end{mylem}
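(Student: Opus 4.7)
The plan is to produce the required cofiltered system in two steps: standard limit arguments over the coherent base $X$ yield the inverse systems, and then a refinement step upgrades the covering property to hold at every finite level.

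First, since $Y$ is qcqs over the coherent scheme $X$, I would apply \cite[8.13.1]{ega4-3} (or Stacks \href{https://stacks.math.columbia.edu/tag/09MV}{09MV}) to write $Y=\lim_{\mu\in M}Y_\mu$ as a cofiltered limit of finitely presented $X$-schemes with affine transition morphisms. Similarly, for each $i\in I$, I would write $Y_i=\lim_\nu Y^{(0)}_{i\nu}$ as such a limit. By \cite[8.8.2, 8.10.5]{ega4-3}, each morphism $Y_i\to Y$ is induced, for $\nu$ sufficiently large, by a morphism $Y^{(0)}_{i\nu}\to Y_{\mu(\nu)}$ of finitely presented $X$-schemes. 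After passing to a common cofinal directed set $\Lambda$ (using that $I$ is finite) and replacing $Y^{(0)}_{i\nu}$ by a pullback if necessary, I obtain, for each $i\in I$ and each $\lambda\in\Lambda$, finitely presented $X$-schemes $Y_\lambda$ and $Y_{i\lambda}$ with affine transition morphisms, equipped with compatible $X$-morphisms $Y_{i\lambda}\to Y_\lambda$ whose limits recover the data $Y_i\to Y$.

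Next, I would show that $\ak{U}_{\lambda_1}=\{Y_{i\lambda_1}\to Y_{\lambda_1}\}_{i\in I}$ is an h-covering for some $\lambda_1\in\Lambda$. Since $\coprod_{i\in I}Y_i\to Y$ is a v-covering and $Y$ is qcqs, standard refinement for v-coverings of qcqs schemes (cf.\ the Stacks project's chapter on the v-topology) yields a finite h-covering $\{Z_k\to Y\}_{k\in K}$ refining $\ak{U}$, equipped with factorizations $Z_k\to Y_{i(k)}\to Y$ for some $i(k)\in I$. By \cite[8.8.2, 8.10.5]{ega4-3}, both the schemes $Z_k$ and these factorizations descend at some index $\lambda_1\in\Lambda$ to finitely presented $X$-schemes $Z_{k\lambda_1}$ together with $Y_{\lambda_1}$-morphisms $Z_{k\lambda_1}\to Y_{i(k),\lambda_1}$. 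Enlarging $\lambda_1$ if necessary, \ref{lem:arc-parc}(\ref{lem:arc-parc-limit-h}) ensures that $\{Z_{k\lambda_1}\to Y_{\lambda_1}\}_{k\in K}$ is still an h-covering. Since it factors through the locally finitely presented family $\ak{U}_{\lambda_1}$, the transitivity statement \ref{lem:arc-parc-basic}(iii) shows that $\ak{U}_{\lambda_1}$ is itself a v-covering, and being a finite family of finitely presented morphisms over $Y_{\lambda_1}$, it is an h-covering.

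Finally, to secure the covering property at every level, I would replace $\Lambda$ by the cofinal subset of indices $\lambda\geq\lambda_1$; this alters neither the limits $Y$ and $Y_i$ nor the stated conclusions. For each such $\lambda$, the family $\ak{U}_\lambda$ is the base change of $\ak{U}_{\lambda_1}$ along the transition morphism $Y_\lambda\to Y_{\lambda_1}$, hence remains an h-covering by \ref{lem:arc-parc-basic}(i). The principal obstacle will be the existence of the finite h-refinement of the v-covering on the qcqs scheme $Y$; I would invoke this as a standard fact from the theory of the v-topology (due to Rydh, Bhatt--Mathew and recorded in the Stacks project), or derive it from the valuative criterion combined with the quasi-compactness of the constructible topology of $Y$.
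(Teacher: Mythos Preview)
Your final step contains an inconsistency: you assert that for $\lambda\geq\lambda_1$ the family $\ak{U}_\lambda$ is the base change of $\ak{U}_{\lambda_1}$ along $Y_\lambda\to Y_{\lambda_1}$, but if $Y_{i\lambda}=Y_\lambda\times_{Y_{\lambda_1}}Y_{i\lambda_1}$ for all such $\lambda$, then $\lim_\lambda Y_{i\lambda}=Y\times_{Y_{\lambda_1}}Y_{i\lambda_1}$ is finitely presented over $Y$, contradicting condition (ii) unless each $Y_i\to Y$ was already of finite presentation. One cannot have both $Y_i=\lim Y_{i\lambda}$ and pure base-change transitions over a single directed set. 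Separately, the ``standard h-refinement of a v-covering'' you invoke in the middle step is essentially what this lemma is meant to establish, so citing it risks circularity.

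The paper's organization avoids both problems. It first writes each $Y_i=\lim_{\alpha\in A}Y_{i\alpha}$ as a limit of finitely presented \emph{$Y$-schemes} (not $X$-schemes). Then for every $\alpha$ the family $\{Y_{i\alpha}\to Y\}_{i\in I}$ is automatically an h-covering, because the original v-covering $\coprod Y_i\to Y$ factors through the finitely presented morphism $\coprod Y_{i\alpha}\to Y$ and one applies \ref{lem:arc-parc-basic}(3); no external refinement result is needed. Only afterwards does one write $Y=\lim_{\beta\in B}Y_\beta$ over $X$ and descend each $Y_{i\alpha}\to Y$ to an h-covering $\{Y_{i\alpha\beta_\alpha}\to Y_{\beta_\alpha}\}$ via \ref{lem:arc-parc}(\ref{lem:arc-parc-limit-h}). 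The index set $\Lambda$ is then built from pairs $(\alpha,\beta)$ with $\beta\geq\beta_\alpha$: for fixed $\alpha$, varying $\beta$ gives base changes (so the h-covering property persists), while varying $\alpha$ recovers $Y_i$ in the limit. This two-parameter structure is exactly what reconciles (ii) with (iii).
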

\begin{proof}
	We take a directed set $A$ such that for each $i\in I$, we can write $Y_i$ as a cofiltered limit of finitely presented $Y$-schemes $Y_i=\lim_{\alpha\in A} Y_{i\alpha}$ with affine transition morphisms (\cite[\href{https://stacks.math.columbia.edu/tag/09MV}{09MV}]{stacks-project}). We see that $\coprod_{i \in I} Y_{i\alpha} \to Y$ is an h-covering for each $\alpha \in A$ by \ref{lem:arc-parc-basic}.
	
	We write $Y$ as a cofiltered limit of finitely presented $X$-schemes $Y=\lim_{\beta \in B} Y_\beta$ with affine transition morphisms (\cite[\href{https://stacks.math.columbia.edu/tag/09MV}{09MV}]{stacks-project}). By \cite[8.8.2, 8.10.5]{ega4-3} and \ref{lem:arc-parc}.(\ref{lem:arc-parc-limit-h}), for each $\alpha \in A$, there exists an index $\beta_\alpha \in B$ such that the morphism $Y_{i\alpha}\to Y$ is the base change of a finitely presented morphism $Y_{i\alpha\beta_\alpha}\to Y_{\beta_\alpha}$ by the transition morphism $Y\to Y_{\beta_\alpha}$ for each $i\in I$, and that $\coprod_{i\in I} Y_{i\alpha\beta_\alpha}\to Y_{\beta_\alpha}$ is an h-covering. For each $\beta\geq \beta_\alpha$, let $Y_{i\alpha\beta}$ be the base change of $Y_{i\alpha\beta_\alpha}$ by $Y_\beta\to Y_{\beta_\alpha}$. 
	
	We define a category $\Lambda^\oppo$, whose set of objects is $\{(\alpha,\beta)\in A\times B\ |\ \beta \geq \beta_\alpha\}$, and for any two objects $\lambda'=(\alpha',\beta')$, $\lambda=(\alpha,\beta)$, the set $\ho_{\Lambda^\oppo}(\lambda',\lambda)$ is 
	\begin{enumerate}
		\renewcommand{\theenumi}{\roman{enumi}}
		\renewcommand{\labelenumi}{{\rm(\theenumi)}}
		\item the subset of $\prod_{i\in I}\ho_{Y_{\beta'}}(Y_{i\alpha'\beta'},Y_{i\alpha\beta'})$ formed by elements $f=(f_i)_{i\in I}$ such that for each $i\in I$, $f_i:Y_{i\alpha'\beta'}\to Y_{i\alpha\beta'}$ is affine and the base change of $f_i$ by $Y\to Y_{\beta'}$ is the transition morphism $Y_{i\alpha'}\to Y_{i\alpha}$, if $\alpha'\geq \alpha$ and $\beta'\geq\beta$;
		\item empty, if else.
	\end{enumerate}
	The composition of morphisms $(g_i:Y_{i\alpha''\beta''}\to Y_{i\alpha'\beta''})_{i\in I}$ with $(f_i:Y_{i\alpha'\beta'}\to Y_{i\alpha\beta'})_{i\in I}$ in $\Lambda^\oppo$ is $(g_i\circ f'_i:
	Y_{i\alpha''\beta''}\to Y_{i\alpha\beta''})$, where $f'_i$ is the base change of $f_i$ by the transition morphism $Y_{\beta''}\to Y_{\beta'}$. We see that $\Lambda^\oppo$ is cofiltered by \cite[8.8.2]{ega4-3}. Let $\Lambda$ be the opposite category of $\Lambda^\oppo$. For each $i\in I$ and $\lambda=(\alpha,\beta)\in \Lambda$, we set $Y_\lambda=Y_\beta$ and $Y_{i\lambda}=Y_{i\alpha\beta}$. It is clear that the natural functors $\Lambda\to A$ and $\Lambda\to B$ are cofinal (\cite[\Luoma{1}.8.1.3]{sga4-1}). After replacing $\Lambda$ by a directed set (\cite[\href{https://stacks.math.columbia.edu/tag/0032}{0032}]{stacks-project}), the families $\ak{U}_\lambda=\{Y_{i\lambda} \to Y_\lambda\}_{i \in I}$ satisfy the required conditions.
\end{proof}

\begin{mylem}\label{lem:mu-inv}
	With the notation in {\rm\ref{para:fp-h-site}}, let $\ca{F}$ be a presheaf on $(\sch^{\mrm{fp}}_{/X})_{\mrm{h}}$, $(Y_\lambda)$ a directed inverse system of finitely presented $X$-schemes with affine transition morphisms, $Y=\lim Y_\lambda$. Then, we have $\nu_\psh\ca{F}(Y) = \colim \ca{F}(Y_\lambda)$, where $\nu^+=\xi^+$ (resp. $\nu^+=\zeta^+\circ\xi^+$).
\end{mylem}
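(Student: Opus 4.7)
The plan is to compute $\nu_\psh\ca{F}(Y)$ directly via the general colimit formula for the left adjoint of a presheaf pullback, and then to show that the system $(Y_\lambda)$ indexes a cofinal subsystem in the resulting diagram. First, by \cite[\Luoma{1}.5]{sga4-1} (cf.\ also \cite[\href{https://stacks.math.columbia.edu/tag/00VI}{00VI}]{stacks-project}), one has
\[
\nu_\psh\ca{F}(Y) \;=\; \mathop{\mathrm{colim}}\nolimits_{(Z,\phi)\in(\mathcal{I}_Y)^{\oppo}} \ca{F}(Z),
\]
where $\mathcal{I}_Y$ has as objects the pairs $(Z,\phi)$ with $Z$ a finitely presented $X$-scheme and $\phi:Y\to Z$ an $X$-morphism (using that the inclusion functor $\nu^+$ is fully faithful on underlying categories), and as morphisms the $X$-morphisms $g:Z\to Z'$ satisfying $g\circ\phi=\phi'$. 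Since this description is purely at the presheaf level, it is insensitive to the topology chosen on the target, so both cases $\nu^+=\xi^+$ and $\nu^+=\zeta^+\circ\xi^+$ can be treated uniformly.

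Next, I construct a natural functor $F:\Lambda\to(\mathcal{I}_Y)^{\oppo}$ by $\lambda\mapsto(Y_\lambda,Y\to Y_\lambda)$. For $\lambda\le\lambda'$ in $\Lambda$, the transition morphism $Y_{\lambda'}\to Y_\lambda$ over $X$ defines a morphism from $(Y_{\lambda'},\cdot)$ to $(Y_\lambda,\cdot)$ in $\mathcal{I}_Y$, i.e.\ from $F(\lambda)$ to $F(\lambda')$ in $(\mathcal{I}_Y)^{\oppo}$, so $F$ is well defined. The key reduction is: it suffices to prove that $F$ is cofinal, namely that for every $(Z,\phi:Y\to Z)$ in $(\mathcal{I}_Y)^{\oppo}$ the category of pairs $(\lambda,h:Y_\lambda\to Z)$ with $h\circ(Y\to Y_\lambda)=\phi$ is nonempty and connected; then $\nu_\psh\ca{F}(Y) = \colim_\Lambda \ca{F}(Y_\lambda)$.

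Both cofinality conditions reduce to the standard finite-presentation descent of morphisms \cite[8.14.2]{ega4-3}. Nonemptiness is the assertion that every $X$-morphism from the cofiltered limit $Y=\lim Y_\lambda$ (with affine transitions) to the finitely presented $X$-scheme $Z$ factors through some $Y_{\lambda_0}$. For connectedness, given two factorizations $(\lambda_1,h_1)$ and $(\lambda_2,h_2)$, filteredness of $\Lambda$ furnishes $\lambda_0\ge\lambda_1,\lambda_2$, and the two compositions $Y_{\lambda_0}\to Y_{\lambda_i}\stackrel{h_i}{\to} Z$ have the same restriction to $Y$; the uniqueness half of \emph{loc.\ cit.} then provides $\lambda\ge\lambda_0$ making the two composites $Y_\lambda\to Z$ equal, which gives a common refinement to which both $(\lambda_i,h_i)$ map. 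No serious obstacle arises beyond correctly unpacking the colimit formula and the comma category; the whole argument is a direct application of standard limit theorems for finitely presented schemes.
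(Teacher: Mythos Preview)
Your proof is correct, but it takes a different route from the paper's. The paper writes $\ca{F}$ as a (not necessarily filtered) colimit of representable presheaves $\ca{F} = \colim_{Y' \in (\sch^{\mrm{fp}}_{/X})_{/\ca{F}}} h_{Y'}$ using \cite[\Luoma{1}.3.4]{sga4-1}, reduces to the representable case since both the section functor $\Gamma(Y,-)$ and $\nu_\psh$ commute with colimits of presheaves, and then invokes the identity $\nu_\psh h_{Y'} = h_{\nu^+(Y')}$ together with \cite[8.14.2]{ega4-3} to conclude $\ho_X(Y,Y') = \colim_\lambda \ho_X(Y_\lambda,Y')$. You instead unpack the left Kan extension via the comma category $\mathcal{I}_Y$ and prove directly that $\Lambda \to (\mathcal{I}_Y)^{\oppo}$ is cofinal. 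Both arguments bottom out in the same input, namely existence and uniqueness of factorizations through the $Y_\lambda$ from \cite[8.14.2]{ega4-3}; the paper's version avoids spelling out the cofinality check by trading it for the reduction to representables, while your version makes the indexing category explicit and is perhaps more transparent about why the directed system $(Y_\lambda)$ suffices.
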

\begin{proof}
	Notice that the presheaf $\ca{F}$ is a filtered colimit of representable presheaves by \cite[\Luoma{1}.3.4]{sga4-1}
	\begin{align}
		\ca{F} = \colim_{Y' \in (\sch^{\mrm{fp}}_{/X})_{/\ca{F}}} h_{Y'}.
	\end{align}
	Thus, we may assume that $\ca{F}$ is representable by a finitely presented $X$-scheme $Y'$ since the section functor $\Gamma(Y,-)$ commutes with colimits of presheaves (\cite[\href{https://stacks.math.columbia.edu/tag/00VB}{00VB}]{stacks-project}). Then, we have
	\begin{align}
		\nu_\psh h_{Y'}(Y) = h_{\nu^+(Y')}(Y) =\ho_{X}(Y , Y') =\colim \ho_{X}(Y_\lambda , Y') =\colim h_{Y'}(Y_\lambda)
	\end{align}
	where the first equality follows from \cite[\href{https://stacks.math.columbia.edu/tag/04D2}{04D2}]{stacks-project}, and the third equality follows from \cite[8.14.2]{ega4-3}.
\end{proof}

\begin{myprop}\label{prop:v-h-coh}
	With the notation in {\rm\ref{para:fp-h-site}}, let $\ca{F}$ be an abelian sheaf on $(\sch^{\mrm{fp}}_{/X})_{\mrm{h}}$, $(Y_\lambda)$ a directed inverse system of finitely presented $X$-schemes with affine transition morphisms, $Y=\lim Y_\lambda$. Let $\tau = \mrm{h}$ and $\nu^+=\xi^+$ (resp.  $\tau = \mrm{v}$ and $\nu^+=\zeta^+\circ\xi^+$). Then, for any integer $q$, we have
	\begin{align}\label{eq:3.8.1}
		H^q_\tau(Y, \nu^{-1}\ca{F})=\colim H^q((\sch^{\mrm{fp}}_{/Y_\lambda})_{\mrm{h}},\ca{F}).
	\end{align}
	In particular, the canonical morphism $\ca{F} \longrightarrow \rr\nu_*\nu^{-1}\ca{F}$ is an isomorphism.
\end{myprop}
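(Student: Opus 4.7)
The plan is to establish the formula first for $q=0$, by showing that the presheaf $\nu_{\psh}\ca{F}$ is already a $\tau$-sheaf, and then to bootstrap to all $q$ via Verdier's theorem on hypercoverings.

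For $q=0$: given a $\tau$-covering $\{Z_i\to Z\}_{i\in I}$ with $I$ finite in $(\schqcqs_{/X})_\tau$, Lemma \ref{lem:descendv-cov} writes it as a cofiltered limit of h-coverings $\{Z_{i\lambda}\to Z_\lambda\}_{i\in I}$ by finitely presented $X$-schemes with affine transitions. The sheaf property of $\ca{F}$ yields equalizers
\begin{align*}
\ca{F}(Z_\lambda)\to \prod_{i\in I}\ca{F}(Z_{i\lambda})\rightrightarrows \prod_{i,j\in I}\ca{F}(Z_{i\lambda}\times_{Z_\lambda}Z_{j\lambda})
\end{align*}
for each $\lambda$. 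Since cofiltered limits of schemes with affine transitions commute with finite fibre products, passing to the filtered colimit in $\lambda$ and applying Lemma \ref{lem:mu-inv} termwise produces the equalizer for $\nu_{\psh}\ca{F}$ on the original covering. Hence $\nu_{\psh}\ca{F}$ is already a $\tau$-sheaf, so $\nu^{-1}\ca{F}\cong \nu_{\psh}\ca{F}$, giving the $q=0$ case by evaluating at $Y$.

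For general $q$, I would apply Verdier's hypercovering theorem to express both sides as filtered colimits over hypercoverings, noting that for fixed $q$ only $n$-truncations with $n>q$ are needed. Iterating Lemma \ref{lem:descendv-cov} level by level---compatibly with simplicial face/degeneracy maps and matching objects using the limit techniques of \cite[\S8]{ega4-3}---shows that every such $n$-truncated $\tau$-hypercovering $L_\bullet$ of $Y$ is, up to refinement, the base change to $Y$ of an h-hypercovering $K_\bullet^{\lambda_0}$ of some $Y_{\lambda_0}$, with compatible systems $K_\bullet^{\lambda}$ for $\lambda\geq\lambda_0$ such that $L_\bullet=\lim_{\lambda\geq\lambda_0} K_\bullet^\lambda$. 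Applying the $q=0$ step levelwise gives $\nu^{-1}\ca{F}(L_m)=\colim_{\lambda\geq\lambda_0}\ca{F}(K_m^\lambda)$; exactness of filtered colimits then transports the equality through \v{C}ech complexes and cohomology, and interchanging the two colimits yields \eqref{eq:3.8.1}. The concluding statement $\ca{F}\iso \rr\nu_*\nu^{-1}\ca{F}$ follows by evaluating \eqref{eq:3.8.1} on a finitely presented $T$ with the constant system $Y_\lambda=T$, identifying $\rr^q\nu_*\nu^{-1}\ca{F}$ with the sheafification of $T\mapsto H^q((\sch^{\mrm{fp}}_{/T})_{\mrm{h}},\ca{F})$, which equals $\ca{F}$ for $q=0$ and vanishes for $q>0$ by the standard vanishing of the sheafification of higher cohomology presheaves.

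The main technical obstacle will be the inductive descent of the simplicial data of a hypercovering through Lemma \ref{lem:descendv-cov}: at each simplicial level one must choose an index $\lambda$ large enough to accommodate all previously descended face/degeneracy maps together with all relevant matching objects simultaneously. Because Verdier's theorem permits restriction to $n$-truncated hypercoverings for any fixed cohomological degree $q$, only finitely many coverings and finite limits need to be descended, which keeps the argument within the scope of the standard limit formalism of \cite[\S8]{ega4-3}.
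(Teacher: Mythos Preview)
Your $q=0$ step is correct and essentially coincides with what the paper does. For higher $q$, however, the paper takes a different and substantially simpler route than your hypercovering descent. Rather than descending truncated simplicial objects through Lemma~\ref{lem:descendv-cov}, the paper reduces to the case where $\ca{F}=\ca{I}$ is an \emph{injective} abelian sheaf on $(\sch^{\mrm{fp}}_{/X})_{\mrm{h}}$, via a standard d\'evissage: embed $\ca{F}\hookrightarrow\ca{I}$, set $\ca{G}$ to be the cokernel, and induct on $q$ using the long exact sequences and the $5$-lemma. The point is that for injective $\ca{I}$ one gets strictly more than the equalizer property: the entire augmented \v Cech complex of $\ca{I}$ with respect to any h-covering in $(\sch^{\mrm{fp}}_{/X})_{\mrm{h}}$ is exact, so after passing to the filtered colimit over the system produced by Lemma~\ref{lem:descendv-cov}, the augmented \v Cech complex of $\nu_{\psh}\ca{I}$ with respect to any $\tau$-covering is exact. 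This yields simultaneously that $\nu_{\psh}\ca{I}$ is a $\tau$-sheaf (hence equals $\nu^{-1}\ca{I}$) and that all higher \v Cech cohomology of $\nu^{-1}\ca{I}$ vanishes, whence $H^q_\tau(Y,\nu^{-1}\ca{I})=0$ for $q>0$ by \cite[\href{https://stacks.math.columbia.edu/tag/03F9}{03F9}]{stacks-project}.

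Your hypercovering strategy should in principle succeed, but the simplicial descent you flag as the ``main technical obstacle'' is genuinely delicate: beyond descending each level of the truncated hypercovering, you must also verify that the resulting system of h-hypercoverings of the $Y_\lambda$ is cofinal in the index category appearing in Verdier's formula on the right-hand side, which requires controlling refinement morphisms and not merely objects. The paper's reduction to injectives sidesteps all of this by trading the single ordinary covering descent of Lemma~\ref{lem:descendv-cov} for the full \v Cech acyclicity of injectives, after which the criterion \cite[\href{https://stacks.math.columbia.edu/tag/03F9}{03F9}]{stacks-project} finishes immediately. What your approach buys is that it never invokes injectives or d\'evissage; what it costs is a nontrivial simplicial-limit argument that the paper avoids entirely.
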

\begin{proof}
	For the second assertion, the sheaf $\rr^q\nu_*\nu^{-1}\ca{F}$ is the sheaf associated to the presheaf $Y \mapsto H^q_\tau(Y, \nu^{-1}\ca{F}) = H^q((\sch^{\mrm{fp}}_{/Y})_{\mrm{h}}, \ca{F})$ by the first assertion, which is $\ca{F}$ if $q=0$ and vanishes otherwise.
		
	We claim that it suffices to show that \eqref{eq:3.8.1} holds for any injective abelian sheaf $\ca{F}=\ca{I}$ on $(\sch^{\mrm{fp}}_{/X})_{\mrm{h}}$. Indeed, if so, then we prove by induction on $q$ that \eqref{eq:3.8.1} holds in general. The case where $q\leq -1$ is trivial. We set $H^q_1(\ca{F})=	H^q_\tau(Y, \nu^{-1}\ca{F})$ and $H^q_2(\ca{F})= \colim H^q((\sch^{\mrm{fp}}_{/Y_\lambda})_{\mrm{h}},\ca{F})$. We embed an abelian sheaf $\ca{F}$ to an injective abelian sheaf $\ca{I}$. Consider the exact sequence $0\to \ca{F}\to \ca{I}\to \ca{G}\to 0$ and the morphism of long exact sequences
	\begin{align}
		\xymatrix{
			H^{q-1}_1(\ca{I})\ar[r]\ar[d]^-{\gamma_1}& H^{q-1}_1(\ca{G})\ar[r]\ar[d]^-{\gamma_2}& H^{q}_1(\ca{F})\ar[r]\ar[d]^-{\gamma_3}& H^{q}_1(\ca{I})\ar[r]\ar[d]^-{\gamma_4}& H^{q}_1(\ca{G})\ar[d]^-{\gamma_5}\\
			H^{q-1}_2(\ca{I})\ar[r]&H^{q-1}_2(\ca{G})\ar[r]&H^q_2(\ca{F})\ar[r]&H^q_2(\ca{I})\ar[r]&H^q_2(\ca{G})
		}
	\end{align}
	If \eqref{eq:3.8.1} holds for any abelian sheaf $\ca{F}$ for degree $q-1$, then $\gamma_1$, $\gamma_2$, $\gamma_4$ are isomorphisms and thus $\gamma_3$ is injective by the 5-lemma (\cite[\href{https://stacks.math.columbia.edu/tag/05QA}{05QA}]{stacks-project}). Thus, $\gamma_5$ is also injective since $\ca{F}$ is an arbitrary abelian sheaf. Then, we see that $\gamma_3$ is an isomorphism, which completes the induction procedure. 
	
	For an injective abelian sheaf $\ca{I}$ on $(\sch^{\mrm{fp}}_{/X})_{\mrm{h}}$, we claim that for any covering family $\ak{U}=\{(Y_i \to Y)\}_{i \in I}$ in $(\schqcqs_{/X})_\tau$ with $I$ finite, the augmented \v Cech complex associated to the presheaf $\nu_{\psh}\ca{I}$
	\begin{align}\label{eq:3.5.3}
		\nu_{\psh}\ca{I}(Y) \to \prod_{i \in I}\nu_{\psh}\ca{I}(Y_i) \to \prod_{i,j \in I} \nu_\psh\ca{I}(Y_i \times_{Y} Y_j)\to \cdots
	\end{align}
	is exact. Admitting this claim, we see that $\nu_\psh\ca{I}$ is indeed a sheaf, i.e. $\nu^{-1}\ca{I}=\nu_\psh\ca{I}$, and the vanishing of higher \v Cech cohomologies implies that $H^q_\tau(Y,\nu^{-1}\ca{I})=0$ for $q>0$ by \ref{lem:descendv-cov} (\cite[\href{https://stacks.math.columbia.edu/tag/03F9}{03F9}]{stacks-project}), which completes the proof together with \ref{lem:mu-inv}. For the claim, we take the covering families $\ak{U}_\lambda=\{Y_{i\lambda} \to Y_\lambda\}_{i \in I}$ in $(\sch^{\mrm{fp}}_{/X})_{\mrm{h}}$ constructed by \ref{lem:descendv-cov}. By \ref{lem:mu-inv}, the sequence \eqref{eq:3.5.3} is the filtered colimit of the augmented \v Cech complexes
	\begin{align}
		\ca{I}(Y_\lambda) \to \prod_{i \in I}\ca{I}(Y_{i\lambda}) \to \prod_{i,j \in I} \ca{I}(Y_{i\lambda} \times_{Y_\lambda} Y_{j\lambda})\to \cdots,
	\end{align}
	which are exact since $\ca{I}$ is an injective abelian sheaf on $(\sch^{\mrm{fp}}_{/X})_{\mrm{h}}$.
\end{proof}

\begin{mycor}\label{cor:v-etale-coh}
	Let $X$ be a coherent scheme, $\ca{F}$ a torsion abelian sheaf on the site $X_\et$ formed by coherent \'etale $X$-schemes endowed with the \'etale topology, $a: (\schqcqs_{/X})_{\mrm{v}} \to X_\et$ the morphism of sites defined by the inclusion functor. Then, the canonical morphism $\ca{F} \to \rr a_*a^{-1}\ca{F}$ is an isomorphism.
\end{mycor}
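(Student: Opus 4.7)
The plan is to factor $a$ through the intermediate site $(\sch^{\mrm{fp}}_{/X})_{\mrm{h}}$ and reduce the claim to the classical cohomological descent of torsion \'etale cohomology along proper hypercoverings, which is recalled in the introduction just before \ref{defn:intro-falh-v-top}.

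More precisely, I would write $a$ as the composition
\begin{align*}
(\schqcqs_{/X})_{\mrm{v}}\stackrel{\mu}{\longrightarrow}(\sch^{\mrm{fp}}_{/X})_{\mrm{h}}\stackrel{b}{\longrightarrow}X_\et,
\end{align*}
where $\mu$ is the morphism of sites $\nu$ of the second case of \ref{prop:v-h-coh} (so that $\mu^+=\zeta^+\circ\xi^+$) and $b$ is induced by the inclusion functor $X_\et\hookrightarrow(\sch^{\mrm{fp}}_{/X})_{\mrm{h}}$; the latter is left exact (fibre products are computed in the same way) and sends \'etale coverings to h-coverings, hence defines a morphism of sites. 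Then $a^{-1}=\mu^{-1}\circ b^{-1}$ and $\rr a_*=\rr b_*\circ \rr\mu_*$. Applying \ref{prop:v-h-coh} to the abelian sheaf $b^{-1}\ca{F}$ on $(\sch^{\mrm{fp}}_{/X})_{\mrm{h}}$ shows that the adjunction $b^{-1}\ca{F}\to \rr\mu_*\mu^{-1}b^{-1}\ca{F}$ is an isomorphism, and hence $\rr a_*a^{-1}\ca{F}\cong \rr b_*b^{-1}\ca{F}$. It therefore suffices to prove that the adjunction $\ca{F}\to \rr b_*b^{-1}\ca{F}$ is an isomorphism.

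For any coherent \'etale $X$-scheme $U$, the sheaf $\rr^q b_*b^{-1}\ca{F}$ is the sheafification on $X_\et$ of the presheaf $U\mapsto H^q((\sch^{\mrm{fp}}_{/U})_{\mrm{h}},b_U^{-1}\ca{F})$. By \ref{prop:v-h-coh} in the first case ($\tau=\mrm{h}$, $\nu^+=\xi^+$), applied with $U$ in place of $X$, this h-cohomology is canonically identified with $H^q((\schqcqs_{/U})_{\mrm{h}},a'^{-1}\ca{F})$, where $a':(\schqcqs_{/U})_{\mrm{h}}\to U_\et$ denotes the natural morphism of sites. The classical h-descent for torsion \'etale cohomology, recalled in the introduction, asserts that $\ca{F}|_{U_\et}\to \rr a'_*a'^{-1}(\ca{F}|_{U_\et})$ is an isomorphism for every such $U$, so the presheaf above is identified with $U\mapsto H^q_\et(U,\ca{F}|_{U_\et})$, whose sheafification is $\ca{F}$ when $q=0$ and vanishes otherwise.

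The main non-formal input is thus the classical h-descent for torsion coefficients on coherent schemes, which is taken as known; the rest of the argument amounts to organising the morphisms of sites correctly and invoking \ref{prop:v-h-coh} twice. A minor point to verify carefully is that the inclusion $X_\et\hookrightarrow(\sch^{\mrm{fp}}_{/X})_{\mrm{h}}$ indeed defines a morphism of sites in the required sense (the only subtle part being that coherent \'etale $X$-schemes are of finite presentation over $X$ and fibre products of \'etale morphisms remain \'etale).
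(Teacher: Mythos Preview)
Your proof is correct and follows essentially the same approach as the paper's: factor $a$ through $(\sch^{\mrm{fp}}_{/X})_{\mrm{h}}$, invoke \ref{prop:v-h-coh} to pass between the v-site, the h-site, and the finitely-presented h-site, and use classical h-descent for torsion \'etale cohomology (the paper cites \cite[\href{https://stacks.math.columbia.edu/tag/0EWG}{0EWG}]{stacks-project}) as the external input. The only cosmetic difference is the order of operations: the paper first applies h-descent for $(\schqcqs_{/X})_{\mrm{h}}\to X_\et$, then uses \ref{prop:v-h-coh} twice to deduce the result for $(\sch^{\mrm{fp}}_{/X})_{\mrm{h}}\to X_\et$ and finally for $(\schqcqs_{/X})_{\mrm{v}}\to X_\et$, whereas you reduce first to $(\sch^{\mrm{fp}}_{/X})_{\mrm{h}}\to X_\et$ and then climb back up to $(\schqcqs_{/U})_{\mrm{h}}$ to invoke h-descent; both routes are equivalent.
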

\begin{proof}
	Consider the morphisms of sites defined by inclusion functors
	\begin{align}\label{eq:3.9.1}
		(\schqcqs_{/X})_{\mrm{v}}\stackrel{\zeta}{\longrightarrow}(\schqcqs_{/X})_{\mrm{h}} \stackrel{\xi}{\longrightarrow} (\sch^{\mrm{fp}}_{/X})_{\mrm{h}} \stackrel{\mu}{\longrightarrow} X_\et.
	\end{align}
	Notice that the morphism $\ca{F} \to \rr (\mu\circ\xi)_*(\mu\circ\xi)^{-1}\ca{F}$ is an isomorphism by \cite[\href{https://stacks.math.columbia.edu/tag/0EWG}{0EWG}]{stacks-project}.  Hence, $\ca{F} \to \rr \mu_*\mu^{-1}\ca{F}$ is an isomorphism by \ref{prop:v-h-coh}, and thus so is $\ca{F} \to \rr a_*a^{-1}\ca{F}$ by \ref{prop:v-h-coh}.
\end{proof}

\begin{mycor}\label{cor:v-etale-bc}
	Let $f:X \to Y$ be a proper morphism of coherent schemes, $\ca{F}$ a torsion abelian sheaf on $X_\et$. Consider the commutative diagram
	\begin{align}
		\xymatrix{
			(\schqcqs_{/X})_{\mrm{v}} \ar[r]^-{a_X} \ar[d]_-{f_{\mrm{v}}}& X_\et\ar[d]^-{f_{\et}}\\
			(\schqcqs_{/Y})_{\mrm{v}} \ar[r]^-{a_Y}& Y_\et
		}
	\end{align}
	where $f_{\mrm{v}}$ and $f_{\et}$ are defined by the base change by $f$. Then, the canonical morphism
	\begin{align}
		a_Y^{-1}\rr f_{\et *} \ca{F} \longrightarrow \rr f_{\mrm{v}*} a_X^{-1}\ca{F}
	\end{align}
	is an isomorphism.
\end{mycor}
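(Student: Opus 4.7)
The plan is to show that the base change morphism $\beta \colon a_Y^{-1}\rr f_{\et *}\ca{F} \to \rr f_{\mrm{v}*}a_X^{-1}\ca{F}$ (arising from the commutative square of topoi in the statement, as the canonical Beck--Chevalley morphism) induces a quasi-isomorphism on derived sections $\rr\Gamma(V, \beta)$ for every coherent $V \in (\schqcqs_{/Y})_{\mrm{v}}$. Since each cohomology sheaf of a complex of v-sheaves is the sheafification of the presheaf $V \mapsto H^i(V, -)$ (computed via a K-injective resolution), this will imply that $\rr^i(\beta)$ is an isomorphism of v-sheaves for every $i$, hence that $\beta$ is a quasi-isomorphism.

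For the right-hand side at $V$, localization commutes with the pushforward $f_{\mrm{v}}$, and Corollary~\ref{cor:v-etale-coh} applied to the coherent scheme $X_V := X \times_Y V$ and the torsion sheaf $\ca{F}|_{X_V}$ gives
\[
\rr\Gamma(V, \rr f_{\mrm{v}*}a_X^{-1}\ca{F}) = \rr\Gamma((\schqcqs_{/X_V})_{\mrm{v}}, a_{X_V}^{-1}(\ca{F}|_{X_V})) = \rr\Gamma((X_V)_\et, \ca{F}|_{X_V}).
\]

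For the left-hand side at $V$, write $g \colon V \to Y$ for the structure morphism and $a_V \colon (\schqcqs_{/V})_{\mrm{v}} \to V_\et$ for the associated morphism of sites. Base change along the localization square gives $(a_Y^{-1}\rr f_{\et*}\ca{F})|_V = a_V^{-1}(g^{-1}\rr f_{\et*}\ca{F})$, and Corollary~\ref{cor:v-etale-coh} applied to $V$ and the torsion étale complex $g^{-1}\rr f_{\et*}\ca{F}$ on $V_\et$ yields
\[
\rr\Gamma(V, a_Y^{-1}\rr f_{\et*}\ca{F}) = \rr\Gamma(V_\et, g^{-1}\rr f_{\et*}\ca{F}).
\]
At this point I invoke Artin's proper base change theorem for torsion étale sheaves along the proper morphism $f \colon X \to Y$: pulling back along $g$ yields $g^{-1}\rr f_{\et*}\ca{F} \cong \rr(f_V)_{\et*}(\ca{F}|_{X_V})$ in $D^+(V_\et)$, where $f_V \colon X_V \to V$ is the base change. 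Combining with the Leray identity $\rr\Gamma(V_\et, \rr(f_V)_{\et*}(-)) = \rr\Gamma((X_V)_\et, -)$ identifies the left-hand side also with $\rr\Gamma((X_V)_\et, \ca{F}|_{X_V})$.

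Finally, an unwinding of the construction of $\beta$ from the adjunctions and from the natural transformations $a_X \circ f_{\mrm{v}} \cong f_\et \circ a_Y$ shows that the two identifications above are compatible with $\beta$; hence $\rr\Gamma(V, \beta)$ is a quasi-isomorphism for every coherent $V$, and the conclusion follows. The principal ingredient beyond Corollary~\ref{cor:v-etale-coh} is classical proper base change; it is the only bridge available between the ``v-side'' computation on the right and the ``étale-side'' computation on the left. The remaining step of descending from compatibility on derived sections to a quasi-isomorphism of complexes is formal, via the description of the cohomology sheaves as sheafifications of presheaves of hypercohomology.
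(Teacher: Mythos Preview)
Your proof is correct and follows essentially the same strategy as the paper's: both identify each $\rr^q f_{\mrm{v}*}a_X^{-1}\ca{F}$ as the v-sheafification of the presheaf $V \mapsto H^q_\et(X_V,\ca{F}|_{X_V})$ via Corollary~\ref{cor:v-etale-coh}, and both ultimately rest on proper base change. The only organizational difference is that the paper factors through the h-topology, quoting \cite[\href{https://stacks.math.columbia.edu/tag/0EWF}{0EWF}]{stacks-project} for the isomorphism $b_Y^{-1}\rr f_{\et*}\ca{F}\iso \rr f_{\mrm{h}*}b_X^{-1}\ca{F}$ (which already encodes proper base change) and then comparing h and v directly, whereas you go from \'etale to v in one step and invoke proper base change explicitly; one minor point to make precise is that you apply Corollary~\ref{cor:v-etale-coh} to the bounded-below complex $g^{-1}\rr f_{\et*}\ca{F}$ with torsion cohomology rather than to a single sheaf, which is an immediate extension by truncation.
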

\begin{proof}
	Consider the commutative diagram
	\begin{align}
		\xymatrix{
			(\schqcqs_{/X})_{\mrm{v}} \ar[r]^-{\zeta_X} \ar[d]_-{f_{\mrm{v}}}& (\schqcqs_{/X})_{\mrm{h}} \ar[r]^-{b_X} \ar[d]_-{f_{\mrm{h}}} & X_\et\ar[d]^-{f_{\et}}\\
			(\schqcqs_{/Y})_{\mrm{v}} \ar[r]^-{\zeta_Y}& (\schqcqs_{/Y})_{\mrm{h}} \ar[r]^-{b_Y} & Y_\et
		}
	\end{align}
	The canonical morphism $b_Y^{-1}\rr f_{\et *} \ca{F} \longrightarrow \rr f_{\mrm{h}*} b_X^{-1}\ca{F}$ is an isomorphism by \cite[\href{https://stacks.math.columbia.edu/tag/0EWF}{0EWF}]{stacks-project}. It remains to show that the canonical morphism $\zeta_Y^{-1}\rr f_{\mrm{h} *} b_X^{-1}\ca{F} \longrightarrow \rr f_{\mrm{v}*} a_X^{-1}\ca{F}$ is an isomorphism. Let $Y'$ be a coherent $Y$-scheme and we set $g:X'=Y'\times_Y X \to X$. For each integer $q$, $\zeta_Y^{-1}\rr^q f_{\mrm{h} *} b_X^{-1}\ca{F}$ is the sheaf associated to the presheaf $Y' \mapsto H^q_{\mrm{h}}(X', b_{X'}^{-1}g^{-1}_\et\ca{F})=H^q_{\et}(X', g^{-1}_\et\ca{F})$ by \cite[\href{https://stacks.math.columbia.edu/tag/0EWH}{0EWH}]{stacks-project}, and $\rr^q f_{\mrm{v}*} a_X^{-1}\ca{F}$ is the sheaf associated to the presheaf $Y'\mapsto H^q_{\mrm{v}}(X', a_{X'}^{-1}g^{-1}_\et\ca{F})=H^q_{\et}(X', g^{-1}_\et\ca{F})$ by \ref{cor:v-etale-coh}. 
\end{proof}

\begin{mylem}\label{lem:prod-val}
Let $A$ be a product of (resp. absolutely integrally closed) valuation rings {\rm(\ref{para:notation-val})}. 
\begin{enumerate}
	\renewcommand{\labelenumi}{{\rm(\theenumi)}}
	\item Any finitely generated ideal of $A$ is principal.\label{lem:prod-val-prin}
	\item Any connected component of $\spec(A)$ with the reduced closed subscheme structure is isomorphic to the spectrum of a (resp. absolutely integrally closed) valuation ring.\label{lem:prod-val-conn}
\end{enumerate}
\end{mylem}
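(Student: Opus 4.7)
Write $A=\prod_{i\in I}V_i$ with each $V_i$ a valuation ring (absolutely integrally closed in the parenthetical assertion). For~(i), it suffices by induction on the number of generators to show that a two-generator ideal $(a,b)$, with $a=(a_i)$ and $b=(b_i)$, is principal. Let $I_1=\{i\in I : a_i \mid b_i \text{ in } V_i\}$ and $I_2=I\setminus I_1$; since each $V_i$ is a valuation ring we have $b_i\mid a_i$ for every $i\in I_2$. Define $c=(c_i)\in A$ by $c_i=a_i$ on $I_1$ and $c_i=b_i$ on $I_2$. Then $c=e_{I_1}a+e_{I_2}b\in(a,b)$, and conversely $a$ and $b$ are visibly multiples of $c$ coordinatewise, so $(a,b)=(c)$.

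For~(ii), my plan is to realize each connected component of $\spec(A)$ (with its reduced induced subscheme structure) as an ultraproduct of the $V_i$. The Boolean algebra of idempotents of $A$ is identified with $\ca{P}(I)$ via $S\mapsto e_S$ (the characteristic function of $S$), and for each prime $\ak{p}$ of $A$ the set $\ca{U}_{\ak{p}}=\{S\subseteq I : e_S\notin\ak{p}\}$ is an ultrafilter on $I$. For an arbitrary ultrafilter $\ca{U}$ on $I$, I introduce the ideal $\ak{m}_{\ca{U}}=\{a\in A : \{i:a_i=0\}\in\ca{U}\}$ and claim that $B_{\ca{U}}:=A/\ak{m}_{\ca{U}}$ (the ultraproduct $\prod V_i/\ca{U}$) is a valuation ring, absolutely integrally closed whenever each $V_i$ is.

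To verify the valuation property, given $a,b\in A$ the sets $S_1=\{i:a_i\mid b_i\}$ and $S_2=\{i:b_i\mid a_i\}$ cover $I$, so one of them lies in $\ca{U}$; taking coordinatewise quotients on that set and zero elsewhere realizes $\bar a\mid\bar b$ or $\bar b\mid\bar a$ in $B_{\ca{U}}$. The domain property follows analogously from the ultrafilter property and the fact that each $V_i$ is a domain. For the absolutely integrally closed refinement, given a monic polynomial $f\in B_{\ca{U}}[T]$, I lift its coefficients to $A=\prod V_i$, find a root in each $V_i$, and form the product tuple; its image in $B_{\ca{U}}$ is then a root of $f$.

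It remains to match $\spec(B_{\ca{U}_{\ak{p}}})$ with the connected component of $\ak{p}$. In the spectral space $\spec(A)$ the connected component of $\ak{p}$ is the intersection of all clopen neighborhoods, i.e.\ the closed subset $V(J)$ where $J=\sum_{S\in\ca{U}_{\ak{p}}}e_{I\setminus S}A$. Because $\ca{U}_{\ak{p}}$ is a filter and the inclusion $S\cap S'\in\ca{U}_{\ak{p}}$ gives $e_{I\setminus S}A, e_{I\setminus S'}A\subseteq e_{I\setminus(S\cap S')}A$, the family of summands is directed under inclusion, so $J=\bigcup_{S\in\ca{U}_{\ak{p}}}e_{I\setminus S}A=\ak{m}_{\ca{U}_{\ak{p}}}$. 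Since $B_{\ca{U}_{\ak{p}}}$ is a domain, $V(\ak{m}_{\ca{U}_{\ak{p}}})$ is irreducible, hence connected; so this set is the full connected component, and the reduced induced structure on it coincides with $\spec(B_{\ca{U}_{\ak{p}}})$. The main obstacle I anticipate is precisely this last identification $J=\ak{m}_{\ca{U}_{\ak{p}}}$: it hinges on turning a sum of ideals into a directed union via the filter property of $\ca{U}_{\ak{p}}$, the delicate step where the Boolean algebra of idempotents interacts with the underlying ring structure of $A$.
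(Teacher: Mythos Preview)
Your argument is correct. The paper itself does not give a self-contained proof: it cites \cite[\href{https://stacks.math.columbia.edu/tag/092T}{092T}]{stacks-project} for~(\ref{lem:prod-val-prin}) and the proof of \cite[6.2]{bhattscholze2017witt} for~(\ref{lem:prod-val-conn}), and your write-up is essentially what one finds when unwinding those references---the coordinatewise gcd for~(\ref{lem:prod-val-prin}) and the ultraproduct description of connected components for~(\ref{lem:prod-val-conn}).

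Two minor remarks on the exposition. First, the sentence ``the connected component of $\ak{p}$ is the intersection of all clopen neighborhoods'' is stated before it is justified; strictly speaking, connected components are only \emph{contained in} quasi-components in general, and it is your later observation that $V(\ak{m}_{\ca{U}_{\ak{p}}})=\spec(B_{\ca{U}_{\ak{p}}})$ is irreducible (hence connected) that closes the loop. You have all the pieces, just reorder them: show $V(J)$ is the intersection of clopen neighborhoods, identify $J=\ak{m}_{\ca{U}_{\ak{p}}}$, observe this is connected, and conclude it equals the connected component. Second, the ``main obstacle'' you flag---the identity $J=\ak{m}_{\ca{U}_{\ak{p}}}$---is not really delicate once you note (as you do) that the filter property makes $\{e_{I\setminus S}A\}_{S\in\ca{U}_{\ak{p}}}$ directed, and that any $a\in\ak{m}_{\ca{U}_{\ak{p}}}$ already lies in the single summand $e_{I\setminus S}A$ with $S=\{i:a_i=0\}$; so the sum is a union and the union is $\ak{m}_{\ca{U}_{\ak{p}}}$.
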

\begin{proof}
(\ref{lem:prod-val-prin}) is proved in \cite[\href{https://stacks.math.columbia.edu/tag/092T}{092T}]{stacks-project}, and (\ref{lem:prod-val-conn}) follows from the proof of \cite[6.2]{bhattscholze2017witt}.
\end{proof}

\begin{mylem}\label{lem:small-val}
	 Let $X$ be a $\bb{U}$-small scheme, $y\rightsquigarrow x$ a specialization of points of $X$. Then, there exists a $\bb{U}$-small family $\{f_\lambda:\spec(V_\lambda) \to X\}_{\lambda \in \Lambda_{y\rightsquigarrow x}}$ of morphisms of schemes such that
	 \begin{enumerate}
	 	\renewcommand{\theenumi}{\roman{enumi}}
	 	\renewcommand{\labelenumi}{{\rm(\theenumi)}}
	 	\item the ring $V_\lambda$ is a $\bb{U}$-small (resp. absolutely integrally closed) valuation ring, and that
	 	\item the morphism $f_\lambda$ maps the generic point and closed point of $\spec(V_\lambda)$ to $y$ and $x$ respectively, and that
	 	\item for any morphism of schemes $f:\spec(V) \to X$ where $V$ is a (resp. absolutely integrally closed) valuation ring such that $f$ maps the generic point and closed point of $V$ to $y$ and $x$ respectively, there exists an element $\lambda \in \Lambda_{y\rightsquigarrow x}$ such that $f$ factors through $f_\lambda$ and that $V_\lambda\to V$ is an extension of valuation rings.
	 \end{enumerate}
\end{mylem}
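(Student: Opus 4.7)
The plan is to reduce the problem to classifying valuation rings inside (or algebraic over) a single residue field, via a localization at $x$ and then at $y$. Let $\ak{p}_y \subset \ca{O}_{X,x}$ denote the prime ideal corresponding to $y$ under the canonical morphism $\spec(\ca{O}_{X,x})\to X$, and set $A := \ca{O}_{X,x}/\ak{p}_y$. Then $A$ is a local domain whose fraction field is $\kappa(y)$. The key observation is that, by the universal property of $\spec(\ca{O}_{X,x})\to X$, giving a morphism $f:\spec(V)\to X$ whose closed point maps to $x$ and whose generic point maps to $y$ is the same as giving a local injection $A\hookrightarrow V$ of local rings (the kernel of $\ca{O}_{X,x}\to V$ being forced to equal $\ak{p}_y$).

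Having made this observation, I would fix once and for all a $\bb{U}$-small algebraic closure $\overline{\kappa(y)}$ of $\kappa(y)$, and define
\[
\Lambda_{y\rightsquigarrow x} := \{ V_\lambda \subseteq \kappa(y) \ :\ V_\lambda \text{ is a valuation ring dominating } A\}
\]
in the first case, respectively
\[
\Lambda_{y\rightsquigarrow x} := \{ V_\lambda \subseteq \overline{\kappa(y)} \ :\ V_\lambda \text{ is a valuation ring dominating } A\}
\]
in the absolutely integrally closed case (in the latter situation, any such $V_\lambda$ is automatically absolutely integrally closed since its fraction field $\overline{\kappa(y)}$ is algebraically closed, by \ref{para:notation-val}). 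Each $V_\lambda$ is a subring of $\kappa(y)$ (resp.\ of $\overline{\kappa(y)}$), and these ambient fields are $\bb{U}$-small because $X$ is, so the family is $\bb{U}$-small. The tautological local injection $A\hookrightarrow V_\lambda$ yields the desired morphism $f_\lambda:\spec(V_\lambda)\to \spec(A)\to \spec(\ca{O}_{X,x})\to X$, which satisfies (i) and (ii) by construction.

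For the universal property (iii), suppose we are given $f:\spec(V)\to X$ as in the statement. The observation of the first paragraph produces a local injection $A\hookrightarrow V$, and hence an embedding of fraction fields $\kappa(y)\hookrightarrow \mrm{Frac}(V)$; in the absolutely integrally closed case, $\mrm{Frac}(V)$ is algebraically closed, so this embedding extends to some embedding $\overline{\kappa(y)}\hookrightarrow \mrm{Frac}(V)$, which we fix. Set $V_\lambda := V\cap \kappa(y)$ (resp.\ $V_\lambda := V\cap \overline{\kappa(y)}$), the intersection being taken inside $\mrm{Frac}(V)$. Since the intersection of a valuation ring of $\mrm{Frac}(V)$ with a subfield is a valuation ring of that subfield, and since $A\hookrightarrow V$ factors through $V_\lambda$ with $\ak{m}_A = A\cap \ak{m}_{V_\lambda}$, we get $V_\lambda \in \Lambda_{y\rightsquigarrow x}$. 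The inclusion $V_\lambda\hookrightarrow V$ is a local injection by construction, i.e.\ an extension of valuation rings, and $f$ factors through $f_\lambda$ via the morphism induced by this inclusion.

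The argument is essentially bookkeeping once the right local picture is set up; the only mild subtlety is the absolutely integrally closed case, where the use of a fixed algebraic closure $\overline{\kappa(y)}$ is needed to guarantee smallness of $\Lambda_{y\rightsquigarrow x}$. Different embeddings $\overline{\kappa(y)}\hookrightarrow \mrm{Frac}(V)$ may yield different witnesses $\lambda$ in (iii), but this is harmless since only existence of some $\lambda$ is required.
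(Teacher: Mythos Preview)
Your approach is essentially identical to the paper's: both reduce to the local domain $A = \ca{O}_{X,x}/\ak{p}_y$, take $\Lambda_{y\rightsquigarrow x}$ to be the set of valuation rings of $\kappa(y)$ (resp.\ of a fixed algebraic closure $\overline{\kappa(y)}$) dominating $A$, and verify (iii) by intersecting $V$ with $\kappa(y)$ (resp.\ $\overline{\kappa(y)}$) inside $\mrm{Frac}(V)$.

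One minor imprecision to fix: in the absolutely integrally closed case your definition takes all valuation rings $V_\lambda \subseteq \overline{\kappa(y)}$ dominating $A$, without requiring $\mrm{Frac}(V_\lambda)=\overline{\kappa(y)}$. As written this allows, for instance, $V_\lambda=\kappa(y)$ itself, whose fraction field is not algebraically closed, so (i) fails for such members. The paper explicitly imposes that the fraction field equal $K_y$; add the same requirement (equivalently: take valuation rings \emph{of} $\overline{\kappa(y)}$ rather than merely \emph{contained in} it). Your witness $V\cap\overline{\kappa(y)}$ in the proof of (iii) already has fraction field $\overline{\kappa(y)}$, so once the index set is corrected the argument goes through verbatim.
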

\begin{proof}
	Let $K_{y}$ be the residue field $\kappa(y)$ of $y$ (resp. an algebraic closure of $\kappa(y)$). Let $\ak{p}_y$ be the prime ideal of the local ring $\ca{O}_{X,x}$ corresponding to the point $y$, and let $\{V_\lambda\}_{\lambda \in \Lambda_{y\rightsquigarrow x}}$ be the set of all valuation rings with fraction field $K_{y}$ which contain $\ca{O}_{X,x}/\ak{p}_y$ such that the injective homomorphism $\ca{O}_{X,x}/\ak{p}_y \to V_\lambda$ is local. The smallness of $\Lambda_{y\rightsquigarrow x}$ and $V_\lambda$ is clear, and the inclusion $\ca{O}_{X,x}/\ak{p}_y \to V_\lambda$ induces a morphism $f_\lambda:\spec(V_\lambda) \to X$ satisfying (\luoma{2}). It remains to check (\luoma{3}). The morphism $f$ induces an injective and local homomorphism $\ca{O}_{X,x}/\ak{p}_y \to V$. Notice that $\ca{O}_{X,x}/\ak{p}_y \to \mrm{Frac}(V)$ factors through $K_{y}$ and that $K_{y}\cap V$ is a valuation ring with fraction field $K_y$. It is clear that $K_{y}\cap V\to V$ is local and injective, which shows that $K_{y}\cap V$ belongs to the set $\{V_\lambda\}_{\lambda\in \Lambda_{y\rightsquigarrow x}}$ constructed before.
\end{proof}

\begin{mylem}\label{lem:specialization}
	Let $f:\spec(V) \to X$ be a morphism of schemes where $V$ is a valuation ring. We denote by $a$ and $b$ the closed point and generic point of $\spec(V)$ respectively. If $c \in X$ is a generalization of $f(b)$, then there exists an absolutely integrally closed valuation ring $W$, a prime ideal $\ak{p}$ of $W$, and a morphism $g : \spec(W) \to X$ satisfying the following conditions:
	\begin{enumerate}
		\item[\rm(i)] If $z$, $y$, $x$ denote respectively the generic point, the point $\ak{p}$ and the closed point of $\spec(W)$, then $g(z)=c$, $g(y)=f(b)$ and $g(x) = f(a)$.
		\item[\rm(ii)] The induced morphism $\spec(W/\ak{p}) \to X$ factors through $f$, and the induced morphism $V \to W/ \ak{p}$ is an extension of valuation rings.
	\end{enumerate}
\end{mylem}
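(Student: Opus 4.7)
The plan is to build $W$ by composing two absolutely integrally closed valuation rings: a ``top'' piece realizing the specialization $c \rightsquigarrow f(b)$ in $X$, and a ``bottom'' piece extending $V$ that handles the specialization $f(b) \rightsquigarrow f(a)$.

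First I would extend $V$. Letting $K = \mrm{Frac}(V)$ and fixing an algebraic closure $\overline{K}$, I pick an absolutely integrally closed valuation ring $V' \subseteq \overline{K}$ extending the valuation on $V$ (standard extension of valuations to a field extension). Then $V \hookrightarrow V'$ is an extension of valuation rings.

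Next I construct the top piece. Set $R_0 = \ca{O}_{X,f(b)}/\tilde{\ak{q}}_c$, where $\tilde{\ak{q}}_c \subseteq \ca{O}_{X,f(b)}$ is the prime corresponding to $c$; this is a local subring of $\kappa(c)$ with residue field $\kappa(f(b))$. In order to eventually accommodate $\overline{K}$ in the residue field, I would enlarge $R_0$ by adjoining a set $(t_j)_{j\in J}$ of indeterminates with $|J| \geq \mrm{trdeg}(\overline{K}/\kappa(f(b)))$, and localize at the extended maximal ideal to obtain a local ring $R_1$ with fraction field $\kappa(c)(t_j)_{j\in J}$ and residue field $\kappa(f(b))(t_j)_{j\in J}$. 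Taking an algebraic closure $L$ of $\mrm{Frac}(R_1)$ and any valuation ring $A \subseteq L$ dominating $R_1$ gives an absolutely integrally closed valuation ring $A$ whose residue field $\kappa_A$ is algebraically closed with $\mrm{trdeg}(\kappa_A/\kappa(f(b))) \geq |J|$. The ring map $\ca{O}_{X,f(a)} \to A$ has kernel $\ak{q}_c$ (the prime corresponding to $c$) and is local at $f(b)$, so $\spec(A) \to X$ sends the generic point to $c$ and the closed point to $f(b)$. By the transcendence bound, I can then fix an embedding $\iota \colon \overline{K} \hookrightarrow \kappa_A$ over $\kappa(f(b))$ that is compatible with the embedding $\kappa(f(b)) \hookrightarrow \kappa_A$ induced by $A$.

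Finally, I compose the valuation rings. Let $\pi \colon A \twoheadrightarrow \kappa_A$ be the residue map and set $W := \pi^{-1}(\iota(V')) \subseteq A$. By the standard composition of valuation rings, $W$ is a valuation ring of $L$; since $L$ is algebraically closed, $W$ is absolutely integrally closed, and its chain of relevant primes is $(0) \subsetneq \ak{p} := \ak{m}_A \subsetneq \ak{m}_W = \pi^{-1}(\ak{m}_{\iota(V')})$, with $W_{\ak{p}} = A$ and $W/\ak{p} \cong \iota(V') \cong V'$ as valuation rings. To define $g \colon \spec(W) \to X$, I would check that $\ca{O}_{X,f(a)} \to A$ factors through $W \subseteq A$: this reduces to showing that $\ca{O}_{X,f(a)} \to A \xrightarrow{\pi} \kappa_A$ lands inside $\iota(V')$, and by the compatibility of $\iota$ this composite equals $\ca{O}_{X,f(a)} \to V \hookrightarrow V' \stackrel{\iota}{\hookrightarrow} \kappa_A$, whose image indeed lies in $\iota(V')$. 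Checking (i) then amounts to computing preimages of $(0), \ak{p}, \ak{m}_W$ in $\ca{O}_{X,f(a)}$, which are $\ak{q}_c, \ak{p}_{f(b)}, \ak{m}_{f(a)}$ respectively; checking (ii) is immediate since $\ca{O}_{X,f(a)} \to W/\ak{p} = \iota(V')$ factors as $\ca{O}_{X,f(a)} \to V \hookrightarrow V' \stackrel{\iota}{\to} \iota(V')$, and $V \to \iota(V')$ is an extension of valuation rings. The main technical obstacle is ensuring the compatibility between the two different routes $\kappa(f(b)) \hookrightarrow \kappa_A$ (via $A$ and via $V' \stackrel{\iota}{\hookrightarrow} \kappa_A$); this is precisely what forces the polynomial enlargement in the construction of $A$, so that $\kappa_A$ has enough transcendence room for the embedding $\iota$ to exist and extend the required data.
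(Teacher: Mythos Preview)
Your proposal is correct and follows essentially the same strategy as the paper: build an absolutely integrally closed valuation ring for the specialization $c\rightsquigarrow f(b)$, arrange that its residue field accommodates the fraction field of $V$, and then compose with a valuation ring extending $V$ via the standard preimage construction $W=\pi^{-1}(\cdot)$.

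The only difference is in how the compatibility of residue fields is arranged. You first extend $V$ to $V'$ with algebraically closed fraction field $\overline{K}$, then inflate the top ring by adjoining enough indeterminates so that its residue field $\kappa_A$ has room for an embedding $\iota:\overline{K}\hookrightarrow\kappa_A$ over $\kappa(f(b))$. The paper proceeds in the opposite order and more economically: it first takes any absolutely integrally closed valuation ring $U$ realizing $c\rightsquigarrow f(b)$, then invokes \cite[7.1.2]{ega2} to extend $U$ so that the residue map $y\to f(b)$ factors through $b$ (i.e.\ $K\hookrightarrow\kappa(y)$), and finally chooses $V'$ to be \emph{any} valuation ring of the algebraically closed field $\kappa(y)$ dominating $V$. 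This avoids the bookkeeping with transcendence degrees and the auxiliary indeterminates $(t_j)_{j\in J}$; your explicit polynomial enlargement is effectively reproving the EGA statement by hand. Both routes yield the same composed valuation ring $W$ up to this reshuffling.
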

\begin{proof}
	According to \cite[7.1.4]{ega2}, there exists an absolutely integrally closed valuation ring $U$ and a morphism $\spec (U) \to X$ which maps the generic point $z$ and the closed point $y$ of $\spec (U)$ to $c$ and $f(b)$ respectively. After extending $U$, we may assume that the morphism $y\to f(b)$ factors through $b$ (\cite[7.1.2]{ega2}). We denote by $\kappa(y)$ the residue field of the point $y$. Let $V'$ be a valuation ring extension of $V$ with fraction field $\kappa(y)$, and let $W$ be the preimage of $V'$ by the surjection $U \to \kappa(y)$. Then, the maximal ideal $\ak{p}=\ke(U \to \kappa(y))$ of $U$ is a prime ideal of $W$, and $W/\ak{p} = V'$. We claim that $W$ is an absolutely integrally closed valuation ring such that $W_\ak{p} = U$. Indeed, firstly note that the fraction fields of $U$ and $W$ are equal as $\ak{p}\subseteq W$. Let $\gamma$ be an element of $\mrm{Frac}(W)\setminus W$. If $\gamma\in U$, then $\gamma^{-1}\in W\setminus \ak{p}$ by definition since $\gamma^{-1}\in U\setminus \ak{p}$ and $V$ is a valuation ring, and then $\gamma\in W_{\ak{p}}$. If $\gamma\notin U$, then $\gamma^{-1}\in \ak{p}$ since $U$ is a valuation ring, and then $\gamma\notin W_{\ak{p}}$. Thus, we have proved the claim, which shows that $W$ satisfies the required conditions.
\end{proof}

\begin{myprop}\label{prop:prod-val-cov}
	Let $X$ be a coherent $\bb{U}$-small scheme, $X^\circ$ a quasi-compact dense open subset of $X$. Then, there exists a $\bb{U}$-small product $A$ of absolutely integrally closed $\bb{U}$-small valuation rings and a v-covering $\spec(A) \to X$ such that $\spec(A)$ is $X^\circ$-integrally closed {\rm(\ref{para:notation-intclos})}.
\end{myprop}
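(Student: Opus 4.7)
The plan is to build $A$ as a product of absolutely integrally closed valuation rings indexed by lifts of specializations whose generalization lies in $X^\circ$. For every pair of points $y \in X^\circ$ and $x \in \overline{\{y\}}$, Lemma \ref{lem:small-val} (in the absolutely integrally closed version) supplies a $\bb{U}$-small family $\{f_\mu: \spec(V_\mu) \to X\}_{\mu \in \Lambda_{y \rightsquigarrow x}}$ of $\bb{U}$-small absolutely integrally closed valuation rings realizing the specialization $y \rightsquigarrow x$ in a universal sense. Let $\Lambda$ be the disjoint union of the $\Lambda_{y \rightsquigarrow x}$ over all such pairs $(y,x)$; this is $\bb{U}$-small since $X$ is. I set $A = \prod_{\lambda \in \Lambda} V_\lambda$ and let $f : \spec(A) \to X$ be the morphism induced from the $f_\lambda$'s via the projections $\pi_\lambda : A \to V_\lambda$.

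To verify the v-covering property, let $\spec(V) \to X$ be any morphism with $V$ a valuation ring, of generic point $b$ and closed point $a$. The density of $X^\circ$ in $X$ ensures that some irreducible component of $X$ through $f(b)$ has its generic point $c$ in $X^\circ$. Lemma \ref{lem:specialization} then produces an absolutely integrally closed valuation ring $W$, a prime ideal $\ak{p} \subset W$, and a morphism $g : \spec(W) \to X$ mapping the generic (resp.\ closed) point to $c$ (resp.\ $f(a)$), such that $V \to W/\ak{p}$ is an extension of valuation rings and $\spec(W/\ak{p}) \to X$ factors through $\spec(V) \to X$. Applying Lemma \ref{lem:small-val} to $c \rightsquigarrow f(a)$ and $g$, I obtain an index $\lambda \in \Lambda_{c \rightsquigarrow f(a)}$ and a factorization $g : \spec(W) \to \spec(V_\lambda) \xrightarrow{f_\lambda} X$ with $V_\lambda \to W$ an extension of valuation rings. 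The composition $A \xrightarrow{\pi_\lambda} V_\lambda \to W \twoheadrightarrow W/\ak{p}$ yields a morphism $\spec(W/\ak{p}) \to \spec(A)$ which is a lift over $X$ of $\spec(W/\ak{p}) \to \spec(V) \to X$, and the extension $V \to W/\ak{p}$ witnesses the v-covering condition.

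It remains to show that $\spec(A)$ is $X^\circ$-integrally closed, i.e., that $A$ equals its integral closure in $B := \Gamma(U_A, \ca{O}_{\spec(A)})$ where $U_A := X^\circ \times_X \spec(A)$. For each $\lambda$, the projection $\pi_\lambda$ restricts $U_A$ to $U_\lambda := X^\circ \times_X \spec(V_\lambda)$, an open subset of $\spec(V_\lambda)$ containing the generic point (since $f_\lambda$ sends it to $c \in X^\circ$); hence $\Gamma(U_\lambda, \ca{O}_{\spec(V_\lambda)}) \subseteq \mrm{Frac}(V_\lambda)$, and $V_\lambda$ is integrally closed in it by normality of the valuation ring. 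Since $X^\circ$ is quasi-compact, $U_A$ is a quasi-compact open in the affine scheme $\spec(A)$, so $U_A = \bigcup_{i=1}^n D(a_i)$ for some $a_i \in A$. For each $a_i$, the map $A_{a_i} \to \prod_\lambda (V_\lambda)_{\pi_\lambda(a_i)}$ is injective: if $c/a_i^k$ maps to $0$ in each factor, then $\pi_\lambda(a_i)^m \pi_\lambda(c) = 0$ in the domain $V_\lambda$ for some $m$, whence $\pi_\lambda(a_i c) = 0$ for every $\lambda$ and so $a_i c = 0$ in $A$, forcing $c/a_i^k = 0$ in $A_{a_i}$. The sheaf property then yields injectivity of $B \hookrightarrow \prod_\lambda \Gamma(U_\lambda, \ca{O}_{\spec(V_\lambda)})$. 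Given $\alpha \in B$ integral over $A$, each image $\alpha_\lambda$ is integral over $V_\lambda$ and thus lies in $V_\lambda$; the element $(\alpha_\lambda) \in A$ coincides with $\alpha$ in $B$ by the injectivity, so $\alpha \in A$.

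The main obstacle is the integral closure step for the infinite product $A$: integrality is a finite condition while the product structure does not interact cleanly with localization, so one must exploit that each factor $V_\lambda$ is a domain in order to control the passage between $B$ and the individual $\Gamma(U_\lambda, \ca{O}_{\spec(V_\lambda)})$.
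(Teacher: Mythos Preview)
Your construction and the v-covering verification follow the paper's proof exactly. There is, however, one genuine gap: you never reduce to the case where $X$ is affine, yet you write ``let $f:\spec(A)\to X$ be the morphism induced from the $f_\lambda$'s via the projections $\pi_\lambda$.'' For non-affine $X$ this morphism need not exist: the closed immersions $\spec(V_\lambda)\hookrightarrow\spec(A)$ are far from jointly surjective (e.g.\ $\spec(A)$ has ultrafilter points lying over no single $\lambda$), so a family of maps $\spec(V_\lambda)\to X$ does not in general glue to a map out of $\spec(\prod_\lambda V_\lambda)$. The paper handles this by first replacing $X$ by a finite affine open cover; once $X=\spec(R)$, the ring maps $R\to V_\lambda$ assemble to $R\to A$ tautologically. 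You should insert this reduction at the start.

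For the $X^\circ$-integral closedness, your argument via the injection $B\hookrightarrow\prod_\lambda\Gamma(U_\lambda,\ca{O}_{\spec(V_\lambda)})$ is correct, but the paper's route is much shorter and worth knowing: by Lemma~\ref{lem:prod-val}.(\ref{lem:prod-val-prin}) every finitely generated ideal of $A$ is principal, so the quasi-compact open $U_A\subset\spec(A)$ is a single principal open $D(\pi)$ for some $\pi=(\pi_\lambda)\in A$. Since each $f_\lambda$ sends the generic point into $X^\circ$, each $\pi_\lambda$ is nonzero, and then $A=\prod V_\lambda$ is visibly integrally closed in $A[1/\pi]$. This bypasses the sheaf-theoretic bookkeeping you carry out in the last paragraph.
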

\begin{proof}
	After replacing $X$ by a finite affine open covering, we may assume that $X=\spec(R)$. For a specialization ${y\rightsquigarrow x}$ of points of $X$, let $\{R \to V_\lambda\}_{\lambda \in \Lambda_{y\rightsquigarrow x}}$ be the $\bb{U}$-small set constructed in \ref{lem:small-val}. Let $\Lambda=\coprod_{y\in X^\circ} \Lambda_{y\rightsquigarrow x}$ where ${y\rightsquigarrow x}$ runs through all specializations in $X$ such that $y \in X^\circ$. We take $A=\prod_{\lambda\in \Lambda}V_\lambda$ and $R\to A$ the natural homomorphism. As a quasi-compact open subscheme of $\spec(A)$, $X^\circ\times_X \spec(A)$ is the spectrum of $A[1/\pi]$ for an element $\pi=(\pi_\lambda)_{\lambda\in \Lambda}\in A$ by \ref{lem:prod-val}.(\ref{lem:prod-val-prin}) (\cite[\href{https://stacks.math.columbia.edu/tag/01PH}{01PH}]{stacks-project}). Notice that $\pi_\lambda\neq 0$ for any $\lambda\in \Lambda$. We see that $A$ is integrally closed in $A[1/\pi]$. It remains to check that $\spec(A) \to X$ is a v-covering. For any morphism $f:\spec(V) \to X$ where $V$ is a valuation ring, by \ref{lem:specialization}, there exists an absolutely integrally closed valuation ring $W$, a prime ideal $\ak{p}$ of $W$ and a morphism $g:\spec(W) \to X$ such that $g$ maps the generic point of $W$ into $X^\circ$ and that $W/\ak{p}$ is a valuation ring extension of $V$. By construction, there exists $\lambda\in \Lambda$ such that $g$ factors through $\spec(V_\lambda) \to X$. We see that $f$ lifts to the composition of $\spec(W/\ak{p}) \to \spec(V_\lambda) \to \spec(A)$.
\end{proof}

\begin{myprop}\label{prop:normal-v-cov}
	Consider a commutative diagram of schemes
	\begin{align}
		\xymatrix{
			Y' \ar[r] \ar[d] & Z' \ar[r] \ar[d] & X'\ar[d]\\
			Y \ar[r] & Z \ar[r] & X
		}
	\end{align}
	where $Z' \to Z$ and $X' \to X$ are quasi-compact. Assume that $Y' \to Y \times_X X'$ is surjective, $Y \to Z$ is dominant, $Z \to X$ is separated and $Z' \to X'$ is integral. If $X' \to X$ is a v-covering, then $Z' \to Z$ is also a v-covering.
\end{myprop}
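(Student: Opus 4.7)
The plan is to factor $Z' \to Z$ as $Z' \to T \to Z$ with $T := Z \times_X X'$, and show both arrows are v-coverings. The morphism $T \to Z$ is the base change of $X' \to X$, so is a v-covering by \ref{lem:arc-parc-basic}. For $Z' \to T$, I would first observe that it is integral by writing it as $Z' \to Z' \times_{X'} T \to T$: the graph is a closed immersion since $T \to X'$ is separated (base change of $Z \to X$), and the projection is the base change of the integral morphism $Z' \to X'$. Using that integral surjective morphisms are v-coverings (given $\spec(V) \to T$ with $V$ a valuation ring, the base change $Z' \times_T \spec(V) \to \spec(V)$ is integral surjective, and one constructs the required extension of $V$ by taking a minimal prime quotient of the local ring at a point over the closed point of $\spec(V)$ and dominating it by a valuation ring), it suffices to show $Z' \to T$ is surjective.

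Given $\spec(V) \to Z$ with $V$ a valuation ring, generic $b \mapsto z_1$ and closed $a \mapsto z_0$, I would apply \ref{lem:specialization} as follows. By dominance of $Y \to Z$, pick a generalization $c$ of $z_1$ lying in the image of $Y \to Z$; then \ref{lem:specialization} produces an absolutely integrally closed valuation ring $W$, a prime $\ak{p} \subset W$, and a morphism $g : \spec(W) \to Z$ sending the generic point, $\ak{p}$, and the closed point to $c$, $z_1$, $z_0$ respectively, with $V \to W/\ak{p}$ an extension of valuation rings. Composing $g$ with $Z \to X$ and lifting via the v-covering $X' \to X$ yields an extension $W \to W'$ and a morphism $\spec(W') \to X'$; combining gives $\spec(W') \to T$ whose generic maps to $(c, x') \in T$ for some $x' \in X'$. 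Since $c$ lies in the image of $Y \to Z$, the point $(c, x')$ lies in the image of $Y \times_X X' \to T$; by surjectivity of $Y' \to Y \times_X X'$ together with $Y' \to Z'$, it also lies in the image of $Z' \to T$.

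Therefore the base change $Z' \times_T \spec(W') \to \spec(W')$ is integral, and its image---being closed and containing the generic point of the irreducible scheme $\spec(W')$---is all of $\spec(W')$. Invoking the v-covering property of this integral surjection, I obtain an extension $W' \to W''$ and a morphism $\spec(W'') \to Z' \times_T \spec(W')$, hence $\spec(W'') \to Z'$. Picking a prime $\ak{p}'' \subset W''$ lying over $\ak{p} \subset W$ (which exists since $\spec$ of an extension of valuation rings is surjective) and setting $\tilde W := W''/\ak{p}''$, the map $V \to \tilde W$ is an extension of valuation rings, and by \ref{lem:specialization} the composition $\spec(\tilde W) \to Z' \to Z$ equals $\spec(\tilde W) \to \spec(V) \to Z$, giving the desired lift. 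The main obstacle is the existence of the generalization $c$ in the second paragraph; this should follow from the dominance of $Y \to Z$ by considering generic points of irreducible components of $Z$ under the paper's coherent scheme conventions.
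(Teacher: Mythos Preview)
Your argument is correct and follows the same strategy as the paper's proof. Both reduce to working over $T=Z\times_X X'$ (the paper does this by replacing $X'\to X$ with $T\to Z$ and assuming $Z=X$; you keep the factorization $Z'\to T\to Z$ explicit), both invoke \ref{lem:specialization} to manufacture $\spec(W)\to Z$ whose generic point lies over the image of $Y$, both lift along the v-covering to obtain $\spec(W')\to X'$, and both then pass to $Z'$ using integrality of $Z'$ over $X'$ (equivalently over $T$). The only substantive difference is in this last step: the paper notes that since $W'$ is a valuation ring (hence integrally closed in its fraction field $\xi'$) and $Z'\to X'$ is integral, the map $\spec(W')\to X'$ factors through $Z'$ directly by \cite[\href{https://stacks.math.columbia.edu/tag/035I}{035I}]{stacks-project}, so no further extension of $W'$ is needed. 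Your detour through ``integral $\Rightarrow$ closed image, plus irreducibility of $\spec(W')$, hence surjective, hence v-covering, hence a further extension $W'\to W''$'' is longer but equally valid.

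One caveat on presentation: your opening claim that ``it suffices to show $Z'\to T$ is surjective'' is stronger than what you actually establish and need not hold in general---dominance of $Y\to Z$ is not stable under the base change $T\to Z$, so the image of $Z'\to T$ can miss entire components of $T$. Fortunately your second and third paragraphs abandon this plan and verify the v-covering criterion for $Z'\to Z$ directly, so the argument stands as written. As for the ``main obstacle'' you flag (existence of a generalization $c$ of $z_1$ in the image of $Y$), the paper's proof passes over this point in exactly the same way.
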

\begin{proof}
	Notice that $Z'\to Z\times_X X'$ is still integral as $Z \to X$ is separated. After replacing $X' \to X$ by $Z\times_X X' \to Z$, we may assume that $Z=X$. Let $\spec(V) \to Z$ be a morphism of schemes where $V$ is a valuation ring. Since $Y \to Z$ is dominant, by \ref{lem:specialization}, there exists a morphism $\spec(W) \to Z$ where $W$ is an absolutely integrally closed valuation ring, a prime ideal $\ak{p}$ of $W$ such that $W/\ak{p}$ is a valuation ring extension of $V$ and that the generic point $\xi$ of $\spec(W)$ is over the image of $Y \to Z$. After extending $W$ (\cite[\href{https://stacks.math.columbia.edu/tag/00IA}{00IA}]{stacks-project}), we may assume that there exists a lifting $\xi \to Y$ of $\xi \to Z$. The morphism $\spec(W) \to Z = X $ admits a lifting $\spec(W') \to X'$ where $W\to W'$ is an extension of valuation rings. We claim that after extending $W'$, $\spec(W') \to X'$ factors through $Z'$. Indeed, if $\xi'$ denotes the generic point of $\spec(W')$, as $Y' \to Y \times_XX'$ is surjective, after extending $W'$, we may assume that there exists an $X'$-morphism $\xi' \to Y'$ which is over $\xi \to Y$. Since $\spec(W')$ is  integrally closed in $\xi'$ and $Z'$ is integral over $X'$, the morphism $\spec(W') \to X'$ factors through $Z'$ (\cite[\href{https://stacks.math.columbia.edu/tag/035I}{035I}]{stacks-project}). Finally, let $\ak{q} \in \spec(W')$ which lies over $\ak{p} \in \spec(W)$, then we get a lifting $\spec(W'/\ak{q}) \to Z'$ of $\spec(V) \to Z$, which shows that $Z' \to Z$ is a v-covering.
\end{proof}

\begin{mypara}\label{defn:relative-normal}
	Let $S^\circ \to S$ be an open immersion of coherent schemes such that $S$ is $S^\circ$-integrally closed (\ref{para:notation-intclos}). For any $S$-scheme $X$, we set $X^\circ=S^\circ \times_S X$. We denote by $\falh_{S^\circ \to S}$ the category formed by coherent $S$-schemes which are $S^\circ$-integrally closed. Note that any $S^\circ$-integrally closed coherent $S$-scheme $X$ is also $X^\circ$-integrally closed by definition. It is clear that the category $(\falh_{S^\circ \to S})_{/X}$ of objects of $\falh_{S^\circ \to S}$ over $X$ is canonically equivalent to the category $\falh_{X^\circ \to X}$.
\end{mypara}

\begin{mylem}[{\cite[\href{https://stacks.math.columbia.edu/tag/03GV}{03GV}]{stacks-project}}]\label{lem:rel-norm-commute}
	Let $Y\to X$ be a coherent morphism of schemes, $X'\to X$ a smooth morphism of schemes, $Y'=Y\times_X X'$. Then, we have $X'^{Y'}=X^Y\times_X X'$.
\end{mylem}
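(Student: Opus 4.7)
The question is local on both $X$ and $X'$, so I would first reduce to the affine situation $X=\spec(R)$, $X'=\spec(R')$, with $Y\to X$ quasi-compact and quasi-separated; after replacing $Y$ by its image in $X^{Y}$, the integral closure construction is given by $X^{Y}=\spec_{X}(\mathcal{A})$, where $\mathcal{A}\subset f_{*}\mathcal{O}_{Y}$ is the integral closure of $\mathcal{O}_{X}$. Since $g:X'\to X$ is flat (being smooth), flat base change gives $f'_{*}\mathcal{O}_{Y'}=g^{*}f_{*}\mathcal{O}_{Y}$, so the assertion becomes: the integral closure of $\mathcal{O}_{X'}$ in $g^{*}f_{*}\mathcal{O}_{Y}$ equals $g^{*}\mathcal{A}$. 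In affine ring-theoretic form, writing $S=f_{*}\mathcal{O}_{Y}(X)$ and letting $T\subset S$ be the integral closure of $R$ in $S$, I must show that $T\otimes_{R}R'$ is the integral closure of $R'$ in $S\otimes_{R}R'$ whenever $R\to R'$ is smooth.

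Using the local structure theorem for smooth morphisms (smooth equals étale-locally a polynomial ring), the claim reduces to two cases, which I would treat independently.

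\textbf{Polynomial case.} It suffices to handle $R'=R[t]$. The classical argument: suppose $\varphi\in S[t]$ satisfies a monic relation $\varphi^{n}+a_{1}\varphi^{n-1}+\cdots+a_{n}=0$ with $a_{i}\in R[t]$. Replacing $\varphi$ by $\varphi-t^{N}$ for $N\gg 0$ transforms this into a monic relation in which the constant terms (with respect to $t$) of the coefficients are suitable; then a standard manipulation using that $S[t]$ is free over $S$ shows that every coefficient of $\varphi$ in $t$ is integral over $R$, hence lies in $T$, so $\varphi\in T[t]$.

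\textbf{Étale case.} Here $R\to R'$ is étale, hence flat; I would show $T\otimes_{R}R'$ is integrally closed in $S\otimes_{R}R'$. By faithfully flat descent, this can be checked after passing to a further étale cover, where $R'$ acquires the standard form $R'=(R[x]/(h))_{h'}$ with $h'$ a unit. One then combines the polynomial case (for $R[x]$) with the elementary fact that localization commutes with integral closure, and uses that integrality of an element of $S\otimes_{R}R'$ over $R'$ pulls back, via a suitable trace/norm argument available for étale extensions, to integrality over $R$ of its components.

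The main obstacle I anticipate is the étale case: the polynomial case is a manipulation and flat base change is formal, but checking that integral closure is actually preserved (and not merely contained) under étale base change requires either a careful descent argument or the explicit use of the standard étale local structure together with the trace map to recognize integral elements. Once both cases are established, one patches them along a Zariski cover of $X'$ witnessing the smooth-local étale-plus-polynomial decomposition to conclude the equality $X'^{Y'}=X^{Y}\times_{X}X'$.
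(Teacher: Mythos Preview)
The paper does not give its own proof of this lemma; it simply cites \cite[\href{https://stacks.math.columbia.edu/tag/03GV}{03GV}]{stacks-project}. Your plan is precisely the standard argument used there: reduce to the affine case, use flat base change to identify $f'_*\ca{O}_{Y'}$ with $g^*f_*\ca{O}_Y$, then show that integral closure commutes with smooth ring extensions by factoring smooth as \'etale over a polynomial algebra and treating the two cases separately. So your approach matches the reference the paper defers to.

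Two minor comments. First, the phrase ``after replacing $Y$ by its image in $X^{Y}$'' is unnecessary and a bit confusing; the definition $X^Y=\spec_X(\ca{A})$ with $\ca{A}$ the integral closure of $\ca{O}_X$ in $f_*\ca{O}_Y$ already works for arbitrary coherent $Y\to X$, so no replacement is needed. Second, your description of the polynomial case is slightly garbled: the substitution $\varphi\mapsto\varphi-t^N$ is used to reduce to the case where $\varphi$ is monic in $t$, after which one invokes the classical lemma that if $f,g\in S[t]$ are monic with $fg\in T[t]$ then $f,g\in T[t]$ (proved by passing to a splitting ring). The \'etale case is most cleanly handled not by trace/norm but by the characterization of \'etale maps as flat with geometrically regular (hence normal) fibers, which directly gives that $T\otimes_R R'$ is integrally closed in $S\otimes_R R'$; this is the route the Stacks Project takes and avoids the descent manoeuvre you describe.
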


\begin{mylem}\label{lem:rel-norm-fil-limit}
	Let $(Y_\lambda\to X_\lambda)_{\lambda\in \Lambda}$ be a  directed inverse system of morphisms of coherent schemes with affine transition morphisms $Y_{\lambda'}\to Y_\lambda$ and $X_{\lambda'}\to X_\lambda$ ($\lambda'\geq \lambda$). We set $Y=\lim Y_\lambda$ and $X=\lim X_\lambda$. Then, $(X_\lambda^{Y_\lambda})_{\lambda\in \Lambda}$ is a directed inverse system of coherent schemes with affine transition morphisms and we have $X^Y=\lim X_\lambda^{Y_\lambda}$.
\end{mylem}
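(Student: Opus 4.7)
The strategy has two steps: first, reduce to the case where every $X_\lambda$ is affine by localizing on $X_{\lambda_0}$ and applying \ref{lem:rel-norm-commute}; then translate the statement into the commutation of integral closure of rings with filtered colimits. Fix some $\lambda_0\in\Lambda$ and a finite affine open covering $\{U_{i,\lambda_0}\}_{i\in I}$ of $X_{\lambda_0}$. For each $\lambda\geq\lambda_0$, set $U_{i,\lambda}:=X_\lambda\times_{X_{\lambda_0}}U_{i,\lambda_0}$ and $U_i:=X\times_{X_{\lambda_0}}U_{i,\lambda_0}$; these are affine (since the transition morphisms and the projection $X\to X_{\lambda_0}$ are affine), they cover $X_\lambda$ and $X$ respectively, and they satisfy $U_i=\lim_{\lambda\geq\lambda_0}U_{i,\lambda}$. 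Applying \ref{lem:rel-norm-commute} to the smooth open immersions $U_{i,\lambda}\hookrightarrow X_\lambda$ and $U_i\hookrightarrow X$, the integral closures $X_\lambda^{Y_\lambda}$ and $X^Y$ restrict respectively to $U_{i,\lambda}^{Y_\lambda\times_{X_\lambda}U_{i,\lambda}}$ and $U_i^{Y\times_X U_i}$. Since base change along an open immersion commutes with arbitrary inverse limits, it suffices to prove the claim after replacing $X_{\lambda_0}$ by each $U_{i,\lambda_0}$. This reduces us to the situation $X_\lambda=\spec(A_\lambda)$ for all $\lambda\geq\lambda_0$, with $X=\spec(A)$ and $A=\colim A_\lambda$.

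In this affine setting, the definition recalled in \ref{para:notation-intclos} gives $X_\lambda^{Y_\lambda}=\spec(A'_\lambda)$ and $X^Y=\spec(A')$, where $A'_\lambda$ (resp.\ $A'$) denotes the integral closure of $A_\lambda$ (resp.\ of $A$) in $B_\lambda:=\Gamma(Y_\lambda,\ca{O}_{Y_\lambda})$ (resp.\ in $B:=\Gamma(Y,\ca{O}_Y)$). Because each $Y_\lambda$ is coherent over the affine scheme $X_{\lambda_0}$ and the transition morphisms $Y_{\lambda'}\to Y_\lambda$ are affine, taking global sections commutes with the cofiltered limit $Y=\lim Y_\lambda$, so that $B=\colim B_\lambda$.

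The algebraic heart of the argument is the identification $A'=\colim A'_\lambda$: any element of $B$ that is integral over $A$ satisfies a monic polynomial whose finitely many coefficients descend to a common $A_\lambda$, showing it lies in some $A'_\lambda$, and the reverse inclusion is immediate since each $A'_\lambda$ consists of elements integral over $A_\lambda\subseteq A$. The transition morphisms $X_{\lambda'}^{Y_{\lambda'}}\to X_\lambda^{Y_\lambda}$ are affine (each $X_\mu^{Y_\mu}\to X_\mu$ is integral, hence affine and separated, and one concludes by cancellation), and each $X_\lambda^{Y_\lambda}$ is coherent as an affine scheme over the coherent scheme $X_\lambda$. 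Converting the filtered colimit of rings into a cofiltered limit of affine schemes yields
\begin{align*}
\lim X_\lambda^{Y_\lambda}=\spec(\colim A'_\lambda)=\spec(A')=X^Y.
\end{align*}

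I do not anticipate any genuine obstacle. The one point deserving some care is the Zariski reduction in the first paragraph, which is handled by \ref{lem:rel-norm-commute} together with the compatibility of cofiltered limits with base change along open immersions; after that, the proof reduces to the elementary commutation of integral closure with filtered colimits of rings.
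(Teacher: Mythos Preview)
Your proposal is correct and follows essentially the same approach as the paper: reduce to the affine case via a finite affine open covering of $X_{\lambda_0}$ and \ref{lem:rel-norm-commute}, then use that integral closure of rings commutes with filtered colimits. You spell out more details (the affineness of the transition morphisms on integral closures, the argument for $B=\colim B_\lambda$, and the element-wise verification of $A'=\colim A'_\lambda$), but the strategy and the key ingredients are identical.
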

\begin{proof}
	We fix an index $\lambda_0\in \Lambda$. After replacing $X_{\lambda_0}$ by an affine open covering, we may assume that $X_{\lambda_0}$ is affine (\ref{lem:rel-norm-commute}). We write $X_\lambda=\spec(A_\lambda)$ and $B_\lambda=\Gamma(Y_\lambda, \ca{O}_{Y_\lambda})$ for each $\lambda\geq \lambda_0$, and we set $A=\colim A_\lambda$ and $B=\colim B_\lambda$. Then, we have $X=\spec(A)$ and $B=\Gamma(Y, \ca{O}_{Y})$ (\cite[\href{https://stacks.math.columbia.edu/tag/009F}{009F}]{stacks-project}). Let $R_\lambda$ (resp. $R$) be the integral closure of $A_\lambda$ in $B_\lambda$ (resp. $A$ in $B$). By definition, we have $X_\lambda^{Y_\lambda}=\spec(R_\lambda)$ and $X^Y=\spec(R)$. The conclusion follows from the fact that $R=\colim R_\lambda$.
\end{proof}

\begin{mylem}\label{lem:relative-normal}
	Let $S^\circ \to S$ be an open immersion of coherent schemes.
	\begin{enumerate}
		\renewcommand{\labelenumi}{{\rm(\theenumi)}}
		\item If $X$ is an $S^\circ$-integrally closed coherent $S$-scheme, then the open subscheme $X^\circ$ is scheme theoretically dense in $X$. \label{lem:relative-normal-open}
		\item If $X$ is an $S^\circ$-integrally closed coherent $S$-scheme and $X'$ is a coherent smooth $X$-scheme, then $X'$ is also $S^\circ$-integrally closed. \label{lem:relative-normal-et}
		\item If $(X_\lambda)_{\lambda\in \Lambda}$ is a directed inverse system of $S^\circ$-integrally closed coherent $S$-scheme with affine transition morphisms, then $X=\lim_{\lambda\in \Lambda}X_\lambda$ is also $S^\circ$-integrally closed. \label{lem:relative-normal-fil-limit}
		\item If $Y \to X$ is a morphism of coherent schemes over $S^\circ \to S$ such that $Y$ is integral over $X^\circ$, then the integral closure $X^Y$ is $S^\circ$-integrally closed with $(X^Y)^\circ=Y$.\label{lem:relative-normal-norm}
	\end{enumerate}
\end{mylem}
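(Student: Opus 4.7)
All four parts will follow in a uniform way from the defining factorization property of the integral closure (any coherent morphism $Y\to X$ factors as $Y\to X^Y\to X$ with $Y\to X^Y$ schematically dominant and quasi-compact and $X^Y\to X$ integral) together with the two auxiliary results \ref{lem:rel-norm-commute} and \ref{lem:rel-norm-fil-limit} already established in the section.

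\textbf{Parts (1)--(3).} For (1), the hypothesis $X = X^{X^\circ}$ combined with the universal factorization of $X^\circ\to X$ through $X^{X^\circ}$ says exactly that the open immersion $X^\circ\hookrightarrow X$ is quasi-compact and schematically dominant, which is the definition of $X^\circ$ being scheme theoretically dense in $X$. For (2), note $X'^\circ = X^\circ\times_X X'$, so applying \ref{lem:rel-norm-commute} to the smooth morphism $X'\to X$ and to the coherent morphism $X^\circ\to X$ yields $X'^{X'^\circ} = X^{X^\circ}\times_X X' = X\times_X X' = X'$. For (3), base change by $S^\circ\to S$ commutes with cofiltered limits with affine transition maps, so $X^\circ = \lim X_\lambda^\circ$; the transition morphisms $X_{\lambda'}^\circ\to X_\lambda^\circ$ remain affine, hence \ref{lem:rel-norm-fil-limit} gives $X^{X^\circ} = \lim X_\lambda^{X_\lambda^\circ} = \lim X_\lambda = X$.

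\textbf{Part (4).} Since the whole diagram lies over $S^\circ\to S$ and $Y$ lies over $S^\circ$, the morphism $Y\to X$ canonically factors through $X^\circ$, so $Y\times_X X^\circ = Y$. Applying \ref{lem:rel-norm-commute} to the open immersion (hence smooth morphism) $X^\circ\to X$ and to $Y\to X$ yields
\[
(X^Y)^\circ \;=\; X^Y\times_X X^\circ \;=\; (X^\circ)^{Y\times_X X^\circ} \;=\; (X^\circ)^Y.
\]
But $Y\to X^\circ$ is integral by hypothesis, so the trivial factorization $Y \stackrel{\mrm{id}}{\to} Y \to X^\circ$ already realizes the defining decomposition of the integral closure, giving $(X^\circ)^Y = Y$ and hence $(X^Y)^\circ = Y$. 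Finally, to see that $X^Y$ is $S^\circ$-integrally closed it suffices to prove $X^Y = (X^Y)^Y$, and this is immediate from the defining property of $X^Y$: the morphism $Y\to X^Y$ is already quasi-compact and schematically dominant, while the identity $X^Y\to X^Y$ is integral, so uniqueness of the factorization forces $(X^Y)^Y = X^Y$.

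\textbf{Expected difficulty.} None of the four parts requires a genuinely new idea beyond the lemmas already proved; the only mild subtlety is in (4), where one has to be careful to invoke the hypothesis that $Y\to X$ factors through $X^\circ$ (so that $Y\times_X X^\circ = Y$) in order to apply \ref{lem:rel-norm-commute} effectively, and to recognise that the integrality of $Y\to X^\circ$ collapses $(X^\circ)^Y$ to $Y$. The rest is a direct chase through the universal property of integral closure.
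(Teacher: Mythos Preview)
Your proposal is correct and follows essentially the same route as the paper: part~(1) via the basic factorization property of integral closure (the paper cites \cite[\href{https://stacks.math.columbia.edu/tag/035I}{035I}]{stacks-project}, which encodes exactly the schematic-dominance you invoke), parts~(2) and~(3) via \ref{lem:rel-norm-commute} and \ref{lem:rel-norm-fil-limit} respectively, and part~(4) via \ref{lem:rel-norm-commute} applied to the open immersion $X^\circ\to X$ together with the observation that $(X^\circ)^Y=Y$ when $Y\to X^\circ$ is integral. Your write-up is somewhat more explicit than the paper's one-line proof of~(4)---in particular you spell out why $Y\times_X X^\circ=Y$ and why $(X^Y)^Y=X^Y$---but the underlying argument is identical.
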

\begin{proof}
	(\ref{lem:relative-normal-open}), (\ref{lem:relative-normal-et}), (\ref{lem:relative-normal-fil-limit}) follow from \cite[\href{https://stacks.math.columbia.edu/tag/035I}{035I}]{stacks-project}, \ref{lem:rel-norm-commute} and \ref{lem:rel-norm-fil-limit} respectively. For (\ref{lem:relative-normal-norm}), $(X^Y)^\circ = X^\circ \times_{X} X^Y$ is the integral closure of $X^\circ$ in $X^\circ \times_{X} Y = Y$ by \ref{lem:rel-norm-commute}, which is $Y$ itself.
\end{proof}

\begin{mypara}\label{para:fun-sigma-psi-N}
	We take the notation in {\rm\ref{defn:relative-normal}}. The inclusion functor
	\begin{align}
		\Phi^+: \falh_{S^\circ \to S} \longrightarrow \schqcqs_{/S},\ X \longmapsto X,
	\end{align}
	admits a right adjoint
	\begin{align}\label{eq:3.19.2}
		\sigma^+: \schqcqs_{/S} \longrightarrow \falh_{S^\circ \to S},\ X \longmapsto \overline{X}=X^{X^\circ}.
	\end{align}
	Indeed, $\sigma^+$ is well-defined by \ref{lem:relative-normal}.(\ref{lem:relative-normal-norm}), and the adjointness follows from the functoriality of taking integral closures. We remark that $\overline{X}^\circ=X^\circ$. On the other hand, the functor
	\begin{align}
		\Psi^+: \falh_{S^\circ \to S} \longrightarrow \schqcqs_{/S^\circ},\ X \longmapsto X^\circ,
	\end{align}
	admits a left adjoint
	\begin{align}
		\alpha^+: \schqcqs_{/S^\circ} \longrightarrow \falh_{S^\circ \to S},\ Y \longmapsto Y.
	\end{align}
\end{mypara}

\begin{mylem}\label{lem:relative-normal-limit}
	With the notation in {\rm\ref{defn:relative-normal}}, let $\varphi : I \to \falh_{S^\circ \to S}$ be a functor sending $i$ to $X_i$. If $X = \lim X_i$ represents the limit of $\varphi$ in the category of coherent $S$-schemes, then the integral closure $\overline{X}=X^{X^\circ}$ represents the limit of $\varphi$ in $\falh_{S^\circ \to S}$ with $\overline{X}^\circ=X^\circ$.
\end{mylem}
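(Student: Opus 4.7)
The plan is to combine the right adjoint $\sigma^+$ of the inclusion $\Phi^+$ from \ref{para:fun-sigma-psi-N} with the fact that $\Phi^+$ is fully faithful. Since right adjoints preserve limits, the image $\sigma^+(X) = \overline{X}$ should inherit the limit structure from $X$, and the verification reduces to a Yoneda-type computation.

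First, I would check that $\overline{X}$ lies in $\falh_{S^\circ \to S}$ and satisfies $\overline{X}^\circ = X^\circ$. This is an application of \ref{lem:relative-normal}.(\ref{lem:relative-normal-norm}) to the integral (in fact, open) morphism $Y = X^\circ \to X$: the integral closure $X^{X^\circ} = \overline{X}$ is $S^\circ$-integrally closed with $\overline{X}^\circ = X^\circ$. In particular $\overline{X}$ is coherent and is an object of $\falh_{S^\circ \to S}$.

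Next, I would verify the universal property. For any object $T$ of $\falh_{S^\circ \to S}$, I compute
\begin{align*}
\ho_{\falh_{S^\circ \to S}}(T, \overline{X})
&= \ho_{\schqcqs_{/S}}(\Phi^+(T), X) \\
&= \lim_{i \in I^{\oppo}} \ho_{\schqcqs_{/S}}(\Phi^+(T), \Phi^+(X_i)) \\
&= \lim_{i \in I^{\oppo}} \ho_{\falh_{S^\circ \to S}}(T, X_i),
\end{align*}
where the first equality is the adjunction $\Phi^+ \dashv \sigma^+$ of \ref{para:fun-sigma-psi-N}, the second uses the hypothesis that $X$ represents the limit of $\Phi^+ \circ \varphi$ in $\schqcqs_{/S}$, and the third uses the full faithfulness of the inclusion $\Phi^+$. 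By Yoneda, $\overline{X}$ represents the limit of $\varphi$ in $\falh_{S^\circ \to S}$.

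No real obstacle arises: the argument is formal once the adjunction $\Phi^+ \dashv \sigma^+$ and the identity $\overline{X}^\circ = X^\circ$ are in hand, both of which are already recorded (or immediate from the lemmas) in the paragraphs just above. The only subtlety worth noting is that one must actually write $X = \lim \Phi^+(X_i)$ so that the chain of bijections is meaningful; this is precisely the hypothesis of the lemma.
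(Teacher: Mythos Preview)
Your proof is correct and is essentially a spelled-out version of the paper's one-line argument, which simply invokes the adjoint pair $(\Phi^+,\sigma^+)$ from \ref{para:fun-sigma-psi-N}. The paper relies on the general fact that right adjoints preserve limits, whereas you make the Yoneda computation explicit; the content is the same.
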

\begin{proof}
	It follows directly from the adjoint pair $(\Phi^+,\sigma^+)$ (\ref{para:fun-sigma-psi-N}).
\end{proof}

	It follows from \ref{lem:relative-normal-limit} that for a diagram $X_1\to X_0 \leftarrow X_2$ in $\falh_{S^\circ \to S}$, the fibred product is representable by
	\begin{align}
		X_1 \overline{\times}_{X_0} X_2 = (X_1 \times_{X_0} X_2)^{X_1^\circ \times_{X_0^\circ}X_2^\circ}.
	\end{align}

\begin{myprop}\label{prop:falh-v-top}
	With the notation in {\rm\ref{defn:relative-normal}}, let $\scr{C}$ be the set of families of morphisms $\{X_i \to X\}_{i \in I}$ of $\falh_{S^\circ \to S}$ with $I$ finite such that $\coprod_{i \in I} X_i \to X$ is a v-covering. Then, $\scr{C}$ forms a pretopology of $\falh_{S^\circ \to S}$.
\end{myprop}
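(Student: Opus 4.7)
The plan is to verify the three axioms of a Grothendieck pretopology for $\scr{C}$. Since any isomorphism in $\falh_{S^\circ \to S}$ is in particular faithfully flat, the family consisting of a single isomorphism is a v-covering by \ref{lem:arc-parc}.(\ref{lem:arc-parc-v}). For the composition/transitivity axiom, if $\{X_i \to X\}_{i\in I}\in \scr{C}$ and $\{X_{ij}\to X_i\}_{j\in J_i}\in \scr{C}$ for every $i$, the composed family $\{X_{ij}\to X\}_{i\in I,\, j\in J_i}$ is again a v-covering by \ref{lem:arc-parc-basic}, since v-coverings are stable under composition. The only nontrivial axiom is stability under base change, which is nontrivial precisely because the fibered product in $\falh_{S^\circ\to S}$ is not the ordinary fibered product but its integral closure (\ref{lem:relative-normal-limit}):
\[
X_i\overline{\times}_X Y=(X_i\times_X Y)^{X_i^\circ\times_{X^\circ}Y^\circ}.
\]

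Given $\{X_i\to X\}_{i\in I}\in\scr{C}$ with $I$ finite and a morphism $Y\to X$ in $\falh_{S^\circ\to S}$, I set $X'=\coprod_{i\in I}X_i$, which still lies in $\falh_{S^\circ\to S}$. The ordinary pullback $X'\times_X Y\to Y$ is a v-covering by \ref{lem:arc-parc-basic}. The key step is to apply Proposition \ref{prop:normal-v-cov} to the commutative diagram
\[
\xymatrix{
(X'\times_X Y)^\circ \ar[r]\ar[d] & X'\overline{\times}_X Y \ar[r]\ar[d] & X'\times_X Y \ar[d]\\
Y^\circ \ar[r] & Y \ar@{=}[r] & Y
}
\]
in order to conclude that $X'\overline{\times}_X Y\to Y$ is a v-covering. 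The hypotheses are readily checked: the bottom-right morphism is the identity, hence separated; the open immersion $Y^\circ\to Y$ is dominant by \ref{lem:relative-normal}.(\ref{lem:relative-normal-open}) since $Y$ is $Y^\circ$-integrally closed; the integral closure morphism $X'\overline{\times}_X Y\to X'\times_X Y$ is integral by construction; and the natural morphism $(X'\times_X Y)^\circ\to Y^\circ\times_Y(X'\times_X Y)$ is actually an isomorphism, because $Y^\circ\to X$ factors through $X^\circ$, so it is in particular surjective.

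Since integral closures are compatible with finite disjoint unions, one has $X'\overline{\times}_X Y=\coprod_{i\in I} X_i\overline{\times}_X Y$, and the conclusion of the previous step therefore reads as the statement that $\{X_i\overline{\times}_X Y\to Y\}_{i\in I}\in\scr{C}$, which completes the verification of the pretopology axioms. The main obstacle, as anticipated, is entirely the base-change step: one must ensure that the passage to the integral closure in the formation of fibered products in $\falh_{S^\circ\to S}$ does not destroy the v-covering property, and Proposition \ref{prop:normal-v-cov} is precisely the tool designed to handle this.
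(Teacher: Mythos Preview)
Your proof is correct and follows essentially the same approach as the paper: both verify base change by applying Proposition~\ref{prop:normal-v-cov} to the diagram with the ordinary fibered product on the right, the integral-closure fibered product in the middle, and the open parts on the left, after noting that the ordinary base change is already a v-covering. The only cosmetic difference is that you first package $\coprod_i X_i$ into a single scheme and decompose at the end, whereas the paper writes the coproduct explicitly throughout the diagram.
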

\begin{proof}	
	Let $\{X_i \to X\}_{i \in I}$ be an element of $\scr{C}$. Firstly, we check that for a morphism $X' \to X$ of $\falh_{S^\circ \to S}$, the base change $\{X_i' \to X'\}_{i \in I}$ also lies in $\scr{C}$, where $Z_i = X_i \times_X X'$ and $X_i'=Z_i^{Z_i^\circ}$ by \ref{lem:relative-normal-limit}. Applying \ref{prop:normal-v-cov} to the following diagram
	\begin{align}
		\xymatrix{
			\coprod_{i \in I} Z_{i}^\circ \ar[r] \ar[d] & \coprod_{i \in I} X_i' \ar[r] \ar[d] & \coprod_{i \in I} Z_i\ar[d]\\
			X'^\circ \ar[r] & X' \ar[r] & X'
		}
	\end{align}
	we deduce that $\coprod_{i \in I} X_i' \to X'$ is also a v-covering, which shows the stability of $\scr{C}$ under base change.
	
	For each $i\in I$, let $\{X_{ij} \to X_i\}_{j \in J_i}$ be an element of $\scr{C}$. We need to show that the composition $\{X_{ij} \to X\}_{i \in I,j \in J_i}$ also lies in $\scr{C}$. This follows immediately from the stability of v-coverings under composition. We conclude that $\scr{C}$ forms a pretopology.
\end{proof}

\begin{mydefn}\label{defn:falh-v-top}
	With the notation in {\rm\ref{defn:relative-normal}}, we endow the category $\falh_{S^\circ \to S}$ with the topology generated by the pretopology defined in \ref{prop:falh-v-top}, and we call $\falh_{S^\circ \to S}$ the \emph{v-site of $S^\circ$-integrally closed coherent $S$-schemes}.
\end{mydefn}

	By definition, any object in $\falh_{S^\circ \to S}$ is quasi-compact. Let $\falhb$ be the sheaf on $\falh_{S^\circ \to S}$ associated to the presheaf $X\mapsto  \Gamma(X,\ca{O}_X)$. We call $\falhb$ the structural sheaf of $\falh_{S^\circ \to S}$.

\begin{myprop}\label{prop:sheaf-isom}
	With the notation in {\rm\ref{defn:relative-normal}}, let $f:X'\to X$ be a covering in $\falh_{S^\circ \to S}$ such that $f$ is separated and that the diagonal morphism $X'^\circ\to X'^\circ\times_{X^\circ}X'^\circ$ is surjective. Then, the morphism of representable sheaves $h_{X'}^\ash \to h_{X}^\ash$ is an isomorphism.
\end{myprop}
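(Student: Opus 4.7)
The plan is to reduce the claim to showing that the diagonal morphism $\Delta\colon X'\to X'\overline{\times}_X X'$ in $\falh_{S^\circ\to S}$ is itself a covering. Indeed, $h_{X'}^\ash\to h_X^\ash$ is an epimorphism because $\{f\colon X'\to X\}$ is a covering, and it is a monomorphism precisely when the induced map to the kernel pair $h_{X'}^\ash\to h_{X'}^\ash\times_{h_X^\ash}h_{X'}^\ash$ is an isomorphism. By \ref{lem:relative-normal-limit}, this kernel pair is represented by $X'\overline{\times}_X X'=(X'\times_X X')^{X'^\circ\times_{X^\circ}X'^\circ}$, so $h_\Delta$ is always a split monomorphism and becomes an isomorphism exactly when $\Delta$ is a covering in $\falh_{S^\circ\to S}$. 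Thus the whole problem reduces to showing $\Delta$ is a v-covering of schemes.

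Next I would show $\Delta$ is a closed immersion. The ordinary diagonal $X'\to X'\times_X X'$ is a closed immersion since $f$ is separated, and by the adjunction $(\Phi^+,\sigma^+)$ of \ref{para:fun-sigma-psi-N} it factors uniquely through $\Delta$ followed by the integral morphism $X'\overline{\times}_X X'\to X'\times_X X'$. Composing $\Delta$ with either projection $\pi\colon X'\overline{\times}_X X'\to X'\times_X X'\to X'$ yields the identity, so $\Delta$ is a section of $\pi$. The morphism $\pi$ is separated (it factors as an integral morphism followed by a base change of the separated map $f$), and any section of a separated morphism is a closed immersion. Hence $\Delta$ is a closed immersion of schemes.

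I would then verify that $\Delta$ is surjective. By \ref{lem:relative-normal}.(\ref{lem:relative-normal-norm}) the open part $(X'\overline{\times}_X X')^\circ$ coincides with $X'^\circ\times_{X^\circ}X'^\circ$, and the hypothesis says $\Delta$ is surjective onto this open. Because $X'\overline{\times}_X X'$ lies in $\falh_{S^\circ\to S}$, \ref{lem:relative-normal}.(\ref{lem:relative-normal-open}) tells us its generic fibre is scheme-theoretically dense, hence topologically dense; the closed subset $\Delta(X')$ contains this dense open and therefore coincides with the whole target. A surjective closed immersion is proper surjective, so a v-covering by \ref{lem:arc-parc}.(\ref{lem:arc-parc-v}). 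Combined with \ref{prop:falh-v-top} this shows $\Delta$ is a covering in $\falh_{S^\circ\to S}$, completing the argument.

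The only genuinely delicate point is the geometric surjectivity of $\Delta$: even knowing $\Delta^\circ$ is surjective, one needs the $S^\circ$-integrally closed property to propagate surjectivity across the boundary, which is exactly where the scheme-theoretic density of \ref{lem:relative-normal}.(\ref{lem:relative-normal-open}) is essential. The remainder of the argument is formal, relying on the separatedness of $f$ and on the identification of the fibered product in $\falh_{S^\circ\to S}$ provided by \ref{lem:relative-normal-limit}.
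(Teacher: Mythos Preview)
Your proof is correct and follows essentially the same approach as the paper: both arguments establish that the diagonal $\Delta\colon X'\to X'\overline{\times}_X X'$ is a surjective closed immersion (hence a v-covering), using separatedness of $f$ for closedness and density of $(X'\overline{\times}_X X')^\circ$ for surjectivity. The only difference is packaging: the paper works directly with an arbitrary sheaf $\ca{F}$ and uses the equalizer diagram $\ca{F}(X)\to\ca{F}(X')\rightrightarrows\ca{F}(X'\overline{\times}_X X')$, observing that injectivity of $\ca{F}(\Delta)$ forces the two parallel arrows to coincide; you instead phrase things in terms of representable sheaves, using that $h_\Delta^\ash$ is a split monomorphism and an epimorphism (hence an isomorphism), and then that topoi are balanced.
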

\begin{proof}
	We need to show that for any sheaf $\ca{F}$ on $\falh_{S^\circ \to S}$, $\ca{F}(X) \to \ca{F}(X')$ is an isomorphism. Since the composition of $X'^\circ \to X'^\circ\times_{X^\circ}X'^\circ\to X'\overline{\times}_X X'$ factors through the closed immersion $X' \to X'\overline{\times}_X X'$ (as $f$ is separated), the closed immersion $X' \to X'\overline{\times}_X X'$ is surjective (\ref{lem:relative-normal}.(\ref{lem:relative-normal-open})). Consider the following sequence
	\begin{align}\label{eq:5.1.1}
		\ca{F}(X) \to \ca{F}(X') \rightrightarrows \ca{F}(X'\overline{\times}_X X')\to \ca{F}(X').
	\end{align}
	The right arrow is injective as $X' \to X'\overline{\times}_X X'$ is a v-covering. Thus, the middle two arrows are actually the same. Thus, the first arrow is an isomorphism by the sheaf property of $\ca{F}$.
\end{proof}

\begin{myprop}\label{prop:v-site-stalk}
	With the notation in {\rm\ref{defn:relative-normal}}, let $\alpha:\ca{F}_1\to \ca{F}_2$ be a morphism of presheaves on $\falh_{S^\circ \to S}$. Assume that 
	\begin{enumerate}
		\renewcommand{\theenumi}{\roman{enumi}}
		\renewcommand{\labelenumi}{{\rm(\theenumi)}}
		\item the morphism $\ca{F}_1(\spec(V))\to \ca{F}_2(\spec(V))$ is an isomorphism for any $S^\circ$-integrally closed $S$-scheme $\spec(V)$ where $V$ is an absolutely integrally closed valuation ring, and that
		\item for any directed inverse system of $S^\circ$-integrally closed affine schemes $(\spec(A_\lambda))_{\lambda\in \Lambda}$ over $S$ the natural morphism $\colim \ca{F}_i(\spec(A_\lambda))\to \ca{F}_i(\spec(\colim A_\lambda))$ is an isomorphism for $i=1,2$ {\rm(cf. \ref{lem:relative-normal}.(\ref{lem:relative-normal-fil-limit}))}.\label{prop:v-site-stalk-limit}
	\end{enumerate}
	Then, the morphism of the associated sheaves $\ca{F}_1^\ash\to \ca{F}_2^\ash$ is an isomorphism.
\end{myprop}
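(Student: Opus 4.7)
The plan is a stalk-style argument: I would identify the $S^\circ$-integrally closed schemes of the form $\spec(V)$, with $V$ an absolutely integrally closed valuation ring, as a conservative family of ``points'' for the topos $\widetilde{\falh_{S^\circ \to S}}$, then verify that $\alpha$ induces an isomorphism on each such stalk. By a standard formal argument, this will imply that $\alpha^\ash$ is an isomorphism.

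For the stalk computation at a given $\spec(V)$ in our family, I would look at the category of pairs $(X, \spec(V) \to X)$ with $X$ an object of $\falh_{S^\circ \to S}$ and extract a cofiltered system of affine refinements whose limit is $\spec(V)$; the key fact here is \ref{lem:relative-normal}.(\ref{lem:relative-normal-fil-limit}), which ensures such cofiltered limits of $S^\circ$-integrally closed affine schemes remain in the site. Condition (ii) then identifies $\colim \ca{F}_i(\spec(A_\lambda))$ with $\ca{F}_i(\spec(V))$, so that after this cofinality reduction the stalks of $\ca{F}_1$ and $\ca{F}_2$ are simply their values at $\spec(V)$. Condition (i) then gives that $\alpha$ induces an isomorphism on these stalks.

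For conservativity I would rely on \ref{prop:prod-val-cov}: any $X \in \falh_{S^\circ \to S}$ admits a v-cover $\spec(A) \to X$ with $A = \prod_\lambda V_\lambda$ a product of absolutely integrally closed valuation rings and $\spec(A)$ still in the site. By \ref{lem:prod-val}.(\ref{lem:prod-val-conn}), every connected component of $\spec(A)$, and more generally the spectrum of every valuation quotient $A \twoheadrightarrow W$, is the spectrum of an absolutely integrally closed valuation ring, and the induced morphisms $\spec(W) \to \spec(A) \to X$ collectively detect sections of sheaves on $X$.

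The main obstacle is that the topology of $\falh_{S^\circ \to S}$ uses only \emph{finite} v-coverings (cf. \ref{para:h-v-arc-topology}, \ref{defn:falh-v-top}), so the ``disjoint union over connected components of $\spec(A)$'' is not itself a covering; one cannot invoke a naive points theory. I expect the cleanest execution is a direct verification of the sheafification criterion: given $X$, a section $t \in \ca{F}_2(X)$, and lifts of $t$ at every valuation point, use \ref{prop:prod-val-cov} to reduce to $X = \spec(A)$, then combine (ii) with \ref{lem:relative-normal}.(\ref{lem:relative-normal-fil-limit}) to descend $t$ through the cofiltered system of profinite partitions of $\pi_0(\spec(A))$ — whose stages are finite disjoint unions of spectra of absolutely integrally closed valuation rings — where (i) together with the sheaf property for finite disjoint unions produces the required lift; symmetric reasoning handles injectivity. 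The delicate point is maintaining the $S^\circ$-integrally closed condition throughout these approximations, which is where \ref{lem:relative-normal} is used repeatedly.
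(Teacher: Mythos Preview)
Your approach is essentially the paper's: reduce via \ref{prop:prod-val-cov} to $X=\spec(A)$ with $A$ a product of absolutely integrally closed valuation rings, approximate each connected component $\spec(V)$ by the cofiltered system of clopen subschemes $\spec(A')\supseteq\spec(V)$, use (ii) to identify $\colim\ca{F}_i(\spec(A'))\cong\ca{F}_i(\spec(V))$ and (i) to get an isomorphism there, then extract a finite clopen partition by quasi-compactness. One correction: the intermediate clopen pieces $\spec(A')$ are \emph{not} spectra of single valuation rings but rather of rings of the same type as $A$ (so condition (i) is only invoked at the limit $\spec(V)$, not at the finite stages), and no ``sheaf property for finite disjoint unions'' is needed for the presheaves $\ca{F}_i$ --- the finite clopen decomposition is itself a covering in $\falh_{S^\circ\to S}$, which is all that the sheafification criterion requires.
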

\begin{proof}
	Let $A$ be a product of absolutely integrally closed valuation rings such that $X=\spec(A)$ is an $S^\circ$-integrally closed $S$-scheme. Let $\spec(V)$ be a connected component of $\spec(A)$ with the reduced closed subscheme structure. Then, $V$ is an absolutely integrally closed valuation ring by \ref{lem:prod-val}.(\ref{lem:prod-val-conn}), and $\spec(V)$ is also $S^\circ$-integrally closed since it has nonempty intersection with the dense open subset $X^\circ$ of $X$. Notice that each connected component of an affine scheme is the intersection of some open and closed subsets (\cite[\href{https://stacks.math.columbia.edu/tag/04PP}{04PP}]{stacks-project}). Moreover, since $A$ is reduced, we have $V = \colim A'$, where the colimit is taken over all the open and closed subschemes $X'=\spec(A')$ of $X$ which contain $\spec(V)$. By assumption, we have an isomorphism
	\begin{align}\label{eq:3.24.1}
		\colim \ca{F}_1(X') \iso \colim \ca{F}_2(X').
	\end{align}	
	For two elements $\xi_1,\xi_1'\in \ca{F}_1(X)$ with $\alpha_X(\xi_1)=\alpha_X(\xi_1')$ in $\ca{F}_2(X)$, by \eqref{eq:3.24.1} and a limit argument, there exists a finite disjoin union $X=\coprod_{i=1}^r X_i'$ such that the images of $\xi_1$ and $\xi_1'$ in $\ca{F}_1(X_i')$ are the same. Therefore, $\ca{F}_1^\ash\to \ca{F}_2^\ash$ is injective by \ref{prop:prod-val-cov}. On the other hand, for an element $\xi_2 \in \ca{F}_2(X)$, by \eqref{eq:3.24.1} and a limit argument, there exists a finite disjoin union $X=\coprod_{i=1}^r X_i'$ such that there exists an element $\xi_{1,i}\in \ca{F}_1(X_i')$ for each $i$ such that the image of $\xi_2$ in $\ca{F}_2(X_i')$ is equal to $\alpha_{X_i'}(\xi_{1,i})$. Therefore, $\ca{F}_1^\ash\to \ca{F}_2^\ash$ is surjective by \ref{prop:prod-val-cov}.
\end{proof}

\begin{mypara}\label{para:mor-sigma-psi-N}
	We take the notation in {\rm\ref{defn:relative-normal}}. Endowing $\schqcqs$ with the v-topology (\ref{para:h-v-arc-topology}), we see that the functors $\sigma^+$ and $\Psi^+$ defined in \ref{para:fun-sigma-psi-N} are left exact (as they have left adjoints) and continuous by \ref{prop:normal-v-cov} and \ref{prop:falh-v-top}. Therefore, they define morphisms of sites (\ref{para:topos})
	\begin{align}
		(\schqcqs_{/S^\circ})_{\mrm{v}}\stackrel{\Psi}{\longrightarrow}\falh_{S^\circ \to S} \stackrel{\sigma}{\longrightarrow} (\schqcqs_{/S})_\mrm{v}.
	\end{align}
\end{mypara}

\begin{myprop}\label{prop:Psi-const-equal}
	With the notation in {\rm\ref{para:mor-sigma-psi-N}}, let $a: (\schqcqs_{/S^\circ})_{\mrm{v}} \to S^\circ_\et$ be the morphism of site defined by the inclusion functor {\rm(\ref{cor:v-etale-coh})}.
	\begin{enumerate}
		\renewcommand{\labelenumi}{{\rm(\theenumi)}}
		\item For any torsion abelian sheaf $\ca{F}$ on $S^\circ_\et$, the canonical morphism $\Psi_*(a^{-1}\ca{F}) \to \rr\Psi_*(a^{-1}\ca{F})$ is an isomorphism.\label{prop:Psi-const-equal-torsion}
		\item For any locally constant torsion abelian sheaf $\bb{L}$ on $\falh_{S^\circ \to S}$, the canonical morphism $\bb{L} \to \rr\Psi_*\Psi^{-1}\bb{L}$ is an isomorphism.\label{prop:Psi-const-equal-const}
	\end{enumerate}
\end{myprop}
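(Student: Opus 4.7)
The plan is to establish (1) first and deduce (2) from it. For (1), since $\Psi_*$ is restriction along $X \mapsto X^\circ$, the higher direct image $\rr^q\Psi_*(a^{-1}\ca{F})$ is the sheafification of the presheaf $X \mapsto H^q((\schqcqs_{/X^\circ})_\mrm{v}, (a^{-1}\ca{F})|_{X^\circ})$, which by Corollary \ref{cor:v-etale-coh} coincides with $X \mapsto H^q(X^\circ_\et, \ca{F}|_{X^\circ})$. To show this sheafifies to zero for $q>0$, I would apply Proposition \ref{prop:v-site-stalk}: condition (ii) follows from the standard limit theorem for torsion \'etale cohomology on cofiltered inverse systems of affine schemes, so the question reduces to verifying that $H^q(X^\circ_\et, \ca{F}|_{X^\circ}) = 0$ for $X = \spec(V) \in \falh_{S^\circ\to S}$ with $V$ an absolutely integrally closed valuation ring.

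For such a $V$, every quasi-compact open of $\spec V$ is principal by \ref{lem:prod-val}.(\ref{lem:prod-val-prin}), so $X^\circ = \spec(V[1/f])$ for some nonzero $f\in V$, and $V[1/f]$ is again an absolutely integrally closed valuation ring. The desired vanishing then follows from the fact that the spectrum of an absolutely integrally closed valuation ring has trivial torsion \'etale cohomology in positive degrees---this is the main technical input, resting on the triviality of finite \'etale covers of such a spectrum combined with a standard descent/limit argument.

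For (2), I would work locally on $\falh_{S^\circ\to S}$: choose a v-covering $\{X_i\to X\}$ trivializing $\bb{L}$; since $\rr\Psi_*\Psi^{-1}$ commutes with restriction to $\falh_{X_i^\circ \to X_i}$, the question reduces to the case where $\bb{L} = L_\falh$ is constant with torsion value $L$. In this case $\Psi^{-1}\bb{L}$ is the constant sheaf on $(\schqcqs_{/S^\circ})_\mrm{v}$, which equals $a^{-1}L_{S^\circ_\et}$, so part (1) yields $\rr\Psi_*\Psi^{-1}\bb{L} = \Psi_*(a^{-1}L_{S^\circ_\et})$. It remains to verify the unit $L_\falh \to \Psi_*(a^{-1}L_{S^\circ_\et})$ is an isomorphism via Proposition \ref{prop:v-site-stalk}: on $X = \spec V$ absolutely integrally closed, both sides evaluate to $L$, since $\spec V$ and $X^\circ = \spec(V[1/f])$ are connected. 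The main obstacle is the \'etale-cohomological acyclicity of spectra of absolutely integrally closed valuation rings invoked above; everything else is formal bookkeeping with the site-theoretic machinery developed in this section.
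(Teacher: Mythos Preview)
Your proposal is correct and follows essentially the same route as the paper: identify $\rr^q\Psi_*(a^{-1}\ca{F})$ with the sheafification of $X\mapsto H^q_\et(X^\circ,\ca{F}|_{X^\circ})$ via \ref{cor:v-etale-coh}, then apply \ref{prop:v-site-stalk} (with the limit condition supplied by \cite[\Luoma{7}.5.8]{sga4-2}) to reduce to $X=\spec(V)$ with $V$ an absolutely integrally closed valuation ring, where $X^\circ=\spec(V[1/\pi])$ is again the spectrum of such a ring and hence strictly Henselian; part~(2) is reduced to the constant case and handled the same way. The only point you leave slightly implicit is that $\pi\neq 0$, which the paper secures via \ref{lem:relative-normal}.(\ref{lem:relative-normal-open}).
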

\begin{proof}
	(\ref{prop:Psi-const-equal-torsion}) For each integer $q$, the sheaf $\rr^q \Psi_*(a^{-1}\ca{F})$ is the sheaf associated to the presheaf $X\mapsto H^q_{\mrm{v}}(X^\circ,a^{-1}\ca{F})=H^q_{\et}(X^\circ,f^{-1}_\et\ca{F})$ by \ref{cor:v-etale-coh}, where $f_\et:X^\circ_\et \to S^\circ_\et$ is the natural morphism. If $X$ is the spectrum of a nonzero absolutely integrally closed valuation ring $V$, then $X^\circ=\spec(V[1/\pi])$ for a nonzero element $\pi \in V$ by \ref{lem:prod-val}.(\ref{lem:prod-val-prin}) and \ref{lem:relative-normal}.(\ref{lem:relative-normal-open}), which is also the spectrum of an absolutely integrally closed valuation ring (\ref{para:notation-val}). In this case, $H^q_{\et}(X^\circ,f^{-1}_\et\ca{F})=0$ for $q>0$, which proves (\ref{prop:Psi-const-equal-torsion}) by \ref{prop:v-site-stalk} and \cite[\Luoma{7}.5.8]{sga4-2}.
	
	(\ref{prop:Psi-const-equal-const}) The problem is local on $\falh_{S^\circ \to S}$. We may assume that $\bb{L}$ is the constant sheaf with value $L$. Then, $\rr^q\Psi_*\Psi^{-1}\bb{L}=0$ for $q>0$ by applying (\ref{prop:Psi-const-equal-torsion}) on the constant sheaf with value $L$ on $S^\circ_\et$. For $q=0$, notice that $\bb{L}$ is also the sheaf associated to the presheaf $X\mapsto H^0_\et(X,L)$, while $\Psi_*\Psi^{-1}\bb{L}$ is the sheaf $X\mapsto H^0_\et(X^\circ,L)$ by the discussion in (\ref{prop:Psi-const-equal-torsion}). If $X$ is the spectrum of a nonzero absolutely integrally closed valuation ring, then so is $X^\circ$ and so that $H^0_\et(X,L)=H^0_\et(X^\circ,L)=L$. The conclusion follows from \ref{prop:v-site-stalk} and \cite[\Luoma{7}.5.8]{sga4-2}.
\end{proof}

\section{The arc-Descent of Perfect Algebras}\label{sec:arc-descent}

\begin{mydefn}\label{defn:A_perf}
	For any $\bb{F}_p$-algebra $R$, we denote by $R_{\mrm{perf}}$ the filtered colimit
	\begin{align}
		R_{\mrm{perf}} = \colim_{\frob} R
	\end{align}
	 indexed by $(\bb{N},\leq)$, where the transition map associated to $i \leq (i+1)$ is the Frobenius of $R$.
\end{mydefn}
It is clear that the endo-functor of the category of $\bb{F}_p$-algebras, $R \mapsto R_{\mrm{perf}}$, commutes with colimits.

\begin{mypara}
	We define a presheaf $\ca{O}_{\mrm{perf}}$ on the category $\schqcqs_{\bb{F}_p}$ of coherent $\bb{U}$-small $\bb{F}_p$-schemes $X$ by
	\begin{align}
	\ca{O}_{\mrm{perf}}(X) = \Gamma(X,\ca{O}_X)_{\mrm{perf}}.
	\end{align}
	For any morphism $\spec(B) \to \spec(A)$ of affine $\bb{F}_p$-schemes, we consider the augmented \v Cech complex of the presheaf $\ca{O}_{\mrm{perf}}$,
	\begin{align}\label{eq:arc-descent}
		0 \to A_{\mrm{perf}} \to B_{\mrm{perf}} \to B_{\mrm{perf}} \otimes_{A_{\mrm{perf}}} B_{\mrm{perf}} \to \cdots.
	\end{align}
\end{mypara}

\begin{mylem}[{\cite[\href{https://stacks.math.columbia.edu/tag/0EWT}{0EWT}]{stacks-project}}]\label{lem:fppf-descent}
	The presheaf $\ca{O}_{\mrm{perf}}$ is a sheaf on $\schqcqs_{\bb{F}_p}$ with respect to the fppf topology {\rm(\cite[\href{https://stacks.math.columbia.edu/tag/021L}{021L}]{stacks-project})}. Moreover, for any coherent $\bb{F}_p$-scheme $X$ and any integer $q$,
	\begin{align}
		H^q_{\mrm{fppf}}(X, \ca{O}_{\mrm{perf}}) = \colim_{\frob} H^q(X,\ca{O}_X).
	\end{align}
\end{mylem}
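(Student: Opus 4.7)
The plan is to deduce both statements from two classical inputs: fpqc descent for the structure sheaf of quasi-coherent modules, and the fact that on a coherent base the fppf cohomology functors commute with filtered colimits of abelian sheaves. The Frobenius serves only as the transition map of a filtered system, so all the real work is already contained in these two facts.

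First, I would verify that $\ca{O}\colon X \mapsto \Gamma(X,\ca{O}_X)$ is itself an fppf sheaf on $\schqcqs_{\bb{F}_p}$. This is just fpqc descent for the structure sheaf viewed as a quasi-coherent module: given a finite fppf cover $\{X_i\to X\}_{i\in I}$ of a coherent scheme by coherent schemes (which is always available, since fppf covers of coherent schemes admit refinements by finite families whose disjoint union is fppf), the sheaf axiom for $\ca{O}$ is the classical equalizer
\begin{align}
\Gamma(X,\ca{O}_X)\longrightarrow \prod_{i}\Gamma(X_i,\ca{O}_{X_i})\rightrightarrows \prod_{i,j}\Gamma(X_i\times_X X_j,\ca{O}_{X_i\times_X X_j}).
\end{align}
Now $\ca{O}_{\mrm{perf}}=\colim_{\frob}\ca{O}$ as presheaves, where the colimit is filtered and indexed by $(\bb{N},\leq)$. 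Because the cover is finite, the sheaf axiom involves only finite limits (products and equalizers), and filtered colimits of abelian groups commute with finite limits. Hence $\ca{O}_{\mrm{perf}}$ inherits the sheaf property. This handles the first assertion.

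For the cohomology computation, I would invoke the standard principle that on a coherent object of a site in which every object has a cofinal system of coverings by finite families of coherent objects, sheaf cohomology commutes with filtered colimits of abelian sheaves; this applies verbatim to $\schqcqs_{\bb{F}_p}$ with the fppf topology (compare \cite[\Luoma{6}.5.2]{sga4-2}). Applied to the filtered system $(\ca{O},\frob)$, it yields
\begin{align}
H^q_{\mrm{fppf}}(X,\ca{O}_{\mrm{perf}}) \;=\; \colim_{\frob}\, H^q_{\mrm{fppf}}(X,\ca{O}).
\end{align}
It then remains to identify $H^q_{\mrm{fppf}}(X,\ca{O})$ with $H^q(X,\ca{O}_X)$ (Zariski cohomology of the quasi-coherent structure sheaf). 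This is the classical theorem of Grothendieck that for a quasi-coherent $\ca{O}_X$-module the fppf (even fpqc) cohomology agrees with the Zariski cohomology on a scheme, which is a direct consequence of the fpqc descent statement above together with the vanishing of higher Čech cohomology of quasi-coherent sheaves on affine fppf covers.

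I do not expect any serious obstacle here; the only point that demands minimal care is the commutation of cohomology with filtered colimits, which rests on choosing finite refinements of fppf coverings of coherent schemes and on the fact that $X$ itself is coherent. Everything else is formal manipulation of filtered colimits and a citation of fpqc descent.
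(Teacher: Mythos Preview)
Your proposal is correct and follows essentially the same approach as the paper: both rely on (i) $\ca{O}$ being an fppf sheaf by faithfully flat descent, (ii) commutation of $H^q_{\mrm{fppf}}(X,-)$ with filtered colimits of abelian sheaves on a coherent base, and (iii) the identification $H^q_{\mrm{fppf}}(X,\ca{O})=H^q(X,\ca{O}_X)$ via descent for quasi-coherent modules. The only cosmetic difference is that you verify the sheaf property of $\ca{O}_{\mrm{perf}}$ directly (filtered colimits commute with the finite-limit sheaf axiom), whereas the paper deduces it from the $q=0$ case of the cohomology commutation applied to the sheafification $\ca{O}_{\mrm{perf}}^{\ash}$; these are the same argument in slightly different packaging.
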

\begin{proof}
	Firstly, we remark that for any integer $q$, the functor $H^q_{\mrm{fppf}}(X,-)$ commutes with filtered colimit of abelian sheaves on $(\schqcqs_{/X})_{\mrm{fppf}}$ for any coherent scheme $X$ (\cite[\href{https://stacks.math.columbia.edu/tag/0739}{0739}]{stacks-project}). Since the presheaf $\ca{O}$ sending $X$ to $\Gamma(X,\ca{O}_X)$ on $\schqcqs_{\bb{F}_p}$ is an fppf-sheaf, we have $H^0_{\mrm{fppf}}(X, \ca{O}_{\mrm{perf}}^\ash)=\colim_{\frob}H^0_{\mrm{fppf}}(X, \ca{O})= \ca{O}_{\mrm{perf}}(X)$. Thus, $\ca{O}_{\mrm{perf}}$ is an fppf-sheaf. Moreover, $H^q_{\mrm{fppf}}(X, \ca{O}_{\mrm{perf}}) = \colim_{\frob} H^q_{\mrm{fppf}}(X, \ca{O}) = \colim_{\frob} H^q(X,\ca{O}_X)$ by faithfully flat descent (\cite[\href{https://stacks.math.columbia.edu/tag/03DW}{03DW}]{stacks-project}).
\end{proof}

\begin{mylem}\label{lem:star-descent}
	Let $\tau \in \{\trm{fppf, h, v, arc}\}$. The following propositions are equivalent:
	\begin{enumerate}
		\renewcommand{\labelenumi}{{\rm(\theenumi)}}
		\item The presheaf $\ca{O}_{\mrm{perf}}$ on $\schqcqs_{\bb{F}_p}$ is a $\tau$-sheaf and $H^q_\tau(X, \ca{O}_{\mrm{perf}}) = \colim_{\frob} H^q(X,\ca{O}_X)$ for any coherent $\bb{F}_p$-scheme $X$ and any integer $q$.
		\item  For any $\tau$-covering $\spec(B) \to \spec(A)$ of affine $\bb{F}_p$-schemes, the augmented \v Cech complex \eqref{eq:arc-descent} is exact.
	\end{enumerate}
\end{mylem}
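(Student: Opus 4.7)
The strategy is the standard dictionary between the sheaf-plus-cohomology-vanishing condition and \v{C}ech exactness, applied to $\ca{O}_{\mrm{perf}}$ via the \v{C}ech-to-derived functor spectral sequence. The essential inputs are: $\ca{O}_{\mrm{perf}}$ is already known to be an fppf-sheaf (hence Zariski sheaf) by \ref{lem:fppf-descent}; for any affine $\spec(A)$ the higher Zariski cohomology of $\ca{O}$ vanishes, so the RHS of (1) is zero in positive degrees on affines; and the functor $(-)_{\mrm{perf}}$ commutes with colimits, so $\ca{O}_{\mrm{perf}}(\spec(B^{\otimes_A n})) = B_{\mrm{perf}}^{\otimes_{A_{\mrm{perf}}} n}$ identifies the augmented \v{C}ech complex \eqref{eq:arc-descent} with the \v{C}ech complex of $\ca{O}_{\mrm{perf}}$ for the covering $\{\spec(B) \to \spec(A)\}$.

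For (1)$\Rightarrow$(2), given a $\tau$-covering $\spec(B) \to \spec(A)$, all iterated fibre products $\spec(B^{\otimes_A n})$ are affine, so by (1) they satisfy $H^q_\tau(-,\ca{O}_{\mrm{perf}})=0$ for $q>0$. The \v{C}ech-to-derived functor spectral sequence
\begin{align*}
\check{H}^p(\{\spec(B)/\spec(A)\}, \underline{H}^q_\tau(\ca{O}_{\mrm{perf}})) \Longrightarrow H^{p+q}_\tau(\spec(A), \ca{O}_{\mrm{perf}})
\end{align*}
then degenerates to $\check{H}^p(\{\spec(B)/\spec(A)\}, \ca{O}_{\mrm{perf}}) = H^p_\tau(\spec(A), \ca{O}_{\mrm{perf}}) = 0$ for $p > 0$, while the sheaf property of $\ca{O}_{\mrm{perf}}$ handles exactness at the first two terms of the augmented complex, yielding (2).

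For (2)$\Rightarrow$(1), the exactness of the first three terms of \eqref{eq:arc-descent} gives the $\tau$-sheaf axiom for every single-morphism $\tau$-covering of an affine; combined with the Zariski-sheaf property from \ref{lem:fppf-descent}, this upgrades to the full $\tau$-sheaf property on $\schqcqs_{\bb{F}_p}$ by the standard reduction (cover $X$ by finitely many affines $U_\alpha$, then cover each $\coprod_i X_i\times_X U_\alpha$ by a finite affine $\tau$-cover, and combine the Zariski descent with affine $\tau$-descent by a diagram chase). Since $\tau$-coverings are stable under composition and base change (\ref{lem:arc-parc-basic}) and iterated fibre products of affines are affine, hypothesis (2) applies uniformly and implies that all positive \v{C}ech cohomologies of $\ca{O}_{\mrm{perf}}$ vanish on affine $\tau$-coverings of affines; by \cite[\href{https://stacks.math.columbia.edu/tag/03F9}{03F9}]{stacks-project} this forces $H^q_\tau(\spec(A),\ca{O}_{\mrm{perf}}) = 0$ for $q > 0$, matching $\colim_\frob H^q(\spec(A),\ca{O}) = 0$.

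For a general coherent $X$, pick a finite affine Zariski cover $\ak{U} = \{U_\alpha\}$; the \v{C}ech-to-derived spectral sequence for $\ak{U}$ in the $\tau$-topology degenerates because (by the same affine analysis applied locally, iterated intersections being coherent and a further affine refinement) the higher $\tau$-cohomology of $\ca{O}_{\mrm{perf}}$ on the $U_{\alpha_0}\cap\cdots\cap U_{\alpha_p}$ vanishes. It remains to identify the Zariski \v{C}ech cohomology of $\ca{O}_{\mrm{perf}}$ for $\ak{U}$ with $\colim_\frob H^\bullet(X,\ca{O}_X)$, which follows from the fact that $(-)_{\mrm{perf}}$ is a filtered colimit and hence commutes with the finite products in the \v{C}ech complex and with taking cohomology. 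The main obstacle is this last reduction from the affine case to the coherent case, which requires verifying vanishing on coherent (not necessarily affine) iterated intersections; this is handled by iterating the affine argument on a further Zariski refinement of each intersection.
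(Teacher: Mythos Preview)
Your overall strategy matches the paper's: both directions go through the \v{C}ech-to-derived spectral sequence, the affine vanishing uses \cite[\href{https://stacks.math.columbia.edu/tag/03F9}{03F9}]{stacks-project}, and the passage from affine to coherent is a Zariski reduction. Your $(1)\Rightarrow(2)$ and the affine part of $(2)\Rightarrow(1)$ are fine.

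The gap is in your last paragraph. For a non-separated coherent $X$, the intersections $U_{\alpha_0}\cap\cdots\cap U_{\alpha_p}$ are only coherent, not affine, and your claim that ``the higher $\tau$-cohomology of $\ca{O}_{\mrm{perf}}$ on the $U_{\alpha_0}\cap\cdots\cap U_{\alpha_p}$ vanishes'' is false: the statement you are trying to prove says $H^q_\tau(-,\ca{O}_{\mrm{perf}})=\colim_{\frob}H^q(-,\ca{O})$, and the right side is typically nonzero on a non-affine coherent scheme. So the spectral sequence for $\ak{U}$ does \emph{not} degenerate, and the proposed identification of its abutment with Zariski \v{C}ech cohomology collapses. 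The suggestion to ``iterate on a further Zariski refinement of each intersection'' is circular: that iteration would require the result for the (coherent, non-affine) intersections, which is precisely what is being proved.

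The paper's fix is to replace the single cover $\ak{U}$ by a Zariski \emph{hypercovering} of $X$ whose terms are (finite disjoint unions of) affine open subschemes; such hypercoverings exist for any coherent $X$. Every term being affine, the hyper-\v{C}ech-to-cohomology spectral sequence \cite[\href{https://stacks.math.columbia.edu/tag/01GY}{01GY}]{stacks-project} degenerates for both the $\tau$- and the fppf-topology, and both abutments are computed by the same hyper-\v{C}ech complex. This gives $H^q_\tau(X,\ca{O}_{\mrm{perf}})=H^q_{\mrm{fppf}}(X,\ca{O}_{\mrm{perf}})$, and \ref{lem:fppf-descent} finishes.
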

\begin{proof}
	For an affine scheme $X = \spec(A)$, $H^q(X,\ca{O}_X)$ vanishes for $q>0$ and $H^0(X,\ca{O}_X)=A$. For (1) $\Rightarrow$ (2), the exactness of \eqref{eq:arc-descent} follows from the \v Cech-cohomology-to-cohomology spectral sequence associated to the $\tau$-covering $\spec(B) \to \spec(A)$ \cite[\href{https://stacks.math.columbia.edu/tag/03AZ}{03AZ}]{stacks-project}. Therefore, (1) and (2) hold for $\tau=\trm{fppf}$ by \ref{lem:fppf-descent}. Conversely, the exactness of \eqref{eq:arc-descent} shows the sheaf property for any $\tau$-covering of an affine scheme by affine schemes, which implies the fppf-sheaf $\ca{O}_{\mrm{perf}}$ is a $\tau$-sheaf (cf. \cite[\href{https://stacks.math.columbia.edu/tag/0ETM}{0ETM}]{stacks-project}). The vanishing of higher \v Cech cohomologies implies that $H^q_\tau(X, \ca{O}_{\mrm{perf}})=0$ for any affine $\bb{F}_p$-scheme $X$ and any $q>0$  (\cite[\href{https://stacks.math.columbia.edu/tag/03F9}{03F9}]{stacks-project}). Therefore, for a coherent $\bb{F}_p$-scheme $X$, $H^q_\tau(X, \ca{O}_{\mrm{perf}})$ can be computed by the hyper-\v Cech cohomology of a hypercovering of $X$ formed by affine open subschemes (\cite[\href{https://stacks.math.columbia.edu/tag/01GY}{01GY}]{stacks-project}). In particular, we have $H^q_\tau(X, \ca{O}_{\mrm{perf}})=H^q_{\mrm{fppf}}(X, \ca{O}_{\mrm{perf}})$ for any integer $q$, which completes the proof by \ref{lem:fppf-descent}.
\end{proof}

\begin{mylem}[Gabber]\label{lem:h-descent}
	The augmented \v Cech complex \eqref{eq:arc-descent} is exact for any h-covering $\spec(B) \to \spec(A)$ of affine $\bb{F}_p$-schemes.
\end{mylem}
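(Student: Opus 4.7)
My plan is to apply Lemma \ref{lem:star-descent} to reformulate the problem: proving exactness of the augmented \v Cech complex \eqref{eq:arc-descent} for every h-covering of affine $\bb{F}_p$-schemes is equivalent to showing that $\ca{O}_{\mrm{perf}}$ is an h-sheaf on $\schqcqs_{\bb{F}_p}$ together with the cohomological identification $H^q_{\mrm{h}}(X,\ca{O}_{\mrm{perf}})=\colim_{\frob}H^q(X,\ca{O}_X)$ for every coherent $\bb{F}_p$-scheme $X$ and every integer $q$. Lemma \ref{lem:fppf-descent} already provides this statement for the fppf topology. Since the h-topology on $\schqcqs_{\bb{F}_p}$ is generated by fppf coverings together with proper surjective morphisms of finite presentation (as reflected in Lemma \ref{lem:arc-parc}.(\ref{lem:arc-parc-h}) and its proof), the task reduces to establishing descent along proper surjective morphisms of finite presentation.

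Via noetherian approximation (Lemma \ref{lem:arc-parc}.(\ref{lem:arc-parc-limit-h}) combined with Lemma \ref{lem:mu-inv}), one may further assume that we are dealing with a proper surjective morphism $f:Y\to X$ between noetherian affine $\bb{F}_p$-schemes. At this stage the plan is to use an alteration theorem (\`a la de Jong-Gabber) to refine $f$ by a composition of a proper birational modification with regular source and a generically finite alteration, and then to verify descent for each type of piece. The key characteristic-$p$ input is that for a proper birational morphism $g:Y'\to X'$ between regular noetherian $\bb{F}_p$-schemes, the cone of $\ca{O}_{X'}\to \rr g_*\ca{O}_{Y'}$ is annihilated by some iterate of the Frobenius, so that it vanishes after applying $(-)_{\mrm{perf}}$; a dual argument, exploiting that inseparable extensions become isomorphisms after perfection, handles the alteration part. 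Combined with the already established fppf descent, this delivers both the sheaf property of $\ca{O}_{\mrm{perf}}$ and the cohomological comparison in the h-topology.

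The principal obstacle is precisely this Frobenius-annihilation statement: verifying that the cone of $\ca{O}_{X}\to \rr f_*\ca{O}_{Y}$ is killed by a power of $\frob$ for proper birational $f$ between regular noetherian $\bb{F}_p$-schemes. This is the only step that relies in an essential way on the characteristic-$p$ nature of the problem, all other steps being formal reductions built from the structure theory of h-coverings in Lemma \ref{lem:arc-parc}, noetherian approximation via Lemma \ref{lem:mu-inv}, and the already available fppf descent from Lemma \ref{lem:fppf-descent}.
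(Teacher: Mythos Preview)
The paper does not prove this lemma: its entire proof is a citation to Gabber's result as written up in \cite[3.3]{bhatt2017gabber} and \cite[\href{https://stacks.math.columbia.edu/tag/0EWU}{0EWU}]{stacks-project}, together with an invocation of Lemma~\ref{lem:star-descent} to pass between the two equivalent formulations. So there is no argument in the paper to compare against; you are attempting to reconstruct a cited result.

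Your strategy is in the right spirit (reduction via Lemma~\ref{lem:star-descent}, noetherian approximation, and handling proper surjective maps separately), but it contains a gap beyond the one you already flag. You propose to reduce, via alterations, to a proper birational morphism $g:Y'\to X'$ between \emph{regular} noetherian $\bb{F}_p$-schemes. Alteration theorems (de Jong, Gabber) produce a regular \emph{source}, but they do not make the target regular, and resolution of singularities in positive characteristic is unavailable in the generality you need. The actual argument in the cited references does not require regularity on either side: the key input is that for a proper surjective morphism $f:Y\to X$ of noetherian $\bb{F}_p$-schemes, the cone of $\ca{O}_X\to\rr f_*\ca{O}_Y$ is annihilated by a power of Frobenius (equivalently, $\ca{O}_{X,\mrm{perf}}\to\rr f_*\ca{O}_{Y,\mrm{perf}}$ is an isomorphism), and this is established by an induction on dimension combined with alterations, not by arranging both schemes to be regular. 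Your ``principal obstacle'' is therefore correctly located as the heart of the matter but misformulated; filling it in amounts to reproducing the cited references, so citing is the appropriate move here.
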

\begin{proof}
	This is a result of Gabber, cf. \cite[3.3]{bhatt2017gabber} or \cite[\href{https://stacks.math.columbia.edu/tag/0EWU}{0EWU}]{stacks-project}, and \ref{lem:star-descent}.
\end{proof}

\begin{mylem}[{\cite[4.1]{bhattscholze2017witt}}]\label{lem:v-descent}
	The augmented \v Cech complex \eqref{eq:arc-descent} is exact for any v-covering $\spec(B) \to \spec(A)$ of affine $\bb{F}_p$-schemes.
\end{mylem}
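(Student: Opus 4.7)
The plan is to deduce v-descent from Gabber's h-descent (Lemma \ref{lem:h-descent}) by a limit argument, using the approximation of v-coverings by h-coverings established in Lemma \ref{lem:descendv-cov}.

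First, I would apply Lemma \ref{lem:descendv-cov} to the v-covering $\{\spec(B) \to \spec(A)\}$ (with $X = \spec(\bb{F}_p)$ and $I$ a singleton) to obtain a directed inverse system $(Y_\lambda)_{\lambda \in \Lambda}$ of finitely presented $\bb{F}_p$-schemes with affine transition morphisms, together with a compatible system $(Y_{*\lambda})_{\lambda \in \Lambda}$ of finitely presented $Y_\lambda$-schemes with affine transition morphisms, such that $\spec(A) = \lim Y_\lambda$, $\spec(B) = \lim Y_{*\lambda}$, and such that each $\{Y_{*\lambda} \to Y_\lambda\}$ is an h-covering. Since $\spec(A)$ is affine, after further refining the system by affine open covers and using \cite[8.8.2, 8.10.5]{ega4-3} together with \ref{lem:arc-parc}.(\ref{lem:arc-parc-limit-h}), one may arrange that each $Y_\lambda = \spec(A_\lambda)$ and $Y_{*\lambda} = \spec(B_\lambda)$ is affine with $A = \colim A_\lambda$ and $B = \colim B_\lambda$.

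Next, I would observe that the perfection functor $(-)_{\mrm{perf}}$ is left adjoint to the inclusion of perfect $\bb{F}_p$-algebras into all $\bb{F}_p$-algebras, so it commutes with colimits; in particular it commutes with filtered colimits and with tensor products, giving the identity $(M \otimes_R N)_{\mrm{perf}} = M_{\mrm{perf}} \otimes_{R_{\mrm{perf}}} N_{\mrm{perf}}$. Combined with the compatibility of filtered colimits with tensor products, this shows that for each $n \geq 0$ the $n$-th term $B_{\mrm{perf}}^{\otimes_{A_{\mrm{perf}}} n}$ of the complex \eqref{eq:arc-descent} is the filtered colimit of the corresponding terms $(B_\lambda)_{\mrm{perf}}^{\otimes_{(A_\lambda)_{\mrm{perf}}} n}$. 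Hence the augmented \v{C}ech complex \eqref{eq:arc-descent} for $A \to B$ is the filtered colimit over $\lambda$ of the augmented \v{C}ech complexes
\begin{align}
    0 \to (A_\lambda)_{\mrm{perf}} \to (B_\lambda)_{\mrm{perf}} \to (B_\lambda)_{\mrm{perf}} \otimes_{(A_\lambda)_{\mrm{perf}}} (B_\lambda)_{\mrm{perf}} \to \cdots .
\end{align}

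Finally, each complex in the colimit system is exact by Lemma \ref{lem:h-descent}, since $\spec(B_\lambda) \to \spec(A_\lambda)$ is an h-covering of affine $\bb{F}_p$-schemes. Since filtered colimits of exact sequences of abelian groups are exact, the complex \eqref{eq:arc-descent} for $A \to B$ is exact, completing the proof.

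The main obstacle is the affinization step in the first paragraph: Lemma \ref{lem:descendv-cov} produces only finitely presented $X$-schemes (not affine), so some care is needed to refine the system so that the limits remain $\spec(A)$ and $\spec(B)$ while each level is affine and each $\{\spec(B_\lambda) \to \spec(A_\lambda)\}$ remains an h-covering; once this is done, the colimit argument is routine.
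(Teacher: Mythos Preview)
Your overall strategy---reduce to Gabber's h-descent via a filtered-colimit argument---matches the paper's, but you have made it needlessly difficult by approximating \emph{both} $A$ and $B$ via Lemma~\ref{lem:descendv-cov}. The paper keeps $A$ fixed and simply writes $B=\colim_\lambda B_\lambda$ as a filtered colimit of finitely presented $A$-algebras; then each $\spec(B_\lambda)\to\spec(A)$ is automatically affine and is an h-covering by Lemma~\ref{lem:arc-parc-basic}(3) (it is of finite presentation and is refined by the v-covering $\spec(B)\to\spec(A)$). The colimit argument you wrote then goes through verbatim with $A_\lambda=A$ for all $\lambda$.

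This simplification dissolves the ``main obstacle'' you flagged: by working over the fixed affine base $\spec(A)$ rather than $\spec(\bb{F}_p)$, every stage is affine by construction and no refinement by open covers is needed. Your version can presumably be repaired (e.g.\ by presenting $A$ as a colimit of finitely generated $\bb{F}_p$-subalgebras and redoing the descent of the h-covering property), but it is extra work for no gain.
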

\begin{proof}
	We write $B$ as a filtered colimit of finitely presented $A$-algebras $B = \colim B_\lambda$. Then, $\spec(B_\lambda) \to \spec(A)$ is an h-covering for each $\lambda$ by \ref{lem:arc-parc-basic}. Notice that $B_{\mrm{perf}} = \colim B_{\lambda,\mrm{perf}}$, then the conclusion follows from applying \ref{lem:h-descent} on $\spec(B_\lambda) \to \spec(A)$ and taking colimit.
\end{proof}

\begin{mylem}[{\cite[6.3]{bhattscholze2017witt}}]\label{lem:tech1}
	For any valuation ring $V$ and any prime ideal $\ak{p}$ of $V$, the sequence
	\begin{align}\label{eq:excision-valu}
	0 \longrightarrow V \stackrel{\alpha}{\longrightarrow} V/\ak{p} \oplus V_{\ak{p}} \stackrel{\beta}{\longrightarrow} V_{\ak{p}}/\ak{p}V_{\ak{p}} \longrightarrow 0
	\end{align}
	is exact, where $\alpha(a) = (a,a)$ and $\beta(a,b) = a-b$. If moreover $V$ is a perfect $\bb{F}_p$-algebra, then for any perfect $V$-algebra $R$, the base change of \eqref{eq:excision-valu} by $V\to R$,
	\begin{align}\label{eq:excision-valu-tensor}
	0 \longrightarrow R \longrightarrow R/\ak{p}R \oplus R_{\ak{p}} \longrightarrow R_{\ak{p}}/\ak{p}R_{\ak{p}} \longrightarrow 0
	\end{align}
	is exact.
\end{mylem}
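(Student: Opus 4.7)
The plan is to establish part (1) directly from the valuation ring structure of $V$, and then leverage it to prove part (2), where an additional argument is needed to handle left exactness because $V_{\ak{p}}/\ak{p}V_{\ak{p}}$ is not a flat $V$-module (it has $\ak{p}$-torsion).

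For part (1), injectivity of $\alpha$ is immediate since $V$ is a domain, and surjectivity of $\beta$ is clear by lifting. For exactness at the middle, given $(a,b)\in V/\ak{p}\oplus V_{\ak{p}}$ with $\beta(a,b)=0$, I would lift $a$ to some $\tilde a\in V$, reducing to showing that any $b-\tilde a\in \ak{p}V_{\ak{p}}$, written as $p/s$ with $p\in\ak{p}$ and $s\in V\setminus\ak{p}$, admits a representative in $\ak{p}\subseteq V$. The key point is the valuation-ring dichotomy: either $p/s\in V$ or $s/p\in V$; the latter would yield $s=(s/p)\cdot p\in\ak{p}$, a contradiction, so $p/s\in V$, and primality of $\ak{p}$ then forces $p/s\in\ak{p}$. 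Thus $\tilde a+p/s\in V$ is the required lift.

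For part (2), the right surjectivity and the middle exactness transfer word-for-word: writing any element of $(\ak{p}R)_{V\setminus\ak{p}}$ as $\sum p_i r_i/s$ with $p_i\in\ak{p}$, $r_i\in R$, $s\in V\setminus\ak{p}$, the same trick produces a preimage $\sum (p_i/s)r_i\in \ak{p}R$ in $R$ itself, since each $p_i/s$ already lives in $\ak{p}\subset V$. The genuinely new step is injectivity of $R\to R/\ak{p}R\oplus R_{V\setminus\ak{p}}$. Given $c\in R$ with $c\in\ak{p}R$ and $tc=0$ in $R$ for some $t\in V\setminus\ak{p}$, I would use that a perfect $\bb{F}_p$-algebra is reduced, so the canonical map $R\to\prod_{\ak{q}\in\spec(R)}R/\ak{q}$ is injective. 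It then suffices to check $c\in\ak{q}$ for every prime $\ak{q}$; primality of $\ak{q}$ applied to $tc=0$ gives either $c\in\ak{q}$ (done) or $t\in\ak{q}$. In the latter case the kernel of $V\to R/\ak{q}$ contains $t$, and since the ideals of $V$ are totally ordered by inclusion and $t\notin\ak{p}$, this kernel must strictly contain $\ak{p}$; hence $\ak{p}R\subseteq\ak{q}$, and from $c\in\ak{p}R$ we again conclude $c\in\ak{q}$.

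The main obstacle is precisely this injectivity step, because it is the only place where the tensor-product formalism fails and genuine input beyond part (1) is needed. The role of the perfectness hypothesis is solely to guarantee that $R$ is reduced, which then combines with the total ordering of ideals of $V$ to rule out any nontrivial kernel.
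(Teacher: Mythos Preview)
Your argument is correct. Part (1) matches the paper's proof: both reduce exactness to the equality $\ak{p}=\ak{p}V_{\ak{p}}$ via the valuation-ring dichotomy.

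For part (2), however, you take a genuinely different route. The paper dispatches the entire base-changed sequence in one line by invoking the Tor-independence of perfect $\bb{F}_p$-algebras \cite[3.16]{bhattscholze2017witt}: since $V$, $R$, and $V_{\ak{p}}/\ak{p}V_{\ak{p}}$ are all perfect, $\op{Tor}_1^V(V_{\ak{p}}/\ak{p}V_{\ak{p}},R)=0$, and exactness is preserved under $-\otimes_V R$. Your approach instead handles middle exactness by the same $\ak{p}=\ak{p}V_{\ak{p}}$ trick and then proves injectivity by an elementary reducedness argument combined with the total ordering of primes in $V$. This is more hands-on but entirely self-contained; it avoids the external Tor-vanishing input. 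As you observe, your argument uses perfectness only through reducedness of $R$, so it in fact establishes the stronger statement that \eqref{eq:excision-valu-tensor} is exact for any reduced $V$-algebra $R$ over any valuation ring $V$. The paper's approach, by contrast, is shorter and conceptually cleaner once one is willing to cite Tor-independence, and that result is used elsewhere in the paper anyway.
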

\begin{proof}
	The sequence \eqref{eq:excision-valu} is exact if and only if $\ak{p}=\ak{p}V_{\ak{p}}$. Let $a \in \ak{p}$ and $s \in V \setminus \ak{p}$. Since $\ak{p}$ is an ideal, $s/a \notin V$, thus $a/s \in V$ as $V$ is a valuation ring. Moreover, we must have $a/s \in \ak{p}$ as $\ak{p}$ is a prime ideal. This shows the equality $\ak{p}=\ak{p}V_{\ak{p}}$.
	
	The second assertion follows directly from the fact that $\op{Tor}_q^A(B,C) =0 $ for any $q>0$ and any diagram $B\leftarrow A \to C$ of perfect $\bb{F}_p$-algebras (\cite[3.16]{bhattscholze2017witt}).
\end{proof}

\begin{mylem}[{\cite[4.8]{bhattmathew2020arc}}]\label{lem:tech2}
	The augmented \v Cech complex \eqref{eq:arc-descent} is exact for any arc-covering $\spec(B) \to \spec(A)$ of affine $\bb{F}_p$-schemes with $A$ a valuation ring.
\end{mylem}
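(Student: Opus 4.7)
My plan follows the approach of Bhatt--Mathew, combining the excision sequence of Lemma \ref{lem:tech1} with the v-descent of Lemma \ref{lem:v-descent} via a dévissage on the structure of valuation rings. First I would reduce to the case where $A$ and $B$ are perfect. The perfection $A_{\mrm{perf}}$ of a characteristic $p$ valuation ring is again a valuation ring (it is a filtered colimit of copies of $A$ along the Frobenius, which is an injective and local homomorphism), and $\spec(B_{\mrm{perf}}) \to \spec(A_{\mrm{perf}})$ remains an arc-covering because perfection is a universal homeomorphism on spectra. Moreover tensor products of perfect $\mathbb{F}_p$-algebras are themselves perfect, so the terms of the augmented \v Cech complex \eqref{eq:arc-descent} coincide with those of the ordinary \v Cech complex of $A_{\mrm{perf}} \to B_{\mrm{perf}}$.

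The heart of the argument is a transfinite induction along the totally ordered poset of prime ideals of $A$: by Zorn's lemma I may assume that the result holds for $A/\ak{q} \to B/\ak{q}B$ for every nonzero prime $\ak{q}$ of $A$ (still an arc-covering by Lemma \ref{lem:arc-parc-basic}). If $A$ admits a minimal nonzero prime $\ak{p}$, I apply the excision sequence
\begin{align*}
0 \to A \to A/\ak{p} \oplus A_{\ak{p}} \to A_{\ak{p}}/\ak{p}A_{\ak{p}} \to 0
\end{align*}
of Lemma \ref{lem:tech1}. Every term of the \v Cech complex of $A \to B$ is a perfect $\mathbb{F}_p$-algebra, and since $\mrm{Tor}$ vanishes between perfect $\mathbb{F}_p$-algebras (\cite[3.16]{bhattscholze2017witt}), tensoring this excision sequence with the \v Cech complex preserves exactness, expressing the \v Cech complex of $A \to B$ as the fiber of a morphism of \v Cech complexes over $A/\ak{p}$, $A_{\ak{p}}$ and $A_{\ak{p}}/\ak{p}A_{\ak{p}}$. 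The first is exact by the inductive hypothesis, while the latter two fall into the base case since $A_{\ak{p}}$ has height $1$ and $A_{\ak{p}}/\ak{p}A_{\ak{p}}$ is a field. If instead $A$ has no minimal nonzero prime, then $\bigcap_{\ak{q}\ne 0}\ak{q} = 0$ and a filtered colimit argument over the quotients $A/\ak{q}$ (which form a cofiltered system with trivial intersection) concludes.

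The remaining base case is that of a perfect valuation ring $A$ of height $\leq 1$ (in particular a field). Here the key observation is that every arc-covering of $\spec(A)$ is automatically a v-covering: any morphism $\spec(V) \to \spec(A)$ from an arbitrary valuation ring $V$ has image consisting of at most two points in $\spec(A)$, and can be arranged (after passing to a suitable quotient and localization of $V$) as a morphism from a height-$\leq 1$ valuation ring, to which the arc-covering property produces the desired lift back in $\spec(B)$. The result then follows from Lemma \ref{lem:v-descent}. I expect the main obstacle to be the dévissage step, in particular handling valuation rings of limit type (no minimal nonzero prime), where care with the filtered colimit argument is needed; the base case also requires verifying that lifts of morphisms from arbitrary valuation rings can in fact be produced from the arc-covering hypothesis alone.
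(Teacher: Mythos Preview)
The overall strategy---reduce to the height-$\leq 1$ case via excision (Lemma~\ref{lem:tech1}) and invoke v-descent (Lemma~\ref{lem:v-descent}) there---matches the paper's, but your d\'evissage scheme differs and has a genuine gap at the limit step.

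Your induction is over quotients $A/\ak{q}$: assume the result for every nonzero $\ak{q}$ and deduce it for $A$. When $A$ admits a minimal nonzero prime $\ak{p}$, your excision argument is fine. But when $A$ has no minimal nonzero prime you assert that ``a filtered colimit argument over the quotients $A/\ak{q}$'' concludes. This does not work: the $A/\ak{q}$ form an \emph{inverse} system (via surjections), and $A$ is in general not $\varprojlim_{\ak{q}\neq 0} A/\ak{q}$ (no completeness hypothesis on $A$). Even granting such an identification, inverse limits do not preserve exactness of the \v Cech complexes without a Mittag-Leffler input you have not supplied. There is no honest filtered colimit here either: $A$ is not a colimit of its proper quotients.

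The paper runs the d\'evissage over \emph{intervals} $I=[\ak{p},\ak{q}]$ with $A_I=(A/\ak{p})_{\ak{q}}$, not just quotients, and---crucially---first reduces to $B$ finitely presented over $A$. This reduction, absent from your proposal, is what makes the limit step go through: when one must extend across a ``limit'' prime $\ak{p}$, the paper writes the field $A_{\ak{p}}/\ak{p}A_{\ak{p}}$ as the filtered colimit $\colim_I A_I$ over nearby intervals, notes that over this field the base change of $B$ gives an h-covering, and uses Lemma~\ref{lem:arc-parc}.(\ref{lem:arc-parc-limit-h}) (which needs finite presentation) to spread the h-covering back to some $A_I$. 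Working with both localizations and quotients is what allows a genuine colimit to appear.

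A smaller point: your base-case argument is not correct as written. Passing from an arbitrary test valuation ring $V$ to a height-$\leq 1$ subquotient $V'$ and lifting $V'$ to $\spec(B)$ does not produce an extension of $V$ itself, since $V\to V'$ is typically not injective. The paper instead applies the arc-covering hypothesis to the identity $\spec(A)\to\spec(A)$ to obtain a height-$\leq 1$ extension $A\to V$ factoring through $B$; as $A\to V$ is faithfully flat, $\spec(B)\to\spec(A)$ is a v-covering by Lemma~\ref{lem:arc-parc-basic} and Lemma~\ref{lem:arc-parc}.(\ref{lem:arc-parc-v}).
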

\begin{proof}
	We follow the proof of Bhatt-Mathew \cite[4.8]{bhattmathew2020arc}. Let $B = \colim B_\lambda$ be a filtered colimit of finitely presented $A$-algebras. Then, $\spec(B_\lambda)  \to \spec(A) $ is also an arc-covering by \ref{lem:arc-parc-basic}. Thus, we may assume that $B$ is a finitely presented $A$-algebra.
	
	An interval $I=[\ak{p}, \ak{q}]$ of a valuation ring $A$ is a pair of prime ideals $\ak{p} \subseteq \ak{q}$ of $A$. We denote by $A_I = (A/\ak{p})_{\ak{q}}$. The set $\ca{I}$ of intervals of $A$ is partially ordered under inclusion. Let $\ca{P}$ be the subset consisting of intervals $I$ such that the lemma holds for $\spec(B\otimes_A A_I) \to \spec(A_I)$. It suffices to show that $\ca{P} = \ca{I}$.
	
	\begin{enumerate}
		\item If the valuation ring $A_I$ is of height $\leq 1$, we claim that $\spec(B\otimes_A A_I)  \to \spec(A_I) $ is automatically a v-covering. Indeed, there is an extension of valuation rings $A_I \to V$ of height $\leq 1$ which factors through $B\otimes_A A_I$. As $A_I \to V$ is faithfully flat, $\spec(B\otimes_A A_I) \to \spec(A_I)$ is a v-covering by \ref{lem:arc-parc-basic} and \ref{lem:arc-parc}.(\ref{lem:arc-parc-v}).  Therefore, $I \in \ca{P}$ by \ref{lem:v-descent}. 
		\item For any interval $J \subseteq I$, if $I \in \ca{P}$ then $J \in \ca{P}$. Indeed, applying $\otimes_{\bb{F}_p} (A_J)_{\mrm{perf}}$ to the exact sequence \eqref{eq:arc-descent} for $\spec(B\otimes_A A_I) \to \spec(A_I)$, we still get an exact sequence by the Tor-independence of perfect $\bb{F}_p$-algebras (\cite[3.16]{bhattscholze2017witt}).
		\item If $\ak{p} \subseteq A$ is not maximal, then there exists $\ak{q} \supsetneq \ak{p}$ with $I=[\ak{p}, \ak{q}] \in \ca{P}$. Indeed, if there is no such $I$ with the height of $A_I$ no more than $1$, then $\ak{p} = \bigcap_{\ak{q} \supsetneq \ak{p}} \ak{q}$, and thus,
		\begin{align}
		A_{\ak{p}}/\ak{p}A_{\ak{p}} = \colim_{I=[\ak{p}, \ak{q}],\ak{q} \supsetneq \ak{p}} A_I.
		\end{align}
		Since $\spec(B\otimes_A {A_{\ak{p}}/\ak{p}A_{\ak{p}}}) \to \spec(A_{\ak{p}}/\ak{p}A_{\ak{p}})$ is an h-covering as $A_{\ak{p}}/\ak{p}A_{\ak{p}}$ is a field (and we have assumed that $B$ is of finite presentation over $A$), there exists an interval $I$ in the above colimit, such that $\spec(B\otimes_A A_I) \to \spec(A_I)$ is also an h-covering by \ref{lem:arc-parc}.(\ref{lem:arc-parc-limit-h}). Therefore, this $I$ lies in $\ca{P}$ by \ref{lem:v-descent}.
		\item If $\ak{p} \subseteq A$ is nonzero, then there exists $\ak{q} \subsetneq \ak{p}$ with $I=[\ak{q}, \ak{p}] \in \ca{P}$. This is similar to (3).
		\item If $I, J \in \ca{P}$ are overlapping, then $I \cup J \in \ca{P}$. Indeed, by (2) and replacing $A$ by $A_{I \cup J}$, we may assume that $I = [0, \ak{p}]$, $J = [\ak{p}, \ak{m}]$ with $\ak{m}$ the maximal ideal. In particular, $A_I = A_{\ak{p}}$, $A_J = A/ \ak{p}$, and $A_{I \cap J} = A_{\ak{p}}/\ak{p}A_{\ak{p}}$. Since for each $R = \otimes_{A_{\mrm{perf}}}^n B_{\mrm{perf}}$ we have the short exact sequence \eqref{eq:excision-valu-tensor}, we get $I \cup J \in \ca{P}$.
	\end{enumerate}
	In general, by Zorn's lemma, the above five properties of $\ca{P}$ guarantee that $\ca{P} = \ca{I}$ (cf. \cite[4.7]{bhattmathew2020arc}).
\end{proof}

\begin{mylem}[{cf. \cite[3.30]{bhattmathew2020arc}}]\label{lem:tech3}
	The augmented \v Cech complex \eqref{eq:arc-descent} is exact for any arc-covering $\spec(B) \to \spec(A)$ of affine $\bb{F}_p$-schemes with $A$ a product of valuation rings.
\end{mylem}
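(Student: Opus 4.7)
The plan is to reduce to the valuation-ring case (Lemma \ref{lem:tech2}) by checking exactness of \eqref{eq:arc-descent}, viewed as a complex of $A_{\mrm{perf}}$-modules, after localizing at each prime $\ak{p}\subset A_{\mrm{perf}}$. The crucial observation is that the natural map $A\to A_{\mrm{perf}}$ induces a homeomorphism on spectra, because the Frobenius acts as the identity on the underlying set of $\spec$ of any $\bb{F}_p$-algebra (so the transition maps in the filtered colimit defining $A_{\mrm{perf}}$ are bijective on points). Hence primes $\ak{p}\subset A_{\mrm{perf}}$ correspond bijectively to primes $\ak{q}\subset A$, and since perfection commutes with filtered colimits (in particular with localization) and with tensor products, the localization of \eqref{eq:arc-descent} at $\ak{p}$ identifies canonically with the complex \eqref{eq:arc-descent} attached to the base change $\spec(B\otimes_A A_{\ak{q}})\to\spec(A_{\ak{q}})$, which is still an arc-covering by \ref{lem:arc-parc-basic}.

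To invoke Lemma \ref{lem:tech2} I must therefore show that $A_{\ak{q}}$ is a valuation ring. For this I use \ref{lem:prod-val}.(\ref{lem:prod-val-conn}): the connected component of $\spec(A)$ through $\ak{q}$, equipped with its reduced closed subscheme structure, is $\spec(V)$ for a valuation ring $V=A/J$. A short direct check shows that every generator $1-e$ of $J$ (for $e$ an idempotent not lying in $\ak{q}$) already vanishes in $A_{\ak{q}}$: indeed $e(1-e)=0$, and $e\notin\ak{q}$ is invertible in $A_{\ak{q}}$. Hence the map $A_{\ak{q}}\to V_{\ak{q}V}$ is an isomorphism, and $V_{\ak{q}V}$ is again a valuation ring by \ref{para:notation-val}. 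Applying Lemma \ref{lem:tech2} to each localized arc-covering then yields exactness of the localized complex at every $\ak{p}\subset A_{\mrm{perf}}$, whence the exactness of \eqref{eq:arc-descent}.

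The main potential obstacle is the identification $A_{\ak{q}}\cong V_{\ak{q}V}$: one must verify that $\ak{q}$-localization destroys enough of the idempotent structure of $\prod V_i$ to collapse it onto the connected-component ring $V$. Once this is done, together with the routine compatibilities of perfection with localization and with tensor products (and the bijection of primes $\spec(A_{\mrm{perf}})\leftrightarrow\spec(A)$), the statement reduces immediately to the preceding lemma.
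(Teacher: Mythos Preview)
Your argument is correct, and it takes a genuinely different route from the paper's.

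The paper does not localize at primes. Instead, for each connected component $\spec(V)$ of $\spec(A)$ it writes $V=\colim A'$ as a filtered colimit over the clopen subrings $\spec(A')\supseteq\spec(V)$ (using reducedness of $A$ and \cite[\href{https://stacks.math.columbia.edu/tag/04PP}{04PP}]{stacks-project}), applies \ref{lem:tech2} to $\spec(B\otimes_A V)\to\spec(V)$, and then uses a limit/compactness argument: given a cocycle $f\in\otimes^n_{A_{\mrm{perf}}}B_{\mrm{perf}}$, one finds, for each connected component, a clopen $\spec(A')$ on which $f$ becomes a coboundary, and then covers $\spec(A)$ by finitely many such clopens to conclude.

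Your approach replaces this limit/compactness step by the single observation that exactness can be checked after localizing at every $\ak{p}\in\spec(A_{\mrm{perf}})$, together with the identification $(A_{\mrm{perf}})_{\ak{p}}\cong(A_{\ak{q}})_{\mrm{perf}}$ and $A_{\ak{q}}\cong V_{\ak{q}V}$. This is cleaner and more conceptual; the price is that you must justify the identification $A_{\ak{q}}\cong V_{\ak{q}V}$, which in turn rests on knowing that the defining ideal $J$ of the connected component is generated by the elements $1-e$ with $e$ idempotent and $e\notin\ak{q}$. That fact is true (again via \cite[\href{https://stacks.math.columbia.edu/tag/04PP}{04PP}]{stacks-project} and reducedness of $A$), but you state it without proof; a one-line remark that $A/J=\colim_{e\notin\ak{q}}A_e$ is reduced, hence $J$ is exactly this radical ideal, would make the argument self-contained. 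Both proofs ultimately feed the same input \ref{lem:prod-val}.(\ref{lem:prod-val-conn}) into \ref{lem:tech2}; they differ only in how the local exactness is globalized.
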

\begin{proof}
	We follow closely the proof of \ref{prop:v-site-stalk}. Let $\spec(V)$ be a connected component of $\spec(A)$ with the reduced closed subscheme structure. Then, $V$ is a valuation ring by \ref{lem:prod-val}.(\ref{lem:prod-val-conn}). By \ref{lem:tech2}, the augmented \v Cech complex
	\begin{align}\label{eq:1.8.1}
		0 \to V_{\mrm{perf}} \to (B\otimes_A V)_{\mrm{perf}} \to (B\otimes_A V)_{\mrm{perf}}\otimes_{V_{\mrm{perf}}}(B\otimes_A V)_{\mrm{perf}} \to \cdots
	\end{align}
	is exact. Notice that each connected component of an affine scheme is the intersection of some open and closed subsets (\cite[\href{https://stacks.math.columbia.edu/tag/04PP}{04PP}]{stacks-project}). Moreover, since $A$ is reduced, we have $V = \colim A'$, where the colimit is taken over all the open and closed subschemes $\spec(A')$ which contain $\spec(V)$. 
	
	Therefore, by a limit argument, for an element $f \in \otimes^n_{A_{\mrm{perf}}}B_{\mrm{perf}}$ which maps to zero in $\otimes^{n+1}_{A_{\mrm{perf}}}B_{\mrm{perf}}$, as $\spec(A)$ is quasi-compact, we can decompose $\spec(A)$ into a finite disjoint union $\coprod_{i=1}^N \spec(A_i)$ such that there exists $g_i \in \otimes^{n-1}_{A_{i,\mrm{perf}}}(B \otimes_A {A_i})_{\mrm{perf}}$ which maps to the image $f_i$ of $f$ in $\otimes^{n}_{A_{i,\mrm{perf}}}(B \otimes_A {A_i})_{\mrm{perf}}$. Since we have
	\begin{align}
	\otimes^{n}_{A_{\mrm{perf}}}B_{\mrm{perf}} = \prod_{i=1}^N \otimes^{n}_{A_{i,\mrm{perf}}}(B \otimes_A {A_i})_{\mrm{perf}},
	\end{align} 
	the element $g=(g_i)_{i=1}^N$ maps to $f$, which shows the exactness of \eqref{eq:arc-descent}.
\end{proof}

\begin{myprop}[{\cite[8.10]{bhattscholze2019prisms}}]\label{prop:arc-descent}
	Let $\tau \in \{\trm{fppf, h, v, arc}\}$.
	\begin{enumerate}
		\renewcommand{\labelenumi}{{\rm(\theenumi)}}
		\item The presheaf $\ca{O}_{\mrm{perf}}$ is a $\tau$-sheaf over $\schqcqs_{\bb{F}_p}$, and for any coherent $\bb{F}_p$-scheme $X$ and any integer $q$,
		\begin{align}
			H^q_\tau(X, \ca{O}_{\mrm{perf}}) = \colim_{\frob} H^q(X,\ca{O}_X).
		\end{align}
		\item  For any $\tau$-covering $\spec(B) \to \spec(A)$ of affine $\bb{F}_p$-schemes, the augmented \v Cech complex 
		\begin{align}\label{eq:cech}
			0 \to A_{\mrm{perf}} \to B_{\mrm{perf}} \to B_{\mrm{perf}} \otimes_{A_{\mrm{perf}}} B_{\mrm{perf}} \to \cdots
		\end{align}
		is exact.
	\end{enumerate}
\end{myprop}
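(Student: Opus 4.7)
The plan is to use the equivalence from Lemma \ref{lem:star-descent}, which identifies parts (1) and (2), so that it suffices to prove the exactness of the augmented \v Cech complex \eqref{eq:cech}. The cases $\tau \in \{\trm{fppf, h, v}\}$ are already furnished by Lemmas \ref{lem:fppf-descent}, \ref{lem:h-descent}, and \ref{lem:v-descent} respectively, leaving only the case $\tau = \trm{arc}$.

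For an arc-covering $\spec(B) \to \spec(A)$, the idea is to reduce to the product-of-valuation-rings base case handled in Lemma \ref{lem:tech3}. Applying Proposition \ref{prop:prod-val-cov} to $X = X^\circ = \spec(A)$ produces a v-covering $\spec(C) \to \spec(A)$ with $C$ a product of absolutely integrally closed valuation rings. This is also an arc-covering by Lemma \ref{lem:arc-parc}.(\ref{lem:arc-parc-arc}), and the base change $\spec(B \otimes_A C) \to \spec(C)$ is therefore an arc-covering whose base is a product of valuation rings, so \ref{lem:tech3} yields the exactness of its perfected augmented \v Cech complex.

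To deduce exactness of \eqref{eq:cech} itself, I would form the augmented \v Cech--\v Cech bicomplex
\[
K^{p,q} = \Bigl( \bigotimes\nolimits_A^{p+1} B \otimes_A \bigotimes\nolimits_A^{q+1} C \Bigr)_{\mrm{perf}}, \qquad p,q \geq -1,
\]
with the convention that a $(-1)$-th factor encodes the augmentation, so that $K^{-1,-1} = A_{\mrm{perf}}$, $K^{p,-1} = (\bigotimes\nolimits_A^{p+1} B)_{\mrm{perf}}$, and $K^{-1,q} = (\bigotimes\nolimits_A^{q+1} C)_{\mrm{perf}}$. For every fixed $p \geq -1$, the row $K^{p,\bullet}$ is the perfected augmented \v Cech complex for $\spec(C) \to \spec(A)$, tensored over $A_{\mrm{perf}}$ with $K^{p,-1}$; by Lemma \ref{lem:v-descent} and the Tor-independence of perfect $\bb{F}_p$-algebras (\cite[3.16]{bhattscholze2017witt}), every row is exact. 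The row-spectral sequence then identifies $\mrm{Tot}(K)$ with the augmenting column $K^{\bullet,-1}$, which is precisely \eqref{eq:cech}.

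The symmetric identification is the delicate step and is what I expect to be the main obstacle. The columns $K^{\bullet,q}$ for $q \geq 0$ are the perfected augmented \v Cech complexes for the base changes $\spec(B \otimes_A D_q) \to \spec(D_q)$ with $D_q = \bigotimes\nolimits_A^{q+1} C$; these are arc-coverings, but $D_q$ is not itself a product of valuation rings for $q \geq 1$, so Lemma \ref{lem:tech3} does not apply directly. My plan would be to handle this by iteration: apply Proposition \ref{prop:prod-val-cov} again to each $D_q$ and rerun the same bicomplex argument, using Tor-independence of perfect $\bb{F}_p$-algebras to propagate the exactness cleanly through each layer. Once column exactness is established, the column-spectral sequence identifies $\mrm{Tot}(K)$ with the augmenting row $K^{-1,\bullet}$, which is exact by Lemma \ref{lem:v-descent}, and comparing the two identifications forces \eqref{eq:cech} to be exact.
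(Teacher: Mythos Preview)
Your reduction to the arc case and the bicomplex strategy are exactly right, and you correctly identify the obstacle: for $q\geq 1$ the rings $D_q=\bigotimes_A^{q+1}C$ are no longer products of valuation rings, so Lemma~\ref{lem:tech3} does not apply to the columns. The gap is in your proposed fix. ``Apply \ref{prop:prod-val-cov} again to each $D_q$ and rerun the bicomplex argument'' is circular: to prove arc-exactness over $D_q$ you would form a new bicomplex with a v-cover $\spec(E_q)\to\spec(D_q)$ by a product of valuation rings, but its columns are augmented \v Cech complexes over $\bigotimes_{D_q}^{r+1}E_q$, which again fail to be products of valuation rings. You are caught in an infinite regress, and nothing in the argument explains why it should terminate.

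The paper avoids this by replacing the single \v Cech cover $\spec(C)\to\spec(A)$ with a v-\emph{hypercovering} $\spec(A_\bullet)\to\spec(A)$ in which \emph{every} $A_n$ is a product of valuation rings (such a hypercovering exists by \ref{prop:prod-val-cov} together with \cite[\href{https://stacks.math.columbia.edu/tag/094K}{094K}, \href{https://stacks.math.columbia.edu/tag/0DB1}{0DB1}]{stacks-project}). One then forms the double complex whose $i$-th row is the augmented \v Cech complex for $\spec(B\otimes_A A_i)\to\spec(A_i)$. Now the roles are reversed from your setup: the \emph{columns} are exact because they compute the v-cohomology of $\ca{O}_{\mrm{perf}}$ on affines (already known to vanish by the v-case), while the \emph{rows} for $i\geq 0$ are exact by Lemma~\ref{lem:tech3} since each $A_i$ is a product of valuation rings by construction. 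The column-exactness forces the $(-1)$-row, namely \eqref{eq:cech}, to be exact. This is precisely the ``iteration'' you were reaching for, packaged correctly.
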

\begin{proof}
	We follow closely the proof of Bhatt-Scholze \cite[8.10]{bhattscholze2019prisms}. (1) and (2) are equivalent by \ref{lem:star-descent}, and they hold for $\tau \in \{\trm{fppf, h, v}\}$ by \ref{lem:fppf-descent}, \ref{lem:h-descent} and \ref{lem:v-descent}. In particular,
	\begin{align}
		H^0_{\mrm{v}}(\spec(A),\ca{O}_{\mrm{perf}}) = A_{\mrm{perf}} \trm{ and } H^q_{\mrm{v}}(\spec(A),\ca{O}_{\mrm{perf}}) = 0, \ \forall q >0.
	\end{align}
	We take a hypercovering in the v-topology $\spec(A_\bullet) \to \spec(A)$ such that $A_n$ is a product of valuation rings for each degree $n$ by \ref{prop:prod-val-cov} and \cite[\href{https://stacks.math.columbia.edu/tag/094K}{094K} and \href{https://stacks.math.columbia.edu/tag/0DB1}{0DB1}]{stacks-project}. The associated sequence
	\begin{align}\label{eq:arc-hyperdescent}
	0 \to A_{\mrm{perf}} \to A_{0,\mrm{perf}} \to A_{1,\mrm{perf}} \to \cdots
	\end{align}
	is exact by the hyper-\v Cech-cohomology-to-cohomology spectral sequence \cite[\href{https://stacks.math.columbia.edu/tag/01GY}{01GY}]{stacks-project}.

	Consider the double complex $(A_i^j)$ where the $i$-th row $A_{i}^\bullet$ is the base change of \eqref{eq:cech} by $A_{\mrm{perf}} \to A_{i,\mrm{perf}}$, i.e. the augmented \v Cech complex \eqref{eq:arc-descent} associated to $\spec(B \otimes_A {A_i}) \to \spec(A_i)$ (we set $A_{-1}=A$). On the other hand, the $j$-th column $A_\bullet^j$ is the associated sequence \eqref{eq:arc-hyperdescent} to the hypercovering $\spec({A_{\bullet}} \otimes_A (\otimes_A^j B)) \to \spec(\otimes_A^j B)$, which is exact by the previous discussion. Since $A_{-1}^\bullet \to \mrm{Tot}(A_i^j)_{i\geq 0}^{j\geq 0}$ is a quasi-isomorphism (\cite[\href{https://stacks.math.columbia.edu/tag/0133}{0133}]{stacks-project}), for the exactness of the $(-1)$-row $A_{-1}^\bullet$, we only need to show the exactness of the $i$-th row $A_i^\bullet$ for any $i\geq 0$ but this has been proved in \ref{lem:tech3} thanks to our choice of the hypercovering, which completes the proof.
\end{proof}

\section{Almost Pre-perfectoid Algebras}\label{sec:perfectoid}

\begin{mydefn}\label{defn:perfectoid-field}
	\mbox{ }
	\begin{enumerate}
		\item A \emph{pre-perfectoid field} $K$ is a valuation field whose valuation ring $\ca{O}_K$ is non-discrete, of height $1$ and of residue characteristic $p$, and such that the Frobenius map on $\ca{O}_K/p\ca{O}_K$ is surjective. 
		\item A \emph{perfectoid field} $K$ is a pre-perfectoid field which is complete for the topology defined by its valuation (cf. \cite[3.1]{scholze2012perfectoid}).
		\item A \emph{pseudo-uniformizer} $\pi$ of a pre-perfectoid field $K$ is a nonzero element of the maximal ideal $\ak{m}_K$ of $\ca{O}_K$.
	\end{enumerate}
	A \emph{morphism of pre-perfectoid fields} $K \to L$ is a homomorphism of fields which induces an extension of valuation rings $\ca{O}_K\to \ca{O}_L$.
\end{mydefn}

\begin{mylem}\label{lem:pre-perf-field}
	Let $K$ be a pre-perfectoid field with a pseudo-uniformizer $\pi$. Then, the fraction field $\widehat{K}$ of the $\pi$-adic completion of $\ca{O}_K$ is a perfectoid field.
\end{mylem}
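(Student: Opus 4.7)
The plan is to verify directly the four defining conditions of a perfectoid field for $\widehat{K}$. The fundamental input is a standard fact about height-$1$ valuation rings: since $\pi \in \ak{m}_K$ is nonzero and $\ca{O}_K$ has height $1$, the $\pi$-adic completion $\widehat{\ca{O}_K}$ is again a valuation ring of height $\leq 1$, with the same value group and the same residue field as $\ca{O}_K$, and its fraction field is $\widehat{\ca{O}_K}[1/\pi]$. This is essentially the content of \cite[\Luoma{6}.\textsection5.3, Prop.~5]{bourbaki2006commalg5-7}, already invoked in the proof of \ref{lem:arc-parc}.(\ref{lem:arc-parc-arc}). Granting this, $\widehat{K}$ is a valuation field with valuation ring $\widehat{\ca{O}_K}$, whose residue field coincides with $\ca{O}_K/\ak{m}_K$ (hence of characteristic $p$) and whose value group is the nontrivial nondiscrete value group of $K$.

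The key technical point is the surjectivity of the Frobenius on $\widehat{\ca{O}_K}/p\widehat{\ca{O}_K}$. First I would argue that the $\pi$-adic and $p$-adic topologies on $\ca{O}_K$ coincide: both $p$ and $\pi$ lie in $\ak{m}_K \setminus \{0\}$, and in a height-$1$ valuation ring the valuations $v(p)$ and $v(\pi)$ are commensurable positive elements of a value group embedded in $\bb{R}$, so there exist integers $m,n \geq 1$ with $p \in \pi^n \ca{O}_K$ and $\pi \in p^m \ca{O}_K$. Consequently $\widehat{\ca{O}_K}$ is also the $p$-adic completion of $\ca{O}_K$, and the natural map induces an isomorphism $\ca{O}_K/p\ca{O}_K \iso \widehat{\ca{O}_K}/p\widehat{\ca{O}_K}$. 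The hypothesis that Frobenius is surjective on $\ca{O}_K/p\ca{O}_K$ then transfers to the corresponding surjectivity for $\widehat{\ca{O}_K}/p\widehat{\ca{O}_K}$.

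Finally, the completeness of $\widehat{K}$ for its valuation topology reduces to completeness of $\widehat{\ca{O}_K}$: the valuation topology on a height-$1$ valuation ring coincides with its $\pi$-adic topology for any pseudo-uniformizer $\pi$ (this uses the same commensurability argument, together with the fact that $\bigcap_n \pi^n\widehat{\ca{O}_K} = 0$, which is automatic in height $1$), and $\widehat{\ca{O}_K}$ is $\pi$-adically complete and separated by construction.

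I expect no serious obstacle, as the statement is essentially a transfer result from $\ca{O}_K$ to $\widehat{\ca{O}_K}$, and the main ingredient (stability of the height-$1$ valuation ring structure under $\pi$-adic completion) can be taken as a black box from Bourbaki. The only subtlety worth emphasizing is that one must exploit height $1$ to identify the $\pi$-adic and $p$-adic topologies; this is what allows the Frobenius condition, originally phrased modulo $p$, to pass unchanged to the $\pi$-adic completion.
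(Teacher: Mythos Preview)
Your approach is essentially the paper's own: invoke Bourbaki to see that $\widehat{\ca{O}_K}$ is again a non-discrete height-$1$ valuation ring with residue characteristic $p$, then transfer the Frobenius condition via the identification of the $\pi$-adic and $p$-adic completions. There is, however, one gap: your commensurability argument for ``$p$ and $\pi$ define the same topology'' explicitly assumes $p \in \ak{m}_K \setminus \{0\}$, i.e.\ that $K$ has mixed characteristic $(0,p)$. The definition of a pre-perfectoid field (Definition~\ref{defn:perfectoid-field}) does not exclude $\mrm{char}(K) = p$, and in that case $p = 0$ in $\ca{O}_K$, so there is no $p$-adic topology to compare with. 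The paper handles this by a separate one-line case: if $p = 0$ then Frobenius surjectivity on $\ca{O}_K/p\ca{O}_K = \ca{O}_K$ means $\ca{O}_K$ is perfect (it is a domain, so Frobenius is already injective), and perfectness passes to the $\pi$-adic completion. With this case split added, your argument and the paper's coincide.
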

\begin{proof}
	The $\pi$-adic completion $\widehat{\ca{O}_K}$ of $\ca{O}_K$ is still a non-discrete valuation ring of height $1$ with residue characteristic $p$ (cf. \cite[\Luoma{6}.\textsection5.3, Prop.5]{bourbaki2006commalg5-7}). If $p\neq 0$, then it is canonically isomorphic to the $p$-adic completion of $\ca{O}_K$, so that there is a canonical isomorphism $\ca{O}_K/p\ca{O}_K \iso \widehat{\ca{O}_K}/p\widehat{\ca{O}_K}$, from which we see that $\widehat{K}$ is a perfectoid field. If $p=0$, then the Frobenius induces a surjection $\ca{O}_K \to \ca{O}_K$ if and only if $\ca{O}_K$ is perfect. Thus, $\widehat{\ca{O}_K}$ is also perfect, and we see that $\widehat{K}$ is a perfectoid field.
\end{proof}

\begin{mypara}\label{para:valuation}
	Let $K$ be a pre-perfectoid field. There is a unique (up to scalar) ordered group homomorphism $v_K: K^\times \to \bb{R}$ such that $v_K^{-1}(0) = \ca{O}_K^\times$, where the group structure on $\bb{R}$ is given by the addition. In particular, $\ca{O}_K\setminus 0= v_K^{-1}(\bb{R}_{\geq 0})$ and $\ak{m}_K\setminus 0= v_K^{-1}(\bb{R}_{> 0})$ (cf. \cite[\Luoma{6}.\textsection4.5 Prop.7]{bourbaki2006commalg5-7} and \cite[\Luoma{5}.\textsection2 Prop.1, Rem.2]{bourbaki2007top5-10}). The non-discrete assumption on $\ca{O}_K$ implies that the image $v_K(K^\times) \subseteq \bb{R}$ is dense. We set $v_K(0)=+\infty$.
\end{mypara}

\begin{mylem}[{\cite[3.2]{scholze2012perfectoid}}]\label{lem:adm-uniformizer}
	Let $K$ be a pre-perfectoid field. Then, for any pseudo-uniformizer $\pi$ of $K$, there exists $\pi_n \in \ak{m}_K$ for each integer $n \geq 0$ such that $\pi_0 = \pi$ and $\pi_n = u_n\cdot \pi_{n+1}^p$ for some unit $u_n \in \ca{O}_K^\times$, and $\ak{m}_K$ is generated by $\{\pi_n\}_{n\geq 0}$.
\end{mylem}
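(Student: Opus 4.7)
My plan splits according to whether $p = 0$ or $p \neq 0$ in $K$. When $p = 0$, the hypothesis that Frobenius is surjective on $\ca{O}_K/p\ca{O}_K = \ca{O}_K$ means $\ca{O}_K$ is perfect, so I simply take $\pi_n = \pi^{1/p^n}$ and $u_n = 1$. For the interesting case $p \neq 0$, the strategy is to show that the value group $v_K(K^\times) \subseteq \bb{R}$ is $p$-divisible; one may then choose any $\pi_{n+1} \in K^\times$ with $v_K(\pi_{n+1}) = v_K(\pi_n)/p$ (so $\pi_{n+1} \in \ak{m}_K$) and set $u_n := \pi_n/\pi_{n+1}^p \in \ca{O}_K^\times$, defining $\pi_n$ inductively with $\pi_0 = \pi$.

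To prove $p$-divisibility, the elementary observation is: for any $a \in \ak{m}_K$ with $0 < v_K(a) < v_K(p)$, Frobenius surjectivity on $\ca{O}_K/p\ca{O}_K$ yields $b \in \ca{O}_K$ with $a - b^p \in p\ca{O}_K$; then $v_K(a - b^p) \geq v_K(p) > v_K(a)$ forces $v_K(b^p) = v_K(a)$ by the ultrametric inequality, giving $v_K(a)/p \in v_K(K^\times)$. Since $\ca{O}_K$ is non-discrete of height $1$, the group $v_K(K^\times)$ is dense in $\bb{R}$ (any non-dense subgroup of $\bb{R}$ would be cyclic, hence discrete), so I can pick a pseudo-uniformizer $\varpi$ with $v_K(\varpi) < v_K(p)$; iterating the observation then produces $\varpi_n \in \ak{m}_K$ with $v_K(\varpi_n) = v_K(\varpi)/p^n$ arbitrarily close to $0$.

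The main obstacle is handling $\alpha := v_K(a) \in v_K(K^\times)$ with $\alpha \geq v_K(p)$, since in that range the bound $v_K(a - b^p) \geq v_K(p)$ no longer forces $v_K(b^p) = v_K(a)$. To bypass this, I fix $n$ large enough that $\delta := v_K(\varpi_n) < v_K(p)/p$. The open interval $\bigl((\alpha - v_K(p))/(p\delta),\ \alpha/(p\delta)\bigr)$ has length $v_K(p)/(p\delta) > 1$ and positive right endpoint, hence contains a non-negative integer $k$; this $k$ satisfies $0 < \alpha - pk\delta < v_K(p)$, so the elementary observation applied to an element of valuation $\alpha - pk\delta$ gives $(\alpha - pk\delta)/p \in v_K(K^\times)$. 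Adding $k\delta = v_K(\varpi_n^k) \in v_K(K^\times)$ yields $\alpha/p \in v_K(K^\times)$, completing the proof of $p$-divisibility.

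For the final claim, the argument is uniform in both characteristic cases: $v_K(\pi_n) = v_K(\pi)/p^n \to 0$, so for any $a \in \ak{m}_K$ one has $v_K(a) \geq v_K(\pi_n)$ for all sufficiently large $n$, giving $a \in \pi_n \ca{O}_K$. Hence $\ak{m}_K = \bigcup_n \pi_n \ca{O}_K$, which is exactly the ideal generated by $\{\pi_n\}_{n \geq 0}$.
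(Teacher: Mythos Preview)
Your proof is correct and follows essentially the same strategy as the paper's: use Frobenius surjectivity modulo $p$ to extract a $p$-th root up to a unit whenever $v_K(\pi) < v_K(p)$, invoke density of $v_K(K^\times)$ in $\bb{R}$ to reduce the general case to this one, and conclude the generation claim from $v_K(\pi_n)\to 0$. The only minor variation is in the reduction step---the paper writes $\pi$ as a finite product of pseudo-uniformizers each of valuation below $v_K(p)$ and applies the basic step to each factor, whereas you subtract a multiple $pk\delta$ of a small valuation to land in the range $(0,v_K(p))$---but the content is the same.
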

\begin{proof}
	If $v_K(\pi)<v_K(p)$, since the Frobenius is surjective on $\ca{O}_K/p$, there exists $\pi_1 \in \ca{O}_K$ such that $v_K(\pi-\pi_1^p)\geq v_K(p)$. Then, $v_K(\pi)=v_K(\pi_1^p)$ and thus $\pi = u \cdot \pi_1^p$ with $u \in \ca{O}_K^\times$. In general, since $v_K(K^\times) \subseteq \bb{R}$ is dense, any pseudo-uniformizer $\pi$ is a finite product of pseudo-uniformizers whose valuation values are strictly less than $v_K(p)$, from which we get a $p$-th root $\pi_1$ of $\pi$ up to a unit. Since $\pi_1$ is also a pseudo-uniformizer, we get $\pi_n$ inductively. As $v_K(\pi_n)$ tends to zero when $n$ tends to infinity, $\ak{m}_K$ is generated by $\{\pi_n\}_{n\geq 0}$.
\end{proof}

\begin{mypara}\label{para:almost-math}
	Let $K$ be a pre-perfectoid field. We briefly review almost algebra over $(\ca{O}_K, \ak{m}_K)$ for which we mainly refer to \cite[2.6]{abbes2020suite}, \cite[\Luoma{5}]{abbes2016p} and \cite{gabber2003almost}. Remark that $\ak{m}_K \otimes_{\ca{O}_K} \ak{m}_K \cong \ak{m}_K^2 = \ak{m}_K$ is flat over $\ca{O}_K$. 
	
	An $\ca{O}_K$-module $M$ is called \emph{almost zero} if $\ak{m}_K M =0$. A morphism of $\ca{O}_K$-modules $M \to N$ is called an \emph{almost isomorphism} if its kernel and cokernel are almost zero. Let $\scr{N}$ be the full subcategory of the category $\ca{O}_K\module$ of $\ca{O}_K$-modules formed by almost zero objects. It is clear that $\scr{N}$ is a Serre subcategory of $\ca{O}_K\module$ (\cite[\href{https://stacks.math.columbia.edu/tag/02MO}{02MO}]{stacks-project}). Let $\ca{S}$ be the set of almost isomorphisms in $\ca{O}_K\module$. Since $\scr{N}$ is a Serre subcategory, $\ca{S}$ is a multiplicative system, and moreover the quotient abelian category $\ca{O}_K\module/\scr{N}$ is representable by the localized category $\ca{S}^{-1}\ca{O}_K\module$ (cf. \cite[\href{https://stacks.math.columbia.edu/tag/02MS}{02MS}]{stacks-project}). We denote $\ca{S}^{-1}\ca{O}_K\module$ by $\ca{O}_K^{\al}\module$, whose objects are called \emph{almost $\ca{O}_K$-modules} or simply \emph{$\ca{O}_K^\al$-modules} (cf. \cite[2.6.2]{abbes2020suite}). We denote by
	\begin{align}
		\alpha^*: \ca{O}_K\module \longrightarrow \ca{O}_K^{\al}\module,\ M \longmapsto M^\al
	\end{align}
	the localization functor. 
	It induces an $\ca{O}_K$-linear structure on $\ca{O}_K^{\al}\module$. For any two $\ca{O}_K$-modules $M$ and $N$, we have a natural $\ca{O}_K$-linear isomorphism (\cite[2.6.7.1]{abbes2020suite})
	\begin{align}\label{eq:alhom}
		\ho_{\ca{O}_K^{\al}\module}(M^\al, N^\al) = \ho_{\ca{O}_K\module}(\ak{m}_K\otimes_{\ca{O}_K} M , N).
	\end{align} 
	
	The localization functor $\alpha^*$ admits a right adjoint 
	\begin{align}
	\alpha_* : \ca{O}_K^{\al}\module \longrightarrow \ca{O}_K\module,\ M \longmapsto M_*=\ho_{\ca{O}_K^{\al}\module}(\ca{O}_K^{\al}, M),
	\end{align}
	and a left adjoint
	\begin{align}
	\alpha_! : \ca{O}_K^{\al}\module \longrightarrow \ca{O}_K\module,\ M \longmapsto M_!=\ak{m}_K \otimes_{\ca{O}_K} M_*.
	\end{align}
	Moreover, the natural morphisms
	\begin{align}\label{eq:star-al}
	(M_*)^\al \iso M,\ M \iso (M_!)^\al
	\end{align}
	are isomorphisms for any $\ca{O}_K^\al$-module $M$ (cf. \cite[2.6.8]{abbes2020suite}). In particular, for any functor $\varphi: I \to \ca{O}_K^\al\module$ sending $i$ to $M_i$, the colimit and limit of $\varphi$ are representable by
	\begin{align}\label{eq:lim-colim-mod}
		\colim M_i = (\colim M_{i*})^\al,\ \lim M_i = (\lim M_{i*})^\al.
	\end{align}
	
	The tensor product in $\ca{O}_K\module$ induces a tensor product in $\ca{O}_K^{\al}\module$ by
	\begin{align}\label{eq:altensor}
		M^\al \otimes_{\ca{O}_K^\al} N^\al = (M\otimes_{\ca{O}_K} N)^\al
	\end{align}
	making $\ca{O}_K^{\al}\module$ an abelian tensor category (\cite[2.6.4]{abbes2020suite}). 
	We denote by $\ca{O}_K^\al\alg$	the category of commutative unitary monoids in $\ca{O}_K^\al\module$ induced by the tensor structure, whose objects are called \emph{almost $\ca{O}_K$-algebras} or simply \emph{$\ca{O}_K^\al$-algebras} (cf. \cite[2.6.11]{abbes2020suite}). Notice that $R^\al$ (resp. $R_*$) admits a canonical algebra structure for any $\ca{O}_K$-algebra (resp. $\ca{O}_K^\al$-algebra) $R$. Moreover, $\alpha^*$ and $\alpha_*$ induce adjoint functors between $\ca{O}_K\alg$ and $\ca{O}_K^\al\alg$ (\cite[2.6.12]{abbes2020suite}). Combining with \eqref{eq:star-al} and \eqref{eq:lim-colim-mod}, we see that for any functor $\varphi: I \to \ca{O}_K^\al\alg$ sending $i$ to $R_i$, the colimit and limit of $\varphi$ are representable by (cf. \cite[2.2.16]{gabber2003almost})
	\begin{align}\label{eq:lim-colim-alg}
		\colim R_i = (\colim R_{i*})^\al,\ \lim R_i = (\lim R_{i*})^\al.
	\end{align}
	In particular, for any diagram $B\leftarrow A \to C$ of $\ca{O}_K^\al$-algebras, we denote its colimit by
	\begin{align}\label{eq:tensor-alg}
		B\otimes_A C = (B_* \otimes_{A_*} C_*)^\al,
	\end{align}
	which is clearly compatible with the tensor products of modules. We remark that $\alpha^*$ commutes with arbitrary colimits (resp. limits), since it has a right adjoint $\alpha_*$ (resp. since the forgetful functor $\ca{O}_K^\al\alg\to \ca{O}_K^\al\module$ and the localization functor $\alpha^*:\ca{O}_K\module\to \ca{O}_K^\al\module$ commute with arbitrary limits).
\end{mypara}
	
\begin{mypara}\label{para:almost-frob}
	For an element $a$ of $\ca{O}_K$, we denote by $(\ca{O}_K/a\ca{O}_K)^\al\module$ the full subcategory of $\ca{O}_K^\al\module$ formed by the objects on which the morphism induced by multiplication by $a$ is zero. Notice that for an $(\ca{O}_K/a\ca{O}_K)^\al$-module $M$, $M_*$ is an $\ca{O}_K/a\ca{O}_K$-module. Thus, the localization functor $\alpha^*$ induces an essentially surjective exact functor $(\ca{O}_K/a\ca{O}_K)\module\to (\ca{O}_K/a\ca{O}_K)^\al\module$, which identifies the latter with the quotient abelian category $(\ca{O}_K/a\ca{O}_K)\module/\scr{N}\cap(\ca{O}_K/a\ca{O}_K)\module$.
	
	Let $\pi$ be a pseudo-uniformizer of $K$ dividing $p$ with a $p$-th root $\pi_1$ up to a unit. The Frobenius on $\ca{O}_K/\pi\ca{O}_K$ induces an isomorphism $\ca{O}_K/\pi_1\ca{O}_K \iso \ca{O}_K/\pi\ca{O}_K$. The Frobenius on $(\ca{O}_K/\pi)$-algebras and the localization functor $\alpha^*$ induce a natural transformation from the base change functor $(\ca{O}_K/\pi)^\al\alg \to (\ca{O}_K/\pi)^\al\alg$, $R \mapsto (\ca{O}_K/\pi)\otimes_{\frob,(\ca{O}_K/\pi)} R$ to the identity functor.
	\begin{align}
		\xymatrix{
			(\ca{O}_K/\pi)^\al\alg\ar[r]\ar@/_2pc/[rr]^-{\Downarrow\frob}_-{\id}& (\ca{O}_K/\pi_1)^\al\alg \ar[r]^-{\sim} & (\ca{O}_K/\pi)^\al\alg
		}
	\end{align}
	For an $(\ca{O}_K/\pi)^\al$-algebra $R$, we usually identify the $(\ca{O}_K/\pi_1)^\al$-algebra $R/\pi_1R$ with the $(\ca{O}_K/\pi)^\al$-algebra $(\ca{O}_K/\pi)\otimes_{\frob,(\ca{O}_K/\pi)} R$, and we denote by $R/\pi_1R\to R$ the natural morphism  $(\ca{O}_K/\pi)\otimes_{\frob,(\ca{O}_K/\pi)} R\to R$ induced by the Frobenius (cf. \cite[3.5.6]{gabber2003almost}).
	Moreover, the natural transformations induced by Frobenius for $(\ca{O}_K/\pi)\alg$ and $(\ca{O}_K/\pi)^\al\alg$ are also compatible with the functor $\alpha_*$. Indeed,
	it follows from the fact that for any $(\ca{O}_K/\pi)$-algebra $R$, the composition of
	\begin{align}
		\xymatrix{
		(\ca{O}_K/\pi)\otimes_{(\ca{O}_K/\pi)}\ho(\ak{m}_K, R) \ar[r]&\ho(\ak{m}_K, (\ca{O}_K/\pi)\otimes_{(\ca{O}_K/\pi)}R) \ar[rr]^-{\ho(\ak{m}_K,\frob)}&& \ho(\ak{m}_K, R)
		}
	\end{align}
	is the relative Frobenius on $(R^\al)_*=\ho_{\ca{O}_K\module}(\ak{m}_K, R)$.
\end{mypara}

\begin{mypara}\label{para:almost-derived}	
	 Let $C$ be a site. A presheaf $\ca{F}$ of $\ca{O}_K$-modules on $C$ is called \emph{almost zero} if $\ca{F}(U)$ is almost zero for any object $U$ of $C$. A morphism of presheaves $\ca{F} \to \ca{G}$ of $\ca{O}_K$-modules on $C$ is called an \emph{almost isomorphism} if $\ca{F}(U) \to \ca{G}(U)$ is an almost isomorphism for any object $U$ of $C$ (cf. \cite[2.6.23]{abbes2020suite}). Let $\scr{N}$ be the full subcategory of the category $\ca{O}_K\module_C$ of sheaves of $\ca{O}_K$-modules on $C$ formed by almost zero objects. Similarly, $\scr{N}$ is a Serre subcategory of $\ca{O}_K\module_C$. Let $\dd_{\scr{N}}(\ca{O}_K\module_C)$ be the full subcategory of the derived category $\dd (\ca{O}_K\module_C)$ formed by complexes with almost zero cohomologies. It is a strictly full saturated triangulated subcategory (\cite[\href{https://stacks.math.columbia.edu/tag/06UQ}{06UQ}]{stacks-project}). We also say that the objects of $\dd_{\scr{N}}(\ca{O}_K\module_C)$ are \emph{almost zero}. Let $\ca{S}$ be the set of arrows in $\dd (\ca{O}_K\module_C)$ which induce almost isomorphisms on cohomologies. We also call the elements of $\ca{S}$ \emph{almost isomorphisms}. Then, $\ca{S}$ is a saturated multiplicative system (\cite[\href{https://stacks.math.columbia.edu/tag/05RG}{05RG}]{stacks-project}), and moreover the quotient triangulated category $\dd(\ca{O}_K\module_C)/\dd_{\scr{N}}(\ca{O}_K\module_C)$ is representable by the localized triangulated category $\ca{S}^{-1}\dd(\ca{O}_K\module_C)$ (\cite[\href{https://stacks.math.columbia.edu/tag/05RI}{05RI}]{stacks-project}). The natural functor
	 \begin{align}
	 	\ca{S}^{-1}\dd(\ca{O}_K\module_C) \longrightarrow \dd(\ca{O}_K^\al\module_C)
	 \end{align}
 	is an equivalence by \cite[\href{https://stacks.math.columbia.edu/tag/06XM}{06XM}]{stacks-project} and \eqref{eq:star-al} (cf. \cite[2.4.9]{gabber2003almost}).
\end{mypara}

\begin{mylem}\label{lem:al-star}
	Let $K$ be a pre-perfectoid field with a pseudo-uniformizer $\pi$, $M$ a flat $\ca{O}_K$-module. We fix a system of $p^n$-th roots $(\pi_n)_{n\geq 0}$ of $\pi$ up to units \eqref{lem:adm-uniformizer}, then the map
	\begin{align}
	\bigcap_{n\geq 0} \pi_n^{-1}M \to (M^\al)_* = \ho_{\ca{O}_K\module}(\ak{m}_K, M),\ a \mapsto (x \mapsto xa)
	\end{align} 
	where $\pi_n^{-1}M \subseteq M[1/\pi]$, is an isomorphism of $\ca{O}_K$-modules. Moreover, for an extension of valuation rings $\ca{O}_K\to R$ of height $1$, we have $R = \bigcap_{n\geq 0} \pi_n^{-1}R$ and the above isomorphism coincides with the unit map $R \to (R^\al)_*$.
\end{mylem}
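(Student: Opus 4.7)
The plan is to build an explicit two-sided inverse by evaluating homomorphisms on the generating family $\{\pi_n\}$ of $\ak{m}_K$ furnished by Lemma \ref{lem:adm-uniformizer}, and then to deduce part 2 from the fact that the value group of $R$ embeds in $\bb{R}$. Since $M$ is flat over the valuation ring $\ca{O}_K$ it is torsion-free, so the localization map $M\to M[1/\pi]$ is injective and $\bigcap_{n\geq 0}\pi_n^{-1}M$ is an unambiguous submodule of $M[1/\pi]$.

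For well-definedness, given $a\in\bigcap_n\pi_n^{-1}M$, the $\ca{O}_K$-linear map $\phi_a:\ak{m}_K\to M[1/\pi]$, $x\mapsto xa$ sends each $\pi_n$ into $M$ by hypothesis; since $\{\pi_n\}_{n\geq 0}$ generates $\ak{m}_K$ as an $\ca{O}_K$-module (Lemma \ref{lem:adm-uniformizer}), $\phi_a$ lands in $M$. Injectivity is immediate: if $\phi_a=0$ then $\pi_n a=0$ in $M\subseteq M[1/\pi]$ for all $n$, so $a=0$. For surjectivity, given $\phi\in\ho_{\ca{O}_K\module}(\ak{m}_K,M)$, set $y_n=\pi_n^{-1}\phi(\pi_n)\in\pi_n^{-1}M\subseteq M[1/\pi]$. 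Using $\pi_n=u_n\pi_{n+1}^p$ with $u_n\in\ca{O}_K^\times$ and $p\geq 2$, $\ca{O}_K$-linearity of $\phi$ gives
\begin{align}
\phi(\pi_n)=\phi(u_n\pi_{n+1}^{p-1}\cdot\pi_{n+1})=u_n\pi_{n+1}^{p-1}\phi(\pi_{n+1}),
\end{align}
whence $y_n=y_{n+1}$. The common value $y$ therefore lies in every $\pi_n^{-1}M$, and $\phi_y$ coincides with $\phi$ on each $\pi_n$, hence on all of $\ak{m}_K$ by $\ca{O}_K$-linearity.

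For part 2, $R$ is a domain over $\ca{O}_K$, hence torsion-free and thus flat, so the first part of the lemma applies. The valuation $v$ on $R$ has values in $\bb{R}$ (height $1$) and $v(\pi)>0$, so $R[1/\pi]=\mrm{Frac}(R)$ (any nonzero $r\in R$ satisfies $nv(\pi)\geq v(r)$ for large $n$). The identity $\pi_n=u_n\pi_{n+1}^p$ forces $v(\pi_n)=p^{-n}v(\pi)$, which tends to $0$. Hence any $a\in\bigcap_n\pi_n^{-1}R\subseteq\mrm{Frac}(R)$ satisfies $v(a)\geq -v(\pi_n)$ for every $n$, so $v(a)\geq 0$ and $a\in R$; this gives the equality $R=\bigcap_n\pi_n^{-1}R$. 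The unit map of the adjunction $(\alpha^*,\alpha_*)$ on $R$ sends $a$ to the class of $(x\mapsto xa)\in\ho_{\ca{O}_K\module}(\ak{m}_K,R)$ under the identification \eqref{eq:alhom}, which is precisely the map in the statement.

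The only nonroutine point is the verification $y_n=y_{n+1}$, which uses both the Frobenius-compatible choice of roots from Lemma \ref{lem:adm-uniformizer} and the fact that $p-1\geq 1$; everything else is bookkeeping once the generating family is in hand.
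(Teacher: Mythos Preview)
Your proof is correct and follows essentially the same approach as the paper's. The only cosmetic difference is in the surjectivity step: you verify $y_n=y_{n+1}$ inductively via $\pi_n=u_n\pi_{n+1}^p$, whereas the paper observes directly that $(\pi/\pi_n)\in\ca{O}_K$ gives $(\pi/\pi_n)f(\pi_n)=f(\pi)$, so all $y_n$ equal $f(\pi)/\pi$ in one stroke; both arguments are equivalent and the remainder of your proof matches the paper.
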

\begin{proof}
	Since $\ak{m}_K$ is generated by $\{\pi_n\}_{n\geq 0}$, any $\ca{O}_K$-linear morphism $f: \ak{m}_K \to M$ is determined by its values $f(\pi_n) \in M$. Notice that $(\pi/\pi_n)\cdot f(\pi_n) = f(\pi)$ and $M$ is $\pi$-torsion free, so that $f$ must be given by the multiplication by an element $a = f(\pi)/\pi \in M[1/\pi]$. It is clear that such a multiplication sends $\ak{m}_K$ to $M$ if and only if $a \in \bigcap_{n\geq 0} \pi_n^{-1}M$, which shows the first assertion. If $\ca{O}_K\to R$ is an extension of valuation rings of height $1$, then we directly deduce from the valuation map $v : R[1/\pi]\setminus 0 \to \bb{R}$ (\ref{para:valuation}) the equality $R = \bigcap_{n\geq 0} \pi_n^{-1}R$.
\end{proof}

\begin{mylem}\label{lem:R-V}
	Let $K$ be a pre-perfectoid field, $R$ an $\ca{O}_K$-algebra, $\ca{O}_K\to V$ an extension of valuation rings of height $1$. Then, the canonical map
	\begin{align}\label{eq:R-V}
	\ho_{\ca{O}_K\alg} (R, V) \longrightarrow \ho_{\ca{O}_K^{\al}\alg}(R^\al, V^\al)
	\end{align}
	is bijective.
\end{mylem}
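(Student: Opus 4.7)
My plan is to deduce the bijectivity of \eqref{eq:R-V} formally from Lemma \ref{lem:al-star} using the adjunction $(\alpha^*, \alpha_*)$ between $\ca{O}_K\alg$ and $\ca{O}_K^\al\alg$ recalled in \ref{para:almost-math}. That adjunction yields a natural bijection
\begin{align*}
\ho_{\ca{O}_K^\al\alg}(R^\al, V^\al) \;=\; \ho_{\ca{O}_K\alg}(R, (V^\al)_*),
\end{align*}
and under this identification the map \eqref{eq:R-V} becomes postcomposition with the unit $\eta_V : V \to (V^\al)_*$ of the algebra-adjunction. Hence it is enough to show that $\eta_V$ is an isomorphism of $\ca{O}_K$-algebras. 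Since $\alpha_*$ on algebras is, as a functor to $\ca{O}_K$-modules, the same as $\alpha_*$ on modules (both being $\ho_{\ca{O}_K^\al\module}(\ca{O}_K^\al, -)$), and the unit of the algebra-adjunction refines the module-level unit, it suffices to verify that $V \to (V^\al)_*$ is an isomorphism of underlying $\ca{O}_K$-modules.

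To apply Lemma \ref{lem:al-star} with $M = V$, I first note that $V$ is flat over $\ca{O}_K$: the extension $\ca{O}_K \to V$ is injective and $V$ is a domain, so $V$ is torsion-free as an $\ca{O}_K$-module, and torsion-free modules over a valuation ring are flat. The first assertion of Lemma \ref{lem:al-star} then identifies $(V^\al)_*$ with $\bigcap_{n \geq 0} \pi_n^{-1} V \subseteq V[1/\pi]$ via the canonical map $\eta_V$, and since $V$ is itself a valuation ring of height $1$ extending $\ca{O}_K$, the second assertion of the same lemma gives $V = \bigcap_{n \geq 0} \pi_n^{-1} V$. Combining these, $\eta_V : V \iso (V^\al)_*$ is an isomorphism of $\ca{O}_K$-modules, hence of $\ca{O}_K$-algebras, proving the claim.

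There is essentially no serious obstacle: all the substantive work has been carried out in Lemma \ref{lem:al-star}, and the argument is a short bookkeeping exercise with adjunctions. The only point that merits a small amount of care is the compatibility of the module- and algebra-level units, but this is automatic from the way the algebra-adjunction is constructed from the module one in \ref{para:almost-math}.
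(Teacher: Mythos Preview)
Your proof is correct and follows essentially the same route as the paper: both use the adjunction $(\alpha^*,\alpha_*)$ to identify $\ho_{\ca{O}_K^\al\alg}(R^\al,V^\al)$ with $\ho_{\ca{O}_K\alg}(R,(V^\al)_*)$ and then invoke Lemma~\ref{lem:al-star} to see that the unit $V\to (V^\al)_*$ is an isomorphism. The paper phrases this as a chain of maps whose composite is the identity, whereas you argue directly that the unit is an isomorphism; these are the same argument.
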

\begin{proof}
	There are natural maps
	\begin{align}
		\ho_{\ca{O}_K\alg} (R, V) \to \ho_{\ca{O}_K^{\al}\alg}(R^\al, V^\al) \iso \ho_{\ca{O}_K\alg}(R, (V^\al)_*) \iso \ho_{\ca{O}_K\alg}(R, V),
	\end{align}
	where the middle isomorphism is given by adjunction and the last isomorphism is induced by the inverse of the unit map $V \to (V^\al)_*$ by \ref{lem:al-star}. The composition is the identity map, which completes the proof.
\end{proof}

\begin{mydefn}\label{defn:flat}
	Let $K$ be a pre-perfectoid field. We say that an $\ca{O}_K^\al$-module $M$ (resp. an $\ca{O}_K$-module $M$) is \emph{flat} (resp. \emph{almost flat}) if the functor $\ca{O}_K^\al\module \to \ca{O}_K^\al\module$ given by tensoring with $M$ is exact (resp. $M^\al$ is flat). 
\end{mydefn}

\begin{myrem}\label{rem:flat}
	In general, one can define the flatness of a morphism of $\ca{O}_K^\al$-algebras (cf. \cite[3.1.1.(\luoma{1})]{gabber2003almost}). We say that a morphism of $\ca{O}_K$-algebras $A\to B$ is \emph{almost flat} if $A^\al\to B^\al$ is flat.
\end{myrem}

\begin{mylem}\label{lem:flat}
	Let $K$ be a pre-perfectoid field with a pseudo-uniformizer $\pi$. Then, an $\ca{O}_K^\al$-module $M$ is flat if and only if $M_*$ is $\pi$-torsion free. In particular, an $\ca{O}_K$-module $N$ is almost flat if and only if the submodule of $\pi$-torsion elements of $N$ is almost zero.
\end{mylem}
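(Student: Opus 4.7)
I would prove the first equivalence by treating the two implications separately, then deduce the ``in particular'' statement as a corollary.

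For the forward implication, if $M$ is flat then tensoring $M$ with the exact sequence $0\to \ca{O}_K^\al\xrightarrow{\pi}\ca{O}_K^\al$ shows that $\pi:M\to M$ is injective in $\ca{O}_K^\al\module$; since $\alpha^*$ is exact and $M=(M_*)^\al$, this forces $\ke(\pi:M_*\to M_*)$ to be almost zero, i.e.\ annihilated by $\ak{m}_K$. The inclusion $\ak{m}_K\hookrightarrow\ca{O}_K$ is an almost isomorphism, so $\ak{m}_K^\al=\ca{O}_K^\al$; combined with \eqref{eq:alhom} and the identity $\ak{m}_K\otimes_{\ca{O}_K}\ak{m}_K=\ak{m}_K^2=\ak{m}_K$ recalled in \ref{para:almost-math}, this yields a canonical identification $M_*=\ho_{\ca{O}_K}(\ak{m}_K,M_*)$ under which an element $m\in M_*$ corresponds to the multiplication map $x\mapsto xm$. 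An $m$ annihilated by $\ak{m}_K$ therefore vanishes, proving $M_*$ is $\pi$-torsion free.

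For the converse, the key input is that $\ca{O}_K$ is a valuation ring of height $1$. Its value group embeds in $\bb{R}$ with $v_K(\pi)>0$, so any nonzero $a\in\ca{O}_K$ divides some power $\pi^n$, whence $M_*[a]\subseteq M_*[\pi^n]$; a straightforward induction using the exact sequences $0\to M_*[\pi]\to M_*[\pi^n]\xrightarrow{\pi} M_*[\pi^{n-1}]$ shows that $\pi$-torsion-freeness of $M_*$ propagates to $\pi^n$-torsion-freeness for every $n$, whence $M_*$ is torsion free and thus flat over the valuation ring $\ca{O}_K$. To promote this to flatness of $M$ in $\ca{O}_K^\al\module$, take any injection $A^\al\hookrightarrow B^\al$: the kernel $K=\ke(A\to B)$ in $\ca{O}_K\module$ satisfies $\ak{m}_K K=0$, flatness of $M_*$ gives $\ke(A\otimes_{\ca{O}_K}M_*\to B\otimes_{\ca{O}_K}M_*)=K\otimes_{\ca{O}_K}M_*$ which is still killed by $\ak{m}_K$, and applying $\alpha^*$ together with \eqref{eq:altensor} shows that $A^\al\otimes M\to B^\al\otimes M$ is injective.

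For the ``in particular'' clause, apply the equivalence to the almost module $N^\al$, noting that $(N^\al)_*=\ho_{\ca{O}_K}(\ak{m}_K,N)$ by \eqref{eq:alhom}. A homomorphism $f:\ak{m}_K\to N$ is $\pi$-torsion iff $f(\ak{m}_K)\subseteq N[\pi]$, so $(N^\al)_*[\pi]=\ho_{\ca{O}_K}(\ak{m}_K,N[\pi])$, and I must show this vanishes iff $\ak{m}_K N[\pi]=0$. Using the system $(\pi_n)_{n\geq 0}$ with $\pi_n=u_n\pi_{n+1}^p$ from \ref{lem:adm-uniformizer}, the relation $f(\pi_n)=u_n\pi_{n+1}^{p-1}f(\pi_{n+1})\in\ak{m}_K\cdot N[\pi]$ shows $f$ vanishes on the generators $\pi_n$ of $\ak{m}_K$, hence $f=0$; conversely, for $n\in N[\pi]$ the multiplication map $x\mapsto xn$ is an element of $\ho_{\ca{O}_K}(\ak{m}_K,N[\pi])$ whose vanishing gives $\ak{m}_K n=0$. (Since $\ak{m}_K^2=\ak{m}_K$, one checks by induction on $k$ that $\ak{m}_K N[\pi]=0$ is equivalent to $\ak{m}_K N[\pi^k]=0$ for all $k$, so the statement is insensitive to whether ``$\pi$-torsion'' means $N[\pi]$ or $\bigcup_k N[\pi^k]$.) The main obstacle is the second paragraph's careful transport of flatness between $\ca{O}_K\module$ and $\ca{O}_K^\al\module$; once that is done, the rest is formal manipulation built on the height-$1$ reduction and the identification $M_*=\ho_{\ca{O}_K}(\ak{m}_K,M_*)$.
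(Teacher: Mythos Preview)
Your proof is correct and follows essentially the same skeleton as the paper's: treat the two implications separately via the transport functors $\alpha^*,\alpha_*$, use that $\pi$-torsion-free over the valuation ring $\ca{O}_K$ is the same as flat, and then deduce the ``in particular'' from the first part applied to $N^\al$.

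The execution differs slightly. For the forward implication the paper simply applies the left exact functor $\alpha_*$ to the tensored sequence $0\to M\xrightarrow{\pi} M$ to get $M_*$ $\pi$-torsion free in one line, whereas you go through the identification $M_*=\ho_{\ca{O}_K}(\ak{m}_K,M_*)$ to kill the almost-zero kernel. For the converse the paper again uses $\alpha_*$ left exact to pass from $L_1^\al\hookrightarrow L_2^\al$ to an honest injection $L_{1*}\hookrightarrow L_{2*}$, avoiding your bookkeeping with the almost-zero kernel $K$ of a chosen representative. For the ``in particular'' the paper simply invokes the almost isomorphism $N\to (N^\al)_*$ together with the observation (which you also prove) that $(N^\al)_*$ has no nonzero almost-zero elements; your explicit computation of $(N^\al)_*[\pi]=\ho_{\ca{O}_K}(\ak{m}_K,N[\pi])$ reaches the same conclusion by a longer route. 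In short: same strategy, but the paper leans more on the adjunction $(\alpha^*,\alpha_*)$ to streamline each step.
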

\begin{proof}
	First of all, for any $\ca{O}_K^\al$-modules $L_1$ and $L_2$, we have a canonical isomorphism
	\begin{align}
		\ho_{\ca{O}_K^\al\module}(M\otimes_{\ca{O}_K^\al} L_1, L_2) = \ho_{\ca{O}_K^\al\module}(L_1,\ho_{\ca{O}_K\module} (M_*, L_{2*})^\al)
	\end{align}
	by \eqref{eq:alhom}, \eqref{eq:star-al} and \eqref{eq:altensor}. Therefore, the functor defined by tensoring with $M$ admits a right adjoint, and thus it is right exact.
	Consider the sequence 
	\begin{align}
		0 \longrightarrow \ca{O}_K^\al \stackrel{\cdot \pi}{\longrightarrow} \ca{O}_K^\al \longrightarrow (\ca{O}_K/\pi\ca{O}_K)^\al \longrightarrow 0,
	\end{align}
 	which is exact since the localization functor $\alpha^*$ is exact. If $M$ is flat, tensoring the above sequence with $M$ and applying $\alpha_*$, we deduce that $M_*$ is $\pi$-torsion free since $\alpha_*$ is left exact (as a right adjoint to $\alpha^*$). Conversely, if $M_*$ is $\pi$-torsion free, then it is flat over $\ca{O}_K$. For any injective morphism $L_1 \to L_2$ of $\ca{O}_K^\al$-modules, $L_{1*} \to L_{2*}$ is also injective, and it remains injective after tensoring with $M_*$. Therefore, $L_1 \to L_2$ also remains injective after tensoring with $M$ since $\alpha^*$ is exact. This shows that $M$ is flat.
 	
 	The second assertion follows from the almost isomorphism $N \to (N^\al)_*$ and the fact that $(N^\al)_* = \ho_{\ca{O}_K\module}(\ak{m}_K, N)$ has no nonzero almost zero element.
\end{proof}

\begin{mylem}\label{lem:almost-complete}
	Let $K$ be a pre-perfectoid field with a pseudo-uniformizer $\pi$, $M$ a flat $\ca{O}_K^\al$-module, $x$ an element of $\ca{O}_K$. Then, the canonical morphism $M_*/xM_*\to (M/xM)_*$ is injective, and for any $\epsilon\in \ak{m}_K$, the image of $\varphi_\epsilon:(M/\epsilon xM)_*\to (M/xM)_*$ is $M_*/xM_*$. In particular, the canonical morphism
	\begin{align}
		\plim_n M_*/\pi^n M_*\longrightarrow (\plim_n M/\pi^n M)_*
	\end{align}
	is an isomorphism of $\ca{O}_K$-modules.
\end{mylem}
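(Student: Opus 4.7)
\begin{mypara}
Write $L = M_*$. Since $M$ is a flat $\ca{O}_K^\al$-module, $L$ is $\pi$-torsion free by \ref{lem:flat}, and because $\ca{O}_K$ is a height one valuation ring every nonzero $x \in \ca{O}_K$ divides a power of $\pi$, so $\cdot x : L \to L$ is injective whenever $x \neq 0$; the case $x = 0$ is trivial, so I assume $x \neq 0$ throughout. Moreover, from $M = (M_*)^\al$ the unit map $L \to \ho_{\ca{O}_K\module}(\ak{m}_K, L)$ is an isomorphism. My plan is to prove the three assertions in order.
\end{mypara}

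\begin{mypara}
For the first assertion, the sequence $0 \to M \stackrel{\cdot x}{\longrightarrow} M \to M/xM \to 0$ is exact in $\ca{O}_K^\al\module$. Applying the left exact functor $\alpha_* = \ho_{\ca{O}_K\module}(\ak{m}_K, -)$ yields $0 \to L \stackrel{\cdot x}{\longrightarrow} L \to (M/xM)_*$, which identifies $L/xL = M_*/xM_*$ with a submodule of $(M/xM)_*$.
\end{mypara}

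\begin{mypara}
For the second assertion, I would apply $\alpha_*$ to the morphism of short exact sequences in $\ca{O}_K^\al\module$
\begin{align*}
\xymatrix{
0 \ar[r] & M \ar[r]^-{\cdot x} \ar[d]_-{q_\epsilon} & M \ar[r] \ar[d]^-{q_\epsilon} & M/xM \ar[r] \ar@{=}[d] & 0 \\
0 \ar[r] & M/\epsilon M \ar[r]^-{\cdot x} & M/\epsilon x M \ar[r] & M/xM \ar[r] & 0
}
\end{align*}
whose bottom row is exact by flatness of $M$ and whose vertical maps are the canonical quotients modulo $\epsilon$. Writing $\delta$ and $\delta_\epsilon$ for the resulting boundary maps into $\mrm{Ext}^1_{\ca{O}_K\module}(\ak{m}_K, L)$ and $\mrm{Ext}^1_{\ca{O}_K\module}(\ak{m}_K, L/\epsilon L)$ respectively, part (1) gives $M_*/xM_* = \ke \delta$, while exactness of the bottom row after applying $\alpha_*$ identifies the image of $\varphi_\epsilon$ with $\ke \delta_\epsilon$. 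Naturality of the boundary yields $\delta_\epsilon = \mrm{Ext}^1(\ak{m}_K, q_\epsilon) \circ \delta$, and the long exact sequence attached to $0 \to L \stackrel{\epsilon}{\to} L \to L/\epsilon L \to 0$ identifies $\ke \mrm{Ext}^1(\ak{m}_K, q_\epsilon) = \epsilon \cdot \mrm{Ext}^1_{\ca{O}_K\module}(\ak{m}_K, L)$. The crux is then the standard fact that $\ak{m}_K$ is almost projective over $\ca{O}_K$, i.e. $\ak{m}_K \cdot \mrm{Ext}^i_{\ca{O}_K\module}(\ak{m}_K, N) = 0$ for every $\ca{O}_K$-module $N$ and every $i \geq 1$; this rests on the isomorphism $\ak{m}_K \otimes_{\ca{O}_K} \ak{m}_K \iso \ak{m}_K$, valid since $K$ is non-discrete (so that $\ak{m}_K^2 = \ak{m}_K$) and $\ak{m}_K$ is flat (see \cite{gabber2003almost}). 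Consequently $\epsilon \cdot \mrm{Ext}^1_{\ca{O}_K\module}(\ak{m}_K, L) = 0$, so $\ke \delta_\epsilon = \ke \delta$, which is the desired identification of the image of $\varphi_\epsilon$ with $M_*/xM_*$.
\end{mypara}

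\begin{mypara}
The third assertion is then formal. For each $n$ the transition map $(M/\pi^{n+1}M)_* \to (M/\pi^n M)_*$ is exactly $\varphi_\pi$ applied with $x = \pi^n$, so part (2) shows its image equals $M_*/\pi^n M_*$; passing to the inverse limit this identifies $\plim_n M_*/\pi^n M_*$ with $\plim_n (M/\pi^n M)_*$, and the latter coincides with $(\plim_n M/\pi^n M)_*$ because the right adjoint $\alpha_* = (-)_*$ commutes with limits. The main obstacle in the outline is the almost projectivity of $\ak{m}_K$ invoked in the middle step, which is not isolated explicitly earlier in the paper; once granted, the remaining steps are routine diagram chases combined with the flatness of $M$.
\end{mypara}
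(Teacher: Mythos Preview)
Your proof is correct, and parts~(1) and~(3) match the paper's argument essentially verbatim. The genuine difference is in part~(2). The paper argues by an explicit element chase: it identifies $(M/xM)_*$ with $\ho_{\ca{O}_K}(\ak{m}_K, M_*/xM_*)$, picks $f\in (M/\epsilon x M)_*$, lifts $f(\epsilon)$ to $b\in M_*$, and uses the system of roots $(\pi_n)_{n\ge 0}$ from \ref{lem:adm-uniformizer} together with \ref{lem:al-star} to show $b/\epsilon\in M_*$ and that $\varphi_\epsilon(f)$ is multiplication by the image of $b/\epsilon$. Your route is homological: you compare the two boundary maps $\delta,\delta_\epsilon$ arising from $\ho_{\ca{O}_K}(\ak{m}_K,-)$ applied to the two short exact sequences, and reduce to the vanishing $\epsilon\cdot\mrm{Ext}^1_{\ca{O}_K}(\ak{m}_K,L)=0$.

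What each buys: the paper's argument stays close to the concrete description of $(-)_*$ via \ref{lem:al-star} and never leaves degree~$0$; your argument is cleaner conceptually and does not invoke the roots $\pi_n$ at all. The only point worth tightening is your appeal to ``almost projectivity of $\ak{m}_K$'': in this paper's setup the quickest justification is the long exact sequence of $0\to\ak{m}_K\to\ca{O}_K\to k\to 0$, which gives $\mrm{Ext}^1_{\ca{O}_K}(\ak{m}_K,L)\cong\mrm{Ext}^2_{\ca{O}_K}(k,L)$, and the latter is annihilated by $\ak{m}_K$ since $k$ is. That single sentence replaces the reference to \cite{gabber2003almost} and makes the argument self-contained within the paper. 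Also, a minor presentational point: once you set $L=M_*$, the diagram of short exact sequences and the Ext computations live entirely in $\ca{O}_K\module$ (using that $L$ is $\pi$-torsion free), so there is no need to phrase it as ``applying $\alpha_*$ to a diagram in $\ca{O}_K^\al\module$''; working directly with $\ho_{\ca{O}_K}(\ak{m}_K,-)$ on honest modules avoids any ambiguity about which derived category the boundary maps live in.
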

\begin{proof}
	We follow the proof of \cite[5.3]{scholze2012perfectoid}. Applying the left exact functor $\alpha_*$ to the exact sequence
	\begin{align}
		\xymatrix{
			0\ar[r]& M\ar[r]^-{\cdot x} & M\ar[r] &M/xM\ar[r]& 0,
		}
	\end{align}
	we see that $M_*/xM_*\to (M/xM)_*$ is injective. 
	
	To show that the image of $\varphi_\epsilon$ is $M_*/xM_*$, it suffices to show that $\varphi_\epsilon$ factors through $M_*/xM_*$. We identify $(M/xM)_*$ with $\ho_{\ca{O}_K\module}(\ak{m}_K, M_*/xM_*)$ by \eqref{eq:star-al} and \eqref{eq:alhom} so that $M_*/xM_*$ identifies with the subset consisting of the $\ca{O}_K$-morphisms $\ak{m}_K \to M_*/xM_*$ sending $y$ to $ya$ for some element $a\in M_*/xM_*$. For an $\ca{O}_K$-morphism $f:\ak{m}_K \to M_*/\epsilon xM_*$, let $b$ be an element of $M_*$ which lifts $f(\epsilon)$. Notice that $M_*$ is $\pi$-torsion free by \ref{lem:flat}. With notation in \ref{lem:al-star}, we have $b\equiv (\epsilon/\pi_n)\cdot f(\pi_n)\ \mod \epsilon x M_*$ for $n$ big enough so that the element $b/\epsilon\in M_*[1/\pi]$ lies in $\bigcap_{n\geq 0} \pi_n^{-1}M_*=M_*$. Moreover, $\pi_n\cdot(b/\epsilon) \equiv f(\pi_n)\ \mod x M_*$ for $n$ big enough. As $\varphi_\epsilon(f)$ is determined by its values on $\pi_n$ for $n$ big enough, it follows that $\varphi_\epsilon(f)=a$, where $a$ is the image of $b/\epsilon$ in $M_*/xM_*$.
	
	Finally, the previous result implies that the inverse system $((M/\pi^n M)_*)_{n\geq 1}$ is Mittag-Leffler so that the ``in particular'' part follows immediately from the fact that $\alpha_*$ commutes with arbitrary limits (as a right adjoint to $\alpha^*$) (\cite[\href{https://stacks.math.columbia.edu/tag/0596}{0596}]{stacks-project}).
\end{proof}

\begin{mydefn}\label{defn:tilt}
	Let $K$ be a pre-perfectoid field. For any $\ca{O}_K$-algebra $R$, we define a perfect ring $R^\flat$ as the projective limit
	\begin{align}
		R^\flat = \varprojlim_{\frob} R/p R
	\end{align}
	indexed by $(\bb{N},\leq)$, where transition map associated to $i \leq (i+1)$ is the Frobenius on $R/pR$. We call $R^\flat$ the \emph{tilt} of $R$. 
\end{mydefn}

\begin{mylem}[{\cite[3.4]{scholze2012perfectoid}}]\label{lem:tilt-perf-field}
	Let $K$ be a perfectoid field with a pseudo-uniformizer $\pi$ dividing $p$.
	\begin{enumerate}
		\item[\rm(1)] The projection induces an isomorphism of multiplicative monoids
		\begin{align}\label{eq:mulmonoids}
			\plim_{\frob} \ca{O}_K \longrightarrow \plim_{\frob} \ca{O}_K/\pi \ca{O}_K.
		\end{align}
		In particular, the right hand side is canonically isomorphic to $(\ca{O}_K)^\flat$ as a ring.
		\item[\rm(2)] We denote by 
		\begin{align}
			\sharp: (\ca{O}_K)^\flat \longrightarrow \ca{O}_K, \ x \mapsto x^\sharp,
		\end{align}
		the composition of the inverse of \eqref{eq:mulmonoids} and the projection onto the first component. Then $v_K \circ \sharp : (\ca{O}_K)^\flat\setminus 0 \to \bb{R}_{\geq 0}$ defines a valuation of height $1$ on $(\ca{O}_K)^\flat$.
		\item[\rm(3)] The fraction field $K^\flat$ of $(\ca{O}_K)^\flat$ is a perfectoid field of characteristic $p$ and the element
		\begin{align}
			\pi^\flat = (\cdots, \pi_1^{1/p^2},\pi_1^{1/p}, \pi_1,0) \in (\ca{O}_K)^\flat
		\end{align}
		is a pseudo-uniformizer of $K^\flat$, where $\pi = u \cdot \pi_1^p$ with $\pi_1 \in \ak{m}_K$ and $u \in \ca{O}_K^\times$.
		\item[\rm(4)] We have $\ca{O}_{K^\flat} = (\ca{O}_K)^\flat$, and there is a canonical isomorphism
		\begin{align}\label{eq:2.23.4}
			\ca{O}_{K^\flat}/\pi^\flat \ca{O}_{K^\flat} \iso \ca{O}_K/\pi\ca{O}_K
		\end{align}
		induced by {\rm(1)} and the projection onto the first component.
	\end{enumerate}
\end{mylem}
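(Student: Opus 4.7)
The plan is to first prove part (1) by constructing the inverse of \eqref{eq:mulmonoids} via a Teichm\"uller-type limit, and then to read off parts (2)--(4) from this explicit bijection. The key estimate is: if $a, b \in \ca{O}_K$ satisfy $a \equiv b \pmod{\pi^n \ca{O}_K}$ for $n \geq 1$, then $a^p \equiv b^p \pmod{\pi^{n+1} \ca{O}_K}$, which follows from the binomial expansion $a^p - b^p = (a-b)^p + p \cdot c$ with $c \in (a-b)\ca{O}_K$, together with $p \in \pi\ca{O}_K$ (handling the $p\cdot c$ term) and $(a-b)^p \in \pi^{np}\ca{O}_K \subseteq \pi^{n+1}\ca{O}_K$ for $n\geq 1$, $p \geq 2$. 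Given $(\bar x_n)_n \in \varprojlim_\frob \ca{O}_K/\pi\ca{O}_K$, choose arbitrary lifts $\tilde x_n \in \ca{O}_K$; iterating the estimate starting from $\tilde x_{n+1}^p \equiv \tilde x_n \pmod{\pi}$, the sequence $(\tilde x_{n+m}^{p^m})_{m\geq 0}$ is $\pi$-adically Cauchy, and since $\ca{O}_K$ is $\pi$-adically complete its limit $y_n := \lim_m \tilde x_{n+m}^{p^m}$ exists, is independent of the choice of lifts, and satisfies $y_{n+1}^p = y_n$. Injectivity of \eqref{eq:mulmonoids} is immediate: an element $(x_n)$ with $x_n \in \pi\ca{O}_K$ satisfies $x_n = x_{n+m}^{p^m} \in \pi^{p^m}\ca{O}_K$ for all $m$, forcing $x_n=0$. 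To upgrade the target to $\ca{O}_K^\flat = \varprojlim_\frob \ca{O}_K/p\ca{O}_K$ as rings, the same argument with $p$ in place of $\pi$ produces a compatible multiplicative bijection $\varprojlim_\frob \ca{O}_K \iso \ca{O}_K^\flat$; since $\pi \mid p$ the natural projection $\ca{O}_K^\flat \twoheadrightarrow \varprojlim_\frob \ca{O}_K/\pi\ca{O}_K$ becomes a ring isomorphism, and the transported addition on $\varprojlim_\frob \ca{O}_K$ is the Witt-vector-type formula $(x+y)_n = \lim_m (x_{n+m}+y_{n+m})^{p^m}$.

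For part (2), multiplicativity of $v_K\circ\sharp$ is immediate from multiplicativity of $\sharp$ and additivity of $v_K$. For the ultrametric inequality, writing $x,y \in \ca{O}_K^\flat$ as $(x_n),(y_n) \in \varprojlim_\frob \ca{O}_K$, one has $(x+y)^\sharp = \lim_m (x_m+y_m)^{p^m}$, and using $p^m v_K(x_m) = v_K(x_m^{p^m}) = v_K(x_0) = v_K(x^\sharp)$ one obtains
\begin{align*}
	v_K\bigl((x_m+y_m)^{p^m}\bigr) \geq p^m\min\bigl(v_K(x_m),v_K(y_m)\bigr) = \min\bigl(v_K(x^\sharp),v_K(y^\sharp)\bigr);
\end{align*}
passing to the limit and using continuity of $v_K$ in the $\pi$-adic topology on $\ca{O}_K$ yields the desired inequality. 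Injectivity of $\sharp$ (from part (1)) ensures $v_K\circ\sharp$ vanishes only on $0$; its value group equals $v_K(K^\times)$, which is non-trivial and dense in $\bb{R}$, so $v_K\circ\sharp$ is a non-discrete valuation of height $1$.

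For parts (3) and (4), $\ca{O}_K^\flat$ is a perfect $\bb{F}_p$-algebra as an inverse limit under Frobenius of $\bb{F}_p$-algebras, so $K^\flat = \mrm{Frac}(\ca{O}_K^\flat)$ has characteristic $p$ and Frobenius is surjective on $\ca{O}_K^\flat$, hence on $\ca{O}_K^\flat/p\ca{O}_K^\flat$. Extending $v_K\circ\sharp$ to $K^\flat$ by multiplicativity, its valuation ring coincides with $\ca{O}_K^\flat$: for $a,b\in \ca{O}_K^\flat$ with $v_K(a^\sharp)\geq v_K(b^\sharp)$, we get $v_K(a_n) = v_K(a^\sharp)/p^n \geq v_K(b^\sharp)/p^n = v_K(b_n)$ for each $n$, so $c_n := a_n/b_n \in \ca{O}_K$ and $(c_n)_n \in \varprojlim_\frob \ca{O}_K = \ca{O}_K^\flat$ satisfies $bc = a$. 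Hence $\ca{O}_{K^\flat} = \ca{O}_K^\flat$, and $K^\flat$ is perfectoid. The element $\pi^\flat$ is the compatible system of iterated $p$-th roots of $\pi$ modulo $p$ provided by Frobenius-surjectivity on $\ca{O}_K/p\ca{O}_K$, and by construction $(\pi^\flat)^\sharp = \pi$, so $v_{K^\flat}(\pi^\flat) = v_K(\pi) > 0$, making $\pi^\flat$ a pseudo-uniformizer. The isomorphism \eqref{eq:2.23.4} is the $0$-th component projection of part (1): surjectivity follows from Frobenius-surjectivity of $\ca{O}_K/\pi\ca{O}_K$ (itself a consequence of the hypothesis and $\pi \mid p$), while injectivity follows from the fact that an element $x = (x_n) \in \ca{O}_K^\flat$ with $x_0 \in \pi\ca{O}_K$ satisfies $v_K(x_n) = v_K(x_0)/p^n \geq v_K(\pi)/p^n$ for all $n$, so is divisible by $\pi^\flat$ componentwise. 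The principal technical obstacle is the convergence estimate powering part (1); all remaining claims then follow by careful unwinding of the explicit bijection.
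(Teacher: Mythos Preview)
The paper does not prove this lemma; it simply cites \cite[3.4]{scholze2012perfectoid}. Your argument is the standard one found there and is correct, with one small slip: the claim ``by construction $(\pi^\flat)^\sharp = \pi$'' is not justified. With the paper's definition of $\pi^\flat$ (built from $p$-th roots \emph{up to units} as supplied by Lemma~\ref{lem:adm-uniformizer}), an exact compatible system of $p$-power roots of $\pi$ in $\ca{O}_K$ need not exist; what one actually obtains is $v_K((\pi^\flat)^\sharp) = v_K(\pi)$, arguing via the higher components $y_m \equiv \pi_1^{1/p^{m-1}} \pmod{\pi}$ with $v_K(\pi_1^{1/p^{m-1}}) = v_K(\pi)/p^m < v_K(\pi)$. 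This weaker statement suffices for everything you use it for: $\pi^\flat$ is a pseudo-uniformizer since $v_{K^\flat}(\pi^\flat) = v_K(\pi) > 0$, and your kernel computation in (4) only needs the valuation equality.
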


\begin{mypara}
	We see that the tilt defines a functor $\ca{O}_K\alg \to \ca{O}_{K^\flat}\alg$, $R\mapsto R^\flat$, which preserves almost zero objects and almost isomorphisms. For an $\ca{O}_K^\al$-algebra $R$, we set $R^\flat=((R_*)^\flat)^\al$ and call it the \emph{tilt} of $R$, which induces a functor $\ca{O}_K^\al\alg \to \ca{O}_{K^\flat}^\al\alg$, $R \mapsto R^\flat$. Note that the tilt functor commutes with the localization functor $\alpha^*$ up to a canonical isomorphism, and commutes with the functor $\alpha_*$ up to a canonical almost isomorphism.
\end{mypara}

\begin{mydefn}[{\cite[5.1]{scholze2012perfectoid}}]\label{defn:pre-perf-alg}
	Let $K$ be a perfectoid field, $\pi$ a pseudo-uniformizer of $K$ dividing $p$ with a $p$-th root $\pi_1$ up to a unit.
	\begin{enumerate}
		\item A \emph{perfectoid $\ca{O}_K^\al$-algebra} is an $\ca{O}_K^\al$-algebra $R$ such that 
		\begin{enumerate}
			\renewcommand{\theenumii}{\roman{enumii}}
			\renewcommand{\labelenumii}{{\rm(\theenumii)}}
			\item $R$ is flat over $\ca{O}_K^\al$;
			\item the Frobenius of $R/\pi R$ induces an isomorphism $R/\pi_1R \to R/\pi R$ of $\ca{O}_K^\al$-algebras (\ref{para:almost-frob});
			\item the canonical morphism $R \to \varprojlim_n R/\pi^n R$ is an isomorphism in $\ca{O}_K^\al\alg$.
		\end{enumerate}
		We denote by $\ca{O}_K^\al\perf$ the full subcategory of $\ca{O}_K^\al\alg$ formed by perfectoid $\ca{O}_K^\al$-algebras.
		\item A \emph{perfectoid $(\ca{O}_K/\pi)^\al$-algebra} is a flat $(\ca{O}_K/\pi)^\al$-algebra $R$ such that the Frobenius map induces an isomorphism $R/\pi_1R \iso R$. We denote by $(\ca{O}_K/\pi)^\al\perf$ the full subcategory of $(\ca{O}_K/\pi)^\al\alg$ formed by perfectoid $(\ca{O}_K/\pi)^\al$-algebras.
	\end{enumerate}
\end{mydefn}

\begin{mylem}\label{lem:pre-perf-alg}
	Let $K$ be a pre-perfectoid field, $\pi$ a pseudo-uniformizer of $K$ dividing $p$ with a $p$-th root $\pi_1$ up to a unit. Then, for an $\ca{O}_K$-algebra $R$, the following conditions are equivalent:
	\begin{enumerate}
		\renewcommand{\labelenumi}{{\rm(\theenumi)}}
		\item The almost algebra $\widehat{R}^\al$ associated to the $\pi$-adic completion $\widehat{R}$ of $R$ is a perfectoid $\ca{O}_{\widehat{K}}^\al$-algebra.
		\item The $\ca{O}_{\widehat{K}}$-module $\widehat{R}$ is almost flat, and the Frobenius of $R/\pi R$ induces an almost isomorphism $R/\pi_1R \to R/\pi R$.
	\end{enumerate}
\end{mylem}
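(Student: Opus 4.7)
The approach is to match the three axioms of Definition \ref{defn:pre-perf-alg} defining a perfectoid $\ca{O}_{\widehat{K}}^\al$-algebra against the two assertions in (2), using the canonical identifications $R/\pi^n R = \widehat{R}/\pi^n\widehat{R}$ and $R/\pi_1^n R = \widehat{R}/\pi_1^n\widehat{R}$. These identifications are classical: the ideal $(\pi)\subseteq \ca{O}_K$ is principal and hence finitely generated so $R/\pi^n R = \widehat{R}/\pi^n\widehat{R}$ (cf.\ \cite[\href{https://stacks.math.columbia.edu/tag/0315}{0315}]{stacks-project}), and because $\pi = u\pi_1^p$ with $u$ a unit the $\pi$-adic and $\pi_1$-adic topologies on $R$ coincide, so that $\widehat{R}$ is simultaneously the $\pi$-adic and $\pi_1$-adic completion of $R$ and is classically $\pi$-adically complete.

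Granting these identifications, axioms (i) and (ii) of \ref{defn:pre-perf-alg} unpack in a manner that matches precisely the two conditions of (2), giving both directions at once for these two axioms. The flatness axiom (i) for $\widehat{R}^\al$ means, by \ref{defn:flat}, exactly that $\widehat{R}$ is almost flat over $\ca{O}_{\widehat{K}}$, which is the first part of (2). The Frobenius axiom (ii), describing Frobenius as in \ref{para:almost-frob}, translates after transport along the identifications $\widehat{R}/\pi\widehat{R} = R/\pi R$ and $\widehat{R}/\pi_1\widehat{R} = R/\pi_1 R$ into the almost isomorphism $R/\pi_1 R \to R/\pi R$, which is the second part of (2).

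The remaining axiom is the completeness axiom (iii), that the canonical map $\widehat{R}^\al \to \plim_n \widehat{R}^\al/\pi^n\widehat{R}^\al$ is an isomorphism in $\ca{O}_{\widehat{K}}^\al\alg$. The key observation, and the technical heart of the argument, is that this is automatic, requiring no further hypothesis on $R$. Indeed, limits in $\ca{O}_{\widehat{K}}^\al\alg$ are computed by \eqref{eq:lim-colim-alg} as $\alpha^*$ of the limit of $\alpha_*$'s; combined with $\alpha_*(\widehat{R}^\al/\pi^n\widehat{R}^\al) = \ho_{\ca{O}_{\widehat{K}}\module}(\ak{m}_{\widehat{K}}, \widehat{R}/\pi^n\widehat{R})$, the fact that $\ho(\ak{m}_{\widehat{K}}, -)$ commutes with limits, and the classical $\pi$-adic completeness of $\widehat{R}$, one obtains
\begin{align*}
\plim_n \widehat{R}^\al/\pi^n\widehat{R}^\al = \bigl(\plim_n \ho_{\ca{O}_{\widehat{K}}\module}(\ak{m}_{\widehat{K}}, \widehat{R}/\pi^n\widehat{R})\bigr)^\al = \bigl(\ho_{\ca{O}_{\widehat{K}}\module}(\ak{m}_{\widehat{K}}, \widehat{R})\bigr)^\al = ((\widehat{R}^\al)_*)^\al = \widehat{R}^\al,
\end{align*}
and unwinding definitions shows that the canonical morphism $\widehat{R}^\al \to \plim_n \widehat{R}^\al/\pi^n \widehat{R}^\al$ is precisely this identification.

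The main obstacle is essentially a careful bookkeeping: distinguishing classical quotients and limits of $\ca{O}_{\widehat{K}}$-modules from their almost counterparts, and verifying that the functors $\alpha^*, \alpha_*$ interact correctly with $\pi$-adic completion (which ultimately rests on the finite generation of $(\pi)$). Once these are secured, the equivalence of (1) and (2) follows by matching axioms (i) and (ii) as above, the completeness axiom (iii) being free.
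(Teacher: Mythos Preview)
Your proposal is correct and follows essentially the same approach as the paper: both identify axioms (i) and (ii) of Definition~\ref{defn:pre-perf-alg} with the two conditions in (2) via the canonical isomorphisms $R/\pi R \cong \widehat{R}/\pi\widehat{R}$ and $R/\pi_1 R \cong \widehat{R}/\pi_1\widehat{R}$, and both observe that axiom (iii) holds automatically for $\widehat{R}^\al$. The only cosmetic difference is that the paper dispatches (iii) in one line by invoking that $\alpha^*$ commutes with arbitrary limits and colimits (so $\widehat{R}^\al = (\varprojlim_n \widehat{R}/\pi^n\widehat{R})^\al = \varprojlim_n \widehat{R}^\al/\pi^n\widehat{R}^\al$ directly), whereas you unwind the limit via \eqref{eq:lim-colim-alg} and the explicit description of $\alpha_*$ as $\ho(\ak{m}_{\widehat{K}},-)$; these amount to the same computation.
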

\begin{proof}	
	We have seen that $\widehat{K}$ is a perfectoid field in \ref{lem:pre-perf-field} and $\pi$ is obviously a pseudo-uniformizer of $\widehat{K}$.
	Since the localization functor $\alpha^*: \ca{O}_K\alg \to \ca{O}_K^\al\alg$ commutes with arbitrary limits and colimits (\ref{para:almost-math}), we have a canonical isomorphism $\widehat{R}^\al \iso \varprojlim_n \widehat{R}^\al/\pi^n \widehat{R}^\al$. Thus, the third condition in \ref{defn:pre-perf-alg}.(1) holds for $\widehat{R}^\al$. Since there are canonical isomorphisms
	\begin{align}
		R/\pi_1 R \iso \widehat{R}/\pi_1\widehat{R}, \ R/\pi R \iso \widehat{R}/\pi\widehat{R},
	\end{align}
	the conditions (1) and (2) are clearly equivalent.
\end{proof}

\begin{mydefn}\label{defn:pre-alg}
	Let $K$ be a pre-perfectoid field, $\pi$ a pseudo-uniformizer of $K$ dividing $p$ with a $p$-th root $\pi_1$ up to a unit. We say that an $\ca{O}_K$-algebra is \emph{almost pre-perfectoid} if it satisfies the equivalent conditions in \ref{lem:pre-perf-alg}.
\end{mydefn}

We remark that in \ref{defn:pre-alg}, if a morphism of $\ca{O}_K$-algebras $R\to R'$ induces an almost isomorphism $R/\pi^n R \to R'/\pi^n R'$ for each $n\geq 1$, then the morphism of the $\pi$-adic completions $\widehat{R}\to \widehat{R'}$ is an almost isomorphism since $\alpha^*$ commutes with limits. In particular, $R$ is almost pre-perfectoid if and only if $R'$ is almost pre-perfectoid.

\begin{mylem}\label{lem:completion-flat}
	Let $K$ be a pre-perfectoid field with a pseudo-uniformizer $\pi$, $R$ an $\ca{O}_K$-algebra. If $R$ is almost flat (resp. flat) over $\ca{O}_K$, then the $\pi$-adic completion $\widehat{R}$ is almost flat (resp. flat) over $\ca{O}_{\widehat{K}}$.
\end{mylem}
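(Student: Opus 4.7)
The plan is to proceed in three steps: (a) reduce the almost-flat case to the flat case; (b) show that the $\pi$-adic completion of a $\pi$-torsion-free $\ca{O}_K$-algebra remains $\pi$-torsion-free; and (c) upgrade $\pi$-torsion-freeness to full flatness over $\ca{O}_{\widehat{K}}$ using the fact that $\ca{O}_{\widehat{K}}$ is a valuation ring of height $1$.

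For step~(a), I replace $R$ by $R':=(R^\al)_*$. By Lemma~\ref{lem:flat}, almost flatness of $R$ over $\ca{O}_K$ is equivalent to $R'$ being $\pi$-torsion-free and the unit morphism $R\to R'$ being an almost isomorphism. Since the localization functor $\alpha^*$ commutes with arbitrary limits and colimits (see \ref{para:almost-math}), applying it to $\widehat{R}=\lim_n R/\pi^n R$ produces canonical isomorphisms
\begin{align}
\widehat{R}^\al=\lim_n R^\al/\pi^n R^\al=\lim_n R'^\al/\pi^n R'^\al=\widehat{R'}^\al
\end{align}
in $\ca{O}_K^\al\module$. Moreover $\ak{m}_{\widehat{K}}=\ak{m}_K\cdot\ca{O}_{\widehat{K}}$ by the density of $v_K(K^\times)$ in $\bb{R}$, so for an $\ca{O}_{\widehat{K}}$-module the notions of ``almost zero'' over $(\ca{O}_K,\ak{m}_K)$ and over $(\ca{O}_{\widehat{K}},\ak{m}_{\widehat{K}})$ coincide, and likewise for almost flatness via Lemma~\ref{lem:flat}. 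Thus the almost-flat statement for $\widehat{R}$ will follow from the flat statement for $\widehat{R'}$, and I may assume henceforth that $R$ itself is flat over $\ca{O}_K$.

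Step~(b) is the technical core and I expect it to be the main obstacle. Let $x\in\widehat{R}$ with $\pi x=0$, represented by a compatible system $(\widetilde{x}_n)_{n\geq 1}$ with $\widetilde{x}_n\in R$ and $\widetilde{x}_{n+1}-\widetilde{x}_n\in \pi^n R$. The relation $\pi x=0$ forces $\pi\widetilde{x}_n\in\pi^n R$; writing $\pi\widetilde{x}_n=\pi^n r_n$ and cancelling one $\pi$ (using $\pi$-torsion-freeness of $R$) gives $\widetilde{x}_n=\pi^{n-1}r_n$ for some $r_n\in R$. Plugging into $\widetilde{x}_{n+1}-\widetilde{x}_n\in\pi^n R$ and cancelling $\pi^{n-1}$ yields $r_n\equiv\pi r_{n+1}\pmod{\pi R}$, hence $r_n\in\pi R$. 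Therefore $\widetilde{x}_n\in\pi^n R$ for every $n$, which means $x=0$.

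For step~(c), $\ca{O}_{\widehat{K}}$ is a valuation ring of height $1$ by \cite[\Luoma{6}.\textsection5.3, Prop.5]{bourbaki2006commalg5-7}. For any nonzero $\lambda\in\ca{O}_{\widehat{K}}$ the inequality $v_K(\lambda)<n\cdot v_K(\pi)$ holds for $n$ large enough, so $\lambda\mid\pi^n$ in $\ca{O}_{\widehat{K}}$. Non-zero-divisors form a multiplicative set, so $\pi$-torsion-freeness of $\widehat{R}$ propagates inductively to $\pi^n$-torsion-freeness, whence to $\lambda$-torsion-freeness for every $0\neq\lambda\in\ca{O}_{\widehat{K}}$. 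As a module over a valuation ring is flat iff torsion-free, $\widehat{R}$ is flat over $\ca{O}_{\widehat{K}}$. Combined with step~(a), this also yields the almost-flat conclusion and completes the plan.
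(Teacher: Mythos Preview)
Your proof is correct. The core computation in step~(b) is essentially the paper's argument: both show that a $\pi$-torsion element of $\widehat{R}$ has image in $\pi^{n-1}R/\pi^nR$ (or $\pi^nR$) for all $n$ and hence vanishes. Step~(c), upgrading $\pi$-torsion-freeness to flatness over the height-$1$ valuation ring $\ca{O}_{\widehat{K}}$, is something the paper leaves implicit in the phrase ``which amounts to say''; you are right to spell it out.

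The one organizational difference is in how the almost-flat case is handled. The paper does not reduce to the flat case: it runs the same $\pi$-torsion argument but inserts an $\epsilon\in\ak{m}_K$ multiplier, using directly that $\pi$-torsion in $R$ is almost zero to conclude $\epsilon x\in\bigcap_n\pi^{n-1}\widehat{R}=0$ for every $\epsilon$. Your step~(a) instead passes to $R'=(R^\al)_*$, which is genuinely $\pi$-torsion-free by Lemma~\ref{lem:flat}, and exploits that $\alpha^*$ commutes with limits to identify $\widehat{R}^\al\cong\widehat{R'}^\al$. This is a clean structural reduction and avoids carrying the $\epsilon$ through the computation; the paper's uniform treatment is shorter but slightly more ad~hoc. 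Either route is fine.
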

\begin{proof}
	For any integer $n>0$, there is a canonical isomorphism
	\begin{align}\label{eq:1.29.1}
		R/\pi^n R \iso \widehat{R}/\pi^n\widehat{R}.
	\end{align}
	Let $x \in \widehat{R}$ be a $\pi$-torsion element. Since any $\pi$-torsion element of $R$ is almost zero (resp. zero) by \ref{lem:flat}, for any $\epsilon \in \ak{m}_K$ (resp. $\epsilon=1$), the image of $\epsilon x$ in $\widehat{R}/\pi^n\widehat{R}$ lies in $\pi^{n-1}\widehat{R}/\pi^n\widehat{R}$. Therefore, $\epsilon x \in \bigcap_{n> 0} \pi^{n-1}\widehat{R} =0 $, which amounts to say that $\widehat{R}$ is almost flat (resp. flat) over $\ca{O}_{\widehat{K}}$.
\end{proof}

\begin{mylem}\label{lem:p-clos}
	Let $K$ be a pre-perfectoid field, $\pi$ a pseudo-uniformizer of $K$ dividing $p$ with a $p$-th root $\pi_1$ up to a unit, $R$ a flat $\ca{O}_K$-algebra. Then, the following conditions are equivalent:
	\begin{enumerate}
		\renewcommand{\labelenumi}{{\rm(\theenumi)}}
		\item The Frobenius induces an injection $R/ \pi_1 R \to R/\pi R$.
		\item For any $x \in R[1/\pi]$, if $x^p \in R$, then $x \in R$.
	\end{enumerate}
\end{mylem}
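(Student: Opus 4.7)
The plan is to exploit the relation $\pi=u\pi_1^p$ for some unit $u\in \ca{O}_K^\times$ to translate directly between the two conditions, using flatness only to ensure that $R$ embeds into $R[1/\pi]$ so that we can speak of elements being integral.

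For the implication $(1)\Rightarrow(2)$, I would argue by a ``minimal denominator'' contradiction. Given $x\in R[1/\pi]$ with $x^p\in R$, since $R$ is flat over $\ca{O}_K$ and $\pi,\pi_1$ have the same radical, we have $R[1/\pi]=R[1/\pi_1]$ and $R\hookrightarrow R[1/\pi]$. Let $m\geq 0$ be minimal with $\pi_1^m x\in R$. Assuming $m\geq 1$, set $y=\pi_1^m x\in R$; then
\begin{equation*}
y^p=\pi_1^{mp}x^p\in \pi_1^{p}R=u^{-1}\pi R\subseteq \pi R,
\end{equation*}
so by $(1)$, $y\in \pi_1 R$, i.e.\ $\pi_1^{m-1}x\in R$, contradicting minimality. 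Hence $m=0$ and $x\in R$.

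For the implication $(2)\Rightarrow(1)$, given $r\in R$ with $r^p\in \pi R$, I would simply put $x=r/\pi_1\in R[1/\pi]$ and observe that
\begin{equation*}
x^p=r^p/\pi_1^p=u\cdot r^p/\pi\in R,
\end{equation*}
so $(2)$ gives $x\in R$, i.e.\ $r\in \pi_1 R$.

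There is no real obstacle: the only subtle point is the use of flatness of $R$ over $\ca{O}_K$ (\ref{lem:flat}), which guarantees $\pi$ is a nonzerodivisor in $R$ and hence $R\hookrightarrow R[1/\pi]$, making the minimal exponent $m$ well defined in the first direction; the rest is a one-line computation using $\pi=u\pi_1^p$.
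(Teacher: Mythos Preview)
Your proof is correct and is essentially the same as the paper's own argument (which in turn follows \cite[5.7]{scholze2012perfectoid}): both directions use the relation $\pi=u\pi_1^p$, with $(1)\Rightarrow(2)$ proved by the minimal-denominator contradiction and $(2)\Rightarrow(1)$ by the one-line substitution $x=r/\pi_1$. The only cosmetic difference is that you make the unit $u$ explicit throughout, whereas the paper absorbs it silently.
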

\begin{proof}
	We follow the proof of \cite[5.7]{scholze2012perfectoid}. Assume first that $R/ \pi_1 R \to R/\pi R$ is injective. Let $x \in R[1/\pi]$ with $x^p \in R$, $k$ the minimal natural number such that $y = \pi_1^{k} x \in R$. If $k\geq 1$, then $y^p = \pi_1^{pk} x^p \in \pi R$. Therefore, $y \in \pi_1 R$ by the injectivity of the Frobenius. However, as $R$ is $\pi$-torsion free, we have $y' = y/\pi_1 = \pi_1^{k-1} x \in R$ which contradicts the minimality of $k$.
	
	Conversely, for any $x \in R$ with $x^p \in \pi R$, we have $(x/\pi_1)^p \in R$. Thus, $x/\pi_1 \in R$ by assumption, i.e. $x \in \pi_1 R$, which implies the injectivity of the Frobenius.
\end{proof}

\begin{mylem}\label{lem:almost-inj}
	Let $K$ be a pre-perfectoid field, $\pi$ a pseudo-uniformizer of $K$ dividing $p$ with a $p$-th root $\pi_1$ up to a unit, $R$ an $\ca{O}_K$-algebra which is almost flat. Then, the following conditions are equivalent:
	\begin{enumerate}
		\renewcommand{\labelenumi}{{\rm(\theenumi)}}
		\item The Frobenius induces an almost injection (resp. almost isomorphism) $R/\pi_1R \to R/\pi R$.
		\item The Frobenius induces an injection (resp. isomorphism) $(R^\al)_*/\pi_1 (R^\al)_* \to (R^\al)_*/\pi (R^\al)_*$.
	\end{enumerate}
\end{mylem}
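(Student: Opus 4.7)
The approach centers on the module $S := (R^\al)_*$, together with the inclusions into $A := (R^\al/\pi_1 R^\al)_*$ and $B := (R^\al/\pi R^\al)_*$ provided by Lemma \ref{lem:almost-complete}. Almost flatness of $R$ transfers via Lemma \ref{lem:flat} to genuine flatness of $S$ over $\ca{O}_K$, and the unit $R \to S$ is an almost isomorphism; applying Lemma \ref{lem:al-star} to the flat $S$ yields the saturation identity $\bigcap_n \pi_n^{-1} S = (S^\al)_* = (R^\al)_* = S$. From this I extract that $S/\pi_1 S$ and $S/\pi S$ contain \emph{no} nonzero almost zero element: if $\bar a \in S/\pi_1 S$ satisfies $\epsilon a \in \pi_1 S$ for every $\epsilon \in \ak{m}_K$, then $a/\pi_1 \in \bigcap_n \pi_n^{-1} S = S$, whence $a \in \pi_1 S$, and likewise with $\pi$. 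Lemma \ref{lem:almost-complete} further provides injections $S/\pi_1 S \hookrightarrow A$ and $S/\pi S \hookrightarrow B$ with cokernels annihilated by $\ak{m}_K$, and the Frobenius of~(1) gives under $\alpha_*$ a map $\phi_* \colon A \to B$ which restricts along these inclusions to the Frobenius $\phi_S \colon S/\pi_1 S \to S/\pi S$ of~(2).

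The direction $(2) \Rightarrow (1)$ is immediate: apply the exact functor $\alpha^*$ to the injection (resp. isomorphism) in~(2) and use $S^\al = R^\al$. Conversely, condition~(1) translates, via left exactness of $\alpha_*$ and its preservation of isomorphisms, into the statement that $\phi_*$ is injective (resp. an isomorphism). The injection case then falls out of the setup: $\ker \phi_S$ injects into $\ker \phi_* = 0$, hence is almost zero inside $S/\pi_1 S$ and therefore vanishes. In addition, Lemma \ref{lem:p-clos} now shows that $S$ is $p$-root closed (any $x \in S[1/\pi]$ with $x^p \in S$ already lies in $S$), a property needed below.

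The isomorphism case is the main obstacle. A snake-lemma diagram chase using $\phi_*$ an isomorphism together with the two injections of almost-zero cokernel shows only that $\mrm{coker}(\phi_S)$ is almost zero, which does not force it to vanish since quotients of modules without almost zero elements can themselves acquire such elements. To bridge this gap I plan to combine the $p$-root closedness above with the further saturation $\bigcap_n (\pi/\pi_n) S = \pi S$, a direct consequence of $\bigcap_n \pi_n^{-1} S = S$. Given $s \in S$, almost surjectivity of $\phi_*$ applied with $\epsilon = \pi_n$ produces $t_n \in S$ with $t_n^p = \pi_n^p s + \pi r_n$; iterated applications of $p$-root closedness, together with the pre-perfectoid surjectivity of Frobenius on $\ca{O}_K/\pi$ used to absorb unit $p$-th roots modulo $\pi$, should yield $\tau_n \in S$ with $\tau_n^p \equiv s \pmod{(\pi/\pi_n) S}$ for every $n \geq 1$. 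Working in the $\bb{F}_p$-algebra $S/\pi S$, the Frobenius identity $(x+y)^p = x^p + y^p$ gives $(\tau_{n+1} - \tau_n)^p \in (\pi/\pi_n) S$ in $S$, and a further application of $p$-root closedness coupled with the saturation identity should then produce a single $\tau \in S$ realizing $\tau^p \equiv s \pmod \pi$, completing the surjectivity of $\phi_S$.
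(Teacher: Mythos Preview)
Your setup, the direction $(2)\Rightarrow(1)$, and the injection half of $(1)\Rightarrow(2)$ are correct and agree with the paper's argument (it too uses the inclusions $S/\pi_1 S\hookrightarrow ((R/\pi_1 R)^\al)_*$ and $S/\pi S\hookrightarrow ((R/\pi R)^\al)_*$ from Lemma~\ref{lem:almost-complete}, and then invokes Lemma~\ref{lem:p-clos}).

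The isomorphism half, however, has a genuine gap. Your scheme applies almost surjectivity with a \emph{varying} $\epsilon=\pi_n^p$ to the \emph{same} element $s$, producing $\tau_n\in S$ with $\tau_n^p\equiv s\pmod{(\pi/\pi_n^p)S}$. But these congruences never reach $\pi S$, and the proposed passage to a single $\tau$ fails: from $(\tau_{n+1}-\tau_n)^p\in(\pi/\pi_n)S$ one extracts only $\tau_{n+1}-\tau_n\in\mu_n S$ with $v_K(\mu_n)=v_K(\pi/\pi_n)/p\to v_K(\pi_1)$, so the sequence is not even Cauchy in a useful sense. The saturation identity $\bigcap_n(\pi/\pi_n)S=\pi S$ would help only if a \emph{single} element satisfied the congruence for all $n$, which you do not have. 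Without $\pi$-adic completeness there is no way to assemble the $\tau_n$ into one $\tau$.

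The paper's fix is to iterate on the \emph{error term} rather than on $s$, with a single fixed $\epsilon=\pi_1=\pi_2^p$, and observe that the process terminates in two steps. From $\pi_2^p x=y^p+\pi_2^{p^2}x'$ and $p$-root closedness one gets $x=y'^p+\pi_2^{p^2-p}x'$ with $y'=y/\pi_2\in S$. Applying the same to $x'$ gives
\[
x=y'^p+\pi_2^{p^2-p}y''^p+\pi_2^{2(p^2-p)}x''.
\]
Since $p\ge 2$ one has $2(p^2-p)\ge p^2$, so $\pi_2^{2(p^2-p)}\in\pi\ca{O}_K$; and since $p\in\pi\ca{O}_K$, the binomial cross terms vanish modulo $\pi$, giving $x\equiv(y'+\pi_2^{p-1}y'')^p\pmod{\pi S}$. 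Your ingredients ($p$-root closedness and almost surjectivity) are the right ones; only the iteration needs to be redirected at the remainder.
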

\begin{proof}
	We follow the proof of \cite[5.6]{scholze2012perfectoid}.
	Notice that the Frobenius is compatible with the functors $\alpha^*$ and $\alpha_*$ (\ref{para:almost-frob}). (2) $\Rightarrow$ (1) follows from the almost isomorphism $R \to (R^\al)_*$. The ``injection'' part of (1) $\Rightarrow$ (2) follows from the inclusions (\ref{lem:almost-complete})
	\begin{align}
		(R^\al)_*/\pi_1 (R^\al)_* \subseteq((R/\pi_1 R)^\al)_*,\ (R^\al)_*/\pi (R^\al)_* \subseteq ((R/\pi R)^\al)_*.
	\end{align}
	For the ``isomorphism'' part of (1) $\Rightarrow$ (2), notice that $(R^\al)_*/\pi_1 (R^\al)_* \to (R^\al)_*/\pi (R^\al)_*$ is almost surjective. Let $\pi_2$ be a $p$-th root of $\pi_1$ up to a unit (\ref{lem:adm-uniformizer}). Then, for an element $x$ of $(R^\al)_*$, there exist elements $y$ and $x'$ of $(R^\al)_*$ such that $\pi_2^p x=y^p+\pi_2^{p^2} x'$. Thus, $x=y'^p+\pi_2^{p^2-p} x'$ where $y'=y/\pi_2\in (R^\al)_*[1/\pi]$ (as $(R^\al)_*$ is flat over $\ca{O}_K$ by \ref{lem:flat}). In fact, $y'$ lies in $(R^\al)_*$ by \ref{lem:p-clos} and the ``injection'' part of (1) $\Rightarrow$ (2). By applying this process to $x'$, there exist elements $y''$ and $x''$ of $(R^\al)_*$ such that $x'=y''^p+\pi_2^{p^2-p}x''$. In conclusion, we have $x=y'^p+\pi_2^{p^2-p}(y''^p+\pi_2^{p^2-p}x'')\equiv (y'+\pi_2^{p-1}y'')^p\ \mod\ \pi(R^\al)_*$, which shows the surjectivity of $(R^\al)_*/\pi_1 (R^\al)_* \to (R^\al)_*/\pi (R^\al)_*$.
\end{proof}

\begin{mylem}\label{lem:anypsuniv}
	Let $K$ be a pre-perfectoid field, $R$ an almost flat $\ca{O}_K$-algebra, $\pi, \pi'$ pseudo-uniformizers dividing $p$ with $p$-th roots $\pi_1, \pi_1'$ respectively up to units. Then, the following conditions are equivalent:
	\begin{enumerate}
		\renewcommand{\labelenumi}{{\rm(\theenumi)}}
		\item The Frobenius induces an almost injection (resp. almost surjection) $R/\pi_1R \to R/\pi R$.
		\item The Frobenius induces an almost injection (resp. almost surjection) $R/\pi_1'R \to R/\pi' R$.
	\end{enumerate}
	In particular, the definitions {\rm\ref{defn:pre-perf-alg}.(1)} and {\rm\ref{defn:pre-alg}} do not depend on the choice of the pseudo-uniformizer.
\end{mylem}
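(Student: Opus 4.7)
The plan is to treat the almost injection and almost surjection cases separately.

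\textbf{Almost injection.} By \ref{lem:almost-inj}, the almost injectivity of Frobenius $R/\pi_1R \to R/\pi R$ is equivalent to the genuine injectivity of Frobenius $(R^\al)_*/\pi_1(R^\al)_* \to (R^\al)_*/\pi(R^\al)_*$. Since $(R^\al)_*$ is flat over $\ca{O}_K$ by \ref{lem:flat}, \ref{lem:p-clos} reformulates this as the condition that every $x \in (R^\al)_*[1/\pi]$ with $x^p \in (R^\al)_*$ already lies in $(R^\al)_*$. Because $\ca{O}_K$ is a valuation ring of height $1$, inverting any pseudo-uniformizer yields the fraction field $K$, so $(R^\al)_*[1/\pi] = (R^\al)_* \otimes_{\ca{O}_K} K$ is independent of $\pi$, and hence so is the stated condition. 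This gives the equivalence of the two almost-injection conditions.

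\textbf{Almost surjection.} The condition says that for every $\epsilon \in \ak{m}_K$ and every $x \in R$ there exist $y, z \in R$ with $\epsilon x = y^p + \pi z$; equivalently, the cokernel of Frobenius $R \to R/\pi R$ is annihilated by $\ak{m}_K$. One direction is immediate: when $v_K(\pi) \geq v_K(\pi')$ we have $\pi R \subseteq \pi' R$, so the relation $\epsilon x = y^p + \pi z$ yields $\epsilon x \equiv y^p \pmod{\pi' R}$. For the reverse direction (i.e., when $v_K(\pi') \geq v_K(\pi)$, upgrading from a coarser to a finer quotient), we bootstrap in the spirit of the ``isomorphism'' portion of the proof of \ref{lem:almost-inj}. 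Iterating the relation $\epsilon x \equiv y^p \pmod{\pi R}$ on successive remainder terms and combining two consecutive $p$-th power terms via the identity $(a + \pi_1^{p-1} b)^p \equiv a^p + \pi_1^{p(p-1)} b^p \pmod{\pi R}$ (valid because the binomial cross terms are multiples of $p \in \pi \ca{O}_K$), we progressively refine the congruence. The $p$-th roots of unit coefficients produced along the way are furnished by the surjectivity of Frobenius on $\ca{O}_K/\pi \ca{O}_K$ from the pre-perfectoid hypothesis (cf.\ \ref{lem:adm-uniformizer}). Finitely many iterations suffice to reach a congruence of the form $\epsilon' x \equiv y'^p \pmod{\pi' R}$ with $\epsilon' \in \ak{m}_K$ and $y' \in R$, which is the desired almost surjectivity modulo $\pi'$.

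The main obstacle is the reverse direction of the surjection case: the iterative combination of $p$-th powers requires careful control of the cross-term errors and of the $p$-th roots of unit coefficients, paralleling the delicate manipulation carried out in the proof of \ref{lem:almost-inj}.
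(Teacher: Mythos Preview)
Your treatment of the almost-injection part is correct and coincides with the paper's proof: both reduce via \ref{lem:almost-inj} and \ref{lem:p-clos} to the $\pi$-independent condition on $(R^\al)_*$.

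For the almost-surjection part, your overall strategy---iterate the relation and combine $p$-th powers to push the modulus from $\pi$ to larger valuations---is sound, but the execution has a gap. The identity you invoke, $(a+\pi_1^{p-1}b)^p \equiv a^p + \pi_1^{p(p-1)}b^p$, combines $a^p$ with $\pi^{p-1}b^p$; but the iteration on remainder terms that you describe produces $\epsilon^2 x = \epsilon y^p + \pi w^p + \pi^2 z'$, where the second summand carries a single factor of $\pi$, not $\pi^{p-1}$. The identity actually needed is the simpler $(a+\pi_1 b)^p \equiv a^p + \pi b^p \pmod{pR}$. Moreover, the stray coefficient $\epsilon$ on $y^p$ is not a unit and must itself be replaced by a $p$-th power modulo $\pi\ca{O}_K$ (using Frobenius surjectivity on $\ca{O}_K/\pi$), which introduces a further $\pi$-level error term that must be absorbed. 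With these corrections the bootstrap does go through, but your sketch does not supply them, and the identity as stated does not fit the terms that arise.

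The paper's argument avoids both difficulties by a different device. It chooses an auxiliary pseudo-uniformizer $\widetilde{\pi}$ with $v_K(\pi)/3 < v_K(\widetilde{\pi}) < v_K(\pi)/2$ and a $p$-th root $\widetilde{\pi}_1$, then applies almost surjectivity once modulo $\widetilde{\pi}^2$ (valid since $v_K(\widetilde{\pi}^2)<v_K(\pi)$) and once modulo $\pi$ with $\widetilde{\pi}$ itself in the role of $\epsilon$. This yields $\epsilon x = y^p + \widetilde{\pi}v^p + \widetilde{\pi}\pi w$, and since $\widetilde{\pi}v^p = (\widetilde{\pi}_1 v)^p$ is already a $p$-th power, one combines directly via $(y+\widetilde{\pi}_1 v)^p$ to obtain almost surjectivity modulo any $\pi'$ dividing $p$ with $v_K(\pi') < \tfrac{4}{3}v_K(\pi)$; iterating this $4/3$-factor step covers all such $\pi'$. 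The point is that using a fixed element of $\ak{m}_K$ which is itself a $p$-th power as the ``$\epsilon$'' eliminates the coefficient problem entirely, so no separate massaging of $\epsilon y^p$ is required.
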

\begin{proof}
	 Notice that $(R^\al)_*$ is flat over $\ca{O}_K$ by \ref{lem:flat}. The ``injection'' part follows from \ref{lem:p-clos} and \ref{lem:almost-inj}.
	For the ``surjection'' part, we assume that $R/\pi_1 R \to R/\pi R$ is almost surjective. Let $\epsilon \in \ak{m}_K$.
	We can take a pseudo-uniformizer $\widetilde{\pi}$ of $K$ dividing $p$ with $\widetilde{\pi}_1^p = \widetilde{\pi}$ and $v_K(\pi)/3 < v_K(\widetilde{\pi}) < v_K(\pi)/2$. For any $x\in R$, by the almost surjectivity, we have $\epsilon x = y^p + \widetilde{\pi}^2 z$ for some $y, z \in R$. We also have $\widetilde{\pi}z= v^p + \pi w$ for some $v,w \in R$, then $\epsilon x = y^p +\widetilde{\pi} v^p + \widetilde{\pi}\pi w$. Since $y^p+\widetilde{\pi} v^p  \equiv (y+ \widetilde{\pi}_1 v)^p \mod pR$, $R'/\pi_1'R \to R/\pi' R$ is almost surjective for any pseudo-uniformizer $\pi'$ dividing $p$ with $v_K(\pi') < 4v_K(\pi)/3$. By induction, we see that $R'/\pi_1'R \to R/\pi' R$ is almost surjective in general.
\end{proof}

\begin{myprop}\label{prop:pre-perf-perf}
	Let $K$ be a pre-perfectoid field of characteristic $p$ with a pseudo-uniformizer $\pi$, $R$ an $\ca{O}_K$-algebra, $\widehat{R}$ the $\pi$-adic completion of $R$. Then, $R$ is almost pre-perfectoid if and only if $(\widehat{R}^\al)_*$ is perfect.
\end{myprop}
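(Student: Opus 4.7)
The plan is to reduce the proposition to an equivalence purely about $S := (\widehat{R}^{\al})_*$. Since $K$ is a pre-perfectoid field of characteristic $p$, the ring $\ca{O}_{\widehat{K}}$ is perfect (Frobenius is surjective on $\ca{O}_{\widehat{K}}/p\,\ca{O}_{\widehat{K}} = \ca{O}_{\widehat{K}}$ and injective, as $\ca{O}_{\widehat{K}}$ is a domain). Using this I may choose, via \ref{lem:adm-uniformizer}, a system of pseudo-uniformizers $(\pi_n)_{n\ge 0}$ with $\pi_0=\pi$, $\pi_n^p=\pi_{n-1}$ and such that $\ak{m}_K$ is generated as an ideal by $\{\pi_n\}_{n\ge 0}$. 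By \ref{lem:almost-complete}, the natural map $S\to \varprojlim_n S/\pi^n S$ is an isomorphism, so $S$ is $\pi$-adically complete and separated. Combining \ref{lem:pre-perf-alg}, \ref{lem:flat} and \ref{lem:almost-inj}, the condition that $R$ be almost pre-perfectoid is equivalent to the conjunction of (a) $S$ is $\pi$-torsion free, and (b) the Frobenius $F\colon S/\pi_1 S \to S/\pi S$ is an isomorphism. It therefore suffices to show that (a)+(b) is equivalent to $F\colon S\to S$ being bijective.

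\textbf{Forward direction.} Assume (a) and (b). For injectivity of $F$: if $x^p=0$ then $x^p\in\pi S$, so $x\in\pi_1 S$ by (b); writing $x=\pi_1 y$ gives $\pi y^p=(\pi_1 y)^p=0$, hence $y^p=0$ by (a). Iteration yields $x\in\bigcap_n \pi_1^n S\subseteq\bigcap_m \pi^m S=0$ by $\pi$-adic separatedness. For surjectivity, given $y\in S$, I inductively construct $x_n\in S$ with $x_n^p\equiv y\pmod{\pi^{n+1}}$ and $x_{n+1}-x_n\in\pi_1^{n+1}S$: starting from $x_0$ provided by (b), at step $n$ I write $y-x_n^p=\pi^{n+1}r$, use (b) to find $s\in S$ with $s^p\equiv r\pmod\pi$, and put $x_{n+1}=x_n+\pi_1^{n+1}s$. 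The characteristic-$p$ identity $(a+b)^p=a^p+b^p$ together with $\pi_1^{p(n+1)}=\pi^{n+1}$ gives
\[
x_{n+1}^p \;=\; x_n^p + \pi^{n+1}s^p \;=\; y + \pi^{n+1}(s^p-r) \;\equiv\; y \pmod{\pi^{n+2}}.
\]
By $\pi$-adic completeness of $S$, the Cauchy sequence $(x_n)$ converges to an $x\in S$ with $x^p=y$.

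\textbf{Backward direction.} Assume $S$ is perfect. For (b), surjectivity is immediate, and injectivity follows because $x^p\in\pi S$ can be written as $x^p=\pi z^p=(\pi_1 z)^p$ (using $F$ surjective on $S$), so $(x-\pi_1 z)^p=0$ in characteristic $p$, and then $x=\pi_1 z\in\pi_1 S$ by injectivity of $F$. The delicate point is (a): if $\phi\in S$ satisfies $\pi\phi=0$, iteratively using ``$\phi=\xi^p \Rightarrow (\pi_1\xi)^p=\pi\xi^p=0 \Rightarrow \pi_1\xi=0$'' (with $\pi_{n-1}$, $\pi_n$ replacing $\pi$, $\pi_1$ at the $n$-th step) produces, for every $n\ge 1$, an element $\xi_n\in S$ with $\xi_n^{p^n}=\phi$ and $\pi_n\xi_n=0$. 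The key identity
\[
\pi_n\phi \;=\; \pi_n\xi_n^{p^n} \;=\; \xi_n^{p^n-1}\cdot(\pi_n\xi_n) \;=\; 0
\]
then shows that $\phi$ is annihilated by every $\pi_n$, hence by all of $\ak{m}_K$. Since $S=\ho_{\ca{O}_K\module}(\ak{m}_K,\widehat{R})$ and $\ak{m}_K^2=\ak{m}_K$, any $\ca{O}_K$-linear map $\ak{m}_K\to\widehat{R}$ killed by $\ak{m}_K$ must vanish; therefore $\phi=0$. The main obstacle is precisely this last step: promoting the bijectivity of Frobenius to $\pi$-torsion freeness of $S$ requires combining the algebraic identity above with the specific internal-hom description of $S$ as an almost module. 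Everything else is a variant of the classical Hensel-like arguments used in Scholze's perfectoid paper.
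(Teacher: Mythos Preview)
Your proof is correct and follows essentially the same approach as the paper; the backward direction is identical (the paper writes $\pi^{1/p^n}$ for your $\pi_n$), while for the forward direction the paper invokes \ref{lem:anypsuniv} to upgrade (b) to Frobenius isomorphisms $S/\pi^n S \iso S/\pi^{pn} S$ for all $n\ge 1$ and then passes to the inverse limit, whereas you run an explicit successive-approximation argument from the single level $n=1$. One small expository point: your appeal to \ref{lem:almost-complete} for the $\pi$-adic completeness of $S$ requires $\widehat{R}^\al$ to be flat, i.e.\ your condition (a), so that sentence belongs inside the forward direction rather than in the Plan; since completeness is only used in the forward direction, the argument is unaffected.
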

\begin{proof}
	Note that $\ca{O}_K$ is perfect by definition.
	If $R$ is almost pre-perfectoid, then $\widehat{R}$ is almost flat so that $(\widehat{R}^\al)_*$ is $\pi$-adically complete by taking $M=\widehat{R}^\al$ in \ref{lem:almost-complete}. Moreover, the Frobenius induces an isomorphism $(\widehat{R}^\al)_*/\pi^n(\widehat{R}^\al)_*\to (\widehat{R}^\al)_*/\pi^{pn}(\widehat{R}^\al)_*$ for any integer $n\geq 1$ by \ref{lem:almost-inj} and \ref{lem:anypsuniv}, which implies that $(\widehat{R}^\al)_*$ is perfect. Conversely, assume that $(\widehat{R}^\al)_*$ is perfect. For any $\pi$-torsion element $f\in (\widehat{R}^\al)_*$, we have $\pi^{1/p^n}f=0$ for any integer $n\geq 0$, which shows that $\widehat{R}$ is almost flat by \ref{lem:flat}. Moreover, it is clear that the Frobenius induces an isomorphism $(\widehat{R}^\al)_*/\pi(\widehat{R}^\al)_*\to (\widehat{R}^\al)_*/\pi^{p}(\widehat{R}^\al)_*$, which shows that $R$ is almost pre-perfectoid by \ref{lem:almost-inj} and \ref{lem:anypsuniv}.
\end{proof}

\begin{myprop}\label{prop:int-closed}
	Let $K$ be a pre-perfectoid field with a pseudo-uniformizer $\pi$, $R$ an $\ca{O}_K$-algebra which is almost flat, $R'$ the integral closure of $R$ in $R[1/\pi]$. If the Frobenius induces an almost injection $R/\pi_1R \to R/\pi R$, then $R \to R'$ is an almost isomorphism.
\end{myprop}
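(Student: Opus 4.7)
The plan is to check that both the kernel and cokernel of $R \to R'$ are almost zero. The kernel equals the $\pi$-power torsion submodule of $R$ (since $R' \subseteq R[1/\pi]$), which is almost zero by the almost flatness of $R$ and \ref{lem:flat}. The substance of the statement is the cokernel: we must show that $\epsilon x$ lies in the image of $R$ for every $x \in R'$ and every $\epsilon \in \ak{m}_K$. I will first pass to $\widetilde{R}=(R^\al)_*$. Since the canonical map $R \to \widetilde{R}$ is an almost isomorphism and $\ak{m}_K\cdot \ak{m}_K = \ak{m}_K$, it suffices to prove that $\ak{m}_K \widetilde{R}' \subseteq \widetilde{R}$, where $\widetilde{R}'$ denotes the integral closure of $\widetilde{R}$ in $\widetilde{R}[1/\pi]=R[1/\pi]$.

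By \ref{lem:flat}, $\widetilde{R}$ is $\pi$-torsion free, and by \ref{lem:almost-inj} the Frobenius induces a genuine injection $\widetilde{R}/\pi_1 \widetilde{R} \hookrightarrow \widetilde{R}/\pi \widetilde{R}$; so \ref{lem:p-clos} shows that $\widetilde{R}$ is $p$-root closed in $\widetilde{R}[1/\pi]$, and a straightforward iteration yields that $\widetilde{R}$ is $p^n$-root closed for every $n \geq 1$. Now for $x \in \widetilde{R}'$, the ring $\widetilde{R}[x]$ is a finitely generated $\widetilde{R}$-submodule of $\widetilde{R}[1/\pi]$, so there exists $N \geq 0$ with $\widetilde{R}[x] \subseteq \pi^{-N}\widetilde{R}$; in particular $\pi^N x^{p^n} \in \widetilde{R}$ for every $n \geq 0$.

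Choose via \ref{lem:adm-uniformizer} a compatible system $(\pi_n)_{n \geq 0}$ of pseudo-uniformizers with $\pi_0=\pi$ and $\pi_n^{p^n} = v_n \pi$ for units $v_n \in \ca{O}_K^\times$. Then $(\pi_n^N x)^{p^n} = v_n^N \pi^N x^{p^n} \in \widetilde{R}$, so by the $p^n$-root closure $\pi_n^N x \in \widetilde{R}$ for every $n$. Since $\{\pi_n\}_{n\geq 0}$ generates $\ak{m}_K$, for any $\epsilon \in \ak{m}_K$ we have $\epsilon/\pi_n^N \in \ca{O}_K$ for $n$ large enough, hence $\epsilon x = (\epsilon/\pi_n^N)(\pi_n^N x) \in \widetilde{R}$, as required. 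The main obstacle is precisely this last step: integrality of $x$ only yields the coarse bound $x^k \in \pi^{-N}\widetilde{R}$, and to convert this into an almost containment of $x$ itself one has to combine the iterated $p^n$-root closure of $\widetilde{R}$ with the presence in $\ca{O}_K$ of compatible $p^n$-th roots of $\pi$ whose valuations tend to zero.
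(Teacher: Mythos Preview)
Your proof is correct and follows essentially the same route as the paper's: pass to $\widetilde{R}=(R^{\al})_*$, use \ref{lem:flat}, \ref{lem:almost-inj}, \ref{lem:p-clos} to get $p$-root (hence $p^n$-root) closure, bound $\pi^N x^{p^n}\in\widetilde{R}$ for $x$ integral, and extract $\pi_n^N x\in\widetilde{R}$. The only cosmetic difference is that the paper, after replacing $R$ by $(R^{\al})_*$, concludes via \ref{lem:al-star} that $x\in\bigcap_{n\geq 0}\pi_n^{-1}R=(R^{\al})_*=R$, thereby showing $R$ is \emph{exactly} integrally closed in $R[1/\pi]$; you instead spell out the last step by hand to get $\ak{m}_K x\subseteq\widetilde{R}$.
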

\begin{proof}
	Since $R \to (R^\al)_*$ is an almost isomorphism, we may replace $R$ by $(R^\al)_*$ so that we may assume that $R = (R^\al)_*$, $R \subseteq R[1/\pi]$ by \ref{lem:flat} and for any $x \in R[1/\pi]$ such that $x^p \in R$, then $x \in R$ by \ref{lem:p-clos} and \ref{lem:almost-inj}. It suffices to show that $R$ is integrally closed in $R[1/\pi]$. Suppose that $x \in R[1/\pi]$ is integral over $R$. There is an integer $N>0$ such that $x^r$ is an $R$-linear combination of $1, x, \dots,x^N$ for any $r>0$. Therefore, there exists an integer $k>0$ such that $\pi^kx^r \in R$ for any $r>0$. Taking $r=p^n$, we get $x \in \bigcap_{n\geq 0} \pi_n^{-1} R = (R^\al)_* = R$ by \ref{lem:al-star}, which completes our proof.
\end{proof}

\begin{mylem}\label{lem:completion-filtration}
	Let $R$ be a ring, $\pi$ a nonzero divisor of $R$, $\widehat{R}$ the $\pi$-adic completion of $R$, $\varphi:R[1/\pi]\to \widehat{R}[1/\pi]$ the canonical morphism. Then, $\varphi^{-1}(\pi^n\widehat{R})=\pi^nR$ for any integer $n$.
\end{mylem}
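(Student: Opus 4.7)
\medskip
\noindent\textbf{Proof plan.}
The plan is to reduce the statement, for arbitrary $n\in\mathbb{Z}$, to the standard case $n\geq 0$, for which the core fact is the isomorphism $R/\pi^nR\cong \widehat{R}/\pi^n\widehat{R}$. Once this isomorphism is established, everything else is a formal manipulation using that $\pi$ becomes invertible in $\widehat{R}[1/\pi]$.

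First I would prove the key auxiliary statement: for every integer $n\geq 0$, the canonical map $R/\pi^nR\to \widehat{R}/\pi^n\widehat{R}$ is an isomorphism. Surjectivity is immediate because the $n$-th projection $\widehat{R}\to R/\pi^nR$ is already surjective (the inverse system $(R/\pi^kR)_k$ has surjective transition maps). For injectivity one must identify the kernel of $\widehat{R}\to R/\pi^nR$ with $\pi^n\widehat{R}$. Let $y=(y_k)_k\in\widehat{R}$ map to zero in $R/\pi^nR$; then for each $k\geq n$ one can write $y_k=\pi^n z_k$ in $R/\pi^kR$ for some $z_k$, and the hypothesis that $\pi$ is a nonzero divisor of $R$ makes the elements $z_k$ well-defined modulo $\pi^{k-n}R$ and compatible under the transition maps (from $y_{k+1}\equiv y_k\bmod \pi^kR$ and the cancellation of $\pi^n$). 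This defines $z\in\widehat{R}$ with $y=\pi^n z$. Thus $\ker(\widehat{R}\to R/\pi^nR)=\pi^n\widehat{R}$, which gives the desired isomorphism. I expect this Mittag--Leffler/cancellation step to be the main (though mild) obstacle; everything else is formal.

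Next I would deduce the lemma. For $n\geq 0$ and $r\in R$, the previous isomorphism says that $\varphi(r)\in\pi^n\widehat{R}$ if and only if $r\in\pi^nR$. For a general element $x\in R[1/\pi]$, write $x=r/\pi^k$ with $r\in R$ and $k\geq 0$ chosen large enough so that $n+k\geq 0$. Since $\pi$ is a unit in $\widehat{R}[1/\pi]$, multiplication by $\pi^k$ is bijective on $\widehat{R}[1/\pi]$, and it sends $\pi^n\widehat{R}$ bijectively onto $\pi^{n+k}\widehat{R}$; hence $\varphi(x)\in\pi^n\widehat{R}$ if and only if $\varphi(r)=\pi^k\varphi(x)\in\pi^{n+k}\widehat{R}$. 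By the case of nonnegative exponents applied to the pair $(r,n+k)$, this is equivalent to $r\in\pi^{n+k}R$, i.e.\ $r=\pi^{n+k}s$ for some $s\in R$, which rewrites as $x=\pi^n s\in\pi^nR$. The reverse inclusion $\pi^nR\subseteq\varphi^{-1}(\pi^n\widehat{R})$ is clear from the definition of $\varphi$, so we obtain $\varphi^{-1}(\pi^n\widehat{R})=\pi^nR$ for all $n\in\mathbb{Z}$.
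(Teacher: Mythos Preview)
Your proof is correct and follows essentially the same approach as the paper: reduce to the isomorphism $R/\pi^{m}R\cong \widehat{R}/\pi^{m}\widehat{R}$ for $m\geq 0$, then write a general element of $R[1/\pi]$ as $r/\pi^k$ and clear denominators. The paper's proof is more terse, taking this isomorphism as known and noting only that $\widehat{R}$ is $\pi$-torsion free (by an earlier lemma), whereas you spell out the Mittag--Leffler/cancellation argument explicitly; but the logical structure is identical.
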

\begin{proof}
	Remark that $\widehat{R}$ is also $\pi$-torsion free by \ref{lem:completion-flat}. For an element $x/\pi^k\in R[1/\pi]$ (where $x\in R$, $k\geq 0$) such that $\varphi(x/\pi^k)=\pi^ny$ for some $y\in \widehat{R}$. After enlarging $k$, we may assume that $k+n>0$. Thus, we deduce from the canonical isomorphism $R/\pi^{k+n}R\to \widehat{R}/\pi^{k+n}\widehat{R}$ that $x\in \pi^{k+n}R$, which completes the proof.
\end{proof}

\begin{mylem}\label{lem:almost-flat}
	Let $K$ be a pre-perfectoid field with a pseudo-uniformizer $\pi$, $R$ an $\ca{O}_K$-algebra such that its $\pi$-adic completion $\widehat{R}$ is almost flat (resp. flat) over $\ca{O}_{\widehat{K}}$, $R[\pi^\infty]$ the $R$-submodule of elements of $R$ killed by some power of $\pi$. Then, $(R[\pi^\infty])^\wedge$ is almost zero (resp. zero) and the canonical morphism $\widehat{R}\to (R/R[\pi^\infty])^\wedge$ is surjective and is an almost isomorphism (resp. an isomorphism).
\end{mylem}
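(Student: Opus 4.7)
The plan is to dissect the short exact sequence $0 \to I \to R \to S \to 0$, where $I = R[\pi^\infty]$ and $S = R/I$, modulo $\pi^n$ and then pass to the inverse limit. Two ingredients are central. First, since $\widehat{R}$ is almost flat (resp. flat) over $\ca{O}_{\widehat{K}}$, \ref{lem:flat} together with tensoring the short exact sequence $0 \to \ca{O}_{\widehat{K}}^\al \stackrel{\pi^n}{\to} \ca{O}_{\widehat{K}}^\al \to (\ca{O}_{\widehat{K}}/\pi^n)^\al \to 0$ with $\widehat{R}^\al$ shows that $\widehat{R}[\pi^n]$ is almost zero (resp. zero) for every $n\geq 1$, and hence that $\widehat{R}[\pi^\infty]$ is almost zero (resp. zero). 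Second, $S$ is $\pi$-torsion free by construction, and one has the elementary identity $I\cap \pi^n R=\pi^n I$ (any $z\in R$ with $\pi^n z\in I$ satisfies $\pi^{n+k}z=0$ for some $k$, hence $z\in I$).

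With these inputs in hand, I would apply the snake lemma to multiplication by $\pi^n$ on $0\to I\to R\to S\to 0$. Using that $S[\pi^n]=0$ and $I\cap \pi^n R=\pi^n I$, it yields the short exact sequence
\[
0\longrightarrow I/\pi^n I\longrightarrow R/\pi^n R\longrightarrow S/\pi^n S\longrightarrow 0
\]
for every $n\geq 1$. Via the canonical isomorphism $R/\pi^n R\iso \widehat{R}/\pi^n\widehat{R}$, the term $I/\pi^n I$ is identified with the image of the natural map $I\to \widehat{R}/\pi^n\widehat{R}$. But this map factors through $\widehat{R}[\pi^\infty]$, since every element of $I$ is $\pi$-power torsion in $R$ and hence in $\widehat{R}$. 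By the first ingredient, this image is almost zero (resp. zero), and therefore $I/\pi^n I$ is almost zero (resp. zero) for each $n$.

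Finally, the transition maps $I/\pi^{n+1}I\to I/\pi^n I$ are surjective, so $\plim^1 I/\pi^n I=0$. Passing to the inverse limit yields the exact sequence
\[
0\longrightarrow \widehat{I}\longrightarrow \widehat{R}\longrightarrow \widehat{S}\longrightarrow 0,
\]
where $\widehat{I}=(R[\pi^\infty])^\wedge$ is almost zero (resp. zero) as a subobject of $\prod_n I/\pi^n I$, the property of being killed by $\ak{m}_K$ being preserved by arbitrary products and subobjects. This simultaneously gives both assertions: $\widehat{I}$ is almost zero (resp. zero), and $\widehat{R}\to \widehat{S}$ is surjective with almost-zero (resp. zero) kernel, hence an almost isomorphism (resp. isomorphism). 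The whole argument is mostly diagram-chasing; the only step demanding any care is the identity $I\cap \pi^n R=\pi^n I$, which is precisely what guarantees that the snake lemma has $I/\pi^n I$ as the exact left term rather than a larger quotient.
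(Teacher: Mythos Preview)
Your proof is correct and follows essentially the same approach as the paper. The paper obtains the exact sequence $0\to\widehat{I}\to\widehat{R}\to\widehat{S}\to 0$ in one step by citing \cite[\href{https://stacks.math.columbia.edu/tag/0315}{0315}]{stacks-project} (using that $S=R/R[\pi^\infty]$ is flat over $\ca{O}_K$), whereas you unpack this via the identity $I\cap\pi^n R=\pi^n I$ and the snake lemma; both arguments then conclude by observing that the image of $I$ in $\widehat{R}$ lands in $\widehat{R}[\pi^\infty]$, which is almost zero (resp.\ zero) by \ref{lem:flat}.
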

\begin{proof}
	The exact sequence $0\to R[\pi^\infty]\to R\to R/R[\pi^\infty]\to 0$ induces an exact sequence of the $\pi$-adic completions
	\begin{align}\label{eq:5.26.1}
		\xymatrix{
			0\ar[r]& (R[\pi^\infty])^\wedge\ar[r]& \widehat{R}\ar[r]& (R/R[\pi^\infty])^\wedge\ar[r]&0,
		}
	\end{align}
	since $R/R[\pi^\infty]$ is flat over $\ca{O}_K$ (\cite[\href{https://stacks.math.columbia.edu/tag/0315}{0315}]{stacks-project}). As $\widehat{R}[\pi^\infty]$ is almost zero (resp. zero) by assumption (\ref{lem:flat}), the canonical morphism $R[\pi^\infty]^{\al}\to\widehat{R}^{\al}$ (resp. $R[\pi^\infty]\to \widehat{R}$) factors through $0$, and thus so is the morphism $(R[\pi^\infty])^{\wedge\al}\to\widehat{R}^{\al}$ (resp. $(R[\pi^\infty])^{\wedge}\to \widehat{R}$). The conclusion follows from the exactness of \eqref{eq:5.26.1}.
\end{proof}

\begin{mylem}\label{lem:int-clos-aliso}
	Let $K$ be a pre-perfectoid field. Given a commutative diagram of $\ca{O}_K$-algebras 
	\begin{align}
		\xymatrix{
			B\ar[r]^-{g} & B'\\
			A\ar[r]^-{f}\ar[u]& A'\ar[u]
		}
	\end{align}
	we denote by $C$ (resp. $C'$) the integral closure of $A$ in $B$ (resp. of $A'$ in $B'$). Assume that $f$ and $g$ are almost isomorphisms. Then, the morphism $C\to C'$ is an almost isomorphism.
\end{mylem}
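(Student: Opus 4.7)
The plan is to verify almost injectivity and almost surjectivity of $C\to C'$ separately, using only the definition of integral closure and the fact that $\ak{m}_K^2=\ak{m}_K$ (since $\ak{m}_K$ is generated by the $p^n$-th roots $\pi_n$ of a pseudo-uniformizer, cf.\ \ref{lem:adm-uniformizer}).

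\textbf{Almost injectivity.} I would observe that the kernel of $C\to C'$ is contained in the kernel of $g\colon B\to B'$ (viewing $C\subseteq B$ and $C'\subseteq B'$). Since $g$ is an almost isomorphism, $\ak{m}_K\cdot\ke(g)=0$, so $\ak{m}_K$ kills $\ke(C\to C')$ as well.

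\textbf{Almost surjectivity.} The central step is the following claim: if $\widetilde C=g^{-1}(C')\subseteq B$, then $\ak{m}_K\cdot\widetilde C\subseteq C$. Given $\widetilde c\in\widetilde C$, the element $g(\widetilde c)\in C'$ satisfies a monic equation $g(\widetilde c)^n+\sum_{i=0}^{n-1}a'_i\,g(\widetilde c)^i=0$ with $a'_i\in A'$. Using almost surjectivity of $f$, for each $\epsilon\in\ak{m}_K$ I can choose $a_i\in A$ with $f(a_i)=\epsilon a'_i$. Then in $B'$,
\begin{align}
g\bigl(\widetilde c^n+\sum_i a_i\widetilde c^i\bigr)=g(\widetilde c)^n+\sum_i \epsilon a'_i\,g(\widetilde c)^i=(1-\epsilon)\,g(\widetilde c)^n+\epsilon\cdot 0,
\end{align}
so after another multiplication by an element of $\ak{m}_K$ and using almost injectivity of $g$, one obtains that for every $\epsilon_1,\epsilon_2\in\ak{m}_K$ the element $\epsilon_1\epsilon_2\widetilde c^n+\sum_i\epsilon_1\epsilon_2 a_i\widetilde c^i$ vanishes in $B$. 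Writing $\epsilon=\epsilon_1\epsilon_2$ and multiplying by $\epsilon^{n-1}$ yields the monic relation
\begin{align}
(\epsilon\widetilde c)^n+\sum_{i=0}^{n-1}(\epsilon^{n-i}a_i)(\epsilon\widetilde c)^i=0
\end{align}
in $B$, so $\epsilon\widetilde c\in C$. This proves $\ak{m}_K\widetilde C\subseteq C$.

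\textbf{Conclusion.} Given $c'\in C'$ and $\epsilon\in\ak{m}_K$, I factor $\epsilon=\epsilon'\epsilon''$ with $\epsilon',\epsilon''\in\ak{m}_K$. By almost surjectivity of $g$ there is $b\in B$ with $g(b)=\epsilon'c'\in C'$, so $b\in\widetilde C$; then $\epsilon''b\in\ak{m}_K\widetilde C\subseteq C$ and maps to $\epsilon c'$. The main subtlety lies in the central claim: one must scale by an appropriate power of $\epsilon$ to convert the ``almost monic'' identity into a genuine monic equation, which is exactly where the stability of integral closure under scaling is exploited. Everything else is a routine bookkeeping of almost elements.
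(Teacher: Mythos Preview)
Your strategy matches the paper's, but the displayed intermediate claim is false as written. From
\[
g\Bigl(\widetilde c^{\,n}+\sum_i a_i\widetilde c^{\,i}\Bigr)=(1-\epsilon)\,g(\widetilde c)^n
\]
you cannot deduce that $\epsilon_1\epsilon_2\bigl(\widetilde c^{\,n}+\sum_i a_i\widetilde c^{\,i}\bigr)=0$ in $B$: the right-hand side $(1-\epsilon)\,g(\widetilde c)^n$ is generically nonzero in $B'$, so almost injectivity of $g$ says nothing about the left-hand side. What does work is to first subtract $(1-\epsilon)\widetilde c^{\,n}$ inside $B$, obtaining $g\bigl(\epsilon\widetilde c^{\,n}+\sum_i a_i\widetilde c^{\,i}\bigr)=0$; then almost injectivity of $g$ gives $\epsilon_2\epsilon\widetilde c^{\,n}+\epsilon_2\sum_i a_i\widetilde c^{\,i}=0$ for every $\epsilon_2\in\ak m_K$ (note the asymmetry: only $\epsilon_2$, not $\epsilon\epsilon_2$, multiplies the lower-order terms). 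Your scaling trick, applied to this corrected identity, yields a monic equation for $\epsilon_2\epsilon\,\widetilde c$ over $A$, and $\ak m_K^2=\ak m_K$ then gives $\ak m_K\widetilde C\subseteq C$ as desired.

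The paper sidesteps this bookkeeping by scaling non-uniformly from the start: given $x'\in C'$ satisfying $x'^{\,n}+\sum_{i<n} a_i'x'^{\,i}=0$, choose $x\in B$ with $g(x)=\epsilon x'$ and $a_i\in A$ with $f(a_i)=\epsilon^{\,n-i}a_i'$. Then $g\bigl(x^n+\sum a_ix^i\bigr)=\epsilon^n\bigl(x'^{\,n}+\sum a_i'x'^{\,i}\bigr)=0$ on the nose, and one further application of almost injectivity produces the monic relation showing $\epsilon x\in C$.
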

\begin{proof}
	Since $C\to C'$ is almost injective as $g$ is so, it remains to show the almost surjectivity. For any $\epsilon\in \ak{m}_K$ and $x'\in C'$ with identity $x'^n+a_{n-1}'x'^{n-1}+\cdots +a_1'x'+a_0'=0$ in $B'$ where $a_{n-1}',\dots,a_0'\in A'$, there exist $a_{n-1},\dots,a_0\in A$ and $x\in B$ such that $f(a_i)=\epsilon^{n-i} a_i'$ ($0\leq i<n$) and $g(x)=\epsilon x'$. Thus, $g(x^n+a_{n-1}x^{n-1}+\cdots a_1x+a_0)=0$. Since $g$ is almost injective, we see that $\epsilon x\in C$. It follows that $C\to C'$ is almost surjective.
\end{proof}

\begin{myprop}\label{prop:intclos-completion}
	Let $K$ be a pre-perfectoid field with a pseudo-uniformizer $\pi$, $A$ an $\ca{O}_K$-algebra such that its $\pi$-adic completion $\widehat{A}$ is almost flat over $\ca{O}_{\widehat{K}}$. We denote by $B$ (resp. $B'$) the integral closure of $A$ in $A[1/\pi]$ (resp. of $\widehat{A}$ in $\widehat{A}[1/\pi]$). Then, the canonical morphism of $\pi$-adic completions $\widehat{B}\to \widehat{B'}$ is an almost isomorphism of $\ca{O}_K$-algebras.
\end{myprop}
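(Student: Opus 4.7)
The plan is to first reduce to the case where $A$ is $\pi$-torsion free, and then to prove the stronger statement that $\widehat{B}\to\widehat{B'}$ is an actual isomorphism via an approximation argument based on Lemma \ref{lem:completion-filtration}; the ``almost'' in the conclusion will arise solely from this reduction.

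For the reduction, set $A_0=A/A[\pi^\infty]$, which is $\pi$-torsion free. Lemma \ref{lem:almost-flat} gives a surjective almost isomorphism $\widehat{A}\to\widehat{A_0}$; since almost zero modules are killed by $\pi$, inverting $\pi$ yields $\widehat{A}[1/\pi]=\widehat{A_0}[1/\pi]$. The integral closure $B$ is unchanged, as $A$ and $A_0$ have the same image in $A[1/\pi]$; and Lemma \ref{lem:int-clos-aliso} gives an almost isomorphism $B'\to B'_0$, where $B'_0$ denotes the integral closure of $\widehat{A_0}$ in $\widehat{A_0}[1/\pi]$. A diagram chase at the level of the two short exact sequences $0\to K\to B'\to I\to 0$ and $0\to I\to B'_0\to C\to 0$, in which $K,C$ are the almost zero kernel and cokernel and $I$ is $\pi$-torsion free as a subring of $\widehat{A_0}[1/\pi]$, shows that $\widehat{B'}\to\widehat{B'_0}$ remains an almost isomorphism: the flatness of $I$ makes the relevant $\mrm{Tor}_1$ vanish so that the sequences stay short exact modulo $\pi^n$, and passage to the inverse limit (using that almost zero modules equal their own completions) produces the almost isomorphism after completion. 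Hence it suffices to treat the case where $A$ is $\pi$-torsion free, in which case $\widehat{A}$ is also $\pi$-torsion free by Lemma \ref{lem:completion-flat} and $B'=B'_0$.

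In the reduced case, I will show that $B/\pi^n B\to B'/\pi^n B'$ is in fact an isomorphism for every $n\geq 1$; combined with the $\pi$-torsion-freeness of $B,B'$ (so that $\widehat{B}/\pi^n\widehat{B}=B/\pi^n B$ and similarly for $B'$), this gives an isomorphism $\widehat{B}\iso\widehat{B'}$ upon passage to the inverse limit. Given $y\in B'$ satisfying $P(y)=0$ with $P(X)=X^m+c_{m-1}X^{m-1}+\cdots+c_0\in\widehat{A}[X]$ monic, write $y=z/\pi^k$ with $z\in\widehat{A}$. For $N$ large, use the canonical isomorphism $A/\pi^NA\iso\widehat{A}/\pi^N\widehat{A}$ to choose $z^{(N)},c_i^{(N)}\in A$ with $z-z^{(N)},\,c_i-c_i^{(N)}\in\pi^N\widehat{A}$, and set $y^{(N)}=z^{(N)}/\pi^k\in A[1/\pi]$ and $P^{(N)}(X)=X^m+c_{m-1}^{(N)}X^{m-1}+\cdots+c_0^{(N)}\in A[X]$. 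A routine telescoping estimate shows $P^{(N)}(y^{(N)})-P(y)\in\pi^{N-mk}\widehat{A}$; since $P(y)=0$ and $P^{(N)}(y^{(N)})\in A[1/\pi]$, Lemma \ref{lem:completion-filtration} pulls this back to $P^{(N)}(y^{(N)})\in\pi^{N-mk}A$, so $y^{(N)}$ is a root of the monic polynomial $P^{(N)}(X)-\pi^{N-mk}a\in A[X]$ (where $\pi^{N-mk}a=P^{(N)}(y^{(N)})$), hence $y^{(N)}\in B$.

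Surjectivity of $B/\pi^nB\to B'/\pi^nB'$ follows by choosing $N\geq n+k+mk$: then $y-y^{(N)}=(z-z^{(N)})/\pi^k\in\pi^{N-k}\widehat{A}\subseteq\pi^n B'$. For injectivity, given $x\in B$ with $\varphi(x)=\pi^n y$ for some $y\in B'$, approximating $y$ by $\tilde{y}\in B$ with $y-\varphi(\tilde{y})\in\pi^M\widehat{A}$ for arbitrarily large $M$ yields $\varphi(x-\pi^n\tilde{y})\in\pi^{n+M}\widehat{A}$, so by Lemma \ref{lem:completion-filtration} again $x-\pi^n\tilde{y}\in\pi^{n+M}A\subseteq\pi^n B$, whence $x\in\pi^n B$. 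The main technical obstacle is the approximation step: making $y^{(N)}$ genuinely integral over $A$ requires transporting the ``smallness'' of $P^{(N)}(y^{(N)})$ from $\widehat{A}[1/\pi]$ back into $A[1/\pi]$, which is precisely the content of Lemma \ref{lem:completion-filtration} and is the reason the reduction to $\pi$-torsion-free $A$ is necessary.
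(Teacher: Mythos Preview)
Your proof is correct, and the main argument after the reduction is genuinely different from the paper's and in fact stronger. Both the paper and you begin by replacing $A$ by $A/A[\pi^\infty]$ to reduce to the $\pi$-torsion-free case. One small imprecision: in your reduction, ``the flatness of $I$ makes the relevant $\mrm{Tor}_1$ vanish'' handles the first short exact sequence but not the second, where it is the $\pi$-torsion-freeness of $B'_0$ (not of $I$) that is relevant, and $\mrm{Tor}_1(\ca{O}_K/\pi^n,C)=C$ does not vanish. The conclusion is nonetheless correct---most cleanly because $\alpha^*$ is exact and commutes with inverse limits, so the almost isomorphism $B'\to B'_0$ passes directly to completions.

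The real divergence is in the $\pi$-torsion-free case. The paper fixes a system of $p^k$-th roots $(\pi_k)$ of $\pi$ (using the pre-perfectoid hypothesis via Lemma~\ref{lem:adm-uniformizer}), exploits the bound $\pi^r x'^{p^k}\in\widehat A$ for $x'\in B'$, and produces approximants of the shape $x_{k0}/\pi_k^{r(p^k-1)}\in B$; this yields only an \emph{almost} isomorphism $B/\pi^n\to B'/\pi^n$. Your direct polynomial-approximation argument---lift the monic equation of $y\in B'$ to a nearby monic equation over $A$ and invoke Lemma~\ref{lem:completion-filtration} to pull the small remainder back into $A$---gives an honest isomorphism $B/\pi^n\to B'/\pi^n$ and never touches the roots $\pi_k$. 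So your approach is more elementary (the pre-perfectoid hypothesis enters only through the almost calculus in the reduction step) and yields a sharper intermediate statement. The paper's route is more in keeping with the perfectoid techniques used elsewhere in the section, but for this proposition your argument is cleaner.
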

\begin{proof}
	We take a system of $p^k$-th roots $(\pi_k)_{k\geq 0}$ of $\pi$ up to units (\ref{lem:adm-uniformizer}). By \ref{lem:almost-flat} and \ref{lem:int-clos-aliso}, we can replace $A$ by its image $A/A[\pi^\infty]$ in $A[1/\pi]$, so that we may assume that $A$ is $\pi$-torsion free (and thus so is $\widehat{A}$). Let $\varphi:A[1/\pi]\to\widehat{A}[1/\pi]$ be the canonical morphism. It suffices to show that $\varphi$ induces an almost isomorphism $B/\pi^n B\to B'/\pi^n B'$ for any $n>0$. 
	
	For any element $x'\in B'$, there exists $r>0$ such that $\pi^rx'^{p^k}\in \widehat{A}$ for any $k>0$. We take an element $x_{ki} \in A$ such that $\varphi(x_{ki})- \pi^rx'^{p^i}\in\pi^{rp^k}\widehat{A}$ for $i=0,k$. Thus, $\varphi(x_{k0}^{p^k})-\varphi(\pi^{r(p^k-1)}x_{kk})\in \pi^{rp^k}\widehat{A}$. By \ref{lem:completion-filtration}, we see that $x_{k0}^{p^k}/\pi^{r(p^k-1)}-x_{kk}\in \pi^{r}A$. In particular, $(x_{k0}/\pi_k^{r(p^k-1)})^{p^k}\in A$, which implies that $x_{k0}/\pi_k^{r(p^k-1)}\in B$. Notice that $\varphi(x_{k0}/\pi_k^{r(p^k-1)})-(\pi/\pi_k^{p^k-1})^rx'\in \pi^{r(p^k-1)}\widehat{A}$. Since $k$ is an arbitrary positive integer, we see that $B/\pi^n B\to B'/\pi^n B'$ is almost surjective.
	
	For any element $x\in B$ such that $\varphi(x/\pi^n)\in B'$, there exists $r>0$ such that $\pi^r\varphi(x/\pi^n)^{p^k}\in \widehat{A}$ for any $k>0$. We take $y\in A$ such that $\pi^r\varphi(x/\pi^n)^{p^k}-\varphi(y)\in \pi\widehat{A}$, and then we see that $\pi^r(x/\pi^n)^{p^k}-y\in \pi A$ by \ref{lem:completion-filtration}. In particular, $(x/\pi_k^{np^k-r})^{p^k}\in A$, which implies that $x/\pi_k^{np^k-r}\in B$. Since $k$ is an arbitrary positive integer, we see that $B/\pi^n B\to B'/\pi^n B'$ is almost injective.
\end{proof}

\begin{mycor}\label{cor:pre-perf-intclos}
	Let $K$ be a pre-perfectoid field with a pseudo-uniformizer $\pi$, $R$ an $\ca{O}_K$-algebra which is almost pre-perfectoid, $R'$ the integral closure of $R$ in $R[1/\pi]$. Then, the morphism of $\pi$-adic completions $\widehat{R} \to \widehat{R'}$ is an almost isomorphism. In particular, $R'$ is also almost pre-perfectoid.
\end{mycor}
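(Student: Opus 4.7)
The overall strategy is to introduce an intermediate ring $B'$, defined as the integral closure of $\widehat{R}$ in $\widehat{R}[1/\pi]$, and to chain together two of the already-established propositions. Since $R$ is almost pre-perfectoid, by \ref{lem:pre-perf-alg} the completion $\widehat{R}$ is almost flat over $\ca{O}_{\widehat{K}}$ and Frobenius induces an almost isomorphism, hence in particular an almost injection, $\widehat{R}/\pi_1\widehat{R} \to \widehat{R}/\pi\widehat{R}$, via the canonical identifications $R/\pi^n R \iso \widehat{R}/\pi^n\widehat{R}$. These are exactly the hypotheses of Proposition \ref{prop:int-closed} applied to the $\ca{O}_{\widehat{K}}$-algebra $\widehat{R}$, which yields that $\widehat{R} \to B'$ is an almost isomorphism.

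On the other hand, Proposition \ref{prop:intclos-completion} applied to the $\ca{O}_K$-algebra $R$ (whose $\pi$-adic completion is almost flat by hypothesis) produces an almost isomorphism $\widehat{R'} \to \widehat{B'}$ of $\pi$-adic completions. To combine the two, I use that $\pi$-adic completion preserves almost isomorphisms: for any almost iso $f\colon M \to N$ of $\ca{O}_K$-modules, the induced map $M/\pi^n M \to N/\pi^n N$ is almost iso for each $n$ (since $\alpha^*$ is exact and compatible with tensor products), and passing to the limit gives $\widehat{M} \to \widehat{N}$ almost iso because $\alpha^*$ commutes with limits (it has a right adjoint $\alpha_*$, cf. \ref{para:almost-math}). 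Applied to $\widehat{R} \to B'$, together with the fact that $\widehat{R}$ is already $\pi$-adically complete, this produces an almost isomorphism $\widehat{R} \to \widehat{B'}$. The obvious commutative triangle $\widehat{R} \to \widehat{R'} \to \widehat{B'}$ and two-out-of-three then force $\widehat{R} \to \widehat{R'}$ to be an almost isomorphism.

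For the ``in particular'' assertion, I verify the two conditions of \ref{lem:pre-perf-alg} for $R'$. Almost flatness of $\widehat{R'}$ over $\ca{O}_{\widehat{K}}$ transfers from that of $\widehat{R}$ along the almost iso $\widehat{R} \to \widehat{R'}$, since this property depends only on the associated $\ca{O}_{\widehat{K}}^\al$-algebra. For the Frobenius condition, reducing the almost iso $\widehat{R} \to \widehat{R'}$ modulo $\pi_1$ and modulo $\pi$, and using the canonical identifications $R/\pi^n R \iso \widehat{R}/\pi^n \widehat{R}$ and similarly for $R'$, produces a commutative square with horizontal almost isomorphisms $R/\pi_1 R \to R'/\pi_1 R'$ and $R/\pi R \to R'/\pi R'$ and with vertical arrows the Frobenius; since the left vertical is an almost iso by hypothesis on $R$, so is the right, showing that $R'$ is almost pre-perfectoid. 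The only mildly delicate ingredient in the argument is the stability of almost isomorphisms under $\pi$-adic completion, which is a purely formal consequence of $\alpha^*$ admitting both a left and a right adjoint.
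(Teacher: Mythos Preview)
Your proof is correct and follows essentially the same route as the paper: introduce the integral closure of $\widehat{R}$ in $\widehat{R}[1/\pi]$ (the paper calls it $R''$, you call it $B'$), apply Proposition~\ref{prop:int-closed} to get $\widehat{R}\to R''$ almost iso, apply Proposition~\ref{prop:intclos-completion} to get $\widehat{R'}\to\widehat{R''}$ almost iso, and conclude. One small slip: the reason $\alpha^*$ commutes with limits is that it has a \emph{left} adjoint $\alpha_!$, not because it has a right adjoint $\alpha_*$ (the latter gives preservation of colimits); the paper's remark after Definition~\ref{defn:pre-alg} already records the consequence you need, and your ``in particular'' argument is exactly that remark unwound.
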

\begin{proof}
	We consider the following commutative diagram
	\begin{align}
		\xymatrix{
			R \ar[r] \ar[d] & R' \ar[r] \ar[d] & R[\frac{1}{\pi}] \ar[d]\\
			\widehat{R} \ar[r]  & R'' \ar[r] & \widehat{R}[\frac{1}{\pi}]
		}
	\end{align}
	where $R''$ is the integral closure of $\widehat{R}$ in $\widehat{R}[1/\pi]$. Since $\widehat{R}\to R''$ is an almost isomorphism by \ref{prop:int-closed}, $R''$ is also perfectoid. The conlusion follows from the fact that $\widehat{R'}\to \widehat{R''}$ is an almost isomorphism by \ref{prop:intclos-completion}. 
\end{proof}

\begin{mythm}[Tilting correspondence {\cite[5.2, 5.21]{scholze2012perfectoid}}]\label{thm:tilt-corre}
	Let $K$ be a perfectoid field, $\pi$ a pseudo-uniformizer of $K$ dividing $p$ with a $p$-th root $\pi_1$ up to a unit.
	\begin{enumerate}
		\renewcommand{\labelenumi}{{\rm(\theenumi)}}
		\item The functor $\ca{O}_K^\al\perf \to (\ca{O}_K/\pi)^\al\perf,\ R \mapsto R/\pi R$, is an equivalence of categories.
		\item The functor $\ca{O}_{K^\flat}^\al\perf \to (\ca{O}_{K^\flat}/\pi^\flat)^\al\perf,\ R \mapsto R/\pi^\flat R$ is an equivalence of categories, and the functor $ (\ca{O}_{K^\flat}/\pi^\flat)^\al\perf \to \ca{O}_{K^\flat}^\al\perf,\ R \mapsto  R^\flat$ is a quasi-inverse. 
		\item Let $R$ be a perfectoid $\ca{O}_K^\al$-algebra with tilt $R^\flat$. Then, $R$ is isomorphic to $\ca{O}_L^\al$ for some perfectoid field $L$ over $K$ if and only if $R^\flat$ is isomorphic to $\ca{O}_{L'}^\al$ for some perfectoid field $L'$ over $K^\flat$.
	\end{enumerate}
\end{mythm}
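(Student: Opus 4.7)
The plan is to prove (2) first in characteristic $p$, then derive (1) by combining (2) with the base-ring identification $\ca{O}_{K^\flat}/\pi^\flat \iso \ca{O}_K/\pi$ of Lemma \ref{lem:tilt-perf-field}.(4) and an untilting construction via Witt vectors, and finally deduce (3) by tracking the valuation-ring structure through the equivalence. The main obstacle will be the untilting step in (1), which requires producing a principal generator of the kernel of the theta map.

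For (2), I would define the candidate quasi-inverse to $T \mapsto T/\pi^\flat T$ by lifting a perfectoid $(\ca{O}_{K^\flat}/\pi^\flat)^\al$-algebra $S$ to a perfectoid $\ca{O}_{K^\flat}^\al$-algebra via Frobenius. Concretely, since $S$ is perfect in the almost sense, the Frobenius isomorphism of Lemma \ref{lem:almost-inj} iteratively produces compatible lifts $S_n$ to $(\ca{O}_{K^\flat}/(\pi^\flat)^n)^\al$-algebras with $S_n/\pi^\flat S_n \iso S$, and their almost limit $T := (\plim_n S_{n*})^\al$ is a perfectoid $\ca{O}_{K^\flat}^\al$-algebra by Proposition \ref{prop:pre-perf-perf}. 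Conversely, for a perfectoid $T$, the $\pi^\flat$-adic completeness of $T_*$ together with the Frobenius isomorphisms gives $T \iso (\plim_n T_*/(\pi^\flat)^n T_*)^\al$, so the two functors are mutually inverse.

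For (1), I would use Lemma \ref{lem:tilt-perf-field}.(4) to identify $(\ca{O}_K/\pi)^\al\perf$ with $(\ca{O}_{K^\flat}/\pi^\flat)^\al\perf$; by (2) this is in turn equivalent to $\ca{O}_{K^\flat}^\al\perf$, so (1) reduces to constructing an untilting functor $\ca{O}_{K^\flat}^\al\perf \to \ca{O}_K^\al\perf$ inverse to the composite $R \mapsto R^\flat$. Given $T \in \ca{O}_{K^\flat}^\al\perf$, I would form the Witt vectors $A := W(T_*)$ and use the sharp map of Lemma \ref{lem:tilt-perf-field}.(2) to extend to a surjective ring homomorphism $\theta : W(\ca{O}_{K^\flat}) \to \ca{O}_K$, $\sum [x_i] p^i \mapsto \sum x_i^\sharp p^i$, whose kernel is generated by a single non-zero-divisor $\xi$ (for instance $\xi = [\pi^\flat] - \pi$, using Lemma \ref{lem:adm-uniformizer}). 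Define $R_* := (A/\xi A)^\wedge$ ($\pi$-adic completion) and $R := R_*^\al$. Using that $A/pA \cong T_*$ (since $T_*$ is perfect of characteristic $p$) together with the explicit form of $\xi$, one checks that $R/\pi R$ recovers $T/\pi^\flat T$ under Lemma \ref{lem:tilt-perf-field}.(4); flatness of $T$ and the non-zero-divisor property of $\xi$ on $A$ then give that $R$ is perfectoid, establishing the equivalence.

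For (3), note that a perfectoid $\ca{O}_K^\al$-algebra $R$ is isomorphic to $\ca{O}_L^\al$ for a perfectoid field $L/K$ if and only if, up to almost isomorphism, $R_*$ is a $\pi$-adically complete non-discrete valuation ring of height $1$ with residue characteristic $p$. The tilt $\ca{O}_L^\flat$ is the valuation ring of a perfectoid field $L^\flat/K^\flat$ by Lemma \ref{lem:tilt-perf-field} applied to $L$, giving one direction. Conversely, assume $R^\flat \cong \ca{O}_{L'}^\al$; then the ideals of $(R^\flat)_* = \ca{O}_{L'}$ are totally ordered, and via the sharp map one transfers this total ordering to the principal ideals of $R_*$ generated by sharp-lifts, which together with $\pi$-adic completeness and height-$1$ non-discreteness forces $R$ to be the almost algebra associated to the valuation ring of a perfectoid field $L/K$.
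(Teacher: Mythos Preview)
The paper does not supply a proof of this theorem: it is stated as a citation to Scholze \cite[5.2, 5.21]{scholze2012perfectoid}, followed only by a remark making explicit how the natural isomorphisms in part (2) are defined (via $x \mapsto (\ldots,x^{1/p^2},x^{1/p},x)$ in one direction, and projection to the first component in the other). There is therefore no in-paper argument to compare yours against; your outline is essentially a sketch of the standard proof in the cited reference.

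Two points where your sketch would need tightening if carried out in full. First, in your formula $\xi = [\pi^\flat] - \pi$ the element $\pi$ does not literally lie in $W(\ca{O}_{K^\flat})$; one must either lift $\pi$ through $\theta$ or argue more carefully with a distinguished element (in Scholze's setup one uses that $(\pi^\flat)^\sharp$ and $\pi$ differ by a unit, and $\xi$ is chosen accordingly). Second, your converse in (3) --- ``via the sharp map one transfers this total ordering to the principal ideals of $R_*$'' --- is not quite an argument: the sharp map is only multiplicative, so it does not transport ideal structure directly. Scholze's route is instead to show that $R_*[1/\pi]$ and $(R^\flat)_*[1/\pi^\flat]$ are simultaneously fields (equivalently, that the corresponding adic spectra are single points), from which the valuation-ring property of $R_*$ follows.
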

In conclusion, we have natural equivalences
\begin{align}
	\ca{O}_K^\al\perf \iso (\ca{O}_K/\pi)^\al\perf \iso (\ca{O}_{K^\flat}/\pi^\flat)^\al\perf \stackrel{\sim}{\longleftarrow} \ca{O}_{K^\flat}^\al\perf,
\end{align}
where the middle equivalence is given by the isomorphism \eqref{eq:2.23.4} $ \ca{O}_{K^\flat}/\pi^\flat\ca{O}_{K^\flat} \iso \ca{O}_K/\pi\ca{O}_K$.
We remark that the natural isomorphisms of the equivalence in (2) are defined as follows: for a perfectoid $\ca{O}_{K^\flat}^\al$-algebra $R$, the natural isomorphism $R\iso (R/\pi^\flat R)^\flat$ is induced by the morphism $R_*\to (R_*/\pi^\flat R_*)^\flat$ sending $x$ to $(\cdots,x^{1/p^2},x^{1/p},x)$ (notice that $R_*$ is perfect by \ref{prop:pre-perf-perf}); for a perfectoid $(\ca{O}_{K^\flat}/\pi^\flat)$-algebra $R$, the natural isomorphism $R^{\flat}/\pi^\flat R^{\flat}\iso R$ is induced by the projection on the first component $(R_*)^{\flat}\to R_*$ (cf. \cite[5.17]{scholze2012perfectoid}). Consequently, for a perfectoid $\ca{O}_K^\al$-algebra $R$, the morphism
\begin{align}
	R^\flat/\pi^\flat R^\flat \longrightarrow R/\pi R
\end{align}
induced by the projection on the first component is an isomorphism.

\begin{myprop}\label{prop:tensor-perf}
	Let $K$ be a perfectoid field with a pseudo-uniformizer $\pi$ of $K$ dividing $p$, $B \leftarrow A \to C$ a diagram of perfectoid $\ca{O}_K^\al$-algebras. Then, the $\pi$-adically completed tensor product $B \widehat{\otimes}_A C$ is also perfectoid.
\end{myprop}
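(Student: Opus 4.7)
Set $D = B \widehat{\otimes}_A C$. By construction $D$ is $\pi$-adically complete, so condition (iii) of Definition \ref{defn:pre-perf-alg}(1) is automatic. It remains to show that $D$ is flat over $\ca{O}_K^\al$ (equivalently, that $D_*$ is almost $\pi$-torsion free by \ref{lem:flat}) and that the Frobenius induces an isomorphism $D/\pi_1 D \iso D/\pi D$. The strategy is to reduce modulo $\pi$, pass to characteristic $p$ via tilting, settle the tensor-product question there, and finally untilt.

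First, since tensor products commute with reduction mod $\pi$ and $\pi$-adic completion followed by reduction mod $\pi$ recovers the ordinary reduction, there is a canonical isomorphism
\begin{align*}
D/\pi D \;\cong\; (B/\pi B) \otimes_{A/\pi A} (C/\pi C)
\end{align*}
in $(\ca{O}_K/\pi)^\al\alg$, whose three factors are perfectoid $(\ca{O}_K/\pi)^\al$-algebras. Via the equivalence $(\ca{O}_K/\pi)^\al\perf \iso (\ca{O}_{K^\flat}/\pi^\flat)^\al\perf$ of Theorem \ref{thm:tilt-corre} induced by \eqref{eq:2.23.4}, these three factors correspond to the reductions modulo $\pi^\flat$ of perfectoid $\ca{O}_{K^\flat}^\al$-algebras $\widetilde{A}, \widetilde{B}, \widetilde{C}$ whose underlying algebras $\widetilde{A}_*, \widetilde{B}_*, \widetilde{C}_*$ are perfect by \ref{prop:pre-perf-perf}.

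In characteristic $p$, the key input is the Tor-vanishing property $\op{Tor}^{\widetilde{A}_*}_q(\widetilde{B}_*, \widetilde{C}_*) = 0$ for $q>0$ of perfect $\bb{F}_p$-algebras (\cite[3.16]{bhattscholze2017witt}, as already invoked in \ref{lem:tech1}). This implies that $\widetilde{B}_* \otimes_{\widetilde{A}_*} \widetilde{C}_*$ represents the derived tensor product, and together with the $\pi^\flat$-torsion freeness of each factor forces the tensor product to be $\pi^\flat$-torsion free; hence it is almost flat over $\ca{O}_{K^\flat}$ by \ref{lem:flat}, and its $\pi^\flat$-adic completion remains so by \ref{lem:completion-flat}. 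The Freshman's dream $(x+y)^p = x^p+y^p$ combined with surjectivity of Frobenius on each perfect factor furnishes a $p$-th root for any element $\sum b_i \otimes c_i$ by writing it as $(\sum b_i' \otimes c_i')^p$, giving Frobenius surjectivity on the tensor product; injectivity then follows from the established flatness together with the Frobenius bijectivity on each factor. Consequently the tensor product remains perfectoid after tilting.

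Finally, let $\widetilde{D}$ denote the perfectoid $\ca{O}_K^\al$-algebra lifting $D/\pi D$ under the equivalence of \ref{thm:tilt-corre}(1). Lifting the structural maps $B/\pi B, C/\pi C \to D/\pi D$ to $A$-algebra morphisms $B, C \to \widetilde{D}$ through the same equivalence, the universal property of the completed tensor product produces a canonical morphism $\varphi : D \to \widetilde{D}$ which is an isomorphism modulo $\pi$. Since $\widetilde{D}$ is flat and both sides are $\pi$-adically complete, an inductive comparison of successive $\pi^n$-quotients using \ref{lem:almost-complete} promotes $\varphi$ to an isomorphism, so $D \cong \widetilde{D}$ is itself perfectoid. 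The main technical obstacle is the characteristic $p$ step: the flatness of the tensor product of perfectoid algebras rests on the nontrivial Tor-vanishing for perfect $\bb{F}_p$-algebras, and the Frobenius injectivity together with compatibility of the whole construction with almost mathematics requires careful bookkeeping.
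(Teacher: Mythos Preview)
Your overall strategy coincides with the paper's: reduce modulo $\pi$, pass to the tilt, establish the perfectoid conditions in characteristic $p$, untilt via Theorem~\ref{thm:tilt-corre} to obtain a perfectoid $\widetilde{D}$ with $\widetilde{D}/\pi \cong D/\pi$, and then identify $D = B \widehat{\otimes}_A C$ with $\widetilde{D}$ by d\'evissage on $\pi^n$-quotients via the snake lemma. The final step is exactly the paper's (the relevant input there is flatness of $\widetilde{D}$, not \ref{lem:almost-complete}).

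There is, however, a genuine gap in your characteristic-$p$ flatness argument. You claim that the Tor-vanishing $\op{Tor}_q^{\widetilde{A}_*}(\widetilde{B}_*, \widetilde{C}_*) = 0$ for $q>0$, together with $\pi^\flat$-torsion-freeness of the factors, forces $\widetilde{B}_* \otimes_{\widetilde{A}_*} \widetilde{C}_*$ to be $\pi^\flat$-torsion free. This does not follow: Tor-vanishing over $\widetilde{A}_*$ controls flatness relative to $\widetilde{A}_*$, not relative to $\ca{O}_{K^\flat}$. For a concrete failure, take $A_0 = \ca{O}_{K^\flat}[x^{1/p^\infty}]$ and let $B_0, C_0$ both be $\ca{O}_{K^\flat}$, the first via $x^{1/p^n} \mapsto 0$ and the second via $x^{1/p^n} \mapsto (\pi^\flat)^{1/p^n}$. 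All three rings are perfect and $\pi^\flat$-torsion free, so Tor-vanishing applies, yet $B_0 \otimes_{A_0} C_0 = \ca{O}_{K^\flat}/\ak{m}_{K^\flat}$ is entirely $\pi^\flat$-torsion. (It is almost zero, consistent with what you actually need.)

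The paper's argument avoids Tor-vanishing altogether. Since $(B_*)^\flat \otimes_{(A_*)^\flat} (C_*)^\flat$ is a tensor product of perfect $\bb{F}_p$-algebras over a perfect $\bb{F}_p$-algebra, it is itself perfect; in a perfect ring annihilator ideals are radical, so $\pi^\flat f = 0$ implies $(\pi^\flat)^{1/p^n} f = 0$ for all $n$, i.e.\ $f$ is almost zero. This yields almost flatness directly, which is all that is needed (and, as the example shows, all that is true). Your Frobenius-injectivity step should likewise be argued directly from perfectness of the pushout rather than via the flawed flatness. With these two repairs your proof is the paper's.
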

\begin{proof}
	We follow closely the proof of \cite[6.18]{scholze2012perfectoid}.
	Firstly, we claim that $(B \otimes_A C)/\pi$ is flat over $(\ca{O}_K/\pi)^\al$. Since $(B \otimes_A C)/\pi= (B^\flat \otimes_{A^\flat} C^\flat)/\pi^\flat$, it suffices to show the flatness of $B^\flat \otimes_{A^\flat} C^\flat$ over $\ca{O}_{K^\flat}^\al$, which amounts to say that the submodule of $\pi^\flat$-torsion elements of $(B_*)^\flat \otimes_{(A_*)^\flat} (C_*)^\flat$ is almost zero as $B^\flat \otimes_{A^\flat} C^\flat = ((B_*)^\flat \otimes_{(A_*)^\flat} (C_*)^\flat)^\al$. If $f \in (B_*)^\flat \otimes_{(A_*)^\flat} (C_*)^\flat$ is a $\pi^\flat$-torsion element, then by perfectness of $(B_*)^\flat \otimes_{(A_*)^\flat} (C_*)^\flat$, we have $(\pi^\flat)^{1/p^n} f =0$ for any $n>0$, which proves the claim.
	
	Thus, $(B \otimes_A C)/\pi$ is a perfectoid $(\ca{O}_K/\pi)^\al$-algebra. It corresponds to a perfectoid $\ca{O}_K^\al$-algebra $D$ by \ref{thm:tilt-corre} and it induces a morphism $B \widehat{\otimes}_A C \to D$ by universal property of $\pi$-adically completed tensor product. We use d\'evissage to show that $(B\otimes_A C)/\pi^n \to D/\pi^n$ is an isomorphism for any integer $n>0$. By induction, 
	\begin{align}
		\xymatrix{
			& (B\otimes_A C)/\pi^n \ar[r]^-{\cdot \pi} \ar[d] & (B\otimes_A C)/\pi^{n+1} \ar[r] \ar[d]& (B\otimes_A C)/\pi \ar[r] \ar[d]& 0\\
			0 \ar[r]	& D/\pi^n \ar[r]^-{\cdot \pi} & D/\pi^{n+1} \ar[r] & D/\pi \ar[r] & 0
		}
	\end{align}
	the vertical arrows on the left and right are isomorphisms. By snake's lemma in the abelian category $\ca{O}_K^\al\module$ (\cite[\href{https://stacks.math.columbia.edu/tag/010H}{010H}]{stacks-project}), we know that the vertical arrow in the middle is also an isomorphism. In conclusion, $B \widehat{\otimes}_A C \to D$ is an isomorphism, which completes the proof.
\end{proof}

\begin{mycor}\label{cor:tensor-pre-perf}
	Let $K$ be a pre-perfectoid field, $B \leftarrow A \to C$ a diagram of $\ca{O}_K$-algebras which are almost pre-perfectoid. Then, the tensor product $B \otimes_A C$ is also almost pre-perfectoid.
\end{mycor}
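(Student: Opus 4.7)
The plan is to reduce the statement to the perfectoid case, namely \ref{prop:tensor-perf}, by passing to $\pi$-adic completions. Fix a pseudo-uniformizer $\pi$ of $K$ dividing $p$; by \ref{lem:pre-perf-field}, the fraction field $\widehat{K}$ of the $\pi$-adic completion of $\ca{O}_K$ is a perfectoid field. By the very definition of almost pre-perfectoid (\ref{defn:pre-alg}, \ref{lem:pre-perf-alg}), the hypothesis on $A$, $B$, $C$ is precisely that the $\ca{O}_{\widehat{K}}^\al$-algebras $\widehat{A}^\al$, $\widehat{B}^\al$, $\widehat{C}^\al$ are perfectoid. Applying \ref{prop:tensor-perf} to the diagram $\widehat{B}^\al \leftarrow \widehat{A}^\al \to \widehat{C}^\al$, the almost completed tensor product
\begin{align*}
E \;=\; \widehat{B}^\al \,\widehat{\otimes}_{\widehat{A}^\al}\, \widehat{C}^\al
\end{align*}
is a perfectoid $\ca{O}_{\widehat{K}}^\al$-algebra.

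Set $D = B \otimes_A C$. The core step is to identify $\widehat{D}^\al$ with $E$, from which $D$ is almost pre-perfectoid by \ref{defn:pre-alg}. Since tensor products commute with reduction mod $\pi^n$ and since $\widehat{R}/\pi^n \widehat{R} = R/\pi^n R$ for any $\ca{O}_K$-algebra $R$, there is for each $n \geq 1$ a canonical isomorphism
\begin{align*}
D/\pi^n D \;\iso\; (\widehat{B}/\pi^n \widehat{B}) \otimes_{\widehat{A}/\pi^n \widehat{A}} (\widehat{C}/\pi^n \widehat{C}).
\end{align*}
Passing to the almost category via the exact functor $\alpha^*$ and applying \eqref{eq:tensor-alg} gives a canonical isomorphism $(D/\pi^n D)^\al \iso \widehat{B}^\al/\pi^n \otimes_{\widehat{A}^\al/\pi^n} \widehat{C}^\al/\pi^n$. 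Since $\alpha^*$ commutes with arbitrary limits of $\ca{O}_K$-modules (as noted in \ref{para:almost-math}), taking the inverse limit over $n$ produces
\begin{align*}
\widehat{D}^\al \;=\; \lim_n (D/\pi^n D)^\al \;\iso\; \lim_n \bigl(\widehat{B}^\al/\pi^n \otimes_{\widehat{A}^\al/\pi^n} \widehat{C}^\al/\pi^n\bigr) \;=\; E,
\end{align*}
completing the reduction.

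The argument is essentially bookkeeping once \ref{prop:tensor-perf} is available; the only subtle point is the compatibility between the classical $\pi$-adic completion of the $\ca{O}_K$-algebra tensor product $D$ and the almost completed tensor product of \ref{prop:tensor-perf}. This compatibility rests on two facts already established in the preliminary sections: that $\alpha^*$ commutes with tensor products (via \eqref{eq:tensor-alg} together with the almost isomorphism $R \to (R^\al)_*$), and that $\alpha^*$ commutes with arbitrary limits, so that the inverse limits defining both sides in the almost category can be compared termwise.
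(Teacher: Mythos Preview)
Your proof is correct and follows exactly the paper's approach: the paper's one-line proof states that since $\alpha^*$ commutes with arbitrary limits and colimits, $(B \widehat{\otimes}_A C)^\al = \widehat{B}^\al \widehat{\otimes}_{\widehat{A}^\al} \widehat{C}^\al$, which is perfectoid by \ref{prop:tensor-perf}. You have simply unpacked this identity by working modulo $\pi^n$ and then taking the inverse limit, which is precisely the content of that equality.
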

\begin{proof}
	Since $\alpha^*$ commutes with arbitrary limits and colimits (\ref{para:almost-math}), we have $(B \widehat{\otimes}_A C)^\al = \widehat{B}^\al \widehat{\otimes}_{\widehat{A}^\al} \widehat{C}^\al$, which is perfectoid by \ref{prop:tensor-perf}.
\end{proof}

\begin{mylem}\label{lem:replace}
	Let $K$ be a perfectoid field, $\ca{O}_K\to V$ an extension of valuation rings of height $1$. Then, there exists an extension of perfectoid fields $K\to L$ and an extension of valuation rings $V\to \ca{O}_L$ over $\ca{O}_K$.
\end{mylem}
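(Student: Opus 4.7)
The plan is to first enlarge $V$ to an absolutely integrally closed valuation ring $W$ of height $1$ by extending the valuation to an algebraic closure of $\mrm{Frac}(V)$, and then apply \ref{lem:pre-perf-field} to the $\pi$-adic completion of $W$. Concretely, set $F = \mrm{Frac}(V)$ and fix an algebraic closure $\overline{F}$ of $F$. By general valuation theory (cf. \cite[\href{https://stacks.math.columbia.edu/tag/00IA}{00IA}]{stacks-project}), the valuation on $F$ extends to $\overline{F}$; let $W \subset \overline{F}$ be a valuation ring with $W \cap F = V$, so that $V \to W$ is an extension of valuation rings. Then $W$ is absolutely integrally closed since its fraction field $\overline{F}$ is algebraically closed, and $W$ has height $1$ because in any algebraic extension of valuation rings the value group of the upper ring is contained in the divisible hull of the value group of the lower one.

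Next we verify that $F' := \mrm{Frac}(W)$ is a pre-perfectoid field in the sense of \ref{defn:perfectoid-field}. The valuation ring $W$ has height $1$ and residue characteristic $p$, the latter being inherited from the inclusions $\ca{O}_K \subseteq V \subseteq W$. It is non-discrete since its value group contains that of $\ca{O}_K$, which is dense in $\bb{R}$ as $K$ is perfectoid. For the surjectivity of the Frobenius on $W/pW$, we invoke absolute integral closedness: for any $x \in W$, the polynomial $T^p - x$ has a root $y$ in $\overline{F}$, and since $y$ is integral over $W$ it already lies in $W$; hence the Frobenius of $W$ itself is surjective, a fortiori on $W/pW$ in both characteristic $0$ and characteristic $p$.

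Now fix a pseudo-uniformizer $\pi$ of $K$ (for instance $\pi = p$ when $\mrm{char}(K) = 0$, any nonzero element of $\ak{m}_K$ otherwise); it remains a pseudo-uniformizer of $F'$. Applying \ref{lem:pre-perf-field} to $F'$, the fraction field $L$ of the $\pi$-adic completion $\ca{O}_L := \widehat{W}$ is a perfectoid field. Since both $V \to W$ and the completion map $W \to \widehat{W}$ are injective local homomorphisms of valuation rings (the latter because height $1$ implies $\pi$-adic separation), the composition $V \to \ca{O}_L$ is an extension of valuation rings over $\ca{O}_K$, which gives the desired extension $K \to L$.

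The main point to check carefully is the assertion that $W$ is again of height $1$, which reduces to the standard fact that in an algebraic extension the value group only enlarges by rational multiples; the remainder is either formal or an immediate invocation of \ref{lem:pre-perf-field}.
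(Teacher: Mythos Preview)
Your proof is correct and follows essentially the same route as the paper's: both pass to a valuation ring $W$ on an algebraic closure of $\mrm{Frac}(V)$ extending $V$, observe that $W$ is absolutely integrally closed of height $1$ so that $\mrm{Frac}(W)$ is pre-perfectoid, and then complete via \ref{lem:pre-perf-field}. The only cosmetic difference is that the paper obtains $W$ as a localization of the integral closure $\overline{V}$ at a maximal ideal and cites Bourbaki for the height-$1$ claim, whereas you invoke the Stacks tag for the extension and the value-group argument directly.
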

\begin{proof}
	Let $\pi$ be a pseudo-uniformizer of $K$, $E$ the fraction field of $V$, $\overline{E}$ an algebraic closure of $E$, $\overline{V}$ the integral closure of $V$ in $\overline{E}$. Let $\ak{m}$ be a maximal ideal of $\overline{V}$. It lies over the unique maximal ideal of $V$ as $V\to \overline{V}$ is integral. Setting $W = \overline{V}_{\ak{m}}$, according to \cite[\Luoma{6}.\textsection8.6, Prop.6]{bourbaki2006commalg5-7}, $V\to W$ is an extension of valuation rings of height $1$. Since $W$ is integrally closed in the algebraically closed fraction field $\overline{E}$, the Frobenius is surjective on $W/pW$. Thus, the fraction field of $W$ is a pre-perfectoid field over $K$. Passing to completion, we get an extension of perfectoid fields $K\to L$ by \ref{lem:pre-perf-field}.
\end{proof}

\begin{mythm}[{\cite[8.10]{bhattscholze2019prisms}}]\label{thm:arc-descent-perf}
	Let $K$ be a pre-perfectoid field with a pseudo-uniformizer $\pi$ dividing $p$, $R \to R'$ a homomorphism of $\ca{O}_K$-algebras which are almost pre-perfectoid. If $\spec(R') \to \spec(R)$ is a $\pi$-complete arc-covering, then for any integer $n \geq 1$, the augmented \v Cech complex 
\begin{align}
	0 \to R/\pi^n \to R'/\pi^n \to (R'\otimes_R R')/\pi^n \to \cdots
\end{align}
is almost exact.
\end{mythm}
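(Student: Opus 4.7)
The plan is to reduce the statement to the arc-descent for perfect $\bb{F}_p$-algebras (Proposition~\ref{prop:arc-descent}) via the tilting correspondence. First, we reduce to the case $n=1$ by d\'evissage: iterated application of \ref{cor:tensor-pre-perf} shows that each tensor power $M^k=(R')^{\otimes_R(k+1)}$ is almost pre-perfectoid, hence almost flat over $\ca{O}_K$ by \ref{lem:flat}. Consequently the short exact sequence $0\to M^k/\pi\xrightarrow{\cdot\pi^n} M^k/\pi^{n+1}\to M^k/\pi^n\to 0$ is almost exact, yielding a short exact sequence of \v Cech complexes. The associated long exact sequence of almost cohomology passes the result from $n$ and $1$ to $n+1$.

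For $n=1$, replacing $R$ by $\widehat{R}$ and $R'$ by $\widehat{R'}$ leaves the \v Cech complex modulo $\pi$ unchanged (the $\pi$-adic completion is an isomorphism modulo $\pi$, and tensor products commute with reduction modulo $\pi$). Replacing $K$ by $\widehat{K}$, which is perfectoid by \ref{lem:pre-perf-field}, and invoking \ref{lem:arc-parc}.(\ref{lem:arc-parc-complete}) to preserve the $\pi$-complete arc-covering condition, we may assume that $K$ is perfectoid and that $\widehat{R}^\al$, $\widehat{R'}^\al$ are perfectoid $\ca{O}_K^\al$-algebras. Let $S$, $S'$ denote the tilts of $\widehat{R}$, $\widehat{R'}$ respectively; they are perfect $\ca{O}_{K^\flat}$-algebras (up to almost isomorphism, cf.~\ref{prop:pre-perf-perf}).

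The crux---and the main obstacle of the proof---is to show that $\spec(S')\to\spec(S)$ is an arc-covering of $\bb{F}_p$-schemes. Given a valuation ring $V$ of characteristic $p$ and height $\leq 1$ with a morphism $S\to V$, the $\pi^\flat$-adic completion $\widehat{V^{\mrm{perf}}}$ of the perfection of $V$ is a perfect $\pi^\flat$-adically complete valuation ring of height $\leq 1$, whose fraction field $L^\flat$ is a perfectoid field over $K^\flat$. By the tilting correspondence for perfectoid fields (\ref{thm:tilt-corre}.(3)), there exists a perfectoid field $L$ over $\widehat{K}$ with tilt $L^\flat$, whose valuation ring $\ca{O}_L$ is $\pi$-adically complete of height $\leq 1$; untilting produces a morphism $\widehat{R}\to\ca{O}_L$ (cf.~\ref{lem:R-V}) that recovers $S\to\widehat{V^{\mrm{perf}}}$ upon tilting. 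Applying the $\pi$-complete arc-covering property of $\spec(\widehat{R'})\to\spec(\widehat{R})$ furnishes an extension $\ca{O}_L\to\ca{O}_{L'}$ of $\pi$-adically complete valuation rings of height $\leq 1$ together with a lift $\widehat{R'}\to\ca{O}_{L'}$. Tilting back gives the desired extension $V\to\ca{O}_{L'}^\flat$ of valuation rings of height $\leq 1$ and a lift $S'\to\ca{O}_{L'}^\flat$.

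Proposition~\ref{prop:arc-descent} applied to this arc-covering yields the exactness of the \v Cech complex $N^\bullet:\, 0\to S\to S'\to S'\otimes_S S'\to\cdots$. Each term $N^k=(S')^{\otimes_S(k+1)}$ is almost pre-perfectoid by \ref{cor:tensor-pre-perf}, hence almost flat over $\ca{O}_{K^\flat}$ by \ref{lem:flat}. An induction on $k$ using the short exact sequences $0\to Z^k\to N^k\to Z^{k+1}\to 0$, where $Z^k=\ke(N^k\to N^{k+1})$ and $Z^0=S$, shows that each $Z^k$ is almost flat over $\ca{O}_{K^\flat}$. Consequently tensoring $N^\bullet$ with $\ca{O}_{K^\flat}/\pi^\flat$ preserves almost exactness. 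Finally, the compatibility of tilting with completed tensor products (\ref{prop:tensor-perf}), together with the canonical identification $\ca{O}_{K^\flat}/\pi^\flat\cong\ca{O}_K/\pi$ and the coincidence of completed and uncompleted tensor products modulo $\pi$, provides a natural almost isomorphism between $N^\bullet/\pi^\flat$ and the \v Cech complex $0\to R/\pi\to R'/\pi\to (R'\otimes_R R')/\pi\to\cdots$, concluding the proof.
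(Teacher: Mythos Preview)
Your overall strategy—tilt, apply arc-descent for perfect $\bb{F}_p$-algebras, reduce modulo $\pi^\flat$—is exactly the paper's, and your d\'evissage and completion reductions are fine (note only that ``almost pre-perfectoid'' gives almost flatness of the \emph{completion} $\widehat{M^k}$, not of $M^k$ itself; this is harmless since $M^k/\pi^n=\widehat{M^k}/\pi^n$).

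The genuine gap is the claim that $\spec(S')\to\spec(S)$ is an arc-covering. Your argument only treats test valuation rings $V$ of height $\leq 1$ in which the image of $\pi^\flat$ is a nonzero nonunit. If $\pi^\flat$ becomes a unit in $V$, the $\pi^\flat$-adic completion $\widehat{V^{\mrm{perf}}}$ is zero and there is no fraction field to untilt; if $\pi^\flat$ becomes zero in $V$, the map $\ca{O}_{K^\flat}\to V$ is not injective and $\mrm{Frac}(V)$ is not a perfectoid field over $K^\flat$, so the tilting correspondence for fields does not apply. In neither case does your untilting step make sense, and the $\pi$-complete arc-covering hypothesis on $\spec(R')\to\spec(R)$ gives no direct control over such $V$. (There is also a minor omission even in the good case: after obtaining $\widehat{R'}\to\ca{O}_{L'}$ you must first enlarge $\ca{O}_{L'}$, via \ref{lem:replace}, to the valuation ring of a perfectoid field before tilting back.)

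The paper does \emph{not} claim that $\spec(R'^\flat)\to\spec(R^\flat)$ is an arc-covering. Instead it proves that
\[
\spec(R'^\flat)\ \amalg\ \spec(R^\flat[1/\pi^\flat])\ \longrightarrow\ \spec(R^\flat)
\]
is an arc-covering: the extra component $\spec(R^\flat[1/\pi^\flat])$ handles the case where $\pi^\flat$ is a unit in $V$, while the case $\pi^\flat=0$ is handled through the identification $R^\flat/\pi^\flat=R/\pi$. The key observation is that this auxiliary component vanishes upon reduction modulo $\pi^\flat$, so the exact \v Cech complex for the enlarged cover, once reduced modulo $\pi^\flat$ (using that all terms are perfect, hence almost $\pi^\flat$-torsion-free), yields precisely the almost exactness of $0\to R/\pi\to R'/\pi\to(R'\otimes_R R')/\pi\to\cdots$. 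This trick is what your argument is missing.
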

\begin{proof}
	We follow Bhatt-Scholze's proof \cite[8.10]{bhattscholze2019prisms}.	After replacing $\ca{O}_K$, $R$, $R'$ by their $\pi$-adic completions, we may assume that $K$ is a perfectoid field and that $R^\al$ and $R'^\al$ are perfectoid $\ca{O}_K^\al$-algebras such that $\spec(R') \to \spec(R)$ is a $\pi$-complete arc-covering by \ref{lem:arc-parc}.(\ref{lem:arc-parc-complete}). Since the localization functor $\alpha^*$ commutes with arbitrary limits and colimits (\ref{para:almost-math}), $(\widehat{\otimes}^k_RR')^\al = \widehat{\otimes}^k_{R^\al}R'^\al$ is still a perfectoid $\ca{O}_K^\al$-algebra by \ref{prop:tensor-perf} for any $k \geq 0$. In particular, $\widehat{\otimes}^k_RR'$ is almost flat over $\ca{O}_K$. Then, by d\'evissage, it suffices to show the almost exactness of the augmented \v Cech complex when $n=1$, i.e. the almost exactness of
	\begin{align}\label{eq:arc-descent-flat}
	0 \to R^\flat/\pi^\flat \to R'^\flat/\pi^\flat \to (R'^\flat\otimes_{R^\flat} R'^\flat)/\pi^\flat \to \cdots.
	\end{align}
	
	We claim that the natural morphism $X=\spec(R'^\flat) \coprod \spec(R^\flat[1/\pi^\flat]) \to Y=\spec (R^\flat)$ is an arc-covering. 
	Since $\spec (R'/\pi) \to \spec (R/\pi)$ is an arc-covering, $X \to Y$ is surjective. Therefore, we only need to consider the test map $\spec(V) \to Y$ where $V$ is a valuation ring of height $1$. There are three cases:
	\begin{enumerate}
		\item If $\pi^\flat$ is invertible in $V$, then we get a natural lifting $R^\flat[1/\pi^\flat] \to V$.
		\item If $\pi^\flat$ is zero in $V$, then we have $R/\pi=R^\flat/\pi^\flat \to V$, and there is a lifting $R'/\pi=R'^\flat/\pi^\flat \to W$.
		\item Otherwise, $\ca{O}_{K^\flat}\to V$ is an extension of valuation rings. After replacing $V$ by an extension (\ref{lem:replace}), we may assume that $V[1/\pi^\flat]$ is a perfectoid field over $K^\flat$ with valuation ring $V$. By tilting correspondence \ref{thm:tilt-corre}, it corresponds to a perfectoid field over $K$ with valuation ring $V^\sharp$, together with an $\ca{O}_K$-morphism $R \to V^\sharp$ by \ref{lem:R-V}. Since $R \to R'$ gives a $\pi$-complete arc-covering, there is an extension $V^\sharp\to W$ of valuation rings of height $1$ and a lifting $R' \to W$. After replacing $W$ by an extension (\ref{lem:replace}), we may assume that $W[1/\pi]$ is a perfectoid field over $K$ with valuation ring $W$. By tilting correspondence \ref{thm:tilt-corre} and \ref{lem:R-V}, we get a lifting $R'^\flat \to W^\flat$ of $R^\flat \to V$.
	\end{enumerate}   
	Now we apply \ref{prop:arc-descent} to the arc-covering $X \to Y$ of perfect affine $\bb{F}_p$-schemes. We get an exact augmented \v Cech complex
	\begin{align}
	0 \to R^\flat \to R'^\flat \times R^\flat[\frac{1}{\pi^\flat}] \to (R'^\flat \times R^\flat[\frac{1}{\pi^\flat}]) \otimes_{R^\flat} (R'^\flat \times R^\flat[\frac{1}{\pi^\flat}]) \to \cdots.
	\end{align}
	Since each term is a perfect $\bb{F}_p$-algebra, the submodule of $\pi^\flat$-torsion elements is almost zero, in other words, each term is almost flat over $\ca{O}_{K^\flat}$. Modulo $\pi^\flat$, we get the almost exactness of \eqref{eq:arc-descent-flat}, which completes the proof.
\end{proof}

\begin{mydefn}\label{defn:almost-et}
	Let $K$ be a pre-perfectoid field, $A\to B$ a morphism of $\ca{O}_K$-algebras. 
	\begin{enumerate}
		\renewcommand{\labelenumi}{{\rm(\theenumi)}}
		\item We say that $A\to B$ is \emph{almost \'etale} if $A^\al\to B^\al$ is an \'etale morphism of $\ca{O}_K^\al$-algebras in the sense of \cite[3.1.1.(\luoma{4})]{gabber2003almost}.
		\item We say that $A\to B$ is \emph{almost finite \'etale} if it is almost \'etale and if $B^\al$ is an almost finitely presented $A^\al$-module in the sense of \cite[2.3.10]{gabber2003almost} (cf. \cite[4.13]{scholze2012perfectoid}, \cite[\Luoma{5}.7.1]{abbes2016p}).
	\end{enumerate}
\end{mydefn}
We remark that in \ref{defn:almost-et} if $A\to B$ is a morphism of $K$-algebras, then it is almost \'etale (resp. almost finite \'etale) if and only if it is \'etale (resp. finite \'etale).

\begin{myprop}\label{lem:pre-perf-alg-basic}
	Let $K$ be a pre-perfectoid field, $\scr{C}$ the full subcategory of the category of $\ca{O}_K$-algebras formed by those $\ca{O}_K$-algebras which are almost pre-perfectoid.
	\begin{enumerate}
		\renewcommand{\labelenumi}{{\rm(\theenumi)}}
		\item The subcategory $\scr{C}$ is stable under taking colimits and products.\label{item:lem-pre-perf-alg-basic1}
		\item Let $A\to B$ be an almost \'etale morphism of $\ca{O}_K$-algebras. If $A\in \ob(\scr{C})$, then $B\in \ob(\scr{C})$.\label{item:lem-pre-perf-alg-basic2}
	\end{enumerate}
\end{myprop}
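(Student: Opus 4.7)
The plan is to verify the characterization in \ref{lem:pre-perf-alg}.(2) in each case, namely that the $\pi$-adic completion is almost flat over $\ca{O}_{\widehat{K}}$ and that the Frobenius induces an almost isomorphism $R/\pi_1 R \to R/\pi R$. By \ref{lem:completion-flat}, the first condition reduces to almost flatness of $R$ itself, equivalently, that its $\pi$-torsion submodule is almost zero.

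For (1), every colimit in the category of $\ca{O}_K$-algebras can be built from filtered colimits and pushouts, and pushouts of objects of $\scr{C}$ are again in $\scr{C}$ by \ref{cor:tensor-pre-perf}. For a filtered colimit $R = \colim R_i$ of objects of $\scr{C}$, the $\pi$-torsion of $R$ is the filtered colimit of the almost zero $\pi$-torsion submodules of the $R_i$, hence almost zero; moreover the functor $R \mapsto R/\pi R$ (and likewise $R \mapsto R/\pi_1 R$) commutes with filtered colimits, so the Frobenius $\overline{\frob}_R$ is the filtered colimit of the $\overline{\frob}_{R_i}$, and a filtered colimit of almost isomorphisms is an almost isomorphism. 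For products $R = \prod_i R_i$ with $R_i \in \ob(\scr{C})$, the $\pi$-torsion of $R$ is $\prod_i T_i$ where $T_i$ is the $\pi$-torsion of $R_i$, and since almost zero modules are stable under arbitrary products, $R$ is almost flat. The natural surjection $R/\pi R \to \prod_i R_i/\pi R_i$ has kernel $(\prod_i \pi R_i)/\pi R$, which is almost zero because for any $\epsilon \in \ak{m}_K$ an element $(\pi y_i)$ is sent to $\pi \cdot (\epsilon y_i) \in \pi R$; the same argument applies with $\pi_1$ in place of $\pi$. Thus $\overline{\frob}_R$ identifies, up to almost isomorphism, with $\prod_i \overline{\frob}_{R_i}$, which is an almost isomorphism.

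For (2), an almost \'etale morphism is in particular almost flat (see \cite[3.1.1]{gabber2003almost}), so $\widehat{B}$ is almost flat over $\widehat{A}$ and by transitivity over $\ca{O}_{\widehat{K}}$, which gives the first condition. The Frobenius condition is more delicate; I would proceed by reducing to characteristic $p$ via the tilting correspondence \ref{thm:tilt-corre}. In that case, by \ref{prop:pre-perf-perf}, the perfectoid assumption on $A$ becomes that $(\widehat{A}^\al)_*$ is a perfect $\bb{F}_p$-algebra, and one invokes the classical fact that the relative Frobenius $\frob_{B/A}: A \otimes_{A,\frob_A} B \to B$ of an almost \'etale morphism is an almost isomorphism. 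When $\frob_A$ is an almost isomorphism, the canonical map $B \to A \otimes_{A,\frob_A} B$ is then an almost isomorphism under which $\frob_B$ identifies with $\frob_{B/A}$, and hence $\frob_B$ is an almost isomorphism, so that $(\widehat{B}^\al)_*$ is perfect and $B \in \ob(\scr{C})$.

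The main obstacle is part (2): transferring the Frobenius isomorphism from $A$ to $B$ amounts to a form of the almost purity statement that almost \'etale extensions of perfectoid algebras are perfectoid, which is cleanest in characteristic $p$ via the \'etale-invariance of the Frobenius twist.
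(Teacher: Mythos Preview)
Your argument for part~(1) has a genuine gap. The definition of almost pre-perfectoid (\ref{defn:pre-alg}) only asserts that the \emph{completion} $\widehat{R_i}$ is almost flat over $\ca{O}_{\widehat{K}}$; it does not say that $R_i$ itself has almost zero $\pi$-torsion. So when you write ``the $\pi$-torsion of $R$ is the filtered colimit of the almost zero $\pi$-torsion submodules of the $R_i$'', and similarly for products, you are assuming something not given. The paper avoids this by working with the completions throughout: for a filtered colimit $R=\colim R_\lambda$ one observes that $\widehat{R}$ is the $\pi$-adic completion of $\colim \widehat{R_\lambda}$ (since $R/\pi^n R=\colim \widehat{R_\lambda}/\pi^n\widehat{R_\lambda}$), and the latter is almost flat as a filtered colimit of almost flat modules, so \ref{lem:completion-flat} applies; for a product one uses $\widehat{R}=\prod \widehat{R_\lambda}$ directly and checks its $\pi$-torsion is almost zero. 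Incidentally, for products the map $R/\pi R\to\prod R_i/\pi R_i$ is an honest isomorphism, since $\pi\cdot\prod R_i=\prod \pi R_i$; your almost-zero-kernel argument is correct but unnecessary.

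For part~(2), your detour through tilting is more complicated than needed and still rests on the same underlying fact. The paper simply cites \cite[3.5.13]{gabber2003almost}: for an almost \'etale morphism $A\to B$, the relative Frobenius is an almost isomorphism, so the Frobenius $B/\pi_1 B\to B/\pi B$ is almost isomorphic to the base change of $A/\pi_1 A\to A/\pi A$ along $A\to B$, hence is an almost isomorphism. This is exactly the ``classical fact'' you invoke after tilting, so there is no gain in passing to characteristic~$p$. Note also that your transitivity argument for flatness has the same issue as in part~(1): you need $\widehat{B}$ almost flat over $\ca{O}_{\widehat{K}}$, and the clean route is again via completions rather than assuming $A$ itself is almost flat.
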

\begin{proof}
	Let $\pi$ be a pseudo-uniformizer of $K$ dividing $p$ with a $p$-th root $\pi_1$ up to a unit. 
	
	(\ref{item:lem-pre-perf-alg-basic1}) The subcategory $\scr{C}$ is stable under taking tensor products by \ref{cor:tensor-pre-perf}. Let $(R_\lambda)_{\lambda\in\Lambda}$ be a directed system of objects in $\scr{C}$ and $R=\colim_{\lambda\in\Lambda} R_\lambda$. It is clear that the Frobenuis induces an almost isomorphism $R/\pi_1 R\to R/\pi R$. On the other hand, $\widehat{R}$ is the $\pi$-adic completion of $\colim_{\lambda\in\Lambda} \widehat{R_\lambda}$. Since the latter is almost flat over $\ca{O}_{\widehat{K}}$, so is $\widehat{R}$ (\ref{lem:completion-flat}). Thus, $\scr{C}$ is stable under taking colimits. 
	
	Let $(R_\lambda)_{\lambda\in\Lambda}$ be a set of objects in $\scr{C}$. Since $R/\pi R=\prod_{\lambda\in \Lambda} R_\lambda/\pi R_\lambda$, the Frobenius induces an almost isomorphism $R/\pi_1 R\to R/\pi R$. Moreover, the submodule of $\pi$-torsion elements of $\widehat{R}=\prod_{\lambda\in \Lambda} \widehat{R_\lambda}$ is almost zero, which implies that $\widehat{R}$ is almost flat over $\ca{O}_{\widehat{K}}$ (\ref{lem:flat}). We conclude that $\scr{C}$ is stable under taking products.
	
	(\ref{item:lem-pre-perf-alg-basic2}) Since $B$ is almost flat over $A$, it is almost flat over $\ca{O}_{K}$ and thus $\widehat{B}$ is almost flat over $\ca{O}_{\widehat{K}}$ (\ref{lem:completion-flat}). Since $B$ is almost \'etale over $A$, the map $B/\pi_1 B\to B/\pi B$ induced by the Frobenius is almost isomorphic to the base change of the map $A/\pi_1 A\to A/\pi A$ by $A\to B$ (\cite[3.5.13]{gabber2003almost}), which completes the proof.
\end{proof}

\begin{mylem}\label{lem:almost-fet}
	Let $K$ be a pre-perfectoid field with a pseudo-uniformizer $\pi$, $R$ an $\ca{O}_K$-algebra which is almost flat and almost pre-perfectoid, $R'$ an $R$-algebra which is almost finite \'etale. Then, the integral closure of $R$ in $R'$ is almost isomorphic to both $R'$ and the integral closure of $R$ in $R'[1/\pi]$.
\end{mylem}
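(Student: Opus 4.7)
The plan is to reduce everything to Proposition~\ref{prop:int-closed} applied to the algebra $R'$ itself. First, by Proposition~\ref{lem:pre-perf-alg-basic}.(\ref{item:lem-pre-perf-alg-basic2}) the $\ca{O}_K$-algebra $R'$ is again almost pre-perfectoid, and it is almost flat over $\ca{O}_K$ since almost \'etale morphisms are almost flat (\ref{rem:flat}) and $R$ is almost flat by assumption. Denote by $S$ the integral closure of $R$ in $R'$, by $S'$ the integral closure of $R$ in $R'[1/\pi]$, and by $R''$ the integral closure of $R'$ in $R'[1/\pi]$. Applying Proposition~\ref{prop:int-closed} to $R'$ (using that the Frobenius induces an almost isomorphism $R'/\pi_1 R' \to R'/\pi R'$ as $R'$ is almost pre-perfectoid) yields that $R' \to R''$ is an almost isomorphism; this is my main input.

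For the almost isomorphism $S \to S'$, the kernel lies in $R'[\pi^\infty]$, which is almost zero by Lemma~\ref{lem:flat}. For almost surjectivity, any $x \in S'$ is integral over $R$, hence over $R'$, so $x \in R''$; the main input produces, for each $\epsilon \in \ak{m}_K$, an element $y_\epsilon \in R'$ whose image in $R'[1/\pi]$ equals $\epsilon x$. A monic equation for $\epsilon x$ over $R$ (obtained by substituting $\epsilon x$ into a monic equation satisfied by $x$) then pulls back to a relation $y_\epsilon^n + b_{n-1}y_\epsilon^{n-1} + \cdots + b_0 = r_\epsilon$ in $R'$ with $b_i \in R$ and $r_\epsilon \in R'[\pi^\infty]$. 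Almost flatness of $R'$ forces $\delta r_\epsilon = 0$ for every $\delta \in \ak{m}_K$, and multiplying the resulting identity by $\delta^{n-1}$ and regrouping powers yields an honest monic equation over $R$ satisfied by $\delta y_\epsilon$. Thus $\delta y_\epsilon \in S$ has image $\delta \epsilon x$ in $S'$, and $\ak{m}_K^2 = \ak{m}_K$ concludes.

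For the almost isomorphism $S \to R'$, injectivity is tautological, and the substantive step is to show that every $y \in R'$ is almost integral over $R$. Since $R'^\al$ is almost finite \'etale over $R^\al$ (\ref{defn:almost-et}), it is almost projective of finite rank, so for each $\epsilon \in \ak{m}_K$ multiplication by $\epsilon$ on $R'^\al$ factors as $R'^\al \xrightarrow{f} (R^\al)^n \xrightarrow{g} R'^\al$. Applying Cayley-Hamilton to the endomorphism $\phi_y = f \circ \mu_y \circ g$ of $(R^\al)^n$ and composing back with $g$ and $f$ produces a relation of the form $\epsilon\bigl((\epsilon y)^n + c_{n-1}(\epsilon y)^{n-1} + \cdots + c_0\bigr) = 0$ in $R'^\al$ with $c_i \in R$. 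Flatness of $R'^\al$ over $\ca{O}_K^\al$ cancels the leading $\epsilon$, whence the bracketed polynomial in $\epsilon y$ is almost zero in $R'$; the same regrouping trick as above then yields a genuine monic equation over $R$ satisfied by $\delta \epsilon y$ for each $\delta \in \ak{m}_K$. Hence $\delta \epsilon y \in S$, and $\ak{m}_K = \ak{m}_K^2$ finishes the step. The main technical obstacle is this Cayley-Hamilton calculation with almost factorizations and the careful bookkeeping of powers of $\epsilon$ and $\delta$; alternatively, one can simply invoke the standard fact from almost ring theory (as in Gabber-Ramero) that any almost finite morphism is almost integral.
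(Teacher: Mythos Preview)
Your proof is correct and follows essentially the same strategy as the paper: both arguments rest on Proposition~\ref{prop:int-closed} applied to $R'$ and on the fact that an almost finite module is almost integral. The paper is simply more economical in its packaging. Where you prove by hand that $S\to S'$ is an almost isomorphism (chasing $x\in S'$ through $R''$ and manipulating monic equations), the paper observes that $S'$ equals the integral closure of $R$ in $R''$ and then invokes Lemma~\ref{lem:int-clos-aliso} directly with the almost isomorphism $R'\to R''$; your argument is exactly an unrolled proof of that lemma in this special case. Where you run a Cayley--Hamilton argument through an almost factorization $R'^{\al}\to (R^{\al})^n\to R'^{\al}$, the paper just cites \cite[2.3.10]{gabber2003almost} for the statement that elements of $\ak{m}_K R'$ are integral over $R$ whenever $R'$ is almost finitely generated as an $R$-module --- precisely the alternative you mention at the end. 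One small remark on your Cayley--Hamilton step: the coefficients $c_i$ you obtain a priori live only in $(R^{\al})_*$, not in $R$, so an extra $\ak{m}_K$-twist is needed before they can serve in an honest monic equation over $R$; this is harmless (and absorbed into your $\delta$-regrouping trick), but it is the one place where the bookkeeping is slightly more delicate than your sketch indicates.
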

\begin{proof}
	Notice that $R'$ is also almost flat and almost pre-perfectoid by \ref{lem:pre-perf-alg-basic}. Since $R'$ is almost finitely generated over $R$ as an $R$-module, the elements of $\ak{m}_KR'$ are integral over $R$ (cf. \cite[2.3.10]{gabber2003almost}). Thus, the integral closure of $R$ in $R'$ is almost isomorphic to $R'$. On the other hand, since $R'$ is almost isomorphic to its integral closure in $R'[1/\pi]$ by \ref{prop:int-closed}, the integral closure of $R$ in $R'$ is almost isomorphic to the integral closure of $R$ in $R'[1/\pi]$ by \ref{lem:int-clos-aliso}.
\end{proof}

\begin{mypara}\label{para:affinoid}
	We recall some basic definitions about affinoid algebras used in \cite{scholze2012perfectoid} in order to prove the almost purity theorem \ref{thm:almost-purity} by reducing to \tit{loc.cit}. Let $K$ be a complete valuation field of height $1$. A \emph{Tate $K$-algebra} is a topological $K$-algebra $\ca{R}$ whose topology is generated by the open subsets $a\ca{R}_0$ for a subring $\ca{R}_0\subset \ca{R}$ and any $a\in K^\times$. We denote by $\ca{R}^\circ$ the subring of power-bounded elements of $\ca{R}$, which is thus an $\ca{O}_K$-algebra. An \emph{affinoid $K$-algebra} is a pair $(\ca{R},\ca{R}^+)$ consisting of a Tate $K$-algebra $\ca{R}$ and a subring $\ca{R}^+$ of $\ca{R}^\circ$ which is open and integrally closed in $\ca{R}$. A morphism of affinoid $K$-algebras $(\ca{R},\ca{R}^+)\to(\ca{R}',\ca{R}'^+)$ is a morphism of topological $K$-algebras $f:\ca{R}\to \ca{R}'$ with $f(\ca{R}^+)\subseteq \ca{R}'^+$. Such a morphism is called \emph{finite \'etale} in the sense of \cite[7.1.(\luoma{1})]{scholze2012perfectoid} if $\ca{R}'$ is finite \'etale over $\ca{R}$ endowed with the canonical topology as a finitely generated $\ca{R}$-module and if $\ca{R}'^+$ is the integral closure of $\ca{R}^+$ in $\ca{R}'$.
	
	For a perfectoid field $K$ and an affinoid $K$-algebra $(\ca{R},\ca{R}^+)$, the inclusion $\ca{R}^+\subseteq \ca{R}^\circ$ is an almost isomorphism. Indeed, for any $\epsilon\in \ak{m}_K$ and any power-bounded element $x\in \ca{R}^\circ$, we have $(\epsilon x)^n\in \ca{R}^+$ for $n\in \bb{N}$ large enough as $\ca{R}^+$ is open. Thus, $\epsilon x\in \ca{R}^+$ as $\ca{R}^+$ is integrally closed. We remark that $(\ca{R},\ca{R}^+)$ is perfectoid in the sense of \cite[6.1]{scholze2012perfectoid} if and only if $\ca{R}^\circ$ is bounded and almost perfectoid over $\ca{O}_K$ (\cite[5.5, 5.6]{scholze2012perfectoid}).
\end{mypara}

\begin{mypara}\label{para:commalg-affinoid}
	There is a typical example for constructing affinoid algebras from commutative algebras (cf. \cite[Sorite 2.3.1]{andre2018abhyankar}). Let $K$ be a complete valuation field of height $1$ with a pseudo-uniformizer $\pi$, $R$ a flat $\ca{O}_K$-algebra. The $K$-algebra $R[1/\pi]$ endowed with the $\pi$-adic topology defined by $R$ is a Tate $K$-algebra. Let $\overline{R}$ be the integral closure of $R$ in $R[1/\pi]$. It is clear that any element of $\overline{R}$ is power-bounded. Thus, $(R[1/\pi],\overline{R})$ is an affinoid $K$-algebra. 
	
	Let $S$ be a finite $R[1/\pi]$-algebra endowed with the canonical topology. More precisely, the topology can be defined as follows: we take a finite $R$-algebra $R'$ contained in $S$ which contains a family of generators of the $R[1/\pi]$-algebra $S$; then the canonical topology of $S=R'[1/\pi]$ is the $\pi$-adic topology defined by $R'$ (which is independent of the choice of $R'$). Let $\overline{R'}$ be the integral closure of $R'$ in $R'[1/\pi]$, which is also the integral closure of $R$ in $R'[1/\pi]$. We remark that $(R[1/\pi],\overline{R})\to (R'[1/\pi],\overline{R'})$ is a finite \'etale morphism of affinoid $K$-algebras if and only if $R[1/\pi]\to R'[1/\pi]$ is finite \'etale.
\end{mypara}

\begin{mythm}[Almost purity, {\cite[7.9]{scholze2012perfectoid}}]\label{thm:almost-purity}
	Let $K$ be a pre-perfectoid field with a pseudo-uniformizer $\pi$, $R$ an $\ca{O}_K$-algebra which is almost pre-perfectoid, $R'$ the integral closure of $R$ in a finite \'etale $R[1/\pi]$-algebra. Then, $R'$ is almost pre-perfectoid and the $\pi$-adic completion $\widehat{R'}$ is almost finite \'etale over $\widehat{R}$.
	
	Moreover, if $R$ is $\pi$-torsion free and if $(R,\pi R)$ is a henselian pair, then $R'$ is almost finite \'etale over $R$.
\end{mythm}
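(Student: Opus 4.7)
The plan is to reduce the assertions to Scholze's almost purity theorem for perfectoid affinoid algebras \cite[7.9]{scholze2012perfectoid} by passing through the dictionary between commutative algebras and affinoid algebras set up in \ref{para:affinoid} and \ref{para:commalg-affinoid}. First, I would reduce to the $\pi$-adically complete case: by \ref{lem:pre-perf-field} and \ref{lem:pre-perf-alg}, after replacing $K$ by $\widehat{K}$ and $R$ by $\widehat{R}$, we may assume that $K$ is perfectoid, that $R$ is $\pi$-adically complete, and that $R^\al$ is a perfectoid $\ca{O}_K^\al$-algebra. Proposition \ref{prop:intclos-completion}, together with \ref{lem:int-clos-aliso} and \ref{cor:pre-perf-intclos}, ensures that forming the integral closure in $R[1/\pi]$ commutes with $\pi$-adic completion up to almost isomorphism, so this reduction is harmless for the first assertion.

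Next I would transport the statement into Scholze's framework via \ref{para:commalg-affinoid}. Equip $\ca{R}=R[1/\pi]$ with the $\pi$-adic topology defined by $R$ and set $\ca{R}^+=R^{\circ}$, the subring of power-bounded elements of $\ca{R}$. By \ref{prop:int-closed} the inclusion $R\hookrightarrow R^{\circ}$ is an almost isomorphism, hence $(R^{\circ})^\al\cong R^\al$ is perfectoid and $(\ca{R},\ca{R}^+)$ is a perfectoid affinoid $K$-algebra in the sense of \cite[6.1]{scholze2012perfectoid}. Writing $\ca{S}=R'[1/\pi]$ with its canonical topology, the hypothesis that $R'$ is the integral closure of $R$ in $\ca{S}$ implies, by \ref{para:commalg-affinoid}, that $(\ca{R},\ca{R}^+)\to(\ca{S},R')$ is a finite \'etale morphism of affinoid $K$-algebras in the sense of \cite[7.1.(i)]{scholze2012perfectoid}. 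Scholze's almost purity theorem then asserts that $(\ca{S},R')$ is itself perfectoid affinoid and that $R'^\al$ is almost finite \'etale over $R^\al$; combined with the reduction of the previous step and with \ref{lem:pre-perf-alg-basic}.(\ref{item:lem-pre-perf-alg-basic2}), this proves the first assertion.

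For the moreover part, assume $R$ is $\pi$-torsion free and $(R,\pi R)$ is henselian. By the already established first assertion applied to $\widehat{R}$ and to the finite \'etale $\widehat{R}[1/\pi]$-algebra $\ca{S}\otimes_{\ca{R}}\widehat{\ca{R}}$, the $\pi$-adic completion $\widehat{R'}^\al$ is almost finite \'etale over $\widehat{R}^\al$. The plan is then to descend this structure along $R\to\widehat{R}$: for the henselian pair $(R,\pi R)$, the base change functor $\mbf{FEt}(R)\to\mbf{FEt}(\widehat{R})$ is an equivalence by Gabber (\cite[\href{https://stacks.math.columbia.edu/tag/09ZL}{09ZL}]{stacks-project}), and this extends, via Gabber--Ramero's almost henselian theory, to an equivalence between the corresponding categories of almost finite \'etale algebras. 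Applying the descent to $\widehat{R'}^\al$ produces an almost finite \'etale $R^\al$-algebra $T^\al$, which one identifies with $R'^\al$ by comparing integral closures in the two generic fibres, using that $R[1/\pi]\to\widehat{R}[1/\pi]$ is faithfully flat. The main obstacle will be making this last identification rigorous in the almost setting: one needs to match $T^\al$ with $R'^\al$ as subobjects of $\widehat{R'}^\al$ by combining the henselian lifting of idempotents/roots modulo $\pi^n$ with the integrality of $R\to R'$ and the uniqueness of integral closures inside $\ca{S}$.
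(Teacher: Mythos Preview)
Your overall strategy matches the paper's: pass to the affinoid setting via \ref{para:commalg-affinoid}, invoke \cite[7.9]{scholze2012perfectoid}, and for the henselian addendum descend the almost finite \'etale structure along $R\to\widehat{R}$.

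For the first assertion, the reduction ``replace $R$ by $\widehat{R}$'' hides the only real work: one must identify $\widehat{R'}$, up to almost isomorphism, with the integral closure $S'$ of $R'\otimes_R\widehat{R}$ in $R'\otimes_R\widehat{R}[1/\pi]$ (this $S'$ is the object to which Scholze's theorem applies). Proposition \ref{prop:intclos-completion} does this, but note how it is used: it is applied with $A=R'$ and with $A=R'\otimes_R\widehat{R}$ (both have $\pi$-adic completion $\widehat{R'}$), not with $A=R$; it concerns the integral closure of $A$ in $A[1/\pi]$, not of $R$ in a finite \'etale extension. The paper first replaces $R$ by $R/R[\pi^\infty]$ via \ref{lem:almost-flat} to make everything $\pi$-torsion free; this also ensures that $R\to R[1/\pi]$ is injective, which your phrase ``the inclusion $R\hookrightarrow R^{\circ}$'' tacitly assumes.

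For the moreover part, your proposed identification of $T^\al$ with $R'^\al$ via faithful flatness of $R[1/\pi]\to\widehat{R}[1/\pi]$ does not work: this map need not be flat for non-Noetherian $R$. The paper resolves the obstacle you flag as follows. Since $(R,\pi R)$ is henselian, base change induces an equivalence between finite \'etale $R[1/\pi]$-algebras and finite \'etale $\widehat{R}[1/\pi]$-algebras (\cite[5.4.53]{gabber2003almost}); both $R''[1/\pi]$ (your $T[1/\pi]$) and $R'[1/\pi]$ base-change to $\widehat{R'}[1/\pi]$, hence $R''[1/\pi]\cong R'[1/\pi]$ as $R[1/\pi]$-algebras. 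Then \ref{lem:almost-fet} shows that $R''$ is almost isomorphic to the integral closure of $R$ in $R''[1/\pi]\cong R'[1/\pi]$, which is $R'$.
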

\begin{proof} 
	For the first statement, by \ref{lem:almost-flat}, we can replace $R$ by its image $R/R[\pi^\infty]$ in $R[1/\pi]$ (which does not change $R'$), so that we may assume that $R$ is $\pi$-torsion free (and thus so is $\widehat{R}$). Let $S$ (resp. $S'$) be the integral closure of $\widehat{R}$ in $\widehat{R}[1/\pi]$ (resp. of $R'\otimes_R \widehat{R}$ in $R'\otimes_R \widehat{R}[1/\pi]$). Then, we obtain a finite \'etale morphism of affinoid $\widehat{K}$-algebras $(\widehat{R}[1/\pi],S)\to(R'\otimes_R \widehat{R}[1/\pi],S')$ by \ref{para:commalg-affinoid}.
	
	Since $\widehat{R}$ is almost perfectoid, $\widehat{R}\to S$ is an almost isomorphism (\ref{prop:int-closed}). Thus, $S$ is bounded and almost perfectoid over $\ca{O}_{\widehat{K}}$. In other words, $(\widehat{R}[1/\pi],S)$ is a perfectoid affinoid $\widehat{K}$-algebra. Then, by almost purity (\cite[7.9.(\luoma{3})]{scholze2012perfectoid}), the $\ca{O}_{\widehat{K}}$-algebra $S'$ is almost perfectoid (thus $S'\to \widehat{S'}$ is an almost isomorphism by definition) and almost finite \'etale over $S$. 
	
	On the other hand, the two $\ca{O}_{\widehat{K}}$-algebras $R'$ and $R'\otimes_R \widehat{R}$ have the same $\pi$-adic completion $\widehat{R'}$. Thus, the $\pi$-adic completions of the integral closures of $R'$ and $R'\otimes_R \widehat{R}$ in $R'[1/\pi]$ and $R'\otimes_R \widehat{R}[1/\pi]$ respectively are almost isomorphic to that of $\widehat{R'}$ in $\widehat{R'}[1/\pi]$ by \ref{prop:intclos-completion}. In other words, $\widehat{R'}\to \widehat{S'}$ is an almost isomorphism. In conclusion, $R'$ is almost pre-perfectoid, and $\widehat{R'}$ is almost finite \'etale over $\widehat{R}$.
	
	We assume moreover that $R$ is $\pi$-torsion free and $(R,\pi R)$ is a henselian pair. Recall that the category of almost $\ca{O}_K$-algebras finite \'etale over $R^\al$ (resp. over $\widehat{R}^\al$) is equivalent to that over $(R/\pi R)^\al$ via the base change functor (\cite[5.5.7.(\luoma{3})]{gabber2003almost}). Hence, there exists an $R$-algebra $R''$ which is almost finite \'etale over $R$ such that $(R''\otimes_R \widehat{R})^\al$ is isomorphic to $\widehat{R'}^{\al}$. On the other hand, recall that the category of finite \'etale $R[1/\pi]$-algebras is equivalent to the category of finite \'etale $\widehat{R}[1/\pi]$-algebras via the base change functor (\cite[5.4.53]{gabber2003almost}). Notice that $R''[1/\pi]\otimes_{R}\widehat{R}\cong \widehat{R'}[1/\pi]$ by the construction of $R''$ and that $R'[1/\pi]\otimes_{R}\widehat{R}\cong\widehat{R'}[1/\pi]$ by the almost isomorphisms $\widehat{R'}\to \widehat{S'}\leftarrow S'$. Hence, there is an isomorphism $R''[1/\pi]\cong R'[1/\pi]$. By \ref{lem:almost-fet}, we see that $R''$ is almost isomorphic to $R'$, which completes the proof.
\end{proof}

\section{Brief Review on Covanishing Fibred Sites}\label{sec:covanishing}
We give a brief review on covanishing fibred sites, which are developed by Abbes and Gros \cite[\Luoma{6}]{abbes2016p}. We remark that \cite[\Luoma{6}]{abbes2016p} does not require the sites to admit finite limits (\ref{para:finite-complete}).

\begin{mypara}\label{par:cova-def}
	A \emph{fibred site} $E/C$ is a fibred category $\pi : E \to C$ whose fibres are sites such that for a cleavage and for every morphism $f : \beta \to \alpha$ in $C$, the inverse image functor $f^+ : E_\alpha \to E_\beta$ gives a morphism of sites (so that the same holds for any cleavage) (cf. \cite[\Luoma{6}.7.2]{sga4-2}).
	
	Let $x$ be an object of $E$ over $\alpha \in \ob(C)$. We denote by
	\begin{align}
		\iota_\alpha^+ : E_\alpha \to E
	\end{align} the inclusion functor of the fibre category $E_{\alpha}$ over $\alpha$ into the whole category $E$. A \emph{vertical covering} of $x$ is the image by $\iota_\alpha^+$ of a covering family $\{ x_m \to x \}_{m \in M}$ in $E_\alpha$. We call the topology generated by all vertical coverings the \emph{total topology} on $E$ (cf. \cite[\Luoma{6}.7.4.2]{sga4-2}). 

	Assume further that $C$ is a site. A \emph{Cartesian covering} of $x$ is a family $\{ x_n \to x \}_{n \in N}$ of morphisms of $E$ such that there exists a covering family $\{ \alpha_n \to \alpha \}_{n \in N}$ in $C$ with $x_n$ isomorphic to the pullback of $x$ by $\alpha_n\to \alpha$ for each $n$. 
\end{mypara}

\begin{mydefn}[{\cite[\Luoma{6}.5.3]{abbes2016p}}]\label{defn:covtop}
	A \emph{covanishing fibred site} is a fibred site $E/C$ where $C$ is a site. We associate to $E$ the  \emph{covanishing topology} which is generated by all vertical coverings and Cartesian coverings. We simply call a covering family for the covanishing topology a \emph{covanishing covering}.
\end{mydefn}

\begin{mydefn}\label{exmp:stdcovcov}
	Let $E/C$ be a covanishing fibred site. We call a composition of a Cartesian covering followed by vertical coverings a \emph{standard covanishing covering}. More precisely, a standard covanishing covering is a family of morphisms of $E$
	\begin{align}
		\{x_{nm} \to x\}_{n \in N, m \in M_n}
	\end{align}
	such that there is a Cartesian covering $\{x_n \to x\}_{n \in N}$ and for each $n \in N$ a vertical covering $\{x_{nm} \to x_n\}_{m \in M_n}$.
\end{mydefn}

\begin{myprop}[{\cite[\Luoma{6}.5.9]{abbes2016p}}]\label{prop:covcov}
	Let $E/C$ be a covanishing fibred site. Assume that in each fibre any object is quasi-compact, then a family of morphisms $\{x_i \to x\}_{i \in I}$ of $E$ is a covanishing covering if and only if it can be refined by a standard covanishing covering.
\end{myprop}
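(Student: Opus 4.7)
The ``if'' direction is immediate: a standard covanishing covering, being the composition of a Cartesian covering with vertical coverings of its members, belongs to the covanishing topology by definition.

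For ``only if'', the plan is to prove that the class $\scr{S}$ of standard covanishing coverings forms a pretopology on $E$. Since $\scr{S}$ contains the vertical coverings (taking trivial Cartesian layer $\id_\alpha$) and the Cartesian coverings (taking trivial vertical layers in each fibre), the topology generated by $\scr{S}$ is exactly the covanishing topology. By the standard characterisation of coverings for a topology generated by a pretopology (\cite[\Luoma{2}.1.4]{sga4-1}), a family is then a covanishing covering if and only if it admits a refinement in $\scr{S}$, which is the desired conclusion.

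Verifying that $\scr{S}$ contains isomorphisms and is stable under base change is routine: given a morphism $y \to x$ lying over $\beta \to \alpha$ and a standard covering of $x$ with Cartesian layer over $\{\alpha_n \to \alpha\}_{n\in N}$ followed by vertical layers $\{x_{nm} \to x_n\}_{m\in M_n}$ in $E_{\alpha_n}$, one forms the fibred products $\beta \times_\alpha \alpha_n$ in $C$ (which exist since $C$ has finite limits by our standing convention) to obtain the Cartesian layer of the base-changed covering, and uses that each transition functor $f^+ : E_\alpha \to E_\beta$ gives a morphism of sites to pull the vertical layers back.

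The main obstacle is closure of $\scr{S}$ under composition, and this is where quasi-compactness enters. Given a standard covering $\{x_{nm} \to x\}_{n,m}$ --- Cartesian over $\{\alpha_n \to \alpha\}_{n\in N}$ followed by vertical $\{x_{nm} \to x_n\}_{m\in M_n}$ --- and for each $(n,m)$ a standard covering $\{x_{nm,pq} \to x_{nm}\}$, Cartesian over $\{\alpha_{nm,p} \to \alpha_n\}_{p\in P_{nm}}$ followed by vertical in $E_{\alpha_{nm,p}}$, I need to refine the composite by a single element of $\scr{S}$. Using that $x_n$ is quasi-compact in $E_{\alpha_n}$, I first extract a finite subset $M'_n \subseteq M_n$ such that $\{x_{nm} \to x_n\}_{m\in M'_n}$ already covers $x_n$. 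Then, for each $\mbf{p} = (p_m)_{m\in M'_n} \in \prod_{m\in M'_n} P_{nm}$, I form the finite fibred product $\alpha_{n,\mbf{p}} = \alpha_{n,p_{m_1}} \times_{\alpha_n} \cdots \times_{\alpha_n} \alpha_{n,p_{m_k}}$ in $C$; the resulting family $\{\alpha_{n,\mbf{p}} \to \alpha\}_{n,\mbf{p}}$ covers $\alpha$, and the associated Cartesian covering $\{y_{n,\mbf{p}} \to x\}_{n,\mbf{p}}$ will serve as the Cartesian layer of the refinement. In each fibre $E_{\alpha_{n,\mbf{p}}}$, pulling back $\{x_{nm} \to x_n\}_{m\in M'_n}$ along $\alpha_{n,\mbf{p}} \to \alpha_n$ and, for each such $m$, the vertical covering $\{x_{nm,p_m,q} \to x_{nm,p_m}\}_q$ along the canonical factorisation $\alpha_{n,\mbf{p}} \to \alpha_{nm,p_m}$, and composing the two resulting vertical layers, produces the vertical part of the sought standard covanishing covering refining the composite.
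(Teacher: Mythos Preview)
The paper gives no proof here but cites \cite[\Luoma{6}.5.9]{abbes2016p}; your outline follows that argument, whose core is precisely the commutation of a vertical layer past a Cartesian layer by first extracting a finite vertical subcover via quasi-compactness and then taking the finite fibred product of the Cartesian indices in $C$.

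One packaging issue needs attention: $\scr{S}$ is \emph{not} a pretopology in the sense of \cite[\Luoma{2}.1.3]{sga4-1}, because, as you yourself write, the composite of two standard coverings is only \emph{refined} by a standard covering rather than being one; hence the appeal to \cite[\Luoma{2}.1.4]{sga4-1} is not justified as stated. The cure is painless: verify directly that the collection of sieves containing a standard covanishing covering forms a Grothendieck topology on $E$. Your base-change computation gives the stability axiom, and your composition-up-to-refinement gives the local-character axiom, once you observe that a sieve containing a family also contains any refinement of that family. Since this topology contains all vertical and Cartesian covering sieves, while every standard covering is covanishing, it coincides with the covanishing topology and the proposition follows. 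A smaller omission in the base-change step: pulling the vertical layer along $\beta_n \to \alpha_n$ produces a covering of $x|_{\beta_n}$, not of $y_n$; one further base change inside the fibre $E_{\beta_n}$ along $y_n \to x|_{\beta_n}$ is needed, which uses that the fibres, as sites, have finite limits under the paper's standing convention.
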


\begin{mypara}\label{para:presheafonE}
	Let $E/C$ be a fibred category. Fixing a cleavage of $E/C$, to give a presheaf $\ca{F}$ on $E$ is equivalent to give a presheaf $\ca{F}_\alpha$ on each fibre category $E_\alpha$ and transition morphisms $\ca{F}_\alpha\to f^\psh\ca{F}_{\beta}$ for each morphism $f:\beta\to \alpha$ in $C$ satisfying a cocycle relation (cf. \cite[\Luoma{6}.7.4.7]{sga4-2}). Thus, we simply denote a presheaf $\ca{F}$ on $E$ by
	\begin{align}
		\ca{F} = (\ca{F}_\alpha )_{\alpha \in \ob(C)},
	\end{align}
	where $\ca{F}_\alpha = \iota_\alpha^{\psh} \ca{F}$ is the restriction of $\ca{F}$ on the fibre category $E_\alpha$. If $E/C$ is a fibred site, then $\ca{F}$ is a sheaf with respect to the total topology on $E$ if and only if $\ca{F}_\alpha$ is a sheaf on $E_\alpha$ for each $\alpha$ (\cite[\Luoma{6}.7.4.7]{sga4-2}).
	Moreover, we have the following description of a covanishing sheaf.
\end{mypara}

\begin{myprop}[{\cite[\Luoma{6}.5.10]{abbes2016p}}]\label{prop:sheaf-covfibsite}
	Let $E/C$ be a covanishing fibred site. Then, a presheaf $\ca{F}$ on $E$ is a sheaf if and only if the following conditions hold:
	\begin{enumerate}
		\item[\rm{(v)}] The presheaf $\ca{F}_\alpha = \iota_\alpha^{\psh} \ca{F}$ on $E_\alpha$ is a sheaf for any $\alpha \in \ob(C)$.
		\item[\rm{(c)}] For any covering family $\{f_i:\alpha_i \to \alpha\}_{i \in I}$ of $C$, if we set $\alpha_{ij}=\alpha_i \times_\alpha \alpha_j$ and $f_{ij}: \alpha_{ij} \to \alpha$, then the sequence of sheaves on $E_\alpha$,
		\begin{align}
			\ca{F}_\alpha \to \prod_{i \in I} f_{i*}\ca{F}_{\alpha_i} \rightrightarrows \prod_{i,j \in I} f_{ij*}\ca{F}_{\alpha_{ij}},
		\end{align}
		is exact.
	\end{enumerate}
\end{myprop}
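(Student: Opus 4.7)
The plan is to reduce to standard covanishing coverings via Proposition~\ref{prop:covcov}: since any covanishing covering can be refined by one of the form $\{x_{nm}\to x\}_{n\in N,m\in M_n}$, obtained by composing a Cartesian covering $\{x_n\to x\}_{n\in N}$ lying over a covering $\{f_n:\alpha_n\to\alpha\}_{n\in N}$ in $C$ with vertical coverings $\{x_{nm}\to x_n\}_{m\in M_n}$, a presheaf $\ca{F}$ is a covanishing sheaf if and only if the sheaf axiom holds for every such family. The proposition then amounts to decoupling this single axiom into a purely vertical component (v) and a purely Cartesian component (c).

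For the ``only if'' direction, I would first specialize to the case where all $\alpha_n=\alpha$ and $N$ is a singleton, in which the covering is entirely vertical; the sheaf axiom then reads as the sheaf axiom for $\ca{F}_\alpha=\iota_\alpha^{\psh}\ca{F}$ on $E_\alpha$ for an arbitrary covering in $E_\alpha$, giving (v). Next, I would take each $M_n$ to be a singleton, so the covering is Cartesian. Using the identification of the fibre product $f_i^+(x)\times_x f_j^+(x)$ in $E$ with the Cartesian pullback $f_{ij}^+(x)$, which holds because $E/C$ is a fibred site, and the tautology $\ca{F}(f_i^+(x))=(f_{i*}\ca{F}_{\alpha_i})(x)$, I would rewrite the sheaf sequence as the sequence of sections at $x$ of the one appearing in (c), deducing (c) from the sheaf axiom for this Cartesian covering.

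For the ``if'' direction, I would compute $\ca{F}(x)$ from the standard covanishing covering $\{x_{nm}\to x\}_{n,m}$ as an iterated equalizer. First, (v) applied on $E_{\alpha_n}$ to $\{x_{nm}\to x_n\}_m$ identifies $\ca{F}(x_n)$ with the equalizer of $\prod_m\ca{F}(x_{nm})\rightrightarrows\prod_{m,m'}\ca{F}(x_{nm}\times_{x_n}x_{nm'})$. Second, (c) at the object $x$ identifies $\ca{F}(x)$ with the equalizer of $\prod_n\ca{F}(x_n)\rightrightarrows\prod_{n,n'}(f_{nn'*}\ca{F}_{\alpha_{nn'}})(x)$; evaluating the right-hand terms via (v) on each fibre $E_{\alpha_n\times_\alpha\alpha_{n'}}$ applied to the base changes of the vertical coverings $\{x_{nm}\to x_n\}_m$ and $\{x_{n'm'}\to x_{n'}\}_{m'}$ replaces them by equalizers involving the iterated fibre products $x_{nm}\times_x x_{n'm'}$. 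Splicing the two descriptions yields that $\ca{F}(x)$ is the equalizer of $\prod_{n,m}\ca{F}(x_{nm})\rightrightarrows\prod_{(n,m),(n',m')}\ca{F}(x_{nm}\times_x x_{n'm'})$, which is precisely the sheaf axiom for the standard covanishing covering.

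The main obstacle will be the bookkeeping in the splicing step: I will need to verify that the differentials of the two equalizer presentations assemble into a commuting bicomplex of sections. This reduces to the naturality of the base-change isomorphisms $g^{+}f_{n*}\simeq f'_{n*}g'^{+}$ between pushforwards of the fibre topoi along the Cartesian square $\alpha_n\times_\alpha\alpha_{n'}\rightrightarrows\alpha_n,\alpha_{n'}\to\alpha$, which follows from the functoriality of the cleavage of $E/C$. Once this compatibility is made explicit, the ``if'' direction follows from a routine diagram chase.
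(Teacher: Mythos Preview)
The paper does not give its own proof of this proposition; it is quoted from \cite[\Luoma{6}.5.10]{abbes2016p} without argument, so there is nothing in the paper to compare against.

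Your outline is reasonable, but there is a genuine gap in the ``if'' direction: you invoke Proposition~\ref{prop:covcov} to reduce to standard covanishing coverings, yet that proposition carries the hypothesis that every object in every fibre $E_\alpha$ is quasi-compact, a hypothesis not present in the statement you are proving. The ``only if'' direction is unaffected (you are only specializing to particular covering families), but your ``if'' argument, as written, needs every covanishing covering to be refined by a standard one, and that is precisely what Proposition~\ref{prop:covcov} provides only under quasi-compactness.

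To close the gap in full generality one should argue at the level of covering sieves rather than covering families: the covanishing topology is by definition the coarsest topology for which all vertical and all Cartesian sieves are covering, and one verifies directly that the class of sieves for which a presheaf satisfying (v) and (c) has the sheaf property is stable under the transitivity and base-change axioms that manufacture the full topology from these generators. Alternatively, since every use of this proposition in the present paper (the Faltings site and its pro-\'etale variant) is in a setting where all fibre objects are quasi-compact, your argument is entirely adequate for those applications; it just does not prove the proposition as stated.
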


\section{Faltings Ringed Sites}\label{sec:faltings-site}

\begin{mypara}\label{para:E}
	Let $Y \to X$ be a morphism of $\bb{U}$-small coherent schemes, and let $\fal_{Y\to X}$ be the category of morphisms $V\to U$ of $\bb{U}$-small coherent schemes over the morphism $Y \to X$, namely, the category of commutative diagrams of coherent schemes 
	\begin{align}
		\xymatrix{
		V \ar[r] \ar[d] &U \ar[d]\\
		Y \ar[r] & X
		}
	\end{align}
	Given a functor $I \to \fal_{Y\to X}$ sending $i$ to $V_i \to U_i$, if $\lim V_i $ and $\lim U_i$ are representable in the category of coherent schemes, then $\lim (V_i \to U_i)$ is representable by $\lim V_i \to \lim U_i$. We say that a morphism $(V'\to U')\to (V\to U)$ of objects of $\fal_{Y \to X}$ is \emph{Cartesian} if $V'\to V\times_U U'$ is an isomorphism. It is clear that the Cartesian morphisms in $\fal_{Y \to X}$ are stable under base change.
	
	Consider the functor
	\begin{align}\label{eq:sigma-fal}
		\phi^+ : \fal_{Y\to X} \longrightarrow \schqcqs_{/X},\ (V \to U) \longmapsto U.
	\end{align} 
	The fibre category over $U$ is canonically equivalent to the category $\schqcqs_{/U_Y}$ of coherent $U_Y$-schemes, where $U_Y=Y\times_X U$. The base change by $U' \to U$ gives an inverse image functor $\schqcqs_{/U_Y} \to \schqcqs_{/U'_Y}$, which endows $\fal_{Y\to X}/\schqcqs_{/X}$ with a structure of fibred category. We define a presheaf on $\fal_{Y\to X}$ by
	\begin{align}\label{eq:falb}
		\falb(V \to U) = \Gamma(U^V , \ca{O}_{U^V}),
	\end{align}
	where $U^V$ is the integral closure of $U$ in $V$.
\end{mypara}

\begin{mydefn}\label{defn:Eet}
	Let $Y\to X$ be a morphism of coherent schemes. A morphism $(V' \to U') \to (V \to U)$ in $\fal_{Y\to X}$ is called \emph{\'etale}, if $U' \to U$ is \'etale and $V' \to V \times_U U'$ is finite \'etale. 
\end{mydefn}

\begin{mylem}\label{lem:etale-basic}
	Let $Y\to X$ be a morphism of coherent schemes, $(V'' \to U'') \stackrel{g}{\longrightarrow} (V' \to U') \stackrel{f}{\longrightarrow} (V \to U)$ morphisms in $\fal_{Y \to X}$.
	\begin{enumerate}
		\renewcommand{\labelenumi}{{\rm(\theenumi)}}
		\item If $f$ is \'etale, then any base change of $f$ is also \'etale.
		\item If $f$ and $g$ are \'etale, then $f \circ g$ is also \'etale.
		\item If $f$ and $f\circ g$ are \'etale, then $g$ is also \'etale.
	\end{enumerate}
\end{mylem}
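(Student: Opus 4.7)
The plan is to decompose each of the three assertions into an ``\'etale part'' concerning the $U$-components and a ``finite \'etale part'' concerning the $V$-components, and then reduce to the corresponding stability properties of (finite) \'etale morphisms of schemes. Throughout, I use that for an object $(V_1\to U_1)\to (V\to U)$ in $\fal_{Y\to X}/(V\to U)$, the fibre product with another object $(V'\to U')\to (V\to U)$ is represented by $(V'\times_V V_1\to U'\times_U U_1)$, and that there is a canonical identification $V_1\times_{U_1}(U'\times_U U_1)=V_1\times_U U'$.

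For (1), write the base change of $f$ as $(V'\times_V V_1\to U'\times_U U_1)\to (V_1\to U_1)$. The \'etale part is the stability of \'etale morphisms of schemes under base change. For the finite \'etale part, the map $V'\times_V V_1\to V_1\times_U U'$ is the base change along $V_1\to V$ of the finite \'etale morphism $V'\to V\times_U U'$, hence is itself finite \'etale. For (2), the composition $U''\to U$ is \'etale as a composition of \'etale morphisms, and the map $V''\to V\times_U U''$ factors as
\begin{equation*}
V''\longrightarrow V'\times_{U'}U''\longrightarrow (V\times_U U')\times_{U'}U''=V\times_U U'',
\end{equation*}
where the first arrow is finite \'etale by the hypothesis on $g$ and the second is the base change along $U''\to U'$ of the finite \'etale morphism $V'\to V\times_U U'$.

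For (3), the \'etale part is the standard cancellation property $U'\to U,\ U''\to U$ \'etale $\Rightarrow$ $U''\to U'$ \'etale. For the finite \'etale part, both $V''\to V\times_U U''$ and the base change $V'\times_{U'}U''\to V\times_U U''$ are finite \'etale by hypothesis, and we must show that the induced arrow $j:V''\to V'\times_{U'}U''$ over $V\times_U U''$ is finite \'etale. This is an instance of the general fact that if $h=k\circ j$ with $h,k$ finite \'etale, then $j$ is finite \'etale: factor $j$ as its graph $V''\to (V'\times_{U'}U'')\times_{V\times_U U''}V''$, which is a clopen immersion since the diagonal of the separated \'etale morphism $V'\times_{U'}U''\to V\times_U U''$ is clopen, followed by the projection $(V'\times_{U'}U'')\times_{V\times_U U''}V''\to V'\times_{U'}U''$, which is finite \'etale as a base change of $V''\to V\times_U U''$.

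I do not anticipate any serious obstacle: each part is a formal consequence of standard stability properties of (finite) \'etale morphisms together with the explicit description of fibre products in $\fal_{Y\to X}$.
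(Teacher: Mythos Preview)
Your proof is correct and follows the same approach the paper has in mind: the paper's proof consists of the single sentence ``It follows directly from the definitions,'' and what you have written is precisely the unpacking of that sentence via the standard stability properties of \'etale and finite \'etale morphisms.
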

\begin{proof}
	It follows directly from the definitions.
\end{proof}

\begin{mypara}
	Let $Y\to X$ be a morphism of coherent schemes. We still denote by $X_\et$ (resp. $X_\fet$) the site formed by coherent \'etale (resp. finite \'etale) $X$-schemes endowed with the \'etale topology. 
	Let $\fal_{Y\to X}^\et$ be the full subcategory of $\fal_{Y\to X}$ formed by $V \to U$ \'etale over the final object $Y \to X$. It is clear that $\fal_{Y\to X}^\et$ is stable under finite limits in $\fal_{Y\to X}$. Then, the functor \eqref{eq:sigma-fal} induces a functor
	\begin{align}
		\phi^+: \fal_{Y\to X}^\et \longrightarrow X_\et,\ (V \to U) \longmapsto U,
	\end{align}
	which endows $\fal_{Y\to X}^\et / X_\et$ with a structure of fibred sites, whose fibre over $U$ is the finite \'etale site $U_{Y,\fet}$. We endow $\fal_{Y\to X}^\et$ with the associated covanishing topology, that is, the topology generated by the following types of families of morphisms
	\begin{enumerate}
		\item[\rm{(v)}] $\{(V_m \to U) \to (V \to U)\}_{m \in M}$, where $M$ is a finite set and $\coprod_{m\in M} V_m\to V$ is surjective;
		\item[\rm{(c)}] $\{(V\times_U{U_n} \to U_n) \to (V \to U)\}_{n \in N}$, where $N$ is a finite set and $\coprod_{n\in N} U_n\to U$ is surjective.
	\end{enumerate}	
	It is clear that any object of $\fal_{Y\to X}^\et$ is quasi-compact by \ref{prop:covcov}.
	We still denote by $\falb$ the restriction of the presheaf $\falb$ on $\fal_{Y\to X}$ to $\fal_{Y\to X}^\et$ if there is no ambiguity.
\end{mypara}

\begin{mylem}\label{lem:falb-sheaf}
	Let $Y \to X$ be a morphism of coherent schemes. Then, the presheaf on $\schqcqs_{/Y}$ sending $Y'$ to $\Gamma(X^{Y'},\ca{O}_{X^{Y'}})$ is a sheaf with respect to the fpqc topology {\rm(\cite[\href{https://stacks.math.columbia.edu/tag/022A}{022A}]{stacks-project})}.
\end{mylem}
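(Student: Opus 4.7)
The plan is to reduce the sheaf property to a simple algebraic observation: integral closure commutes with equalizers of parallel ring homomorphisms. The structure sheaf $\ca{O}$ on $\schqcqs$ is a sheaf for the fpqc topology by standard faithfully flat descent, and the proposition will follow from this together with the equalizer observation and a Zariski-localization step on $X$.

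The first step is to reduce to the case $X = \spec(A)$ affine. Since $X^{Y'} \to X$ is an affine morphism, one has $\Gamma(X^{Y'}, \ca{O}_{X^{Y'}}) = \Gamma(X, \ca{A}_{Y'})$ where $\ca{A}_{Y'}$ is the quasi-coherent $\ca{O}_X$-algebra defining $X^{Y'}$. Applying Lemma \ref{lem:rel-norm-commute} to the open immersion $U \hookrightarrow X$ (which is smooth) of any affine open $U$ gives $X^{Y'} \times_X U = U^{U \times_X Y'}$, so $\ca{A}_{Y'}|_U$ coincides with the analogous construction performed with $U$ in place of $X$. Choosing a finite affine open cover $X = \bigcup U_i$ (with the double intersections further covered by affines, using coherence of $X$) presents the presheaf $\ca{G}:Y'\mapsto \Gamma(X^{Y'},\ca{O}_{X^{Y'}})$ as an equalizer of the analogous presheaves built from the $U_i$ and their overlaps. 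Since equalizers of fpqc sheaves are fpqc sheaves, it suffices to treat the affine case.

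For $X = \spec(A)$, the presheaf $\ca{G}$ sends $Y'$ to the integral closure $\overline{A}_{Y'}$ of $A$ in $\Gamma(Y', \ca{O}_{Y'})$, with the $A$-algebra structure coming from $Y' \to Y \to X$. Given an fpqc covering $\{Y'_j \to Y'\}_{j \in J}$, after extracting a finite subcover by quasi-compactness of $Y'$ and forming a disjoint union, I reduce to a single faithfully flat quasi-compact morphism $Y'' \to Y'$ of coherent $Y$-schemes; put $Y''' = Y'' \times_{Y'} Y''$. Faithfully flat descent for $\ca{O}$ then gives an exact sequence
\begin{align*}
0 \to \Gamma(Y', \ca{O}) \to \Gamma(Y'', \ca{O}) \rightrightarrows \Gamma(Y''', \ca{O}),
\end{align*}
whose parallel arrows are $A$-algebra homomorphisms. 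The crucial observation is now that for any pair of parallel $A$-algebra homomorphisms $N \rightrightarrows P$ with equalizer $M \subseteq N$, the integral closure of $A$ in $M$ equals the equalizer of the induced $A$-linear maps between the integral closures of $A$ in $N$ and in $P$: indeed an element $n \in N$ integral over $A$ lies in $M$ iff its two images in $P$ agree, and those images are integral over $A$ automatically because the maps are ring homomorphisms. Applying this to the descent sequence above yields the desired exactness for $\ca{G}$. I do not anticipate any real obstacle; the only new input is the equalizer-integral-closure observation, and the remainder is standard faithfully flat descent combined with the locality of $\ca{A}_{Y'}$ on $X$ provided by Lemma \ref{lem:rel-norm-commute}.
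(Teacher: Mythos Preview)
Your proposal is correct and follows essentially the same approach as the paper: reduce to affine $X$, apply faithfully flat descent for $\ca{O}$, and then observe that integral closure preserves the equalizer diagram. The paper performs the reduction to affine $X$ more tersely---it phrases the sheaf condition as exactness of a sequence of quasi-coherent $\ca{O}_X$-algebras and passes directly to $X=\spec(R)$---whereas you spell out the Zariski descent via Lemma~\ref{lem:rel-norm-commute} and a finite affine cover; the key algebraic step (your ``equalizer--integral-closure observation'', the paper's $R_0 = A_0 \cap R_1$) is identical.
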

\begin{proof}
	We may regard $\ca{O}_{X^{Y'}}$ as a quasi-coherent $\ca{O}_X$-algebra over $X$. It suffices to show that for a finite family of morphisms $\{Y_i\to Y\}_{i\in I}$ with $Y'=\coprod_{i\in I} Y_i$ faithfully flat over $Y$, the sequence of quasi-coherent $\ca{O}_X$-algebras
	\begin{align}
		\xymatrix{
			0\ar[r]& \ca{O}_{X^Y}\ar[r]& \ca{O}_{X^{Y'}}\ar@<0.5ex>[r]\ar@<-0.5ex>[r]& \ca{O}_{X^{Y'\times_Y Y'}}
		}
	\end{align}
	is exact. Thus, we may assume that $X=\spec(R)$ is affine. We set $A_0=\Gamma(Y,\ca{O}_Y)$, $A_1=\Gamma(Y',\ca{O}_{Y'})$, $A_2=\Gamma(Y'\times_YY',\ca{O}_{Y'\times_YY'})$, $R_0=\Gamma(X^Y,\ca{O}_{X^Y})$, $R_1=\Gamma(X^{Y'},\ca{O}_{X^{Y'}})$, $R_2=\Gamma(X^{Y'\times_Y Y'},\ca{O}_{X^{Y'\times_Y Y'}})$. Notice that $R_i$ is the integral closure of $R$ in $A_i$ for $i=0,1,2$ (\cite[\href{https://stacks.math.columbia.edu/tag/035F}{035F}]{stacks-project}). Consider the diagram
	\begin{align}
		\xymatrix{
			0\ar[r]& R_0\ar[r]\ar[d]& R_1\ar@<0.5ex>[r]\ar@<-0.5ex>[r]\ar[d]& R_2\ar[d]\\
			0\ar[r]& A_0\ar[r]& A_1\ar@<0.5ex>[r]\ar@<-0.5ex>[r]& A_2
		}
	\end{align}
	We see that the vertical arrows are injective and the second row is exact by faithfully flat descent. Notice that $R_0=A_0\cap R_1$, since they are both the integral closure of $R$ in $A_0$ as $A_0\subseteq A_1$. Thus, the first row is also exact, which completes the proof.
\end{proof}

\begin{myprop}\label{prop:falb-sheaf}
	Let $Y \to X$ be a morphism of coherent schemes. Then, the presheaf $\falb$ on $\fal_{Y\to X}^\et$ is a sheaf.
\end{myprop}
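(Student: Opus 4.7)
The plan is to apply the sheaf criterion for covanishing fibred sites, namely Proposition \ref{prop:sheaf-covfibsite}, to the fibred site $\fal_{Y\to X}^\et/X_\et$ whose fibre over $U\in X_\et$ is $U_{Y,\fet}$. This reduces the sheaf property of $\falb$ to the verification of the vertical condition (v) on each fibre $U_{Y,\fet}$ and of the covanishing condition (c) along every \'etale covering $\{U_n\to U\}_{n\in N}$ in $X_\et$.

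For condition (v), I fix $U\in X_\et$ and examine the restriction $\ca{F}_U$ on $U_{Y,\fet}$, which sends $V\mapsto \Gamma(U^V,\ca{O}_{U^V})$. Given a finite \'etale covering family $\{V_m\to V\}_{m\in M}$ with $\coprod V_m\to V$ surjective, the map $\coprod V_m\to V$ is faithfully flat and quasi-compact; thus it is an fpqc covering in $\schqcqs_{/U_Y}$. Applying Lemma \ref{lem:falb-sheaf} with the base morphism $U_Y\to U$ in place of $Y\to X$ yields the exactness of
\begin{align*}
0\to \Gamma(U^V,\ca{O}_{U^V})\to \prod_m\Gamma(U^{V_m},\ca{O}_{U^{V_m}})\rightrightarrows\prod_{m,m'}\Gamma(U^{V_m\times_V V_{m'}},\ca{O}),
\end{align*}
which is precisely the sheaf axiom for $\ca{F}_U$.

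For condition (c), I fix an \'etale covering $\{f_n:U_n\to U\}_{n\in N}$ in $X_\et$ and an object $(V\to U)$ of $\fal^\et_{Y\to X}$, and set $V_n = V\times_U U_n$, $V_{nn'}=V\times_U (U_n\times_U U_{n'})$. The key observation is that integral closure commutes with \'etale base change: by Lemma \ref{lem:rel-norm-commute} we have $U_n^{V_n}=U^V\times_U U_n$ and analogously for double intersections. Consequently the sequence
\begin{align*}
\Gamma(U^V,\ca{O}_{U^V})\to\prod_n\Gamma(U^V\times_U U_n,\ca{O})\rightrightarrows\prod_{n,n'}\Gamma(U^V\times_U(U_n\times_U U_{n'}),\ca{O})
\end{align*}
is the sequence obtained by applying $\Gamma(U,-)$ to the \v Cech complex of the quasi-coherent $\ca{O}_U$-algebra $\ca{A}=(U^V\to U)_*\ca{O}_{U^V}$ along the fpqc covering $\coprod U_n\to U$; its exactness is standard faithfully flat descent for quasi-coherent modules.

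No serious obstacle is expected; the only subtle point is that the fibres in condition (v) involve $U_{Y,\fet}$, so the vertical descent has to be read off from fpqc descent in $\schqcqs_{/U_Y}$ via Lemma \ref{lem:falb-sheaf}, while condition (c) hinges on the fact that \'etale base change commutes with taking integral closure (Lemma \ref{lem:rel-norm-commute}), which is what allows us to translate the covanishing condition into ordinary fpqc descent for the quasi-coherent algebra $(U^V\to U)_*\ca{O}_{U^V}$.
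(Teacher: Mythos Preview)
Your proof is correct and follows exactly the same strategy as the paper: apply the criterion \ref{prop:sheaf-covfibsite}, verify condition (v) via Lemma \ref{lem:falb-sheaf}, and verify condition (c) via Lemma \ref{lem:rel-norm-commute} together with fpqc descent for quasi-coherent modules. The paper's proof is a one-line citation of these three ingredients, and you have simply unpacked the details.
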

\begin{proof}
	It follows directly from \ref{prop:sheaf-covfibsite}, whose first condition holds by \ref{lem:falb-sheaf}, and whose second condition holds by \ref{lem:rel-norm-commute} (cf. \cite[\Luoma{3}.8.16]{abbes2016p}).
\end{proof}

\begin{mydefn}[{\cite[page 214]{faltings2002almost}, \cite[\Luoma{6}.10.1]{abbes2016p}}]\label{defn:falsite}
	We call $(\fal_{Y\to X}^\et,\falb)$ the \emph{Faltings ringed site} of the morphism of coherent schemes $Y \to X$.
\end{mydefn}
It is clear that the localization $(\fal_{Y\to X}^\et)_{/ (V \to U)}$ of $\fal_{Y\to X}^\et$ at an object $V \to U$ is canonically equivalent to the Faltings ringed site $\fal_{V \to U}^\et$ of the morphism $V \to U$ by \ref{prop:covcov} (cf. \cite[\Luoma{6}.10.14]{abbes2016p}).

\begin{mypara}\label{para:notation-psi-beta-sigma}
	Let $Y \to X$ be a morphism of coherent schemes. Consider the natural functors
	\begin{align}
		\psi^+ &: \fal_{Y\to X}^\et \longrightarrow Y_{\et},\ (V \to U) \longmapsto V,\\
		\beta^+ &: Y_{\fet} \longrightarrow \fal_{Y\to X}^\et,\ V \longmapsto (V \to X),\\
		\sigma^+ &: X_\et \longrightarrow \fal_{Y\to X}^\et,\ U\longmapsto (Y\times_X U\to U) .
	\end{align}
	They are left exact and continuous (cf. \cite[\Luoma{6} 10.6, 10.7]{abbes2016p}). Then, we obtain a commutative diagram of sites associated functorially to the morphism $Y\to X$ by \ref{para:topos},
	\begin{align}\label{diag:beta-psi-sigma}
		\xymatrix{
			&Y_\et\ar[dl]_-{\rho}\ar[dr]\ar[d]^-{\psi}&\\
			Y_\fet&\fal_{Y \to X}^\et \ar[r]_-{\sigma}\ar[l]^-{\beta}& X_\et
		}
	\end{align}
	where $\rho: Y_\et \to Y_\fet$ is defined by the inclusion functor, and the unlabelled arrow $Y_\et \to X_\et$ is induced by the morphism $Y\to X$. Moreover, if $\ca{O}_{X_\et}$ denotes the structural sheaf on $X_\et$ sending $U$ to $\Gamma(U,\ca{O}_U)$, then $\sigma^+$ actually defines a morphism of ringed sites
	\begin{align}
		\sigma: (\fal_{Y \to X}^\et,\falb) \longrightarrow (X_\et,\ca{O}_{X_\et}).
	\end{align}
	We will study more properties of these morphisms in the remaining sections.
\end{mypara}

\begin{mylem}\label{lem:fal-localtopos}
	Let $X$ be the spectrum of an absolutely integrally closed valuation ring, $Y$ a quasi-compact open subscheme of $X$. Then, for any presheaf $\ca{F}$ on $\fal_{Y \to X}^\et$, we have $\ca{F}^\ash(Y\to X)=\ca{F}(Y \to X)$. In particular, the associated topos of $\fal_{Y \to X}^\et$ is local {\rm(\cite[\Luoma{6}.8.4.6]{sga4-2})}.
\end{mylem}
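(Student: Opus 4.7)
The plan is to show that the only covering sieve of the final object $(Y \to X)$ in $\fal_{Y\to X}^\et$ is the maximal sieve. Once this is established, a glance at the standard construction of sheafification via the Čech plus-construction gives immediately $\ca{F}^\ash(Y \to X) = \ca{F}(Y \to X)$, because $\ca{F}^+(Y \to X)$ is a filtered colimit indexed by covering sieves of $(Y \to X)$, and the sieve generated by the identity is cofinal. The locality of the topos then follows from the criterion in \cite[\Luoma{6}.8.4.6]{sga4-2}.

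To check that every covering sieve of $(Y \to X)$ contains the identity, I would first use \ref{prop:covcov}, together with the fact that every object of $\fal_{Y\to X}^\et$ is quasi-compact, to reduce to refining any standard covanishing covering
\begin{align*}
\{(V_{nm} \to U_n) \to (Y\to X)\}_{n\in N,\, m\in M_n},
\end{align*}
where $\{U_n \to X\}$ is étale surjective and, for each $n$, $\{V_{nm} \to Y\times_X U_n\}_m$ is a finite étale surjective family. Since $X = \spec(V)$ with $V$ an absolutely integrally closed local ring, $V$ is strictly henselian, so the étale cover $\{U_n \to X\}$ admits a section $X\to U_{n_0}$. Pulling back along this section reduces the problem to producing a section $Y \to W_{m_1}$ of some member of a finite étale surjective family $\{W_m \to Y\}_{m\in M}$.

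For the finite étale step, since ideals of a valuation ring are totally ordered and finitely generated ideals are principal, every quasi-compact open of $\spec(V)$ is of the form $Y = \spec(V[1/f])$ for some $f \in V$. The ring $R := V[1/f]$ is a localization of $V$, hence itself an absolutely integrally closed valuation ring, with algebraically closed fraction field $K$. The key claim to prove is then the following: every finite étale $R$-algebra $S$ is isomorphic to $R^n$ for some $n\geq 0$. The argument I would give: $S$ is normal since $R$ is normal and $R\to S$ is étale, and $S\otimes_R K \cong K^n$ because $K$ is algebraically closed; each connected component of $\spec(S)$ is the spectrum of a normal finite étale $R$-algebra $T$ which is a domain, whose generic fibre $T\otimes_R K$ is a finite separable field extension of $K$, hence equal to $K$, forcing $T$ to be of rank $1$ over $R$, i.e. $T=R$. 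Thus $\spec(S)$ is a finite disjoint union of copies of $Y$, and surjectivity of $\coprod_m W_m \to Y$ furnishes the desired section.

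The only delicate point in the plan is the normality and structure statement for finite étale $R$-algebras over the possibly non-local valuation ring $R = V[1/f]$; most standard references for "strictly henselian implies trivial étale site" assume a local base. I would therefore take care to argue normality of $S$ locally (noting that each localization $R_{\ak p}$ is itself an absolutely integrally closed valuation ring, hence strictly henselian), and to identify the connected components of $\spec(S)$ directly via the generic fibre rather than via a Galois-theoretic description of the fundamental group.
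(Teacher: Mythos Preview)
Your proposal is correct and follows the same strategy as the paper: show that every covering of the final object $(Y\to X)$ is refined by the identity, by using strict henselianness of $X$ for the Cartesian part and triviality of finite \'etale covers of $Y$ for the vertical part. The one place where you take a detour is the finite \'etale step: you flag as ``the only delicate point'' that $R=V[1/f]$ might fail to be local, but in fact any localization of a valuation ring is again a valuation ring (hence local), and localizations of absolutely integrally closed rings remain absolutely integrally closed (both recorded in \ref{para:notation-val}); thus $R$ is itself an absolutely integrally closed valuation ring, hence strictly henselian, and finite \'etale $R$-algebras split immediately---this is exactly the shortcut the paper takes, rendering your generic-fibre argument unnecessary. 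The paper also spells out the locality assertion by checking directly that $\Gamma(Y\to X,-)$ commutes with colimits of sheaves (pass to the presheaf colimit and apply the first part), whereas you defer this to the cited SGA criterion.
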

\begin{proof}
	Notice that $Y$ is also the spectrum of an absolutely integrally closed valuation ring by \ref{lem:prod-val}.(\ref{lem:prod-val-prin}) and that absolutely integrally closed valuation rings are strictly Henselian. Thus, any covering of $Y\to X$ in $\fal_{Y \to X}^\et$ can be refined by the identity covering by \ref{prop:covcov}. We see that $\ca{F}^\ash(Y\to X)=\ca{F}(Y \to X)$ for any presheaf $\ca{F}$. For the last assertion, it suffices to show that the section functor $\Gamma(Y\to X, -)$ commutes with colimits of sheaves. For a colimit of sheaves $\ca{F}=\colim \ca{F}_i$, let $\ca{G}$ be the colimit of presheaves $\ca{G}=\colim \ca{F}_i$. Then, we have $\ca{F}=\ca{G}^\ash$ and $\Gamma(Y\to X, \ca{F})=\Gamma(Y\to X, \ca{G})=\colim \Gamma(Y\to X, \ca{F}_i)$.
\end{proof}

\begin{mypara}\label{para:limit-falet}
	Let $(Y_\lambda\to X_\lambda)_{\lambda\in \Lambda}$ be a $\bb{U}$-small directed inverse system of morphisms of $\bb{U}$-small coherent schemes with affine transition morphisms $Y_{\lambda'} \to Y_\lambda$ and $X_{\lambda'} \to X_\lambda$ ($\lambda'\geq \lambda$). We set $(Y\to X)=\lim_{\lambda\in \Lambda} (Y_\lambda\to X_\lambda)$. We regard the directed set $\Lambda$ as a filtered category and regard the inverse system $(Y_\lambda\to X_\lambda)_{\lambda\in \Lambda}$ as a functor $\varphi: \Lambda^{\oppo} \to \fal$ from the opposite category of $\Lambda$ to the category of morphisms of $\bb{U}$-small coherent schemes. Consider the fibred category $\fal_{\varphi}^\et\to \Lambda^{\oppo}$ defined by $\varphi$ whose fibre category over $\lambda$ is $\fal_{Y_\lambda\to X_\lambda}^\et$ and whose inverse image functor $\varphi_{\lambda'\lambda}^+:\fal_{Y_{\lambda}\to X_{\lambda}}^\et\to \fal_{Y_\lambda'\to X_\lambda'}^\et$ associated to a morphism $\lambda'\to\lambda$ in $\Lambda^{\oppo}$ is given by the base change by the transition morphism $(Y_{\lambda'}\to X_{\lambda'})\to (Y_\lambda\to X_\lambda)$ (cf. \cite[\Luoma{6}.11.2]{abbes2016p}). Let $\varphi_\lambda^+:\fal_{Y_\lambda\to X_\lambda}^\et\to \fal_{Y \to X}^\et$ be the functor defined by the base change by the transition morphism $(Y\to X)\to (Y_\lambda\to X_\lambda)$. 
	
	Recall that the filtered colimit of categories $(\fal_{Y_\lambda\to X_\lambda}^\et)_{\lambda\in \Lambda}$ is representable by the category $\underrightarrow{\fal}_{\varphi}^\et$ whose objects are those of $\fal_{\varphi}^\et$ and whose morphisms are given by (\cite[\Luoma{6} 6.3, 6.5]{sga4-2})
	\begin{align}
		\ho_{\underrightarrow{\fal}_{\varphi}^\et}((V\to U),(V'\to U'))=\colim_{\stackrel{(V''\to U'')\to (V\to U)}{\trm{Cartesian}}} \ho_{\fal_{\varphi}^\et}((V''\to U''),(V'\to U')),
	\end{align}
	where the colimit is taken over the opposite category of the cofiltered category of Cartesian morphisms with target $V\to U$ of the fibred category $\fal_{\varphi}^\et$ over $\Lambda^{\oppo}$ (distinguish with the Cartesian morphisms defined in \ref{para:E}).
	We see that the functors $\varphi_\lambda^+$ induces an equivalence of categories by \cite[8.8.2, 8.10.5]{ega4-3} and \cite[17.7.8]{ega4-4}
	\begin{align}\label{eq:7.9.2}
		\underrightarrow{\fal}_{\varphi}^\et \iso \fal_{Y \to X}^\et.
	\end{align}

	Recall that the cofiltered limit of sites $(\fal_{Y_\lambda\to X_\lambda}^\et)_{\lambda\in \Lambda}$ is representable by $\underrightarrow{\fal}_{\varphi}^\et$ endowed with the coarsest topology such that the natural functors $\fal_{Y_\lambda\to X_\lambda}^\et\to \underrightarrow{\fal}_{\varphi}^\et$ are continuous (\cite[\Luoma{6}.8.2.3]{sga4-2}).
\end{mypara}

\begin{mylem}\label{lem:descendcov0}
	With the notation in {\rm\ref{para:limit-falet}}, for any covering family $\ak{U}=\{f_k:(V_k \to U_k) \to (V \to U)\}_{k \in K}$ in $\fal_{Y\to X}^\et$ with $K$ finite, there exists an index $\lambda_0\in\Lambda$ and a covering family $\ak{U}_{\lambda_0}=\{f_{k\lambda_0}:(V_{k\lambda_0} \to U_{k\lambda_0}) \to (V_{\lambda_0} \to U_{\lambda_0})\}_{k \in K}$ in $\fal_{Y_{\lambda_0}\to X_{\lambda_0}}^\et$ such that $f_k$ is the base change of $f_{k\lambda_0}$ by the transition morphism $(Y\to X)\to (Y_{\lambda_0}\to X_{\lambda_0})$.
\end{mylem}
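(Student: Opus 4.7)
The plan is to reduce to descending a \emph{standard} covanishing covering refining $\ak{U}$ using \ref{prop:covcov}, and then to invoke the equivalence \eqref{eq:7.9.2} together with the standard limit results from \cite[8.8.2, 8.10.5, 11.2.6]{ega4-3} and \cite[17.7.8]{ega4-4}.

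First, by the equivalence \eqref{eq:7.9.2}, combined with \cite[8.8.2, 8.10.5]{ega4-3} and \cite[17.7.8]{ega4-4}, there is some index $\lambda_1 \in \Lambda$ such that $V \to U$ comes by base change from an object $V_{\lambda_1} \to U_{\lambda_1}$ of $\fal_{Y_{\lambda_1} \to X_{\lambda_1}}^\et$, each $V_k \to U_k$ comes from some $V_{k\lambda_1} \to U_{k\lambda_1}$ in $\fal_{Y_{\lambda_1}\to X_{\lambda_1}}^\et$, and each $f_k$ comes from a morphism $f_{k\lambda_1}$; after enlarging $\lambda_1$, we can further arrange that each $U_{k\lambda_1} \to U_{\lambda_1}$ is \'etale and each $V_{k\lambda_1} \to V_{\lambda_1}\times_{U_{\lambda_1}} U_{k\lambda_1}$ is finite \'etale, so that each $f_{k\lambda_1}$ is an \'etale morphism of $\fal_{Y_{\lambda_1} \to X_{\lambda_1}}^\et$. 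For any $\lambda \geq \lambda_1$, set $\ak{U}_\lambda = \{f_{k\lambda}\}_{k\in K}$ to be the base change of $\ak{U}_{\lambda_1}$ along the transition morphism. It remains to show that for some $\lambda_0 \geq \lambda_1$, the family $\ak{U}_{\lambda_0}$ is a covering family in $\fal_{Y_{\lambda_0} \to X_{\lambda_0}}^\et$.

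Since $\ak{U}$ is a covanishing covering and each object of $\fal_{Y\to X}^\et$ is quasi-compact, by \ref{prop:covcov} it is refined by a standard covanishing covering of the form $\ak{V} = \{(W_{nm} \to T_n) \to (V \to U)\}_{n\in N,\, m \in M_n}$ with $N$ and each $M_n$ finite, where $\{T_n \to U\}_{n \in N}$ is an \'etale covering, each $W_{nm}$ is finite \'etale over $V \times_U T_n$, and $\coprod_{m \in M_n} W_{nm} \to V \times_U T_n$ is surjective. The refinement is given by a map $k \colon \bigsqcup_n (\{n\}\times M_n) \to K$ together with morphisms $(W_{nm} \to T_n) \to (V_{k(n,m)} \to U_{k(n,m)})$ in $\fal_{Y\to X}^\et$ over $(V \to U)$. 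Applying once more \eqref{eq:7.9.2}, \cite[8.8.2, 8.10.5]{ega4-3} and \cite[17.7.8]{ega4-4}, after replacing $\lambda_1$ by a larger index $\lambda_2$ we may descend the whole data $\ak{V}$ together with the refinement morphisms into $\fal_{Y_{\lambda_2} \to X_{\lambda_2}}^\et$ while preserving the \'etaleness of $T_{n\lambda_2} \to U_{\lambda_2}$ and the finite \'etaleness of $W_{nm\lambda_2} \to V_{\lambda_2}\times_{U_{\lambda_2}} T_{n\lambda_2}$.

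Finally, the two surjectivity conditions $\coprod_n T_n \to U$ and $\coprod_m W_{nm} \to V \times_U T_n$ descend: by \cite[8.10.5]{ega4-3}, the image of a finitely presented morphism between coherent schemes in a directed affine cofiltered limit is controlled at some finite level, so there exists $\lambda_0 \geq \lambda_2$ such that both $\coprod_n T_{n\lambda_0} \to U_{\lambda_0}$ and $\coprod_m W_{nm\lambda_0} \to V_{\lambda_0}\times_{U_{\lambda_0}} T_{n\lambda_0}$ are surjective. Then $\ak{V}_{\lambda_0}$ is a standard covanishing covering of $(V_{\lambda_0} \to U_{\lambda_0})$ in $\fal_{Y_{\lambda_0} \to X_{\lambda_0}}^\et$, and by construction it factors through $\ak{U}_{\lambda_0}$; thus $\ak{U}_{\lambda_0}$ is itself a covering family. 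The main (but very mild) obstacle is just the bookkeeping: there are several pieces of data (objects, morphisms, the \'etaleness/finite-\'etaleness properties, and the two surjectivities), and we need to keep enlarging $\lambda$ finitely often until all descend simultaneously; each individual step is a direct application of standard limit theorems.
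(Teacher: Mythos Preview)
Your proof is correct and follows essentially the same strategy as the paper: refine $\ak{U}$ by a standard covanishing covering via \ref{prop:covcov}, descend all the data (objects, morphisms, the refinement, and the \'etale/finite-\'etale conditions) to some finite level using the equivalence \eqref{eq:7.9.2} and the standard limit results, and finally descend the two surjectivity conditions by \cite[8.10.5]{ega4-3}. The only cosmetic difference is ordering: the paper first takes the standard refinement and then descends everything at once, whereas you first descend $\ak{U}$, then the refinement $\ak{V}$, then the surjectivities; the additional citation \cite[11.2.6]{ega4-3} is unnecessary.
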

\begin{proof}
	There is a standard covanishing covering $\ak{U}'=\{g_{nm}:(V'_{nm} \to U'_n) \to (V \to U) \}_{n \in N, m \in M_n}$ in $\fal_{Y\to X}^\et$ with $N$, $M_n$ finite, which refines $\ak{U}$ by \ref{prop:covcov}. The equivalence \eqref{eq:7.9.2} implies that there exists an index $\lambda_1\in\Lambda$ and families of morphisms $\ak{U}'_{\lambda_1}=\{g_{nm\lambda_1}:(V'_{nm\lambda_1} \to U'_{n\lambda_1}) \to (V_{\lambda_1} \to U_{\lambda_1}) \}_{n \in N, m \in M_n}$ (resp. $\ak{U}_{\lambda_1}=\{f_{k\lambda_1}:(V_{k\lambda_1} \to U_{k\lambda_1}) \to (V_{\lambda_1} \to U_{\lambda_1})\}_{k \in K}$) in $\fal_{Y_{\lambda_1}\to X_{\lambda_1}}^\et$ such that $g_{nm}$ (resp. $f_k$) is the base change of $g_{nm\lambda_1}$ (resp. $f_{k\lambda_1}$) by the transition morphism $(Y\to X)\to (Y_{\lambda_1}\to X_{\lambda_1})$ and that $\ak{U}'_{\lambda_1}$ refines $\ak{U}_{\lambda_1}$. For each $\lambda\geq\lambda_1$, let $g_{nm\lambda}:(V'_{nm\lambda} \to U'_{n\lambda}) \to (V_{\lambda} \to U_{\lambda})$ (resp. $f_{k\lambda}:(V_{k\lambda} \to U_{k\lambda}) \to (V_{\lambda} \to U_{\lambda})$) be the base change of $g_{nm\lambda_1}$ (resp. $f_{k\lambda_1}$) by the transition morphism $(Y_\lambda\to X_\lambda)\to (Y_{\lambda_1}\to X_{\lambda_1})$. Since the morphisms $\coprod_{n\in N} U'_{n}\to U$ and $\coprod_{m\in M_n} V'_{nm}\to V\times_U U'_{n}$ are surjective, there exists an index $\lambda_0\geq \lambda_1$ such that the morphisms $\coprod_{n\in N} U'_{n\lambda_0}\to U_{\lambda_0}$ and $\coprod_{m\in M_n} V'_{nm\lambda_0}\to V_{\lambda_0}\times_{U_{\lambda_0}} U'_{n\lambda_0}$ are also surjective by \cite[8.10.5]{ega4-3}, i.e. $\ak{U}'_{\lambda_0}=\{g_{nm\lambda_0}\}_{n \in N, m \in M_n}$ is a standard covanishing covering in $\fal_{Y_{\lambda_0}\to X_{\lambda_0}}^\et$. Thus, $\ak{U}_{\lambda_0}=\{f_{k\lambda_0}\}_{k \in K}$ is a covering family in $\fal_{Y_{\lambda_0}\to X_{\lambda_0}}^\et$.
\end{proof}

\begin{myprop}[{\cite[\Luoma{6}.11]{abbes2016p}}]\label{prop:limit-fal-sites}
	With the notation in {\rm\ref{para:limit-falet}}, $\fal_{Y \to X}^\et$ represents the limit of sites $(\fal_{Y_\lambda\to X_\lambda}^\et)_{\lambda\in \Lambda}$, and $\falb=\colim_{\lambda\in \Lambda} \varphi_\lambda^{-1} \falb$.
\end{myprop}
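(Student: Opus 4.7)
The proof reduces to two independent verifications: (a) the topology on $\fal_{Y\to X}^\et$ agrees with the limit topology on the underlying colimit category $\underrightarrow{\fal}_\varphi^\et$, and (b) the structural sheaf coincides with the filtered colimit of pulled-back structural sheaves. The equivalence of underlying categories is already provided by \eqref{eq:7.9.2}.

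For (a), by \cite[\Luoma{6}.8.2.3]{sga4-2}, the limit topology is the coarsest topology on $\underrightarrow{\fal}_\varphi^\et$ making each functor $\varphi_\lambda^+$ continuous. On one hand, $\varphi_\lambda^+$ preserves coverings of types (v) and (c) by stability of \'etaleness, finite \'etaleness and surjectivity under base change, so the topology on $\fal_{Y\to X}^\et$ is finer than the limit topology. On the other hand, \ref{lem:descendcov0} says precisely that any finite covering family on $\fal_{Y\to X}^\et$ is the image under some $\varphi_{\lambda_0}^+$ of a covering family of $\fal_{Y_{\lambda_0}\to X_{\lambda_0}}^\et$, so it already belongs to the limit topology. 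Since every object of $\fal_{Y\to X}^\et$ is quasi-compact, finite covering families generate the topology, and the two topologies agree.

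For (b), pick an object $(V\to U)$ of $\fal_{Y\to X}^\et$; by \eqref{eq:7.9.2} it is the pullback of some $(V_{\lambda_0}\to U_{\lambda_0})$ at a level $\lambda_0$, and writing $(V_\lambda\to U_\lambda)$ for its successive base changes we have $V=\lim_{\lambda\geq\lambda_0} V_\lambda$ and $U=\lim_{\lambda\geq\lambda_0} U_\lambda$ with affine transition morphisms. By \ref{lem:rel-norm-fil-limit}, the integral closures assemble into a cofiltered limit $U^V=\lim_\lambda U_\lambda^{V_\lambda}$, again with affine transition morphisms, so that
\begin{align*}
\falb(V\to U)=\Gamma(U^V,\ca{O}_{U^V})=\colim_\lambda \Gamma(U_\lambda^{V_\lambda},\ca{O}_{U_\lambda^{V_\lambda}})=\colim_\lambda \falb(V_\lambda\to U_\lambda),
\end{align*}
by \cite[\href{https://stacks.math.columbia.edu/tag/009F}{009F}]{stacks-project}. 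These pointwise isomorphisms are compatible with the transition maps and assemble into the desired identification $\falb=\colim_\lambda \varphi_\lambda^{-1}\falb$.

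The main subtlety lies in turning the pointwise identification above into the claimed sheaf-level identity, because $\varphi_\lambda^{-1}$ is \emph{a priori} the sheafification of a left-adjoint presheaf functor defined by a colimit over a comma category. However, in view of the colimit-of-categories formula underlying \eqref{eq:7.9.2}, the comma category $(V\to U)\downarrow \varphi_\lambda^+$ admits a cofinal system given by the canonical arrows to the base changes $\varphi_\lambda^+(V_\mu\to U_\mu)$ for $\mu\geq \lambda$, which reduces the presheaf underlying $\varphi_\lambda^{-1}\falb$ to the naive formula $(V\to U)\mapsto \falb(V_\lambda\to U_\lambda)$ at objects descending to some level. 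Since filtered colimits of sheaves commute with finite limits and every object of $\fal_{Y\to X}^\et$ is quasi-compact, the presheaf colimit $\colim_\lambda \varphi_\lambda^{-1}\falb$ is already a sheaf on the generating family, so no further sheafification is needed and the proposition follows.
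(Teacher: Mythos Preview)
Your proof is correct and follows the same route as the paper: part (a) via the equivalence \eqref{eq:7.9.2} together with \ref{lem:descendcov0}, and part (b) via \ref{lem:rel-norm-fil-limit}. The paper handles your last paragraph more directly by writing $\colim_\lambda \varphi_\lambda^{-1}\falb=(\colim_\lambda \varphi_{\lambda,\psh}\falb)^\ash$ and citing \cite[\Luoma{6}~8.5.2,~8.5.7]{sga4-2} to get $\colim_\lambda(\varphi_{\lambda,\psh}\falb)(V\to U)=\colim_\lambda \falb(V_\lambda\to U_\lambda)$, so the presheaf colimit already equals the sheaf $\falb$ and no separate ``no sheafification needed'' argument is required; note also that your expression $\varphi_\lambda^+(V_\mu\to U_\mu)$ does not typecheck (the object $(V_\mu\to U_\mu)$ lives at level $\mu$, not $\lambda$), though the intended cofinality in the total system is clear.
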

\begin{proof}
	The first statement is proved in \cite[\Luoma{6}.11.3]{abbes2016p}. It also follows directly from the discussion in \ref{para:limit-falet} and \ref{lem:descendcov0}. For the second statement, notice that $\colim_{\lambda\in \Lambda} \varphi_\lambda^{-1} \falb=(\colim_{\lambda\in \Lambda} \varphi_{\lambda,\psh} \falb)^\ash$ (\cite[\href{https://stacks.math.columbia.edu/tag/00WI}{00WI}]{stacks-project}). It suffices to show that $\falb(V \to U)=\colim_{\lambda\in \Lambda} (\varphi_{\lambda,\psh} \falb)(V \to U)$ for each object $V\to U$ of $\fal_{Y \to X}^\et$. It follows from the equivalence \eqref{eq:7.9.2} that there exists an index $\lambda_0\in \Lambda$ and an object $V_{\lambda_0} \to U_{\lambda_0}$ of $\fal_{Y_{\lambda_0} \to X_{\lambda_0}}^\et$ such that $V \to U$ is the base change of $V_{\lambda_0} \to U_{\lambda_0}$ by the transition morphism. For each $\lambda\geq \lambda_0$, let $V_\lambda \to U_\lambda$ be the base change of $V_{\lambda_0} \to U_{\lambda_0}$ by the transition morphism $(Y_\lambda\to X_\lambda)\to (Y_{\lambda_0}\to X_{\lambda_0})$. Then, we have $\colim_{\lambda\in \Lambda}(\varphi_{\lambda,\psh} \falb)(V \to U)=\colim_{\lambda\in \Lambda}\falb(V_\lambda \to U_\lambda)$ by \cite[\Luoma{6} 8.5.2, 8.5.7]{sga4-2}. The conclusion follows from $\falb(V \to U)=\colim_{\lambda\in \Lambda} \falb(V_\lambda \to U_\lambda)$ by \ref{lem:rel-norm-fil-limit}.
\end{proof}

\begin{mydefn}\label{defn:profet-mor}
	A morphism $X \to S$ of coherent schemes is called \emph{pro-\'etale} (resp. \emph{pro-finite \'etale}), if there is a directed inverse system of \'etale (resp. finite \'etale) $S$-schemes $(X_\lambda)_{\lambda\in \Lambda}$ with affine transition morphisms such that there is an isomorphism of $S$-schemes $X \cong \lim_{\lambda\in \Lambda} X_\lambda$. We call such an inverse system $(X_\lambda)_{\lambda\in \Lambda}$ a \emph{pro-\'etale presentation} (resp. \emph{pro-finite \'etale presentation}) of $X$ over $S$.
\end{mydefn}

\begin{mylem}\label{lem:proet-profet}
	Let $X \stackrel{g}{\longrightarrow} Y \stackrel{f}{\longrightarrow} S$ be morphisms of coherent schemes.
	\begin{enumerate}
		\setcounter{enumi}{0}
		\renewcommand{\labelenumi}{{\rm(\theenumi)}}
		\item If $f$ is pro-\'etale (resp. pro-finite \'etale), then $f$ is flat (resp. flat and integral).\label{lem:proet-profet-flat}
		\item Any base change of a pro-\'etale (resp. pro-finite \'etale) morphism is pro-\'etale (resp. pro-finite \'etale).\label{lem:proet-profet-bc}
		\item If $f$ and $g$ are pro-\'etale (resp. pro-finite \'etale), then so is $f \circ g$.\label{lem:proet-profet-compsition}
		\item If $f$ and $f \circ g$ are pro-\'etale (resp. pro-finite \'etale), then so is $g$.\label{lem:proet-profet-cancel}
		\item If $f$ is pro-\'etale with a pro-\'etale presentation $Y=\lim Y_\beta$, and if $g$ is \'etale (resp. finite \'etale), then there is an index $\beta_0$ and an \'etale (resp. finite \'etale) $S$-morphism $g_{\beta_0}: X_{\beta_0} \to Y_{\beta_0}$ such that $g$ is the base change of $g_{\beta_0}$ by $Y \to Y_{\beta_0}$.\label{lem:proet-profet-limit}
		\item Let $Z$ and $Z'$ be coherent schemes pro-\'etale over $S$ with pro-\'etale presentations $Z=\lim Z_\alpha$, $Z'=\lim Z'_\beta$, then 
		\begin{align}\label{eq:hom-profet}
			\ho_S(Z, Z') = \lim_\beta \colim_\alpha \ho_S(Z_\alpha, Z'_\beta).
		\end{align}\label{lem:proet-profet-mor}
	\end{enumerate}
\end{mylem}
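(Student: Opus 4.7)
The plan is to establish the six assertions in the order $(1), (2), (5), (3), (4), (6)$, using each earlier part to ground the next.

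Parts $(1)$ and $(2)$ are essentially formal. For $(1)$, a filtered colimit of flat ring maps is flat; for the pro-finite étale case, a cofiltered limit of affine integral morphisms along affine transition maps is again affine and integral, as limits can be computed ring-theoretically. For $(2)$, base change commutes with cofiltered limits of schemes with affine transition morphisms (\cite[8.2]{ega4-3}), and étale (resp. finite étale) morphisms are stable under base change. Part $(5)$ is standard limit descent: since $g$ is of finite presentation, it descends to a finitely presented morphism $g_{\beta_0}: X_{\beta_0} \to Y_{\beta_0}$ by \cite[8.8.2]{ega4-3}, and after enlarging $\beta_0$ one arranges $g_{\beta_0}$ to be étale by \cite[17.7.8]{ega4-4} (and finite by \cite[8.10.5]{ega4-3} in the pro-finite étale case).

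For $(3)$, write $X = \lim_\alpha X_\alpha$ étale (resp. finite étale) over $Y$ and $Y = \lim_\beta Y_\beta$ étale (resp. finite étale) over $S$. Apply $(5)$ to each $X_\alpha \to Y$ to descend it to an étale (resp. finite étale) morphism $X_{\alpha,\beta(\alpha)} \to Y_{\beta(\alpha)}$. Setting $X_{\alpha,\beta} = X_{\alpha,\beta(\alpha)} \times_{Y_{\beta(\alpha)}} Y_\beta$ for $\beta \geq \beta(\alpha)$, and organizing the pairs $(\alpha,\beta)$ into a cofiltered indexing category as in the proof of \ref{lem:descendv-cov}, we obtain a directed inverse system of schemes étale (resp. finite étale) over $Y_\beta$ and hence over $S$, with affine transition morphisms and limit $X$. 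This furnishes the required pro-étale (resp. pro-finite étale) presentation of $X$ over $S$.

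Part $(4)$ proceeds via the graph trick: factor $g$ as $X \stackrel{\Gamma_g}{\longrightarrow} X \times_S Y \stackrel{\pi_2}{\longrightarrow} Y$. The projection $\pi_2$ is pro-étale (resp. pro-finite étale) by $(2)$ applied to $f \circ g: X \to S$, and $\pi_1: X \times_S Y \to X$ is pro-étale (resp. pro-finite étale) by $(2)$ applied to $f: Y \to S$. Choose a presentation $X \times_S Y = \lim_\gamma Z_\gamma$ with $Z_\gamma$ étale (resp. finite étale) over $X$. The section $\Gamma_g$ of $\pi_1$ induces compatible sections $s_\gamma: X \to Z_\gamma$, each of which is an open immersion (a section of an étale morphism), and moreover a closed immersion in the pro-finite étale case (a section of a separated morphism), hence identifies $X$ with a clopen subscheme $U_\gamma \subseteq Z_\gamma$. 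Setting $V_\gamma := U_\gamma \times_{Z_\gamma}(X \times_S Y)$, one has $V_\gamma = X \times_{Z_\gamma}(X \times_S Y)$ étale (resp. finite étale) over $X \times_S Y$, and
\begin{align}
\lim_\gamma V_\gamma \;=\; X \times_{\lim_\gamma Z_\gamma}(X \times_S Y) \;=\; X \times_{X \times_S Y}(X \times_S Y) \;=\; X,
\end{align}
so $\Gamma_g$ is pro-étale (resp. pro-finite étale), with transition morphisms between the $V_\gamma$ affine as open (resp. clopen) immersions between coherent schemes. Composing with $\pi_2$ and invoking $(3)$ concludes. Finally, $(6)$ is formal: $\ho_S(Z,Z') = \lim_\beta \ho_S(Z,Z'_\beta)$ by the universal property of the limit $Z' = \lim Z'_\beta$, while $\ho_S(Z,Z'_\beta) = \colim_\alpha \ho_S(Z_\alpha, Z'_\beta)$ because each $Z'_\beta$ is locally of finite presentation over $S$ and the transition morphisms in $Z = \lim Z_\alpha$ are affine (\cite[8.14.2]{ega4-3}).

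The main obstacle will be the combinatorial bookkeeping of the double index $(\alpha,\beta)$ in $(3)$, and parallel verification in $(4)$ that the $V_\gamma$ genuinely form a directed inverse system with affine transition morphisms realizing $\Gamma_g$ as a pro-étale (resp. pro-finite étale) presentation; once these points are settled, all the other parts are either formal manipulations of limits or direct applications of standard limit descent from EGA IV.
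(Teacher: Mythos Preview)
Your arguments for parts (1), (2), (3), (5), and (6) are correct and essentially match the paper's. The difference lies in part (4), where the paper takes a different route and your graph-trick argument has a genuine gap in the pro-\'etale case.

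The problem is your claim that the transition morphisms $V_{\gamma'} \to V_\gamma$ are affine ``as open immersions between coherent schemes.'' This is false: an open immersion of coherent schemes need not be affine, e.g.\ $\bb{A}^2 \setminus \{0\} \hookrightarrow \bb{A}^2$. In the pro-finite \'etale case your argument does go through, since the sections $s_\gamma$ are open \emph{and closed} immersions and hence the $V_\gamma$ are clopen in $X \times_S Y$, making the transition maps clopen immersions (which are affine). But in the pro-\'etale case the $V_\gamma$ are merely open, and the inclusions $V_{\gamma'} \subseteq V_\gamma$ have no reason to be affine; thus your inverse system $(V_\gamma)$ does not furnish a pro-\'etale presentation of $\Gamma_g$ in the sense of the definition.

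The paper avoids this by proving (6) \emph{before} (4) and then using (6) to deduce (4) directly: taking pro-\'etale presentations $X = \lim_\alpha X_\alpha$ and $Y = \lim_\beta Y_\beta$ over $S$, formula \eqref{eq:hom-profet} shows that for each $\beta$ the composite $X \to Y \to Y_\beta$ factors through some $X_\alpha \to Y_\beta$. This map is automatically \'etale (resp.\ finite \'etale) since both source and target are \'etale (resp.\ finite \'etale) over $S$. Then $X = \lim_{\alpha,\beta} X_\alpha \times_{Y_\beta} Y$ exhibits $X$ as pro-\'etale (resp.\ pro-finite \'etale) over $Y$, with affine transition morphisms inherited from those of the $X_\alpha$. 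Since your proof of (6) is self-contained, you can simply reorder to prove (6) before (4) and adopt this argument.
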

\begin{proof}	
	(\ref{lem:proet-profet-flat}) and (\ref{lem:proet-profet-bc}) follow directly from the definition. 
	
	(\ref{lem:proet-profet-compsition}) We follow closely the proof of \ref{lem:descendv-cov}. Let $X = \lim X_\alpha$ and $Y=\lim Y_\beta$ be pro-\'etale (resp. pro-finite \'etale) presentations over $Y$ and over $S$ respectively. As $Y_\beta$ are coherent, for each $\alpha$, there is an index $\beta_\alpha$ and an \'etale (resp. finite \'etale) $Y_{\beta_\alpha}$-scheme $X_{\alpha \beta_\alpha}$ such that $X_\alpha\to Y$ is the base change of $X_{\alpha \beta_\alpha}\to Y_{\beta_\alpha}$ (\cite[8.8.2, 8.10.5]{ega4-3}, \cite[17.7.8]{ega4-4}). For each $\beta\geq \beta_\alpha$, let $X_{\alpha \beta}\to Y_{\beta}$ be the base change of $X_{\alpha \beta_\alpha}\to Y_{\beta_\alpha}$ by $Y_\beta\to Y_{\beta_\alpha}$. Then, we have $X = \lim_{\alpha,\beta\geq \beta_\alpha} X_{\alpha \beta}$ by \cite[8.8.2]{ega4-3} (cf. \ref{lem:descendv-cov}), which is pro-finite \'etale over $S$. For (\ref{lem:proet-profet-limit}), one can take $X =X_\alpha$.
	
	(\ref{lem:proet-profet-mor}) We have
	\begin{align}
		\ho_S(Z, Z') = \lim_\beta \ho_S(Z,Z'_\beta) = \lim_\beta \colim_\alpha \ho_S(Z_\alpha, Z'_\beta)
	\end{align}
	where the first equality follows from the universal property of limits of schemes, and the second follows from the fact that $Z'_\beta\to S$ is locally of finite presentation (\cite[8.14.2]{ega4-3}). For (\ref{lem:proet-profet-cancel}), we take $Z=X$ and $Z'=Y$. Then, for each index $\beta$, we have an $S$-morphism $X_\alpha \to Y_\beta$ for $\alpha$ big enough, which is also \'etale (resp. finite \'etale). Then, $X = \lim_{\alpha} X_\alpha = \lim_{\alpha,\beta} X_\alpha \times_{Y_\beta} Y$ is pro-\'etale (resp. pro-finite \'etale) over $Y$.
\end{proof}

\begin{myrem}\label{rem:small-presentation}
	A pro-\'etale (resp. pro-finite \'etale) morphism of $\bb{U}$-small coherent schemes $X\to S$ admits a $\bb{U}$-small pro-\'etale (resp. pro-finite \'etale) presentation. Indeed, let $X=\lim_{\lambda\in\Lambda} X_\lambda$ be a presentation of $X\to S$. We may regard $\Lambda$ as a filtered category with an initial object $0$. Consider the category $\scr{C}={}_{X\backslash}X_{0,\et,\mrm{aff}}$ (resp. $\scr{C}={}_{X\backslash}X_{0,\fet}$) of affine (resp. finite) \'etale $X_0$-schemes which are under $X$. Notice that $\scr{C}$ is essentially $\bb{U}$-small and that the natual functor $\Lambda\to \scr{C}^{\oppo}$ is cofinal by \ref{lem:proet-profet}.(\ref{lem:proet-profet-mor}) (\cite[\Luoma{1}.8.1.3]{sga4-1}). Therefore, after replacing $\scr{C}^{\oppo}$ by a $\bb{U}$-small directed set $\Lambda'$, we obtain a $\bb{U}$-small presentation  $X=\lim_{X'\in \Lambda'} X'$ (\cite[\Luoma{1}.8.1.6]{sga4-1}).
\end{myrem}

\begin{mydefn}\label{defn:profet-site}
	For any $\bb{U}$-small coherent scheme $X$, we endow the category of $\bb{U}$-small coherent pro-\'etale (resp. pro-finite \'etale) $X$-schemes with the topology generated by the pretopology formed by families of morphisms
	\begin{align}
		\{f_i : U_i \to U\}_{i \in I}
	\end{align}
	such that $I$ is finite and that $U = \bigcup f_i(U_i)$. 
	This defines a site $X_\proet$ (resp. $X_\profet$), called the \emph{pro-\'etale site} (resp. \emph{pro-finite \'etale site}) of $X$.
\end{mydefn}
It is clear that the localization $X_{\proet /U}$ (resp. $X_{\profet /U}$) of $X_\proet$ (resp. $X_\profet$) at an object $U$ is canonically equivalent to the pro-\'etale (resp. pro-finite \'etale) site $U_\proet$ (resp. $U_\profet$) of $U$. By definition, any object in $X_\proet$ (resp. $X_\profet$) is quasi-compact.

\begin{mypara}\label{para:compare}
	We compare our definitions of pro-\'etale site and pro-finite \'etale site with some other definitions existing in the literature. But we don't use the comparison result in this paper.
	
	Let $X$ be a $\bb{U}$-small Noetherian scheme. Consider the category of pro-objects $\mrm{pro}\trm{-}X_\fet$ of $X_\fet$, i.e. the category whose objects are functors $F:\ca{A}\to X_\fet$ with $\ca{A}$ a $\bb{U}$-small cofiltered category and whose morphisms are given by $\ho(F,G)=\lim_{\beta\in \ca{B}}\colim_{\alpha\in \ca{A}} \ho(F(\alpha),G(\beta))$ for any $F:\ca{A}\to X_\fet$ and $G:\ca{B}\to X_\fet$ (\cite[3.2]{scholze2013hodge}). We may simply denote such a functor $F$ by $(X_\alpha)_{\alpha\in A}$. Remark that $\lim_{\alpha\in \ca{A}} X_\alpha$ exists which is pro-finite \'etale over $X$. Consider the functor
	\begin{align}
		\mrm{pro}\trm{-}X_\fet\longrightarrow X_\profet,\ (X_\alpha)_{\alpha\in A}\mapsto \lim_{\alpha\in \ca{A}} X_\alpha,
	\end{align}
	which is well-defined and fully faithful by \ref{lem:proet-profet}.(\ref{lem:proet-profet-mor}) and essentially surjective by \ref{rem:small-presentation}. Thus, according to \cite[3.3]{scholze2013hodge} and its corrigendum \cite{scholze2016erratum}, Scholze's pro-finite \'etale site $X_\profet^{\mrm{S}}$ has the underlying category $X_\profet$ and its topology is generated by the families of morphisms 
	\begin{align}
		\{U_i\stackrel{f_i}{\longrightarrow}U'\stackrel{f}{\longrightarrow} U\}_{i\in I}
	\end{align}
	where $I$ is finite and $\coprod_{i\in I} U_i \to U'$ is finite \'etale surjective, and there exists a well-ordered directed set $\Lambda$ with a least index $0$ and a pro-finite \'etale presentation $(U'_\lambda)_{\lambda\in \Lambda}$ of $f$ such that $U'_0=U$ and that for each $\lambda\in \Lambda$ the natural morphism $U'_\lambda\to \lim_{\mu<\lambda} U'_\mu$ is finite \'etale surjective (cf. \cite[5.5]{kerz2016transfinite}, \ref{lem:proet-profet} and \cite[8.10.5.(\luoma{6})]{ega4-3}). It is clear that the topology of our pro-finite \'etale site $X_\profet$ is finer than that of $X_\profet^{\mrm{S}}$. We remark that if $X$ is connected, then $X_\profet^{\mrm{S}}$ gives a site-theoretic interpretation of the continuous group cohomology of the fundamental group of $X$ (\cite[3.7]{scholze2013hodge}). For simplicity, we don't consider $X_\profet^{\mrm{S}}$ in the rest of the paper, but we can replace $X_\profet$ by $X_\profet^{\mrm{S}}$ for most of the statements in this paper (cf. \cite[6]{kerz2016transfinite}).
\end{mypara}

\begin{mypara}
	Let $X$ be a $\bb{U}$-small scheme. Bhatt-Scholze's pro-\'etale site $X_\proet^{\mrm{BS}}$ has the underlying category of $\bb{U}$-small weakly \'etale $X$-schemes and a family of morphisms $\{f_i:Y_i \to Y\}_{i \in I}$ in $X_\proet^{\mrm{BS}}$ is a covering if and only if for any affine open subscheme $U$ of $Y$, there exists a map $a:\{1,\dots,n\}\to I$ and affine open subschemes $U_j$ of $Y_{a(j)}$ ($j=1,\dots,n$) such that $U=\bigcup_{j=1}^n f_{a(j)}(U_j)$ (\cite[4.1.1]{bhattscholze2015proet}, cf. \cite[\href{https://stacks.math.columbia.edu/tag/0989}{0989}]{stacks-project}). Remark that a pro-\'etale morphism of coherent schemes is weakly \'etale by \cite[2.3.3.1]{bhattscholze2015proet}. Thus, for a coherent scheme $X$, $X_\proet$ is a full subcategory of $X_\proet^{\mrm{BS}}$.
\end{mypara}
\begin{mylem}
	Let $X$ be a coherent scheme. The full subcategory $X_\proet$ of $X_\proet^{\mrm{BS}}$ is a topologically generating family, and the induced topology on $X_\proet$ coincides the topology defined in {\rm\ref{defn:profet-site}}. In particular, the topoi of sheaves of $\bb{V}$-small sets associated to the two sites are naturally equivalent.
\end{mylem}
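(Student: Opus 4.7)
The plan is to apply the Comparison Lemma for sites (\cite[\Luoma{3}.4.1]{sga4-1}) to the inclusion functor $X_\proet \hookrightarrow X_\proet^{\mrm{BS}}$. This reduces the statement to two verifications: (a) $X_\proet$ is a topologically generating family inside $X_\proet^{\mrm{BS}}$, and (b) the topology induced on $X_\proet$ from $X_\proet^{\mrm{BS}}$ coincides with the one of Definition~\ref{defn:profet-site}. Given (a) and (b), the Comparison Lemma immediately produces the equivalence of topoi of sheaves of $\bb{V}$-small sets.

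For (a), I would argue that every weakly \'etale coherent $X$-scheme $Y$ admits a covering in $X_\proet^{\mrm{BS}}$ whose members lie in $X_\proet$. Covering $X$ and $Y$ by finitely many affine opens reduces matters to the affine case $X = \spec(A)$, $Y = \spec(B)$ with $A \to B$ weakly \'etale. I would then invoke Bhatt--Scholze's theorem (\cite[2.3.4]{bhattscholze2015proet}), which produces a faithfully flat ind-\'etale $A$-algebra $C$ together with a factorization $A \to B \to C$. Then $\spec(C) \to Y$ is faithfully flat, hence a covering in $X_\proet^{\mrm{BS}}$, while $\spec(C) \to X$ is pro-\'etale in the sense of Definition~\ref{defn:profet-mor}, so $\spec(C)$ lies in $X_\proet$. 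Gluing these affine-local covers over the affine opens of $Y$ completes the verification of (a).

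For (b), I would compare sieves on a coherent object $U$. In one direction, any finite family $\{f_i : U_i \to U\}_{i \in I}$ in $X_\proet$ with $\coprod_i U_i \to U$ surjective is a BS-covering: for any affine open $V \subseteq U$, each coherent $U_i$ admits a cofiltered presentation by schemes \'etale over $U$ with affine transition maps, so using that \'etale maps are open and $V$ is quasi-compact one can extract finitely many affine opens in the $U_i$ whose images cover $V$. In the converse direction, a BS-covering $\{g_j : V_j \to U\}_{j \in J}$ of a coherent $U$ can always be refined to a covering in the sense of Definition~\ref{defn:profet-site}: cover $U$ by finitely many affine opens $W_1,\dots,W_n$, apply the BS-condition to each $W_k$ to extract finitely many affine opens in the $V_j$'s covering it, and collect the resulting finite family whose image surjects onto $U$.

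The main obstacle will be step (b), specifically the control over images of pro-\'etale morphisms: one must check that for $f_i : U_i \to U$ pro-\'etale with $U_i$ coherent, an affine open $V \subseteq U$ meeting $f_i(U_i)$ admits a finite cover by images of affine opens of $U_i$. This rests on writing $U_i$ as a cofiltered limit of coherent $U$-schemes \'etale over intermediate \'etale covers of $U$, combined with the openness of \'etale maps and the quasi-compactness of $V$. Once (a) and (b) are established, the equivalence of topoi follows formally.
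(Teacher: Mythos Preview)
Your approach is essentially the same as the paper's, with two minor points worth noting.

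In part (a), your reduction covers $Y$ by \emph{finitely many} affine opens, but objects of $X_\proet^{\mrm{BS}}$ are arbitrary weakly \'etale $X$-schemes and need not be quasi-compact. Simply drop ``finitely many'' for $Y$; the resulting (possibly infinite) affine open cover of $Y$ is still a covering in $X_\proet^{\mrm{BS}}$, and from there the affine case and \cite[2.3.4]{bhattscholze2015proet} proceed exactly as you and the paper describe.

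In part (b), your detour through pro-\'etale presentations and openness of \'etale maps is unnecessary, and your formulation of the ``main obstacle'' is slightly garbled (an affine open $V$ merely meeting $f_i(U_i)$ certainly need not be covered by images of affine opens of that single $U_i$). The paper's argument is shorter: first, by \cite[\Luoma{3}.3.3]{sga4-1} the induced topology on $X_\proet$ has exactly the BS-style covering families; then, since every object of $X_\proet$ is coherent, any morphism $f_i:Y_i\to Y$ in $X_\proet$ is automatically quasi-compact (as $Y_i$ is quasi-compact and $Y$ is quasi-separated). Hence for any affine open $V\subseteq Y$ the preimage $f_i^{-1}(V)$ is quasi-compact, so covered by finitely many affine opens; conversely, applying the BS-condition to a finite affine open cover of the coherent $Y$ extracts a finite $I_0\subseteq I$ with $Y=\bigcup_{i\in I_0} f_i(Y_i)$. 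This gives both directions at once without ever unpacking a pro-\'etale presentation.
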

\begin{proof}
	For a weakly \'etale $X$-scheme $Y$, we show that it can be covered by pro-\'etale $X$-schemes. After replacing $X$ by a finite affine open covering and replacing $Y$ by an affine open covering, we may assume that $X$ and $Y$ are affine. Then, the result follows from the fact that for any weakly \'etale morphism of rings $A\to B$ there exists a faithfully flat ind-\'etale morphism $B\to C$ such that $A\to C$ is ind-\'etale by \cite[2.3.4]{bhattscholze2015proet} (cf. \cite[4.1.3]{bhattscholze2015proet}). Thus, $X_\proet$ is a topologically generating family of $X_\proet^{\mrm{BS}}$. A family of morphisms $\{f_i:Y_i\to Y\}_{i\in I}$ in $X_\proet$ is a covering with respect to the induced topology if and only if for any affine open subscheme $U$ of $Y$, there exists a map $a:\{1,\dots,n\}\to I$ and affine open subschemes $U_j$ of $Y_{a(j)}$ ($j=1,\dots,n$) such that $U=\bigcup_{j=1}^n f_{a(j)}(U_j)$ (\cite[\Luoma{3}.3.3]{sga4-1}). Notice that $Y_i$ and $Y$ are coherent, thus $\{f_i\}_{i\in I}$ is a covering if and only if there exists a finite subset $I_0\subseteq I$ such that $Y=\bigcup_{i\in I_0} f_i(Y_i)$, which shows that the induced topology on $X_\proet$ coincides the topology defined in {\rm\ref{defn:profet-site}}. Finally, the ``in particular'' part follows from \cite[\Luoma{3}.4.1]{sga4-1}.
\end{proof}

\begin{mydefn}\label{defn:Eproet}
	Let $Y\to X$ be a morphism of coherent schemes. A morphism $(V' \to U') \to (V \to U)$ in $\fal_{Y\to X}$ is called \emph{pro-\'etale} if $U' \to U$ is pro-\'etale and $V' \to V \times_U U'$ is pro-finite \'etale. A \emph{pro-\'etale presentation} of such a morphism is a directed inverse system $(V_\lambda \to U_\lambda)_{\lambda\in \Lambda}$ \'etale over $V \to U$ with affine transition morphisms $U_{\lambda'}\to U_\lambda$ and $V_{\lambda'}\to V_\lambda$ ($\lambda'\geq \lambda$) such that $(V' \to U')=\lim_{\lambda\in \Lambda} (V_\lambda \to U_\lambda)$.
\end{mydefn}

\begin{mylem}\label{lem:proetale-basic}
	Let $Y\to X$ be a morphism of coherent schemes, $(V'' \to U'') \stackrel{g}{\longrightarrow} (V' \to U') \stackrel{f}{\longrightarrow} (V \to U)$ morphisms in $\fal_{Y \to X}$.
	\begin{enumerate}
		\renewcommand{\labelenumi}{{\rm(\theenumi)}}
		\item If $f$ is pro-\'etale, then it admits a pro-\'etale presentation.
		\item If $f$ is pro-\'etale, then any base change of $f$ is also pro-\'etale.
		\item If $f$ and $g$ are pro-\'etale, then $f \circ g$ is also pro-\'etale.
		\item If $f$ and $f\circ g$ are pro-\'etale, then $g$ is also pro-\'etale.
	\end{enumerate}
\end{mylem}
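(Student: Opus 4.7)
The plan is to reduce each statement to the corresponding property of pro-\'etale or pro-finite \'etale morphisms of schemes established in \ref{lem:proet-profet}, after identifying the relevant fibre products.

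For (1), given $f:(V'\to U')\to (V\to U)$ pro-\'etale, I first take a pro-\'etale presentation $U'=\lim_{\alpha\in A} U'_\alpha$ over $U$ and a pro-finite \'etale presentation $V'=\lim_{\gamma\in \Gamma} V'_\gamma$ over $V\times_U U'$. Using \ref{lem:proet-profet}.(\ref{lem:proet-profet-limit}), each finite \'etale $V'_\gamma\to V\times_U U'$ descends to a finite \'etale morphism $V'_{\gamma\alpha_\gamma}\to V\times_U U'_{\alpha_\gamma}$ for some $\alpha_\gamma\in A$; for $\alpha\geq \alpha_\gamma$ set $V'_{\gamma\alpha}=V'_{\gamma\alpha_\gamma}\times_{U'_{\alpha_\gamma}}U'_\alpha$. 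Imitating the construction in the proof of \ref{lem:descendv-cov} or \ref{lem:proet-profet}.(\ref{lem:proet-profet-compsition}), one organises the pairs $(\gamma,\alpha)$ with $\alpha\geq \alpha_\gamma$ into a directed set $\Lambda$ with affine transition morphisms so that the system $(V'_{\gamma\alpha}\to U'_\alpha)_{(\gamma,\alpha)\in\Lambda}$ is an inverse system of \'etale objects over $(V\to U)$ in $\fal_{Y\to X}$. Taking the limit yields $(V'\to U')$ by \cite[8.8.2]{ega4-3}, providing the required pro-\'etale presentation.

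For (2), the base change of $f$ by $(V_0\to U_0)\to(V\to U)$ is $(V'\times_V V_0\to U'\times_U U_0)\to (V_0\to U_0)$. The morphism $U'\times_U U_0\to U_0$ is pro-\'etale by \ref{lem:proet-profet}.(\ref{lem:proet-profet-bc}), and the canonical identification
\begin{align}
V_0\times_{U_0}(U'\times_U U_0)\cong V_0\times_U U'\cong (V\times_U U')\times_V V_0
\end{align}
shows that $V'\times_V V_0\to V_0\times_{U_0}(U'\times_U U_0)$ is the base change of the pro-finite \'etale morphism $V'\to V\times_U U'$ along $V\times_U U'\times_V V_0\to V\times_U U'$, hence pro-finite \'etale. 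For (3), write $(f\circ g):(V''\to U'')\to (V\to U)$; the morphism $U''\to U$ is pro-\'etale by \ref{lem:proet-profet}.(\ref{lem:proet-profet-compsition}), and the factorisation
\begin{align}
V''\longrightarrow V'\times_{U'}U''\longrightarrow V\times_U U''
\end{align}
(the identification $V\times_U U''\cong(V\times_U U')\times_{U'} U''$ being immediate) expresses $V''\to V\times_U U''$ as the composition of a pro-finite \'etale morphism with the base change of the pro-finite \'etale morphism $V'\to V\times_U U'$ along $U''\to U'$, and is therefore pro-finite \'etale by \ref{lem:proet-profet}.(\ref{lem:proet-profet-compsition}). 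For (4), under the assumption that $f$ and $f\circ g$ are pro-\'etale, the morphism $U''\to U'$ is pro-\'etale by \ref{lem:proet-profet}.(\ref{lem:proet-profet-cancel}); the same factorisation as above combined with \ref{lem:proet-profet}.(\ref{lem:proet-profet-cancel}) then yields that $V''\to V'\times_{U'}U''$ is pro-finite \'etale.

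The only non-formal step is (1), and even there the main obstacle is merely bookkeeping: organising the two index sets into a single directed set indexing a compatible presentation in $\fal_{Y\to X}$. The rest is a straightforward translation between fibre products over $U$, over $U'$, and over $U''$, plus invocation of the corresponding properties from \ref{lem:proet-profet}.
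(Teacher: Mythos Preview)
Your proposal is correct and is precisely the approach the paper takes: its proof reads in full ``It follows directly from \ref{lem:proet-profet} and its arguments.'' You have simply unpacked that one line, reducing each item to the corresponding property of pro-\'etale and pro-finite \'etale morphisms of schemes, with (1) handled by the same index-set bookkeeping as in the proof of \ref{lem:proet-profet}.(\ref{lem:proet-profet-compsition}).
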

\begin{proof}
	It follows directly from \ref{lem:proet-profet} and its arguments.
\end{proof}

\begin{myrem}\label{rem:small-presentation-proet}
	Similar to \ref{rem:small-presentation}, a pro-\'etale morphism in $\fal_{Y\to X}$ admits a $\bb{U}$-small presentation.
\end{myrem}

\begin{mypara}\label{para:proet-falsite}
	Let $Y\to X$ be a morphism of coherent schemes, $\fal_{Y\to X}^\proet$ the full subcategory of $\fal_{Y\to X}$ formed by objects which are pro-\'etale over the final object $Y \to X$. It is clear that $\fal_{Y\to X}^\proet$ is stable under finite limits in $\fal_{Y\to X}$. Then, the functor \eqref{eq:sigma-fal} induces a functor
	\begin{align}
		\phi^+: \fal_{Y\to X}^\proet \longrightarrow X_\proet,\ (V \to U) \longmapsto U,
	\end{align}
	which endows $\fal_{Y\to X}^\proet / X_\proet$ with a structure of fibred sites, whose fibre over $U$ is the pro-finite \'etale site $U_{Y,\profet}$. We endow $\fal_{Y\to X}^\proet$ with the associated covanishing topology, that is, the topology generated by the following types of families of morphisms
	\begin{enumerate}
		\item[\rm{(v)}] $\{(V_m \to U) \to (V \to U)\}_{m \in M}$, where $M$ is a finite set and $\coprod_{m\in M} V_m\to V$ is surjective;
		\item[\rm{(c)}] $\{(V\times_U{U_n} \to U_n) \to (V \to U)\}_{n \in N}$, where $N$ is a finite set and $\coprod_{n\in N} U_n\to U$ is surjective.
	\end{enumerate}	 
	It is clear that any object in $\fal_{Y\to X}^\proet$ is quasi-compact by \ref{prop:covcov}. We still denote by $\falb$ the restriction of the presheaf $\falb$ on $\fal_{Y\to X}$ to $\fal_{Y\to X}^\proet$ if there is no ambiguity. We will show in \ref{cor:nu-falb} that $\falb$ is a sheaf on $\fal_{Y\to X}^\proet$. 
\end{mypara}

\begin{mydefn}\label{defn:profalsite}
We call $(\fal_{Y\to X}^\proet,\falb)$ the \emph{pro-\'etale Faltings ringed site} of the morphism of coherent schemes $Y \to X$.
\end{mydefn}
It is clear that the localization $(\fal_{Y \to X}^\proet)_{/ (V \to U)}$ of $\fal_{Y\to X}^\proet$ at an object $V \to U$ is canonically equivalent to the pro-\'etale Faltings ringed site $\fal_{V \to U}^\proet$ of the morphism $V \to U$ by \ref{prop:covcov}.

\begin{myrem}
	The categories $X_\proet$, $X_\profet$ and $\fal_{Y\to X}^\proet$ are essentially $\bb{V}$-small categories.
\end{myrem}

\begin{mypara}\label{para:notation-psi-beta-sigma-proet}
	Let $Y \to X$ be a morphism of coherent schemes. Consider the natural functors
	\begin{align}
		\psi^+ &: \fal_{Y\to X}^\proet \longrightarrow Y_{\proet},\ (V \to U) \longmapsto V,\\
		\beta^+ &: Y_{\profet} \longrightarrow \fal_{Y\to X}^\proet,\ V \longmapsto (V \to X),\\
		\sigma^+ &: X_\proet \longrightarrow \fal_{Y\to X}^\proet,\ U\longmapsto (Y\times_X U\to U) .
	\end{align}
	They are left exact and continuous (cf. \ref{para:notation-psi-beta-sigma}). Then, we obtain a commutative diagram of sites associated functorially to the morphism $Y\to X$ by \ref{para:topos},
	\begin{align}\label{diag:beta-psi-sigma-proet}
		\xymatrix{
			&Y_\proet\ar[dl]_-{\rho}\ar[dr]\ar[d]^-{\psi}&\\
			Y_\profet&\fal_{Y \to X}^\proet \ar[r]_-{\sigma}\ar[l]^-{\beta}& X_\proet
		}
	\end{align}
	where $\rho: Y_\proet \to Y_\profet$ is defined by the inclusion functor, and the unlabelled arrow $Y_\proet \to X_\proet$ is induced by the morphism $Y\to X$. Moreover, if $\ca{O}_{X_\proet}$ denotes the structural sheaf on $X_\proet$ sending $U$ to $\Gamma(U,\ca{O}_U)$, then $\sigma^+$ actually defines a morphism of ringed sites
	\begin{align}
		\sigma: (\fal_{Y \to X}^\proet,\falb) \longrightarrow (X_\proet,\ca{O}_{X_\proet}).
	\end{align}
\end{mypara}

\begin{mylem}\label{lem:morphism-nu}
	Let $Y \to X$ be a morphism of coherent schemes. Then, the inclusion functor
	\begin{align}\label{eq:3.6.1}
		\nu^+ : \fal_{Y\to X}^\et \longrightarrow \fal_{Y\to X}^\proet,\ (V\to U) \longmapsto (V \to U)
	\end{align}
	 defines a morphism of sites $\nu :\fal_{Y\to X}^\proet \to \fal_{Y\to X}^\et$.
\end{mylem}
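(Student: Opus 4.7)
The verification is essentially formal: by the convention of \ref{para:topos}, to produce a morphism of sites $\nu : \fal_{Y\to X}^\proet \to \fal_{Y\to X}^\et$ from the functor $\nu^+$ in the opposite direction, we must check that $\nu^+$ is well-defined, left exact, and continuous.

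The plan is as follows. First, $\nu^+$ is well-defined since an \'etale morphism in $\fal_{Y\to X}$ is a fortiori pro-\'etale (take the constant inverse system): an \'etale morphism of schemes is pro-\'etale and a finite \'etale morphism is pro-finite \'etale, so the definitions in \ref{defn:Eet} and \ref{defn:Eproet} are compatible. Next, for left exactness, I will use that both $\fal_{Y\to X}^\et$ and $\fal_{Y\to X}^\proet$ are full subcategories of $\fal_{Y\to X}$ which are stable under finite limits: this follows from \ref{lem:etale-basic} and \ref{lem:proetale-basic} together with the fact that the final object $Y \to X$ lies in each subcategory, so that finite limits in each can be computed in the ambient $\fal_{Y\to X}$. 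Consequently the inclusion $\nu^+$ preserves finite limits.

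Finally, for continuity, by \cite[\Luoma{2}.1.4]{sga4-1} it suffices to verify that $\nu^+$ sends a generating family of the pretopology on $\fal_{Y\to X}^\et$ to a covering of $\fal_{Y\to X}^\proet$. The topology on $\fal_{Y\to X}^\et$ (resp.\ on $\fal_{Y\to X}^\proet$) is generated by the covanishing coverings of types (v) and (c) described in \ref{para:notation-psi-beta-sigma} (resp.\ \ref{para:proet-falsite}), and the two descriptions agree except that ``\'etale'' is replaced by ``pro-\'etale''. Hence a type (v) (resp.\ type (c)) generating covering in $\fal_{Y\to X}^\et$ is automatically a type (v) (resp.\ type (c)) generating covering in $\fal_{Y\to X}^\proet$, since the underlying surjectivity conditions on $\coprod V_m \to V$ and $\coprod U_n \to U$ are identical and the ambient morphisms remain pro-\'etale. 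Thus $\nu^+$ preserves coverings, and no real obstacle arises; the statement is a routine but necessary set-up step for the comparison developed in the subsequent sections.
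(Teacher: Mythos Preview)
Your proof is correct and follows essentially the same approach as the paper's: verify that $\nu^+$ commutes with finite limits (using that both sites are full subcategories of $\fal_{Y\to X}$ stable under finite limits) and that it preserves the generating covering families. The only cosmetic difference is that the paper phrases continuity in terms of standard covanishing coverings and invokes \ref{prop:covcov}, whereas you check the generators of type (v) and (c) separately; also, your reference to \ref{para:notation-psi-beta-sigma} for the definition of the \'etale covanishing topology should point instead to the paragraph preceding \ref{lem:falb-sheaf}.
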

\begin{proof}
	It is clear that $\nu^+$ commutes with finite limits and sends a standard covanishing covering in $\fal_{Y\to X}^\et$ to a standard covanishing covering in $\fal_{Y\to X}^\proet$ (\ref{exmp:stdcovcov}). Therefore, $\nu^+$ is continuous by \ref{prop:covcov} and defines a morphism of sites.
\end{proof}

\begin{mylem}\label{lem:inducetop-proet-et}
	Let $Y \to X$ be a morphism of coherent schemes. Then, the topology on $\fal_{Y\to X}^\et$ is the topology induced from $\fal_{Y\to X}^\proet$.
\end{mylem}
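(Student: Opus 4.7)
The inclusion $\nu^+:\fal_{Y\to X}^\et \to \fal_{Y\to X}^\proet$ is continuous by \ref{lem:morphism-nu}, so the covanishing topology on $\fal_{Y\to X}^\et$ is at most as fine as the topology induced from $\fal_{Y\to X}^\proet$. I plan to prove the reverse inclusion by showing that $\nu^+$ is cocontinuous, i.e. that any sieve $\mathcal{R}$ on an object $(V\to U)\in \fal_{Y\to X}^\et$ whose image generates a covering sieve in $\fal_{Y\to X}^\proet$ is already a covering sieve in $\fal_{Y\to X}^\et$. Since all objects in both sites are quasi-compact, this will suffice.

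Given such an $\mathcal{R}$, by Proposition \ref{prop:covcov} the sieve it generates in $\fal_{Y\to X}^\proet$ contains a standard covanishing covering $\{(V'_{nm}\to U'_n)\to (V\to U)\}_{n\in N,\,m\in M_n}$ with $N$ and each $M_n$ finite, and for each $(n,m)$ there is a $\fal_{Y\to X}^\proet$-factorization $(V'_{nm}\to U'_n)\to (W_{nm}\to T_{nm})\to (V\to U)$ with $(W_{nm}\to T_{nm})\in\mathcal{R}\cap \fal_{Y\to X}^\et$. Fix pro-\'etale presentations $U'_n=\lim_{\lambda\in\Lambda_n}U'_{n,\lambda}$ and pro-finite \'etale presentations $V'_{nm}=\lim_{\mu}V'_{nm,\mu}$. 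The limit characterization of morphisms provided by \ref{lem:proet-profet}.(\ref{lem:proet-profet-limit}) and \ref{prop:limit-fal-sites} then lets me descend each factorization: since $T_{nm}$ is \'etale over $U$ (hence finitely presented) and $W_{nm}\times_{T_{nm}}U'_n$ is finite \'etale over $V\times_U U'_n$, I can choose an index $\lambda_n\in\Lambda_n$ (uniform in $m\in M_n$, by directedness of $\Lambda_n$ and finiteness of $M_n$) together with a finite \'etale descent $\widetilde V_{nm}\to V\times_U U'_{n,\lambda_n}$ of $V'_{nm}\to W_{nm}\times_{T_{nm}}U'_n$, fitting into a $\fal_{Y\to X}^\et$-morphism $(\widetilde V_{nm}\to U'_{n,\lambda_n})\to (W_{nm}\to T_{nm})$. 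By the sieve property of $\mathcal{R}$, each $(\widetilde V_{nm}\to U'_{n,\lambda_n})\to (V\to U)$ lies in $\mathcal{R}\cap\fal_{Y\to X}^\et$.

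To conclude, I would verify that the family $\{(\widetilde V_{nm}\to U'_{n,\lambda_n})\to (V\to U)\}_{n,m}$ is a standard covanishing covering in $\fal_{Y\to X}^\et$. The \'etale family $\{U'_{n,\lambda_n}\to U\}_n$ is jointly surjective because $\{U'_n\to U\}_n$ is and $U'_n\to U$ factors through $U'_{n,\lambda_n}$, so it is an \'etale covering of $U$. For each $n$, joint surjectivity of $\{\widetilde V_{nm}\to V\times_U U'_{n,\lambda_n}\}_{m\in M_n}$ follows from joint surjectivity of $\{V'_{nm}\to V\times_U U'_n\}_{m\in M_n}$ via the pro-finite \'etale factorizations through $\widetilde V_{nm}\times_{U'_{n,\lambda_n}}U'_n$, once one knows that $U'_n\to U'_{n,\lambda_n}$ is surjective. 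This surjectivity is the main technical point; I expect to handle it by refining the pro-\'etale presentation of $U'_n$ so that the transition maps $U'_{n,\lambda'}\to U'_{n,\lambda}$ are faithfully flat, which can be arranged by replacing each $U'_{n,\lambda}$ with a suitable quasi-compact open subscheme capturing the image of a sufficiently advanced stage of the limit (using quasi-compactness of the coherent schemes involved), so that the projection $U'_n\to U'_{n,\lambda_n}$ from the cofiltered limit of surjections of spectral spaces is itself surjective. Once this is in place, $\mathcal{R}$ contains a standard covanishing covering of $(V\to U)$ in $\fal_{Y\to X}^\et$, completing the proof.
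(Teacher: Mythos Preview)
Your overall strategy coincides with the paper's: refine the pro-\'etale covering by a standard covanishing one, descend it to a finite level of the pro-\'etale presentations, and check that the descended family is a standard covanishing covering in $\fal_{Y\to X}^\et$ factoring through the given \'etale family. The only substantive difference is how you handle the vertical surjectivity, and there your argument has a gap.

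You reduce the surjectivity of $\coprod_m \widetilde V_{nm}\to V\times_U U'_{n,\lambda_n}$ to the surjectivity of $U'_n\to U'_{n,\lambda_n}$, and then propose to arrange the latter by refining the pro-\'etale presentation so that the transition maps are faithfully flat, via ``replacing each $U'_{n,\lambda}$ with a suitable quasi-compact open subscheme capturing the image of a sufficiently advanced stage.'' This does not work in general: take $U=\spec(\bb{Z})$ and $U'_n=\spec(\bb{Q})=\lim_k \spec(\bb{Z}[1/(p_1\cdots p_k)])$; the transition maps are open immersions that never stabilize, and the image of the limit in any finite stage is the (non-open) generic point. No choice of quasi-compact opens rescues this.

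The fix is that you do not need $U'_n\to U'_{n,\lambda_n}$ to be surjective at all. The morphism $\coprod_m \widetilde V_{nm}\to V\times_U U'_{n,\lambda_n}$ is finite \'etale, hence of finite presentation, and its base change along the cofiltered limit $U'_n=\lim_{\lambda\geq\lambda_n} U'_{n,\lambda}\to U'_{n,\lambda_n}$ is surjective (because $\coprod_m V'_{nm}$ surjects onto $V\times_U U'_n$ through $\coprod_m \widetilde V_{nm}\times_{U'_{n,\lambda_n}}U'_n$). By \cite[8.10.5(vi)]{ega4-3}, surjectivity of a finitely presented morphism descends to some finite stage, so after enlarging $\lambda_n$ the map $\coprod_m \widetilde V_{nm}\to V\times_U U'_{n,\lambda_n}$ is already surjective. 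This is exactly how the paper proceeds (with $V'_{nm\sigma}$ playing the role of your $W_{nm}\times_{T_{nm}}U'_n$), and it closes the argument without any hypothesis on the transition maps of the presentation.
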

\begin{proof}
	After \ref{lem:morphism-nu}, it suffices to show that for a family of morphisms $\ak{U}=\{(V_k \to U_k) \to (V \to U)\}_{k \in K}$ in $\fal_{Y\to X}^\et$, if $\nu^+(\ak{U})$ is a covering in $\fal_{Y\to X}^\proet$, then $\ak{U}$ is a covering in $\fal_{Y\to X}^\et$. We may assume that $K$ is finite. There is a standard covanishing covering $\ak{U}'=\{(V'_{nm} \to U'_n) \to (V \to U) \}_{n \in N, m \in M_n}$ in $\fal_{Y \to X}^\proet$ with $N$, $M_n$ finite, which refines $\nu^+(\ak{U})$ by \ref{prop:covcov}. We take a directed set $\Xi$ such that for each $n \in N$, we can take a pro-\'etale presentation $U'_n = \lim_{\xi\in \Xi} U'_{n\xi}$ over $U$, and we take a directed set $\Sigma$ such that for each $n\in N$ and $m \in M_n$, we can take a pro-finite \'etale presentation $V'_{nm} = \lim_{\sigma\in\Sigma} V'_{nm\sigma}$ over $V\times_U U'_n$. By \ref{lem:proet-profet} (\ref{lem:proet-profet-limit}), for each $\sigma\in\Sigma$, there exists an index $\xi_\sigma\in \Xi$ and a finite \'etale morphism $V'_{nm\sigma\xi_\sigma}\to V\times_U U'_{n\xi_\sigma}$ for each $n$ and $m$, whose base change by $U'_{n} \to U'_{n\xi_\sigma}$ is $V'_{nm\sigma}\to V \times_U U'_n$. Let $V'_{nm\sigma\xi}\to V\times_U U'_{n\xi}$ be the base change of $V'_{nm\sigma\xi_\sigma}\to V\times_U U'_{n\xi_\sigma}$ by the transition morphism $U'_{n\xi}\to U'_{n\xi_\sigma}$ for each $\xi\geq \xi_\sigma$. Since $\coprod_{m \in M_n} V'_{nm\sigma} \to V \times_U U'_n$ is surjective, after enlarging $\xi_\sigma$, we may assume that $\coprod_{m \in M_n} V'_{nm\sigma \xi} \to V \times_U U'_{n\xi}$ is also surjective for $\xi\geq \xi_\sigma$ by \cite[8.10.5.(\luoma{6})]{ega4-3}. It is clear that $\coprod_{n\in N}U'_{n\xi} \to U$ is surjective for each $\xi\in \Xi$. Therefore, $\ak{U}'_{\sigma\xi}=\{(V'_{nm\sigma \xi} \to U'_{n\xi}) \to (V \to U)\}_{n \in N, m \in M_n}$ is a standard covanishing covering in $\fal_{V \to U}^\et$ for each $\sigma\in \Sigma$ and $\xi\geq \xi_\sigma$. Notice that for each $n\in N$ and $m\in M_n$, there exists $k\in K$ such that the morphism $(V'_{nm\sigma \xi} \to U'_{n\xi}) \to (V \to U)$ factors through $(V_{k} \to U_{k})$ for $\sigma,\xi$ big enough by \ref{lem:proet-profet} (\ref{lem:proet-profet-mor}), which shows that $\ak{U}$ is a covering in $\fal_{Y\to X}^\et$.
\end{proof}

\begin{mylem}\label{lem:descendcov}
	Let $Y \to X$ be a morphism of coherent schemes, $\ak{U}=\{(V_k \to U_k) \to (V \to U)\}_{k \in K}$ a covering in $\fal_{Y\to X}^\proet$ with $K$ finite. Then, there exist pro-\'etale presentations $(V\to U)=\lim_{\lambda \in \Lambda} (V_{\lambda} \to U_{\lambda})$, $(V_k\to U_k)=\lim_{\lambda \in \Lambda} (V_{k\lambda} \to U_{k\lambda})$ over $Y\to X$ and compatible \'etale morphisms $(V_{k\lambda} \to U_{k\lambda}) \to (V_{\lambda} \to U_{\lambda})$ such that the family $\ak{U}_\lambda=\{(V_{k\lambda} \to U_{k\lambda}) \to (V_\lambda \to U_\lambda)\}_{k \in K}$ is a covering in $\fal_{Y\to X}^\et$.
\end{mylem}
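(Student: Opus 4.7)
The plan is to mirror the finite-level argument of Lemma \ref{lem:descendcov0} while organising a single directed system on which three separate pro-presentations (that of $(V\to U)$ over $(Y\to X)$, that of each $(V_k\to U_k)$ over $(V\to U)$, and that of a covanishing refinement of $\ak{U}$) can be made compatible. First, I would invoke Proposition \ref{prop:covcov} to obtain a standard covanishing covering
\[
\ak{U}' = \{(V'_{nm}\to U'_n)\to(V\to U)\}_{n\in N,\,m\in M_n}
\]
in $\fal_{Y\to X}^{\proet}$ with $N$ and all $M_n$ finite, refining $\ak{U}$. For each pair $(n,m)$ I would fix an index $k(n,m)\in K$ and a chosen factorisation $(V'_{nm}\to U'_n)\to(V_{k(n,m)}\to U_{k(n,m)})$; by Lemma \ref{lem:proetale-basic} this factorisation is automatically pro-\'etale.

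Next I would choose compatible pro-\'etale presentations: $(V\to U)=\lim_{\alpha\in A}(V_\alpha\to U_\alpha)$ over $(Y\to X)$ with each $(V_\alpha\to U_\alpha)$ \'etale over $(Y\to X)$; for each $k\in K$ a presentation $(V_k\to U_k)=\lim_{\mu\in M_k}(V_{k\mu}\to U_{k\mu})$ with $(V_{k\mu}\to U_{k\mu})$ \'etale over $(V\to U)$; and pro-\'etale, respectively pro-finite \'etale, presentations of $U'_n\to U$ and of $V'_{nm}\to V\times_U U'_n$. By \cite[8.8.2, 8.10.5]{ega4-3} and \cite[17.7.8]{ega4-4}, each \'etale (resp.\ finite \'etale) object in these presentations descends, for $\alpha$ large enough, to an analogous object $(V_{k\mu\alpha}\to U_{k\mu\alpha})$, resp.\ $(V'_{nm\bullet\alpha}\to U'_{n\bullet\alpha})$, defined over $(V_\alpha\to U_\alpha)$ and belonging to $\fal_{Y\to X}^{\et}$. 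Lemma \ref{lem:proet-profet}(\ref{lem:proet-profet-mor}) further allows me to descend, for $\alpha$ large, the factorisation $(V'_{nm}\to U'_n)\to(V_{k(n,m)}\to U_{k(n,m)})$ to a morphism $(V'_{nm\bullet\alpha}\to U'_{n\bullet\alpha})\to(V_{k(n,m),\mu\alpha}\to U_{k(n,m),\mu\alpha})$ at finite level.

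I would then assemble all these choices into a single $\bb{U}$-small directed set $\Lambda$ whose elements are tuples $(\alpha,(\mu_k)_{k\in K},\ldots)$ packaging one index from each presentation, subject to the compatibility that $\alpha$ is large enough for every required descent. Following the reindexing argument in the proof of Lemma \ref{lem:descendcov0}, we can realise $\Lambda$ as an actual directed set. For $\lambda\in\Lambda$ set $(V_\lambda\to U_\lambda)=(V_\alpha\to U_\alpha)$, $(V_{k\lambda}\to U_{k\lambda})=(V_{k\mu_k\alpha}\to U_{k\mu_k\alpha})$, and similarly define $(V'_{nm\lambda}\to U'_{n\lambda})$. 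By construction $(V\to U)=\lim_\lambda (V_\lambda\to U_\lambda)$ and $(V_k\to U_k)=\lim_\lambda (V_{k\lambda}\to U_{k\lambda})$, with each structure morphism \'etale over $(Y\to X)$. Using the surjectivity-descent part of \cite[8.10.5.(\luoma{6})]{ega4-3} applied to the surjections $\coprod_n U'_n\to U$ and $\coprod_m V'_{nm}\to V\times_U U'_n$, the family $\ak{U}'_\lambda=\{(V'_{nm\lambda}\to U'_{n\lambda})\to(V_\lambda\to U_\lambda)\}$ becomes a standard covanishing covering in $\fal_{Y\to X}^{\et}$ for $\lambda$ sufficiently large. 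Combined with the descended factorisations, $\ak{U}'_\lambda$ refines $\ak{U}_\lambda$, so $\ak{U}_\lambda$ is a covering in $\fal_{Y\to X}^{\et}$.

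The main technical obstacle is the bookkeeping: amalgamating the pro-presentations of $(V\to U)$, of each $(V_k\to U_k)$, and of the covanishing refinement $\ak{U}'$ into one directed system while simultaneously ensuring that (i) every \'etale/finite \'etale object descends to the chosen finite level, (ii) the factorisations $(V'_{nm}\to U'_n)\to(V_{k(n,m)}\to U_{k(n,m)})$ are realised at that level, and (iii) the surjectivity required for $\ak{U}'_\lambda$ to be covanishing survives the descent. Each of (i)--(iii) is a standard application of finite-presentation descent, but they need to be made compatible; this is handled, as in the proof of Lemma \ref{lem:descendcov0}, by first choosing indices witnessing each descent separately and then passing to their common refinement in $\Lambda$.
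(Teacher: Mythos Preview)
Your approach is correct but differs from the paper's in organisation. The paper proceeds in two clean steps: first, taking a common pro-\'etale presentation $(V_k\to U_k)=\lim_{\alpha\in A}(V_{k\alpha}\to U_{k\alpha})$ over $(V\to U)$ for all $k$, it observes that each family $\ak{U}_\alpha=\{(V_{k\alpha}\to U_{k\alpha})\to(V\to U)\}_{k\in K}$ is already a covering in $\fal_{V\to U}^\et$, as an immediate consequence of Lemma~\ref{lem:inducetop-proet-et} (the \'etale topology on $\fal_{Y\to X}^\et$ is induced from the pro-\'etale one); second, it applies Lemma~\ref{lem:descendcov0} to the limit presentation $(V\to U)=\lim_{\beta\in B}(V_\beta\to U_\beta)$ to descend each $\ak{U}_\alpha$ to a covering $\ak{U}_{\alpha\beta_\alpha}$ in $\fal_{V_{\beta_\alpha}\to U_{\beta_\alpha}}^\et\subseteq\fal_{Y\to X}^\et$, and finally sets $\Lambda=\{(\alpha,\beta)\in A\times B:\beta\ge\beta_\alpha\}$. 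All explicit handling of a covanishing refinement and of factorisations at finite level is thus hidden inside Lemmas~\ref{lem:inducetop-proet-et} and~\ref{lem:descendcov0}. Your argument instead unwinds both of those lemmas inline, carrying the standard covanishing refinement $\ak{U}'$, its descent, and the factorisations $(V'_{nm}\to U'_n)\to(V_{k(n,m)}\to U_{k(n,m)})$ through the directed system by hand. This is self-contained but heavier on bookkeeping; the paper's route is shorter and more modular given the lemmas already in place.
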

\begin{proof}
	We follow closely the proof of \ref{lem:descendv-cov}. We take a directed set $A$ such that for each $k\in K$ we can take a pro-\'etale presentation $(V_k \to U_k)=\lim_{\alpha \in A}(V_{k\alpha} \to U_{k\alpha})$ over $(V \to U)$. Then, $\ak{U}_\alpha=\{(f_{k\alpha}:V_{k\alpha} \to U_{k\alpha})\to (V\to U)\}_{k\in K}$ is a covering family in $\fal_{V\to U}^\et$ for each $\alpha \in A$ by \ref{lem:inducetop-proet-et}.
	
	Let $(V \to U)=\lim_{\beta \in B}(V_\beta \to U_\beta)$ be a pro-\'etale presentation over $Y\to X$. For each $\alpha\in A$, there exists an index $\beta_\alpha\in B$ and a covering family $\ak{U}_{\alpha\beta_\alpha}=\{f_{k\alpha\beta_\alpha}:(V_{k\alpha\beta_\alpha} \to U_{k\alpha\beta_\alpha}) \to (V_{\beta_\alpha} \to U_{\beta_\alpha})\}_{k \in K}$ such that $f_{k\alpha}$ is the base change of $f_{k\alpha\beta_\alpha}$ by the transition morphism $(V \to U)\to (V_{\beta_\alpha} \to U_{\beta_\alpha})$ (\ref{lem:descendcov0}). For each $\beta\geq \beta_\alpha$, let $f_{k\alpha\beta}:(V_{k\alpha\beta} \to U_{k\alpha\beta}) \to (V_{\beta} \to U_{\beta})$ be the base change of $f_{k\alpha\beta_\alpha}$ by the transition morphism $(V_\beta \to U_\beta)\to (V_{\beta_\alpha} \to U_{\beta_\alpha})$. We take $\Lambda=\{(\alpha,\beta)\in A\times B\ |\ \beta\geq \beta_\alpha\}$, $(V_\lambda\to U_\lambda)=(V_\beta\to U_\beta)$ and $(V_{k\lambda}\to U_{k\lambda})=(V_{k\alpha\beta} \to U_{k\alpha\beta})$ for each $\lambda=(\alpha,\beta)\in \Lambda$. Then, the families $\ak{U}_\lambda=\{(V_{k\lambda} \to U_{k\lambda}) \to (V_\lambda \to U_\lambda)\}_{k \in K}$ meet the requirements in the lemma (cf. \ref{lem:descendv-cov}).
\end{proof}

\begin{mylem}\label{lem:nu-inv}
	Let $Y \to X$ be a morphism of coherent schemes, $\ca{F}$ a presheaf on $\fal_{Y\to X}^\et$, $V \to U$ an object of $\fal_{Y\to X}^\proet$ with a pro-\'etale presentation $(V\to U) = \lim (V_\lambda \to U_\lambda)$. Then, we have $\nu_\psh\ca{F}(V \to U) = \colim \ca{F}(V_\lambda \to U_\lambda)$, where $\nu^+:\fal_{Y\to X}^\et \to \fal_{Y\to X}^\proet$ is the inclusion functor.
\end{mylem}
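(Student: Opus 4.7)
My plan is to follow the strategy of Lemma \ref{lem:mu-inv} verbatim, with the passage from $\ho_X$-computations for schemes to $\ho_{\fal}$-computations for objects of the Faltings site being the only new ingredient.

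First I would reduce to the representable case. By \cite[\Luoma{1}.3.4]{sga4-1}, any presheaf on $\fal^{\et}_{Y\to X}$ is canonically a filtered colimit of representable presheaves,
\begin{align}
\ca{F} = \colim_{(V'\to U')\in (\fal^\et_{Y\to X})_{/\ca{F}}} h_{(V'\to U')}.
\end{align}
Since the section functor $\Gamma((V\to U),-)$ commutes with colimits of presheaves (\cite[\href{https://stacks.math.columbia.edu/tag/00VB}{00VB}]{stacks-project}) and $\nu_\psh = \psh\circ\nu^+$ commutes with colimits of presheaves because it is a left adjoint on presheaves, we may assume $\ca{F}=h_{(V'\to U')}$ with $(V'\to U')\in \fal_{Y\to X}^\et$. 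By \cite[\href{https://stacks.math.columbia.edu/tag/04D2}{04D2}]{stacks-project}, $\nu_\psh h_{(V'\to U')}=h_{\nu^+(V'\to U')}$, so the statement reduces to the equality
\begin{align}
\ho_{\fal^\proet}((V\to U),(V'\to U'))=\colim_\lambda\ho_{\fal^\et}((V_\lambda\to U_\lambda),(V'\to U')),
\end{align}
where the Hom on the left is computed inside the ambient category $\fal_{Y\to X}$ (since $\fal^{\et}$ and $\fal^{\proet}$ are full subcategories).

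To verify this, I unfold a morphism $(V\to U)\to (V'\to U')$ in $\fal$ as a pair $(f,g)$ with $f\colon U\to U'$ over $X$ and $g\colon V\to V'$ over $Y$ such that the obvious square commutes; equivalently, $f\colon U\to U'$ over $X$ together with a morphism $V\to V'\times_{U'}U$ over $Y\times_X U$. Since $U'\to X$ is étale, hence locally of finite presentation, and $(V\to U)=\lim(V_\lambda\to U_\lambda)$ has affine transition morphisms, \cite[8.14.2]{ega4-3} gives an index $\lambda_0$ and $f_{\lambda_0}\colon U_{\lambda_0}\to U'$ whose pullback to $U$ is $f$. Denote its further composition to $U_\lambda$ by $f_\lambda$ for $\lambda\geq\lambda_0$; then $V'\times_{U'}U_\lambda$ is finite étale (hence of finite presentation) over $Y\times_X U_\lambda$. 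For $\mu\geq\lambda\geq\lambda_0$, the transition morphisms $V_{\mu'}\to V_\mu$ are morphisms in $\fal$, so they are compatible with $U_{\mu'}\to U_\mu\to U_\lambda$, and in particular $V=\lim_{\mu\geq\lambda}V_\mu$ is a cofiltered limit of $Y\times_X U_\lambda$-schemes with affine transitions. Applying \cite[8.14.2]{ega4-3} again, the morphism $V\to V'\times_{U'}U$ factors through $V_\mu\to V'\times_{U'}U_\mu$ over $Y\times_X U_\mu$ for some $\mu\geq\lambda_0$ after further enlarging the index; this datum is precisely a morphism $(V_\mu\to U_\mu)\to (V'\to U')$ in $\fal^\et$ whose pullback to $(V\to U)$ recovers $(f,g)$. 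Conversely, any element of the right-hand colimit induces such a pair $(f,g)$, giving the desired bijection.

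No serious obstacle is expected: the only thing to double-check is that the transition morphisms $V_{\mu'}\to V_\mu$ really are morphisms of $Y\times_X U_\lambda$-schemes once $\mu,\mu'\geq\lambda$, which is immediate from the fact that morphisms in $\fal_{Y\to X}$ are by definition compatible with the projections to $Y$ and to $X$. After that the argument is a straightforward application of the finite presentation limit formulas \cite[8.14.2]{ega4-3}, exactly in the spirit of \ref{lem:proet-profet}.(\ref{lem:proet-profet-mor}).
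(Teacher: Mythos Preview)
Your proposal is correct and follows essentially the same approach as the paper's proof: reduce to representable presheaves via \cite[\Luoma{1}.3.4]{sga4-1} and \cite[\href{https://stacks.math.columbia.edu/tag/00VB}{00VB}, \href{https://stacks.math.columbia.edu/tag/04D2}{04D2}]{stacks-project}, then establish the Hom-colimit identity using \cite[8.14.2]{ega4-3}. Your two-step unfolding (first the $U$-component, then the $V$-component) is simply a more explicit rendering of what the paper compresses into the single remark that $U'$ and $V'$ are locally of finite presentation over $X$ and $Y\times_X U'$ respectively.
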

\begin{proof}
	Notice that the presheaf $\ca{F}$ is a filtered colimit of representable presheaves by \cite[\Luoma{1}.3.4]{sga4-1}
	\begin{align}
		\ca{F} = \colim_{(V' \to U') \in (\fal_{Y \to X}^\et)_{/\ca{F}}} h_{V' \to U'}^{\et}.
	\end{align}
	Thus, we may assume that $\ca{F}$ is representable by $V' \to U'$ since the section functor $\Gamma(V\to U,-)$ commutes with colimits of presheaves (\cite[\href{https://stacks.math.columbia.edu/tag/00VB}{00VB}]{stacks-project}). Then, we have
	\begin{align}
		\nu_\psh h_{V' \to U'}^\et(V \to U) =& h_{V' \to U'}^\proet(V \to U)\\
		=& \ho_{\fal_{Y\to X}^\proet}((V \to U) , (V' \to U'))\nonumber\\
		=& \colim \ho_{\fal_{Y\to X}^\et}((V_\lambda \to U_\lambda) , (V' \to U'))\nonumber\\
		=& \colim h_{V' \to U'}^\et(V_\lambda \to U_\lambda)\nonumber
	\end{align}
	where the first equality follows from \cite[\href{https://stacks.math.columbia.edu/tag/04D2}{04D2}]{stacks-project}, and the third equality follows from \cite[8.14.2]{ega4-3} since $U'$ and $V'$ are locally of finite presentation over $X$ and $Y\times_X U'$ respectively.
\end{proof}

\begin{myprop}\label{prop:et-proet}
	Let $Y \to X$ be a morphism of coherent schemes, $\ca{F}$ an abelian sheaf on $\fal_{Y\to X}^\et$, $V \to U$ an object of $\fal_{Y\to X}^\proet$ with a pro-\'etale presentation $(V\to U) = \lim (V_\lambda \to U_\lambda)$. Then, for any integer $q$, we have
	\begin{align}
		H^q(\fal_{V \to U}^\proet, \nu^{-1}\ca{F}) = \colim H^q(\fal_{V_\lambda \to U_\lambda}^\et, \ca{F}),
	\end{align} 
	where $\nu :\fal_{Y\to X}^\proet \to \fal_{Y\to X}^\et$ is the morphism of sites defined by the inclusion functor {\rm(\ref{lem:morphism-nu})}.
	In particular, the canonical morphism $\ca{F} \longrightarrow \rr\nu_*\nu^{-1}\ca{F}$	is an isomorphism.
\end{myprop}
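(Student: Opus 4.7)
My plan is to follow closely the argument used for Proposition \ref{prop:v-h-coh}, adapted to the setting of Faltings sites. The main ingredients are already available: Lemma \ref{lem:descendcov}, which shows that any finite covering in $\fal_{Y\to X}^\proet$ can be written as a cofiltered limit of coverings in $\fal_{Y\to X}^\et$; Lemma \ref{lem:nu-inv}, computing $\nu_\psh\ca{F}$ on pro-objects as a filtered colimit; and the fact that every object of $\fal_{Y\to X}^\proet$ is quasi-compact.

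The first step will be to reduce to the case where $\ca{F}=\ca{I}$ is injective. I argue by induction on $q$, exactly as in \ref{prop:v-h-coh}: the case $q\le -1$ is trivial, and for the inductive step I embed $\ca{F}$ into an injective $\ca{I}$, form the short exact sequence $0\to\ca{F}\to\ca{I}\to\ca{G}\to 0$, and apply the 5-lemma to the morphism of the corresponding long exact sequences. Since $\colim$ is exact on filtered systems of abelian groups and $H^q(\fal_{V\to U}^\proet,-)$ takes short exact sequences to long exact sequences, the inductive step goes through provided the statement holds for injective sheaves in every degree.

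For injective $\ca{I}$, I claim that for any finite covering $\ak{U}=\{(V_k\to U_k)\to (V\to U)\}_{k\in K}$ in $\fal_{Y\to X}^\proet$, the augmented \v Cech complex
\begin{align}
\nu_\psh\ca{I}(V\to U)\to \prod_{k\in K}\nu_\psh\ca{I}(V_k\to U_k)\to\prod_{k,k'\in K}\nu_\psh\ca{I}((V_k\to U_k)\overline{\times}(V_{k'}\to U_{k'}))\to\cdots
\end{align}
is exact. Granting this, $\nu_\psh\ca{I}$ is a sheaf, i.e.\ $\nu^{-1}\ca{I}=\nu_\psh\ca{I}$, and all higher \v Cech cohomologies vanish, hence $H^q(\fal_{V\to U}^\proet,\nu^{-1}\ca{I})=0$ for $q>0$ by \cite[\href{https://stacks.math.columbia.edu/tag/03F9}{03F9}]{stacks-project}; together with Lemma \ref{lem:nu-inv} this yields the desired formula $H^q(\fal_{V\to U}^\proet,\nu^{-1}\ca{I})=\colim H^q(\fal_{V_\lambda\to U_\lambda}^\et,\ca{I})$ (both sides being zero for $q>0$, and equal to $\colim\ca{I}(V_\lambda\to U_\lambda)$ when $q=0$). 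To prove the claim, apply Lemma \ref{lem:descendcov} to produce pro-\'etale presentations $(V\to U)=\lim(V_\lambda\to U_\lambda)$ and $(V_k\to U_k)=\lim(V_{k\lambda}\to U_{k\lambda})$ with coverings $\ak{U}_\lambda=\{(V_{k\lambda}\to U_{k\lambda})\to (V_\lambda\to U_\lambda)\}_{k\in K}$ in $\fal_{Y\to X}^\et$. By Lemma \ref{lem:nu-inv} together with commutation of fibred products with cofiltered limits (applied via \ref{lem:rel-norm-fil-limit} and the description in \ref{lem:relative-normal-limit}), the complex above is the filtered colimit of the augmented \v Cech complexes of $\ca{I}$ associated with $\ak{U}_\lambda$, which are exact because $\ca{I}$ is injective.

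Finally, the ``in particular'' part follows by evaluating on a general object: $\rr^q\nu_*\nu^{-1}\ca{F}$ is the sheaf associated to the presheaf $(V\to U)\mapsto H^q(\fal_{V\to U}^\proet,\nu^{-1}\ca{F})$, and by the first part this equals $H^q(\fal_{V\to U}^\et,\ca{F})$ when $V\to U$ is already in $\fal_{Y\to X}^\et$ (taking the trivial pro-\'etale presentation), hence the sheafification recovers $\ca{F}$ for $q=0$ and vanishes for $q>0$. The only place requiring real care will be the compatibility of finite limits (and thus of iterated fibred products appearing in the \v Cech nerve) with the cofiltered limits of pro-\'etale presentations; this is ensured because pro-\'etale morphisms are stable under finite limits in $\fal_{Y\to X}$ and taking integral closure commutes with affine cofiltered limits by \ref{lem:rel-norm-fil-limit}.
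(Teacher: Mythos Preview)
Your proposal is correct and follows essentially the same approach as the paper's proof: reduce to injectives via the induction argument of Proposition~\ref{prop:v-h-coh}, then use Lemma~\ref{lem:descendcov} and Lemma~\ref{lem:nu-inv} to write the augmented \v Cech complex as a filtered colimit of exact complexes. One minor point: the fibred products in $\fal_{Y\to X}^\proet$ are ordinary scheme-theoretic fibred products (see \ref{para:E}), so there is no need to invoke the integral-closure lemmas \ref{lem:rel-norm-fil-limit} and \ref{lem:relative-normal-limit}; the compatibility of fibred products with cofiltered limits of affine transition maps is immediate from standard properties of limits of schemes.
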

\begin{proof}
	We follow closely the proof of \ref{prop:v-h-coh}.
	For the second assertion, since $\rr^q\nu_*\nu^{-1}\ca{F}$ is the sheaf associated to the presheaf $(V \to U) \mapsto H^q(\fal_{V \to U}^\proet, \nu^{-1}\ca{F}) = H^q(\fal_{V \to U}^\et, \ca{F})$ by the first assertion, which is $\ca{F}$ if $q=0$ and vanishes otherwise.
	
	For the first assertion, we may assume that $\ca{F}=\ca{I}$ is an abelian injective sheaf on $\fal_{Y\to X}^\et$ (cf. \ref{prop:v-h-coh}). We claim that for any covering in $\fal_{Y\to X}^\proet$, $\ak{U}=\{(V_k \to U_k) \to (V \to U)\}_{k \in K}$ with $K$ finite, the augmented \v Cech complex associated to the presheaf $\nu_\psh \ca{I}$,
	\begin{align}\label{eq:3.7.4}
		0 \to \nu_{\psh}\ca{I}(V \to U) \to \prod_k \nu_{\psh}\ca{I}(V_k\to U_k) \to \prod_{k,k'} \nu_{\psh}\ca{I}(V_k \times_V V_{k'} \to U_k \times_U U_{k'}) \to \cdots
	\end{align}
	is exact. Admitting this claim, we see that $\nu_\psh \ca{I}$ is indeed a sheaf, i.e. $\nu^{-1}\ca{I} = \nu_\psh \ca{I}$, and the vanishing of higher \v Cech cohomologies implies that $H^q(\fal_{V \to U}^\proet,\nu^{-1}\ca{I})=0$ for any $q>0$, which completes the proof together with \ref{lem:nu-inv}. For the claim, let $(V\to U)=\lim_{\lambda \in \Lambda} (V_{\lambda} \to U_{\lambda})$ and $(V_k\to U_k)=\lim_{\lambda \in \Lambda} (V_{k\lambda} \to U_{k\lambda})$ be the pro-\'etale presentations constructed in \ref{lem:descendcov}. The family $\ak{U}_\lambda=\{(V_{k\lambda} \to U_{k\lambda}) \to (V_\lambda \to U_\lambda)\}_{k \in K}$ is a covering in $\fal_{Y\to X}^\et$. By \ref{lem:nu-inv}, the sequence \eqref{eq:3.7.4} is the filtered colimit of the augmented \v Cech complexes
	\begin{align}
		0 \to \ca{I}(V_\lambda \to U_\lambda) \to \prod_k \ca{I}(V_{k\lambda}\to U_{k\lambda}) \to \prod_{k,k'} \ca{I}(V_{k\lambda} \times_{V_\lambda} V_{k'\lambda} \to U_{k\lambda} \times_{U_\lambda} U_{k'\lambda}) \to \cdots
	\end{align}
	which are exact since $\ca{I}$ is an injective abelian sheaf on $\fal_{Y\to X}^\et$.
\end{proof}

\begin{mycor}\label{cor:nu-falb}
	With the notation in {\rm\ref{prop:et-proet}}, the presheaf $\falb$ on $\fal_{Y\to X}^\proet$ is a sheaf, and the canonical morphisms $\nu^{-1}\falb \to \falb$ and $\falb\to\rr\nu_*\falb$ are isomorphisms. If moreover $p$ is invertible on $Y$, then for each integer $n\geq 0$, the canonical morphisms $\nu^{-1}(\falb/p^n\falb) \to \falb/p^n\falb$ and $\falb/p^n\falb\to\rr\nu_*(\falb/p^n\falb)$ are isomorphisms.
\end{mycor}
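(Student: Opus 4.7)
The plan is to reduce Corollary \ref{cor:nu-falb} to Proposition \ref{prop:et-proet} by showing that the natural presheaf $\falb$ on $\fal_{Y\to X}^\proet$ (defined by $(V\to U)\mapsto\Gamma(U^V,\ca{O}_{U^V})$) literally coincides with $\nu^{-1}\falb$. First I would establish the presheaf-level identification $\nu_\psh\falb=\falb$ on $\fal_{Y\to X}^\proet$. For any object $(V\to U)$ with pro-\'etale presentation $(V\to U)=\lim(V_\lambda\to U_\lambda)$, Lemma \ref{lem:rel-norm-fil-limit} gives $U^V=\lim U_\lambda^{V_\lambda}$ with affine transition morphisms, whence $\Gamma(U^V,\ca{O})=\colim\Gamma(U_\lambda^{V_\lambda},\ca{O})$. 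Combining this with Lemma \ref{lem:nu-inv} produces the desired equality of presheaves.

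Next I would check that this common presheaf is a sheaf on $\fal_{Y\to X}^\proet$, by lifting the \v Cech argument from the proof of Proposition \ref{prop:et-proet}. Given a finite covering $\ak{U}=\{(V_k\to U_k)\to(V\to U)\}_{k\in K}$ in $\fal_{Y\to X}^\proet$, Lemma \ref{lem:descendcov} provides compatible pro-\'etale presentations and coverings $\ak{U}_\lambda$ of $(V_\lambda\to U_\lambda)$ in $\fal_{Y\to X}^\et$ whose inverse limit reproduces $\ak{U}$. Since finite fibre products commute with cofiltered limits of coherent schemes, Lemma \ref{lem:nu-inv} applies term by term and presents the augmented \v Cech complex of $\nu_\psh\falb$ relative to $\ak{U}$ as the filtered colimit in $\lambda$ of the augmented \v Cech complexes of $\falb$ on $\fal_{Y\to X}^\et$ relative to $\ak{U}_\lambda$. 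Each of the latter is exact by Proposition \ref{prop:falb-sheaf}, and filtered colimits of exact sequences of abelian groups remain exact; hence $\nu_\psh\falb$ satisfies the sheaf condition. In particular $\nu^{-1}\falb=\nu_\psh\falb=\falb$ on $\fal_{Y\to X}^\proet$, which simultaneously shows that $\falb$ is a sheaf there and that the canonical morphism $\nu^{-1}\falb\to\falb$ is an isomorphism.

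Finally, applying Proposition \ref{prop:et-proet} to the abelian sheaf $\ca{F}=\falb$ on $\fal_{Y\to X}^\et$ yields the canonical isomorphism $\falb\iso\rr\nu_*\nu^{-1}\falb=\rr\nu_*\falb$. For the modulo $p^n$ statements, exactness of $\nu^{-1}$ gives $\nu^{-1}(\falb/p^n\falb)=\nu^{-1}\falb/p^n\nu^{-1}\falb=\falb/p^n\falb$ on $\fal_{Y\to X}^\proet$, which is the first claimed isomorphism, and applying Proposition \ref{prop:et-proet} to $\ca{F}=\falb/p^n\falb$ gives the second. The whole proof is formal given Proposition \ref{prop:et-proet}; the only non-formal input is the commutation of integral closures with cofiltered affine limits (Lemma \ref{lem:rel-norm-fil-limit}), which lets one describe $\nu^{-1}\falb$ concretely via integral closures rather than as an abstract sheafification, so I do not expect a genuine main obstacle here beyond carefully transposing the \v Cech argument from the \'etale to the pro-\'etale Faltings site.
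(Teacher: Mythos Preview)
Your proposal is correct, and the first half essentially follows the paper: both identify $\nu_\psh\falb$ with $\falb$ via Lemmas \ref{lem:nu-inv} and \ref{lem:rel-norm-fil-limit}. The paper then simply invokes the $q=0$ case of Proposition \ref{prop:et-proet} to get $\nu^{-1}\falb(V\to U)=\colim\falb(V_\lambda\to U_\lambda)=\falb(V\to U)$, immediately yielding that $\falb$ is a sheaf equal to $\nu^{-1}\falb$; your explicit \v Cech argument reproves this special case by hand. One wording issue: Proposition \ref{prop:falb-sheaf} only gives exactness of the equalizer diagram (degrees $0$ and $1$ of the augmented \v Cech complex), not of the full complex, but that is all you need for the sheaf condition, so the argument stands.

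Where you genuinely diverge is in the $p^n$ part. The paper uses the hypothesis that $p$ is invertible on $Y$ to obtain a short exact sequence $0\to\falb\xrightarrow{\cdot p^n}\falb\to\falb/p^n\falb\to 0$ on both sites and then deduces the two claimed isomorphisms from the already-established ones for $\falb$. Your route---exactness of $\nu^{-1}$ for the first isomorphism and a direct application of Proposition \ref{prop:et-proet} to $\ca{F}=\falb/p^n\falb$ for the second---is cleaner and in fact does not use the hypothesis on $p$ at all. So your argument actually shows that the second sentence of the corollary holds unconditionally; the paper's extra hypothesis is an artifact of its proof method.
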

\begin{proof}
	For any pro-\'etale presentation $(V\to U) = \lim (V_\lambda \to U_\lambda)$, we have $\nu^{-1}\falb(V \to U) = \colim \falb(V_\lambda \to U_\lambda) =\falb(V \to U)$ by \ref{lem:nu-inv} and \ref{lem:rel-norm-fil-limit}. This verifies that $\falb$ is a sheaf  on $\fal_{Y\to X}^\proet$ and that $\nu^{-1}\falb \to \falb$ is an isomorphism. The second isomorphism follows from the first and \ref{prop:et-proet}. For the last assertion, notice that the multiplication by $p^n$ is injective on $\falb$, so that the conclusion follows from the exact sequence
	\begin{align}
		\xymatrix{
			0\ar[r]& \falb\ar[r]^-{\cdot p^n} &\falb\ar[r]&\falb/p^n\falb\ar[r]&0.
		}
	\end{align}
\end{proof}

\begin{mypara}\label{para:fibred-site-N}
	We regard the ordered set $\bb{N}$ of natural numbers as a filtered category (there is an arrow $i\to j$ if $i\leq j$). Let $E$ be a site. We denote by $E^{\bb{N}}$ the fibred site $E\times \bb{N}$ over $\bb{N}$, and we endow the category $E^{\bb{N}}$ with the total topology which makes it into a site without final objects (\ref{par:cova-def}, cf. \cite[\Luoma{3}.7, \Luoma{6}.7]{abbes2016p}).  Giving a presheaf $\ca{F}$ on $E^{\bb{N}}$ is equivalent to giving a directed inverse system of presheaves $(\ca{F}_n )_{n\geq 0}$ on $E$. We write $\ca{F}=(\ca{F}_n )_{n\geq 0}$. Moreover, $\ca{F}$ is a sheaf on $E^{\bb{N}}$ if and only if each $\ca{F}_n$ is a sheaf on $E$.
\end{mypara}

\begin{mypara}\label{para:psi-sigma-nu-breve}
	Let $Y\to X$ be a morphism of coherent schemes. We obtain a fibred site $\fal_{Y\to X}^{\et,\bb{N}}$ (resp. $\fal_{Y\to X}^{\proet,\bb{N}}$) over $\bb{N}$ by \ref{para:fibred-site-N}. We define a sheaf $\breve{\falb}$ on $\fal_{Y\to X}^{\et,\bb{N}}$ (resp. $\fal_{Y\to X}^{\proet,\bb{N}}$) by
	\begin{align}
		\breve{\falb}=(\falb/p^n\falb)_{n\geq 0}.
	\end{align}
	The inclusion functor $\nu^+:\fal_{Y\to X}^\et\to \fal_{Y\to X}^\proet$ defines a morphism of ringed fibred sites (\cite[\Luoma{6}.7.2.2]{sga4-2})
	\begin{align}
		\breve{\nu}: (\fal_{Y\to X}^{\proet,\bb{N}},\breve{\falb})\longrightarrow (\fal_{Y\to X}^{\et,\bb{N}},\breve{\falb}),\label{eq:breve-nu}	
	\end{align}
	which induces a morphism of the associated ringed topoi with respect to the total topology (\cite[\Luoma{6}.7.4.13.1]{sga4-2}, cf. \cite[\Luoma{3}.7.18]{abbes2016p}). 
	If moreover the prime number $p$ is invertible on $Y$, then the canonical morphisms
	\begin{align}\label{eq:5.3.6}
		\breve{\nu}^{-1}\breve{\falb}\longrightarrow \breve{\falb}\ \trm{ and }\ \breve{\falb}\longrightarrow \rr\breve{\nu}_*\breve{\falb}
	\end{align}
	are isomorphisms by \ref{cor:nu-falb} ( \cite[\Luoma{6}.7.7]{abbes2016p}). Combining with the canonical diagrams \eqref{diag:beta-psi-sigma} and \eqref{diag:beta-psi-sigma-proet}, we obtain a canonical commutative diagram of fibred sites over $\bb{N}$,
	\begin{align}\label{diam:breve-psi-sigma}
		\xymatrix{
			Y_\proet^{\bb{N}}\ar[r]^-{\breve{\psi}}\ar[d]^-{\breve{\nu}}& \fal_{Y\to X}^{\proet,\bb{N}}\ar[r]^-{\breve{\sigma}}\ar[d]^-{\breve{\nu}}& X_\proet^{\bb{N}}\ar[d]^-{\breve{\nu}}\\
			Y_\et^{\bb{N}}\ar[r]^-{\breve{\psi}}& \fal_{Y\to X}^{\et,\bb{N}}\ar[r]^-{\breve{\sigma}}& X_\et^{\bb{N}}
		}
	\end{align}
	where we ambiguously denote by $\breve{\nu}$ the morphisms induced by the inclusion functors of the underlying categories. Moreover, if $\breve{\ca{O}}_{X_\et}$ (resp. $\breve{\ca{O}}_{X_\proet}$) denotes the sheaf $(\ca{O}_{X_\et}/p^n\ca{O}_{X_\et})_{n\geq 0}$ (resp. $(\ca{O}_{X_\proet}/p^n\ca{O}_{X_\proet})_{n\geq 0}$), then the diagram \eqref{diam:breve-psi-sigma} induces a commutative diagram of ringed fibred sites
	\begin{align}\label{diam:breve-psi-sigma-ringed}
		\xymatrix{
			(\fal_{Y\to X}^{\proet,\bb{N}},\breve{\falb})\ar[r]^-{\breve{\sigma}}\ar[d]^-{\breve{\nu}}& (X_\proet^{\bb{N}},\breve{\ca{O}}_{X_\proet})\ar[d]^-{\breve{\nu}}\\
			(\fal_{Y\to X}^{\et,\bb{N}},\breve{\falb})\ar[r]^-{\breve{\sigma}}& (X_\et^{\bb{N}},\breve{\ca{O}}_{X_\et})
		}
	\end{align}
\end{mypara}

\begin{mydefn}\label{defn:p-adic-falsites}
	We call $(\fal_{Y\to X}^{\et,\bb{N}},\breve{\falb})$ (resp. $(\fal_{Y\to X}^{\proet,\bb{N}},\breve{\falb})$) the \emph{$p$-adic Faltings ringed site} (resp. \emph{$p$-adic pro-\'etale Faltings ringed site}) of the morphism of coherent schemes $Y \to X$. It is a ringed site without final objects, which can be regarded as a fibred site over $\bb{N}$.
\end{mydefn}

\section{Cohomological Descent of the Structural Sheaves}\label{sec:coh-descent}

\begin{mydefn}\label{defn:faltings-acyclic}
	Let $K$ be a pre-perfectoid field of mixed characteristic $(0,p)$, $Y\to X$ a morphism of coherent schemes such that $Y\to X^Y$ is over $\spec(K)\to \spec(\ca{O}_K)$, where $X^Y$ denotes the integral closure of $X$ in $Y$. We say that $Y\to X$ is \emph{Faltings acyclic} if $X$ is affine and if for any integer $n>0$ the canonical morphism
	\begin{align}\label{eq:5.5.1}
		A/p^n A\longrightarrow \rr\Gamma(\fal_{Y\to X}^\proet,\falb/p^n\falb)
	\end{align}
	is an almost isomorphism (cf. \ref{para:almost-derived}), where $A$ denotes the $\ca{O}_K$-algebra $\falb(Y\to X)$ (i.e. $X^Y=\spec(A)$).
\end{mydefn}

\begin{myrem}
	In \ref{defn:faltings-acyclic}, the canonical morphism $\rr\Gamma(\fal_{Y\to X}^\et,\falb/p^n\falb)\to \rr\Gamma(\fal_{Y\to X}^\proet,\falb/p^n\falb)$ is an isomorphism as \eqref{eq:5.3.6} are isomorphisms.
\end{myrem}

\begin{mylem}\label{lem:acyclic-falsite}
	Let $K$ be a pre-perfectoid field of mixed characteristic $(0,p)$, $Y\to X$ a morphism of coherent schemes such that $Y\to X^Y$ is over $\spec(K)\to \spec(\ca{O}_K)$,  $A=\falb(Y\to X)$. Assume that $Y\to X$ is Faltings acyclic. Then, we have:
	\begin{enumerate}
		\renewcommand{\labelenumi}{{\rm(\theenumi)}}
		\item For any pseudo-uniformizer $\pi$ of $K$, the canonical morphism $A/\pi A\to \rr\Gamma(\fal_{Y\to X}^\proet,\falb/\pi\falb)$ is an almost isomorphism.\label{item:lem-acyclic-falsite1}
		\item Let $\widehat{A}$ be the $p$-adic completion of $A$. Then, the canonical morphism $\widehat{A}\to \rr\Gamma(\fal^{\proet,\bb{N}}_{Y\to X},\breve{\falb})$ is an almost isomorphism.\label{item:lem-acyclic-falsite2}
	\end{enumerate}
\end{mylem}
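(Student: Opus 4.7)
For \emph{(1)}, fix a pseudo-uniformizer $\pi$. Since $K$ has mixed characteristic $(0,p)$ and the value group $v_K(K^\times)\subset \bb{R}$ is of height $1$, I can choose $N\geq 1$ with $p^N=\pi s$ for some $s\in\ca{O}_K$; if $s\in\ca{O}_K^\times$ the claim is immediate, so I assume $s\in\ak{m}_K$. The sheaf $\falb$ and the ring $A$ are both $\ca{O}_K$-flat (since they arise as subrings of $K$-algebras), so $\pi$ and $s$ are non-zero-divisors on them, and I get two short exact sequences of sheaves on $\fal^\proet_{Y\to X}$:
\begin{align*}
0 \to \falb/s\falb \xrightarrow{\cdot \pi} \falb/p^N\falb \to \falb/\pi\falb \to 0, \qquad 0 \to \falb/\pi\falb \xrightarrow{\cdot s} \falb/p^N\falb \to \falb/s\falb \to 0,
\end{align*}
and analogous ones with $A$. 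For $\tau\in\{\pi,s,p^N\}$ let $\alpha_\tau: A/\tau A \to \rr\Gamma(\fal^\proet_{Y\to X},\falb/\tau\falb)$ denote the canonical morphism in $\dd(\ca{O}_K^\al)$, so that by hypothesis $\mrm{cone}(\alpha_{p^N})\simeq_{\mrm{al}} 0$.

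Applying $\rr\Gamma$ to each short exact sequence and taking the morphism of distinguished triangles to the $A$-side gives a distinguished triangle of cones. The first sequence yields $\mrm{cone}(\alpha_\pi)\simeq_{\mrm{al}}\mrm{cone}(\alpha_s)[1]$ (since the middle cone $\mrm{cone}(\alpha_{p^N})$ is almost zero), and the second gives $\mrm{cone}(\alpha_s)\simeq_{\mrm{al}}\mrm{cone}(\alpha_\pi)[1]$. Combining:
\begin{align*}
\mrm{cone}(\alpha_\pi)\simeq_{\mrm{al}}\mrm{cone}(\alpha_\pi)[2].
\end{align*}
Now $A/\pi A$ is concentrated in degree $0$ and $\rr\Gamma(\fal^\proet,\falb/\pi\falb)$ lives in nonnegative degrees, so $H^i(\mrm{cone}(\alpha_\pi))=0$ for $i\leq -2$. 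The $2$-step periodicity combined with this boundedness below forces, by a straightforward induction on $i$, that $H^i(\mrm{cone}(\alpha_\pi))\simeq_{\mrm{al}} 0$ for all $i$, so $\alpha_\pi$ is an almost isomorphism.

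For \emph{(2)}, I use the Leray spectral sequence for the projection of fibred sites $\fal^{\proet,\bb{N}}_{Y\to X}\to \bb{N}$, which gives
\begin{align*}
\rr\Gamma(\fal^{\proet,\bb{N}}_{Y\to X},\breve{\falb})\;\cong\;\rr\plim_n\rr\Gamma(\fal^\proet_{Y\to X},\falb/p^n\falb).
\end{align*}
The hypothesis provides a compatible morphism of inverse systems $(A/p^n A)_n\to (\rr\Gamma(\fal^\proet,\falb/p^n\falb))_n$ whose termwise cone is almost zero. The category of almost-zero $\ca{O}_K$-modules is a Serre subcategory stable under products and $\rr^1\plim$, so $\rr\plim$ of a termwise almost-zero system remains almost zero. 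Thus the induced map $\rr\plim_n A/p^n A\to \rr\Gamma(\fal^{\proet,\bb{N}}_{Y\to X},\breve{\falb})$ is an almost isomorphism; the left side is just $\widehat{A}$ because the transition maps $A/p^{n+1}A\twoheadrightarrow A/p^n A$ are surjective (so $\rr^1\plim$ vanishes). The main obstacle I expect is the periodicity-to-vanishing step for $\mrm{cone}(\alpha_\pi)$ in part (1): one must be careful to extract a genuine cohomological isomorphism from the derived-category relation and use the sharp lower bound $H^i=0$ for $i\leq -2$ rather than a weaker one, as any shift error would propagate and break the induction.
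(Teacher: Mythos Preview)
Your proof is correct and follows essentially the same strategy as the paper. For (1), both you and the paper exploit a factorization $p^N=\pi\cdot\pi'$ (your $s$ is the paper's $\pi'$) and the two resulting short exact sequences; the paper separates the $H^0$ case (via a diagram chase showing both $\alpha_\pi$ and $\alpha_{\pi'}$ are almost injective, hence almost isomorphic on $H^0$) from the $H^{q>0}$ case (via the connecting map $H^q(\falb/\pi)\to H^{q+1}(\falb/\pi')$ being an almost isomorphism and an induction), whereas your cone-periodicity $\mrm{cone}(\alpha_\pi)\simeq_{\mrm{al}}\mrm{cone}(\alpha_\pi)[2]$ together with the vanishing for $i\le -2$ handles both at once --- a cleaner packaging of the same content. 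For (2), the paper uses the Milnor short exact sequence \cite[\Luoma{6}.7.10]{abbes2016p} in each degree, which is exactly the abelian shadow of your $\rr\plim$ argument; your claim that almost-zero modules are stable under products and $\rr^1\plim$ is precisely what makes the Milnor sequence conclude.
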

\begin{proof}
	(\ref{item:lem-acyclic-falsite1}) There exists an integer $n>0$ such that $\pi'=p^n/\pi$ is a pseudo-uniformizer of $K$. Since $A$ and $\falb$ are flat over $\ca{O}_K$, we have a natural morphism of exact sequences
	\begin{align}
		\xymatrix{
			0\ar[r]& A/\pi'A\ar[d]^-{\alpha_1}\ar[r]& A/p^nA\ar[r]\ar[d]^-{\alpha_2} & A/\pi A\ar[r]\ar[d]^-{\alpha_3}&0\\
			0\ar[r]& H^0(\fal_{Y\to X}^\proet,\falb/\pi'\falb)\ar[r]& H^0(\fal_{Y\to X}^\proet,\falb/p^n\falb)\ar[r] & H^0(\fal_{Y\to X}^\proet,\falb/\pi\falb)&
		}
	\end{align}
	By definition, $\alpha_2$ is an almost isomorphism. Thus, $\alpha_1$ is almost injective. Since any pseudo-uniformizer of $K$ is of the form $\pi'=p^n/\pi$ for some pseudo-uniformizer $\pi$ of $K$ and $n>0$, $\alpha_3$ is almost injective. By diagram chasing, we see that $\alpha_1$ is an almost isomorphism (and thus so is $\alpha_3$). It remains to show that $H^q(\fal_{Y\to X}^\proet,\falb/\pi\falb)$ is almost zero for $q>0$. Since $H^q(\fal_{Y\to X}^\proet,\falb/p^n\falb)$ is almost zero. By the long exact sequence associated to the short exact sequence $0\to \falb/\pi'\falb\to \falb/p^n\falb\to \falb/\pi \falb\to 0$, we see that $H^1(\fal_{Y\to X}^\proet,\falb/\pi'\falb)$ is almost zero and that $H^q(\fal_{Y\to X}^\proet,\falb/\pi\falb)\to H^{q+1}(\fal_{Y\to X}^\proet,\falb/\pi'\falb)$ is an almost isomorphism. By induction, we complete the proof.

	(\ref{item:lem-acyclic-falsite2}) Recall that for any integer $q\geq 0$ there exists a canonical exact sequence (\cite[\Luoma{6}.7.10]{abbes2016p})
	\begin{align}
		0\to \rr^1\lim_{n\to \infty} H^{q-1}(\fal^\proet_{Y\to X},\falb/p^n\falb)\to H^q(\fal^{\proet,\bb{N}}_{Y\to X},\breve{\falb})\to \lim_{n\to \infty} H^{q}(\fal^\proet_{Y\to X},\falb/p^n\falb)\to 0.
	\end{align}
	The conclusion follows from the almost isomorphisms \eqref{eq:5.5.1}.
\end{proof}

\begin{myprop}\label{prop:acyclic-diff}
	Let $K$ be a pre-perfectoid field of mixed characteristic $(0,p)$, $Y\to X$ a morphism of coherent schemes such that $Y\to X^Y$ is over $\spec(K)\to \spec(\ca{O}_K)$, $A=\falb(Y\to X)$. Assume that $Y\to X$ is Faltings acyclic and that $X=\spec(R)$ with $R$ being $p$-torsion free. Let $\ca{M}$ be an $\ca{O}_X$-module of finite presentation with $M=\ca{M}(X)$ such that $M[1/p]$ is a projective $R[1/p]$-module. Then, the canonical morphism
	\begin{align}
		M\otimes_R \widehat{A}[\frac{1}{p}]\longrightarrow \rr\Gamma(\fal^{\proet,\bb{N}}_{Y\to X},\breve{\sigma}^*\breve{\ca{M}})[\frac{1}{p}]
	\end{align}
	is an isomorphism, where $\breve{\sigma}: (\fal^{\proet,\bb{N}}_{Y\to X},\breve{\falb})\to (X_\proet^{\bb{N}},\breve{\ca{O}}_{X_\proet})$ is defined in {\rm\ref{para:psi-sigma-nu-breve}} and $\breve{\ca{M}}=\ca{M}\otimes_{\ca{O}_{X_\proet}} \breve{\ca{O}}_{X_\proet}=(\ca{M}/p^{n}\ca{M})_{n\geq 0}$.
\end{myprop}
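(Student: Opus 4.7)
The plan is to reduce the claim to the case $\ca{M}=\ca{O}_X$, which is essentially the Faltings acyclicity hypothesis, by exploiting the projectivity of $M[1/p]$ over $R[1/p]$ to realize $\ca{M}$ as a direct summand of a free module in the category of $\ca{O}_X$-modules up to isogeny (see \ref{para:isogeny}).

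For $M=R$ the morphism $\breve{\sigma}$ of ringed sites yields $\breve{\sigma}^*\breve{\ca{O}}_X=\breve{\falb}$, so the canonical arrow reduces to $\widehat{A}\to \rr\Gamma(\fal^{\proet,\bb{N}}_{Y\to X},\breve{\falb})$, which is an almost isomorphism by \ref{lem:acyclic-falsite}.(\ref{item:lem-acyclic-falsite2}). Since $K$ has mixed characteristic $(0,p)$, we have $p\in\ak{m}_K$, so any almost zero $\ca{O}_K$-module is killed by $p$ and vanishes after inverting $p$; hence the almost isomorphism becomes a genuine isomorphism after inverting $p$. The case $\ca{M}=\ca{O}_X^n$ then follows by additivity, since both the source functor $F(\ca{M}):=M\otimes_R\widehat{A}[1/p]$ and the target functor $G(\ca{M}):=\rr\Gamma(\fal^{\proet,\bb{N}}_{Y\to X},\breve{\sigma}^*\breve{\ca{M}})[1/p]$ commute with finite direct sums.

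For the general case, the projectivity of $M[1/p]$ over $R[1/p]$ supplies, after clearing denominators in a splitting of a surjection $R[1/p]^n\twoheadrightarrow M[1/p]$, $R$-linear maps $\alpha:M\to R^n$ and $\beta:R^n\to M$ with $\beta\circ\alpha=p^k\cdot\id_M$ for some integer $k\geq 0$. Both functors $F$ and $G$ factor through the category of $\ca{O}_X$-modules up to isogeny: for $F$ this is immediate since $p$ is invertible in $\widehat{A}[1/p]$, and for $G$ it is because any $p^\ell$-torsion $\ca{O}_X$-module $\ca{C}$ produces a $p^\ell$-torsion sheaf $\breve{\sigma}^*\breve{\ca{C}}$, whose $\rr\Gamma$ is $p^\ell$-torsion and hence dies after inverting $p$. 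In this localized setting $\alpha$ exhibits $\ca{M}$ as a direct summand of $\ca{O}_X^n$ with retraction $p^{-k}\beta$, and by functoriality the comparison morphism $F(\ca{M})\to G(\ca{M})$ is a retract, in the arrow category, of the isomorphism $F(\ca{O}_X^n)\to G(\ca{O}_X^n)$ established above. Since retracts of isomorphisms in an additive category are isomorphisms, we conclude.

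The main technical point to verify is the additivity and isogeny-invariance of $G$; the hypothesis that $\ca{M}$ is of finite presentation is precisely what guarantees that $\breve{\sigma}^*\breve{\ca{M}}$ has a well-behaved description compatible with finite direct sums and with reduction modulo $p^n$, so these properties ultimately follow from the additivity of $\breve{\sigma}^*$ and $\rr\Gamma$.
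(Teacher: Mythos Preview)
Your proof is correct and follows essentially the same approach as the paper: both reduce to the free case via Faltings acyclicity (Lemma~\ref{lem:acyclic-falsite}), then exploit the projectivity of $M[1/p]$ to relate $\ca{M}$ to a free module up to $p$-isogeny. The only cosmetic difference is that the paper brings in $N=\ker(R^{\oplus n}\to M)$ and constructs a two-sided isogeny $\varphi,\psi$ between $M\oplus N$ and $R^{\oplus n}$ with $\varphi\psi=\psi\varphi=p^{4k}\id$, whereas your one-sided retract $\beta\alpha=p^k\id_M$ already suffices and is slightly cleaner.
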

\begin{proof}
	Let $N$ be the kernel of a surjective $R$-linear homomorphism $\phi:R^{\oplus n}\to M$. We take a splitting $R^{\oplus n}[1/p]=M[1/p]\oplus N[1/p]$. Composing with the inclusion $(M\oplus N)\subseteq (M[1/p]\oplus N[1/p])$, we get an injective map $\widetilde{\phi}: M\oplus N\to R^{\oplus n}[1/p]$. Since $M$ and $N$ are finitely generated, there exists an integer $k\geq 1$ such that $\varphi=p^k \widetilde{\phi}:M\oplus N\to R^{\oplus n}\subseteq R^{\oplus n}[1/p]$. It is clear that $\varphi$ is injective. We claim that the cokernel of $\varphi$ is killed by $p^{2k}$. Indeed, the composition of the maps $M\stackrel{\varphi}{\longrightarrow}R^{\oplus n}\stackrel{\phi}{\longrightarrow} M$ is $p^k\id_M$. Thus, for any $x\in R^{\oplus n}$, the element $y=p^kx-\varphi(\phi(x))$ of $R^{\oplus n}$ lies in $N$. Thus, $p^{2k}x=\varphi(p^k\phi(x)+y)$, which proves the claim. Then, there exists an $R$-linear homomorphism $\psi:R^n\to M\oplus N$ such that $\varphi\circ \psi= p^{4k}\id_{R^n}$ and $\psi\circ \varphi=p^{4k}\id_{M\oplus N}$ (\cite[2.6.3]{abbes2020suite}). Let $\ca{N}$ be the quasi-coherent $\ca{O}_X$-module associated to $N$. Then, for each integer $q$, the morphism $\varphi$ induces an $\widehat{A}$-linear homomorphism
	\begin{align}\label{eq:2.5.2}
		H^q(\fal^{\proet,\bb{N}}_{Y\to X},\breve{\sigma}^*\breve{\ca{M}})\oplus H^q(\fal^{\proet,\bb{N}}_{Y\to X},\breve{\sigma}^*\breve{\ca{N}})\longrightarrow H^q(\fal^{\proet,\bb{N}}_{Y\to X},\breve{\falb}^{\oplus n}),
	\end{align} 
	whose kernel and cokernel are killed by $p^{4k}$ by the existence of $\psi$. For $q\neq 0$, we have $H^q(\fal^{\proet,\bb{N}}_{Y\to X},\breve{\falb}^{\oplus n})[1/p]=0$ by \ref{lem:acyclic-falsite}.(\ref{item:lem-acyclic-falsite2}), thus $H^q(\fal^{\proet,\bb{N}}_{Y\to X},\breve{\sigma}^*\breve{\ca{M}})[1/p]=0$. For $q=0$, we have $H^0(\fal^{\proet,\bb{N}}_{Y\to X},\breve{\falb}^{\oplus n})[1/p]=\widehat{A}^{\oplus n}[1/p]$ by \ref{lem:acyclic-falsite}.(\ref{item:lem-acyclic-falsite2}). On the other hand, there is a canonical morphism
	\begin{align}
		M\otimes_R \widehat{A} \oplus N\otimes_R \widehat{A} \to H^0(\fal^{\proet,\bb{N}}_{Y\to X},\breve{\sigma}^*\breve{\ca{M}})\oplus H^0(\fal^{\proet,\bb{N}}_{Y\to X},\breve{\sigma}^*\breve{\ca{N}})
	\end{align}
	whose composition with \eqref{eq:2.5.2} is compatible with $\varphi\otimes_R \id_{\widehat{A}}:M\otimes_R \widehat{A}\oplus N\otimes_R \widehat{A}\to \widehat{A}^{\oplus n}$. Thus, $H^0(\fal^{\proet,\bb{N}}_{Y\to X},\breve{\sigma}^*\ca{M})[1/p]=M\otimes_R \widehat{A}[1/p]$.
\end{proof}

\begin{mylem}\label{lem:rel-norm-proet-map}
	Let $Y \to X$ be a morphism of coherent schemes such that $Y \to X^Y$ is an open immersion. Then, the functor
	\begin{align}\label{eq:epsilon}
		\epsilon^+: \fal_{Y \to X}^\proet \longrightarrow \falh_{Y \to X^Y},\ (V \to U) \longmapsto U^V,
	\end{align}
	is well-defined, left exact and continuous. Moreover, we have $Y\times_{X^Y} U^V=V$.
\end{mylem}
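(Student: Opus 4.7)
The plan is to reduce each of the four claims to properties of $\falh_{Y\to X^Y}$ via the identification $\epsilon^+(V\to U)=U^V=(X^Y\times_X U)^V$. For well-definedness, first note that $V$ is integral over $Y\times_X U$ (pro-finite \'etale morphisms are filtered limits of finite \'etale ones, hence integral), so $U^V\to U$ is affine and $U^V$ is coherent. Writing $U=\lim_{\lambda}U_\lambda$ with each $U_\lambda\to X$ \'etale, \ref{lem:rel-norm-commute} gives $U_\lambda^{Y\times_X U_\lambda}=X^Y\times_X U_\lambda$, and passing to the limit via \ref{lem:rel-norm-fil-limit} yields $U^{Y\times_X U}=X^Y\times_X U$; in particular $\ca{O}_U\to\ca{O}_{X^Y\times_X U}$ is integral, and since $V\to U$ factors through $X^Y\times_X U$ (via $V\to Y\hookrightarrow X^Y$), transitivity of integral closures gives $U^V=(X^Y\times_X U)^V$. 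Now $X^Y\times_X U\to X^Y$ has $Y$-fibre $Y\times_X U$, over which $V$ is integral, so \ref{lem:relative-normal}.(\ref{lem:relative-normal-norm}) yields $U^V\in\falh_{Y\to X^Y}$ together with $Y\times_{X^Y}U^V=V$, proving well-definedness and the ``moreover'' assertion.

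For left exactness, the final object $(Y\to X)$ is sent to $X^Y$, the final object of $\falh_{Y\to X^Y}$. For fibered products of a diagram $(V_1\to U_1)\to (V_0\to U_0)\leftarrow (V_2\to U_2)$, \ref{lem:relative-normal-limit} identifies the product $U_1^{V_1}\overline{\times}_{U_0^{V_0}}U_2^{V_2}$ in $\falh_{Y\to X^Y}$ with $(U_1^{V_1}\times_{U_0^{V_0}}U_2^{V_2})^{V_1\times_{V_0}V_2}$. Since each $U_i^{V_i}\to U_i$ is integral, so is $U_1^{V_1}\times_{U_0^{V_0}}U_2^{V_2}\to U_1\times_{U_0}U_2$, and transitivity of integral closures identifies this with $(U_1\times_{U_0}U_2)^{V_1\times_{V_0}V_2}=\epsilon^+((V_1\times_{V_0}V_2)\to (U_1\times_{U_0}U_2))$.

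For continuity, I will check the two types of generating coverings separately. For a vertical covering $\{(V_m\to U)\to (V\to U)\}_{m\in M}$, each $U^{V_m}$ is (by transitivity) the integral closure of $U^V$ in $V_m$, so $\coprod_m U^{V_m}\to U^V$ is integral; moreover $V$ is schematically dense in $U^V$ by \ref{lem:relative-normal}.(\ref{lem:relative-normal-open}), and $\coprod_m V_m\to V$ is surjective by hypothesis. Applying \ref{prop:normal-v-cov} with $X=X'=U^V$, $Z=U^V$, $Z'=\coprod_m U^{V_m}$, $Y=V$, $Y'=\coprod_m V_m$ (and $X'\to X$ the identity, which is trivially a v-covering) concludes that $\coprod_m U^{V_m}\to U^V$ is a v-covering. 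For a Cartesian covering $\{((V\times_U U_n)\to U_n)\to (V\to U)\}_{n\in N}$ with $\coprod_n U_n\to U$ pro-\'etale surjective, the compatibility of integral closure with pro-\'etale base change (\ref{lem:rel-norm-commute} combined with \ref{lem:rel-norm-fil-limit}) together with $U^V=(X^Y\times_X U)^V$ yields $U_n^{V\times_U U_n}=U^V\times_U U_n$; hence $\coprod_n U_n^{V\times_U U_n}\to U^V$ is the base change of the pro-\'etale surjection $\coprod_n U_n\to U$, which is faithfully flat and surjective, and so is a v-covering by \ref{lem:arc-parc}.(\ref{lem:arc-parc-v}).

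The main obstacle is the vertical case of continuity, which requires upgrading a surjection of the $V_m$'s into a v-covering of the ``larger'' integrally-closed envelopes $U^{V_m}$. This is resolved cleanly by \ref{prop:normal-v-cov} once one notices that the schematic density of $V$ in $U^V$ and the integrality of each $U^{V_m}\to U^V$ place us exactly in the hypotheses of that proposition (with the external v-cover being trivial).
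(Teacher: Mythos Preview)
Your proof is correct and matches the paper's argument for well-definedness, left exactness, and the vertical covering case (the paper applies \ref{prop:normal-v-cov} with $X=X'=U$ rather than $X=X'=U^V$, but the difference is cosmetic). The one genuine deviation is in the Cartesian case: the paper again invokes \ref{prop:normal-v-cov}, applied to the diagram
\[
\xymatrix{
\coprod_{n\in N} V \times_U U_n\ar[r]\ar[d]& \coprod_{n\in N}U_n^{V \times_U U_n}\ar[r]\ar[d]& \coprod_{n\in N}U_n\ar[d]\\
V\ar[r]& U^V\ar[r] & U
}
\]
with $\coprod_n U_n\to U$ as the ambient v-covering, whereas you bypass \ref{prop:normal-v-cov} entirely by first establishing the explicit identification $U_n^{V\times_U U_n}=U^V\times_U U_n$ (via \ref{lem:rel-norm-commute} and \ref{lem:rel-norm-fil-limit}) and then noting that the map in question is a base change of a faithfully flat surjection. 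Your route is slightly more direct and records a useful compatibility statement along the way; the paper's route has the virtue of treating both types of covering uniformly with a single tool.
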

\begin{proof}
	Since $U'=X^Y \times_X U$ is integral over $U$, we have $U^V = U'^V$. Applying \ref{lem:relative-normal}.(\ref{lem:relative-normal-norm}) to $V \to U'$ over $Y \to X^Y$, we see that the $X^Y$-scheme $U^V$ is $Y$-integrally closed with $Y \times_{X^Y} U^V = V$, and thus the functor $\epsilon^+$ is well-defined. Let $(V_1 \to U_1) \to (V_0 \to U_0) \leftarrow (V_2 \to U_2)$ be a diagram in $\fal_{Y \to X}^\proet$. By \ref{lem:relative-normal-limit}, $U_1^{V_1} \overline{\times}_{U_0^{V_0}} U_2^{V_2} = (U_1^{V_1} \times_{U_0^{V_0}} U_2^{V_2})^{V_1\times_{V_0} V_2} = (U_1 \times_{U_0} U_2)^{V_1\times_{V_0} V_2}$ which shows the left exactness of $\epsilon^+$. For the continuity, notice that any covering in $\fal_{Y \to X}^\proet$ can be refined by a standard covanishing covering (\ref{prop:covcov}). For a Cartesian covering family $\ak{U}=\{(V \times_U U_n \to U_n) \to (V \to U)\}_{n \in N}$ with $N$ finite, we apply \ref{prop:normal-v-cov} to the commutative diagram  
	\begin{align}
		\xymatrix{
			\coprod_{n\in N} V \times_U U_n\ar[r]\ar[d]& \coprod_{n\in N}U_n^{V \times_U U_n}\ar[r]\ar[d]& \coprod_{n\in N}U_n\ar[d]\\
			V\ar[r]& U^V\ar[r] & U
		}
	\end{align}
	then we see that $\epsilon^+(\ak{U})$ is a covering family in $\falh_{Y \to X^Y}$. For a vertical covering family $\ak{U}=\{(V_{m} \to U) \to (V \to U) \}_{m \in M}$ with $M$ finite, we apply \ref{prop:normal-v-cov} to the commutative diagram  
	\begin{align}
		\xymatrix{
			\coprod_{m\in M} V_m\ar[r]\ar[d]& \coprod_{m\in M}U^{V_m}\ar[r]\ar[d]& U\ar[d]\\
			V\ar[r]& U^V\ar[r] & U
		}
	\end{align}
	then we see that $\epsilon^+(\ak{U})$ is also a covering family in $\falh_{Y \to X^Y}$.
\end{proof}

\begin{mypara}\label{par:epsilon-mor}
	Let $Y \to X$ be a morphism of coherent schemes such that $Y \to X^Y$ is an open immersion. Then, there are morphisms of sites
	\begin{align}
		\epsilon : \falh_{Y \to X^Y} &\longrightarrow \fal_{Y \to X}^\proet,\\
		\varepsilon : \falh_{Y \to X^Y} &\longrightarrow \fal_{Y \to X}^\et
	\end{align}
	defined by \eqref{eq:epsilon} and the composition of \eqref{eq:epsilon} with \eqref{eq:3.6.1} respectively. We temporarily denote by $\falhb^{\mrm{pre}}$ the presheaf on $\falh_{Y \to X^Y}$ sending $W$ to $\Gamma(W,\ca{O}_W)$ (thus $\falhb=(\falhb^{\mrm{pre}})^\ash$). Notice that we have $\falb=\epsilon^\psh\falhb^{\mrm{pre}}$ (resp. $\falb=\varepsilon^\psh\falhb^{\mrm{pre}}$). The canonical morphism $\epsilon^\psh\falhb^{\mrm{pre}}\to \epsilon^\psh\falhb$ (resp. $\varepsilon^\psh\falhb^{\mrm{pre}}\to \varepsilon^\psh\falhb$) induces a canonical morphism $\falb \to \epsilon_*\falhb$ (resp. $\falb \to \varepsilon_*\falhb$).
\end{mypara}

\begin{mypara}\label{para:notation}
	Let $K$ be a pre-perfectoid field (\ref{defn:perfectoid-field}) of mixed characteristic $(0, p)$, $\eta = \spec(K)$, $S = \spec (\ca{O}_K)$, $Y \to X$ a morphism of coherent schemes such that $X^Y$ is an $S$-scheme with generic fibre $(X^Y)_\eta=Y$. In particular, $X^Y$ is an object of $\falh_{\eta\to S}$.
\end{mypara}

\begin{mylem}\label{lem:profet-tower}
	For any ring $R$, there is an $R$-algebra $R_\infty$ satisfying the following conditions:
	\begin{enumerate}
		\item[\rm(i)] The scheme $\spec(R_\infty[1/p])$ is pro-finite \'etale and  faithfully flat over $\spec(R[1/p])$.
		\item[\rm(ii)] The $R$-algebra $R_\infty$ is the integral closure of $R$ in $R_\infty[1/p]$.
		\item[\rm(iii)] Any unit $t$ of $R_\infty$ admits a $p$-th root $t^{1/p}$ in $R_\infty$.
	\end{enumerate}
	Moreover, if $p$ lies in the Jacobson radical $J(R)$ of $R$, and if there is a $p^2$-th root $p_2 \in R$ of $p$ up to a unit, and we write $p_1=p_2^p$, then we may require further that
	\begin{enumerate}
		\item[\rm(iv)] the Frobenius of $R_\infty/p R_\infty$ induces an isomorphism $R_\infty/p_1R_\infty \to R_\infty/p R_\infty$.
	\end{enumerate} 
\end{mylem}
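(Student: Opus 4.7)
The plan is to build $R_\infty$ as a filtered colimit of a tower $R = R_0 \to R_1 \to R_2 \to \cdots$ obtained by iteratively adjoining $p$-th roots of units. Given $R_n$, set
\begin{align*}
A_{n+1} := R_n[1/p][T_u : u \in R_n^\times]/(T_u^p - u : u \in R_n^\times),
\end{align*}
a filtered colimit of finite \'etale faithfully flat $R_n[1/p]$-algebras (each factor $R_n[1/p][T]/(T^p - u)$ is \'etale since $T$ is a unit there and $p$ is inverted), and let $R_{n+1}$ be the integral closure of $R_n$ in $A_{n+1}$. Define $R_\infty := \colim_n R_n$; equivalently this is the integral closure of $R$ in $R_\infty[1/p] := \colim_n A_n$, and in particular $R_\infty$ is $p$-torsion free.

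Properties (i)--(iii) follow directly. For (i), $R_\infty[1/p]$ is a filtered colimit of finite \'etale faithfully flat $R[1/p]$-algebras. For (ii), any element of $R_\infty[1/p]$ integral over $R_\infty$ satisfies a monic relation with coefficients in some $R_n$, and lies in $R_m[1/p] = A_m$ for large $m$; by the integral-closure step of the construction it then belongs to $R_m \subseteq R_\infty$. For (iii), a unit $t \in R_\infty^\times$ and its inverse are both represented in some $R_N$, where $tt^{-1} = 1$ already holds (since $R_N$ embeds in $A_N$ by $p$-torsion freeness), so $t \in R_N^\times$ and its freely adjoined $p$-th root $T_t$ lies in $R_{N+1} \subseteq R_\infty$.

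For (iv), assume $p \in J(R)$ and write $p_1^p = vp$ with $v \in R^\times$. By going-up for the integral extensions $R \subseteq R_n$ one gets $p \in J(R_n)$, whence $p_1 \in J(R_n)$ (as $p_1^p \in pR$); so $1 + p_1 a \in R_\infty^\times$ for every $a \in R_\infty$. Injectivity of Frobenius $R_\infty/p_1 \to R_\infty/p$ is immediate: if $b^p \in pR_\infty$, then $y := b/p_1 \in R_\infty[1/p]$ satisfies $y^p = v\,b^p/p \in R_\infty$, so $y$ is integral over $R_\infty$ and by (ii) lies in $R_\infty$, giving $b \in p_1 R_\infty$. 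For surjectivity, given $a \in R_\infty$ set $c := (1 + p_1 a)^{1/p} \in R_\infty$ (via (iii)); substituting $c = 1 + p_2 x$ into $c^p = 1 + p_2^p a$ and dividing by $p_2^p$ produces a monic relation for $x = (c-1)/p_2$ over $R_\infty$ whose non-leading coefficients are $\tfrac{1}{p}\binom{p}{k}v^{-1}p_2^{p^2 - p + k}$ ($1 \le k \le p-1$), all lying in $R$; thus $x \in R_\infty$ by (ii), and rewriting yields $a = x^p + p_1^{p-1} a'$ with an explicit $a' \in R_\infty$. Iterating the trick on $a'$ produces $x', a'' \in R_\infty$ with $a' = x'^p + p_1^{p-1} a''$, hence
\begin{align*}
a = x^p + (p_2^{p-1} x')^p + p_1^{2(p-1)} a''
\end{align*}
using $p_1^{p-1} = (p_2^{p-1})^p$. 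Since $2p(p-1) \ge p^2$ for $p \ge 2$, we have $p_1^{2(p-1)} \in pR$, and the freshman-dream identity $(y+z)^p \equiv y^p + z^p \pmod{p}$ yields $a \equiv (x + p_2^{p-1} x')^p \pmod{p R_\infty}$.

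The crux of (iv) is that a single application of the $p$-th root trick only gives $x^p \equiv a \pmod{p_1^{p-1}}$, which is strictly weaker than mod $pR_\infty$ since $p \in p_1^p R \subsetneq p_1^{p-1} R$. Iterating exactly twice is what is needed: the first-step error is a multiple of $p_1^{p-1} = (p_2^{p-1})^p$, which is itself a genuine $p$-th power and can be absorbed into the adjusted candidate $b = x + p_2^{p-1} x'$, while the residual second-step error lives in $p_1^{2(p-1)} R \subseteq pR$.
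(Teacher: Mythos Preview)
Your construction of $R_\infty$ and the verification of (i)--(iii) are the same as the paper's, and your injectivity argument for (iv) is identical (modulo the harmless typo $v$ versus $v^{-1}$). The surjectivity argument in (iv) is a minor variant: the paper also extracts two $p$-th roots, but after writing $b^p = 1 + p_1 a$ and $(b-1)^p = p_1 a'$ with $a' \in a + p_1 R_\infty$, it takes the second root of $1 + (a' - a)$ rather than iterating the identical step on the error term; setting $x = (b-1)/p_2$ and $c^p = 1 + a' - a$ one gets $(x - c + 1)^p \equiv a \pmod{p}$ directly, without needing the numerical check $2p(p-1) \ge p^2$. Both arguments are correct and of the same length and difficulty; yours is slightly more mechanical (repeat and bound the error), the paper's slightly slicker (correct the error with a tailored root).
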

\begin{proof}
	Setting $B_0 = R[1/p]$, we construct inductively a ring $B_{n+1}$ ind-finite \'etale over $B_n$ and we denote by $R_{n}$ the integral closure of $R$ in $B_{n}$. For $n\geq 0$, we set
	\begin{align}
		B_{n+1} = \colim_{T\subseteq R_n^\times} \bigotimes\nolimits_{B_n}^{t \in T} B_n[X]/(X^p-t)
	\end{align}
	where the colimit runs through all finite subsets $T$ of the subset $R_n^\times$ of units of $R_n$ and the transition maps are given by the inclusion relation of these finite subsets $T$. Notice that $B_n[X]/(X^p-t)$ is finite \'etale and faithfully flat over $B_n$, thus $B_{n+1}$ is ind-finite \'etale and faithfully flat over $B_n$. Now we take $B_\infty = \colim_n B_n$. The integral closure $R_\infty$ of $R$ in $B_\infty$ is equal to $\colim_n R_n$ by \ref{lem:rel-norm-fil-limit}. We claim that $R_\infty$ satisfies the first three conditions. Firstly, we see inductively that $B_n = R_n[1/p]$ ($0\leq n\leq \infty$) by \ref{lem:rel-norm-commute}. Thus, (i), (ii) follow immediately. For (iii), notice that we have $R_\infty^\times = \colim_n R_n^\times$. For an unit $t\in R_\infty^\times$, we suppose that it is the image of $t_n \in R_n^\times$. By construction, there exists an element $x_{n+1} \in R_{n+1}$ such that $x_{n+1}^p=t_n$. Thus, $t$ admits a $p$-th root in $R_\infty$.
	
	For (iv), the injectivity follows from the fact that $R_\infty$ is integrally closed in $R_\infty[1/p]$ (cf. \ref{lem:p-clos}). For the surjectivity, let $a \in R_\infty$. Firstly, since $R_\infty$ is integral over $R$, $p$ also lies in the Jacobson radical $J(R_\infty)$ of $R_\infty$. Thus, $1+p_1a \in R_\infty^\times$, and then by (iii) there is $b \in R_\infty$ such that $b^p = 1+p_1a$. We write $(b-1)^p=p_1a'$ for some $a' \in a+ p_1R_\infty$. Thus, $1+a'-a \in R_\infty^\times$, and then by (iii) there is $c \in R_\infty$ such that $c^p = 1+a'-a$. On the other hand, since $R_\infty$ is integrally closed in $R_\infty[1/p]$, we have $x= (b-1)/p_2 \in R_\infty$. Now we have $(x-c+1)^p\equiv x^p-c^p+1 \equiv a\ (\trm{mod } pR_\infty)$, which completes the proof. 
\end{proof}
\begin{myrem}
	In \ref{lem:profet-tower}, it follows from the construction that $\spec(R_\infty[1/p])\to \spec(R[1/p])$ is a covering in $\spec(R[1/p])_\profet^{\mrm{S}}$ (\ref{para:compare}).
\end{myrem}

\begin{myprop}\label{prop:pre-perf-base}
	With the notation in {\rm\ref{para:notation}}, for any object $V \to U$ in $\fal_{Y \to X}^\proet$, there exists a covering $\{(V_i \to U_i) \to (V \to U)\}_{i \in I}$ with $I$ finite such that for each $i \in I$, $U_i^{V_i}$ is the spectrum of an $\ca{O}_K$-algebra which is almost pre-perfectoid {\rm(\ref{defn:pre-alg})}.
\end{myprop}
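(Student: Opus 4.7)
The plan is to reduce the assertion to the construction of \ref{lem:profet-tower} applied to the ring of global sections of $U^V$, and then to verify that the resulting tower is almost pre-perfectoid. The main obstacle will be the almost surjectivity of Frobenius on $A_\infty/p A_\infty$, which cannot be invoked by a direct application of \ref{lem:profet-tower}(iv) and must be proved via an approximation argument in the $p$-adic completion.

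First I would reduce to the case where $U = \spec(R_U)$ is affine. A finite affine open cover $U = \bigcup_{j \in J} U_j$ produces a Cartesian (type (c)) covering $\{(V \times_U U_j \to U_j) \to (V \to U)\}_{j \in J}$ in $\fal_{Y\to X}^{\proet}$ (\ref{para:proet-falsite}), so it suffices to handle each piece. In the affine case, let $A = \Gamma(U^V,\ca{O}_{U^V})$, so $U^V = \spec(A)$. The hypothesis $(X^Y)_\eta = Y$ from \ref{para:notation} together with \ref{lem:rel-norm-proet-map} identifies $V = \spec(A[1/\pi])$ for any pseudo-uniformizer $\pi$ of $K$; in particular $V = \spec(A[1/p])$.

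Next I would apply \ref{lem:profet-tower} to the $\ca{O}_K$-algebra $A$ to produce an $\ca{O}_K$-algebra $A_\infty$ satisfying properties (i)--(iii): $A_\infty[1/p]$ is pro-finite \'etale and faithfully flat over $A[1/p]$, $A_\infty$ is integrally closed in $A_\infty[1/p]$, and every unit of $A_\infty$ admits a $p$-th root. Setting $V_\infty = \spec(A_\infty[1/p])$ and keeping $U$ fixed, the morphism $(V_\infty \to U) \to (V \to U)$ is a type-(v) covering of $\fal_{Y\to X}^{\proet}$ by the faithful flatness of $V_\infty \to V$; moreover, since $A_\infty$ is integrally closed in $A_\infty[1/p]$ and integral over $R_U$, we have $U^{V_\infty} = \spec(A_\infty)$. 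The task is therefore reduced to showing that $A_\infty$ is almost pre-perfectoid. Flatness is essentially automatic: $A_\infty$ is $p$-torsion free because it embeds into $A_\infty[1/p]$, hence $\ca{O}_K$-flat by \ref{lem:flat}, and then $\widehat{A_\infty}$ is almost flat over $\ca{O}_{\widehat{K}}$ by \ref{lem:completion-flat}. Fix a $p^2$-th root $p_2 \in \ca{O}_K$ of $p$ up to a unit, supplied by \ref{lem:adm-uniformizer}, and set $p_1 = p_2^p$. The remaining requirement is that Frobenius induces an almost isomorphism $A_\infty/p_1 A_\infty \to A_\infty/p A_\infty$; injectivity is actually strict, since property (ii) of \ref{lem:profet-tower} places $A_\infty$ in the situation of condition~(2) of \ref{lem:p-clos}.

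The main obstacle is almost surjectivity of Frobenius. Property (iv) of \ref{lem:profet-tower} is not directly available: it requires $p \in J(A)$, which we have not arranged, and without this the elements $1 + p_1 a$ exploited in the proof of (iv) need not be units of $A_\infty$, so property (iii) does not apply to them. The strategy is to pass to the $p$-adic completion $\widehat{A_\infty}$, which automatically satisfies $p \in J(\widehat{A_\infty})$, and to reproduce the argument of (iv) there, then transfer the conclusion back to $A_\infty$ via the canonical isomorphism $A_\infty/p^n A_\infty \cong \widehat{A_\infty}/p^n \widehat{A_\infty}$. The delicate technical step is to produce, for every $a \in \widehat{A_\infty}$, a $p$-th root of $1 + p_1 a \in \widehat{A_\infty}^{\times}$ inside $\widehat{A_\infty}$, combining the $p$-th roots of units available in $A_\infty$ via property (iii) with a telescoping $p$-adic approximation. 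This successive-approximation step bridging (iii) of $A_\infty$ to $\widehat{A_\infty}$ is the heart of the proof.
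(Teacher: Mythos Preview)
Your approach has a genuine gap at the ``telescoping $p$-adic approximation'' step, and in fact the algebra $A_\infty$ you produce from \ref{lem:profet-tower} need not be almost pre-perfectoid when $p \notin J(A)$. Take $K$ with $\ca{O}_K$ an absolutely integrally closed valuation ring (so that $\ca{O}_K^\times$ is $p$-divisible and $\zeta_p \in K$), set $X = \spec(\ca{O}_K[x])$, $Y = \spec(K[x])$, and $(V \to U) = (Y \to X)$, so $A = \ca{O}_K[x]$. Since $\ca{O}_K[x]^\times = \ca{O}_K^\times$, each extension $K[x][T]/(T^p - t)$ for $t \in \ca{O}_K^\times$ splits as $K[x]^p$; iterating, $A_\infty$ is a filtered colimit of finite products of $\ca{O}_K[x]$, and its units remain tuples in $\ca{O}_K^\times$, which are already $p$-divisible. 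But the diagonal element $x \in A_\infty/p$ is not almost a $p$-th power: in each factor $(\ca{O}_K/p)[x]$, a $p$-th power involves only monomials $x^{pi}$, so $\epsilon x$ is never in the image of Frobenius for $\epsilon \in \ak{m}_K \setminus p\ca{O}_K$. Thus $A_\infty$ is not almost pre-perfectoid, and no approximation argument in $\widehat{A_\infty}$ can repair this: the roots you need simply do not exist. The underlying issue is that property~(iii) of \ref{lem:profet-tower} gives $p$-th roots only for units of $A_\infty$, while the elements $1 + p_1 a$ required in the proof of~(iv) are units of $A_\infty$ only when $p \in J(A_\infty)$; passing to $\widehat{A_\infty}$ makes them units but does not transfer property~(iii).

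The paper avoids this by refining $U$ \emph{before} invoking \ref{lem:profet-tower}. After reducing to $U = \spec(A)$ affine, one covers $U$ in $U_\proet$ by $\spec(A^{\mrm h}) \amalg \spec(A[1/p])$, where $A^{\mrm h}$ is the Henselization of $(A, pA)$ realized as the filtered colimit of \'etale $A$-algebras $B$ with $A/p \iso B/p$. Over $\spec(A[1/p])$ the $p$-adic completion of $\Gamma(U^V, \ca{O}_{U^V})$ vanishes and the claim is vacuous. Over $\spec(A^{\mrm h})$ one has $p \in J(A^{\mrm h})$, hence $p \in J(R)$ for $R = \Gamma(U^V, \ca{O}_{U^V})$ since $R$ is integral over $A^{\mrm h}$; now \ref{lem:profet-tower} applied to $R$ yields $R_\infty$ satisfying~(iv), and $R_\infty$ is almost pre-perfectoid by \ref{lem:adm-uniformizer} and \ref{lem:completion-flat}. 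The Henselization step is not a convenience but the essential reduction.
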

\begin{proof}	
	After replacing $U$ by an affine open covering, we may assume that $U=\spec(A)$. Consider the category $\scr{C}$ of \'etale $A$-algebras $B$ such that $A/pA \to B/pB$ is an isomorphism, and the colimit $A^\mrm{h} = \colim B$ over $\scr{C}$. In fact, $\scr{C}$ is filtered and $(A^\mrm{h},pA^\mrm{h})$ is the Henselization of the pair $(A,pA)$ (cf. \cite[\href{https://stacks.math.columbia.edu/tag/0A02}{0A02}]{stacks-project}). It is clear that $\spec(A^\mrm{h}) \coprod \spec(A[1/p]) \to \spec(A)$ is a covering in $U_\proet$. So we reduce to the situation where $p \in J(A)$ or $p \in A^\times$. The latter case is trivial, since the $p$-adic completion of $R= \Gamma(U^V,\ca{O}_{U^V})$ is zero if $p$ is invertible in $A$. Therefore, we may assume that $p \in J(A)$ in the following.
	
	Since $R= \Gamma(U^V,\ca{O}_{U^V})$ is integral over $A$, we also have $p \in J(R)$. Applying \ref{lem:profet-tower} to the $\ca{O}_K$-algebra $R$, we obtain a covering $V_\infty=\spec (R_\infty[1/p]) \to V=\spec (R[1/p])$ in $V_\profet$ such that $R_\infty = \Gamma(U^{V_\infty},\ca{O}_{U^{V_\infty}})$ is an $\ca{O}_K$-algebra which is almost pre-perfectoid by \ref{lem:adm-uniformizer} and \ref{lem:completion-flat}.
\end{proof}

\begin{myprop}\label{prop:coh-vanish-preperf}
	With the notation in {\rm\ref{para:notation}}, if $W$ is an object of $\falh_{\eta \to S}$ such that $W$ is the spectrum of an $\ca{O}_K$-algebra which is almost pre-perfectoid, then for any integer $n>0$, the canonical morphism
	\begin{align}
		\Gamma(W,\ca{O}_W)/p^n\Gamma(W,\ca{O}_W) \to \rr\Gamma(\falh_{W_\eta\to W}, \falhb/p^n\falhb)
	\end{align}
	is an almost isomorphism {\rm(\ref{para:almost-derived})}.
\end{myprop}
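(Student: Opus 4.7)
The plan is to compute the cohomology of $\falhb/p^n\falhb$ on $\falh_{W_\eta\to W}$ using \v{C}ech complexes arising from a v-cover of $W$ by the spectrum of an almost pre-perfectoid algebra whose iterated $\falh$-fibered products remain of the same form, and then to invoke the arc-descent of perfectoid algebras (Theorem \ref{thm:arc-descent-perf}) to obtain almost exactness. The first step is to produce a v-cover $W_0 = \spec(A_0)\to W$ in $\falh_{W_\eta\to W}$ with $A_0$ almost pre-perfectoid. By \ref{prop:prod-val-cov} applied to the open immersion $W_\eta \to W$, we may take $A_0$ to be a product of absolutely integrally closed $\ca{O}_K$-valued valuation rings; each factor is almost pre-perfectoid (Frobenius modulo $p$ is surjective because the fraction field is algebraically closed, and $p$-torsion freeness is automatic), and then \ref{lem:pre-perf-alg-basic}.(\ref{item:lem-pre-perf-alg-basic1}) ensures the product $A_0$ is also almost pre-perfectoid.

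Next, I would show that the \v{C}ech nerve of $W_0\to W$ taken in $\falh_{W_\eta\to W}$ consists of spectra of almost pre-perfectoid algebras. Writing $W_m = W_0^{\overline{\times}_W,\, m+1}$, \ref{lem:relative-normal-limit} identifies $W_m$ with $\spec(B_m)$, where $B_m$ is the integral closure of $A_0^{\otimes_A,\, m+1}$ in $(A_0^{\otimes_A,\, m+1})[1/p]$. By \ref{cor:tensor-pre-perf}, $A_0^{\otimes_A,\, m+1}$ is almost pre-perfectoid, and by \ref{cor:pre-perf-intclos}, so is $B_m$, with the canonical map $A_0^{\otimes_A,\, m+1}\to B_m$ inducing an almost isomorphism of $p$-adic completions and hence an almost isomorphism modulo $p^n$. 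Since $\spec(A_0)\to\spec(A)$ is a v-covering, it is in particular a $p$-complete arc-covering by \ref{lem:arc-parc}.(\ref{lem:arc-parc-arc}), so Theorem \ref{thm:arc-descent-perf} yields almost exactness of
\begin{equation*}
0 \longrightarrow A/p^n A \longrightarrow A_0/p^n A_0 \longrightarrow (A_0\otimes_A A_0)/p^n \longrightarrow \cdots,
\end{equation*}
which together with the previous step gives the almost exactness of the \v{C}ech complex of $\falhb/p^n\falhb$ associated to the covering $W_0\to W$.

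To pass from the single almost-acyclic \v{C}ech complex to the derived cohomology of $\falhb/p^n\falhb$ on $\falh_{W_\eta\to W}$, I would run the preceding argument uniformly on the topologically generating family formed by all objects $W'\in\falh_{W_\eta\to W}$ of the form $\spec(A')$ with $A'$ almost pre-perfectoid: each such $W'$ admits a v-cover $W_0'\to W'$ of the same kind (by applying \ref{prop:prod-val-cov} to $W'$), and the $\falh$-fibered products of $W_0'$ over $W'$ are again in this family by the argument above. Consequently, the \v{C}ech-to-cohomology spectral sequence and an induction on the cohomological degree, in the spirit of \cite[\href{https://stacks.math.columbia.edu/tag/03F9}{03F9}]{stacks-project}, force the higher cohomology of $\falhb/p^n\falhb$ to almost vanish on every such $W'$ and identify $H^0$ with $A'/p^n A'$ almost; specializing to $W'=W$ gives the claim.

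The main obstacle is the transition from \v{C}ech almost-acyclicity to derived almost-acyclicity, since in the almost setting one cannot directly quote the standard criterion. One must either work throughout in the derived category of almost modules (\ref{para:almost-derived}) and verify that the hyper-\v{C}ech spectral sequence degenerates almost, or perform the induction on degree carefully by checking that the almost zero objects form a Serre subcategory stable under the relevant operations. The secondary technical point is ensuring that the $\falh$-fibered products $W_m$ really compute the same cohomology modulo $p^n$ as the naive tensor products; this is exactly controlled by \ref{cor:pre-perf-intclos} combined with the almost flatness provided by \ref{lem:flat}, which ensures that reducing modulo $p^n$ preserves the almost isomorphism of $p$-adic completions.
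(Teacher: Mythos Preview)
Your proposal is correct and follows essentially the same strategy as the paper: identify a topologically generating family of affine almost pre-perfectoid objects stable under $\falh$-fibred products (via \ref{cor:tensor-pre-perf} and \ref{cor:pre-perf-intclos}), then use arc-descent (\ref{thm:arc-descent-perf}) to obtain almost exactness of all the relevant augmented \v{C}ech complexes, and conclude via \cite[\href{https://stacks.math.columbia.edu/tag/03F9}{03F9}]{stacks-project}.

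Two small points of comparison. First, you construct the generating cover of each $W'$ using \ref{prop:prod-val-cov} (products of absolutely integrally closed valuation rings), whereas the paper instead invokes \ref{lem:rel-norm-proet-map} and \ref{prop:pre-perf-base}, passing through the pro-\'etale Faltings site; your route is more direct and avoids that detour. Second, regarding the concern you raise about the almost setting: the paper resolves it exactly along the lines you suggest. The almost exactness of the \v{C}ech complex for the presheaf $W'\mapsto \Gamma(W',\ca{O}_{W'})/p^n$ shows first that this presheaf agrees almost with the sheaf $\falhb/p^n\falhb$ on every object of the generating family (cf.\ \cite[\href{https://stacks.math.columbia.edu/tag/00W1}{00W1}]{stacks-project}); hence the \v{C}ech complex of the \emph{sheaf} itself has almost vanishing higher cohomology for every such cover, and \cite[\href{https://stacks.math.columbia.edu/tag/03F9}{03F9}]{stacks-project} applies verbatim in the abelian category of almost $\ca{O}_K$-module sheaves. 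So your ``main obstacle'' is not an obstacle.
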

\begin{proof}
	Let $\scr{C}$ be the full-subcategory of $\falh_{\eta \to S}$ formed by the spectrums of $\ca{O}_K$-algebras which are almost pre-perfectoid. It is stable under fibred product by \ref{cor:tensor-pre-perf}, \ref{cor:pre-perf-intclos} and \ref{lem:relative-normal-limit}, and it forms a topologically generating family for the site $\falh_{\eta \to S}$ by \ref{lem:rel-norm-proet-map} and \ref{prop:pre-perf-base}. It suffices to show that for any covering in $\falh_{\eta \to S}$, $\ak{U}=\{W_k \to W\}_{k \in K}$ consisting of objects of $\scr{C}$ with $K$ finite, the augmented \v Cech complex associated to the presheaf $W\mapsto \Gamma(W,\ca{O}_W)/p^n\Gamma(W,\ca{O}_W)$ on $\falh_{\eta \to S}$ (whose associated sheaf is just $\falhb/p^n\falhb$),
	\begin{align}\label{eq:4.28.1}
		0 \to \Gamma(W,\ca{O}_W)/p^n \to \prod_k \Gamma(W_k,\ca{O}_{W_k})/p^n \to \prod_{k,k'} \Gamma(W_k \overline{\times}_W W_{k'},\ca{O}_{W_k \overline{\times}_W W_{k'}})/p^n \to \cdots
	\end{align}
	is almost exact. Indeed, the almost exactness shows firstly that $\Gamma(W,\ca{O}_W)/p^n \to H^0(\falh_{W_\eta\to W}, \falhb/p^n\falhb)$ is an almost isomorphism (cf. \cite[\href{https://stacks.math.columbia.edu/tag/00W1}{00W1}]{stacks-project}), so that the augmented \v Cech complex associated to the sheaf $\falhb/p^n\falhb$ is also almost exact. Then, the conclusion follows from the almost vanishing of the higher \v Cech cohomologies of $\falhb/p^n\falhb$ by \cite[\href{https://stacks.math.columbia.edu/tag/03F9}{03F9}]{stacks-project}. 
	
	We set $R=\Gamma(W,\ca{O}_W)$ and $R'=\prod_{k \in K} \Gamma(W_k,\ca{O}_{W_k})$. They are almost pre-perfectoid, and $\spec (R') \to \spec (R)$ is a v-covering by definition. Thus, the almost exactness of \eqref{eq:4.28.1} follows from \ref{cor:tensor-pre-perf}, \ref{cor:pre-perf-intclos} and \ref{thm:arc-descent-perf}.
\end{proof}

\begin{mythm}\label{thm:falh-proet}
	With the notation in {\rm\ref{para:notation}}, let $\epsilon : \falh_{Y \to X^Y} \to \fal_{Y \to X}^\proet$ be the morphism of sites defined in {\rm\ref{par:epsilon-mor}}. Then, for any integer $n>0$, the canonical morphism
	\begin{align}
		\falb/p^n\falb \to \rr\epsilon_*(\falhb/p^n\falhb)
	\end{align}
	is an almost isomorphism in the derived category $\dd (\ca{O}_K\module_{\fal_{Y \to X}^\proet})$ {\rm(\ref{para:almost-derived})}.
\end{mythm}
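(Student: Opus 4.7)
My plan is to reduce the theorem to the almost vanishing result \ref{prop:coh-vanish-preperf} by evaluating $\rr\epsilon_*$ on the topologically generating family produced by \ref{prop:pre-perf-base}.

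First, I would establish the following description of the higher direct images. For any object $V\to U$ of $\fal_{Y\to X}^\proet$, the object $\epsilon^+(V\to U)=U^V$ of $\falh_{Y\to X^Y}$ satisfies $Y\times_{X^Y} U^V=V$ by \ref{lem:rel-norm-proet-map}, and the localized site $(\falh_{Y\to X^Y})_{/U^V}$ is canonically equivalent to $\falh_{V\to U^V}$ by \ref{defn:relative-normal}. Using the compatibility of $\epsilon$ with localization and the standard formula for higher direct images, the sheaf $\rr^q\epsilon_*(\falhb/p^n\falhb)$ on $\fal_{Y\to X}^\proet$ is the sheaf associated to the presheaf
\begin{align*}
\ca{H}^q\colon (V\to U)\longmapsto H^q(\falh_{V\to U^V},\,\falhb/p^n\falhb).
\end{align*}

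Second, by \ref{prop:pre-perf-base} the full subcategory $\scr{T}\subseteq \fal_{Y\to X}^\proet$ consisting of those $V\to U$ such that $U^V$ is the spectrum of an almost pre-perfectoid $\ca{O}_K$-algebra forms a topologically generating family. For each $(V\to U)\in\scr{T}$, applying \ref{prop:coh-vanish-preperf} to $W=U^V$ (whose generic fibre $W_\eta$ equals $V$) yields that the canonical morphism
\begin{align*}
\falb(V\to U)/p^n\falb(V\to U)=\Gamma(U^V,\ca{O}_{U^V})/p^n\Gamma(U^V,\ca{O}_{U^V})\longrightarrow \rr\Gamma(\falh_{V\to U^V},\,\falhb/p^n\falhb)
\end{align*}
is an almost isomorphism. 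Equivalently, restricted to $\scr{T}$ the presheaf $\ca{H}^0$ is almost isomorphic to the presheaf $(V\to U)\mapsto \falb(V\to U)/p^n\falb(V\to U)$, and $\ca{H}^q$ is almost zero for every $q>0$.

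Third, I would conclude by sheafification. The sheafification functor and the localization functor $\alpha^*$ to almost modules are both exact. Moreover, if a presheaf of $\ca{O}_K$-modules on $\fal_{Y\to X}^\proet$ is almost zero (resp.\ has almost zero sections) on a topologically generating family, then its associated sheaf is almost zero, since sections of the sheafification locally arise from compatible sections of the presheaf over a covering drawn from the generating family, and these sections are killed by $\ak{m}_K$. Applying this to the cone of $\falb/p^n\falb\to \rr\epsilon_*(\falhb/p^n\falhb)$---whose cohomology sheaves are the sheafifications of $\ca{H}^q$ for $q>0$ and of the kernel and cokernel of the natural map $(V\to U)\mapsto \falb(V\to U)/p^n\falb(V\to U)\to \ca{H}^0$ for $q=0$---step two implies that the cone has almost zero cohomology sheaves, which is the desired conclusion.

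The main obstacle I anticipate is the first step: rigorously justifying the presheaf description of $\rr^q\epsilon_*$ by checking that $\epsilon$ restricts to a morphism of sites between the localized sites $(\fal_{Y\to X}^\proet)_{/(V\to U)}$ and $(\falh_{Y\to X^Y})_{/U^V}$, compatibly with the structural sheaves modulo $p^n$. Once this bookkeeping is done, steps two and three amount to combining the almost vanishing input \ref{prop:coh-vanish-preperf} with the topological generation \ref{prop:pre-perf-base} via exactness of sheafification, and present no further difficulty.
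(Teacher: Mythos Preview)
Your proposal is correct and follows essentially the same approach as the paper: identify $\rr^q\epsilon_*(\falhb/p^n\falhb)$ as the sheafification of $(V\to U)\mapsto H^q(\falh_{V\to U^V},\falhb/p^n\falhb)$, use \ref{prop:pre-perf-base} to cover by objects with almost pre-perfectoid $U^V$, and then invoke \ref{prop:coh-vanish-preperf}. The paper's proof is a one-sentence version of your three steps, and your anticipated obstacle about localization compatibility is indeed routine bookkeeping that the paper leaves implicit.
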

\begin{proof}
	Since $\rr^q\epsilon_*(\falhb/p^n\falhb)$ is the sheaf associated to the presheaf $(V \to U) \mapsto H^q(\falh_{V \to U^V}, \falhb/p^n\falhb)$ and any object in $\fal_{Y \to X}^\proet$ can be covered by those objects whose image under $\epsilon^+$ are the spectrums of $\ca{O}_K$-algebras which are almost pre-perfectoid by \ref{prop:pre-perf-base}, the conclusion follows from \ref{prop:coh-vanish-preperf}. 
\end{proof}

\begin{mycor}\label{prop:fal-coh}
	With the notation in {\rm\ref{para:notation}}, let $V\to U$ be an object of $\fal_{Y\to X}^\proet$ such that $U$ is affine and that the integral closure $U^V=\spec(A)$ is the spectrum of an $\ca{O}_K$-algebra $A$ which is almost pre-perfectoid. Then, $V\to U$ is Faltings acyclic.
\end{mycor}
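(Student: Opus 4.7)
The plan is to deduce this corollary by localizing Theorem \ref{thm:falh-proet} at the object $V \to U$ and combining it with the almost vanishing result of Proposition \ref{prop:coh-vanish-preperf}. First, I would check that the hypotheses of \ref{para:notation} are inherited by the morphism $V \to U$, viewed in place of $Y \to X$: by \ref{lem:rel-norm-proet-map} we have $Y \times_{X^Y} U^V = V$, and since $(X^Y)_\eta = Y$, this gives $(U^V)_\eta = V$; moreover $U^V = \spec(A)$ is an $\ca{O}_K$-scheme by assumption. Hence $V \to U$ itself is of the type considered in \ref{para:notation}, with integral closure $U^V = \spec(A)$.

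Second, applying Theorem \ref{thm:falh-proet} to $V \to U$ yields an almost isomorphism
\begin{equation}
\falb/p^n\falb \longrightarrow \rr\epsilon_*(\falhb/p^n\falhb)
\end{equation}
in $\dd(\ca{O}_K\module_{\fal_{V \to U}^\proet})$, where $\epsilon : \falh_{V \to U^V} \to \fal_{V \to U}^\proet$ is the morphism of sites from \ref{par:epsilon-mor} (using the canonical equivalences $(\fal_{Y \to X}^\proet)_{/(V \to U)} \simeq \fal_{V \to U}^\proet$ and $(\falh_{Y \to X^Y})_{/U^V} \simeq \falh_{V \to U^V}$). Taking $\rr\Gamma(\fal_{V \to U}^\proet, -)$ and using the adjunction $(\epsilon^{-1}, \rr\epsilon_*)$, this gives an almost isomorphism
\begin{equation}
\rr\Gamma(\fal_{V \to U}^\proet, \falb/p^n\falb) \longrightarrow \rr\Gamma(\falh_{V \to U^V}, \falhb/p^n\falhb).
\end{equation}

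Third, since $A$ is an almost pre-perfectoid $\ca{O}_K$-algebra, I would apply Proposition \ref{prop:coh-vanish-preperf} to the object $W = U^V = \spec(A)$ of $\falh_{\eta \to S}$ (noting that $W_\eta = V$), obtaining that the canonical morphism
\begin{equation}
A/p^n A \longrightarrow \rr\Gamma(\falh_{V \to U^V}, \falhb/p^n\falhb)
\end{equation}
is an almost isomorphism. Composing this with the previous almost isomorphism shows that $A/p^n A \to \rr\Gamma(\fal_{V \to U}^\proet, \falb/p^n\falb)$ is an almost isomorphism, which is exactly the condition in \ref{defn:faltings-acyclic}.

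The only point to check carefully is that the composition constructed above agrees with the canonical morphism from \ref{defn:faltings-acyclic}; this is a bookkeeping matter, following from the equality $\falb(V\to U) = \Gamma(U^V, \ca{O}_{U^V}) = A$ and the functoriality of the natural maps $\falb \to \epsilon_*\falhb$ of \ref{par:epsilon-mor}, so no serious obstacle should arise. In effect, the corollary is a direct concatenation of \ref{thm:falh-proet} (the cohomological descent from the v-site to the Faltings site) and \ref{prop:coh-vanish-preperf} (the arc-descent of perfectoid algebras used to compute cohomology on $\falh$ at pre-perfectoid objects).
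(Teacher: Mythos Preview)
Your proposal is correct and follows exactly the paper's approach: the paper's proof simply states that the corollary follows directly from \ref{thm:falh-proet} and \ref{prop:coh-vanish-preperf}, and you have accurately unpacked what this means by localizing the almost isomorphism of \ref{thm:falh-proet} at $V\to U$ and then invoking \ref{prop:coh-vanish-preperf} for $W=U^V=\spec(A)$.
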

\begin{proof}
	It follows directly from \ref{thm:falh-proet} and \ref{prop:coh-vanish-preperf}. 
\end{proof}

\begin{mycor}\label{cor:falh-et}
	With the notation in {\rm\ref{para:notation}}, let $\varepsilon: \falh_{Y \to X^Y} \to \fal_{Y \to X}^\et$ be the morphism of sites defined in {\rm\ref{par:epsilon-mor}}. Then, for any finite locally constant abelian sheaf $\bb{L}$ on $\fal_{Y \to X}^\et$, the canonical morphism 
	\begin{align}
		\bb{L}\otimes_{\bb{Z}} \falb \to \rr\varepsilon_*(\varepsilon^{-1}\bb{L}\otimes_{\bb{Z}}\falhb)
	\end{align}
	is an almost isomorphism in the derived category $\dd (\ca{O}_K\module_{\fal_{Y \to X}^\et})$ {\rm(\ref{para:almost-derived})}.
\end{mycor}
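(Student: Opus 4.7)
The plan is to reduce the statement to the case $\bb{L} = \bb{Z}/p^n\bb{Z}$ already handled by Theorem \ref{thm:falh-proet}, then descend through the factorization $\varepsilon = \nu \circ \epsilon$ of \ref{par:epsilon-mor} and \ref{lem:morphism-nu}. First, since $\bb{L}$ is finite locally constant, it is killed by some positive integer $N$; write $N = p^n m$ with $\gcd(m,p)=1$. Because $K$ has mixed characteristic $(0,p)$, every integer coprime to $p$ is a unit in $\ca{O}_K$ and hence acts invertibly on both $\falb$ and $\falhb$. Consequently the natural maps
\begin{align}
\bb{L}\otimes_{\bb{Z}}\falb \to \bb{L}\otimes_{\bb{Z}/p^n\bb{Z}} (\falb/p^n\falb), \qquad \varepsilon^{-1}\bb{L}\otimes_{\bb{Z}}\falhb \to \varepsilon^{-1}\bb{L}\otimes_{\bb{Z}/p^n\bb{Z}} (\falhb/p^n\falhb)
\end{align}
are isomorphisms after replacing $\bb{L}$ by its $p$-primary part (whose complement is annihilated on both sheaves). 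The question being local on $\fal_{Y\to X}^\et$, one may assume $\bb{L}$ is constant with value a finite $(\bb{Z}/p^n\bb{Z})$-module, thus a finite direct sum of cyclic groups $\bigoplus_i \bb{Z}/p^{n_i}\bb{Z}$ with $n_i \le n$. Since both $\bb{L}\otimes_\bb{Z}(-)$ and $\rr\varepsilon_*$ commute with finite direct sums, we reduce to proving that for every $k\ge 1$ the canonical morphism
\begin{align}
\falb/p^k\falb \longrightarrow \rr\varepsilon_*(\falhb/p^k\falhb)
\end{align}
is an almost isomorphism.

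Using the factorization $\varepsilon = \nu \circ \epsilon$ with $\nu: \fal_{Y\to X}^\proet \to \fal_{Y\to X}^\et$, we have $\rr\varepsilon_* = \rr\nu_* \circ \rr\epsilon_*$. By Theorem \ref{thm:falh-proet} applied on the pro-\'etale side, the canonical morphism $\falb/p^k\falb \to \rr\epsilon_*(\falhb/p^k\falhb)$ is an almost isomorphism in $\dd(\ca{O}_K\module_{\fal_{Y\to X}^\proet})$. Applying $\rr\nu_*$ yields an almost isomorphism
\begin{align}
\rr\nu_*(\falb/p^k\falb) \longrightarrow \rr\varepsilon_*(\falhb/p^k\falhb)
\end{align}
in $\dd(\ca{O}_K\module_{\fal_{Y\to X}^\et})$, since $\rr\nu_*$ preserves almost isomorphisms (the almost zero subcategory is stable under $\rr\nu_*$, being characterized sectionwise). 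Next, as $p$ is invertible on $Y$ (because $Y$ lies over $\spec(K)$ with $K$ of characteristic $0$), Corollary \ref{cor:nu-falb} applied to the sheaf $\falb/p^k\falb$ on $\fal_{Y\to X}^\et$ gives that the adjunction morphism $\falb/p^k\falb \to \rr\nu_*(\nu^{-1}(\falb/p^k\falb)) = \rr\nu_*(\falb/p^k\falb)$ is an isomorphism. Composing, we obtain the desired almost isomorphism, and functoriality of the construction identifies it with the canonical morphism.

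The reasoning is essentially formal given the inputs at hand; the substantive content has already been absorbed into Theorem \ref{thm:falh-proet} (which itself relies on the arc-descent for perfectoid algebras \ref{thm:arc-descent-perf}) and Corollary \ref{cor:nu-falb} (which compares the \'etale and pro-\'etale Faltings sites). The only point requiring mild care is the compatibility of the various adjunction morphisms under the factorization $\varepsilon = \nu \circ \epsilon$ and under the reduction $\bb{L}\otimes_\bb{Z}(-) = \bb{L}\otimes_{\bb{Z}/p^n\bb{Z}} (-/p^n)$, which is ensured by the naturality of each step.
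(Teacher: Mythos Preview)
Your proposal is correct and follows essentially the same approach as the paper. The paper's proof is extremely terse—it simply says the problem is local so one may assume $\bb{L}$ is constant with value $\bb{Z}/p^n\bb{Z}$, then invokes Theorem~\ref{thm:falh-proet} and Corollary~\ref{cor:nu-falb}—whereas you have carefully unpacked the reduction (handling the prime-to-$p$ part, decomposing into cyclic summands, and making the factorization $\varepsilon=\nu\circ\epsilon$ explicit).
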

\begin{proof}
	The problem is local on $\fal_{Y \to X}^\et$, thus we may assume that $\bb{L}$ is the constant sheaf with value $\bb{Z}/p^n\bb{Z}$. Then, the conclusion follows from \ref{thm:falh-proet} and \ref{cor:nu-falb}.
\end{proof}

\begin{myrem}\label{rem:falh-et}
	In \ref{cor:falh-et}, if $\bb{L}$ is a bounded complex of abelian sheaves on $\fal_{Y \to X}^\et$ with finite locally constant cohomology sheaves, then the canonical morphism $\bb{L}\otimes_{\bb{Z}}^\dl \falb \to \rr\varepsilon_*(\varepsilon^{-1}\bb{L}\otimes_{\bb{Z}}^\dl\falhb)$ is also an almost isomorphism. Indeed, after replacing $\bb{L}$ by $\bb{L}\otimes_{\bb{Z}}^\dl \bb{Z}_p$, we may assume that $\bb{L}$ is a complex of $\bb{Z}/p^n\bb{Z}$-modules for some integer $n$ (\cite[\href{https://stacks.math.columbia.edu/tag/0DD7}{0DD7}]{stacks-project}). Then, there exists a covering family $\{(Y_i\to X_i)\to (Y\to X)\}_{i \in I}$ in $\fal_{Y \to X}^\et$ such that the restriction of $\bb{L}$ on $\fal_{Y_i \to X_i}^\et$ is represented by a bounded complex of finite locally constant $\bb{Z}/p^n\bb{Z}$-modules (\cite[\href{https://stacks.math.columbia.edu/tag/094G}{094G}]{stacks-project}). Then, the conclusion follows directly from \ref{cor:falh-et}.
\end{myrem}

\begin{mycor}\label{cor:modelless}
	With the notation in {\rm\ref{para:notation}}, let $Y \to X_i$ ($i=1,2$) be a morphism of coherent schemes such that $X_i^Y$ is an $S$-scheme with generic fibre $(X_i^Y)_\eta=Y$, $\bb{L}$ a finite locally constant abelian sheaf on $\fal_{Y\to X_2}^\et$. If there is a morphism $f:X_1 \to X_2$ under $Y$ such that the natural morphism $g:X_1^Y \to X_2^Y$ is a separated v-covering and that $g^{-1}(Y)=Y$, and if we denote by $u:\fal_{Y \to X_1}^\et \to \fal_{Y\to X_2}^\et$ the corresponding morphism of sites, then the natural morphism
	\begin{align}
		\bb{L}\otimes_{\bb{Z}}\falb \to \rr u_*(u^{-1}\bb{L}\otimes_{\bb{Z}}\falb)
	\end{align}
	is an almost isomorphism.
\end{mycor}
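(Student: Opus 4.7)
My plan is to descend the statement to the v-sites $\falh_{Y\to X_i^Y}$ via the cohomological descent \ref{cor:falh-et}, where it reduces to an essentially formal assertion about equivalence of topoi. First I would set up the commutative diagram of morphisms of sites
\[
\xymatrix{
\falh_{Y \to X_1^Y} \ar[r]^-{\varepsilon_1} \ar[d]_-{g_\falh} & \fal_{Y\to X_1}^\et \ar[d]^-{u} \\
\falh_{Y \to X_2^Y} \ar[r]^-{\varepsilon_2} & \fal_{Y\to X_2}^\et
}
\]
where $g_\falh$ is induced by $g$ (equivalently, via the identification $\falh_{Y\to X_1^Y}\simeq (\falh_{Y\to X_2^Y})_{/X_1^Y}$ from \ref{defn:relative-normal}). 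Commutativity on the level of underlying functors is a routine check using \ref{lem:rel-norm-commute}, comparing the two ways of forming the integral closure $(U\times_{X_2}X_1)^V$ and $X_1^Y\overline{\times}_{X_2^Y} U^V$.

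Applying \ref{cor:falh-et} to both Faltings sites and then $\rr u_*$ to the almost isomorphism on $\fal_{Y\to X_1}^\et$, the problem would reduce to showing that the canonical morphism
\[
\ca{M}\otimes_\bb{Z}\falhb \longrightarrow \rr g_{\falh,*}(g_\falh^{-1}\ca{M}\otimes_\bb{Z}\falhb)
\]
is an isomorphism in $\dd(\ca{O}_K\module_{\falh_{Y\to X_2^Y}})$ for $\ca{M}=\varepsilon_2^{-1}\bb{L}$.

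For this final step I would invoke \ref{prop:sheaf-isom}: the morphism $g: X_1^Y\to X_2^Y$ is a covering in $\falh_{Y\to X_2^Y}$ (being a v-covering), is separated by hypothesis, and has diagonal $(X_1^Y)^\circ\to (X_1^Y)^\circ\times_{(X_2^Y)^\circ}(X_1^Y)^\circ$ equal to the identity $Y\to Y$ thanks to $g^{-1}(Y)=Y$. Consequently $h_{X_1^Y}^\ash\to h_{X_2^Y}^\ash$ is an isomorphism; since $X_2^Y$ is the final object of $\falh_{Y\to X_2^Y}$, $h_{X_1^Y}^\ash$ is also a final object of $\widetilde{\falh_{Y\to X_2^Y}}$, and hence the localization $\falh_{Y\to X_1^Y}\simeq (\falh_{Y\to X_2^Y})_{/X_1^Y}\to \falh_{Y\to X_2^Y}$ induces an equivalence of topoi. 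The structural sheaves correspond under this equivalence (both arise from the same presheaf $T\mapsto \Gamma(T,\ca{O}_T)$ after sheafification), so $g_\falh$ is an equivalence of ringed topoi; in particular $\rr g_{\falh,*}=g_{\falh,*}$ is exact and intertwines the tensor product with $\falhb$, which yields the required isomorphism. The main obstacle in the plan is really just verifying the commutativity of the square of sites at the level of underlying functors; once the ringed topos equivalence is extracted from \ref{prop:sheaf-isom}, the rest of the argument is formal in view of \ref{cor:falh-et}.
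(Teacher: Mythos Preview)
Your proposal is correct and rests on the same two ingredients as the paper's proof: the cohomological descent \ref{cor:falh-et} and the isomorphism of representable sheaves from \ref{prop:sheaf-isom}. The only difference is packaging: you apply \ref{prop:sheaf-isom} once to the final object $X_1^Y\to X_2^Y$ and conclude that $g_\falh$ is an equivalence of ringed topoi, whereas the paper computes $\rr^q u_*(u^{-1}\bb{L}\otimes\falb)$ as the sheafification of $(V\to U_2)\mapsto H^q(\fal_{V\to U_1}^\et,u^{-1}\bb{L}\otimes\falb)$ and applies \ref{prop:sheaf-isom} pointwise to each $U_1^V\to U_2^V$ (after checking via \ref{lem:rel-norm-commute} and \ref{prop:normal-v-cov} that this map is a separated v-covering whose generic fibres are both $V$).
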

\begin{proof}
	The morphism $u$ is defined by the functor $u^+ : \fal_{Y\to X_2}^\et \to \fal_{Y \to X_1}^\et$ sending $(V\to U_2)$ to $(V\to U_1)=(V\to X_1 \times_{X_2} U_2)$. We set $V_0 = Y\times_{X_1} U_{1} = Y\times_{X_2} U_{2}$. According to \ref{lem:rel-norm-commute}, $U_1^{V_0} \to U_2^{V_0}$ is the base change of $X_1^Y \to X_2^Y$ by $U_2 \to X_2$, and thus it is a separated v-covering. Notice that $V_0$ is an open subscheme in both $U_1^{V_0}$ and $U_2^{V_0}$, and moreover $V_0=V_0\times_{U_2^{V_0}}U_1^{V_0}$. Applying \ref{prop:normal-v-cov} to the commutative diagram
	\begin{align}
		\xymatrix{
			V \ar[r] \ar[d] & U_1^V \ar[r] \ar[d] & U_1^{V_0}\ar[d]\\
			V \ar[r] & U_2^V \ar[r] & U_2^{V_0}
		}
	\end{align}
	it follows that $U_1^V \to U_2^V$ is also a separated v-covering. Let $\varepsilon_i: \falh_{Y \to X_i^{Y}} \to \fal_{Y \to X_i}^\et$ ($i=1,2$) be the morphisms of sites defined in \ref{par:epsilon-mor}. The sheaf $\rr^q u_*(u^{-1}\bb{L}\otimes_{\bb{Z}}\falb)$ is associated to the presheaf $(V \to U_2) \mapsto H^q(\fal_{V \to U_1}^\et, u^{-1}\bb{L}\otimes_{\bb{Z}}\falb)$. We have
	\begin{align}
		H^q(\fal_{V \to U_1}^\et, u^{-1}\bb{L}\otimes_{\bb{Z}}\falb)&\to H^q(\falh_{V \to U_1^V}, \varepsilon_1^{-1}u^{-1}\bb{L}\otimes_{\bb{Z}}\falhb)\\
		&= H^q(\falh_{V \to U_2^V}, \varepsilon_2^{-1}\bb{L}\otimes_{\bb{Z}}\falhb)\leftarrow H^q(\fal_{V \to U_2}^\et, \bb{L}\otimes_{\bb{Z}}\falb),\nonumber
	\end{align}
	where the equality follows from the fact that the morphism of representable sheaves associated to $U_1^V \to U_2^V$ on $\falh_{\eta \to S}$ is an isomorphism by \ref{prop:sheaf-isom}, and where the two arrows are almost isomorphisms by \ref{cor:falh-et}, which completes the proof.
\end{proof}

\begin{mypara}\label{para:hypercov}
	Let $\Delta$ be the category formed by finite ordered sets $[n]=\{0,1,\dots,n\}$ ($n\geq 0$) with non-decreasing maps (\cite[\href{https://stacks.math.columbia.edu/tag/0164}{0164}]{stacks-project}). For a functor from its opposite category $\Delta^{\oppo}$ to the category $\fal$ of morphisms of coherent schemes sending $[n]$ to $Y_n \to X_n$, we simply denote it by $Y_\bullet \to X_\bullet$. Then, we obtain a fibred site $\fal_{Y_\bullet \to X_\bullet}^\et$ over $\Delta^{\oppo}$ whose fibre category over $[n]$ is $\fal_{Y_n\to X_n}^\et$ and the inverse image functor $f^+:\fal_{Y_n\to X_n}^\et\to \fal_{Y_m\to X_m}^\et$ associated to a morphism $f:[m] \to [n]$ in $\Delta^{\oppo}$ is induced the base change by the morphism $(Y_m \to X_m)\to (Y_n\to X_n)$ associated to $f$. We endow $\fal_{Y_\bullet \to X_\bullet}^\et$ with the total topology (\ref{par:cova-def}) and call it the \emph{simplicial Faltings site} associated to $Y_\bullet \to X_\bullet$ (\cite[\href{https://stacks.math.columbia.edu/tag/09WE}{09WE.(A)}]{stacks-project}). The sheaf $\falb$ on each $\fal_{Y_n \to X_n}^\et$ induces a sheaf $\falb_{\bullet}=(\falb)_{[n]\in\ob(\Delta)}$ on $\fal_{Y_\bullet \to X_\bullet}^\et$ with the notation in \ref{para:presheafonE}.
	
	For an augmentation $(Y_\bullet\to X_\bullet)\to (Y\to X)$ in $\fal$ (\cite[\href{https://stacks.math.columbia.edu/tag/018F}{018F}]{stacks-project}), we obtain an augmentation of simplicial site $a:\fal_{Y_\bullet \to X_\bullet}^\et\to \fal_{Y\to X}^\et$ (\cite[\href{https://stacks.math.columbia.edu/tag/0D6Z}{0D6Z.(A)}]{stacks-project}). We denote by $a_n: \fal_{Y_n \to X_n}^\et\to \fal_{Y\to X}^\et$ the natural morphism induced by $(Y_n\to X_n)\to (Y\to X)$. Notice that for any sheaf $\ca{F}$ on $\fal_{Y\to X}^\et$, we have $a^{-1}\ca{F}=(a_n^{-1}\ca{F})_{[n]\in\ob(\Delta)}$ with the notation in \ref{para:presheafonE} (\cite[\href{https://stacks.math.columbia.edu/tag/0D70}{0D70}]{stacks-project}).
\end{mypara}

\begin{mycor}\label{cor:resolution}
	With the notation in {\rm\ref{para:notation}}, let $\bb{L}$ a finite locally constant abelian sheaf on $\fal_{Y \to X}^\et$, $X_\bullet \to X$ an augmentation of simplicial coherent scheme. If we set $Y_\bullet=Y \times_X X_\bullet$ and denote by $a: \fal_{Y_\bullet \to X_\bullet}^\et \to \fal_{Y \to X}^\et$ the augmentation of simplicial site, assuming that $X_\bullet^{Y_\bullet} \to X^Y$ is a hypercovering in $\falh_{\eta \to S}$, then the canonical morphism 
	\begin{align}
		\bb{L}\otimes_{\bb{Z}} \falb \to \rr a_*(a^{-1}\bb{L}\otimes_{\bb{Z}}\falb_\bullet)
	\end{align}
	is an almost isomorphism.
\end{mycor}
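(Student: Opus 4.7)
The plan is to reduce this to Theorem \ref{cor:falh-et} applied degreewise, combined with ordinary hypercovering cohomological descent in the v-site $\falh_{\eta\to S}$. Consider the commutative diagram of (simplicial) ringed sites
\begin{align*}
\xymatrix{
\falh_{Y_\bullet\to X_\bullet^{Y_\bullet}} \ar[r]^-{b} \ar[d]_-{\varepsilon_\bullet} & \falh_{Y\to X^Y} \ar[d]^-{\varepsilon} \\
\fal_{Y_\bullet\to X_\bullet}^\et \ar[r]^-{a} & \fal_{Y\to X}^\et
}
\end{align*}
where $\varepsilon_\bullet$ is the degreewise version of $\varepsilon$ (cf. \ref{par:epsilon-mor}), and $b$ is induced by the augmentation $X_\bullet^{Y_\bullet}\to X^Y$ in $\falh_{\eta\to S}$. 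Note that each $X_n^{Y_n}$ is an object of $\falh_{Y\to X^Y}$: it is $Y_n$-integrally closed, and since $Y_n=Y\times_{X^Y}X_n^{Y_n}$, it is also $Y$-integrally closed as an $X^Y$-scheme (\ref{defn:relative-normal}). Under the canonical equivalence $\falh_{Y_n\to X_n^{Y_n}}\simeq (\falh_{Y\to X^Y})_{/X_n^{Y_n}}$, the simplicial site $\falh_{Y_\bullet\to X_\bullet^{Y_\bullet}}$ (with the total topology) identifies with the localization of $\falh_{Y\to X^Y}$ at the simplicial object $X_\bullet^{Y_\bullet}$.

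First, I apply Theorem \ref{cor:falh-et} and Remark \ref{rem:falh-et} degreewise to the finite locally constant sheaf $a^{-1}\bb{L}$: for each $n\geq 0$, the canonical morphism
\begin{align*}
a_n^{-1}\bb{L}\otimes_{\bb{Z}}\falb \longrightarrow \rr\varepsilon_{n*}(\varepsilon_n^{-1}a_n^{-1}\bb{L}\otimes_{\bb{Z}}\falhb)
\end{align*}
is an almost isomorphism. Since total direct images on fibred topoi are computed fibrewise in each degree (\cite[\href{https://stacks.math.columbia.edu/tag/0D76}{0D76}]{stacks-project}), this assembles into an almost isomorphism
\begin{align*}
a^{-1}\bb{L}\otimes_{\bb{Z}}\falb_\bullet \iso \rr\varepsilon_{\bullet *}(\varepsilon_\bullet^{-1}a^{-1}\bb{L}\otimes_{\bb{Z}}\falhb_\bullet)
\end{align*}
on $\fal_{Y_\bullet\to X_\bullet}^\et$. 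Applying $\rr a_*$, we get
\begin{align*}
\rr a_*(a^{-1}\bb{L}\otimes_{\bb{Z}}\falb_\bullet) \iso \rr a_*\rr\varepsilon_{\bullet *}(b^{-1}\varepsilon^{-1}\bb{L}\otimes_{\bb{Z}}\falhb_\bullet) = \rr\varepsilon_*\rr b_*(b^{-1}\varepsilon^{-1}\bb{L}\otimes_{\bb{Z}}\falhb_\bullet)
\end{align*}
where I have used $\varepsilon_\bullet^{-1}a^{-1}=b^{-1}\varepsilon^{-1}$ and $a\circ \varepsilon_\bullet=\varepsilon\circ b$.

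The core point is then to show that for any abelian sheaf $\ca{F}$ on $\falh_{Y\to X^Y}$, the canonical morphism $\ca{F}\to \rr b_*b^{-1}\ca{F}$ is an isomorphism. This is the standard cohomological descent along a hypercovering in a site: as $X_\bullet^{Y_\bullet}\to X^Y$ is a hypercovering in $\falh_{\eta\to S}$ and hence in its localization at $X^Y$, the hyper-\v Cech-cohomology-to-cohomology spectral sequence (\cite[\href{https://stacks.math.columbia.edu/tag/01H0}{01H0}]{stacks-project}) combined with the sheaf condition shows that $\rr\Gamma(W\overline{\times}_{X^Y}X_\bullet^{Y_\bullet},\ca{F})$ computes $\rr\Gamma(W,\ca{F})$ for each object $W$ of $\falh_{Y\to X^Y}$ (the hypercovering remains one after pullback by \ref{prop:falh-v-top}), which amounts to the sheafified statement $\ca{F}\iso \rr b_*b^{-1}\ca{F}$. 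Applying this to $\ca{F}=\varepsilon^{-1}\bb{L}\otimes_{\bb{Z}}\falhb$ (and using $b^{-1}\falhb=\falhb_\bullet$), we conclude
\begin{align*}
\rr a_*(a^{-1}\bb{L}\otimes_{\bb{Z}}\falb_\bullet) \iso \rr\varepsilon_*(\varepsilon^{-1}\bb{L}\otimes_{\bb{Z}}\falhb) \iso \bb{L}\otimes_{\bb{Z}}\falb,
\end{align*}
where the last almost isomorphism is again Theorem \ref{cor:falh-et}.

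The main technical point is the hypercovering descent for $b$. It is a formal consequence of the definition of the total topology on the simplicial site and the fact that $X_\bullet^{Y_\bullet}\to X^Y$ is a hypercovering, but one has to check carefully that the notion of hypercovering is preserved under the localization functors involved and that no set-theoretic subtleties obstruct the spectral sequence argument; apart from this, all the ingredients are already in place by Theorem \ref{cor:falh-et}.
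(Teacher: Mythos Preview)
Your proof is correct and follows essentially the same route as the paper: set up the commutative square involving $a$, $b$, $\varepsilon$, $\varepsilon_\bullet$, apply \ref{cor:falh-et} degreewise to identify $\rr a_*(a^{-1}\bb{L}\otimes\falb_\bullet)$ with $\rr\varepsilon_*\rr b_*(b^{-1}(\varepsilon^{-1}\bb{L}\otimes\falhb))$, invoke hypercovering cohomological descent on $\falh_{Y\to X^Y}$ for $b$, and finish with \ref{cor:falh-et} at the base. The only cosmetic differences are that the paper packages the argument as a single commutative diagram with labelled arrows $\alpha_1,\dots,\alpha_5$, cites \cite[\href{https://stacks.math.columbia.edu/tag/0D97}{0D97}]{stacks-project} for the degreewise computation of $\rr\varepsilon_{\bullet*}$ and \cite[\href{https://stacks.math.columbia.edu/tag/0D8N}{0D8N}]{stacks-project} directly for the hypercovering descent (rather than sketching the \v Cech argument), whereas you argue these points by hand; your reasoning is sound in both places.
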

\begin{proof}
	Let $b: \falh_{Y_\bullet \to X_\bullet^{Y_\bullet}} \to \falh_{Y \to X^Y}$ be the augmentation of simplicial site associated to the augmentation of simplicial object $X_\bullet^{Y_\bullet} \to X^Y$ of $\falh_{\eta \to S}$ (\cite[\href{https://stacks.math.columbia.edu/tag/09X8}{09X8}]{stacks-project}). The functorial morphism of sites $\varepsilon: \falh_{Y \to X^Y} \to \fal_{Y \to X}^\et$ defined in {\rm\ref{par:epsilon-mor}} induces a commutative diagram of topoi (\cite[\href{https://stacks.math.columbia.edu/tag/0D99}{0D99}]{stacks-project})
	\begin{align}
		\xymatrix{
			\falh_{Y_\bullet \to X_\bullet^{Y_\bullet}}^\sim\ar[r]^-{\varepsilon_\bullet}\ar[d]_-{b}&\fal_{Y_\bullet \to X_\bullet}^{\et\sim}\ar[d]^-{a}\\
			\falh_{Y \to X^Y}^\sim \ar[r]^-{\varepsilon} & \fal_{Y \to X}^{\et\sim}
		}
	\end{align}
	We denote by $a_n: \fal_{Y_n \to X_n}^\et \to \fal_{Y \to X}^\et$ and $b_n : \falh_{Y_n \to X_n^{Y_n}} \to \falh_{Y \to X^Y}$ the natural morphisms of sites. Consider the commutative diagram
	\begin{align}
		\xymatrix{
			\rr a_* (a^{-1}\bb{L} \otimes \falb_\bullet)\ar[d]_-{\alpha_3}&\bb{L}\otimes \falb \ar[r]^-{\alpha_2}\ar[l]_-{\alpha_1}& \rr\varepsilon_*(\varepsilon^{-1}\bb{L} \otimes \falhb)\ar[d]^-{\alpha_4}\\
			\rr a_* \rr\varepsilon_{\bullet*}\varepsilon_{\bullet}^{-1}(a^{-1}\bb{L} \otimes \falb_\bullet)\ar[r]^-{\alpha_5}& \rr c_*(c^{-1}\bb{L}\otimes \falhb_\bullet) &\rr\varepsilon_*\rr b_* b^{-1}(\varepsilon^{-1}\bb{L} \otimes \falhb)\ar@{=}[l]
		}
	\end{align}
	where $c=a\circ\varepsilon_{\bullet}=\varepsilon\circ b$, and $\alpha_2$ (resp. $\alpha_5$) is induced by the canonical morphism $\varepsilon^{-1}\falb\to \falhb$ (resp. $\varepsilon_{\bullet}^{-1}\falb_\bullet\to \falhb_\bullet$), and other arrows are the canonical morphisms.
	
	Notice that $\alpha_2$ is an almost isomorphism by \ref{cor:falh-et}, and that $\alpha_4$ is an isomorphism by \cite[\href{https://stacks.math.columbia.edu/tag/0D8N}{0D8N}]{stacks-project} as $X_\bullet^{Y_\bullet} \to X^Y$ is a hypercovering in $\falh_{\eta \to S}$. It remains to show that $\alpha_5\circ\alpha_3$ is an almost isomorphism. With the notation in \ref{para:presheafonE}, we have
	\begin{align}
		a^{-1}\bb{L} \otimes \falb_\bullet&=(a_n^{-1}\bb{L} \otimes \falb)_{[n]\in\ob(\Delta)} \trm{ and }\\
		c^{-1}\bb{L} \otimes \falhb_\bullet&=( \varepsilon_{n}^{-1}a_n^{-1}\bb{L} \otimes \falhb)_{[n]\in\ob(\Delta)}.
	\end{align}
	Moreover, by \cite[\href{https://stacks.math.columbia.edu/tag/0D97}{0D97}]{stacks-project} we have
	\begin{align}
		\rr^q\varepsilon_{\bullet*} (c^{-1}\bb{L} \otimes \falhb_\bullet)=(\rr^q \varepsilon_{n*}(\varepsilon_{n}^{-1}a_n^{-1}\bb{L} \otimes \falhb))_{[n]\in\ob(\Delta)}
	\end{align}
	for each integer $q$. Therefore, $a^{-1}\bb{L} \otimes \falb_\bullet\to \rr\varepsilon_{\bullet*} (c^{-1}\bb{L} \otimes \falhb_\bullet)$ is an almost isomorphism by \ref{cor:falh-et} and so is $\alpha_5\circ\alpha_3$. 
\end{proof}

\begin{mylem}\label{lem:pre-perf-falsite}
	With the notation in {\rm\ref{para:notation}}, assume that $X^Y$ is the spectrum of an $\ca{O}_K$-algebra which is almost pre-perfectoid. Let $V\to U$ be an object of $\fal_{Y\to X}^\proet$ with $U$ affine. Then, $U^V$ is the spectrum of an $\ca{O}_K$-algebra which is almost pre-perfectoid.
\end{mylem}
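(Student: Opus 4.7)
My plan is to split $V\to U$ into its pro-\'etale base $U\to X$ and its pro-finite \'etale cover $V\to Y\times_X U$, and handle each leg using a different stability property of almost pre-perfectoid $\ca{O}_K$-algebras, namely the stability under almost \'etale extensions and filtered colimits (\ref{lem:pre-perf-alg-basic}) and the almost purity theorem (\ref{thm:almost-purity}).

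First I would reduce to working over $X^Y$ rather than $X$. Since $V\to X$ factors through $Y\hookrightarrow X^Y$ and through $U$, the universal property of the fibre product gives a morphism $V\to U'':=U\times_X X^Y$. The morphism $X^Y\to X$ is integral, so $U''\to U$ is affine and hence $U''=\spec(R_1)$ is affine. Transitivity of integral closure applied to the tower $\ca{O}_U\to \ca{O}_{U''}\to \ca{O}_V$ yields $U^V=U''^V$, so it suffices to show that $U''^V$ is the spectrum of an almost pre-perfectoid $\ca{O}_K$-algebra.

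Next I would show that $R_1$ itself is almost pre-perfectoid. Setting $R_0=\Gamma(X^Y,\ca{O}_{X^Y})$, the morphism $U''\to X^Y$ is pro-\'etale between affine schemes and, by a standard affinisation of pro-\'etale presentations, arises from an ind-\'etale ring map $R_0\to R_1=\colim_\lambda R_{1,\lambda}$ with each $R_{1,\lambda}$ \'etale over $R_0$. Any \'etale morphism of $\ca{O}_K$-algebras is almost \'etale in the sense of \ref{defn:almost-et}, so each $R_{1,\lambda}$ is almost pre-perfectoid by \ref{lem:pre-perf-alg-basic}.(\ref{item:lem-pre-perf-alg-basic2}); stability under filtered colimits \ref{lem:pre-perf-alg-basic}.(\ref{item:lem-pre-perf-alg-basic1}) then gives that $R_1$ is almost pre-perfectoid.

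Finally I would handle the pro-finite \'etale cover. The generic fibre $(U'')_\eta$ equals $Y\times_{X^Y}U''=Y\times_X U=\spec(R_1[1/p])$, and by definition $V\to \spec(R_1[1/p])$ is pro-finite \'etale. Writing $V=\lim_\mu V_\mu$ as a cofiltered limit with affine transitions of finite \'etale $\spec(R_1[1/p])$-schemes, the almost purity theorem \ref{thm:almost-purity} applied to the almost pre-perfectoid $R_1$ shows that the integral closure $C_\mu$ of $R_1$ in $\Gamma(V_\mu,\ca{O}_{V_\mu})$ is almost pre-perfectoid for every $\mu$. By \ref{lem:rel-norm-fil-limit} applied to the constant tower with base $\spec(R_1)$, we have $U''^V=\spec(\colim_\mu C_\mu)$, and a final application of \ref{lem:pre-perf-alg-basic}.(\ref{item:lem-pre-perf-alg-basic1}) concludes the proof. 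The substantive input is the almost purity theorem; the main technical obstacle is the affinisation of pro-\'etale presentations between affine schemes used to write $R_1$ as a filtered colimit of \'etale $R_0$-algebras, which is a standard but not entirely obvious refinement argument not explicitly recorded earlier in the paper.
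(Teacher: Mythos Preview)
Your proposal is correct and follows essentially the same strategy as the paper: pass to $U''=U\times_X X^Y=U^{Y\times_X U}$ (the paper identifies these via the Cartesian squares in diagram~\eqref{diam:5.5.1}), show it is almost pre-perfectoid using that it is pro-\'etale over $X^Y$ and \ref{lem:pre-perf-alg-basic}, then invoke almost purity \ref{thm:almost-purity} together with \ref{lem:pre-perf-alg-basic} for the pro-finite \'etale part. The paper is terser in the middle step, citing \ref{lem:pre-perf-alg-basic} directly for the pro-\'etale map without spelling out the ind-\'etale decomposition you make explicit; the affinisation issue you flag is indeed glossed over there as well.
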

\begin{proof}
	Consider the following commutative diagram:
	\begin{align}\label{diam:5.5.1}
		\xymatrix@R=16pt{
			V\ar[r]\ar[d]&U^V\ar[d]&\\
			Y\times_X U\ar[r]\ar[d]&U^{Y\times_X U}\ar[r]\ar[d]&U\ar[d]\\
			Y\ar[r]\ar[d]&X^Y\ar[r]\ar[d]&X\\
			\spec(K)\ar[r]&\spec(\ca{O}_K)&
		}
	\end{align}
	Since taking integral closures commutes with \'etale base change and filtered colimits (\ref{lem:rel-norm-commute}, \ref{lem:rel-norm-fil-limit}), all the squares in \eqref{diam:5.5.1} are Cartesian (\ref{lem:relative-normal}). Notice that $U^{Y\times_X U}$ is integral over $U$ and thus affine. Since $U^{Y\times_X U}$ is pro-\'etale over $X^Y$, it is the spectrum of an $\ca{O}_K$-algebra which is almost pre-perfectoid by \ref{lem:pre-perf-alg-basic}. As $V$ is pro-finite \'etale over $Y\times_X U$, by almost purity \ref{thm:almost-purity} and \ref{lem:pre-perf-alg-basic}, we see that $U^V$ is the spectrum of an $\ca{O}_K$-algebra which is almost pre-perfectoid.
\end{proof}

\begin{mypara}\label{para:notation-open}
	Let $K$ be a pre-perfectoid field of mixed characteristic $(0, p)$ which contains a compatible system $\{\zeta_n\}_{n\geq 1}$ of primitive roots of unity, $\eta = \spec(K)$, $S = \spec (\ca{O}_K)$, $Y \to X$ a morphism of coherent schemes such that $X^Y$ is an $S$-scheme and that the induced morphism $Y\to X^Y$ is an open immersion over $\eta\to S$. Remark that the morphism $X^Y_\eta\to X$ over $\eta\to S$ is in the situation \ref{para:notation}. We assume further that there exist finitely many nonzero divisors $f_1,\dots, f_r$ of $\Gamma(X^Y_\eta,\ca{O}_{X^Y_\eta})$ such that the divisor $D=\sum_{i=1}^r \mrm{div}(f_i)$ on $X^Y_\eta$ has support $X^Y_\eta\setminus Y$ and that at each strict henselization of $X^Y_\eta$ those elements $f_i$ contained in the maximal ideal form a subset of a regular system of parameters (in particular, $D$ is a normal crossings divisor on $X^Y_\eta$, and we allow $D$ to be empty, i.e. $r=0$). We set
	\begin{align}
		Y_{\infty}&=\lim_{n} Y[T_1,\dots,T_r]/(T_1^{n}-f_1,\dots,T_r^{n}-f_r),
	\end{align}
	where the limit is taken over $\bb{N}$ with the division relation. We see that $Y_\infty$ is faithfully flat and pro-finite \'etale over $Y$.
\end{mypara}

\begin{myprop}[Abhyankar's lemma]\label{prop:abhyankar}
	Under the assumptions in {\rm\ref{para:notation-open}} and with the same notation, for any finite \'etale $Y_\infty$-scheme $V_\infty$, the integral closure $X^{V_\infty}_\eta$ is finite \'etale over $X^{Y_\infty}_\eta$.
\end{myprop}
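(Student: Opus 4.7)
The approach is a two-step descent to the classical Abhyankar lemma.

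\emph{Step 1 (descent to finite level and limit argument).} By \ref{lem:proet-profet}.(\ref{lem:proet-profet-limit}), since $Y_\infty \to Y$ is the pro-finite \'etale limit $\lim_n Y_n$, the finite \'etale morphism $V_\infty \to Y_\infty$ descends to a finite \'etale morphism $V_{n_0} \to Y_{n_0}$ for some positive integer $n_0$. For every multiple $n$ of $n_0$, set $V_n = V_{n_0}\times_{Y_{n_0}}Y_n$. Applying \ref{lem:rel-norm-commute} to the smooth open immersion $\eta \to S$ and \ref{lem:rel-norm-fil-limit} yields $X^{V_\infty}_\eta = (X^Y_\eta)^{V_\infty} = \lim_n X^{V_n}_\eta$ and similarly $X^{Y_\infty}_\eta = \lim_n X^{Y_n}_\eta$. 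Suppose we find a positive integer $N$ divisible by $n_0$ such that $X^{V_N}_\eta \to X^{Y_N}_\eta$ is finite \'etale. Then for every multiple $n$ of $N$, the base change $X^{V_N}_\eta \times_{X^{Y_N}_\eta} X^{Y_n}_\eta$ is finite \'etale over $X^{Y_n}_\eta$, normal, and coincides with $V_n$ on the open subscheme $Y_n$; by the uniqueness of the integral closure (\ref{lem:relative-normal}.(\ref{lem:relative-normal-norm})), it equals $X^{V_n}_\eta$. Taking the limit, $X^{V_\infty}_\eta = X^{V_N}_\eta \times_{X^{Y_N}_\eta} X^{Y_\infty}_\eta$ is finite \'etale over $X^{Y_\infty}_\eta$, as desired.

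\emph{Step 2 (classical Abhyankar).} It remains to produce such an $N$. Being finite \'etale is local for the \'etale topology on the target, and the formation of integral closure commutes with \'etale base change by \ref{lem:rel-norm-commute}, so the question reduces to checking finite \'etaleness after passing to the strict henselizations of $X^Y_\eta$. Fix a point $y$ of $X^Y_\eta$ and set $R = \ca{O}^{\mrm{sh}}_{X^Y_\eta, y}$. By hypothesis, those $f_i$ contained in the maximal ideal $\ak{m}_R$ form part of a regular system of parameters of $R$, and since $X^Y_\eta$ is a $K$-scheme with $K$ of characteristic $0$, the residue field of $R$ is also of characteristic $0$. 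The classical Abhyankar lemma (cf.\ SGA 1 \Luoma{13}.5.2, or \cite[\href{https://stacks.math.columbia.edu/tag/0BRM}{0BRM}]{stacks-project}) applied to the pullback of $V_{n_0}\to Y_{n_0}$ to $\spec(R)$ (which is automatically tamely ramified along the components of the divisor defined by the $f_i \in \ak{m}_R$, since the residue characteristic is $0$) then shows that for $N$ a sufficiently divisible multiple of $n_0$, this pullback extends to a finite \'etale cover of $\spec(R[T_1, \ldots, T_r]/(T_i^N - f_i))$. Since the ramification indices of a finite \'etale cover of generic rank $d$ at any tame branch divide $d!$, the uniform choice $N = n_0 \cdot d!$ works simultaneously at every point $y$. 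The local extension is unique (both it and the pullback of $X^{V_N}_\eta$ are finite over a regular, hence normal, scheme and agree with $V_N$ on a dense open), so it coincides with the pullback of $X^{V_N}_\eta$. Hence $X^{V_N}_\eta \to X^{Y_N}_\eta$ is finite \'etale after base change to every strict henselization, and is therefore finite \'etale by faithfully flat descent.

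\emph{Main obstacle.} The principal difficulty will be making the \'etale-local-to-global descent rigorous, particularly because $X^Y_\eta$ is not assumed Noetherian, so neither are its strict henselizations, whereas the classical Abhyankar lemma is typically stated for Noetherian regular strictly henselian local rings. This will be circumvented by a standard noetherianization: one writes the relevant local situation as a cofiltered limit of finitely presented Noetherian affine $K$-schemes in which the snc hypothesis on the $f_i$ is inherited, applies Abhyankar at each such finite Noetherian level, and passes to the limit using \ref{lem:rel-norm-fil-limit} together with \ref{lem:proet-profet}.(\ref{lem:proet-profet-limit}).
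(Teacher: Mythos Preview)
Your approach is correct and shares the paper's core idea: pass to strict henselizations of $X^Y_\eta$ and invoke the classical Abhyankar lemma (SGA~1, \Luoma{13}.5.2). The difference lies in how you assemble the local statements into the global one. You first seek a \emph{uniform} finite level $N$ (via the bound that tame ramification indices of a degree-$d$ cover are $\le d$, hence divide $d!$), prove $X^{V_N}_\eta \to X^{Y_N}_\eta$ is finite \'etale globally, and only then pass to the limit. The paper instead passes to the limit \emph{locally}: at each strict henselization $Z$ of $X^Y_\eta$, Abhyankar produces some $n_1$ (depending on the point) with $Z_{n_1}^{V_{n_1}} \to Z_{n_1}$ finite \'etale; since $Z_\infty = \lim Z_n$ is normal, the base change $Z_\infty \times_{Z_{n_1}} Z_{n_1}^{V_{n_1}}$ is normal and hence coincides with $Z_\infty^{V_\infty}$, which is therefore finite \'etale over $Z_\infty$. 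Finite \'etaleness being \'etale-local on the base then gives the global conclusion at level $\infty$ directly. This sidesteps any uniformity in $N$ and is shorter.

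Your ``main obstacle'' is not actually an obstacle. The standing hypothesis in \ref{para:notation-open} says that at every strict henselization of $X^Y_\eta$, the relevant $f_i$ form part of a regular system of parameters; this already forces each strict henselization to be a regular local ring, hence Noetherian. (And then $Z_n = Z[T_1,\dots,T_r]/(T_i^n-f_i)$ is again regular local by the standard computation, cf.\ SGA~1, \Luoma{13}.5.1.) So the classical Abhyankar lemma applies without any further noetherianization step, and the limit gymnastics you sketch there can be dropped.
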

\begin{proof}
	We set $Z=X^Y_\eta$. Passing to a strict henselization of $Z$ where $D$ is non-empty, we may assume that $Z$ is local and regular and that $f_1,\dots, f_r$ ($r\geq 1$) are all contained in the maximal ideal. We set $Y_n=Y[T_1,\dots,T_r]/(T_1^{n}-f_1,\dots,T_r^{n}-f_r)$ and $Z_n=Z[T_1,\dots,T_r]/(T_1^{n}-f_1,\dots,T_r^{n}-f_r)$ for any integer $n>0$. Notice that $Z_{n}$ is still local and regular (thus isomorphic to $X^{Y_{n}}_\eta$) and that $g_0=f_0^{1/n},\dots, g_r=f_r^{1/n}$ form a subset of a regular system of parameters for $Z_{n}$ (see the proof of \cite[\Luoma{13}.5.1]{sga1}). Using \cite[8.8.2, 8.10.5]{ega4-3} and \cite[17.7.8]{ega4-4}, there exists an integer $n_0>0$ and a finite \'etale $Y_{n_0}$-scheme $V_{n_0}$ such that $V_\infty=Y_\infty\times_{Y_{n_0}} V_{n_0}$. We set $V_n=Y_n\times_{Y_{n_0}} V_{n_0}$ for any $n\geq n_0$. According to \cite[\Luoma{13}.5.2]{sga4-3}, there exists a multiple $n_1$ of $n_0$ such that $Z_{n_1}^{V_{n_1}}$ is finite \'etale over $Z_{n_1}$. As $Z_\infty=\lim Z_n$ is normal, $Z_\infty \times_{Z_{n_1}}Z_{n_1}^{V_{n_1}}$ is also normal and thus isomorphic to $Z_\infty^{V_\infty}=X^{V_\infty}_\eta$, which shows that the latter is also finite \'etale over $Z_\infty=X^{Y_\infty}_\eta$.
\end{proof}

\begin{mycor}\label{cor:abhyankar}
	Under the assumptions in {\rm\ref{para:notation-open}} and with the same notation, the natural functor $\fal^\proet_{X^{Y_\infty}_\eta\to X}\to \fal^\proet_{Y_\infty\to X}$ sending $V\to U$ to $Y_\infty\times_{X^{Y_\infty}_\eta}V\to U$ induces an equivalence of ringed sites $(\fal^\proet_{Y_\infty\to X},\falb)\to (\fal^\proet_{X^{Y_\infty}_\eta\to X},\falb)$.
\end{mycor}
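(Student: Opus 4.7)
The plan is to construct a quasi-inverse functor to the given one and verify the equivalence at the level of underlying categories, topologies, and structural sheaves. Writing $Z = X^{Y_\infty}_\eta$ for brevity, the functor $u^+:\fal_{Z\to X}^\proet \to \fal_{Y_\infty\to X}^\proet$ sends $(V\to U)$ to $(Y_\infty\times_Z V\to U)$. I would construct the candidate quasi-inverse $v^+:\fal_{Y_\infty\to X}^\proet \to \fal_{Z\to X}^\proet$ sending $(W\to U)$ to $((Z\times_X U)^W\to U)$, where $(Z\times_X U)^W$ denotes the integral closure of $Z\times_X U$ in $W$.

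The main obstacle is verifying that $v^+$ is well-defined, i.e.\ that $(Z\times_X U)^W$ is pro-finite \'etale over $Z\times_X U$. Writing $U=\lim U_\alpha$ with $U_\alpha\to X$ \'etale (\ref{rem:small-presentation}) and $W=\lim W_\beta$ with $W_\beta$ finite \'etale over $Y_\infty\times_X U$, noetherian approximation (\cite[8.8.2, 8.10.5]{ega4-3}, \cite[17.7.8]{ega4-4}) allows one to descend each $W_\beta$ to a finite \'etale cover of some $Y_n\times_X U_\alpha$. The setup of \ref{para:notation-open} persists under \'etale base change (using \ref{lem:rel-norm-commute} to identify $U_\alpha^{Y\times_X U_\alpha} = X^Y\times_X U_\alpha$ and noting that \'etale maps preserve regular systems of parameters), so Abhyankar's lemma \ref{prop:abhyankar} applied to the base-changed situation shows that the integral closure of $Z\times_X U_\alpha$ in the descended cover is finite \'etale over $Z\times_X U_\alpha$. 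Passing to the limit via \ref{lem:rel-norm-fil-limit} yields the required pro-finite \'etaleness.

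For the natural isomorphisms, $v^+\circ u^+\cong \id$ uses that $V$, being pro-finite \'etale over the normal scheme $Z\times_X U$ (normality of $Z$ follows from $Z$ being the integral closure of $X^Y_\eta$ in the regular scheme $Y_\infty$, and is preserved under pro-\'etale base change), is itself normal; moreover $Y_\infty\times_Z V$ is a scheme-theoretically dense open in $V$ (because $Y_\infty$ is dense in $Z$ and $V\to Z$ is flat), so $V$ equals the integral closure of $Z\times_X U$ in $Y_\infty\times_Z V$. Conversely, $u^+\circ v^+\cong \id$ follows because pulling back $(Z\times_X U)^W$ along the open immersion $Y_\infty\times_X U\hookrightarrow Z\times_X U$ recovers $W$, reducing once again to the finite \'etale case handled above. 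Continuity of $u^+$ is routine stability of coverings under base change; for $v^+$, the integral morphism $\coprod(Z\times_X U)^{W_m}\to (Z\times_X U)^W$ induced by a vertical covering has closed image containing the dense open $W$, and hence is surjective, while Cartesian coverings are preserved because integral closure commutes with \'etale base change (\ref{lem:rel-norm-commute}).

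Finally, to match structural sheaves, I would show $U^V = U^{Y_\infty\times_Z V}$ for any $(V\to U)\in \fal_{Z\to X}^\proet$. Locally $U=\spec(A)$, $V=\spec(B)$ with $\Gamma(Y_\infty\times_Z V,\ca{O})=B[1/g']$ for $g'$ the image in $B$ of the product of the $f_i$'s; since $B$ is normal (pro-finite \'etale over the normal ring $\Gamma(Z\times_X U)$) and $g'$ is a non-zero-divisor on every irreducible component of $\spec(B)$, any element of $B[1/g']$ integral over $A$ is a fortiori integral over $B$ and therefore already lies in $B$. Hence the integral closures of $A$ in $B$ and in $B[1/g']$ coincide, so $\falb\circ u^+ = \falb$, completing the equivalence of ringed sites.
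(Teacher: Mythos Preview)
Your proof is correct. The paper takes a more streamlined organizational route: rather than constructing the quasi-inverse $v^+$ directly at the pro-\'etale level with approximation arguments, it reduces via the Hom formula \ref{lem:proet-profet}.(\ref{lem:proet-profet-mor}) to showing that the \'etale-level functor $\fal^\et_{Z\to X}\to \fal^\et_{Y_\infty\to X}$ is an equivalence. Since this is a morphism of fibred categories over $X_\et$, it suffices to check fibre by fibre that $(Z\times_X U)_\fet \to (Y_\infty\times_X U)_\fet$ is an equivalence for each \'etale $U\to X$, which is precisely \ref{prop:abhyankar} applied to $Y\times_X U\to U$; this bypasses your explicit limit arguments and the separate verification of both compositions. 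For the structural sheaves, the paper invokes \ref{lem:relative-normal} rather than normality: $Z$ is $Y_\infty$-integrally closed and this property is stable under smooth base change and filtered limits, hence under pro-\'etale morphisms, so every $V$ in $\fal^\proet_{Z\to X}$ is integrally closed in $Y_\infty\times_Z V$, giving $U^V=U^{Y_\infty\times_Z V}$ directly. Your normality route reaches the same conclusion but requires extracting regularity of $X^Y_\eta$ (and hence of $Y_\infty$) from the regular-system-of-parameters hypothesis in \ref{para:notation-open}. Both approaches are valid; the paper's is shorter, yours more explicit and self-contained.
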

\begin{proof}
	For the equivalence of categories, it suffices to show that the induced functor $u^+:\fal^\et_{X^{Y_\infty}_\eta\to X}\to \fal^\et_{Y_\infty\to X}$ is an equivalence by \ref{lem:proet-profet}.(\ref{lem:proet-profet-mor}). Since
	$u^+$ is a morphism of fibred categories over $X_\et$, it suffices to show that for each object $U$ of $X_\et$, the fibre functor $u^+_U: U^{Y_\infty\times_XU}_{\eta,\fet}\to (Y_\infty\times_XU)_{\fet}$ induced by $u^+$ is an equivalence of categories. Notice that if we replace $Y\to X$ in \ref{para:notation-open} by $Y\times_XU\to U$, then $(Y\times_XU)_\infty=Y_\infty\times_XU$. Therefore, the equivalence of categories follows from applying \ref{prop:abhyankar} to $Y\times_XU\to U$. 
	
	To show the equivalence of categories identifies their topologies, it suffices to show that it identifies the vertical coverings and Cartesian coverings in \ref{para:proet-falsite}. For a finite family $\{(V_m \to U) \to (V \to U)\}_{m \in M}$ in $\fal^\proet_{X^{Y_\infty}_\eta\to X}$, its image in $\fal^\proet_{Y_\infty\to X}$ is $\{(Y_\infty\times_{X^{Y_\infty}_\eta}V_m \to U) \to ((Y_\infty\times_{X^{Y_\infty}_\eta}V \to U)\}_{m \in M}$. Notice that $Y_\infty\times_{X^{Y_\infty}_\eta}V$ is a dense open subset of $V$ as $V$ is flat over $X^{Y_\infty}_\eta$ (\cite[2.3.7]{ega4-2}), and the same holds for $V_m$. Thus, the integral morphism $\coprod_{m \in M} V_m\to V$ is surjective if and only if $\coprod_{m \in M} Y_\infty\times_{X^{Y_\infty}_\eta}V_m\to Y_\infty\times_{X^{Y_\infty}_\eta}V$ is surjective. On the other hand, it is tautological that the equivalence identifies the Cartesian coverings. Hence, the two sites are naturally equivalent.
	
	The identification of the structural sheaves by the equivalence of sites follows from the fact that $V$ is integrally closed in $Y_\infty\times_{X^{Y_\infty}_\eta}V$ for any object $V\to U$ of $\fal^\proet_{X^{Y_\infty}_\eta\to X}$ as $V$ is pro-\'etale over $X^{Y_\infty}_\eta$ (\ref{lem:relative-normal}).
\end{proof}

\begin{mycor}\label{cor:pre-perf-falsite}
	Under the assumptions in {\rm\ref{para:notation-open}} and with the same notation, let $V\to U$ be an object of $\fal_{Y_{\infty}\to X}^\proet$ such that $U^V$ is the spectrum of an $\ca{O}_K$-algebra which is almost pre-perfectoid, and let $V'\to U'$ be an object of $\fal_{V\to U}^\proet$ with $U'$ affine. Then, $U'^{V'}$ is the spectrum of an $\ca{O}_K$-algebra which is almost pre-perfectoid.
\end{mycor}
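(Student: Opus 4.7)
The plan is to reduce to \ref{lem:pre-perf-falsite} via the equivalence of ringed sites $(\fal^\proet_{Y_\infty\to X},\falb)\cong(\fal^\proet_{X^{Y_\infty}_\eta\to X},\falb)$ provided by \ref{cor:abhyankar}. Under this equivalence, the given object $V\to U$ of $\fal^\proet_{Y_\infty\to X}$ corresponds to an object $\widetilde{V}\to U$ of $\fal^\proet_{X^{Y_\infty}_\eta\to X}$ such that $V=Y_\infty\times_{X^{Y_\infty}_\eta}\widetilde{V}$ and $U^V=U^{\widetilde{V}}$ (the latter since $\widetilde{V}$ is integrally closed in $V$, as noted at the end of the proof of \ref{cor:abhyankar}). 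Since the equivalence restricts to an equivalence of the corresponding localizations $\fal^\proet_{V\to U}\cong\fal^\proet_{\widetilde{V}\to U}$, the object $V'\to U'$ of $\fal^\proet_{V\to U}$ corresponds to an object $\widetilde{V}'\to U'$ of $\fal^\proet_{\widetilde{V}\to U}$ with $U'^{V'}=U'^{\widetilde{V}'}$.

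Next, I would check that $\widetilde{V}\to U$ fits the framework of \ref{para:notation}, namely that $U^{\widetilde{V}}$ is an $S$-scheme with generic fibre $\widetilde{V}$. The $S$-scheme structure is inherited from $U\to X\to S$. For the generic fibre, \ref{lem:rel-norm-commute} applied to the open immersion $U_\eta\to U$ gives $(U^{\widetilde{V}})_\eta=U_\eta^{\widetilde{V}}$. Then the transitivity of integral closure along the tower $U_\eta\to X^{Y_\infty}_\eta\times_X U\to \widetilde{V}$, combined with the integrality of $X^{Y_\infty}_\eta\times_X U$ over $U_\eta$ (as a pro-\'etale base change of the integral morphism $X^{Y_\infty}_\eta\to X_\eta$) and the fact that $\widetilde{V}$ is pro-\'etale—hence relatively normal, by \ref{lem:relative-normal}.(\ref{lem:relative-normal-et}) and \ref{lem:relative-normal}.(\ref{lem:relative-normal-fil-limit})—over $X^{Y_\infty}_\eta\times_X U$, yields $U_\eta^{\widetilde{V}}=\widetilde{V}$.

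With these reductions in place, I would invoke \ref{lem:pre-perf-falsite} with the morphism $\widetilde{V}\to U$ playing the role of ``$Y\to X$''. The required hypothesis that $U^{\widetilde{V}}=U^V$ is the spectrum of an almost pre-perfectoid $\ca{O}_K$-algebra is exactly the given assumption, and $\widetilde{V}'\to U'$ is an object of $\fal^\proet_{\widetilde{V}\to U}$ with $U'$ affine. Hence \ref{lem:pre-perf-falsite} delivers that $U'^{\widetilde{V}'}=U'^{V'}$ is the spectrum of an almost pre-perfectoid $\ca{O}_K$-algebra, completing the argument. The main subtlety is the verification of the generic fibre condition in the second step, since $V$ itself need not equal the generic fibre of $U^V$ in general—passing through Abhyankar's equivalence is precisely what allows one to realize the situation inside the framework of \ref{para:notation}.
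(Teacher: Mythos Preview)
Your approach is correct and matches the paper's one-line proof (which simply cites \ref{cor:abhyankar} and \ref{lem:pre-perf-falsite}). One imprecision in your verification step: in \ref{para:notation-open} only $X^Y$, not $X$ itself, is assumed to be an $S$-scheme, so the phrases ``$U\to X\to S$'' and ``$U_\eta$'' are not a priori meaningful. The clean route is to note that $X^{X^{Y_\infty}_\eta}=X^{Y_\infty}$ (so $X^{Y_\infty}_\eta\to X$ is itself in the situation of \ref{para:notation}) and then invoke \ref{lem:rel-norm-proet-map}: this exhibits $U^{\widetilde{V}}$ as an $X^{Y_\infty}$-scheme, hence an $S$-scheme, with $\widetilde{V}=X^{Y_\infty}_\eta\times_{X^{Y_\infty}}U^{\widetilde{V}}=(U^{\widetilde{V}})_\eta$, exactly as asserted (without justification) in the proof of \ref{thm:acyclic}.
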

\begin{proof}
	It follows directly from \ref{cor:abhyankar} and \ref{lem:pre-perf-falsite}.
\end{proof}

\begin{mythm}\label{thm:acyclic}
	Under the assumptions in {\rm\ref{para:notation-open}} and with the same notation, let $V\to U$ be an object of $\fal_{Y_{\infty}\to X}^\proet$. Then, the following statements are equivalent:
	\begin{enumerate}
		\renewcommand{\labelenumi}{{\rm(\theenumi)}}
		\item The morphism $V\to U$ is Faltings acyclic.\label{item:thm-acyclic1}
		\item The scheme $U$ is affine and $U^V=\spec(A)$ is the spectrum of an $\ca{O}_K$-algebra $A$ which is almost pre-perfectoid.\label{item:thm-acyclic2}
	\end{enumerate} 
\end{mythm}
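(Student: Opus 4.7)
The plan is to prove the two implications separately, with $(2)\Rightarrow(1)$ serving as a tool for the harder direction $(1)\Rightarrow(2)$.

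For $(2)\Rightarrow(1)$, I would invoke the equivalence $\fal_{X^{Y_\infty}_\eta\to X}^\proet \iso \fal_{Y_\infty\to X}^\proet$ of \ref{cor:abhyankar} to transport $V\to U$ to the unique object $V'\to U$ of $\fal_{X^{Y_\infty}_\eta\to X}^\proet$ with $V=Y_\infty\times_{X^{Y_\infty}_\eta}V'$. The equivalence preserves integral closures and structural sheaves, so $U^{V'}=U^V=\spec(A)$ remains almost pre-perfectoid. The pair $X^{Y_\infty}_\eta\to X$ fits the hypotheses of \ref{para:notation}: by \ref{lem:rel-norm-commute}, the generic fibre of $X^{X^{Y_\infty}_\eta}$ equals the integral closure of $X_\eta$ in $X^{Y_\infty}_\eta$, which is $X^{Y_\infty}_\eta$ itself because the latter, being the generic fibre of the integral $X$-scheme $X^{Y_\infty}$, is integral over $X_\eta$. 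Corollary~\ref{prop:fal-coh} therefore applies to $V'\to U$ and yields Faltings acyclicity, which via the equivalence transfers back to $V\to U$.

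For $(1)\Rightarrow(2)$, write $A=\Gamma(U^V,\ca{O}_{U^V})$. The case $p\in A^\times$ is trivial since then $\widehat{A}=0$, so I would reduce (as in the proof of \ref{prop:pre-perf-base}, by passing to the pro-\'etale covering $\spec(B^{\mrm{h}})\coprod\spec(B[\tfrac{1}{p}])\to U$) to the case $p\in J(A)$. Lemma~\ref{lem:profet-tower} then produces a pro-finite \'etale cover $V_\infty=\spec(A_\infty[\tfrac{1}{p}])\to V$ in $\fal_{Y_\infty\to X}^\proet$ with $A_\infty=\Gamma(U^{V_\infty},\ca{O})$ almost pre-perfectoid. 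Let $A_\infty^{(k)}$ denote the integral closure of $A_\infty$ in the $k$-fold tensor product $A_\infty[\tfrac{1}{p}]^{\otimes_{A[1/p]} k}$. Since this tensor product is ind-finite \'etale over $A_\infty[\tfrac{1}{p}]$, iterated applications of almost purity (Theorem~\ref{thm:almost-purity}) combined with \ref{lem:pre-perf-alg-basic} show each $A_\infty^{(k)}$ is almost pre-perfectoid \emph{without assuming this for $A$ itself}, thereby bypassing \ref{cor:tensor-pre-perf}. By the already-established $(2)\Rightarrow(1)$, every fibre product $V_\infty^{\times_V k}\to U$ is Faltings acyclic, so the \v{C}ech-to-cohomology spectral sequence for the cover $V_\infty\to V$ degenerates; the Faltings acyclicity of $V\to U$, together with \ref{lem:acyclic-falsite}, then forces the \v{C}ech complex
\begin{align*}
0\longrightarrow A/\pi^n \longrightarrow A_\infty/\pi^n \longrightarrow A_\infty^{(2)}/\pi^n \longrightarrow \cdots
\end{align*}
to be almost exact for every pseudo-uniformizer $\pi$ of $K$ and every $n\ge 1$.

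To conclude, the almost injectivity of $A/p^n\to A_\infty/p^n$, combined with the almost flatness of $\widehat{A_\infty}$, forces $\widehat{A}$ to have almost-zero $p$-torsion and hence to be almost flat over $\ca{O}_K$ by \ref{lem:flat}. The equalizer descriptions of $A/p$ and $A/p_1$ inside $A_\infty/p\rightrightarrows A_\infty^{(2)}/p$ and $A_\infty/p_1\rightrightarrows A_\infty^{(2)}/p_1$, taken together with the Frobenius-induced almost isomorphisms $A_\infty^{(k)}/p_1\iso A_\infty^{(k)}/p$ (available for $k=1,2$), yield by functoriality of equalizers an almost isomorphism $A/p_1\iso A/p$ induced by Frobenius; these two properties together mean $A$ is almost pre-perfectoid. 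The main obstacles are the initial reduction to $p\in J(A)$ along with the attendant verification that Faltings acyclicity descends through the pro-\'etale covering of $U$, and the verification that each intermediate $A_\infty^{(k)}$ is almost pre-perfectoid via almost purity over the base $A_\infty$ rather than $A$; once these technicalities are settled, the remaining descent is essentially formal.
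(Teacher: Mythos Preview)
Your argument for $(2)\Rightarrow(1)$ matches the paper's exactly: transport through the equivalence of \ref{cor:abhyankar} and apply \ref{prop:fal-coh}.

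For $(1)\Rightarrow(2)$, your route differs from the paper's and is considerably more laborious. The paper observes that objects satisfying (2) form a topological generating family of $\fal_{Y_\infty\to X}^\proet$ (by \ref{cor:abhyankar} and \ref{prop:pre-perf-base}); hence the Frobenius $\falb/p_1\falb\to\falb/p\falb$ is an almost isomorphism \emph{of sheaves}, since it is so upon evaluation at every object of this family. Faltings acyclicity of $V\to U$ together with \ref{lem:acyclic-falsite}.(\ref{item:lem-acyclic-falsite1}) then gives almost isomorphisms $A/p_1A\to H^0(\fal_{V\to U}^\proet,\falb/p_1\falb)$ and $A/pA\to H^0(\fal_{V\to U}^\proet,\falb/p\falb)$, whence $A/p_1A\to A/pA$ is an almost isomorphism. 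Almost flatness of $\widehat{A}$ is automatic and not even mentioned: $A$ embeds in $\Gamma(V,\ca{O}_V)$, a $K$-algebra, so $A$ is $p$-torsion-free.

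Your \v{C}ech-descent approach carries a real gap at the step you yourself flag. The reduction to $p\in J(A)$ by passing to $\spec(B^{\mrm{h}})\coprod\spec(B[1/p])\to U$ changes $U$, and there is no mechanism offered for why Faltings acyclicity of $V\to U$ should imply that of $V\times_U U'\to U'$; nothing in the paper provides this. One could circumvent the reduction entirely by using \ref{prop:pre-perf-base} (via \ref{cor:abhyankar}) to produce a covering in $\fal_{Y_\infty\to X}^\proet$ by objects satisfying (2), allowing $U$ to vary, and then invoking \ref{cor:pre-perf-falsite} for all fibre products in the \v{C}ech nerve---but at that point the argument has become a \v{C}ech-level rephrasing of the paper's sheaf argument. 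Your almost-purity argument for the $A_\infty^{(k)}$ and your equalizer argument for the Frobenius condition on $A$ are both fine; the separate argument for almost flatness of $\widehat{A}$ is unnecessary.
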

\begin{proof}
	(\ref{item:thm-acyclic2}) $\Rightarrow$ (\ref{item:thm-acyclic1}): Let $V'\to U$ be an object of $\fal^\proet_{X^{Y_\infty}_\eta\to X}$ whose image under the equivalence in \ref{cor:abhyankar} is isomorphic to $V\to U$. Then, $U^{V'}=\spec(A)$, $V'=U^{V'}_\eta$, and $\rr\Gamma(\fal^\proet_{V\to U},\falb/p^n\falb)=\rr\Gamma(\fal^\proet_{V'\to U},\falb/p^n\falb)$. The conclusion follows from \ref{prop:fal-coh}.
	
	(\ref{item:thm-acyclic1}) $\Rightarrow$ (\ref{item:thm-acyclic2}): Firstly, notice that the objects $V'\to U'$ of $\fal_{Y_{\infty}\to X}^\proet$ satisfying the condition (\ref{item:thm-acyclic2}) form a topological generating family by \ref{cor:abhyankar} and \ref{prop:pre-perf-base}. Let $p_1\in \ca{O}_K$ be a $p$-th root of $p$ up to a unit. Then, we see that the Frobenius induces an almost isomorphism $\falb/p_1\falb\to \falb/p\falb$ by evaluating these sheaves at the objects $V'\to U'$. The Frobenius also induces an almost isomorphism $A/p_1A\to A/pA$ by \ref{lem:acyclic-falsite}.(\ref{item:lem-acyclic-falsite1}), which shows that $A$ is almost pre-perfectoid.
\end{proof}

\section{Complements on Logarithmic Geometry}\label{sec:log-geo}

We briefly recall some notions and facts of logarithmic geometry which will be used in the rest of the paper. We refer to \cite{kato1989log,kato1994toric,gabber2004foundations,ogus2018log} for a systematic development of logarithmic geometry, and to \cite[\Luoma{2}.5]{abbes2016p} for a brief summary of the theory.

\begin{mypara}
	We only consider logarithmic structures in \'etale topology. More precisely, let $X$ be a scheme, $X_\et$ the \'etale site of $X$, $\ca{O}_{X_\et}$ the structure sheaf on $X_\et$, $\ca{O}_{X_\et}^\times$ the subsheaf of units of $\ca{O}_{X_\et}$. A logarithmic struture on $X$ is a homomorphism of sheaves of monoids $\alpha:\scr{M} \to \ca{O}_{X_\et}$ on $X_\et$ which induces an isomorphism $\alpha^{-1}(\ca{O}_{X_\et}^\times) \iso \ca{O}_{X_\et}^\times$. We denote by $(X,\scr{M})$ the associated logarithmic scheme (cf. \cite[\Luoma{2}.5.11]{abbes2016p}). 
\end{mypara}

\begin{mypara}\label{para:log-fs-bc}
	Let $(X,\scr{M})$ be a coherent log scheme (cf. \cite[\Luoma{2}.5.15]{abbes2016p}). Then, there is a maximal open subscheme $X^{\mrm{tr}}$ of $X$ on which $\scr{M}$ is trivial, and moreover it is functorial in $(X,\scr{M})$ (\cite[\Luoma{3}.1.2.8]{ogus2018log}). Let $(X,\scr{M}) \to (S,\scr{L}) \leftarrow (Y,\scr{N})$ be a diagram of fine and saturated log schemes (cf. \cite[\Luoma{2}.5.15]{abbes2016p}). Then, the fibred product is representable in the category of fine and saturated log schemes by $(Z,\scr{P})=(X,\scr{M}) \times_{(S,\scr{L})}^{\mrm{fs}}  (Y,\scr{N})$. We remark that $Z^{\mrm{tr}}= X^{\mrm{tr}} \times_{S^{\mrm{tr}}} Y^{\mrm{tr}}$, that $Z \to X \times_S Y$ is finite, and that $Z^{\mrm{tr}} \to Z$ is Cartesian over $X^{\mrm{tr}} \times_{S^{\mrm{tr}}} Y^{\mrm{tr}} \to X \times_S Y$ (\cite[\Luoma{3} 2.1.2, 2.1.6]{ogus2018log}). Moreover, if $X^{\mrm{tr}}=X$, then $Z=X \times_S Y$ (\cite[\Luoma{3}.2.1.3]{ogus2018log}).
\end{mypara}

\begin{mypara}\label{para:log-compact}
	For an open immersion $j:Y \to X$, we denote by $j_\et: Y_\et \to X_\et$ the morphism of their \'etale sites defined by the base change by $j$. Let $\scr{M}_{Y \to X}$ be the preimage of $j_{\et*} \ca{O}_{Y_\et}^\times$ under the natural map $\ca{O}_{X_\et}\to j_{\et*}\ca{O}_{Y_\et}$, and we endow $X$ with the logarithmic structure $\scr{M}_{Y \to X} \to \ca{O}_{X_\et}$, which is called the compactifying log structure associated to the open immersion $j$ (\cite[\Luoma{3}.1.6.1]{ogus2018log}). Sometimes we write $\scr{M}_{Y \to X}$ as $\scr{M}_{X}$ if $Y$ is clear in the context.
\end{mypara}

\begin{mypara}\label{para:log-reg}
	Let $(X,\scr{M})$ be a fine and saturated log scheme which is regular (\cite[2.1]{kato1994toric}, \cite[2.3]{niziol2006toric}). Then, $X$ is locally Noetherian and normal, and $X^{\mrm{tr}}$ is regular and dense in $X$ (\cite[4.1]{kato1994toric}). Moreover, there is a natural isomorphism $\scr{M} \iso \scr{M}_{X^{\mrm{tr}} \to X}$ (\cite[11.6]{kato1994toric}, \cite[2.6]{niziol2006toric}). We remark that if $X$ is a regular scheme with a strict normal crossings divisor $D$, then $(X,\scr{M}_{X\setminus D \to X})$ is fine, saturated and regular (\cite[\Luoma{3}.1.11.9]{ogus2018log}). 
	
	Let $f:(X,\scr{M}) \to (S,\scr{L})$ be a smooth (resp. saturated) morphism of fine and saturated log schemes (cf. \cite[\Luoma{2} 5.25, 5.18]{abbes2016p}). Then, $f$ remains smooth (resp. saturated) under the base change in the category of fine and saturated log schemes (\cite[\Luoma{4}.3.1.2, \Luoma{4}.3.1.11]{ogus2018log}, resp. \cite[\Luoma{3}.2.5.3]{ogus2018log}). We remark that if $f$ is smooth, then $f^{\mrm{tr}}:X^{\mrm{tr}}\to S^{\mrm{tr}}$ is a smooth morphism of schemes. If moreover $(S,\scr{L})$ is regular, then $(X,\scr{M})$ is also regular (\cite[\Luoma{4}.3.5.3]{ogus2018log}). We also remark that if $f$ is saturated, then for any fibred product in the category of fine and saturated log schemes $(Z,\scr{P})=(X,\scr{M}) \times_{(S,\scr{L})}^{\mrm{fs}}  (Y,\scr{N})$, we have $Z=X\times_S Y$ (\cite[\Luoma{2}.2.13]{tsuji2019saturated}).
\end{mypara}

\begin{mypara}\label{para:log-dvr}
	Let $K$ be a complete discrete valuation field with valuation ring $\ca{O}_K$, $k$ the residue field of $\ca{O}_K$, $\pi$ a uniformizer of $\ca{O}_K$. We set $\eta=\spec(K)$, $S= \spec(\ca{O}_K)$ and $s=\spec(k)$. Then, $(S,\scr{M}_{\eta \to S})$ is fine, saturated and regular, since $\bb{N} \to \Gamma(S,\scr{M}_{\eta \to S})$ sending $1$ to $\pi$ forms a chart of $(S,\scr{M}_{\eta \to S})$ (cf. \cite[\Luoma{2}.5.13, \Luoma{2}.6.1]{abbes2016p}). Recall that an open immersion $Y \to X$ of quasi-compact and separated schemes over $\eta \to S$ is strictly semi-stable (\cite[6.3]{dejong1996alt}) if and only if the following conditions are satisfied (\cite[6.4]{dejong1996alt}, \cite[17.5.3]{ega4-4}): 
	\begin{enumerate}
		\renewcommand{\theenumi}{\roman{enumi}}
		\renewcommand{\labelenumi}{{\rm(\theenumi)}}
		\item For each point $x$ of the generic fibre $X_\eta$,  there is an open neighborhood $U\subseteq X_\eta$ of $x$ and a smooth $K$-morphism 
		\begin{align}
			f:U\longrightarrow \spec(K[s_1,\dots,s_m])
		\end{align}
		such that $f$ maps $x$ to the point associated to the maximal ideal $(s_1,\dots,s_m)$ and that $U\setminus Y$ is the inverse image of the closed subset defined by $s_1\cdots s_m=0$.
		\item For each point $x$ of the special fibre $X_s$,  there is an open neighborhood $U\subseteq X$ of $x$ and a smooth $\ca{O}_K$-morphism 
		\begin{align}
			f:U\longrightarrow \spec(\ca{O}_K[t_1,\dots,t_n, s_1,\dots,s_m]/(\pi-t_1\cdots t_n))
		\end{align}
		such that $f$ maps $x$ to the point associated to the maximal ideal $(t_1,\dots,t_n, s_1,\dots,s_m)$ and that $U\setminus Y$ is the inverse image of the closed subset defined by $t_1\cdots t_n\cdot s_1\cdots s_m=0$.
	\end{enumerate}
	We call an open immersion $Y \to X$ of quasi-compact and separated schemes over $\eta \to S$ is semi-stable if for any point $x$ of $X$ there is an \'etale neighborhood $U$ of $x$ such that $Y\times_X U\to U$ is strictly semi-stable. In this case, $(X,\scr{M}_{Y \to X})$ is fine, saturated and regular which is smooth and saturated over $(S,\scr{M}_{\eta \to S})$, since for any point $x$ of $X$ there is an \'etale neighborhood $U$ of $x$ such that there exists a chart for the morphism $(U,\scr{M}_{Y\times_X U \to U})\to (S,\scr{M}_{\eta \to S})$ subordinate to the morphism $\bb{N} \to \bb{N}^n\oplus \bb{N}^m$ sending $1$ to $(1,\dots,1,0,\dots,0)$ such that the induced morphism $U\to S\times_{\bb{A}_{\bb{N}}}\bb{A}_{\bb{N}^n\oplus \bb{N}^m}$ is smooth (cf. \cite[\Luoma{4}.3.1.18]{ogus2018log}).
\end{mypara}

\begin{mypara}\label{para:generic-finite}
	Recall that a morphism of schemes $f:X\to S$ is called \emph{generically finite} if there exists a dense open subscheme $U$ of $S$ such that $f^{-1}(U)\to U$ is finite. We remark that for a morphism $f:X\to S$ of finite type between Noetherian schemes which maps generic points to generic points, $f$ is generically finite if and only if the residue field of any generic point $\eta$ of $X$ is a finite field extension of the residue field of $f(\eta)$ (\cite[\Luoma{2}.1.1.7]{gabber2014travaux}).
\end{mypara}

\begin{mypara}\label{para:notation-alter}
	Let $K$ be a complete discrete valuation field with valuation ring $\ca{O}_K$, $L$ an algebraically closed valuation field of height $1$ extension of $K$ with valuation ring $\ca{O}_L$, $\overline{K}$ the algebraic closure of $K$ in $L$.
	
	Consider the category $\scr{C}$ of open immersions between integral affine schemes $U\to T$ over $\spec(K)\to \spec(\ca{O}_K)$ under $\spec(L)\to \spec(\ca{O}_L)$ such that $T$ is of finite type over $\ca{O}_K$ and that $\spec(L)\to U$ is dominant. Let $\scr{C}_{\mrm{car}}$ be the full subcategory of $\scr{C}$ formed by those objects $U \to T$ Cartesian over $\spec(K) \to \spec(\ca{O}_K)$.
	\begin{align}
		\xymatrix@R=1pc{
			\spec(L)\ar[r]\ar[d]&\spec(\ca{O}_L)\ar[d]\\
			U=\spec(B)\ar[r]\ar[d]&T=\spec(A)\ar[d]\\
			\spec(K)\ar[r]&\spec(\ca{O}_K)
		}
	\end{align}
	We note that the objects of $\scr{C}$ are of the form $(U=\spec(B) \to T=\spec(A))$ where $A$ (resp. $B$) is a finitely generated $\ca{O}_K$-subalgebra of $\ca{O}_L$ (resp. $K$-subalgebra of $L$) with $A\subseteq B$ such that $\spec(B) \to \spec(A)$ is an open immersion. 
\end{mypara}

\begin{mylem}\label{thm:alteration-ok}
	With the notation in {\rm\ref{para:notation-alter}}, we have:
	\begin{enumerate}
		\renewcommand{\labelenumi}{{\rm(\theenumi)}}
		\item The category $\scr{C}$ is cofiltered, and the subcategory $\scr{C}_{\mrm{car}}$ is initial in $\scr{C}$.
		\item The morphism $\spec(L) \to \spec(\ca{O}_L)$ represents the cofiltered limit of morphisms $U \to T$ indexed by $\scr{C}$ in the category of morphisms of schemes {\rm(cf. \ref{para:E})}.
		\item There exists a directed inverse system $(U_\lambda \to T_\lambda)_{\lambda \in \Lambda}$ of objects of $\scr{C}_{\mrm{car}}$ over a directed inverse system $(\spec(K_\lambda) \to \spec(\ca{O}_{K_\lambda}))_{\lambda \in \Lambda}$ of objects of $\scr{C}_{\mrm{car}}$ such that $K_\lambda$ is a finite field extension of $K$ in $L$, that $\overline{K}=\bigcup_{\lambda \in \Lambda} K_\lambda$, that $U_\lambda \to T_\lambda$ is strictly semi-stable over $\spec(K_\lambda) \to \spec(\ca{O}_{K_\lambda})$ {\rm(\ref{para:log-dvr})}, and that $(U_\lambda \to T_\lambda)_{\lambda \in \Lambda}$ forms an initial full subcategory of $\scr{C}_{\mrm{car}}$.
	\end{enumerate}
\end{mylem}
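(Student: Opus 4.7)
The plan is to establish the three parts in order, with (1) and (2) being formal limit-theoretic manipulations and (3) being the substantive step that invokes alteration.

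For part (1), I will first show that $\scr{C}$ admits finite products and equalizers by taking scheme-theoretic closures of the image of $\spec(\ca{O}_L)$ (resp.\ $\spec(L)$). Given objects $(U_i\to T_i)_{i=1,2}$ with $T_i=\spec(A_i)$, the induced ring map $A_1\otimes_{\ca{O}_K}A_2\to \ca{O}_L$ has prime kernel, so its image $A_3$ is a finitely generated $\ca{O}_K$-subalgebra of $\ca{O}_L$ and $T_3=\spec(A_3)$ receives compatible morphisms to the $T_i$. After first shrinking each $U_i$ to a principal affine open $D(f_i)\subseteq T_i$ containing the image of $\spec(L)$ (possible since that image is the generic point of $U_i$), I set $U_3=D(f_1f_2)\cap T_3$, which is a principal affine open of $T_3$ and gives an object of $\scr{C}$ dominating $(U_1\to T_1)$ and $(U_2\to T_2)$; parallel pairs of morphisms are handled analogously via scheme-theoretic equalizers. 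For the initial property of $\scr{C}_{\mrm{car}}\subseteq \scr{C}$, given $(U\to T)\in \scr{C}$ I will choose a principal affine open $T'\subseteq T$ containing the image of $\spec(\ca{O}_L)$ and satisfying $T'_K\subseteq U$; this is possible because $T_K\setminus U$ is closed in $T_K$ and does not meet the image of $\spec(L)$. Then $(T'_K\to T')\in \scr{C}_{\mrm{car}}$ maps canonically to $(U\to T)$.

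For part (2), by part (1) it suffices to compute the limit along $\scr{C}_{\mrm{car}}$. Every finitely generated $\ca{O}_K$-subalgebra $A\subseteq \ca{O}_L$ yields $(\spec(A[\pi^{-1}])\to \spec(A))\in \scr{C}_{\mrm{car}}$ (for any uniformizer $\pi$ of $\ca{O}_K$), and conversely every object of $\scr{C}_{\mrm{car}}$ arises this way. Writing $\ca{O}_L$ as the filtered colimit of its finitely generated $\ca{O}_K$-subalgebras then gives $\spec(\ca{O}_L)=\lim_{\scr{C}_{\mrm{car}}} T$; passing to generic fibres, which commutes with filtered colimits of rings, gives $\spec(L)=\lim_{\scr{C}_{\mrm{car}}} U$.

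The main obstacle is part (3), which I expect to deduce from de Jong-Gabber-Illusie-Temkin's alteration theorem (cf.\ \cite[\Luoma{10}.3]{gabber2014travaux}). Given any $(U\to T)\in \scr{C}_{\mrm{car}}$, this theorem yields a finite extension $K'/K$ inside $\overline{K}$ together with a proper surjective generically finite morphism $\widetilde{T}\to T\otimes_{\ca{O}_K}\ca{O}_{K'}$ such that $\widetilde{T}$ is strictly semi-stable over $\ca{O}_{K'}$. Since $L$ is algebraically closed and contains $K'$, the generic-point lift $\spec(L)\to \widetilde{T}_{K'}$ exists through the generically finite morphism on generic fibres, and properness extends it to a unique lift $\spec(\ca{O}_L)\to \widetilde{T}$. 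Taking $T_\lambda$ to be an affine open neighborhood of this image and $U_\lambda=T_{\lambda,K'}$ produces a strictly semi-stable object of $\scr{C}_{\mrm{car}}$ over $\spec(K')\to \spec(\ca{O}_{K'})$ dominating $(U\to T)$. Applying this to a cofinal family in $\scr{C}_{\mrm{car}}$ and iteratively using part (1) to form pairwise refinements lets me assemble the resulting strictly semi-stable objects into a genuine directed inverse system that is initial in $\scr{C}_{\mrm{car}}$. The hard part will be the careful bookkeeping needed to ensure the system is truly directed rather than merely cofinal, and the verification that $\bigcup_\lambda K_\lambda=\overline{K}$; the latter will follow by including, for each finite subextension $K'\subseteq \overline{K}$, the already strictly semi-stable object $(\spec(K')\to \spec(\ca{O}_{K'}))\in \scr{C}_{\mrm{car}}$ into the system.
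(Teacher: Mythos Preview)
Your argument for the initiality of $\scr{C}_{\mrm{car}}$ in part (1) has a gap: you cannot in general find a principal affine open $T'\subseteq T$ containing the image of $\spec(\ca{O}_L)$ with $T'_K\subseteq U$. The justification you give---that $T_K\setminus U$ misses the image of $\spec(L)$---only controls the generic point; the closure of $T_K\setminus U$ in $T$ may well contain the image of the \emph{closed} point of $\spec(\ca{O}_L)$. For a concrete instance, take $a\in\ak{m}_L$ transcendental over $K$ (such $a$ exists unless $L=\overline{K}$, and in that degenerate case one checks $\scr{C}=\scr{C}_{\mrm{car}}$). Set $A=\ca{O}_K[a]$, $T=\spec(A)$, $U=\spec(K[a,a^{-1}])$. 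Then $T_K\setminus U$ is the single point $(a)$, whose closure in $T$ is $V(a)\cong\spec(\ca{O}_K)$, and the closed point of $\spec(\ca{O}_L)$ maps to the maximal ideal $(a,\pi)\in V(a)$. Every open neighborhood of $(a,\pi)$ therefore contains $(a)$, so $T'_K\not\subseteq U$ for any such $T'$.

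The paper repairs this not by shrinking $T$ but by \emph{enlarging} $A$ inside $\ca{O}_L$: write $\ca{O}_L=\colim A_i$ over finitely generated $A$-subalgebras and use that $B$ is of finite presentation over $K$ to conclude (\cite[8.14.2]{ega4-3}) that $B\hookrightarrow L=\colim A_i[1/\pi]$ factors through some $A_i[1/\pi]$; then $(\spec(A_i[1/\pi])\to\spec(A_i))\in\scr{C}_{\mrm{car}}$ dominates $(U\to T)$. In the example above this amounts to adjoining $\pi^N/a\in\ca{O}_L$ for $N$ large.

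The remainder of your outline is correct and close to the paper. For part (3) the paper invokes only de Jong's alteration theorem \cite[6.5]{dejong1996alt}, and it organizes your bookkeeping by working in the auxiliary cofiltered category $\scr{D}$ of morphisms $(U'\to T')\to(\spec(K')\to\spec(\ca{O}_{K'}))$ in $\scr{C}_{\mrm{car}}$ with $K'/K$ finite in $\overline{K}$: showing the strictly semi-stable objects are initial in $\scr{D}$ handles the directedness and the exhaustion $\bigcup K_\lambda=\overline{K}$ at once.
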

\begin{proof}
	(1) For a diagram $(U_1\to T_1)\to (U_0 \to T_0) \leftarrow (U_2 \to T_2)$ in $\scr{C}$, let $T$ be the scheme theoretic image of $\spec(L) \to T_1\times_{T_0} T_2$ and let $U$ be the intersection of $U_1\times_{U_0} U_2$ with $T$. It is clear that $T$ is of finite type over $\ca{O}_K$ as $\ca{O}_K$ is Noetherian, that $U$ and $T$ are integral and affine, that $\spec(L) \to U$ is dominant, and that $\spec(L)\to T$ factors through $\spec(\ca{O}_L)$. Thus, $U \to T$ is an object of $\scr{C}$, which shows that $\scr{C}$ is cofiltered. For an object $(U=\spec(B)\to T=\spec(A))$ of $\scr{C}$, we write $\ca{O}_L$ as a filtered union of finitely generated $A$-subalgebras $A_i$. Let $\pi$ be a uniformizer of $K$. Notice that $L=\ca{O}_L[1/\pi]=\colim A_i[1/\pi]$ and that $\ho_{K\alg}(B,L)=\colim \ho_{K\alg}(B,A_i[1/\pi])$ by \cite[8.14.2.2]{ega4-3}. Thus, there exists an index $i$ such that $\spec(A_i[1/\pi]) \to \spec(A_i)$ is an object of $\scr{C}_{\mrm{car}}$ over $U \to T$. 
	
	(2) It follows immediately from the arguments above.
	
	(3) Consider the category $\scr{D}$ of morphisms of $\scr{C}_{\mrm{car}}$,
	\begin{align}
		\xymatrix{
			U' \ar[r]\ar[d] & T' \ar[d]\\
			\spec(K') \ar[r] & \spec(\ca{O}_{K'})
		}
	\end{align}
	such that $K'$ is a finite field extension of $K$. Similarly, this category is also cofiltered with limit of diagrams of schemes $(\spec(L) \to \spec(\ca{O}_L)) \to (\spec(\overline{K}) \to \spec(\ca{O}_{\overline{K}}))$. It suffices to show that the full subcategory of $\scr{D}$ formed by strictly semi-stable objects is initial. For any object $U \to T$ of $\scr{C}_{\mrm{car}}$, by de Jong's alteration theorem \cite[6.5]{dejong1996alt}, there exists a proper surjective and generically finite morphism $T' \to T$ of integral schemes such that $U'=U\times_T T' \to T'$ is strictly semi-stable over $\spec(K') \to \spec(\ca{O}_{K'})$ for a finite field extension $K\to K'$. Since $L$ is algebraically closed, the dominant morphism $\spec(L)\to U$ lifts to a dominant morphism $\spec(L)\to U'$ (\ref{para:generic-finite}), which further extends to a lifting $\spec(\ca{O}_L) \to T'$ of $\spec(\ca{O}_L) \to T$ by the valuative criterion. After replacing $T'$ by an affine open neighborhood of the image of the closed point of $\spec(\ca{O}_L)$, we obtain a strictly semi-stable object of $\scr{D}$ over $(U \to T)\to (\spec(K) \to \spec(\ca{O}_K))$, which completes the proof.
\end{proof}

\begin{mythm}[{\cite[\Luoma{10} 3.5, 3.7]{gabber2014travaux}}]\label{thm:alteration}
	Let $K$ be a complete discrete valuation field with valuation ring $\ca{O}_K$, $(Y\to X) \to (U \to T)$ a morphism of dominant open immersions over $\spec(K) \to \spec(\ca{O}_K)$ between irreducible $\ca{O}_K$-schemes of finite type such that $X\to T$ is proper surjective. Then, there exists a commutative diagram of dominant open immersions between irreducible $\ca{O}_K$-schemes of finite type
	\begin{align}\label{diam:alteration}
		\xymatrix{
			(Y'\to X') \ar[r]^-{(\beta^{\circ},\beta)}\ar[d]_-{(f'^{\circ}, f')}& (Y\to X)\ar[d]^-{(f^{\circ}, f)}\\
			(U'\to T') \ar[r]_-{(\alpha^{\circ},\alpha)}& (U\to T)
		}
	\end{align}
	satisfying the following conditions:
	\begin{enumerate}
		\renewcommand{\theenumi}{\roman{enumi}}
		\renewcommand{\labelenumi}{{\rm(\theenumi)}}
		\item We have $Y'=\beta^{-1}(Y)\cap f'^{-1}(U')$, i.e. $Y'\to X'$ is Cartesian over $U'\times_{U}Y \to T'\times_{T} X$ {\rm(cf. \ref{para:E})}.
		\item The morphism $(X',\scr{M}_{Y'\to X'})\to (T',\scr{M}_{U'\to T'})$ induced by $(f'^{\circ},f')$ is a smooth and saturated morphism of fine, saturated and regular log schemes.
		\item The morphisms $\alpha$ and $\beta$ are proper surjective and generically finite, and $f'$ is projective surjective.
	\end{enumerate}
\end{mythm}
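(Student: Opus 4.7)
The plan is to follow the strategy of Illusie--Temkin's ``alteration theorem for morphisms'' \cite[\Luoma{10}]{gabber2014travaux}: we first produce a log-smooth relative alteration by an iterated use of de Jong's theorem, then pass from log-smooth to log-smooth-and-saturated by a Kummer-type base change combined with a toric modification. Throughout, the compactifying log structures are handled via \ref{para:log-compact} and \ref{para:log-reg}, and fibre products in the fs category via \ref{para:log-fs-bc}.

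First, regularize the base. Apply de Jong's alteration theorem (exactly as in the proof of \ref{thm:alteration-ok}.(3), but now globally over $T$ rather than localized at the valuation $\spec(\ca{O}_L)$) to the open immersion $U \to T$: there is a proper surjective and generically finite morphism $\alpha_0 : T_0 \to T$ of irreducible $\ca{O}_K$-schemes of finite type such that, writing $U_0 = \alpha_0^{-1}(U)$, the open immersion $U_0 \to T_0$ is strictly semi-stable over $\spec(K') \to \spec(\ca{O}_{K'})$ for a suitable finite extension $K'/K$. In particular $(T_0, \scr{M}_{U_0 \to T_0})$ is fine, saturated, regular and smooth--saturated over $(\spec(\ca{O}_{K'}),\scr{M}_{\spec(K') \to \spec(\ca{O}_{K'})})$ by \ref{para:log-dvr}.

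Second, do a relative alteration over $T_0$. Pick an irreducible component $X_0$ of $X \times_T T_0$ dominating $T_0$ (using that $X \to T$ is proper surjective), set $Y_0 = X_0 \times_X Y \cap X_0 \times_{T_0} U_0$, and apply de Jong's relative alteration theorem to the morphism of open immersions $(Y_0 \to X_0) \to (U_0 \to T_0)$. This yields a projective, surjective, generically finite morphism $X_1 \to X_0$ together with a strict normal crossings divisor $D_1 \subset X_1$ containing $X_1 \setminus (X_1 \times_{X_0} Y_0)$ and containing the pullback of $T_0 \setminus U_0$, such that $(X_1, \scr{M}_{X_1 \setminus D_1 \to X_1}) \to (T_0, \scr{M}_{U_0 \to T_0})$ is log-smooth. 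Setting $Y_1 = X_1 \setminus D_1$ ensures the Cartesian condition (i) by construction.

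Third, upgrade log-smoothness to log-smoothness plus saturatedness. The cokernels of the induced morphisms $\overline{\scr{M}}_{T_0,\bar{t}} \to \overline{\scr{M}}_{X_1,\bar{x}}$ on stalks of characteristic monoids may have torsion, and saturatedness requires these to be torsion-free. Following \cite[\Luoma{10}]{gabber2014travaux} (see also \cite{tsuji2019saturated}), one constructs a fine saturated Kummer cover $T' \to T_0$ that kills this torsion after base change, then takes $X'$ to be a toric resolution of an irreducible component of $X_1 \times_{T_0}^{\mrm{fs}} T'$ dominating $T'$; by \ref{para:log-fs-bc} the trivial locus pulls back correctly, so $Y'$ remains the intersection of the pullbacks of $Y$ and $U'$. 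The resulting $(X',\scr{M}_{Y' \to X'}) \to (T',\scr{M}_{U' \to T'})$ is log-smooth and saturated, fs-regular, and the properness, projectivity, and generic finiteness of $\beta$, $\alpha$, $f'$ are preserved along the construction.

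The main obstacle is the third step: producing the Kummer cover and the toric resolution so that the resulting morphism is simultaneously log-smooth, saturated, projective, and compatible with the Cartesian condition (i). The relevant combinatorial input is Kato's theory of fans and the semi-stable reduction theorem for toric morphisms, which is the technical heart of \cite[\Luoma{10}.3.5, \Luoma{10}.3.7]{gabber2014travaux}; the verification that condition (i) survives the fs base change relies on the compatibility of the trivial locus with fs fibre products recorded in \ref{para:log-fs-bc} together with the fact that $Y \to X$ is already the complement of the support of the log structure.
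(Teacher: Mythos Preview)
Your overall strategy matches the paper's: invoke the relative alteration theorem of Gabber--Illusie--Temkin to produce a log-smooth morphism, then a further base alteration to achieve saturatedness. However, the paper's execution differs from your three-step decomposition in a few ways worth noting.

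First, the paper does not separate ``regularize the base'' from ``relative alteration''. Instead, it observes that $\spec(\ca{O}_K)$ is universally $\bb{Q}$-resolvable (by de~Jong), hence so is $T$, and then applies \cite[\Luoma{10}.3.5]{gabber2014travaux} \emph{once} to the proper surjective morphism $f:X\to T$ together with the closed subset $X\setminus Y$. This single application produces alterations of both $X$ and $T$ simultaneously, with the resulting morphism log-smooth between fs regular log schemes. Your step~2 (``de~Jong's relative alteration theorem'' with $T_0$ held fixed) is not quite available as stated; the cited result \emph{does} alter the base again, so your preliminary step~1 is absorbed into it.

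Second, in the saturation step the paper applies \cite[\Luoma{10}.3.7]{gabber2014travaux} to obtain $T''\to T'$ and then simply takes the fs fibre product $(T'',\scr{M}_{U''\to T''})\times^{\mrm{fs}}_{(T',\scr{M}_{U'\to T'})}(X',\scr{M}_{Y'\to X'})$; this is already log-smooth, saturated, and regular over $(T'',\scr{M}_{U''\to T''})$ by base change in the fs category (\ref{para:log-reg}). No further toric resolution of $X'$ is needed, contrary to your description.

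Third, the paper contains a verification you omit: after applying \cite[\Luoma{10}.3.5]{gabber2014travaux}, the open subset $U'\subseteq T'$ is produced by the theorem, not defined as $\alpha^{-1}(U)$, so one must check that $\alpha(U')\subseteq U$. The paper does this by a log-geometric argument: if $u\in U'$ had $\alpha(u)\notin U$, then the fibre $f'^{-1}(u)$ would miss $Y'$, but endowing $u$ with trivial log structure and taking the fs fibre product shows $f'^{-1}(u)\cap Y'$ is dense in the (nonempty) fibre, a contradiction.
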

\begin{proof}
	We may assume that $T$ is nonempty. Recall that $\spec(\ca{O}_K)$ is universally $\bb{Q}$-resolvable (\cite[\Luoma{10}.3.3]{gabber2014travaux}) by de Jong's alteration theorem \cite[6.5]{dejong1996alt}. Thus, $T$ is also universally $\bb{Q}$-resolvable by \cite[\Luoma{10} 3.5, 3.5.2]{gabber2014travaux} so that we can apply \cite[\Luoma{10}.3.5]{gabber2014travaux} to the proper surjective morphism $f$ and the nowhere dense closed subset $X\setminus Y$. Then, we obtain a commutative diagram of schemes
	\begin{align}
		\xymatrix{
			X'\ar[r]^-{\beta}\ar[d]_-{f'}& X\ar[d]^-{f}\\
			T'\ar[r]_-{\alpha}& T
		}
	\end{align}
	and dense open subsets $U'\subseteq T'$, $Y'=\beta^{-1}(Y)\cap f'^{-1}(U')\subseteq X'$ such that $(X',\scr{M}_{Y'\to X'})$ and $(T',\scr{M}_{U'\to T'})$ are fine, saturated and regular, that $(X',\scr{M}_{Y'\to X'})\to (T',\scr{M}_{U'\to T'})$ is smooth, that $\alpha, \beta$ are proper surjective and generically finite morphisms which map generic points to generic points, and that $f'$ is projective (since $f$ is proper, cf. \cite[\Luoma{10} 3.1.6, 3.1.7]{gabber2014travaux}). Since $X$ (resp. $T$) is irreducible and $X'$ (resp. $T'$) is a disjoint union of normal integral schemes (\ref{para:log-reg}), after firstly replacing $X'$ by an irreducible component and then replacing $T'$ by the irreducible component under $X'$, we may assume that $X'$ and $T'$ are irreducible. Then, $Y'\to U'$ is dominant (so that $f'$ is projective surjective), since it is smooth and $Y'$ is nonempty (\cite[2.3.4]{ega4-2}). We claim that $\alpha$ maps $U'$ into $U$. Indeed, if there exists a point $u\in U'$ with $\alpha(u)\notin U$, then $f'^{-1}(u)\cap Y'=\emptyset$. However, endowing $u$ with the trivial log structure, the log scheme $(u,\ca{O}_{u_{\et}}^\times)$ is fine, saturated and regular, and the fibred product in the category of fine and saturated log schemes
	\begin{align}
		(u,\ca{O}_{u_{\et}}^\times)\times_{(T',\scr{M}_{U' \to T'})}^{\mrm{fs}} (X',\scr{M}_{Y' \to X'})
	\end{align}
	is regular with underlying scheme $f'^{-1}(u)$ (\ref{para:log-reg}, \ref{para:log-fs-bc}). Thus, $f'^{-1}(u)\cap Y'$ is dense in $f'^{-1}(u)$, which contradicts the assumption that $f'^{-1}(u)\cap Y'=\emptyset$ since $f'$ is surjective. Thus, we obtain a diagram \eqref{diam:alteration} satisfying all the conditions except the saturatedness of $(X',\scr{M}_{Y'\to X'})\to (T',\scr{M}_{U'\to T'})$.
	
	To make $(X',\scr{M}_{Y'\to X'})\to (T',\scr{M}_{U'\to T'})$ saturated, we apply \cite[\Luoma{10}.3.7]{gabber2014travaux} to the morphism $(f'^{\circ}, f')$. We obtain a Cartesian morphism $(\gamma^{\circ}, \gamma):(U'' \to T'')\to (U' \to T') $ of dominant open immersions such that $(T'',\scr{M}_{U'' \to T''})$ is a fine, saturated and regular log scheme, that $\gamma$ is a proper surjective and generically finite morphism which maps generic points of $T''$ to the generic point of $T'$, and that the fibred product in the category of fine and saturated log schemes
	\begin{align}\label{eq:fib-log-prod}
		(T'',\scr{M}_{U'' \to T''})\times^{\mrm{fs}}_{(T',\scr{M}_{U' \to T'})} (X',\scr{M}_{Y' \to X'})
	\end{align}
	is saturated over $(T'',\scr{M}_{U'' \to T''})$. The fibred product \eqref{eq:fib-log-prod} is still smooth over $(T'',\scr{M}_{U'' \to T''})$, and thus it is regular  (\ref{para:log-reg}). Let $X''$ be the underlying scheme of it and let $Y''=(X'')^{\mrm{tr}}$. Then, the fibred product \eqref{eq:fib-log-prod} is isomorphic to $(X'',\scr{M}_{Y''\to X''})$ (\ref{para:log-reg}). Thus, we obtain a commutative diagram of dominant open immersions of schemes
	\begin{align}\label{diam:alteration2}
		\xymatrix{
			(Y''\to X'') \ar[r]^-{(\delta^{\circ},\delta)}\ar[d]_-{(f''^{\circ}, f'')}& (Y'\to X')\ar[d]^-{(f'^{\circ}, f')}\\
			(U''\to T'') \ar[r]_-{(\gamma^{\circ},\gamma)}& (U'\to T')
		}
	\end{align}
	Notice that $Y''= U'' \times_{U'} Y'$ and $X'' \to T'' \times_{T'} X'$ is finite, and that $Y''\to X''$ is Cartesian over $U'' \times_{U'} Y' \to T'' \times_{T'} X'$ (\ref{para:log-fs-bc}). Thus, we see that $Y''\to X''$ is Cartesian over $U'' \times_{U} Y \to T'' \times_{T} X$ and that $f''$ is projective. Since $T'$ (resp. $X'$) is irreducible and $T''$ (resp. $X''$) is a disjoint union of normal integral schemes (\ref{para:log-reg}), after firstly replacing $T''$ by an irreducible component and then replacing $X''$ by an irreducible component on which the restriction of $\delta^{\circ}$ is dominant, we may assume that $T''$ and $X''$ are irreducible. In particular, $\delta$ is generically finite and so is $\beta\circ\delta$ (\ref{para:generic-finite}), and again $Y''\to U''$ is dominant so that $f''$ is projective surjective.
\end{proof}

\begin{mylem}\label{lem:noetherian}
	Let $X$ be a scheme of finite type over a valuation ring $A$ of height $1$. Then, the underlying topological space of $X$ is Noetherian.
\end{mylem}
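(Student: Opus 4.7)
The plan is to exploit the fact that $\spec(A)$ has exactly two points since $A$ has height $1$, so set-theoretically $X$ is the disjoint union of its generic fibre $X_\eta$ (lying over $\eta=\spec(K)$ with $K=\mrm{Frac}(A)$) and its special fibre $X_s$ (lying over the closed point $s=\spec(k)$). Both fibres are schemes of finite type over a field, hence have Noetherian underlying topological spaces by the Hilbert basis theorem. The goal is then to combine the Noetherianness of the two pieces via the open/closed decomposition $X=X_\eta\sqcup X_s$.

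First I would reduce to the affine case. Since $X$ is of finite type over $A$ it is quasi-compact, hence covered by finitely many affine opens $U_1,\dots,U_r$. A standard topological argument shows that a space covered by finitely many open subspaces each of Noetherian topology is itself Noetherian: given a descending chain $Z_1\supseteq Z_2\supseteq\cdots$ of closed subsets, each induced chain $Z_i\cap U_j$ stabilizes, and since the $U_j$ cover $X$, so does the original. So I may assume $X=\spec(B)$ with $B$ a finitely generated $A$-algebra.

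With $X$ affine, let $f:X\to \spec(A)$ be the structural morphism. The maximal ideal $\ak{m}_A$ cuts out the closed point $s=V(\ak{m}_A)$, and the generic point $\eta$ is its open complement in $\spec(A)$. Hence $X_\eta=f^{-1}(\eta)$ is open in $X$, $X_s=f^{-1}(s)$ is closed, and $X=X_\eta\sqcup X_s$ set-theoretically. As an open (resp.\ closed) subscheme of $X$, the space $X_\eta$ (resp.\ $X_s$) carries its natural scheme topology, and it is of finite type over the field $K$ (resp.\ $k$), hence its underlying topological space is Noetherian.

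To conclude, given a descending chain of closed subsets $Z_1\supseteq Z_2\supseteq\cdots$ in $X$, the induced chains $Z_i\cap X_\eta$ in $X_\eta$ and $Z_i\cap X_s$ in $X_s$ both stabilize by Noetherianness of the fibres; since $X=X_\eta\sqcup X_s$ as sets, we have $Z_i=(Z_i\cap X_\eta)\cup(Z_i\cap X_s)$, forcing the original chain to stabilize. There is essentially no obstacle: the only substantive input is that $A$ has height $1$, so that $\spec(A)$ has only two points and the decomposition into fibres is compatible with the topology.
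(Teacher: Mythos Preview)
Your proof is correct and follows essentially the same approach as the paper: decompose $X$ into its generic fibre $X_\eta$ and special fibre $X_s$, observe both are Noetherian as finite type schemes over a field, and conclude that $X=X_\eta\cup X_s$ is Noetherian (the paper cites \cite[\href{https://stacks.math.columbia.edu/tag/0053}{0053}]{stacks-project} for this last step, while you spell out the descending chain argument). Your reduction to the affine case is unnecessary, since the open/closed fibre decomposition and the stabilization argument already work for arbitrary $X$ of finite type over $A$.
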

\begin{proof}
	Let $\eta$ and $s$ be the generic point and closed point of $\spec(A)$ respectively. Then, the generic fibre $X_\eta$ and the special fibre $X_s$ are both Noetherian. As a union of $X_\eta$ and $X_s$, the underlying topological space of $X$ is also Noetherian (\cite[\href{https://stacks.math.columbia.edu/tag/0053}{0053}]{stacks-project}).
\end{proof}

\begin{myprop}\label{prop:val-lim-ss}
	With the notation in {\rm\ref{para:notation-alter}} and {\rm\ref{thm:alteration-ok}}, let $Y\to X$ be a quasi-compact dominant open immersion over $\spec(L) \to \spec(\ca{O}_L)$ such that $X \to \spec(\ca{O}_L)$ is proper of finite presentation. Then, there exists a proper surjective $\ca{O}_L$-morphism of finite presentation $X' \to X$, an index $\lambda_1\in\Lambda$, and a directed inverse system of open immersions $(Y'_\lambda \to X'_\lambda)_{\lambda \geq \lambda_1}$ over $(U_\lambda\to T_\lambda)_{\lambda \geq \lambda_1}$ satisfying the following conditions for each $\lambda \geq \lambda_1$:
	\begin{enumerate}
		\renewcommand{\theenumi}{\roman{enumi}}
		\renewcommand{\labelenumi}{{\rm(\theenumi)}}
		\item We have $Y'=Y\times_X X'=\lim_{\lambda \geq \lambda_1} Y'_\lambda$ and $X'=\lim_{\lambda \geq \lambda_1} X'_\lambda$.
		\item The log scheme $(X'_\lambda,\scr{M}_{Y'_\lambda \to X'_\lambda})$ is fine, saturated and regular.
		\item The morphism $(X'_\lambda,\scr{M}_{Y'_\lambda \to X'_\lambda})\to (T_\lambda,\scr{M}_{U_\lambda\to T_\lambda})$ is smooth and saturated, and $X'_\lambda \to T_\lambda$ is projective.
		\item If moreover $Y= \spec(L) \times_{\spec(\ca{O}_L)} X$, then we can require that $Y'_\lambda= U_\lambda \times_{T_\lambda}X'_\lambda$.\label{prop:val-lim-ss-car}
	\end{enumerate}
\end{myprop}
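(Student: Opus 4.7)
My strategy is to descend the datum $(Y \to X)$ to a finitely presented model over some $T_{\lambda_0}$, apply the relative alteration theorem \ref{thm:alteration}, and then base-change the resulting alteration back along the semi-stable system $(U_\lambda \to T_\lambda)_{\lambda \in \Lambda}$ of \ref{thm:alteration-ok}.

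First, since $\spec(\ca{O}_L) = \lim_\lambda T_\lambda$ has affine transition maps and $X \to \spec(\ca{O}_L)$ is proper of finite presentation, the standard approximation results \cite[8.8.2, 8.10.5]{ega4-3} and \cite[17.7.8]{ega4-4} produce an index $\lambda_0 \in \Lambda$, a proper $T_{\lambda_0}$-scheme $X_{\lambda_0}$, and a quasi-compact open immersion $Y_{\lambda_0} \to X_{\lambda_0}$ over $U_{\lambda_0} \to T_{\lambda_0}$ whose base change along $\spec(\ca{O}_L) \to T_{\lambda_0}$ recovers $(Y \to X)$; in the absolute case of (iv), after enlarging $\lambda_0$ I may further impose $Y_{\lambda_0} = U_{\lambda_0} \times_{T_{\lambda_0}} X_{\lambda_0}$. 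Decomposing $X_{\lambda_0}$ (with its reduced structure) into irreducible components and handling each one separately before forming a disjoint union at the end, I reduce to the case where $X_{\lambda_0}$ is integral. Then \ref{thm:alteration} yields $\alpha \colon T'_{\lambda_0} \to T_{\lambda_0}$ and $\beta \colon X'_{\lambda_0} \to X_{\lambda_0}$ proper surjective generically finite, $f'_{\lambda_0} \colon X'_{\lambda_0} \to T'_{\lambda_0}$ projective surjective, and a smooth saturated morphism $(X'_{\lambda_0}, \scr{M}_{Y'_{\lambda_0} \to X'_{\lambda_0}}) \to (T'_{\lambda_0}, \scr{M}_{U'_{\lambda_0} \to T'_{\lambda_0}})$ of fine, saturated and regular log schemes.

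To reinsert $(U'_{\lambda_0} \to T'_{\lambda_0})$ into the semi-stable system, I use that $L$ is algebraically closed and $\alpha^\circ \colon U'_{\lambda_0} \to U_{\lambda_0}$ is generically finite: the map $\spec(L) \to U_{\lambda_0}$ lifts to $\spec(L) \to U'_{\lambda_0}$, and the valuative criterion for the proper $\alpha$ extends it to $\spec(\ca{O}_L) \to T'_{\lambda_0}$. This places $(U'_{\lambda_0} \to T'_{\lambda_0})$ in the category $\scr{C}$ of \ref{para:notation-alter}, and by the initiality of $\{(U_\lambda \to T_\lambda)\}_{\lambda \in \Lambda}$ in $\scr{C}$ (\ref{thm:alteration-ok}) there exist an index $\lambda_1 \geq \lambda_0$ and a morphism $(U_{\lambda_1} \to T_{\lambda_1}) \to (U'_{\lambda_0} \to T'_{\lambda_0})$. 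For every $\lambda \geq \lambda_1$ I set
\begin{align}
(X'_\lambda, \scr{M}_{Y'_\lambda \to X'_\lambda}) = (T_\lambda, \scr{M}_{U_\lambda \to T_\lambda}) \times^{\mrm{fs}}_{(T'_{\lambda_0}, \scr{M}_{U'_{\lambda_0} \to T'_{\lambda_0}})} (X'_{\lambda_0}, \scr{M}_{Y'_{\lambda_0} \to X'_{\lambda_0}}),
\end{align}
and $X' = X'_{\lambda_0} \times_{T'_{\lambda_0}} \spec(\ca{O}_L)$, $Y' = Y \times_X X'$.

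Because the alteration is saturated, this fs-fibered product has underlying scheme the naive fibered product $X'_{\lambda_0} \times_{T'_{\lambda_0}} T_\lambda$ (\ref{para:log-reg}), hence projective over $T_\lambda$; smoothness and saturatedness are stable under fs-base change, and regularity of $(X'_\lambda, \scr{M}_{Y'_\lambda \to X'_\lambda})$ follows from smoothness over the regular log base $(T_\lambda, \scr{M}_{U_\lambda \to T_\lambda})$ (\ref{para:log-reg}), giving (ii) and (iii). Using that $Y \subseteq X_\eta$ and passing to the limit yields $Y' = \spec(L) \times_{U'_{\lambda_0}} Y'_{\lambda_0} = \lim Y'_\lambda$ and $X' = \lim X'_\lambda$, hence (i); in the Cartesian case, condition (i) of \ref{thm:alteration} forces $Y'_{\lambda_0} = U'_{\lambda_0} \times_{T'_{\lambda_0}} X'_{\lambda_0}$, and the fs-base change then gives $Y'_\lambda = U_\lambda \times_{T_\lambda} X'_\lambda$, proving (iv). The morphism $X' \to X$, being the base change of $\beta$ along $\spec(\ca{O}_L) \to T'_{\lambda_0}$, is proper, surjective, and of finite presentation. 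The principal obstacle is the third step: producing $\lambda_1$ by combining the valuative-criterion lift with the initiality statement of \ref{thm:alteration-ok}. A secondary subtlety, the reduction to integral $X_{\lambda_0}$, is resolved by the disjoint-union trick compatible with all required conditions.
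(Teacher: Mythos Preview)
Your overall strategy matches the paper's: descend, apply \ref{thm:alteration}, then use initiality of the semi-stable system to build the fs-base changes. But there is a genuine gap in your surjectivity argument for $X'\to X$, and your reduction step differs from the paper's in a way that matters for this.

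You assert that $X'\to X$ is ``the base change of $\beta$ along $\spec(\ca{O}_L)\to T'_{\lambda_0}$''. It is not. The map factors as
\[
X'=X'_{\lambda_0}\times_{T'_{\lambda_0}}\spec(\ca{O}_L)\ \longrightarrow\ X'_{\lambda_0}\times_{T_{\lambda_0}}\spec(\ca{O}_L)\ \longrightarrow\ X,
\]
where the second arrow is the base change of $\beta$ (hence surjective) but the first is the base change along $X'_{\lambda_0}\to T'_{\lambda_0}$ of the section $\spec(\ca{O}_L)\to T'_{\lambda_0}\times_{T_{\lambda_0}}\spec(\ca{O}_L)$, a closed immersion that is typically not surjective. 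The paper secures surjectivity by a different route: it first reduces $X$ itself (not $X_{\lambda_0}$) to be irreducible via \ref{lem:noetherian}, descends, and then replaces $X_{\lambda_0}$ by the scheme-theoretic closure of the image of the generic point of $X$. Because $X_L$ is then irreducible and $L$ is algebraically closed, the generic fibre of $X_{\lambda_0}\to T_{\lambda_0}$ is geometrically irreducible, and the paper tracks generic points through the alteration to conclude that $X'\to X$ hits the generic point of the irreducible $X$. Your reduction---decomposing $X_{\lambda_0}$ into reduced irreducible components---does not yield this geometric irreducibility, and for components $Z_i$ that fail to dominate $T_{\lambda_0}$ (or that miss $Y_{\lambda_0}$ entirely) the hypotheses of \ref{thm:alteration} fail outright, so the ``disjoint-union trick'' needs more care than you indicate.

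A smaller omission: the scheme $T'_{\lambda_0}$ coming out of \ref{thm:alteration} need not be affine, so $(U'_{\lambda_0}\to T'_{\lambda_0})$ is not automatically an object of $\scr{C}$; the paper first replaces $T'_{\lambda_0}$ by an affine open neighbourhood of the image of the closed point of $\spec(\ca{O}_L)$ before invoking the initiality statement of \ref{thm:alteration-ok}.
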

\begin{proof}
	We follow closely the proof of \cite[5.2.19]{temkin2019logval}. Since the underlying topological space of $X$ is Noetherian by \ref{lem:noetherian}, each irreducible component $Z$ of $X$ admits a closed subscheme structure such that $Z\to X$ is of finite presentation (\cite[\href{https://stacks.math.columbia.edu/tag/01PH}{01PH}]{stacks-project}). After replacing $X$ by the disjoint union of its irreducible components, we may assume that $X$ is irreducible. Then, the generic fibre of $X\to \spec(\ca{O}_L)$ is also irreducible as an open subset of $X$. Using \cite[8.8.2, 8.10.5]{ega4-3}, there exists an index $\lambda_0 \in \Lambda$, a proper $T_{\lambda_0}$-scheme $X_{\lambda_0}$, and an open subscheme $Y_{\lambda_0}$ of $U_{\lambda_0}\times_{T_{\lambda_0}} X_{\lambda_0}$, such that $X=\spec(\ca{O}_L) \times_{T_{\lambda_0}} X_{\lambda_0}$ and that $Y=\spec(L) \times_{U_{\lambda_0}} Y_{\lambda_0}$. Let $\eta$ denote the generic point of $X$, $\eta_{\lambda_0}$ the image of $\eta$ under the morphism $X\to X_{\lambda_0}$, $Z_{\lambda_0}$ the scheme theoretic closure of $\eta_{\lambda_0}$ in $X_{\lambda_0}$. Notice that $\spec(\ca{O}_L)\times_{T_{\lambda_0}} Z_{\lambda_0}\to X$ is a surjective finitely presented closed immersion. After replacing $X$ by $\spec(\ca{O}_L)\times_{T_{\lambda_0}} Z_{\lambda_0}$ and replacing $X_{\lambda_0}$ by $Z_{\lambda_0}$, we may assume that $X\to X_{\lambda_0}$ is a dominant morphism of irreducible schemes. Since $T_{\lambda_0}$ is irreducible and $L$ is algebraically closed, the generic fibre of $f:X_{\lambda_0}\to T_{\lambda_0}$ is geometrically irreducible. In particular, if $\xi_{\lambda_0}$ (resp. $\eta_{\lambda_0}$) denotes the generic point of $T_{\lambda_0}$ (resp. $X_{\lambda_0}$), then $\eta=\spec(L)\times_{\xi_{\lambda_0}}\eta_{\lambda_0}$ (\cite[4.5.9]{ega4-2}). In the situation of (\ref{prop:val-lim-ss-car}), we can moreover assume that $Y_{\lambda_0}= U_{\lambda_0} \times_{T_{\lambda_0}} X_{\lambda_0}$.
	
	By \ref{thm:alteration}, there exists a commutative diagram of dominant open immersions of irreducible schemes,
	\begin{align}
		\xymatrix{
			(Y'_{\lambda_0}\to X'_{\lambda_0}) \ar[r]^-{(\beta^{\circ},\beta)}\ar[d]_-{(f'^{\circ}, f')}& (Y_{\lambda_0}\to X_{\lambda_0})\ar[d]^-{(f^{\circ}, f)}\\
			(U'_{\lambda_0}\to T'_{\lambda_0}) \ar[r]_-{(\alpha^{\circ},\alpha)}& (U_{\lambda_0}\to T_{\lambda_0})
		}
	\end{align}
	where $Y'_{\lambda_0}\to X'_{\lambda_0}$ is Cartesian over $U'_{\lambda_0}\times_{U_{\lambda_0}}Y_{\lambda_0} \to T'_{\lambda_0}\times_{T_{\lambda_0}} X_{\lambda_0}$, and where $(X'_{\lambda_0},\scr{M}_{Y'_{\lambda_0}\to X'_{\lambda_0}})\to (T'_{\lambda_0},\scr{M}_{U'_{\lambda_0}\to T'_{\lambda_0}})$ is a smooth and saturated morphism of fine, saturated and regular log schemes, and where $\alpha$ and $\beta$ are proper surjective and generically finite, and where $f'$ is projective surjective. We take a dominant morphism $\gamma^{\circ}:\spec(L)\to U_{\lambda_0}'$ which lifts $\spec(L)\to U_{\lambda_0}$ since $L$ is algebraically closed and $\alpha$ is generically finite, the morphism $\spec(\ca{O}_L) \to T_{\lambda_0}$ lifts to $\gamma:\spec(\ca{O}_L) \to T'_{\lambda_0}$ by the valuative criterion. We set $Y'=\spec(L)\times_{U'_{\lambda_0}}Y'_{\lambda_0}$ and $X'=\spec(\ca{O}_L)\times_{T'_{\lambda_0}} X'_{\lambda_0}$. It is clear that $Y'\to X'$ is Cartesian over $Y \to X$ by base change. Let $\xi'_{\lambda_0}$ (resp. $\eta'_{\lambda_0}$) be the generic points of $T'_{\lambda_0}$ (resp. $X'_{\lambda_0}$). Since the generic fibre of $f$ is geometrically irreducible, $\xi'_{\lambda_0}\times_{\xi_{\lambda_0}}\eta_{\lambda_0}$ is a single point and $\eta'_{\lambda_0}$ maps to it (\cite[4.5.9]{ega4-2}). Since $\spec(L)\times_{\xi_{\lambda_0}}\eta_{\lambda_0}$ is the generic point of $X$, we see that $X' \to X$ is proper surjective and of finite presentation. 
	It remains to construct $(Y'_\lambda \to X'_\lambda)_{\lambda \geq \lambda_1}$. 
	
	After replacing $T'_{\lambda_0}$ by an affine open neighborhood of the image of the closed point of $\spec(\ca{O}_L)$, lemma \ref{thm:alteration-ok} implies that there exists an index $\lambda_1 \geq \lambda_0$ such that the transition morphism $(U_{\lambda_1}\to T_{\lambda_1}) \to (U_{\lambda_0}\to T_{\lambda_0})$ factors through $(U'_{\lambda_0}\to T'_{\lambda_0})$. For each index $\lambda\geq \lambda_1$, consider the fibred product in the category of fine and saturated log schemes
	\begin{align}\label{eq:fs-log-fibprod}
		(X'_\lambda,\scr{M}_{Y'_\lambda \to X'_\lambda})=(T_{\lambda},\scr{M}_{U_{\lambda}\to T_{\lambda}}) \times_{(T'_{\lambda_0},\scr{M}_{U'_{\lambda_0}\to T'_{\lambda_0}})}^{\mrm{fs}} (X'_{\lambda_0},\scr{M}_{Y'_{\lambda_0}\to X'_{\lambda_0}}),
	\end{align}
	which is a fine, saturated and regular log scheme smooth and saturated over $(T_{\lambda},\scr{M}_{U_{\lambda}\to T_{\lambda}})$ (\ref{para:log-fs-bc}, \ref{para:log-reg}). Moreover, we have $Y'_\lambda= U_\lambda \times_{U'_{\lambda_0}} Y'_{\lambda_0}$, $X'_\lambda = T_\lambda \times_{T'_{\lambda_0}} X'_{\lambda_0}$, and in the situation of (\ref{prop:val-lim-ss-car}), $Y'_\lambda=U_\lambda \times_{T_\lambda}X'_\lambda$ by base change. Therefore, $(Y'_\lambda \to X'_\lambda)_{\lambda \geq \lambda_1}$ meets our requirements. 
\end{proof}

\section{Faltings' Main $p$-adic Comparison Theorem: the Absolute Case}\label{sec:abs-comp}

\begin{mylem}\label{lem:finite-etale-loc}
	Let $Y$ be a coherent scheme, $V$ a finite \'etale $Y$-scheme. Then, there exists a finite \'etale surjective morphism $Y'\to Y$ such that $Y'\times_Y V$ is isomorphic to a finite disjoint union of $Y'$.
\end{mylem}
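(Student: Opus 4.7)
The plan is to first reduce to the case in which $V\to Y$ has constant degree, and then induct on this degree using the standard ``diagonal splitting'' argument.

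First I would observe that since $V\to Y$ is finite \'etale it is finite locally free, so the degree function on $Y$ is locally constant. As $Y$ is quasi-compact, this function is also bounded (cover $Y$ by finitely many affines and use that a finitely generated projective module has bounded local rank). Hence $Y$ admits a finite decomposition $Y=\coprod_{n=0}^N Y_n$ into clopen subschemes on which $V\to Y$ has constant degree $n$. It suffices to construct $Y'_n\to Y_n$ separately for each $n$ and then take the disjoint union.

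Next, assuming $V\to Y$ has constant degree $n$, I would induct on $n$. The cases $n=0$ (where $V=\emptyset$) and $n=1$ (where $V\to Y$ is an isomorphism) are immediate with $Y'=Y$. For $n\geq 2$, consider the base change $V\times_Y V\to V$, which has constant degree $n$. Since $V\to Y$ is \'etale and separated, the diagonal $\Delta\colon V\hookrightarrow V\times_Y V$ is simultaneously an open and a closed immersion, so one can write
\[
V\times_Y V=\Delta(V)\sqcup V^\circ,
\]
with $V^\circ\to V$ finite \'etale of constant degree $n-1$. By the induction hypothesis applied to $V^\circ\to V$, there exists a finite \'etale surjective morphism $Y'_n\to V$ such that $V^\circ\times_V Y'_n\cong \coprod_{i=1}^{n-1}Y'_n$. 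The composition $Y'_n\to V\to Y$ is then finite \'etale surjective (the map $V\to Y$ being surjective since $n\geq 1$), and
\[
V\times_Y Y'_n=(V\times_Y V)\times_V Y'_n\cong Y'_n\sqcup (V^\circ\times_V Y'_n)\cong \coprod_{i=1}^n Y'_n.
\]

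Finally, I would set $Y'=\coprod_n Y'_n$; then $Y'\to Y$ is finite \'etale surjective, and
\[
V\times_Y Y'\cong \coprod_n\coprod_{i=1}^n Y'_n
\]
exhibits $V\times_Y Y'$ as a finite disjoint union of clopen subschemes of $Y'$, which is the required ``trivialization''. The only non-formal input is the familiar fact that the diagonal of a separated \'etale morphism is both open (by unramifiedness) and closed (by separatedness); the rest of the argument is just bookkeeping with clopen decompositions, so I do not anticipate any serious obstacle.
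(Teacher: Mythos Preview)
Your argument is correct. It takes a genuinely different route from the paper's proof, and it is worth recording the contrast.

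The paper first treats the case where $Y$ is connected by invoking the equivalence between $Y_{\fet}$ and finite $\pi_1(Y,\overline{y})$-sets. For general coherent $Y$ it then writes each connected component $Z$ as a cofiltered limit of open-and-closed subschemes $Y_\lambda$, applies the connected case to $Z$, and spreads the resulting trivialisation out to some $Y_{\lambda_0}$ using \cite[8.8.2, 8.10.5]{ega4-3} and \cite[17.7.8]{ega4-4}; quasi-compactness then gives finitely many such clopens covering $Y$. Your proof avoids both the fundamental group and the limit/spreading-out step: you stratify $Y$ by the (locally constant, hence bounded) degree and then run the classical induction on the degree via the clopen splitting $V\times_Y V=\Delta(V)\sqcup V^\circ$. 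This is more elementary and entirely self-contained; the paper's argument, by contrast, plugs directly into the Galois-theoretic picture and reuses the standard limit machinery already pervasive in the paper.

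One remark: both proofs actually deliver a finite \'etale surjective $Y'\to Y$ together with a clopen decomposition $Y'=\coprod_j Y'_j$ such that each $V\times_Y Y'_j$ is a disjoint union of copies of $Y'_j$ (with the number of copies depending on $j$), rather than literally $V\times_Y Y'\cong\coprod Y'$; you correctly flag this in your final paragraph. This is exactly what is used in the subsequent lemma to see that $h_V^{\fet}$ is finite locally constant, so no strengthening is needed.
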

\begin{proof}
	If $Y$ is connected, let $\overline{y}$ be a geometric point of $Y$, $\pi_1(Y,\overline{y})$ the fundamental group of $Y$ with base point $\overline{y}$. Then, $Y_\fet$ is equivalent to the category of finite $\pi_1(Y,\overline{y})$-sets so that the lemma holds (\cite[\href{https://stacks.math.columbia.edu/tag/0BND}{0BND}]{stacks-project}).
	
	In general, for any connected component $Z$ of $Y$, let $(Y_\lambda)_{\lambda\in \Lambda_Z}$ be the directed inverse system of all open and closed subschemes of $Y$ which contain $Z$ and whose transition morphisms are inclusions. Notice that $\lim_{\lambda\in \Lambda_Z} Y_\lambda$ is a closed subscheme of $Y$ with underlying topological space $Z$ by \cite[\href{https://stacks.math.columbia.edu/tag/04PL}{04PL}]{stacks-project} and \cite[8.2.9]{ega4-3}. We endow $Z$ with the closed subscheme structure of $\lim_{\lambda\in \Lambda_Z} Y_\lambda$. The first paragraph shows that there exists a finite \'etale surjective morphism $Z'\to Z$ such that $Z'\times_Y V=\coprod_{i=1}^r Z'$. Using \cite[8.8.2, 8.10.5]{ega4-3} and \cite[17.7.8]{ega4-4}, there exists an index $\lambda_0\in \Lambda_Z$, a finite \'etale surjective morphism $Y'_{\lambda_0}\to Y_{\lambda_0}$ and an isomorphism $Y'_{\lambda_0}\times_Y V = \coprod_{i=1}^r Y'_{\lambda_0}$. Notice that $Y'_{\lambda_0}$ is also finite \'etale over $Y$. Since $Z$ is an arbitrary connected component of $Y$, the conclusion follows from the quasi-compactness of $Y$.
\end{proof}

\begin{mylem}\label{lem:loc-sys-trans}
	Let $Y$ be a coherent scheme, $\rho:Y_\et \to Y_\fet$ the morphism of sites defined by the inclusion functor. Then, the functor $\rho^{-1}: \widetilde{Y_\fet}\to \widetilde{Y_\et}$ of the associated topoi induces an equivalence $\rho^{-1}:\locsys(Y_\fet) \to \locsys(Y_\et)$ between the categories of finite locally constant abelian sheaves with quasi-inverse $\rho_*$.
\end{mylem}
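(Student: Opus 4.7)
The plan is to show that the adjoint pair $(\rho^{-1},\rho_*)$ restricts to an adjoint equivalence between the full subcategories $\locsys(Y_\fet)$ and $\locsys(Y_\et)$ of finite locally constant abelian sheaves. I would proceed in three steps.

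First, I would verify that $\rho^{-1}$ sends $\locsys(Y_\fet)$ into $\locsys(Y_\et)$. Given $\bb{L}\in\locsys(Y_\fet)$, by quasi-compactness of $Y$ there is a finite \'etale surjective morphism $Y'\to Y$ such that $\bb{L}|_{Y'_\fet}$ is the constant sheaf with finite abelian value $L$. Since $\rho^{-1}$ commutes with localization and preserves constant sheaves, $\rho^{-1}\bb{L}|_{Y'_\et}$ is the constant sheaf with value $L$ on $Y'_\et$; as $Y'\to Y$ is in particular an \'etale covering, this exhibits $\rho^{-1}\bb{L}$ as an object of $\locsys(Y_\et)$.

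Second, for the opposite direction, I would use the classical representability of $\ca{F}\in\locsys(Y_\et)$ by a finite \'etale commutative group scheme $Z\to Y$, i.e.\ $\ca{F}(V)=\ho_Y(V,Z)$ functorially for all $V\in Y_\et$. The same formula restricted to $V\in Y_\fet$ describes $\rho_*\ca{F}$ as the sheaf represented by $Z$ on $Y_\fet$. Applying Lemma~\ref{lem:finite-etale-loc} to $Z\to Y$ produces a finite \'etale surjective $Y'\to Y$ with $Y'\times_Y Z\simeq \coprod_{i=1}^n Y'$; pulling $\rho_*\ca{F}$ back along $Y'\to Y$ yields the constant sheaf of finite value on $(Y')_\fet$. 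Hence $\rho_*\ca{F}\in\locsys(Y_\fet)$.

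Third, I would check that the adjunction unit $\bb{L}\to\rho_*\rho^{-1}\bb{L}$ and counit $\rho^{-1}\rho_*\ca{F}\to\ca{F}$ are isomorphisms on the respective subcategories. Since both $\rho^{-1}$ and $\rho_*$ commute with restriction along any finite \'etale surjection $Y'\to Y$ (such a morphism being a covering in both $Y_\fet$ and $Y_\et$, so that restriction is jointly conservative), I may pass to a common trivializing cover furnished by the first two steps, reducing to the tautological case where $\bb{L}$ and $\ca{F}$ are finite constant sheaves of the same value, where unit and counit are identity maps.

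The main obstacle is the representability input in the second step: one needs that a finite locally constant abelian sheaf on $Y_\et$ is represented by a finite \'etale commutative group scheme, which is precisely what allows one to convert the a priori \'etale-local trivialization into a finite \'etale-local one via Lemma~\ref{lem:finite-etale-loc}.
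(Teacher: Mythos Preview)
Your proposal is correct and follows essentially the same approach as the paper: both arguments hinge on representing finite locally constant abelian sheaves by finite \'etale group schemes and then invoking Lemma~\ref{lem:finite-etale-loc} to produce a trivializing finite \'etale cover. The paper organizes this slightly more compactly by using the Yoneda embedding $\locsys\hookrightarrow Y_\fet\hookrightarrow\widetilde{Y_\fet}$ (and similarly for $Y_\et$) to read off full faithfulness of $\rho^{-1}$ directly, and then checks essential surjectivity via $\bb{F}=h_V^\et=\rho^{-1}h_V^\fet$; your explicit verification of the unit and counit on constant sheaves over a common trivializing cover achieves the same thing.
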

\begin{proof}
	Since any finite locally constant sheaf on $Y_\et$ (resp. $Y_\fet$) is representable by a finite \'etale $Y$-scheme by faithfully flat descent (cf. \cite[\href{https://stacks.math.columbia.edu/tag/03RV}{03RV}]{stacks-project}), the Yoneda embeddings induce a commutative diagram
	\begin{align}
		\xymatrix{
			\locsys(Y_\fet) \ar[d]_-{\rho^{-1}}\ar[r]&Y_\fet\ar[r]^-{h^\fet}\ar[d]&\widetilde{Y_\fet}\ar[d]\\
			\locsys(Y_\et)\ar[r]& Y_\et\ar[r]^-{h^\et}&\widetilde{Y_\et}
		}
	\end{align}
	where the horizontal arrows are fully faithful. In particular, $\rho^{-1}$ is fully faithful. For a finite locally constant abelian sheaf $\bb{F}$ on $Y_\et$, let $V$ be a finite \'etale $Y$-scheme representing $\bb{F}$ and let $h_V^\et$ (resp. $h_V^\fet$) be the representable sheaf of $V$ on $Y_\et$ (resp. $Y_\fet$). We have $\bb{F}=h_V^\et=\rho^{-1}h_V^\fet$ (\cite[\href{https://stacks.math.columbia.edu/tag/04D3}{04D3}]{stacks-project}). By \ref{lem:finite-etale-loc}, $h_V^\fet$ is finite locally constant. It is clear that the adjunction morphism $h_V^\fet\to \rho_*h_V^\et$ is an isomorphism, which shows that $h_V^\fet$ is an abelian sheaf. Thus, $\rho^{-1}$ is essentially surjective. Moreover, the argument also shows that $\rho_*$ induces a functor $\rho_*:\locsys(Y_\et) \to \locsys(Y_\fet)$ which is a quasi-inverse of $\rho^{-1}$.
\end{proof}

\begin{myprop}\label{prop:loc-sys-trans}
	With the notation in {\rm\ref{para:notation-psi-beta-sigma}}, the functors between the categories of finite locally constant abelian sheaves 
	\begin{align}
		\locsys(Y_\fet) \stackrel{\beta^{-1}}{\longrightarrow}\locsys(\fal_{Y \to X}^\et) \stackrel{\psi^{-1}}{\longrightarrow}\locsys(Y_\et) 
	\end{align}
	are equivalences with quasi-inverses $\beta_*$ and $\psi_*$ respectively.
\end{myprop}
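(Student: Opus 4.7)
The diagram \eqref{diag:beta-psi-sigma} factors $\rho^+ = \psi^+\circ\beta^+$, hence $\rho^{-1} = \psi^{-1}\circ\beta^{-1}$ and $\rho_* = \beta_*\circ\psi_*$. Since Lemma~\ref{lem:loc-sys-trans} already yields the equivalence $\rho^{-1}: \locsys(Y_\fet)\iso \locsys(Y_\et)$ with quasi-inverse $\rho_*$, it will be enough to establish that $\psi^{-1}: \locsys(\fal_{Y\to X}^\et) \to \locsys(Y_\et)$ is an equivalence with quasi-inverse $\psi_*$; the corresponding statement for $\beta^{-1}$ then follows formally from the factorization, with quasi-inverse $\beta_* = \rho_*\circ\psi^{-1}|_{\locsys}$ (using $\psi_*\circ\psi^{-1}\simeq\id$ on $\locsys$).

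I would first check that $\psi_*$ and $\psi^{-1}$ preserve finite locally constant sheaves. Every finite locally constant abelian sheaf on $Y_\et$ is representable by a finite \'etale $Y$-scheme $V$ (by faithfully flat descent), and a direct Yoneda computation gives $\psi_* h_V^\et = h_{(V\to X)}^\fal$. Applying Lemma~\ref{lem:finite-etale-loc} to obtain a finite \'etale surjection $Y'\to Y$ with $V\times_Y Y' \cong \coprod_{i=1}^n Y'$, the vertical covanishing covering $\{(Y'\to X)\to (Y\to X)\}$ trivializes $h_{(V\to X)}^\fal$ to a constant sheaf of value $n$. Conversely, for $\mathbb{G}$ finite locally constant on $\fal_{Y\to X}^\et$, Proposition~\ref{prop:covcov} supplies a standard covanishing covering $\{(V_{nm}\to U_n)\to (Y\to X)\}$ on which $\mathbb{G}|_{(V_{nm}\to U_n)}$ is constant of value $L_{nm}$; using the compatibility of $\psi^{-1}$ with localization, $\psi^{-1}\mathbb{G}|_{V_{nm}}$ is identified with the constant sheaf of value $L_{nm}$, and since $\{V_{nm}\to Y\}$ is an \'etale covering of $Y$, the sheaf $\psi^{-1}\mathbb{G}$ is finite locally constant.

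The crucial step will be the conservativity of $\psi^{-1}$ on $\locsys(\fal_{Y\to X}^\et)$: if $\psi^{-1}\mathbb{G} = 0$, then $L_{nm} = 0$ for every nonempty $V_{nm}$ in the trivializing covering, and after discarding the empty members (the remaining family is still a covering) one concludes that $\mathbb{G}$ vanishes on a covering of the final object, hence $\mathbb{G}=0$. Granted conservativity, the counit $\psi^{-1}\psi_* h_V^\et \to h_V^\et$ is the identity by the Yoneda identification $\psi^{-1}h_{(V\to X)}^\fal = h_V^\et$, while the unit $\mathbb{G}\to\psi_*\psi^{-1}\mathbb{G}$ becomes the identity after applying $\psi^{-1}$ (triangle identity for the adjunction), so conservativity upgrades it to an isomorphism. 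This yields the desired equivalence on $\locsys$. The main obstacle is precisely the conservativity step, which is not formal and relies on the explicit structure of the covanishing topology via Proposition~\ref{prop:covcov} together with the mild technical manoeuver of removing empty members from a covering family; all subsequent passages are routine manipulations with representable sheaves and triangle identities.
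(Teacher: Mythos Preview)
Your argument is correct and self-contained, but it differs from the paper's route. The paper does not argue via conservativity of $\psi^{-1}$; instead it imports from \cite[\Luoma{6}.6.3.(\luoma{3})]{abbes2016p} the canonical isomorphism $\beta^{-1}\bb{G}\iso\psi_*\rho^{-1}\bb{G}$ for any $\bb{G}\in\locsys(Y_\fet)$, and then checks both adjunction maps directly: writing $\bb{F}=\rho^{-1}\bb{G}$ gives $\psi^{-1}\psi_*\bb{F}\cong\psi^{-1}\beta^{-1}\bb{G}=\rho^{-1}\bb{G}=\bb{F}$, while for the unit $\bb{L}\to\psi_*\psi^{-1}\bb{L}$ one localizes to the constant case $\bb{L}=\beta^{-1}\underline{L}$ and again invokes the imported isomorphism. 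Your Yoneda computation $\psi_*h_V^\et=h^\fal_{(V\to X)}=\beta^{-1}h_V^\fet$ is in effect a direct proof of that cited isomorphism for representable sheaves, so you are reproving what the paper outsources. What you gain is independence from the reference and a clean categorical mechanism (triangle identity plus conservativity) for the unit; the cost is the extra verification that $\psi^{-1}$ is compatible with localization and that $\locsys(\fal_{Y\to X}^\et)$ is stable under kernels and cokernels so that object-level conservativity upgrades to morphisms. Note, incidentally, that your manoeuvre of discarding empty members is harmless but unnecessary: for $V_{nm}=\emptyset$ the object $(\emptyset\to U_n)$ is covered by the empty family in the covanishing topology, so any abelian sheaf already vanishes there.
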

\begin{proof}
	Notice that for any finite locally constant abelian sheaf $\bb{G}$ on $Y_\fet$, the canonical morphism $\beta^{-1}\bb{G}\to \psi_*\rho^{-1}\bb{G}$, which is induced by the adjunction $\id \to \psi_*\psi^{-1}$ and by the identity $\psi^{-1}\beta^{-1}=\rho^{-1}$, is an isomorphism by \ref{lem:loc-sys-trans} and the proof of \cite[\Luoma{6}.6.3.(\luoma{3})]{abbes2016p}. For a finite locally constant abelian sheaf $\bb{F}$ over $Y_\et$, we write $\bb{F}=\rho^{-1}\bb{G}$ by \ref{lem:loc-sys-trans}. Then, $\bb{F}=\psi^{-1}\beta^{-1}\bb{G}\iso\psi^{-1}\psi_*\rho^{-1}\bb{G}=\psi^{-1}\psi_*\bb{F}$, whose inverse is the adjunction map $\psi^{-1}\psi_*\bb{F} \to \bb{F}$ since the composition of $\psi^{-1}(\beta^{-1}\bb{G}) \to \psi^{-1}(\psi_*\psi^{-1})(\beta^{-1}\bb{G})=(\psi^{-1}\psi_*)\psi^{-1}(\beta^{-1}\bb{G})\to \psi^{-1}(\beta^{-1}\bb{G})$ is the identity. On the other hand, for a finite locally constant abelian sheaf $\bb{L}$ over $\fal_{Y \to X}^\et$, we claim that $\bb{L} \to \psi_*\psi^{-1}\bb{L}$ is an isomorphism. The problem is local on $\fal_{Y \to X}^\et$. Thus, we may assume that $\bb{L}$ is the constant sheaf with value $L$ where $L$ is a finite abelian group. Let $\underline{L}$ be the constant sheaf with value $L$ on $Y_\fet$. Then, $\bb{L}=\beta^{-1}\underline{L}$, and the isomorphism $\bb{L}=\beta^{-1}\underline{L}\iso \psi_*\rho^{-1}\underline{L}=\psi_*\psi^{-1}\bb{L}$ coincides with the adjunction map $\bb{L} \to \psi_*\psi^{-1}\bb{L}$. Therefore, $\psi^{-1}:\locsys(\fal_{Y \to X}^\et) \to \locsys(Y_\et) $ is an equivalence with quasi-inverse $\psi_*$. The conclusion follows from \ref{lem:loc-sys-trans}.
\end{proof}

\begin{mypara}\label{para:easy-comparison-mor-rel}
	Let $f:(Y' \to X')\to (Y \to X)$ be a morphism of morphisms between coherent schemes over $\spec(\bb{Q}_p)\to \spec(\bb{Z}_p)$. The base change by $f$ induces a commutative diagram of sites
	\begin{align}
		\xymatrix{
			Y'_\et\ar[d]_-{f_{\et}}\ar[r]^-{\psi'} & \fal_{Y' \to X'}^\et\ar[d]^-{f_{\fal}}\\
			Y_\et\ar[r]^-{\psi} & \fal_{Y \to X}^\et
		}
	\end{align}
	Let $\bb{F}'$ be a finite locally constant abelian sheaf on $Y'_\et$. Remark that the sheaf $\falb$ on $\fal_{Y \to X}^\et$ is flat over $\bb{Z}$. Consider the natural morphisms in the derived category $\dd(\falb\module_{\fal_{Y \to X}^\et})$,
	\begin{align}\label{eq:fal-rel-comp}
		\xymatrix{
			(\rr\psi_*\rr f_{\et *} \bb{F}')\otimes^{\dl}_{\bb{Z}}\falb & (\rr f_{\fal *} \psi'_*\bb{F}') \otimes^{\dl}_{\bb{Z}}\falb \ar[l]_-{\alpha_1}\ar[r]^-{\alpha_2}& \rr f_{\fal *} (\psi'_*\bb{F}'\otimes_{\bb{Z}}\falb'),
		}
	\end{align}
	where $\alpha_1$ is induced by the canonical morphism $\psi'_*\bb{F}' \to \rr\psi'_*\bb{F}'$, and $\alpha_2$ is the canonical morphism.
\end{mypara}

\begin{mypara}\label{para:easy-comparison-mor-abs}
	We keep the notation in \ref{para:easy-comparison-mor-rel} and assume that $X$ is the spectrum of an absolutely integrally closed valuation ring $A$ and that $Y$ is a quasi-compact open subscheme of $X$. Applying the functor $\rr\Gamma(Y \to X,-)$ on \eqref{eq:fal-rel-comp}, we obtain the natural morphisms in the derived category $\dd(A\module)$ by \ref{lem:fal-localtopos},
	\begin{align}\label{eq:fal-abs-comp}
		\xymatrix{
			\rr\Gamma (Y'_\et, \bb{F}')\otimes^{\dl}_{\bb{Z}}A & \rr\Gamma (\fal_{Y' \to X'}^\et, \psi'_*\bb{F}') \otimes^{\dl}_{\bb{Z}}A \ar[l]_-{\alpha_1}\ar[r]^-{\alpha_2}& \rr\Gamma (\fal_{Y' \to X'}^\et, \psi'_*\bb{F}'\otimes_{\bb{Z}}\falb').
		}
	\end{align}
\end{mypara}

\begin{mydefn}[{\cite[4.8.13, 5.7.3]{abbes2020suite}}]\label{defn:easy-fal-comp-mor}
	Under the assumptions in \ref{para:easy-comparison-mor-rel} (resp. \ref{para:easy-comparison-mor-abs}) and with the same notation, if $\alpha_1$ is an isomorphism (for instance, if the canonical morphism $\psi'_*\bb{F}' \to \rr\psi'_*\bb{F}'$ is an isomorphism), then we call the canonical morphism
	\begin{align}
		&\alpha_2\circ\alpha_1^{-1}: (\rr\psi_*\rr f_{\et *} \bb{F}')\otimes^{\dl}_{\bb{Z}}\falb \longrightarrow \rr f_{\fal *} (\psi'_*\bb{F}'\otimes_{\bb{Z}}\falb')\label{eq:easy-fal-rel-comp}\\
		\trm{(resp. }
		&\alpha_2\circ\alpha_1^{-1}: \rr\Gamma (Y'_\et, \bb{F}')\otimes^{\dl}_{\bb{Z}}A \longrightarrow \rr\Gamma (\fal_{Y' \to X'}^\et, \psi'_*\bb{F}'\otimes_{\bb{Z}}\falb') \trm{)}\label{eq:easy-fal-abs-comp}
	\end{align}
	the \emph{relative} (resp. \emph{absolute}) \emph{Faltings' comparison morphism} associated to $f:(Y' \to X')\to (Y \to X)$ and $\bb{F}'$. In this case, we say that \emph{Faltings' comparison morphisms exist}.
\end{mydefn}

\begin{mythm}[{\cite[Cor.6.9]{achinger2017wildram}, cf. \cite[4.4.2]{abbes2020suite}}]\label{thm:achinger}
	Let $\ca{O}_K$ be a strictly Henselian discrete valuation ring with fraction field $K$ of characteristic $0$ and residue field of characteristic $p$. We fix an algebraic closure $\overline{K}$ of $K$. Let $X$ be an $\ca{O}_K$-scheme of finite type, $\bb{F}$ a finite locally constant abelian sheaf on $X_{\overline{K},\et}$, $\psi:X_{\overline{K},\et} \to \fal_{X_{\overline{K}} \to X}^\et$ the morphism of sites defined in {\rm\ref{para:notation-psi-beta-sigma}}. Then, the canonical morphism $\psi_*\bb{F} \to \rr\psi_*\bb{F}$ is an isomorphism.
\end{mythm}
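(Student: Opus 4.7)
The plan is to reformulate the claim as the vanishing of $\rr^q\psi_*\bb{F}$ for all $q\geq 1$. Recall that $\rr^q\psi_*\bb{F}$ is the sheaf on $\fal^\et_{X_{\overline{K}}\to X}$ associated to the presheaf $(V\to U)\mapsto H^q_\et(V,\bb{F}|_V)$. By {\rm\ref{prop:covcov}}, a topologically generating family for the covanishing topology is given by the objects $(V\to U)$ in which $U$ is affine \'etale over $X$ and $V$ is a connected finite \'etale $U_{\overline{K}}$-scheme. It therefore suffices to prove that for each such $(V\to U)$ and each class $\xi\in H^q_\et(V,\bb{F}|_V)$ with $q\geq 1$, there exists a covering of $(V\to U)$ in $\fal^\et_{X_{\overline{K}}\to X}$ on which the restriction of $\xi$ vanishes.

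For such a $V$, a standard limit argument via \cite[8.8.2, 8.10.5]{ega4-3} and \cite[17.7.8]{ega4-4} descends $V$ to a finite \'etale $U_{K'}$-scheme $V_{K'}$ for some finite subextension $K\subseteq K'\subseteq\overline{K}$. Choosing a finite presentation of $V_{K'}$ over $K'$ and clearing denominators produces an affine $\ca{O}_{K'}$-scheme $W$ of finite type with $W_{K'}=V_{K'}$, whence $W_{\overline{K}}=V$. Since $\ca{O}_K$ is a strictly Henselian discrete valuation ring and $K'/K$ is finite, $\ca{O}_{K'}$ is again a strictly Henselian discrete valuation ring with the same residue field, so Achinger's theorem applied to $W$ shows that $V=W_{\overline{K}}$ is a $K(\pi,1)$-scheme for all finite locally constant torsion coefficients. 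Combining this with {\rm\ref{lem:loc-sys-trans}}, we obtain a canonical isomorphism $H^q_\fet(V,\rho_*\bb{F}|_V)\iso H^q_\et(V,\bb{F}|_V)$ for every $q\geq 0$.

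To conclude, since $V$ is connected, $H^q_\fet(V,\rho_*\bb{F}|_V)$ identifies with the continuous cohomology of the profinite \'etale fundamental group $\pi_1^\et(V,\overline{v})$ with finite coefficients. Any positive-degree class in such cohomology is inflated from $H^q$ of a finite quotient $\pi_1^\et(V,\overline{v})/N$ for some open normal subgroup $N$, and the inflation-restriction composition forces its restriction to $N$ to vanish. The subgroup $N$ corresponds to a connected finite \'etale Galois cover $V'\to V$, and $\{(V'\to U)\to (V\to U)\}$ is then a vertical covering in $\fal^\et_{X_{\overline{K}}\to X}$ on which, by the previous paragraph applied to $V'$, the restriction of $\xi$ vanishes. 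This shows $\rr^q\psi_*\bb{F}=0$ for $q\geq 1$, hence $\psi_*\bb{F}\iso\rr\psi_*\bb{F}$. The delicate point is the descent of $V$ to a finite-type $\ca{O}_{K'}$-model so that Achinger's theorem may be invoked; once the $K(\pi,1)$-property is secured, the group-cohomological reduction is formal.
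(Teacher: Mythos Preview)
The paper does not give its own proof of this theorem; it is quoted from the cited references \cite[Cor.~6.9]{achinger2017wildram} and \cite[4.4.2]{abbes2020suite}. Your argument is correct and is essentially the one found in those references: one identifies $\rr^q\psi_*\bb{F}$ as the sheafification of $(V\to U)\mapsto H^q_\et(V,\bb{F}|_V)$, reduces to the case where $U$ is affine and $V$ is connected, spreads $V$ out to a finite-type affine $\ca{O}_{K'}$-scheme, invokes Achinger's $K(\pi,1)$ theorem, and then kills any positive-degree class by restriction to a connected finite \'etale cover, which yields a vertical covering in $\fal^\et_{X_{\overline{K}}\to X}$.

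Two small remarks. First, the appeal to \ref{prop:covcov} for the claim that such $(V\to U)$ form a topological generating family is not quite the right reference: \ref{prop:covcov} describes the shape of coverings, not generators. The claim you need is the elementary fact that every object can be covered by objects of this form (Zariski-cover $U$ by affines, then split $V$ into connected components); this is immediate. Second, Achinger's result in \cite{achinger2017wildram} is stated for $p$-torsion coefficients; the prime-to-$p$ case of the $K(\pi,1)$ property follows from older results (cf.\ the discussion in \cite[4.4.2]{abbes2020suite}), so your assertion ``for all finite locally constant torsion coefficients'' is correct but relies on both inputs.
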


\begin{mycor}\label{cor:achinger}
	Let $\ca{O}_K$ be a strictly Henselian discrete valuation ring with fraction field $K$ of characteristic $0$ and residue field of characteristic $p$. We fix an algebraic closure $\overline{K}$ of $K$. Let $X$ be a coherent $\ca{O}_{\overline{K}}$-scheme, $Y=\spec(\overline{K})\times_{\spec(\ca{O}_{\overline{K}})} X$, $\bb{F}$ a finite locally constant abelian sheaf on $Y_\et$, $\psi:Y_\et \to \fal_{Y \to X}^\et$ the morphism of sites defined in {\rm\ref{para:notation-psi-beta-sigma}}. Then, the canonical morphism $\psi_*\bb{F} \to \rr\psi_*\bb{F}$ is an isomorphism.
\end{mycor}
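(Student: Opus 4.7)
The natural plan is to reduce to Achinger's Theorem \ref{thm:achinger} via a limit argument. First, I would write $\ca{O}_{\overline{K}} = \colim_{\lambda \in \Lambda} \ca{O}_{K_\lambda}$ as the filtered colimit of the strictly Henselian DVRs corresponding to the finite extensions $K_\lambda$ of $K$ inside $\overline{K}$ (each $\ca{O}_{K_\lambda}$ is strictly Henselian because its residue field is algebraic over the algebraically closed residue field of $\ca{O}_K$, hence equal to it, while it is Henselian as a finite algebra over the Henselian $\ca{O}_K$). Since $X$ is coherent over $\ca{O}_{\overline{K}}$, by expressing its structure sheaf Zariski-locally as a filtered colimit of finitely presented $\ca{O}_{\overline{K}}$-algebras and descending each term to some $\ca{O}_{K_\lambda}$ via \cite[8.8.2, 8.10.5]{ega4-3}, one writes $X = \lim_\lambda X_\lambda$ with each $X_\lambda$ coherent and of finite type over $\ca{O}_{K_\lambda}$ and with affine transition morphisms. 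Setting $Y_\lambda = \spec(\overline{K}) \times_{\spec(\ca{O}_{K_\lambda})} X_\lambda$, we have $Y = \lim_\lambda Y_\lambda$, and by Proposition \ref{prop:limit-fal-sites}, the site $\fal_{Y \to X}^\et$ represents the limit of the sites $\fal_{Y_\lambda \to X_\lambda}^\et$ under the base change functors $\varphi_\lambda^+ : \fal_{Y_\lambda \to X_\lambda}^\et \to \fal_{Y \to X}^\et$.

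Second, I would descend the finite locally constant abelian sheaf $\bb{F}$ to a finite level. Since $\bb{F}$ is representable by a finite \'etale commutative group scheme over $Y$, an application of \cite[8.8.2, 8.10.5]{ega4-3} and \cite[17.7.8]{ega4-4} yields an index $\mu \in \Lambda$ and a finite \'etale commutative group scheme over $Y_\mu$ whose base change to $Y$ represents $\bb{F}$; equivalently, a finite locally constant abelian sheaf $\bb{F}_\mu$ on $Y_{\mu,\et}$ with $\bb{F} = f_\mu^{-1} \bb{F}_\mu$, where $f_\mu : Y \to Y_\mu$ is the transition morphism. For $\lambda \geq \mu$, denote by $\bb{F}_\lambda$ the pullback of $\bb{F}_\mu$ to $Y_{\lambda,\et}$. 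Since each $\ca{O}_{K_\lambda}$ is a strictly Henselian DVR of characteristic $0$ with residue field of characteristic $p$, and $X_\lambda$ is of finite type over it, Theorem \ref{thm:achinger} yields an isomorphism $\psi_{\lambda *} \bb{F}_\lambda \iso \rr \psi_{\lambda *} \bb{F}_\lambda$, i.e. $\rr^q \psi_{\lambda *} \bb{F}_\lambda = 0$ for all $q \geq 1$.

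Finally, to conclude that $\rr^q\psi_*\bb{F} = 0$ for $q \geq 1$, it suffices to show that the presheaf $(V \to U) \mapsto H^q(V_\et, \bb{F}|_V)$ has vanishing sheafification. Given such $(V \to U)$ and a class $\xi \in H^q(V_\et, \bb{F}|_V)$, by the equivalence \eqref{eq:7.9.2} the object $(V \to U)$ descends to some $(V_{\mu'} \to U_{\mu'}) \in \fal_{Y_{\mu'} \to X_{\mu'}}^\et$ with $\mu' \geq \mu$; for $\lambda \geq \mu'$ let $V_\lambda, U_\lambda$ be the obvious base changes, so $V = \lim_{\lambda \geq \mu'} V_\lambda$ with affine transition morphisms and $\bb{F}|_V$ is the pullback of $\bb{F}_\lambda|_{V_\lambda}$. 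By the compatibility of \'etale cohomology with cofiltered limits of coherent schemes along affine transitions (\cite[\Luoma{7}.5.8]{sga4-2}), one has $H^q(V_\et, \bb{F}|_V) = \colim_{\lambda \geq \mu'} H^q(V_{\lambda,\et}, \bb{F}_\lambda|_{V_\lambda})$, so $\xi$ comes from some class $\xi_\nu \in H^q(V_{\nu,\et}, \bb{F}_\nu|_{V_\nu})$ for $\nu$ large. Applying Theorem \ref{thm:achinger} to $X_\nu$, there is a covering $\{(V_{\nu,i} \to U_{\nu,i}) \to (V_\nu \to U_\nu)\}_{i \in I}$ in $\fal_{Y_\nu \to X_\nu}^\et$ on which $\xi_\nu$ restricts to zero; pulling it back along $\varphi_\nu^+$ yields a covering of $(V \to U)$ in $\fal_{Y \to X}^\et$ killing $\xi$, as required.

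The main obstacle I anticipate is bookkeeping the various limit compatibilities rigorously, especially the descent of $X$ along $\ca{O}_{\overline{K}} = \colim \ca{O}_{K_\lambda}$ (which is slightly subtle since $X \to \spec(\ca{O}_{\overline{K}})$ is only assumed coherent, not of finite presentation) and verifying that the resulting system $X_\lambda$ fits into the framework of \ref{para:limit-falet} and \ref{prop:limit-fal-sites}. Both are standard applications of \cite[8.8.2, 8.10.5]{ega4-3}, and once set up, the remaining arguments reduce cleanly to Theorem \ref{thm:achinger}.
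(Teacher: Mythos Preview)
Your proposal is correct and follows essentially the same approach as the paper: write $X$ as a limit of finite-type schemes $X_\lambda$ over the $\ca{O}_{K_\lambda}$, descend $\bb{F}$ to some level, apply Theorem \ref{thm:achinger} there, and pass to the limit. The only cosmetic difference is that the paper concludes in one line via the identification $\rr^q\psi_*\bb{F} = \colim_{\lambda} \varphi_\lambda^{-1}\rr^q\psi_{\lambda*}\bb{F}_\lambda$ (citing \cite[\Luoma{6}.8.7.3]{sga4-2}), whereas you spell out this limit compatibility by hand at the level of sections and coverings.
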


We emphasize that we don't need any finiteness condition of $X$ over $\ca{O}_{\overline{K}}$ in \ref{cor:achinger}. In fact, one can replace $\ca{O}_{\overline{K}}$ by $\overline{\bb{Z}_p}$ without loss of generality, where $\overline{\bb{Z}_p}$ is the integral closure of $\bb{Z}_p$ in an algebraic closure of $\bb{Q}_p$. We keep working over $\ca{O}_{\overline{K}}$ only for the continuation of our usage of notation.

\begin{proof}[Proof of \ref{cor:achinger}]
	We take a directed inverse system $(X_\lambda \to \spec(\ca{O}_{K_\lambda}))_{\lambda \in \Lambda}$ of morphisms of finite type of schemes by Noetherian approximation, such that $K_\lambda$ is a finite field extension of $K$ and $\overline{K}=\bigcup_{\lambda\in\Lambda}K_\lambda$, and that the transition morphisms $X_{\lambda'} \to X_\lambda$ are affine and $X=\lim_{\lambda\in\Lambda} X_\lambda$ (cf. \cite[\href{https://stacks.math.columbia.edu/tag/09MV}{09MV}]{stacks-project}). For each $\lambda\in \Lambda$, we set $Y_\lambda=\spec(\overline{K}) \times_{\spec(\ca{O}_{K_\lambda})}X_\lambda$. Notice that $Y=\lim Y_\lambda$. Then, there exists an index $\lambda_0\in \Lambda$ and a finite locally constant abelian sheaf $\bb{F}_{\lambda_0}$ on $Y_{\lambda_0,\et}$ such that $\bb{F}$ is the pullback of $\bb{F}_{\lambda_0}$ by $Y_\et \to Y_{\lambda_0,\et}$ (cf. \cite[\href{https://stacks.math.columbia.edu/tag/09YU}{09YU}]{stacks-project}). Let $\bb{F}_\lambda$ be the pullback of $\bb{F}_{\lambda_0}$ by $Y_{\lambda,\et} \to Y_{\lambda_0,\et}$ for each $\lambda\geq \lambda_0$. Notice that $\ca{O}_{K_\lambda}$ also satisfies the conditions in \ref{thm:achinger}. Let $\psi_\lambda:Y_{\lambda,\et} \to \fal_{Y_\lambda \to X_\lambda}^\et$ be the morphism of sites defined in {\rm\ref{para:notation-psi-beta-sigma}}, $\varphi_\lambda: \fal_{Y \to X}^\et\to \fal_{Y_\lambda \to X_\lambda}^\et$ the morphism of sites defined by the transition morphism. Then, we have $\rr^q\psi_{\lambda*}\bb{F}_\lambda=0$ for each integer $q>0$ by \ref{thm:achinger}, and moreover 
	\begin{align}
		\rr^q\psi_* \bb{F}=\colim_{\lambda\geq \lambda_0} \varphi_\lambda^{-1} \rr^q\psi_{\lambda*}\bb{F}_\lambda=0
	\end{align}
	by \ref{prop:limit-fal-sites}, \cite[\Luoma{7}.5.6]{sga4-2} and \cite[\Luoma{6}.8.7.3]{sga4-2} whose conditions are satisfied because each object in each concerned site is quasi-compact.
\end{proof}

\begin{mylem}\label{lem:loc-sys-trans-bc}
	With the notation in {\rm\ref{para:easy-comparison-mor-rel}}, let $\bb{F}$ be a finite locally constant abelian sheaf on $Y_\et$. Then, the canonical morphism $f_\fal^{-1}\psi_*\bb{F}\to \psi'_*f_\et^{-1}\bb{F}$ is an isomorphism.
\end{mylem}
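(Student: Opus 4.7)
The plan is to reduce the base change statement on Faltings' sites to an identity on finite \'etale sites, where it becomes essentially tautological.

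First, I would invoke Proposition \ref{prop:loc-sys-trans} (together with Lemma \ref{lem:loc-sys-trans}) to write $\bb{F}\cong \rho^{-1}\bb{G}$, where $\bb{G}=\rho_*\bb{F}$ is a finite locally constant abelian sheaf on $Y_\fet$, and to identify $\psi_*\bb{F}\cong \beta^{-1}\bb{G}$. In particular $\psi_*\bb{F}$ is itself finite locally constant on $\fal_{Y\to X}^\et$, and $f_\fal^{-1}\psi_*\bb{F}\cong f_\fal^{-1}\beta^{-1}\bb{G}$. By the same proposition applied to $(Y'\to X')$, denoting $\rho':Y'_\et\to Y'_\fet$ and $\beta':Y'_\fet\to \fal_{Y'\to X'}^\et$ the analogous morphisms of sites and $f_\fet:Y'_\fet\to Y_\fet$ the morphism induced by the base change by $Y'\to Y$, I would use that a finite locally constant sheaf $\bb{H}$ on $Y'_\fet$ satisfies $\psi'_*\rho'^{-1}\bb{H}\cong \beta'^{-1}\bb{H}$.

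Next, the commutativity $\rho\circ f_\et=f_\fet\circ \rho'$ of morphisms of sites (both come from the base change by $Y'\to Y$ applied to finite \'etale $Y$-schemes) yields $f_\et^{-1}\bb{F}\cong f_\et^{-1}\rho^{-1}\bb{G}\cong \rho'^{-1}f_\fet^{-1}\bb{G}$, which is finite locally constant on $Y'_\et$. Applying the identity of the previous paragraph to $\bb{H}=f_\fet^{-1}\bb{G}$ identifies $\psi'_*f_\et^{-1}\bb{F}\cong \beta'^{-1}f_\fet^{-1}\bb{G}$. I would then check, directly from the definitions of $\beta^+, \beta'^+, f_\fal^+, f_\fet^+$ recalled in \ref{para:notation-psi-beta-sigma} and \ref{para:easy-comparison-mor-rel}, that both $\beta^+\circ f_\fal^+$ and $f_\fet^+\circ \beta'^+$ are the functor $Y_\fet\to \fal_{Y'\to X'}^\et$, $V\mapsto (V\times_Y Y'\to X')$. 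Hence $\beta\circ f_\fal=f_\fet\circ\beta'$ as morphisms of sites, giving the equality $f_\fal^{-1}\beta^{-1}\bb{G}=\beta'^{-1}f_\fet^{-1}\bb{G}$ at the level of inverse image functors. Composing all of these identifications produces an isomorphism from $f_\fal^{-1}\psi_*\bb{F}$ to $\psi'_*f_\et^{-1}\bb{F}$.

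The remaining point, which will be the main piece requiring care, is to verify that the isomorphism so constructed coincides with the canonical base change morphism $f_\fal^{-1}\psi_*\bb{F}\to \psi'_*f_\et^{-1}\bb{F}$. This is a formal diagram chase: I would express the base change morphism via the unit and counit of the adjunctions $(f_\fal^{-1},f_{\fal *})$ and $(\psi'^{-1},\psi'_*)$ applied to the commutative square $\psi\circ f_\et=f_\fal\circ \psi'$, and then use the explicit description of $\beta^{-1}\bb{G}\iso \psi_*\rho^{-1}\bb{G}$ from the proof of Proposition \ref{prop:loc-sys-trans} as being induced by the adjunction unit $\id\to \psi_*\psi^{-1}$ together with the identification $\psi^{-1}\beta^{-1}=\rho^{-1}$. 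The compatibility of base change morphisms with compositions of morphisms of sites then forces the two isomorphisms to agree.
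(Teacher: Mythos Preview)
Your approach is correct but takes a longer route than the paper's. The paper exploits the standard factorization of the base change morphism directly: it writes
\[
f_\fal^{-1}\psi_*\bb{F}\longrightarrow \psi'_*\psi'^{-1}(f_\fal^{-1}\psi_*\bb{F})=\psi'_*f_\et^{-1}(\psi^{-1}\psi_*\bb{F})\longrightarrow \psi'_*f_\et^{-1}\bb{F},
\]
where the first arrow is the unit of $(\psi'^{-1},\psi'_*)$ and the second is $\psi'_*f_\et^{-1}$ applied to the counit of $(\psi^{-1},\psi_*)$. Since $\psi_*\bb{F}$ is finite locally constant by Proposition~\ref{prop:loc-sys-trans}, so is $f_\fal^{-1}\psi_*\bb{F}$, and the same proposition (applied on the $(Y'\to X')$ side) makes the unit an isomorphism; the counit $\psi^{-1}\psi_*\bb{F}\to\bb{F}$ is an isomorphism again by Proposition~\ref{prop:loc-sys-trans}. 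That is the entire argument.

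Your version instead passes through the finite \'etale sites via $\beta,\beta',f_\fet$, identifies both sides with $\beta'^{-1}f_\fet^{-1}\bb{G}$, and then must separately verify that the resulting isomorphism agrees with the canonical base change map. This works, and the diagram chase you outline is the right one, but it is exactly the extra step that the paper's factorization avoids: by writing the base change morphism itself as a composite of two maps that are visibly isomorphisms under Proposition~\ref{prop:loc-sys-trans}, no separate compatibility check is needed. Your approach buys nothing beyond a more explicit picture of where the sheaves live; the paper's buys a one-line proof.
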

\begin{proof}
	The base change morphism $f_\fal^{-1}\psi_*\bb{F}\to \psi'_*f_\et^{-1}\bb{F}$ is the composition of the adjunction morphisms (\cite[\Luoma{17}.2.1.3]{sga4-3})
	\begin{align}
		f_\fal^{-1}\psi_*\bb{F}\to \psi'_*\psi'^{-1}(f_\fal^{-1}\psi_*\bb{F})=\psi'_*f_\et^{-1}(\psi^{-1}\psi_*\bb{F})\to \psi'_*f_\et^{-1}\bb{F}
	\end{align}
	which are both isomorphisms by \ref{prop:loc-sys-trans}.
\end{proof}

\begin{mypara}\label{para:notation-falcomp-rel}
	Let $K$ be a complete discrete valuation field of characteristic $0$ with valuation ring $\ca{O}_K$ whose residue field $k$ is algebraically closed (a condition required by \cite[4.1.3, 5.1.3]{abbes2020suite}) of characteristic $p>0$, $\overline{K}$ an algebraic closure of $K$, $\ca{O}_{\overline{K}}$ the integral closure of $\ca{O}_K$ in $\overline{K}$, $\eta= \spec (K)$, $\overline{\eta}=\spec(\overline{K})$, $S=\spec (\ca{O}_K)$, $\overline{S}=\spec(\ca{O}_{\overline{K}})$, $s=\spec (k)$. Remark that $\overline{K}$ is a pre-perfectoid field with valuation ring $\ca{O}_{\overline{K}}$ so we are also in the situation {\rm\ref{para:notation}}.
\end{mypara}

\begin{mypara}\label{para:notation-log}
	With the notation in {\rm\ref{para:notation-falcomp-rel}}, let $X$ be an $S$-scheme, $Y$ an open subscheme of the generic fibre $X_\eta$. We simply denote by $\scr{M}_X$ the compactifying log structure $\scr{M}_{X_\eta \to X}$ (\ref{para:log-compact}). Following \cite[\Luoma{3}.4.7]{abbes2016p}, we say that $Y\to X$ is \emph{adequate} over $\eta\to S$ if the following conditions are satisfied:
	\begin{enumerate}
		\renewcommand{\theenumi}{\roman{enumi}}
		\renewcommand{\labelenumi}{{\rm(\theenumi)}}
		\item $X$ is of finite type over $S$;
		\item Any point of the special fibre $X_s$ admits an \'etale neighborhood $U$ such that $U_\eta\to \eta$ is smooth and that $U_\eta\setminus Y$ is the support of a strict normal crossings divisor on $U_\eta$;
		\item $(X,\scr{M}_{Y \to X})$ is a fine log scheme and the structure morphism $(X,\scr{M}_{Y \to X}) \to (S,\scr{M}_{S})$ is smooth and saturated.
	\end{enumerate}
	In this case, $(X,\scr{M}_{Y \to X}) \to (S,\scr{M}_{S})$ is adequate in the sense of \cite[\Luoma{3}.4.7]{abbes2016p}. We remark that for any adequate $(S,\scr{M}_{S})$-log scheme $(X,\scr{M})$, $X^{\mrm{tr}}\to X$ is adequate over $\eta\to S$ and $(X,\scr{M})=(X,\scr{M}_{X^{\mrm{tr}}\to X})$ (cf. \ref{para:log-reg}, \ref{para:log-dvr}).
	Note that if $Y \to X$ is semi-stable over $\eta\to S$ then it is adequate (cf. \ref{para:log-dvr}).
\end{mypara}

\begin{mypara}\label{para:abbes-gros}
	We recall the statement of Faltings' main $p$-adic comparison theorem following Abbes-Gros \cite{abbes2020suite}. We take the notation and assumptions in {\rm\ref{para:notation-falcomp-rel}}. Firstly, recall that for any adequate open immersion of schemes $X^\circ \to X$ over $\eta\to S$ and any finite locally constant abelian sheaf $\bb{F}$ on $X^\circ_{\overline{\eta},\et}$, the canonical morphism $\psi_*\bb{F} \to \rr\psi_*\bb{F}$ is an isomorphism, where $\psi:X^\circ_{\overline{\eta},\et} \to \fal_{X_{\overline{\eta}}^\circ \to X}^\et$ is the morphism of sites defined in \ref{para:notation-psi-beta-sigma} (\cite[4.4.2]{abbes2020suite}).
	
	Let $(X'^{\triangleright}\to X') \to (X^\circ \to X)$ be a morphism of adequate open immersions of schemes over $\eta \to S$ such that $X' \to X$ is projective and that the induced morphism $(X',\scr{M}_{X'^{\triangleright} \to X'})\to (X,\scr{M}_{X^\circ \to X})$ is smooth and saturated. Let $Y'=\overline{\eta}\times_{\eta} X'^{\triangleright}$, $Y=\overline{\eta}\times_{\eta} X^\circ$, $f:(Y' \to X') \to (Y\to X)$ the natural morphism, $\bb{F}'$ a finite locally constant abelian sheaf on $Y'_\et$. By the first paragraph, we have the relative Faltings' comparison morphism associated to $f$ and $\bb{F}'$ \eqref{eq:easy-fal-rel-comp},
	\begin{align}
		(\rr\psi_*\rr f_{\et *} \bb{F}')\otimes^{\dl}_{\bb{Z}}\falb \longrightarrow \rr f_{\fal *} (\psi'_*\bb{F}'\otimes_{\bb{Z}}\falb').
	\end{align}
	Remark that under our assumption, the sheaf $\rr^q f_{\et *} \bb{F}'$ on $Y_\et$ is finite locally constant for each integer $q$ (\cite[2.2.25]{abbes2020suite}).
\end{mypara}

\begin{mythm}[{\cite[Thm.6, page 266]{faltings2002almost}, \cite[5.7.3]{abbes2020suite}}]\label{thm:abess-gros}
	Under the assumptions in {\rm\ref{para:abbes-gros}} and with the same notation, the relative Faltings' comparison morphism associated to $f$ and $\bb{F}'$ is an almost isomorphism in the derived category $\dd(\ca{O}_{\overline{K}}\module_{\fal_{Y \to X}^\et})$ {\rm(\ref{para:almost-derived})}, and it induces an almost isomorphism 
	\begin{align}
		(\psi_*\rr^q f_{\et *} \bb{F}')\otimes_{\bb{Z}}\falb \longrightarrow \rr^q f_{\fal *} (\psi'_*\bb{F}'\otimes_{\bb{Z}}\falb')
	\end{align}
	of $\ca{O}_{\overline{K}}$-modules for each integer $q$.
\end{mythm}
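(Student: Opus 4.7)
The plan is to localize the assertion on $\fal_{Y\to X}^\et$ until it becomes an absolute Faltings comparison over an almost pre-perfectoid base, and then to deduce this absolute statement from the classical case over a discrete valuation ring by a limit argument over semi-stable models.

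First, by Proposition \ref{prop:loc-sys-trans} every finite locally constant $\bb{F}'$ on $Y'_\et$ arises from such a sheaf on $\fal_{Y'\to X'}^\et$, and a d\'evissage in $p$ reduces to the case $\bb{F}'=\bb{Z}/p^n\bb{Z}$. Under the adequacy and log smoothness assumptions the sheaves $\rr^q f_{\et *}\bb{F}'$ on $Y_\et$ are finite locally constant (\cite[2.2.25]{abbes2020suite}), and Corollary \ref{cor:achinger} shows that $\psi_*$ agrees with $\rr\psi_*$ on them. A standard truncation/spectral-sequence argument then reduces the derived comparison to the graded one, so it suffices to prove that the canonical map
\begin{align*}
(\psi_*\rr^q f_{\et *}\bb{F}')\otimes_{\bb{Z}}\falb \longrightarrow \rr^q f_{\fal *}(\psi'_*\bb{F}'\otimes_{\bb{Z}}\falb')
\end{align*}
is an almost isomorphism for each $q\geq 0$.

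Second, the assertion is local on $\fal_{Y\to X}^\et$, and by Proposition \ref{prop:pre-perf-base} a cofinal family of objects in $\fal_{Y\to X}^\proet$ is given by pairs $(V\to U)$ with $U=\spec(R)$ affine and the integral closure $U^V=\spec(A)$ the spectrum of an almost pre-perfectoid $\ca{O}_{\overline{K}}$-algebra. Writing $(V'\to U')=(V\to U)\times_{(Y\to X)}(Y'\to X')$, proper base change identifies the stalk of the left-hand side at $(V\to U)$ with $H^q(V'_\et,\bb{F}')\otimes_{\bb{Z}}A$, while Proposition \ref{prop:fal-coh} identifies the right-hand side with $H^q(\fal_{V'\to U'}^\et,\psi'_*\bb{F}'\otimes_{\bb{Z}}\falb')$. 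Thus the statement reduces to showing that the canonical map
\begin{align*}
H^q(V'_\et,\bb{Z}/p^n\bb{Z})\otimes_{\bb{Z}} A \longrightarrow H^q(\fal_{V'\to U'}^\et,\psi'_*\bb{F}'\otimes_{\bb{Z}}\falb')
\end{align*}
is an almost isomorphism, which is precisely the absolute Faltings comparison theorem for the projective, log smooth and saturated morphism $(U',\scr{M}_{V'\to U'})\to (U,\scr{M}_{V\to U})$ over the almost pre-perfectoid base $\spec(A)$.

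Third, to obtain this local absolute comparison from the classical case of a semi-stable model over a complete discrete valuation ring (\cite[Thm.8, page 223]{faltings2002almost}), I would further restrict to objects of the cofinal family for which $\spec(A)$ may be written as a cofiltered limit of affine opens $U_\lambda\subset T_\lambda$ in strictly semi-stable pairs $(U_\lambda\to T_\lambda)$ over finite extensions $\ca{O}_{K_\lambda}\subset\ca{O}_{\overline{K}}$, via Lemma \ref{thm:alteration-ok} applied at the geometric generic points of $\spec(A)$. Proposition \ref{prop:val-lim-ss} then presents the projective log smooth saturated family $U'\to U$ as a cofiltered limit of analogous families $X'_\lambda\to T_\lambda$. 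At each finite level the adequate model $X'_\lambda/\ca{O}_{K_\lambda}$ admits the absolute Faltings comparison, and passing to the filtered colimit using Proposition \ref{prop:limit-fal-sites} and the flatness of $\falb$ over $\ca{O}_{\overline{K}}$ yields the desired almost isomorphism over $A$.

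The hardest part will be the third step: verifying that the absolute comparison morphisms for each $X'_\lambda/\ca{O}_{K_\lambda}$ fit into a coherent system whose colimit recovers the comparison morphism for $U'/A$. This requires tracking the base change of the Faltings construction through the transition morphisms and controlling how $\psi'_*\bb{F}'\otimes\falb'$ propagates along the tower; the saturatedness of the log morphism and the adequacy condition, which ensure that all base changes remain in the admissible class, are essential here.
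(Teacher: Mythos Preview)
The paper does not prove this theorem: it is quoted from \cite[Thm.6, page 266]{faltings2002almost} and \cite[5.7.3]{abbes2020suite} and used as a black-box input to the rest of Section~\ref{sec:abs-comp}. In particular, the paper's logical flow is the reverse of yours: it takes \ref{thm:abess-gros} as given, applies it at each finite level of a semi-stable tower to deduce \ref{prop:fal-comp-hypercov}, and then combines this with cohomological descent to obtain \ref{thm:fal-comp-abs}.

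Your proposed proof has two genuine gaps. First, the alteration results you invoke in step~3, namely Lemma~\ref{thm:alteration-ok} and Proposition~\ref{prop:val-lim-ss}, are stated for a height-$1$ absolutely integrally closed valuation ring $\ca{O}_L$, not for a general almost pre-perfectoid $\ca{O}_{\overline{K}}$-algebra $A$ as produced by Proposition~\ref{prop:pre-perf-base}. The bases $\spec(A)$ arising there (Henselization along $p$ followed by the tower of \ref{lem:profet-tower}) are far from being valuation rings, so you cannot present them as limits of affine opens in strictly semi-stable pairs via those results. Second, even granting such a presentation, at each finite level the scheme $X'_\lambda$ is projective over the \emph{affine} scheme $T_\lambda$, hence not proper over $\ca{O}_{K_\lambda}$; the classical absolute comparison (\cite[Thm.8, page 223]{faltings2002almost}) therefore does not apply to $X'_\lambda/\ca{O}_{K_\lambda}$. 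What you actually need at finite level is precisely the relative comparison for $X'_\lambda\to T_\lambda$, which is \ref{thm:abess-gros} itself, so the argument becomes circular. The proof in \cite{abbes2020suite} proceeds instead through Faltings' local computation of $\rr\sigma_*\falb$ via almost purity and the adequate chart structure, and does not reduce to an absolute statement.
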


\begin{myprop}\label{prop:fal-comp-hypercov}
	With the natation in {\rm\ref{para:notation-falcomp-rel}}, let $A$ be an absolutely integrally closed valuation ring of height $1$ extension of $\ca{O}_{\overline{K}}$, $X$ a proper $A$-scheme of finite presentation, $Y= \spec(A[1/p]) \times_{\spec(A)} X$, $\bb{F}$ a finite locally constant abelian sheaf on $Y_\et$. Then, there exists a proper surjective morphism $X' \to X$ of finite presentation such that the relative and absolute Faltings' comparison morphisms associated to $f':(Y' \to X') \to (\spec(A[1/p])\to \spec(A))$ and $\bb{F}'$ (which exist by {\rm\ref{cor:achinger}}) are almost isomorphisms, where $Y'=Y\times_X X'$ and $\bb{F}'$ is the pullback of $\bb{F}$ on $Y'_\et$.
\end{myprop}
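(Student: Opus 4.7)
The plan is to realize the situation as a cofiltered limit of "nice" ones (smooth, saturated over a semi-stable base over a complete DVR, hence adequate), invoke Abbes-Gros's relative comparison theorem \ref{thm:abess-gros} at each level, and pass to the limit. Since $A$ is an absolutely integrally closed valuation ring of height $1$, its fraction field $L=A[1/p]$ is algebraically closed, so we are in the framework of \ref{para:notation-alter} with $A=\ca{O}_L$. Applying \ref{prop:val-lim-ss}.(\ref{prop:val-lim-ss-car}) to the Cartesian open immersion $Y=\spec(L)\times_{\spec(A)}X\to X$ over $\spec(L)\to\spec(A)$ produces the sought proper surjective $\ca{O}_L$-morphism $X'\to X$ of finite presentation, together with a directed inverse system $(Y'_\lambda\to X'_\lambda)_{\lambda\geq\lambda_1}$ over the strictly semi-stable models $(U_\lambda\to T_\lambda)$ of \ref{thm:alteration-ok}.(3), satisfying: $(X'_\lambda,\scr{M}_{Y'_\lambda\to X'_\lambda})$ is fine, saturated and regular, $(X'_\lambda,\scr{M}_{Y'_\lambda\to X'_\lambda})\to(T_\lambda,\scr{M}_{U_\lambda\to T_\lambda})$ is smooth and saturated, $X'_\lambda\to T_\lambda$ is projective, $Y'_\lambda=U_\lambda\times_{T_\lambda}X'_\lambda$, with $X'=\lim X'_\lambda$ and $Y':=Y\times_X X'=\lim Y'_\lambda$.

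Setting $\overline{U}_\lambda=\spec(\overline{K})\times_{\spec(K_\lambda)}U_\lambda$ and $\overline{Y}'_\lambda=\spec(\overline{K})\times_{\spec(K_\lambda)}Y'_\lambda$, one verifies that $\spec(L)=\lim\overline{U}_\lambda$ and $Y'=\lim\overline{Y}'_\lambda$. By a standard noetherian approximation argument (\cite[\href{https://stacks.math.columbia.edu/tag/09YU}{09YU}]{stacks-project}), the sheaf $\bb{F}'$ descends to a finite locally constant abelian sheaf $\bb{F}'_{\lambda_0}$ on $\overline{Y}'_{\lambda_0,\et}$ for some $\lambda_0\geq\lambda_1$, and I denote by $\bb{F}'_\lambda$ its pullback to $\overline{Y}'_{\lambda,\et}$. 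For each $\lambda\geq\lambda_0$, the strict semi-stability of $U_\lambda\to T_\lambda$ together with the smooth saturated condition for $X'_\lambda$ over $T_\lambda$ guarantees (via \ref{para:notation-log}) that both $U_\lambda\to T_\lambda$ and $Y'_\lambda\to X'_\lambda$ are adequate open immersions over $\eta_\lambda\to S_\lambda=\spec(\ca{O}_{K_\lambda})$. Thus Abbes-Gros's theorem \ref{thm:abess-gros} applies to $f_\lambda:(\overline{Y}'_\lambda\to X'_\lambda)\to(\overline{U}_\lambda\to T_\lambda)$ with coefficient $\bb{F}'_\lambda$, giving an almost isomorphism
\begin{align}
(\rr\psi_{\lambda*}\rr f_{\lambda,\et*}\bb{F}'_\lambda)\otimes^\dl_{\bb{Z}}\falb\longrightarrow \rr f_{\lambda,\fal*}(\psi'_{\lambda*}\bb{F}'_\lambda\otimes_{\bb{Z}}\falb')
\end{align}
in $\dd(\ca{O}_{\overline{K}}\module_{\fal^\et_{\overline{U}_\lambda\to T_\lambda}})$.

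By \ref{prop:limit-fal-sites}, the Faltings sites $\fal^\et_{\overline{U}_\lambda\to T_\lambda}$ (resp.\ $\fal^\et_{\overline{Y}'_\lambda\to X'_\lambda}$) form a cofiltered system with limit $\fal^\et_{\spec(L)\to\spec(A)}$ (resp.\ $\fal^\et_{Y'\to X'}$), and $\falb$ is the filtered colimit of the pullbacks of the $\falb_\lambda$. Let $\varphi_\lambda:\fal^\et_{\spec(L)\to\spec(A)}\to\fal^\et_{\overline{U}_\lambda\to T_\lambda}$ denote the natural morphism. Quasi-compactness of each object in each Faltings site, together with the torsion coefficient $\bb{F}'$ and the limit formalism (\cite[\Luoma{7}.5.6]{sga4-2}, \cite[\Luoma{6}.8.7.3]{sga4-2}) used in the proof of \ref{cor:achinger}, yields the compatibilities $\rr^q\psi_*\rr f_{\et*}\bb{F}'=\colim_\lambda\varphi_\lambda^{-1}\rr^q\psi_{\lambda*}\rr f_{\lambda,\et*}\bb{F}'_\lambda$ and the analogous equality on the right-hand side. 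Taking the filtered colimit of the Abbes-Gros almost isomorphisms produces the relative Faltings' comparison morphism for $f':(Y'\to X')\to(\spec(L)\to\spec(A))$ and $\bb{F}'$ as an almost isomorphism. Finally, since $\fal^\et_{\spec(L)\to\spec(A)}$ is a local topos by \ref{lem:fal-localtopos}, the section functor at the final object $\spec(L)\to\spec(A)$ is exact; evaluating yields the absolute comparison map $\rr\Gamma(Y'_\et,\bb{F}')\otimes^\dl_{\bb{Z}}A\to\rr\Gamma(\fal^\et_{Y'\to X'},\psi'_*\bb{F}'\otimes_{\bb{Z}}\falb')$ (using $\falb(\spec(L)\to\spec(A))=A$ and that $\spec(L)$ is a geometric point), which is therefore also an almost isomorphism.

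The principal difficulty will be the limit argument of the final step, verifying that the colimit of the relative Abbes-Gros comparison morphisms is precisely the relative comparison morphism for $f'$. This rests on the naturality of each of the functors $\psi_\lambda$, $\rr f_{\lambda,\et*}$, $\rr\psi_{\lambda*}$, $\psi'_{\lambda*}$, $\rr f_{\lambda,\fal*}$ and $-\otimes\falb$ under base change along the transition morphisms, coupled with exactness of filtered colimits on almost isomorphisms; these compatibilities are ultimately consequences of the coherent (quasi-compact, quasi-separated) structure of every site involved and the torsion nature of the coefficient $\bb{F}'$.
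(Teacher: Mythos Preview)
Your approach is essentially identical to the paper's: invoke \ref{prop:val-lim-ss}.(\ref{prop:val-lim-ss-car}) to produce $X'\to X$ and the inverse system $(Y'_\lambda\to X'_\lambda)\to(U_\lambda\to T_\lambda)$, descend $\bb{F}'$ to some level, apply \ref{thm:abess-gros} at each $\lambda$, and take the filtered colimit using \ref{prop:limit-fal-sites} together with \cite[\Luoma{6}.8.7.3]{sga4-2}. The paper writes out the square of sites and the explicit colimit identities \eqref{eq:10.9.5}, \eqref{eq:10.9.6}, but the substance is the same as your final paragraph.

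One small gap: \ref{prop:val-lim-ss} requires the open immersion $Y\to X$ to be \emph{dominant}. Since $Y=\spec(A[1/p])\times_{\spec(A)}X$ is just the generic fibre, nothing prevents $X$ from having irreducible components supported entirely in the special fibre, in which case dominance fails. The paper handles this by first replacing $X$ by the disjoint union of its irreducible components (using that $X$ has Noetherian underlying space by \ref{lem:noetherian}) and then treating the case $Y=\emptyset$ trivially (both sides of the comparison vanish). You should insert this two-line reduction before invoking \ref{prop:val-lim-ss}.
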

\begin{proof}
	Since the underlying topological space of $X$ is Noetherian by \ref{lem:noetherian}, each irreducible component $Z$ of $X$ admits a closed subscheme structure such that $Z\to X$ is of finite presentation (\cite[\href{https://stacks.math.columbia.edu/tag/01PH}{01PH}]{stacks-project}). After replacing $X$ by the disjoint union of its irreducible components, we may assume that $X$ is irreducible. If $Y$ is empty, then we take $X'=X$ and thus the relative (resp. absolute) Faltings' comparison morphism associated to $f'$ and $\bb{F}'$ is an isomorphism between zero objects. If $Y$ is not empty, then we are in the situation of \ref{prop:val-lim-ss}.(\ref{prop:val-lim-ss-car}) by taking $\ca{O}_L=A$. With the notation in \ref{prop:val-lim-ss}, we check that the morphism $X' \to X$ meets our requirements. We set $\eta_\lambda=\spec(K_\lambda)$, $S_\lambda=\spec(\ca{O}_{K_\lambda})$, $T_{\lambda,\overline{\eta_\lambda}} =\overline{\eta}\times_{\eta_\lambda} U_\lambda$, $X'_{\lambda,\overline{\eta_\lambda}}=\overline{\eta}\times_{\eta_\lambda} Y'_\lambda$, and denote by $f'_\lambda:(X'_{\lambda,\overline{\eta_\lambda}} \to X'_\lambda) \to (T_{\lambda,\overline{\eta_\lambda}} \to T_\lambda)$ the natural morphism. We obtain a commutative diagram
	\begin{align}
		\xymatrix{
			\fal_{Y' \to X'}^\et \ar[rrr]^-{g_{\lambda, \fal}} \ar[ddd]_-{f'_{\fal}} &&& \fal_{X'_{\lambda,\overline{\eta_\lambda}} \to X'_\lambda}^\et \ar[ddd]^-{f'_{\lambda,\fal}}\\
			&Y'_\et \ar[r]^-{g_{\lambda,\et}}\ar[d]_-{f'_\et} \ar[ul]_-{\psi'} & X'_{\lambda,\overline{\eta_\lambda},\et}\ar[d]^-{f'_{\lambda,\et}} \ar[ur]^-{\psi_\lambda'}& \\
			&\spec(A[1/p])_\et \ar[r]^-{h_{\lambda,\et}} \ar[dl]_-{\psi} & T_{\lambda,\overline{\eta_\lambda},\et} \ar[dr]^-{\psi_\lambda}& \\
			\fal_{\spec(A[1/p]) \to \spec(A)}^\et \ar[rrr]^-{h_{\lambda,\fal}} &&& \fal_{T_{\lambda,\overline{\eta_\lambda}} \to T_\lambda}^\et
		}
	\end{align}
	Firstly notice that the site $Y'_\et$ (resp. $\spec(A[1/p])_\et$) is the limit of the sites $X'_{\lambda,\overline{\eta_\lambda},\et}$ (resp. $T_{\lambda,\overline{\eta_\lambda},\et}$) and the site $\fal_{Y' \to X'}^\et$ (resp. $\fal_{\spec(A[1/p]) \to \spec(A)}^\et$) is the limit of the sites $\fal_{X'_{\lambda,\overline{\eta_\lambda}} \to X'_\lambda}^\et$ (resp. $\fal_{T_{\lambda,\overline{\eta_\lambda}} \to T_\lambda}^\et$) (\cite[\Luoma{7}.5.6]{sga4-2} and \ref{prop:limit-fal-sites}). There exists an index $\lambda_0\in \Lambda$ and a finite locally constant abelian sheaf $\bb{F}'_{\lambda_0}$ on $X'_{\lambda_0,\overline{\eta_{\lambda_0}},\et}$ such that $\bb{F}'$ is the pullback of $\bb{F}'_{\lambda_0}$ by $Y'_\et \to X'_{\lambda_0,\overline{\eta_{\lambda_0}},\et}$ (cf. \cite[\href{https://stacks.math.columbia.edu/tag/09YU}{09YU}]{stacks-project}). Let $\bb{F}'_\lambda$ be the pullback of $\bb{F}'_{\lambda_0}$ by $X'_{\lambda,\overline{\eta_{\lambda}},\et} \to X'_{\lambda_0,\overline{\eta_{\lambda_0}},\et}$ for each $\lambda\geq \lambda_0$. We also have $\falb'=\colim g_{\lambda,\fal}^{-1}\falb'$ (resp. $\falb=\colim h_{\lambda,\fal}^{-1}\falb$) by \ref{prop:limit-fal-sites}. According to \cite[\Luoma{6}.8.7.3]{sga4-2}, whose conditions are satisfied because each object in each concerned site is quasi-compact, there are canonical isomorphisms for each integer $q$,
	\begin{align}
		(\rr^q (\psi\circ f'_\et)_*\bb{F}') \otimes_{\bb{Z}} \falb &\iso \colim h_{\lambda,\fal}^{-1}((\rr^q (\psi_{\lambda}\circ f'_{\lambda,\et})_*\bb{F}'_{\lambda}) \otimes_{\bb{Z}}\falb),\label{eq:10.9.5}\\
		\rr^q f'_{\fal *}(\psi'_*\bb{F}'\otimes_{\bb{Z}} \falb') & \iso \colim h_{\lambda,\fal}^{-1} \rr^q f'_{\lambda, \fal *}(\psi'_{\lambda *}\bb{F}'_{\lambda}\otimes_{\bb{Z}} \falb').\label{eq:10.9.6}
	\end{align}
	On the other hand, $(X'_\lambda,\scr{M}_{X'_\lambda}) \to (T_\lambda,\scr{M}_{T_\lambda})$ is a smooth and saturated morphism of adequate $(S_\lambda,\scr{M}_{S_\lambda})$-log schemes with $X'_\lambda \to T_\lambda$ projective for each $\lambda\in \Lambda$ by construction. Thus, we are in the situation of \ref{thm:abess-gros}, which implies that the relative Faltings' comparison morphism associated to $f'_\lambda$ and $\bb{F}'_\lambda$,
	\begin{align}
		(\rr^q (\psi_{\lambda}\circ f'_{\lambda,\et})_*\bb{F}'_{\lambda}) \otimes_{\bb{Z}}\falb\longrightarrow  \rr^q f'_{\lambda, \fal *}(\psi'_{\lambda*}\bb{F}'_{\lambda}\otimes_{\bb{Z}} \falb')
	\end{align} 
	is an almost isomorphism for each $\lambda\geq \lambda_0$. Combining with \eqref{eq:10.9.5} and \eqref{eq:10.9.6}, we see that the relative Faltings' comparison morphism associated to $f'$ and $\bb{F}'$, 
	\begin{align}\label{eq:10.9.3}
		\rr\psi_*(\rr f'_{\et *}\bb{F}') \otimes_{\bb{Z}}^{\dl} \falb \longrightarrow\rr f'_{\fal *}(\psi'_*\bb{F}'\otimes_{\bb{Z}} \falb'),
	\end{align}
	is an almost isomorphism (and thus so is the absolute one).
\end{proof}

\begin{mycor}\label{cor:fal-comp-hypercov}
	Under the assumptions in {\rm\ref{prop:fal-comp-hypercov}} and with the same notation, there exists a proper hypercovering $X_\bullet\to X$ of coherent schemes {\rm(\cite[\href{https://stacks.math.columbia.edu/tag/0DHI}{0DHI}]{stacks-project})} such that for each degree $n$, the relative and absolute Faltings' comparison morphisms associated to $f_n:(Y_n \to X_n) \to (\spec(A[1/p])\to \spec(A))$ and $\bb{F}_n$ (which exist by {\rm\ref{cor:achinger}}) are almost isomorphisms, where $Y_n=Y\times_X X_n$ and $\bb{F}_n$ is the pullback of $\bb{F}$ by the natural morphism $Y_{n,\et} \to Y_\et$. In particular, $Y_\bullet\to Y$ is a proper hypercovering and $X_\bullet^{Y_\bullet}\to X^Y$ is a hypercovering in $\falh_{\overline{\eta}\to \overline{S}}$.
\end{mycor}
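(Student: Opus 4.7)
The plan is to construct the proper hypercovering $X_\bullet \to X$ by induction on the simplicial degree, applying Proposition \ref{prop:fal-comp-hypercov} at every step to replace the coskeleton by a sufficiently fine cover. Recall that a proper hypercovering $X_\bullet \to X$ is determined by choosing, for each $n\geq 0$, a proper surjective morphism of finite presentation $X_n \to (\mathrm{cosk}_{n-1}(X_{\bullet\leq n-1}/X))_n$, with the convention $\mathrm{cosk}_{-1}(X_{\bullet\leq -1}/X)_0 = X$ (cf. \cite[\href{https://stacks.math.columbia.edu/tag/0DHI}{0DHI}]{stacks-project}). The freedom in choosing $X_n$ is precisely what allows us to invoke Proposition \ref{prop:fal-comp-hypercov}.

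For the base case, I apply Proposition \ref{prop:fal-comp-hypercov} directly to $X$ and $\bb{F}$, producing a proper surjective morphism of finite presentation $X_0 \to X$ for which the relative and absolute Faltings' comparison morphisms associated to $f_0$ and $\bb{F}_0$ are almost isomorphisms. For the inductive step, I assume $X_{\bullet\leq n-1}$ has been constructed with the desired property in each degree $<n$. I form the coskeleton $Z_n = (\mathrm{cosk}_{n-1} X_{\bullet\leq n-1}/X)_n$ and pull back $\bb{F}$ to a finite locally constant abelian sheaf $\bb{G}_n$ on $\spec(A[1/p])\times_{\spec(A)} Z_n$. Applying Proposition \ref{prop:fal-comp-hypercov} to $Z_n$ and $\bb{G}_n$ yields a proper surjective morphism of finite presentation $X_n\to Z_n$ for which the Faltings' comparison morphisms associated to $f_n$ and $\bb{F}_n$ are almost isomorphisms. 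The composition $X_n \to Z_n \to (\mathrm{cosk}_{n-1}(X_{\bullet\leq n-1}))_n$ extends the $(n{-}1)$-truncation to degree $n$, and iterating produces the desired proper hypercovering $X_\bullet \to X$.

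The principal obstacle is bookkeeping the properness and finite presentation hypothesis along the induction: one must verify that each coskeleton $Z_n$ is again a proper $A$-scheme of finite presentation, since Proposition \ref{prop:fal-comp-hypercov} is only available in that class. This reduces to the fact that $Z_n$ is a finite inverse limit of the schemes $X_i$ ($i<n$) along morphisms of $A$-schemes, and that the classes of proper morphisms and morphisms of finite presentation are stable under composition and fibre products; no deeper input is needed.

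Finally, for the \emph{in particular} statement, base change along the flat morphism $\spec(A[1/p]) \to \spec(A)$ preserves proper hypercoverings, so $Y_\bullet = Y\times_X X_\bullet \to Y$ is a proper hypercovering. To show that $X_\bullet^{Y_\bullet} \to X^Y$ is a hypercovering in $\falh_{\overline{\eta}\to \overline{S}}$, I observe that each map $X_n \to (\mathrm{cosk}_{n-1} X_\bullet)_n$ is proper surjective and hence a v-covering by \ref{lem:arc-parc}.(\ref{lem:arc-parc-v}); combining this with \ref{prop:normal-v-cov} (applied to the relevant square involving generic fibres and relative integral closures) and \ref{lem:relative-normal-limit}, which identifies fibre products in $\falh_{\overline{\eta}\to\overline{S}}$ with integral closures of ordinary fibre products, gives that the induced map $X_n^{Y_n}\to (\mathrm{cosk}_{n-1}(X_\bullet^{Y_\bullet}))_n$ is a covering in $\falh_{\overline{\eta}\to\overline{S}}$, as required.
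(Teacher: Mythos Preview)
Your argument is correct and follows essentially the same strategy as the paper: iteratively refine coskeleta using Proposition~\ref{prop:fal-comp-hypercov}. The paper packages this more abstractly by introducing the site $\scr{C}$ of proper $A$-schemes of finite presentation (with the topology of finite jointly surjective families) and the functor $u^+:\scr{C}\to\falh_{\spec(A[1/p])\to\spec(A)}$ sending $X$ to $X^Y$; it then checks once that $u^+$ commutes with fibred products (via \ref{lem:relative-normal-limit}) and is continuous (via \ref{prop:normal-v-cov}), and invokes the general Stacks Project machinery \cite[\href{https://stacks.math.columbia.edu/tag/094K}{094K}, \href{https://stacks.math.columbia.edu/tag/0DB1}{0DB1}, \href{https://stacks.math.columbia.edu/tag/0DAY}{0DAY}]{stacks-project} to produce the hypercovering in $\scr{C}$ and transport it to $\falh$. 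Your explicit induction is exactly what these tags encode, and your verification of the ``in particular'' clause is the hands-on version of checking that $u^+$ is continuous and preserves fibred products. One small remark: base change preserving proper hypercoverings does not require flatness of $\spec(A[1/p])\to\spec(A)$; stability of proper surjective morphisms under arbitrary base change suffices.
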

\begin{proof}
	Let $\scr{C}$ be the category of proper $A$-schemes of finite presentation endowed with the pretopology formed by families of morphisms $\{f_i:X_i \to X\}_{i \in I}$ with $I$ finite and $X=\bigcup_{i \in I} f_i(X_i)$. Consider the functor $u^+:\scr{C} \to \falh_{\spec(A[1/p]) \to \spec(A)}$ sending $X$ to $X^{Y}$ where $Y=\spec(A[1/p])\times_{\spec(A)} X$. It is well-defined by \ref{lem:relative-normal}.(\ref{lem:relative-normal-norm}) and commutes with fibred products by \ref{lem:relative-normal-limit} and continuous by \ref{prop:normal-v-cov}. Lemma \ref{prop:fal-comp-hypercov} allows us to take a hypercovering $X_\bullet \to X$ in $\scr{C}$ meeting our requirement by \cite[\href{https://stacks.math.columbia.edu/tag/094K}{094K} and \href{https://stacks.math.columbia.edu/tag/0DB1}{0DB1}]{stacks-project}. We see that $Y_\bullet\to Y$ is a proper hypercovering and that $X_\bullet^{Y_\bullet}\to X^Y$ is a hypercovering in $\falh_{\overline{\eta}\to \overline{S}}$ by the properties of $u^+$ (\cite[\href{https://stacks.math.columbia.edu/tag/0DAY}{0DAY}]{stacks-project}).
\end{proof}

\begin{mylem}\label{lem:fal-comp-reduce-val}
	Let $\overline{\bb{Z}_p}$ be the integral closure of $\bb{Z}_p$ in an algebraic closure of $\bb{Q}_p$, $A$ a $\overline{\bb{Z}_p}$-algebra which is an absolutely integrally closed valuation ring, $X$ a proper $A$-scheme of finite presentation, $Y= \spec(A[1/p]) \times_{\spec(A)} X$, $\bb{F}$ a finite locally constant abelian sheaf on $Y_\et$. Let $A'=((A/\cap_{n>0}p^nA)_{\sqrt{pA}})^\wedge$ ($p$-adic completion), $X'=X_{A'}$, $Y'=Y_{A'}$, $\bb{F}'$ the pullback of $\bb{F}$ on $Y'_\et$. Then, the following statements are equivalent:
	\begin{enumerate}
		\renewcommand{\labelenumi}{{\rm(\theenumi)}}
		\item The absolute Faltings' comparison morphism associated to $f:(Y\to X) \to (\spec(A[1/p]) \to \spec(A))$ and $\bb{F}$ (which exists by {\rm\ref{cor:achinger}}) is an almost isomorphism.
		\item The absolute Faltings' comparison morphism associated to $f':(Y'\to X') \to (\spec(A'[1/p]) \to \spec(A'))$ and $\bb{F}'$ (which exists by {\rm\ref{cor:achinger}}) is an almost isomorphism.
	\end{enumerate}
\end{mylem}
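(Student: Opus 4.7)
The pivotal observation is that the natural ring map $A \to A'$ becomes an almost isomorphism modulo every power of $p$, from which both (1) and (2) will be seen to be equivalent. Write $\ak{p} = \bigcap_{m>0} p^mA$, $B = A/\ak{p}$, $\ak{q} = \sqrt{pB} \subseteq B$; by construction $A' = \widehat{B_{\ak{q}}}$. Since $\ak{p} \subseteq p^nA$ for every $n$, the map $A/p^nA \to B/p^nB$ is an isomorphism, and by the definition of $A'$ one has $A'/p^nA' = B_{\ak{q}}/p^nB_{\ak{q}}$. The prime $\ak{q}$ corresponds to the largest convex subgroup $H$ of the value group $\Gamma$ of $B$ not containing $v(p)$, so any $s \in B \setminus \ak{q}$ satisfies $v(s) \in H$; meanwhile any $\epsilon \in \ak{m}_{\overline{\bb{Z}_p}}$ has $v(\epsilon) \in \bb{Q}_{>0} \cdot v(p)$, and convexity of $H$ forces $v(\epsilon) > v(s)$. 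A direct valuation-theoretic calculation (treating the kernel via $sa = p^nc$ and the cokernel via $\epsilon a/s \in B$, which lies in $B$ because $v(\epsilon a) \geq v(\epsilon) > v(s)$) then shows $A/p^nA \to A'/p^nA'$ is an almost isomorphism for every $n \geq 1$.

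Since $\bb{F}$ is annihilated by some $p^m$, both sides of the comparison morphism $\alpha_A$ are $p^m$-torsion, so $\alpha_A$ belongs to $\dd((A/p^mA)\module)$ and likewise $\alpha_{A'} \in \dd((A'/p^mA')\module)$. In view of the almost isomorphism $A/p^mA \to A'/p^mA'$, a morphism in $\dd((A/p^mA)\module)$ is an almost isomorphism if and only if its derived base change along $A/p^m \to A'/p^m$ is. It therefore suffices to construct a commutative square whose horizontals are $\alpha_A$ and $\alpha_{A'}$, and to show that the two vertical arrows, induced by the base change $(Y' \to X') \to (Y \to X)$, are almost isomorphisms modulo each $p^n$.

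For the left vertical (étale cohomology), use the acyclicity of $\psi$ from \ref{cor:achinger} to rewrite $\rr\Gamma(Y_\et,\bb{F}) \cong \rr\Gamma(\fal_{Y \to X}^\et, \psi_*\bb{F})$ and similarly for $Y'$, reducing the question to the Faltings side; alternatively, invoke proper base change for $X \to \spec(A)$ combined with the almost isomorphism $X/p^nX \iso X'/p^nX'$ (a consequence of $A/p^n \iso A'/p^n$ via $X' = X \otimes_A A'$). For the right vertical, apply the cohomological descent of \ref{cor:falh-et} to cover $\fal_{Y \to X}^\et$ by objects $V \to U$ whose integral closures $U^V = \spec(R)$ are almost pre-perfectoid, as provided by \ref{prop:pre-perf-base}; for such $R$, the base change $R \otimes_A A'$ is again almost pre-perfectoid by \ref{lem:pre-perf-alg-basic}, and $R/p^nR \to (R \otimes_A A')/p^n(R \otimes_A A')$ is an almost isomorphism by the very same valuation-theoretic argument of the first paragraph applied to $R$. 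Combining these steps yields (1) $\Leftrightarrow$ (2). The principal obstacle is the right vertical: its analysis rests on the cohomological descent of \ref{cor:falh-et} together with the arc-descent of perfectoid algebras \ref{thm:arc-descent-perf} to localize the question to almost pre-perfectoid rings, where the key almost isomorphism of the first paragraph is available.
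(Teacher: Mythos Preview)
Your approach is essentially that of the paper: establish that $A/p^nA \to A'/p^nA'$ is an almost isomorphism (the paper deduces this from \ref{lem:tech1} rather than a direct valuation computation), then show the two vertical base-change maps in the comparison diagram are almost isomorphisms, using proper base change over the strictly Henselian ring $A[1/p]$ on the \'etale side and a reduction to almost pre-perfectoid rings on the Faltings side. Two small corrections are in order: the reason $R\otimes_A A'$ is almost pre-perfectoid is not \ref{lem:pre-perf-alg-basic} (which does not apply to the map $A\to A'$) but rather the almost isomorphism $R/p^n \to (R\otimes_A A')/p^n$ together with Definition~\ref{defn:pre-alg}; and the object on the $X'$-side is not $\spec(R\otimes_A A')$ but its integral closure $\spec(R')$ in $(R\otimes_A A')[1/p]$, for which $(R\otimes_A A')/p^n \to R'/p^n$ is an almost isomorphism by \ref{cor:pre-perf-intclos}.
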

\begin{proof}
	If $p$ is zero (resp. invertible) in $A$, then the absolute Faltings' comparison morphisms are both isomorphisms between zero objects, since $Y$ and $Y'$ are empty (resp. the abelian sheaves $\bb{F}$ and $\bb{F}'$ are zero after inverting $p$). Thus, we may assume that $p$ is a nonzero element of the maximal ideal of $A$. Notice that $\cap_{n>0}p^nA$ is the maximal prime ideal of $A$ not containing $p$ and that $\sqrt{pA}$ is the minimal prime ideal of $A$ containing $p$ (\ref{para:notation-val}). Thus, $(A/\cap_{n>0}p^nA)_{\sqrt{pA}}$ is an absolutely integrally closed valuation ring of height $1$ extension of $\overline{\bb{Z}_p}$ (\ref{para:notation-val}) and thus so is its $p$-adic completion $A'$. 
	
	We denote by $u:(Y' \to X') \to (Y \to X)$ the natural morphism. We have $\bb{F}'=u_\et^{-1}\bb{F}$. The natural morphisms in \eqref{eq:fal-abs-comp} induce a commutative diagram
	\begin{align}
		\xymatrix{
			\rr\Gamma (Y_\et, \bb{F})\otimes^{\dl}_{\bb{Z}}A\ar[d]^-{\gamma_1} & \rr\Gamma (\fal_{Y \to X}^\et, \psi_*\bb{F}) \otimes^{\dl}_{\bb{Z}}A \ar[l]_-{\alpha_1}\ar[r]^-{\alpha_2}\ar[d]^-{\gamma_2}& \rr\Gamma (\fal_{Y \to X}^\et, \psi_*\bb{F}\otimes_{\bb{Z}}\falb)\ar[d]^-{\gamma_3}\\
			\rr\Gamma (Y'_\et, \bb{F}')\otimes^{\dl}_{\bb{Z}}A' & \rr\Gamma (\fal_{Y' \to X'}^\et, \psi'_*\bb{F}') \otimes^{\dl}_{\bb{Z}}A' \ar[l]_-{\alpha_1'}\ar[r]^-{\alpha_2'}& \rr\Gamma (\fal_{Y' \to X'}^\et, \psi'_*\bb{F}'\otimes_{\bb{Z}}\falb')
		}
	\end{align}
	where $\gamma_1$ is induced by the canonical morphism $\bb{F} \to \rr u_{\et *}u_{\et}^{-1}\bb{F}$, and $\gamma_2$ (resp. $\gamma_3$) is induced by the composition of $\psi_*\bb{F} \to \rr u_{\fal *} u_{\fal}^{-1}\psi_*\bb{F} \to \rr u_{\fal *}\psi'_*u_{\et}^{-1}\bb{F}$ (resp. and by the canonical morphism $\falb \to \rr u_{\fal *}\falb'$). Since $\alpha_1$ and $\alpha_1'$ are isomorphisms by \ref{cor:achinger}, it suffices to show that $\gamma_1$ and $\gamma_3$ are almost isomorphisms.
	
	Since $A/\cap_{n>0}p^nA\to (A/\cap_{n>0}p^nA)_{\sqrt{pA}}$ is injective whose cokernel is killed by $\sqrt{pA}$ (\ref{lem:tech1}), the morphism $A \to A'$ induces an almost isomorphism $A/p^nA \to A'/p^nA'$ for each $n$. Then, for any torsion abelian group $M$, the natural morphism $M\otimes_{\bb{Z}} A \to M \otimes_{\bb{Z}} A'$ is an almost isomorphism. Therefore, $\gamma_1$ is an almost isomorphism by the proper base change theorem over the strictly Henselian local ring $A[1/p]$ (\cite[\Luoma{12} 5.5, 5.4]{sga4-3}). For $\gamma_3$, it suffices to show that the canonical morphism $\psi_*\bb{F}\otimes \falb \to \rr u_{\fal*} (\psi'_*\bb{F}'\otimes \falb')$ is an almost isomorphism. The problem is local on $\fal_{Y \to X}^\et$, thus we may assume that $\psi_*\bb{F}$ is the constant sheaf with value $\bb{Z}/p^n\bb{Z}$ by \ref{prop:loc-sys-trans}. Then, $\psi'_*\bb{F}'$ is also the constant sheaf with value $\bb{Z}/p^n\bb{Z}$ by \ref{lem:loc-sys-trans-bc}. Let $V \to U$ be an object of $\fal_{Y \to X}^\proet$ such that $U^V=\spec (R)$ is the spectrum of an $\overline{\bb{Z}_p}$-algebra $R$ which is almost pre-perfectoid. Since the almost isomorphisms $R/p^n \to (R\otimes_A A')/p^n$ ($n\geq 1$) induces an almost isomorphism of the $p$-adic completions $\widehat{R}\to R\widehat{\otimes}_A A'$, the $\overline{\bb{Z}_p}$-algebra $R\otimes_A A'$ is still almost pre-perfectoid (\ref{defn:pre-alg}). The pullback of $V \to U$ in $\fal_{Y' \to X'}^\proet$ is the object $V_{A'} \to U_{A'}$ and $U_{A'}^{V_{A'}}$ is the spectrum of the integral closure $R'$ of $R\otimes_A A'$ in $R\otimes_A A'[1/p]$. Since $R\otimes_A A'$ is almost pre-perfectoid, $R'$ is also almost pre-perfectoid and the morphism $(R\otimes_A A')/p^n \to R'/p^n$ is an almost isomorphism by \ref{cor:pre-perf-intclos}. Therefore, the morphism $\falb/p^n\falb \to \rr u_{\fal *}(\falb'/p^n\falb')$ is an almost isomorphism by \ref{cor:nu-falb}, \ref{prop:coh-vanish-preperf} and \ref{thm:falh-proet}.
\end{proof}

\begin{mythm}\label{thm:fal-comp-abs}
	Let $\overline{\bb{Z}_p}$ be the integral closure of $\bb{Z}_p$ in an algebraic closure of $\bb{Q}_p$, $A$ a $\overline{\bb{Z}_p}$-algebra which is an absolutely integrally closed valuation ring, $X$ a proper $A$-scheme of finite presentation, $Y= \spec(A[1/p]) \times_{\spec(A)} X$, $\bb{F}$ a finite locally constant abelian sheaf on $Y_\et$. Then, the absolute Faltings' comparison morphism associated to $f:(Y \to X) \to (\spec(A[1/p])\to \spec(A))$ and $\bb{F}$ \eqref{eq:easy-fal-abs-comp} (which exists by {\rm\ref{cor:achinger}}),
	\begin{align}\label{eq:fal-comp-abs}
		\rr\Gamma (Y_\et, \bb{F})\otimes^{\dl}_{\bb{Z}}A \longrightarrow \rr\Gamma (\fal_{Y \to X}^\et, \psi_*\bb{F}\otimes_{\bb{Z}}\falb),
	\end{align}
	is an almost isomorphism in $\dd(\overline{\bb{Z}_p}\module)$ {\rm(\ref{para:almost-derived})}.
\end{mythm}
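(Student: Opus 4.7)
\begin{mypara}
My plan is to follow the three-step strategy sketched in the introduction, using the machinery already assembled in the excerpt. First, by \ref{lem:fal-comp-reduce-val}, I may replace $A$ by $A' = ((A/\cap_{n>0}p^n A)_{\sqrt{pA}})^{\wedge}$, which is an absolutely integrally closed valuation ring of height $1$, $p$-adically complete, extending $\overline{\bb{Z}_p}$. In other words, I may assume that $A = \ca{O}_L$ for a perfectoid field $L$ with algebraically closed fraction field, so that $\overline{K} \hookrightarrow L$ is an extension of perfectoid fields and we fit into the framework of \ref{para:notation-alter}--\ref{thm:alteration-ok}, making the alteration machinery applicable.
\end{mypara}

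\begin{mypara}
Next, I apply \ref{cor:fal-comp-hypercov} to $X$ and $\bb{F}$: there exists a proper hypercovering $X_\bullet \to X$ of coherent schemes such that the absolute Faltings' comparison morphism associated to $f_n : (Y_n \to X_n) \to (\spec(A[1/p]) \to \spec(A))$ and $\bb{F}_n = \bb{F}|_{Y_n,\et}$ is an almost isomorphism for every degree $n$, where $Y_n = Y \times_X X_n$. Moreover, $Y_\bullet \to Y$ is a proper hypercovering and $X_\bullet^{Y_\bullet} \to X^Y$ is a hypercovering in $\falh_{\overline{\eta} \to \overline{S}}$ (in the v-topology of $\spec(A[1/p])$-integrally closed coherent $\spec(A)$-schemes).
\end{mypara}

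\begin{mypara}
Now I descend along this hypercovering on both sides of \eqref{eq:fal-comp-abs}. On the \'etale side, proper cohomological descent for torsion \'etale sheaves along the proper hypercovering $Y_\bullet \to Y$ yields a quasi-isomorphism $\rr\Gamma(Y_\et,\bb{F}) \iso \rr\Gamma(Y_{\bullet,\et},\bb{F}_\bullet)$ (\cite[\href{https://stacks.math.columbia.edu/tag/0DHL}{0DHL}]{stacks-project}); tensoring with the flat $\bb{Z}$-algebra $A$ preserves this. On the Faltings side, since $X_\bullet^{Y_\bullet} \to X^Y$ is a hypercovering in $\falh_{\overline{\eta} \to \overline{S}}$ (and taking $\overline{\bb{Z}_p}$ in place of the $\ca{O}_K$ in \ref{para:notation} which is harmless by a base change of the site), \ref{cor:resolution} gives an almost isomorphism
\begin{align}
\psi_*\bb{F}\otimes_{\bb{Z}}\falb \stackrel{\sim}{\longrightarrow} \rr a_*(a^{-1}(\psi_*\bb{F})\otimes_{\bb{Z}}\falb_\bullet),
\end{align}
where $a : \fal_{Y_\bullet \to X_\bullet}^\et \to \fal_{Y \to X}^\et$ is the augmentation; since the pullback of $\psi_*\bb{F}$ identifies with $\psi_{n*}\bb{F}_n$ in each degree by \ref{lem:loc-sys-trans-bc} and \ref{prop:loc-sys-trans}, applying $\rr\Gamma(\fal_{Y\to X}^\et,-)$ yields an almost quasi-isomorphism $\rr\Gamma(\fal_{Y\to X}^\et, \psi_*\bb{F}\otimes\falb) \iso \rr\Gamma(\fal_{Y_\bullet \to X_\bullet}^\et, \psi_{\bullet *}\bb{F}_\bullet\otimes\falb_\bullet)$.
\end{mypara}

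\begin{mypara}
Finally, the comparison morphism for $X$ is the totalization of the comparison morphisms for each $X_n$, which are almost isomorphisms by construction of $X_\bullet$; a standard spectral sequence argument (applied to the double complex associated to an injective Cartan--Eilenberg resolution of the simplicial object) then shows that the induced map on totalizations is an almost isomorphism, completing the proof. The main technical point to check carefully is the compatibility of the Faltings comparison morphism under pullback along $(Y_n\to X_n)\to(Y\to X)$, so that the global morphism \eqref{eq:fal-comp-abs} actually corresponds, under the two hypercovering descent isomorphisms, to the totalization of the per-degree comparison morphisms of \ref{cor:fal-comp-hypercov}; this functoriality is ultimately a formal consequence of the definition \ref{defn:easy-fal-comp-mor} together with \ref{cor:achinger}, but verifying it is the piece that requires the most care. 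A secondary subtlety, already handled in \ref{lem:fal-comp-reduce-val}, is that replacing $A$ by its completion $A'$ does not break any of the almost isomorphisms — but this is now subsumed in the first reduction.
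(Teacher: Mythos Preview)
Your proposal is correct and follows essentially the same approach as the paper: reduce to height $1$ via \ref{lem:fal-comp-reduce-val}, take the hypercovering from \ref{cor:fal-comp-hypercov}, descend on the \'etale side via proper cohomological descent and on the Faltings side via \ref{cor:resolution} (using \ref{lem:loc-sys-trans-bc} to identify $a^{-1}\psi_*\bb{F}$ with $\psi_{\bullet*}\bb{F}_\bullet$), and conclude by the spectral sequence of simplicial sites. The compatibility you flag is handled in the paper by keeping the middle term $\rr\Gamma(\fal^\et,\psi_*\bb{F})\otimes A$ in the picture and comparing the full three-term diagrams \eqref{eq:fal-abs-comp} for $X$ and $X_\bullet$ simultaneously, so that functoriality is immediate.
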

\begin{proof}
	Let $K$ be the $p$-adic completion of the maximal unramified extension of $\bb{Q}_p$.	By \ref{lem:fal-comp-reduce-val}, we may assume that $A$ is a valuation ring of height $1$ extension of $\ca{O}_{\overline{K}}$. Let $X_\bullet \to X$ be the proper hypercovering of coherent schemes constructed in \ref{cor:fal-comp-hypercov}. For each degree $n$ the canonical morphisms \eqref{eq:easy-fal-abs-comp}
	\begin{align}\label{eq:10.18.2}
		\rr\Gamma(Y_{n,\et}, \bb{F}_n) \otimes_{\bb{Z}}^{\dl} A \longleftarrow \rr\Gamma(\fal_{Y_n \to X_n}^\et,\psi_{n*}\bb{F}_n) \otimes_{\bb{Z}}^{\dl} A \longrightarrow \rr\Gamma(\fal_{Y_n \to X_n}^\et,\psi_{n*}\bb{F}_n\otimes_{\bb{Z}} \falb)
	\end{align}
	are an isomorphism and an almost isomorphism, where $\bb{F}_n$ is the pullback of $\bb{F}$ by the natural morphism $Y_{n,\et} \to Y_\et$. Consider the commutative diagram 
	\begin{align}
		\xymatrix{
			\rr\Gamma(Y_{\et}, \bb{F})\otimes_{\bb{Z}}^{\dl} A \ar[d]& \rr\Gamma(\fal_{Y \to X}^\et, \psi_{ *}\bb{F})\otimes_{\bb{Z}}^{\dl} A  \ar[r]^-{\alpha_2}\ar[d]\ar[l]_-{\alpha_1}& \rr\Gamma(\fal_{Y \to X}^\et, \psi_{ *}\bb{F}\otimes_{\bb{Z}}\falb)\ar[d]\\
			\rr\Gamma(Y_{\bullet,\et}, \bb{F}_\bullet)\otimes_{\bb{Z}}^{\dl} A & \rr\Gamma(\fal_{Y_{\bullet} \to X_\bullet}^\et, \psi_{\bullet *}\bb{F}_\bullet)\otimes_{\bb{Z}}^{\dl} A \ar[l]_-{\alpha_{1\bullet}}\ar[r]^-{\alpha_{2\bullet}}& \rr\Gamma(\fal_{Y_{\bullet} \to X_\bullet}^\et, \psi_{\bullet *}\bb{F}_\bullet\otimes_{\bb{Z}}\falb_\bullet)
		}
	\end{align}
	where $\bb{F}_\bullet=(\bb{F}_n)_{[n]\in\ob(\Delta)}$ with the notation in \ref{para:presheafonE}. By the functorial spectral sequence of simplicial sites (\cite[\href{https://stacks.math.columbia.edu/tag/09WJ}{09WJ}]{stacks-project}), we deduce from \eqref{eq:10.18.2} that $\alpha_{1\bullet}$ is an isomorphism and $\alpha_{2\bullet}$ is an almost isomorphism. Since $\alpha_1$ is an isomorphism by \ref{cor:achinger}, it remains to show that the left vertical arrow is an isomorphism and the right vertical arrow is an almost isomorphism. 
	
	We denote by $a:\fal_{Y_{\bullet} \to X_\bullet}^\et\to \fal_{Y \to X}^\et$ the augmentation of simplicial site and by $a_n:\fal_{Y_{n} \to X_n}^\et\to \fal_{Y \to X}^\et$ the natural morphism of sites. Notice that $a^{-1}\psi_*\bb{F}=(a_n^{-1}\psi_*\bb{F})_{[n]\in\ob(\Delta)}=(\psi_{n *}\bb{F}_n)_{[n]\in\ob(\Delta)}=\psi_{\bullet *}\bb{F}_\bullet$ by \ref{lem:loc-sys-trans-bc} (\cite[\href{https://stacks.math.columbia.edu/tag/0D70}{0D70}]{stacks-project}).
	Since $X_\bullet^{Y_\bullet} \to X^Y$ forms a hypercovering in $\falh_{\overline{\eta}\to \overline{S}}$, the right vertical arrow is an almost isomorphism by \ref{prop:loc-sys-trans} and \ref{cor:resolution}. Finally, the left vertical arrow is an isomorphism by the cohomological descent for \'etale cohomology \cite[\href{https://stacks.math.columbia.edu/tag/0DHL}{0DHL}]{stacks-project}.
\end{proof}

\section{Faltings' Main $p$-adic Comparison Theorem: the Relative Case for More General Coefficients}\label{sec:rel-comp}

\begin{mypara}\label{para:notation-psi-beta-2}
	Let $Y \to X$ be a morphism of coherent schemes such that $Y \to X^Y$ is an open immersion. We obtain from  \ref{para:mor-sigma-psi-N}, \ref{par:epsilon-mor} and \ref{para:notation-psi-beta-sigma} a commutative diagram of sites
	\begin{align}\label{diag:beta-psi-2}
		\xymatrix{
			(\schqcqs_{/Y})_{\mrm{v}}\ar[r]^-{a}\ar[d]_-{\Psi} &Y_\et\ar[d]^-{\psi}\ar[dr]^-{\rho}&\\
			\falh_{Y \to X^Y}\ar[r]^-{\varepsilon} & \fal_{Y \to X}^\et \ar[r]^-{\beta}& Y_{\fet}
		}
	\end{align}
	where $a: (\schqcqs_{/Y})_{\mrm{v}} \to Y_\et$ and $\rho: Y_\et \to Y_\fet$ are defined by the inclusion functors.
\end{mypara}

\begin{mylem}\label{lem:loc-sys-trans-2}
	With the notation in {\rm\ref{para:notation-psi-beta-2}}, for any finite locally constant abelian sheaf $\bb{F}$ on $Y_\et$, the canonical morphism $\varepsilon^{-1}\psi_*\bb{F}\to \Psi_*a^{-1}\bb{F}$ is an isomorphism.
\end{mylem}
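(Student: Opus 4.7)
The plan is to reduce the claim to Proposition~\ref{prop:Psi-const-equal} via the equivalence of locally constant torsion sheaves established in Proposition~\ref{prop:loc-sys-trans}. First, since $\bb{F}$ is a finite locally constant abelian sheaf on $Y_\et$, Proposition~\ref{prop:loc-sys-trans} shows that $\psi_*\bb{F}$ is a finite locally constant abelian sheaf on $\fal_{Y\to X}^\et$ and that the adjunction $\psi^{-1}\psi_*\bb{F}\to\bb{F}$ is an isomorphism. Pulling back by $\varepsilon$, the sheaf $\bb{L}:=\varepsilon^{-1}\psi_*\bb{F}$ is then finite locally constant on $\falh_{Y\to X^Y}$, hence locally constant and torsion.

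Next, I would use the commutativity of the square \eqref{diag:beta-psi-2} to identify
\begin{align*}
\Psi^{-1}\bb{L}=\Psi^{-1}\varepsilon^{-1}\psi_*\bb{F}=a^{-1}\psi^{-1}\psi_*\bb{F}=a^{-1}\bb{F}.
\end{align*}
By Proposition~\ref{prop:Psi-const-equal}.(\ref{prop:Psi-const-equal-const}), the adjunction morphism
\begin{align*}
\bb{L}\longrightarrow\rr\Psi_*\Psi^{-1}\bb{L}=\rr\Psi_*a^{-1}\bb{F}
\end{align*}
is an isomorphism, since $\bb{L}$ is locally constant torsion. On the other hand, since $\bb{F}$ is torsion, Proposition~\ref{prop:Psi-const-equal}.(\ref{prop:Psi-const-equal-torsion}) gives $\Psi_*a^{-1}\bb{F}\iso\rr\Psi_*a^{-1}\bb{F}$. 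Combining these two isomorphisms in degree zero, we obtain an isomorphism $\varepsilon^{-1}\psi_*\bb{F}\iso\Psi_*a^{-1}\bb{F}$.

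Finally, to conclude the proof I would verify that this isomorphism coincides with the canonical base change morphism $\varepsilon^{-1}\psi_*\bb{F}\to\Psi_*a^{-1}\bb{F}$: this is a routine check using that the base change morphism for the commutative square \eqref{diag:beta-psi-2} is defined as the composition of adjunction units and counits (cf.\ \cite[\Luoma{17}.2.1.3]{sga4-3}), and both arrows constructed above are the corresponding natural adjunction morphisms. The substantive content is entirely encoded in Propositions~\ref{prop:loc-sys-trans} and \ref{prop:Psi-const-equal}; no further obstacle is anticipated.
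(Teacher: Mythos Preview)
Your proposal is correct and follows essentially the same approach as the paper: both decompose the base change morphism via the unit $\varepsilon^{-1}\psi_*\bb{F}\to\Psi_*\Psi^{-1}\varepsilon^{-1}\psi_*\bb{F}$ (an isomorphism by Proposition~\ref{prop:Psi-const-equal}.(\ref{prop:Psi-const-equal-const})) and the map induced by the counit $\psi^{-1}\psi_*\bb{F}\to\bb{F}$ (an isomorphism by Proposition~\ref{prop:loc-sys-trans}). Your invocation of Proposition~\ref{prop:Psi-const-equal}.(\ref{prop:Psi-const-equal-torsion}) is slightly redundant, since taking $H^0$ of the derived isomorphism in part~(\ref{prop:Psi-const-equal-const}) already yields the underived statement you need.
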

\begin{proof}
	The base change morphism $\varepsilon^{-1}\psi_*\bb{F}\to \Psi_*a^{-1}\bb{F}$ is the composition of the adjunction morphisms (\cite[\Luoma{17}.2.1.3]{sga4-3})
	\begin{align}
		\varepsilon^{-1}\psi_*\bb{F}\to \Psi_*\Psi^{-1}(\varepsilon^{-1}\psi_*\bb{F})=\Psi_*a^{-1}(\psi^{-1}\psi_*\bb{F})\to \Psi_*a^{-1}\bb{F}
	\end{align}
	which are both isomorphisms by \ref{prop:Psi-const-equal}.(\ref{prop:Psi-const-equal-const}) and \ref{prop:loc-sys-trans}.
\end{proof}

\begin{mypara}\label{para:mor-rel-notation}
	We fix an algebraic closure $\overline{\bb{Q}_p}$ of the $p$-adic number field $\bb{Q}_p$ and we denote by $\overline{\bb{Z}_p}$ the integral closure of $\bb{Z}_p$ in $\overline{\bb{Q}_p}$. We set $\eta= \spec (\bb{Q}_p)$, $\overline{\eta}=\spec(\overline{\bb{Q}_p})$, $S=\spec (\bb{Z}_p)$, $\overline{S}=\spec(\overline{\bb{Z}_p})$. Remark that $\overline{\bb{Q}_p}$ is a pre-perfectoid field with valuation ring $\overline{\bb{Z}_p}$ so we are also in the situation {\rm\ref{para:notation}}. Let $f:(Y'\to X')\to (Y\to X)$ be a Cartesian morphism of morphisms of coherent schemes with a Cartesian morphism $(Y\to X^Y)\to (\overline{\eta} \to \overline{S})$ (then, $Y'\to X'^{Y'}$ is Cartesian over $\overline{\eta} \to \overline{S}$ by \ref{lem:relative-normal}.(\ref{lem:relative-normal-norm})). Thus, $X^Y$ and $X'^{Y'}$ are objects of $\falh_{\overline{\eta} \to \overline{S}}$. Consider the following commutative diagram of sites associated to $f$.
	\begin{align}\label{diam:sites}
		\xymatrix{
			Y'_\et \ar[d]_-{f_\et} \ar@/^2pc/[rrr]^-{\psi'}& (\schqcqs_{/Y'})_{\mrm{v}} \ar[l]_-{a'} \ar[r]^-{\Psi'} \ar[d]_-{f_{\mrm{v}}}& \falh_{Y' \to X'^{Y'}}\ar[r]^-{\varepsilon'} \ar[d]^-{f_{\falh}}& \fal_{Y' \to X'}^\et\ar[d]^-{f_\fal}\\
			Y_\et \ar@/_2pc/[rrr]^-{\psi}& (\schqcqs_{/Y})_{\mrm{v}} \ar[l]_-{a} \ar[r]^-{\Psi} & \falh_{Y \to X^{Y}}\ar[r]^-{\varepsilon} & \fal_{Y \to X}^\et
		}
	\end{align}
\end{mypara}

\begin{mypara}\label{para:comparison-mor-rel-bc}
	Following \ref{para:mor-rel-notation}, let $g: (\widetilde{Y}\to \widetilde{X})\to (Y\to X)$ be a morphisms of coherent schemes such that $\widetilde{Y}\to \widetilde{X}^{\widetilde{Y}}$ is also Cartesian over $\overline{\eta} \to \overline{S}$. We denote by $g':(\widetilde{Y'}\to \widetilde{X'})\to (Y'\to X')$ the base change of $g$ by $f$, and denote by $\widetilde{f}: (\widetilde{Y'}\to \widetilde{X'})\to (\widetilde{Y}\to \widetilde{X})$ the natural morphism which is Cartesian by base change. Thus, $\widetilde{X}^{\widetilde{Y}}$ and $\widetilde{X'}^{\widetilde{Y'}}$ are also objects of $\falh_{\overline{\eta} \to \overline{S}}$. We write the diagram \eqref{diam:sites} associated $\widetilde{f}$ equipping all labels with tildes.
\end{mypara}

\begin{mylem}\label{lem:comparison-mor-rel}
	With the notation in {\rm\ref{para:mor-rel-notation}} and  {\rm\ref{para:comparison-mor-rel-bc}}, let $\bb{F}'$ be a finite locally constant abelian sheaf on $Y'_\et$ and we set $\scr{F}'=\Psi'_*a'^{-1}\bb{F}'$. Let $\widetilde{X}$ be an object of $\falh_{Y\to X^Y}$, $\widetilde{Y}=\overline{\eta}\times_{\overline{S}} \widetilde{X}$, $\widetilde{\bb{F}'}=g'^{-1}_{\et}\bb{F}'$, $q$ an integer.
	\begin{enumerate}
		\renewcommand{\labelenumi}{{\rm(\theenumi)}}
		\item The sheaf $\rr^q f_{\falh *}\scr{F}'$ on $\falh_{Y \to X^{Y}}$ is canonically isomorphic to the sheaf associated to the presheaf $\widetilde{X}\mapsto H^q_{\et}(\widetilde{Y'},\widetilde{\bb{F}'})$.\label{lem:comparison-mor-rel-1}
		\item The sheaf $\rr^q f_{\falh *}(\scr{F}'\otimes_{\bb{Z}}\falhb')$ on $\falh_{Y \to X^{Y}}$ is canonically almost isomorphic to the sheaf associated to the presheaf $\widetilde{X}\mapsto H^q(\fal_{\widetilde{Y'}\to \widetilde{X'}}^\et,\widetilde{\psi'}_*\widetilde{\bb{F}'}\otimes_{\bb{Z}}\falb')$.
		\item The canonical morphism $(\rr^q f_{\falh *}\scr{F}')\otimes_{\bb{Z}}\falhb\to (\rr^q f_{\falh *}\scr{F}'\otimes_{\bb{Z}}\falhb')$ is compatible with the canonical morphisms $H^q_{\et}(\widetilde{Y'},\widetilde{\bb{F}'})\otimes_{\bb{Z}} R \stackrel{\alpha_1}{\longleftarrow} H^q(\fal_{\widetilde{Y'}\to \widetilde{X'}}^\et,\widetilde{\psi'}_*\widetilde{\bb{F}'})\otimes_{\bb{Z}} R\stackrel{\alpha_2}{\longrightarrow} H^q(\fal_{\widetilde{Y'}\to \widetilde{X'}}^\et,\widetilde{\psi'}_*\widetilde{\bb{F}'}\otimes_{\bb{Z}}\falb')$, where $R=\falb(\widetilde{Y}\to \widetilde{X})$ {\rm(cf.\ref{eq:fal-abs-comp})}.
	\end{enumerate}
\end{mylem}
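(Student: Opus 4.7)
The plan is to identify each presheaf of cohomology groups appearing on the right with the restriction of the sheaf on the left to the localized site at $\widetilde{X}$, then sheafify. By \ref{defn:relative-normal}, the localization $(\falh_{Y\to X^{Y}})_{/\widetilde{X}}$ is canonically equivalent to $\falh_{\widetilde{Y}\to\widetilde{X}}$, and the pullback functor $f_{\falh}^+$ sends $\widetilde{X}$ to $\widetilde{X'}^{\widetilde{Y'}}$; the corresponding localized site is $\falh_{\widetilde{Y'}\to\widetilde{X'}^{\widetilde{Y'}}}$. Hence for any abelian sheaf $\ca{G}'$ on $\falh_{Y'\to X'^{Y'}}$, the sheaf $\rr^q f_{\falh *}\ca{G}'$ is the sheafification of $\widetilde{X}\mapsto H^q(\falh_{\widetilde{Y'}\to\widetilde{X'}^{\widetilde{Y'}}},\ca{G}'|_{\widetilde{X'}^{\widetilde{Y'}}})$. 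The first step is to verify, by naturality of the diagram \eqref{diam:sites} under the base change by $g$ of \ref{para:comparison-mor-rel-bc}, that the restriction of $\scr{F}'=\Psi'_*a'^{-1}\bb{F}'$ to this localized site coincides with $\widetilde{\Psi'}_*\widetilde{a'}^{-1}\widetilde{\bb{F}'}$, where $\widetilde{\bb{F}'}=g'^{-1}_\et\bb{F}'$.

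For (\luoma{1}), the Leray spectral sequence for $\widetilde{\Psi'}$ combined with \ref{prop:Psi-const-equal}.(\ref{prop:Psi-const-equal-torsion}) yields a canonical isomorphism
\begin{align*}
H^q(\falh_{\widetilde{Y'}\to\widetilde{X'}^{\widetilde{Y'}}},\widetilde{\Psi'}_*\widetilde{a'}^{-1}\widetilde{\bb{F}'})\iso H^q((\schqcqs_{/\widetilde{Y'}})_{\mrm{v}},\widetilde{a'}^{-1}\widetilde{\bb{F}'}),
\end{align*}
and \ref{cor:v-etale-coh} identifies the right-hand side with $H^q_\et(\widetilde{Y'},\widetilde{\bb{F}'})$. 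Sheafifying over $\widetilde{X}$ gives (\luoma{1}).

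For (\luoma{2}), I will first invoke \ref{lem:loc-sys-trans-2} in the base-changed setting to rewrite $\widetilde{\Psi'}_*\widetilde{a'}^{-1}\widetilde{\bb{F}'}=\widetilde{\varepsilon'}^{-1}\widetilde{\psi'}_*\widetilde{\bb{F}'}$. By \ref{prop:loc-sys-trans}, $\widetilde{\psi'}_*\widetilde{\bb{F}'}$ is finite locally constant on $\fal_{\widetilde{Y'}\to\widetilde{X'}}^\et$, so \ref{cor:falh-et} produces an almost isomorphism
\begin{align*}
\widetilde{\psi'}_*\widetilde{\bb{F}'}\otimes_{\bb{Z}}\falb'\longrightarrow\rr\widetilde{\varepsilon'}_*\bigl(\widetilde{\varepsilon'}^{-1}\widetilde{\psi'}_*\widetilde{\bb{F}'}\otimes_{\bb{Z}}\falhb'\bigr).
\end{align*}
Applying $\rr\Gamma(\fal_{\widetilde{Y'}\to\widetilde{X'}}^\et,-)$ converts this into a canonical almost isomorphism between $H^q(\fal_{\widetilde{Y'}\to\widetilde{X'}}^\et,\widetilde{\psi'}_*\widetilde{\bb{F}'}\otimes_{\bb{Z}}\falb')$ and $H^q(\falh_{\widetilde{Y'}\to\widetilde{X'}^{\widetilde{Y'}}},\scr{F}'\otimes_{\bb{Z}}\falhb')$, and sheafifying gives (\luoma{2}). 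Part (\luoma{3}) then reduces to checking that all the identifications above are natural in $\widetilde{X}$ and compatible with the tautological morphism $\scr{F}'\otimes_{\bb{Z}}\falhb\to\scr{F}'\otimes_{\bb{Z}}\falhb'$ on the left and with Faltings' absolute comparison morphisms \eqref{eq:easy-fal-abs-comp} evaluated at $\widetilde{X}$ on the right.

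The main obstacle is the bookkeeping around base change: one must verify cleanly that the diagram \eqref{diam:sites} commutes with localization of each site at $\widetilde{X}$, so that the restriction of $\scr{F}'$ really is computed by the analogous construction on $\widetilde{Y'}\to\widetilde{X'}^{\widetilde{Y'}}$, and that the identifications $\scr{F}'|_{\widetilde{X'}^{\widetilde{Y'}}}=\widetilde{\Psi'}_*\widetilde{a'}^{-1}\widetilde{\bb{F}'}=\widetilde{\varepsilon'}^{-1}\widetilde{\psi'}_*\widetilde{\bb{F}'}$ are indeed canonical. Once this is in place, (\luoma{1}) and (\luoma{2}) are formal consequences of the cohomological descent inputs \ref{prop:Psi-const-equal}.(\ref{prop:Psi-const-equal-torsion}), \ref{cor:v-etale-coh}, and \ref{cor:falh-et}, while (\luoma{3}) follows by the naturality of these identifications in $\widetilde{X}$.
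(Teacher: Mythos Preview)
Your proposal is correct and follows essentially the same route as the paper. The paper phrases everything in terms of the auxiliary sheaf $\widetilde{\bb{L}'}=\widetilde{\psi'}_*\widetilde{\bb{F}'}$ and, for (\ref{lem:comparison-mor-rel-1}), invokes \ref{prop:Psi-const-equal}.(\ref{prop:Psi-const-equal-const}) (the locally constant version) rather than (\ref{prop:Psi-const-equal-torsion}) as you do; both are valid here since $\bb{F}'$ is finite locally constant. For (3) the paper writes down the explicit commutative square \eqref{eq:11.5.3} linking $\alpha_1,\alpha_2$ to the canonical morphism on the $\falh$-side, which is exactly the naturality you invoke; your statement ``compatible with the tautological morphism $\scr{F}'\otimes_{\bb{Z}}\falhb\to\scr{F}'\otimes_{\bb{Z}}\falhb'$'' is slightly imprecise (the map lives after pushforward, not on the source site), but the intended content matches the paper's diagram.
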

\begin{proof}
	Let $\widetilde{\scr{F}'}$ be the restriction of $\scr{F}'$ on $\falh_{\widetilde{Y'}\to \widetilde{X'}^{\widetilde{Y'}}}$. We have $\widetilde{\scr{F}'}=\widetilde{\Psi'}_*\widetilde{a'}^{-1}\widetilde{\bb{F}'}$. We set $\widetilde{\bb{L}'}=\widetilde{\psi'}_*\widetilde{\bb{F}'}$ which is a finite locally constant abelian sheaf on $\fal_{\widetilde{Y'} \to \widetilde{X'}}^\et$ by \ref{prop:loc-sys-trans}. Notice that the canonical morphisms $\widetilde{\psi'}^{-1}\widetilde{\bb{L}'}\to \widetilde{\bb{F}'}$ and $\widetilde{\varepsilon'}^{-1}\widetilde{\bb{L}'} \to \widetilde{\scr{F}'}$ are isomorphisms by \ref{prop:loc-sys-trans} and \ref{lem:loc-sys-trans-2} respectively.
	
	(1) It follows from the canonical isomorphisms
	\begin{align}
		H^q(I_{\widetilde{Y'}\to \widetilde{X'}^{\widetilde{Y'}}},\widetilde{\varepsilon'}^{-1}\widetilde{\bb{L}'})\stackrel{\gamma_1}{\longrightarrow} H^q_{\mrm{v}}(\widetilde{Y'},\widetilde{\Psi'}^{-1}\widetilde{\varepsilon'}^{-1}\widetilde{\bb{L}'})= H^q_{\mrm{v}}(\widetilde{Y'},\widetilde{a'}^{-1}\widetilde{\psi'}^{-1}\widetilde{\bb{L}'}) \stackrel{\gamma_2}{\longleftarrow} H^q_{\et}(\widetilde{Y'},\widetilde{\psi'}^{-1}\widetilde{\bb{L}'}),
	\end{align}
	where $\gamma_1$ is induced by the canonical isomorphism $\widetilde{\varepsilon'}^{-1}\widetilde{\bb{L}'}\iso \rr\widetilde{\Psi'}_*\widetilde{\Psi'}^{-1}\widetilde{\varepsilon'}^{-1}\widetilde{\bb{L}'}$ (\ref{prop:Psi-const-equal}.(\ref{prop:Psi-const-equal-const})), and $\gamma_2$ is induced by the canonical isomorphism $\widetilde{\psi'}^{-1}\widetilde{\bb{L}'}\to \rr\widetilde{a'}_*\widetilde{a'}^{-1}\widetilde{\psi'}^{-1}\widetilde{\bb{L}'}$ (\ref{cor:v-etale-coh}).
	
	(2) It follows from the canonical almost isomorphism 
	\begin{align}
		\gamma_3: H^q(\fal_{\widetilde{Y'}\to \widetilde{X'}}^\et,\widetilde{\bb{L}'}\otimes_{\bb{Z}}\falb')\longrightarrow H^q(I_{\widetilde{Y'}\to \widetilde{X'}^{\widetilde{Y'}}},\widetilde{\varepsilon'}^{-1}\widetilde{\bb{L}'}\otimes \falhb')
	\end{align}
	which is induced by the canonical almost isomorphism $\widetilde{\bb{L}'}\otimes_{\bb{Z}}\falb'\to \rr\widetilde{\varepsilon'}_*(\widetilde{\varepsilon'}^{-1}\widetilde{\bb{L}'}\otimes \falhb')$ (\ref{cor:falh-et}).
	
	(3) Consider the following diagram
	\begin{align}\label{eq:11.5.3}
		\xymatrix{
			H^q_{\et}(\widetilde{Y'},\widetilde{\psi'}^{-1}\widetilde{\bb{L}'})\otimes  R \ar[d]_-{\gamma_2\otimes\id_R}^-{\wr}&  H^q(\fal_{\widetilde{Y'}\to \widetilde{X'}}^\et,\widetilde{\bb{L}'})\otimes R\ar[l]_-{\alpha_1}\ar[r]^-{\alpha_2}\ar[d]& H^q(\fal_{\widetilde{Y'}\to \widetilde{X'}}^\et,\widetilde{\bb{L}'}\otimes_{\bb{Z}}\falb')\ar[d]^-{\gamma_3}\\
			H^q_{\mrm{v}}(\widetilde{Y'},\widetilde{\Psi'}^{-1}\widetilde{\varepsilon'}^{-1}\widetilde{\bb{L}'})\otimes R &  H^q(I_{\widetilde{Y'}\to \widetilde{X'}^{\widetilde{Y'}}},\widetilde{\varepsilon'}^{-1}\widetilde{\bb{L}'})\otimes R\ar[l]_-{\sim}^-{\gamma_1\otimes \id_R}\ar[r]& H^q(I_{\widetilde{Y'}\to \widetilde{X'}^{\widetilde{Y'}}},\widetilde{\varepsilon'}^{-1}\widetilde{\bb{L}'}\otimes \falhb')
		}
	\end{align}
	where the unlabelled vertical arrow is induced by the canonical morphism $\widetilde{\bb{L}'}\to \rr\widetilde{\varepsilon'}_*\widetilde{\varepsilon'}^{-1}\widetilde{\bb{L}'}$, and the unlabelled horizontal arrow is the canonical morphism which induces $(\rr^q f_{\falh *}\scr{F}')\otimes_{\bb{Z}}\falhb\to \rr^q f_{\falh *}(\scr{F}'\otimes_{\bb{Z}}\falhb')$ on $\falh_{Y \to X^{Y}}$ by sheafification. It is clear that the diagram \eqref{eq:11.5.3} is commutative, which completes the proof.
\end{proof}

\begin{mypara}\label{para:comparison-mor-rel}
	We remark that \ref{lem:comparison-mor-rel} gives a new definition of the relative (resp. absolute) Faltings' comparison morphism without using \ref{cor:achinger}. Following \ref{para:mor-rel-notation}, let $\bb{F}'$ be a finite locally constant abelian sheaf on $Y'_\et$ and we set $\scr{F}'=\Psi'_*a'^{-1}\bb{F}'$. We set $\bb{L}'=\psi'_*\bb{F}'$, which is a finite locally constant abelian sheaf on $\fal_{Y'\to X'}^\et$ by \ref{prop:loc-sys-trans}. Remark that the canonical morphisms $\psi'^{-1}\bb{L}'\to \bb{F}'$ and $\varepsilon'^{-1}\bb{L}'\to \scr{F}'$ are isomorphisms by \ref{prop:loc-sys-trans} and \ref{lem:loc-sys-trans-2} respectively. We also remark that $\falb$, $\falhb$ are flat over $\bb{Z}$. The canonical morphisms in the derived category $\dd(\falb\module_{\fal_{Y \to X}^\et})$ (cf. \ref{eq:fal-rel-comp}),
		\begin{align}
			\xymatrix{
				(\rr\psi_*\rr f_{\et *} \psi'^{-1}\bb{L}')\otimes^{\dl}_{\bb{Z}}\falb & (\rr f_{\fal *} \bb{L}') \otimes^{\dl}_{\bb{Z}}\falb \ar[l]_-{\alpha_1}\ar[r]^-{\alpha_2}& \rr f_{\fal *} (\bb{L}'\otimes_{\bb{Z}}\falb'),
			}
		\end{align}
		fit into the following commutative diagram
		\begin{align}\label{diam:comp-rel}
			\xymatrix{
				\rr\psi_*(\rr f_{\et *}\psi'^{-1}\bb{L}') \otimes_{\bb{Z}}^{\dl} \falb \ar[d]^-{\wr}_-{\alpha_3}&  (\rr f_{\fal *} \bb{L}') \otimes^{\dl}_{\bb{Z}}\falb \ar[l]_-{\alpha_1}\ar[r]^-{\alpha_2}\ar[d] &\rr f_{\fal *}(\bb{L}'\otimes_{\bb{Z}}\falb')\ar[d]^-{\alpha_4}\\
				\rr\psi_* (\rr a_* \rr f_{\mrm{v}*}\Psi'^{-1}\varepsilon'^{-1}\bb{L}')\otimes_{\bb{Z}}^{\dl} \falb & \rr \varepsilon_*(\rr f_{\falh *}\varepsilon'^{-1}\bb{L}') \otimes_{\bb{Z}}^{\dl} \falb \ar[l]_-{\sim}^-{\alpha_5} \ar[r]_-{\alpha_6}&\rr \varepsilon_* \rr f_{\falh *}(\varepsilon'^{-1}\bb{L}'\otimes_{\bb{Z}} \falhb')
			}
		\end{align} 
		\begin{enumerate}
			\renewcommand{\labelenumi}{{\rm(\theenumi)}}
			\item The morphism $\alpha_3$ is induced by the canonical isomorphism $\psi'^{-1}\bb{L}'\to \rr a'_*a'^{-1} (\psi'^{-1}\bb{L}')$ by \ref{cor:v-etale-coh}, and thus $\alpha_3$ is an isomorphism.
			\item The morphism $\alpha_5$ is induced by the canonical isomorphism $\varepsilon'^{-1}\bb{L}' \to \rr \Psi'_*\Psi'^{-1}\varepsilon'^{-1}\bb{L}'$ by \ref{prop:Psi-const-equal}, and thus $\alpha_5$ is an isomorphism.
			\item The unlabelled arrow is induced by the canonical morphism $\bb{L}\to \rr\varepsilon'_*\varepsilon'^{-1}\bb{L}$.
			\item The morphism $\alpha_4$ is induced by the canonical almost isomorphism $\bb{L}'\otimes_{\bb{Z}} \falb' \to \rr \varepsilon'_* (\varepsilon'^{-1}\bb{L}'\otimes_{\bb{Z}} \falhb')$ by \ref{cor:falh-et}, and thus $\alpha_4$ is an almost isomorphism.
			\item The morphism $\alpha_6$ is the composition of 
			\begin{align}
				\rr \varepsilon_*(\rr f_{\falh *}\varepsilon'^{-1}\bb{L}') \otimes_{\bb{Z}}^{\dl} \falb &\longrightarrow \rr \varepsilon_*((\rr f_{\falh *}\varepsilon'^{-1}\bb{L}') \otimes_{\bb{Z}}^{\dl} \falhb)\label{eq:alpha41}\\ \trm{ with }
				\rr \varepsilon_*((\rr f_{\falh *}\varepsilon'^{-1}\bb{L}') \otimes_{\bb{Z}}^{\dl} \falhb) &\longrightarrow \rr \varepsilon_* \rr f_{\falh *}(\varepsilon'^{-1}\bb{L}'\otimes_{\bb{Z}} \falhb').\label{eq:alpha42}
			\end{align}
		\end{enumerate}
		In conclusion, the arrows $\alpha_3$, $\alpha_5$, $\alpha_6$ and $\alpha_4$ induce an arrow
		\begin{align}\label{eq:11.6.5}
			\alpha_4^{-1}\circ\alpha_6\circ \alpha_5^{-1}\circ\alpha_3: \rr\psi_*(\rr f_{\et *}\bb{F}') \otimes_{\bb{Z}}^{\dl} \falb \longrightarrow  \rr f_{\fal *}(\psi'_*\bb{F}'\otimes_{\bb{Z}}\falb')
		\end{align}
		in the derived category of almost $\overline{\bb{Z}_p}$-modules on $\fal_{Y \to X}^\et$ (\ref{para:almost-derived}). Remark that we don't assume that $\alpha_1$ is an isomorphism here. We also call \eqref{eq:11.6.5} the \emph{relative Faltings' comparison morphism}. Indeed, if $\alpha_1$ is an isomorphism, then the relative Faltings' comparison morphism \eqref{eq:easy-fal-rel-comp} induces \eqref{eq:11.6.5} in $\dd(\overline{\bb{Z}_p}^\al\module)$ due to the commutativity of the diagram \eqref{diam:comp-rel}.
		
		If $X$ is the spectrum of an absolutely integrally closed valuation ring $A$ and if $Y=\overline{\eta}\times_{\overline{S}} X$, then applying the functor $\rr\Gamma(Y \to X,-)$ on \eqref{diam:comp-rel} we obtain the natural morphisms in the derived category $\dd(A\module)$ by \ref{lem:fal-localtopos},
		\begin{align}\label{diam:comp-abs}
			\xymatrix{
				\rr\Gamma(Y'_\et, \psi'^{-1}\bb{L}') \otimes_{\bb{Z}}^{\dl} A \ar[d]^-{\wr}_-{\alpha_3}&\rr\Gamma(\fal_{Y' \to X'}^\et,\bb{L}')\otimes_{\bb{Z}}^{\dl} A\ar[l]_-{\alpha_1}\ar[r]^-{\alpha_2}\ar[d]&\rr\Gamma(\fal_{Y' \to X'}^\et,\bb{L}'\otimes_{\bb{Z}} \falb')\ar[d]^-{\alpha_4}\\
				\rr\Gamma((\schqcqs_{/Y'})_{\mrm{v}},\Psi'^{-1}\varepsilon'^{-1}\bb{L}')\otimes_{\bb{Z}}^{\dl} A & \rr \Gamma(\falh_{Y' \to X'^{Y'}},\varepsilon'^{-1}\bb{L}') \otimes_{\bb{Z}}^{\dl} A \ar[l]_-{\sim}^-{\alpha_5} \ar[r]_-{\alpha_6}&\rr \Gamma(\falh_{Y' \to X'^{Y'}},\varepsilon'^{-1}\bb{L}'\otimes_{\bb{Z}} \falhb')
			}
		\end{align} 
		The arrows $\alpha_3$, $\alpha_5$, $\alpha_6$ and $\alpha_4$ induce an arrow
		\begin{align}\label{eq:11.6.7}
			\alpha_4^{-1}\circ\alpha_6\circ \alpha_5^{-1}\circ\alpha_3: \rr\Gamma(Y'_\et, \bb{F}') \otimes_{\bb{Z}}^{\dl} A \longrightarrow  \rr\Gamma(\fal_{Y' \to X'}^\et,\psi'_*\bb{F}'\otimes_{\bb{Z}} \falb')
		\end{align}
		in the derived category $\dd(\overline{\bb{Z}_p}^\al\module)$ of almost $\overline{\bb{Z}_p}$-modules (\ref{para:almost-derived}). We also call \eqref{eq:11.6.7} the \emph{absolute Faltings' comparison morphism}.
\end{mypara}

\begin{mylem}\label{lem:hard-comp-rel-loc}
	With the notation in {\rm\ref{para:mor-rel-notation}}, let $\bb{F}'$ be a finite locally constant abelian sheaf on $Y'_\et$ and we set $\scr{F}'=\Psi'_*a'^{-1}\bb{F}'$. Assume that $X' \to X$ is proper of finite presentation. Then, the canonical morphism
	\begin{align}
		(\rr f_{\falh *}\scr{F}')\otimes_{\bb{Z}}^\dl \falhb \longrightarrow \rr f_{\falh *}(\scr{F}'\otimes_{\bb{Z}} \falhb')
	\end{align}
	is an almost isomorphism.
\end{mylem}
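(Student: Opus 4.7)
The plan is to reduce the statement to the absolute comparison theorem \ref{thm:fal-comp-abs} via the sheafification criterion \ref{prop:v-site-stalk} on the v-site $\falh_{Y \to X^Y}$. I would begin by verifying that the structural sheaf $\falhb$ is flat over $\bb{Z}$: by \ref{prop:v-site-stalk}, this can be checked on the presheaf $\widetilde{X} \mapsto \Gamma(\widetilde{X},\ca{O}_{\widetilde{X}})$ at objects $\widetilde{X} = \spec(V)$ where $V$ is an absolutely integrally closed valuation ring in $\falh_{Y \to X^Y}$, for which $\falhb(\spec(V)) = V$ is torsion-free over $\bb{Z}$. Consequently the derived tensor product on the left-hand side agrees with the underived one, and it suffices to show that for each integer $q$ the induced morphism
\begin{equation*}
    (\rr^q f_{\falh *}\scr{F}') \otimes_\bb{Z} \falhb \longrightarrow \rr^q f_{\falh *}(\scr{F}' \otimes_\bb{Z} \falhb')
\end{equation*}
is an almost isomorphism.

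Next, I would combine the three parts of \ref{lem:comparison-mor-rel}: the source is the sheaf associated to the presheaf $\widetilde{X} \mapsto H^q_\et(\widetilde{Y'}, \widetilde{\bb{F}'}) \otimes_\bb{Z} \Gamma(\widetilde{X},\ca{O}_{\widetilde{X}})$, the target is (almost isomorphic to) the sheaf associated to $\widetilde{X} \mapsto H^q(\fal^\et_{\widetilde{Y'} \to \widetilde{X'}}, \widetilde{\psi'}_*\widetilde{\bb{F}'} \otimes_\bb{Z} \falb')$, and the canonical morphism between them is induced by the absolute Faltings' comparison morphism $\alpha_2 \circ \alpha_1^{-1}$ attached to the base change $\widetilde{f}: (\widetilde{Y'} \to \widetilde{X'}) \to (\widetilde{Y} \to \widetilde{X})$ and the coefficient $\widetilde{\bb{F}'}$ (whose existence is guaranteed by \ref{cor:achinger} applied to $\widetilde{X}$ after reduction to a valuation ring extension of $\overline{\bb{Z}_p}$).

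I would then invoke the almost analogue of \ref{prop:v-site-stalk} for this morphism of presheaves. Condition (\luoma{2}) (compatibility with cofiltered limits of $\overline{\eta}$-integrally closed affine $\overline{S}$-schemes with affine transition morphisms) will follow from the standard continuity of \'etale cohomology of coherent schemes together with \ref{prop:limit-fal-sites} applied to the induced limit $\fal^\et_{\widetilde{Y'} \to \widetilde{X'}} = \lim \fal^\et_{\widetilde{Y'_\lambda} \to \widetilde{X'_\lambda}}$; here I use that $X' \to X$ is of finite presentation so that base changes commute with cofiltered limits in the $\widetilde{X}$-direction. For condition (\luoma{1}), I take $\widetilde{X} = \spec(V)$ with $V$ an absolutely integrally closed valuation ring in $\falh_{Y \to X^Y}$; then $V$ is naturally an $\overline{\bb{Z}_p}$-algebra, and $\widetilde{X'} = X' \times_X \spec(V)$ is proper of finite presentation over $\spec(V)$, so Theorem~\ref{thm:fal-comp-abs} applies and yields that the absolute Faltings' comparison morphism for $\widetilde{f}$ and $\widetilde{\bb{F}'}$ is an almost isomorphism, which is exactly what is required.

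The main technical point will be the careful identification, via \ref{lem:comparison-mor-rel}.(3), of the morphism between the sheafified presheaves with the absolute Faltings' comparison morphism applied pointwise; once this identification is justified, the argument is essentially a transposition of the absolute theorem \ref{thm:fal-comp-abs} to the relative setting using the local-to-global machinery of $\falh_{Y \to X^Y}$.
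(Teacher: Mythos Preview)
Your proposal is correct and follows essentially the same route as the paper's proof: reduce to individual cohomological degrees using flatness of $\falhb$ over $\bb{Z}$, identify source and target via \ref{lem:comparison-mor-rel} with sheafifications of the presheaves $\widetilde{X}\mapsto H^q_\et(\widetilde{Y'},\widetilde{\bb{F}'})\otimes_{\bb{Z}}\Gamma(\widetilde{X},\ca{O}_{\widetilde{X}})$ and $\widetilde{X}\mapsto H^q(\fal^\et_{\widetilde{Y'}\to\widetilde{X'}},\widetilde{\psi'}_*\widetilde{\bb{F}'}\otimes_{\bb{Z}}\falb')$, and then apply \ref{prop:v-site-stalk} together with \ref{thm:fal-comp-abs} on absolutely integrally closed valuation rings. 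The only minor differences are cosmetic: the paper keeps the intermediate presheaf $\widetilde{X}\mapsto H^q(\fal^\et_{\widetilde{Y'}\to\widetilde{X'}},\widetilde{\psi'}_*\widetilde{\bb{F}'})\otimes_{\bb{Z}}\Gamma(\widetilde{X},\ca{O}_{\widetilde{X}})$ explicitly as a zigzag rather than inverting $\alpha_1$, and your parenthetical about \ref{cor:achinger} ``after reduction to a valuation ring'' is unnecessary since \ref{cor:achinger} already applies to any coherent $\overline{\bb{Z}_p}$-scheme $\widetilde{X'}$.
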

\begin{proof}
	Following \ref{lem:comparison-mor-rel}, consider the following presheaves on $\falh_{Y \to X^{Y}}$ for each integer $q$:
	\begin{align}
		\ca{H}_1^q&: \widetilde{X}\longmapsto  H^q_{\et}(\widetilde{Y'},\widetilde{\bb{F}'})\otimes_{\bb{Z}} \falb(\widetilde{Y}\to \widetilde{X}),\\
		\ca{H}_2^q&: \widetilde{X} \longmapsto  H^q(\fal_{\widetilde{Y'}\to \widetilde{X'}}^\et,\widetilde{\psi'}_*\widetilde{\bb{F}'})\otimes_{\bb{Z}} \falb(\widetilde{Y}\to \widetilde{X}),\\
		\ca{H}_3^q&:  \widetilde{X} \longmapsto  H^q(\fal_{\widetilde{Y'}\to \widetilde{X'}}^\et,\widetilde{\psi'}_*\widetilde{\bb{F}'}\otimes_{\bb{Z}}\falb'),
	\end{align}
	They satisfy the limit-preserving condition \ref{prop:v-site-stalk}.(\ref{prop:v-site-stalk-limit}) by \ref{prop:limit-fal-sites}, \cite[\Luoma{7}.5.6]{sga4-2} and \cite[\Luoma{6} 8.5.9, 8.7.3]{sga4-2}. Moreover, if $\widetilde{X}=\spec(A)$ where $A$ is an absolutely integrally closed valuation ring with $p$ nonzero in $A$, then the canonical morphisms
	\begin{align}
		\ca{H}_1^q(\spec(A))\leftarrow \ca{H}_2^q(\spec(A)) \to \ca{H}_3^q(\spec(A))
	\end{align}
	are an isomorphism and an almost isomorphism by \ref{thm:fal-comp-abs}. Thus, the canonical morphisms $\ca{H}_1^q\leftarrow \ca{H}_2^q \to \ca{H}_3^q$ induce an isomorphism and an almost isomorphism of their sheafifications by \ref{prop:v-site-stalk}. The conclusion follows from \ref{lem:comparison-mor-rel}.
\end{proof}

\begin{mylem}\label{lem:finite-fp}
	Let $Y\to X$ be an open immersion of coherent schemes, $Y' \to Y$ a finite morphism of finite presentation. Then, there exists a finite morphism $X'\to X$ of finite presentation whose base change by $Y\to X$ is $Y'\to Y$.
\end{mylem}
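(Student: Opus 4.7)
My approach is to reduce to the case where $X$ is Noetherian by absolute Noetherian approximation, and then extend the coherent algebra associated to $Y' \to Y$ using pushforward along the open immersion.

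\emph{Reduction to the Noetherian case.} By \cite[\href{https://stacks.math.columbia.edu/tag/01ZA}{01ZA}]{stacks-project}, I would write $X = \lim_\lambda X_\lambda$ as a cofiltered limit of Noetherian $\bb{Z}$-schemes with affine transition morphisms. Since $Y \hookrightarrow X$ is a quasi-compact open immersion (hence of finite presentation), \cite[8.8.2, 8.10.5]{ega4-3} gives, for some sufficiently large index $\mu$, a quasi-compact open immersion $Y_\mu \hookrightarrow X_\mu$ whose base change to $X$ recovers $Y \hookrightarrow X$. Applying the same references to the finite morphism of finite presentation $Y' \to Y$, we may further assume that $Y' \to Y$ descends to a finite morphism $Y'_\mu \to Y_\mu$. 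If we can produce a finite morphism $X'_\mu \to X_\mu$ of finite presentation whose base change by $Y_\mu \hookrightarrow X_\mu$ is $Y'_\mu \to Y_\mu$, then $X' := X'_\mu \times_{X_\mu} X$ is finite of finite presentation over $X$ with $X' \times_X Y \cong Y'$. This reduces us to the case where $X$ is Noetherian.

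\emph{Noetherian case.} Assume $X$ is Noetherian, let $\ca{B}$ denote the coherent $\ca{O}_Y$-algebra associated to the finite morphism $Y' \to Y$, and let $j : Y \hookrightarrow X$ denote the open immersion. The pushforward $j_*\ca{B}$ is a quasi-coherent $\ca{O}_X$-algebra, and by \cite[\href{https://stacks.math.columbia.edu/tag/01PG}{01PG}]{stacks-project} it is the filtered union of its coherent $\ca{O}_X$-submodules. I would start from a coherent submodule $\ca{F}_0 \subset j_*\ca{B}$ containing the unit $1$ and such that $\ca{F}_0|_Y$ contains a set of $\ca{O}_Y$-algebra generators of $\ca{B}$ (possible because $\ca{B}$ is of finite type as an $\ca{O}_Y$-algebra), and then iteratively enlarge $\ca{F}_n$ to $\ca{F}_{n+1} = \ca{F}_n + \mu(\ca{F}_n \otimes_{\ca{O}_X} \ca{F}_n) \subset j_*\ca{B}$, where $\mu$ denotes the multiplication on $j_*\ca{B}$. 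Each $\ca{F}_n$ is a coherent $\ca{O}_X$-submodule. Once the chain stabilizes, one obtains a coherent $\ca{O}_X$-subalgebra $\ca{A} \subset j_*\ca{B}$ with $\ca{A}|_Y = \ca{B}$, and then $X' = \mbf{Spec}_X(\ca{A})$ is a finite $X$-scheme of finite presentation whose base change to $Y$ is $Y'$.

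\emph{Main obstacle.} The crux is ensuring that the ascending chain $(\ca{F}_n)$ stabilizes globally on $X$, not merely on $Y$ where finiteness of $\ca{B}$ over $\ca{O}_Y$ makes the stabilization immediate. This relies on the integral dependence relations: locally on $Y$ each section $b$ of $\ca{F}_0|_Y$ satisfies a monic polynomial $b^n + c_{n-1} b^{n-1} + \cdots + c_0 = 0$ with $c_i \in \Gamma(U, \ca{O}_Y)$ over a suitable open $U$. Since $j_*\ca{O}_Y$ is itself the filtered union of its coherent $\ca{O}_X$-subsheaves, we may enlarge $\ca{F}_0$ from the outset so as to contain coherent extensions of the coefficients $c_i$ (viewed as sections of $j_*\ca{O}_Y \subset j_*\ca{B}$). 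With this enlargement, the iterative closure is controlled by the (globally defined) monic relations, and hence terminates within a coherent submodule of $j_*\ca{B}$. This delicate bookkeeping is the only nontrivial part of the argument.
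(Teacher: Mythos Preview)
Your reduction to the Noetherian case is correct and matches the paper. The gap lies in the Noetherian step: the ascending chain $(\ca{F}_n)$ inside $j_*\ca{B}$ need not stabilize, and your proposed fix does not rescue it. The coefficients $c_i$ of the monic relations live only in $j_*\ca{O}_Y$, and since $j_*\ca{O}_Y$ is generally not integral over $\ca{O}_X$, including coherent extensions of the $c_i$ in $\ca{F}_0$ and then closing under multiplication produces unbounded growth. Concretely, take $X=\spec(k[t])$, $Y=\spec(k[t,t^{-1}])$, and $Y'=\spec\bigl(k[t,t^{-1}][s]/(s^2-t^{-1})\bigr)$. The generator $s$ satisfies $s^2=t^{-1}$, so you are forced to put $t^{-1}$ into $\ca{F}_0$; but then $t^{-2}\in\ca{F}_1$, $t^{-4}\in\ca{F}_2$, and the chain never terminates, reflecting the fact that $j_*\ca{B}\cong k[s,s^{-1}]$ (via $t\mapsto s^{-2}$) is not a finite $k[t]$-module.

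The paper avoids this by replacing $j_*\ca{B}$ with the integral closure $X^{Y'}=\spec_X(\ca{A})$ of $X$ in $Y'$. One has $Y'=Y\times_X X^{Y'}$ (so $\ca{A}|_Y=\ca{B}$), and crucially $\ca{A}$ is \emph{integral} over $\ca{O}_X$, hence a filtered union of its finite quasi-coherent $\ca{O}_X$-subalgebras $\ca{A}_\alpha$. Since the transition maps are injective and $\ca{B}$ is coherent, restriction to $Y$ yields $\ca{B}=\ca{A}_{\alpha_0}|_Y$ for some $\alpha_0$, and $X'=\spec_X(\ca{A}_{\alpha_0})$ works. In the example above, $\ca{A}$ corresponds to $k[s^{-1}]$, finite of degree $2$ over $k[t]$ via $(s^{-1})^2=t$, with $k[s^{-1}]\otimes_{k[t]}k[t,t^{-1}]=k[s,s^{-1}]=\ca{B}$. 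The point you were missing is that integrality of $\ca{A}$ over $\ca{O}_X$---not mere quasi-coherence of $j_*\ca{B}$---is what furnishes the supply of finite subalgebras.
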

\begin{proof}
	Firstly, assume that $X$ is Noetherian. We have $Y'=Y\times_X X^Y$ by \ref{lem:relative-normal}.(\ref{lem:relative-normal-norm}). We write $X^Y=\spec_X(\ca{A})$ where $\ca{A}$ is an integral quasi-coherent $\ca{O}_X$-algebra on $X$, and we write $\ca{A}$ as a filtered colimit of its finite quasi-coherent $\ca{O}_X$-subalgebras $\ca{A}=\colim \ca{A}_\alpha$ (\cite[\href{https://stacks.math.columbia.edu/tag/0817}{0817}]{stacks-project}). Let $\ca{B}_\alpha$ be the restriction of $\ca{A}_\alpha$ to $Y$. Then, $\ca{B}=\colim \ca{B}_\alpha$ is a filtered colimit of finite quasi-coherent $\ca{O}_Y$-algebras with injective transition morphisms. Since $Y'=\spec_Y(\ca{B})$ is finite over $Y$, there exists an index $\alpha_0$ such that $Y'=\spec_Y(\ca{B}_{\alpha_0})$. Therefore, $X'=\spec_X(\ca{A}_{\alpha_0})$ meets our requirements.
	
	In general, we write $X$ as a cofiltered limit of coherent schemes of finite type over $\bb{Z}$ with affine transition morphisms $X=\lim_{\lambda\in \Lambda} X_\lambda$ (\cite[\href{https://stacks.math.columbia.edu/tag/01ZA}{01ZA}]{stacks-project}). Since $Y\to X$ is an open immersion of finite presentation, using \cite[8.8.2, 8.10.5]{ega4-3} there exists an index $\lambda_0\in \Lambda$, an open immersion $Y_{\lambda_0}\to X_{\lambda_0}$ and a finite morphism $Y'_{\lambda_0}\to Y_{\lambda_0}$ such that the base change of the morphisms $Y'_{\lambda_0}\to Y_{\lambda_0} \to X_{\lambda_0}$ by $X\to X_{\lambda_0}$ are the morphisms $Y'\to Y \to X$. By the first paragraph, there exists a finite morphism $X'_{\lambda_0}\to X_{\lambda_0}$ of finite presentation such that $Y'_{\lambda_0}=Y_{\lambda_0}\times_{X_{\lambda_0}} X'_{\lambda_0}$. We see that the base change $X'\to X$ of $X'_{\lambda_0}\to X_{\lambda_0}$ by $X\to X_{\lambda_0}$ meets our requirements.
\end{proof}

\begin{mylem}\label{lem:hard-comp-rel-finite}
	With the notation in {\rm\ref{para:mor-rel-notation}}, let $g:Y''\to Y'$ be a finite morphism of finite presentation, $\bb{F}''$ a finite locally constant abelian sheaf on $Y''_\et$ and we set $\scr{F}'=\Psi'_*a'^{-1}(g_{\et*}\bb{F}'')$. Assume that $X' \to X$ is proper of finite presentation. Then, the canonical morphism
	\begin{align}
		(\rr f_{\falh *}\scr{F}')\otimes_{\bb{Z}}^\dl \falhb \longrightarrow \rr f_{\falh *}(\scr{F}'\otimes_{\bb{Z}} \falhb')
	\end{align}
	is an almost isomorphism.
\end{mylem}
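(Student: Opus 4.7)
The plan is to reduce to Lemma \ref{lem:hard-comp-rel-loc} (the finite locally constant case) by spreading out the finite morphism $g$ to a finite morphism of the integral models, and then applying the already-proved lemma twice.

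First I would apply Lemma \ref{lem:finite-fp} to the open immersion $Y' \to X'$ and the finite finitely presented morphism $g: Y''\to Y'$ to obtain a finite morphism $h: X''\to X'$ of finite presentation such that $Y'' = Y'\times_{X'}X''$. Since $Y'\to X'^{Y'}$ is Cartesian over $\overline{\eta}\to\overline{S}$, so is $Y''\to X''^{Y''}$, and the composition $f'' := f'\circ h : (Y''\to X'') \to (Y\to X)$ is a Cartesian morphism with $X''\to X$ proper of finite presentation. Writing $\scr{F}'' = \Psi''_* a''^{-1}\bb{F}''$, we are in the scope of Lemma \ref{lem:hard-comp-rel-loc} for both $h$ (a Cartesian morphism of opens over $\overline{\eta}\to\overline{S}$ with $h$ proper of finite presentation) and $f''$, each with the finite locally constant coefficient $\bb{F}''$.

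The crucial identification is $\rr h_{\falh *}\scr{F}'' \simeq \scr{F}'$. To see this, I would combine three ingredients: (i) the base-change isomorphism $a'^{-1}\rr g_{\et *}\bb{F}''\iso \rr g_{\mrm{v}*}a''^{-1}\bb{F}''$ of Corollary \ref{cor:v-etale-bc}, together with the fact that $g$ is finite so $\rr g_{\et *}\bb{F}'' = g_{\et *}\bb{F}''$ sits in degree $0$; (ii) the vanishing of the higher derived functors of $\Psi_*$ on sheaves of the form $a^{-1}\ca{F}$ for $\ca{F}$ torsion, furnished by Proposition \ref{prop:Psi-const-equal}.(\ref{prop:Psi-const-equal-torsion}); and (iii) the commutativity $h_\falh\circ\Psi'' = \Psi'\circ g_\mrm{v}$ of morphisms of sites, which follows from the functoriality of the construction \eqref{diam:1.7.1} applied to $h$. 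Putting these together gives $\rr h_{\falh *}\scr{F}'' = \rr\Psi'_* \rr g_{\mrm{v}*}a''^{-1}\bb{F}'' = \rr\Psi'_* a'^{-1}g_{\et *}\bb{F}'' = \Psi'_* a'^{-1}g_{\et *}\bb{F}'' = \scr{F}'$.

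Once this is in hand, Lemma \ref{lem:hard-comp-rel-loc} applied to $h$ with $\bb{F}''$ yields an almost isomorphism
\begin{align*}
\scr{F}' \otimes_{\bb{Z}}\falhb' \;\longrightarrow\; \rr h_{\falh *}(\scr{F}''\otimes_{\bb{Z}}\falhb''),
\end{align*}
using the flatness of $\falhb'$ over $\bb{Z}$ to identify derived and underived tensor products. Pushing forward along $\rr f'_{\falh *}$ and invoking the analogous application of \ref{lem:hard-comp-rel-loc} to $f''=f'\circ h$ with $\bb{F}''$, for which $\rr f''_{\falh *}\scr{F}'' = \rr f'_{\falh *}\rr h_{\falh *}\scr{F}'' = \rr f'_{\falh *}\scr{F}'$, we obtain the desired almost isomorphism $(\rr f'_{\falh *}\scr{F}')\otimes_{\bb{Z}}^{\dl}\falhb \to \rr f'_{\falh *}(\scr{F}'\otimes_{\bb{Z}}\falhb')$. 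The main obstacle is the careful verification of the commutativity of the morphisms of sites in step (iii) and the compatibility of the various derived pushforwards under composition; once these bookkeeping points are settled, the argument is a clean two-step application of \ref{lem:hard-comp-rel-loc}, with no further input needed beyond Corollary \ref{cor:v-etale-bc} and Proposition \ref{prop:Psi-const-equal}.
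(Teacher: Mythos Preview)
Your overall strategy is exactly the paper's: spread out $g$ to a finite finitely presented morphism of integral models, identify $\scr{F}'$ with the derived pushforward of $\scr{F}''=\Psi''_*a''^{-1}\bb{F}''$ along that morphism, and then apply Lemma~\ref{lem:hard-comp-rel-loc} twice (to the finite map and to the composite). The identification step via \ref{cor:v-etale-bc} and \ref{prop:Psi-const-equal} is fine and matches the paper's argument (which phrases it through \ref{lem:comparison-mor-rel}.(\ref{lem:comparison-mor-rel-1}) and \ref{prop:v-site-stalk}).

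There is, however, one genuine slip: you invoke Lemma~\ref{lem:finite-fp} for ``the open immersion $Y'\to X'$'', but nothing in \ref{para:mor-rel-notation} says $Y\to X$ (hence $Y'\to X'$) is an open immersion. What is given is that $Y\to X^Y$ is an open immersion; the map $X^Y\to X$ is only integral. So Lemma~\ref{lem:finite-fp} does not apply as you wrote it. The fix is immediate and is what the paper does: apply \ref{lem:finite-fp} to the open immersion $Y'\to X^Y\times_X X'$ (the base change of $Y\to X^Y$) to get $X''\to X^Y\times_X X'$ finite of finite presentation. Then $X''\to X^Y$ is proper of finite presentation (as $X^Y\times_X X'\to X^Y$ is the base change of $X'\to X$), so both applications of \ref{lem:hard-comp-rel-loc} go through. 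Since $(X^Y\times_X X')^{Y'}=X'^{Y'}$ and $(X^Y)^Y=X^Y$, the associated $\falh$-sites and the morphism $f_\falh$ are unchanged by this replacement, and your identification $h_\falh=f_\falh\circ g_\falh$ (in the paper's notation) holds. With this correction, your argument is complete and coincides with the paper's.
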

\begin{proof}
	There exists a Cartesian morphism $g:(Y''\to X'')\to (Y'\to X^{Y}\times_X X')$ of open immersions of coherent schemes such that $X'' \to X^{Y}\times_X X'$ is finite and of finite presentation by \ref{lem:finite-fp}. Consider the diagram \eqref{diam:sites} associated to $g$:
	\begin{align}
			\xymatrix{
				Y''_\et \ar[d]_-{g_\et} & (\schqcqs_{/Y''})_{\mrm{v}} \ar[l]_-{a''} \ar[r]^-{\Psi''} \ar[d]_-{g_{\mrm{v}}}& \falh_{Y'' \to X''^{Y''}}\ar[d]_-{g_\falh}\\
				Y'_\et & (\schqcqs_{/Y'})_{\mrm{v}} \ar[l]_-{a'} \ar[r]^-{\Psi'} & \falh_{Y' \to X'^{Y'}}
			}
	\end{align}
	We set $\scr{G}''=\Psi''_*a''^{-1}\bb{F}''$. The base change morphism $a'^{-1}g_{\et *}\to g_{\mrm{v}*}a''^{-1}$ induces a canonical isomorphism $\scr{F}'\iso g_{\falh*}\scr{G}''$ by \ref{cor:v-etale-bc}. Moreover, the canonical morphism $g_{\falh *}\scr{G}'' \to \rr g_{\falh *} \scr{G}''$ is an isomorphism by \ref{lem:comparison-mor-rel}.(\ref{lem:comparison-mor-rel-1}) and \ref{prop:v-site-stalk}, since $g:Y''\to Y'$ is finite (\cite[\Luoma{8}.5.6]{sga4-2}). By applying \ref{lem:hard-comp-rel-loc} to $g$ and $\bb{F}''$, the canonical morphism
	\begin{align}\label{eq:11.9.2}
		(\rr g_{\falh *}\scr{G}'')\otimes_{\bb{Z}}^\dl \falhb' \longrightarrow \rr g_{\falh *}(\scr{G}''\otimes_{\bb{Z}} \falhb'')
	\end{align}
	is an almost isomorphism. Let $h$ be the composition of $(Y''\to X'')\to (Y'\to X^{Y}\times_X X')\to (Y\to X^Y)$. Note that $X''\to X^Y$ is also proper of finite presentation. By applying \ref{lem:hard-comp-rel-loc} to $h$ and $\bb{F}''$, the canonical morphism
	\begin{align}\label{eq:11.9.3}
		(\rr h_{\falh *}\scr{G}'')\otimes_{\bb{Z}}^\dl \falhb \longrightarrow \rr h_{\falh *}(\scr{G}''\otimes_{\bb{Z}} \falhb'')
	\end{align}
	is an almost isomorphism. It is clear that $h_{\falh}=f_{\falh}\circ g_{\falh}$. The conclusion follows from the canonical isomorphism $\scr{F}'\to \rr g_{\falh *}\scr{G}''$ and the canonical almost isomorphisms \eqref{eq:11.9.2} and \eqref{eq:11.9.3}.
\end{proof}

\begin{mylem}\label{lem:hard-comp-rel-constr}
	With the notation in {\rm\ref{para:mor-rel-notation}}, let $\ca{F}'$ be a constructible abelian sheaf on $Y'_\et$ and we set $\scr{F}'=\Psi'_*a'^{-1}\ca{F}'$. Assume that $X' \to X$ is proper of finite presentation. Then, the canonical morphism
	\begin{align}
		(\rr f_{\falh *}\scr{F}')\otimes_{\bb{Z}}^\dl \falhb \longrightarrow \rr f_{\falh *}(\scr{F}'\otimes_{\bb{Z}} \falhb')
	\end{align}
	is an almost isomorphism.
\end{mylem}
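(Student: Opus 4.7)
The plan is to reduce the statement to the finite-morphism case \ref{lem:hard-comp-rel-finite} via a dévissage on $\ca{F}'$. Since $\ca{F}'$ is constructible on the quasi-compact scheme $Y'$, it is killed by some integer $n \geq 1$. The key preliminary observation is that the functor $\ca{G} \mapsto \scr{G} := \Psi'_* a'^{-1}\ca{G}$ is \emph{exact} on $n$-torsion abelian sheaves on $Y'_\et$: indeed, $a'^{-1}$ is exact and, by \ref{prop:Psi-const-equal}.(\ref{prop:Psi-const-equal-torsion}), one has $\Psi'_* a'^{-1}\ca{G} = \rr\Psi'_*a'^{-1}\ca{G}$ for such $\ca{G}$. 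Combined with the $\bb{Z}$-flatness of $\falhb$ and $\falhb'$ and the fact that $\rr f_{\falh *}$ is triangulated, this implies that both functors $\ca{F}' \mapsto (\rr f_{\falh *}\scr{F}') \otimes_{\bb{Z}}^\dl \falhb$ and $\ca{F}' \mapsto \rr f_{\falh *}(\scr{F}' \otimes_{\bb{Z}} \falhb')$ carry short exact sequences of constructible $\bb{Z}/n\bb{Z}$-sheaves on $Y'_\et$ to distinguished triangles in the derived category of almost $\overline{\bb{Z}_p}$-modules on $\falh_{Y \to X^Y}$, compatibly with the canonical natural transformation between them.

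Let $\ca{C}$ be the class of constructible $\bb{Z}/n\bb{Z}$-sheaves $\ca{F}'$ on $Y'_\et$ for which the stated morphism is an almost isomorphism. By the triangulated five lemma, $\ca{C}$ is stable under extensions in the abelian category of constructible $\bb{Z}/n\bb{Z}$-sheaves. On the other hand, \ref{lem:hard-comp-rel-finite} shows $\ca{C}$ contains every sheaf of the form $g_{\et *}\bb{F}''$ with $g : Y'' \to Y'$ finite of finite presentation and $\bb{F}''$ finite locally constant (in particular every finite locally constant sheaf on $Y'_\et$, by taking $g = \id_{Y'}$). It therefore suffices to prove the structural claim that every constructible $\bb{Z}/n\bb{Z}$-sheaf on $Y'_\et$ admits a finite filtration whose graded pieces are of this form. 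By a Noetherian approximation argument (cf.\ \cite[\href{https://stacks.math.columbia.edu/tag/09YU}{09YU}]{stacks-project}) we reduce to the case where $Y'$ is Noetherian, and then argue by Noetherian induction on the support of $\ca{F}'$: choose a dense open $j : U \hookrightarrow Y'$ of finite presentation on which $j^*\ca{F}'$ is finite locally constant, with complementary closed immersion $i : Z \hookrightarrow Y'$, and exploit the exact sequence $0 \to j_!j^*\ca{F}' \to \ca{F}' \to i_*i^*\ca{F}' \to 0$; the term $i_*i^*\ca{F}'$ is directly handled by the inductive hypothesis applied to $i^*\ca{F}'$ on $Z$, followed by pushforward along the finite immersion $i$.

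The main obstacle is the extension-by-zero term $j_!\bb{L}$ (with $\bb{L} = j^*\ca{F}'$), which must itself be realized as an iterated extension of sheaves of the form $g_{\et *}\bb{F}''$. This is effected through a Zariski's Main Theorem-style compactification $j : U \hookrightarrow \overline{U} \xrightarrow{h} Y'$ with $h$ finite of finite presentation and $\overline{j} : U \hookrightarrow \overline{U}$ a dense open immersion; since $h$ is proper, one has $j_!\bb{L} = h_*\overline{j}_!\bb{L}$, and on $\overline{U}$ the standard exact sequence $0 \to \overline{j}_!\bb{L} \to \overline{j}_*\bb{L} \to \overline{i}_*\overline{i}^*\overline{j}_*\bb{L} \to 0$ (where $\overline{i} : \overline{U}\setminus U \hookrightarrow \overline{U}$ is the complement) reduces the problem to dévissage on the constructible sheaves $\overline{j}_*\bb{L}$ and $\overline{i}_*\overline{i}^*\overline{j}_*\bb{L}$, which are handled inductively by the same procedure on $\overline{U}$. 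Applying $h_*$ and invoking the stability properties of $\ca{C}$ then yields the required filtration of $j_!\bb{L}$ by sheaves of the form $g_{\et *}\bb{F}''$, completing the proof.
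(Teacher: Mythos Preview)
Your d\'evissage does not terminate. After reducing to $j_!\bb{L}$ with $\bb{L}$ finite locally constant on the dense open $U$, you pass to the exact sequence $0 \to \overline{j}_!\bb{L} \to \overline{j}_*\bb{L} \to \overline{i}_*\overline{i}^*\overline{j}_*\bb{L} \to 0$ on a finite compactification $\overline{U}$ and assert that $\overline{j}_*\bb{L}$ is ``handled inductively by the same procedure''. But $\overline{j}_*\bb{L}$ has full support on $\overline{U}$ (hence $h_*\overline{j}_*\bb{L}$ has support equal to all of $Y'$), so the Noetherian induction hypothesis on the support does not apply to it. Running your procedure again on $\overline{j}_*\bb{L}$ immediately reproduces $\overline{j}_!\bb{L}$ as the first term of the new short exact sequence, and you are back where you started. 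More generally, there is no reason to expect that an arbitrary constructible sheaf admits a \emph{finite} filtration whose graded pieces are of the form $g_{\et*}\bb{F}''$; the result \cite[\href{https://stacks.math.columbia.edu/tag/09Z7}{09Z7}]{stacks-project} only gives an embedding $\ca{F}' \hookrightarrow g_{\et*}\bb{F}''$, and the cokernel is another constructible sheaf of the same complexity.

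The paper circumvents this by inducting on the cohomological degree $q$ rather than on $\ca{F}'$. One embeds $\ca{F}' \hookrightarrow g_{\et*}\bb{F}''$ with constructible cokernel $\ca{G}'$, and the long exact sequence together with the $5$-lemma in the almost category gives: assuming the statement in degrees $<q$ for \emph{all} constructible sheaves, the degree-$q$ map for $\ca{F}'$ is almost injective; since $\ca{G}'$ is again an arbitrary constructible sheaf, the degree-$q$ map for $\ca{G}'$ is also almost injective, whence the degree-$q$ map for $\ca{F}'$ is an almost isomorphism. This dimension-shifting trick is precisely what replaces the missing finite d\'evissage you were aiming for.
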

\begin{proof}
	We prove by induction on an integer $q$ that the canonical morphism $(\rr^q f_{\falh *}\scr{F}')\otimes_{\bb{Z}} \falhb \to \rr^q f_{\falh *}(\scr{F}'\otimes_{\bb{Z}} \falhb')$ is an almost isomorphism. It holds trivially for each $q\leq -1$. Notice that there exists a finite morphism $g:Y''\to Y'$ of finite presentation, a finite locally constant abelian sheaf $\bb{F}''$ on $Y''_\et$ and an injective morphism $\ca{F}'\to g_{\et*}\bb{F}''$ by \cite[\href{https://stacks.math.columbia.edu/tag/09Z7}{09Z7}]{stacks-project} (cf. \cite[\Luoma{9}.2.14]{sga4-3}). Let $\ca{G}'$ be the quotient of $\ca{F}'\to g_{\et*}\bb{F}''$, which is also a constructible abelian sheaf on $Y'_\et$ since $g_{\et*}\bb{F}''$ is so (\cite[\href{https://stacks.math.columbia.edu/tag/095R}{095R}, \href{https://stacks.math.columbia.edu/tag/03RZ}{03RZ}]{stacks-project}). The exact sequence $0\to \ca{F}' \to g_{\et*}\bb{F}'' \to \ca{G}'\to 0$ induces an exact sequence by \ref{prop:Psi-const-equal}.(\ref{prop:Psi-const-equal-torsion}), 
	\begin{align}
		\xymatrix{
			0\ar[r]& \Psi'_*a'^{-1}\ca{F}'\ar[r]&\Psi'_*a'^{-1}(g_{\et*}\bb{F}'')\ar[r]&\Psi'_*a'^{-1}\ca{G}'\ar[r]&0.
		}
	\end{align}
	We set $\scr{H}'=\Psi'_*a'^{-1}(g_{\et*}\bb{F}'')$ and $\scr{G}'=\Psi'_*a'^{-1}\ca{G}'$. Then, we obtain a morphism of long exact sequences
	\begin{align}
		\small
		\xymatrix{
			(\rr^{q-1} f_{\falh *}\scr{H}')\otimes \falhb \ar[d]^-{\gamma_1} \ar[r] & (\rr^{q-1} f_{\falh *}\scr{G}')\otimes \falhb \ar[d]^-{\gamma_2} \ar[r] &(\rr^{q} f_{\falh *}\scr{F}')\otimes \falhb \ar[d]^-{\gamma_3} \ar[r] &(\rr^{q} f_{\falh *}\scr{H}')\otimes \falhb \ar[d]^-{\gamma_4} \ar[r] &(\rr^{q} f_{\falh *}\scr{G}')\otimes \falhb \ar[d]^-{\gamma_5}  \\ 
			\rr^{q-1} f_{\falh *}(\scr{H}'\otimes \falhb')\ar[r] & \rr^{q-1} f_{\falh *}(\scr{G}'\otimes \falhb')\ar[r] & \rr^{q} f_{\falh *}(\scr{F}'\otimes \falhb')\ar[r] & \rr^{q} f_{\falh *}(\scr{H}'\otimes \falhb')\ar[r] & \rr^{q} f_{\falh *}(\scr{G}'\otimes \falhb')
		}
	\end{align}
	Notice that $\gamma_1$ and $\gamma_2$ are almost isomorphisms by induction, and that $\gamma_4$ is an almost isomorphism by \ref{lem:hard-comp-rel-finite}. Thus, applying the 5-lemma (\cite[\href{https://stacks.math.columbia.edu/tag/05QA}{05QA}]{stacks-project}) in the abelian category of almost $\overline{\bb{Z}_p}$-modules over $\falh_{Y \to X^{Y}}$, we see that $\gamma_3$ is almost injective. Since $\ca{F}'$ is an arbitrary constructible abelian sheaf, the morphism $\gamma_5$ is also almost injective. Thus, $\gamma_3$ is an almost isomorphism.
\end{proof}

\begin{mythm}\label{thm:hard-comp-rel}
	With the notation in {\rm\ref{para:mor-rel-notation}}, let $\ca{F}'$ be a torsion abelian sheaf on $Y'_\et$ and we set $\scr{F}'=\Psi'_*a'^{-1}\ca{F}'$. Assume that $X' \to X$ is proper of finite presentation. Then, the canonical morphism
	\begin{align}
		(\rr f_{\falh *}\scr{F}')\otimes_{\bb{Z}}^\dl \falhb \longrightarrow \rr f_{\falh *}(\scr{F}'\otimes_{\bb{Z}} \falhb')
	\end{align}
	is an almost isomorphism in the derived category $\dd(\overline{\bb{Z}_p}\module_{\falh_{Y \to X^Y}})$ {\rm(\ref{para:almost-derived})}.
\end{mythm}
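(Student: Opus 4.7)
The plan is to reduce to the case of constructible abelian sheaves, which is Lemma \ref{lem:hard-comp-rel-constr}, via a filtered colimit argument. Since $Y'$ is a coherent scheme, one can write $\ca{F}' = \colim_{i \in I} \ca{F}'_i$ as a filtered colimit of its constructible abelian subsheaves (\cite[\Luoma{9}.2.7.2]{sga4-3}). The idea is to show that both $(\rr f_{\falh *}\scr{F}')\otimes^{\dl}_{\bb{Z}}\falhb$ and $\rr f_{\falh *}(\scr{F}' \otimes_{\bb{Z}}\falhb')$ commute with such filtered colimits in $\ca{F}'$, apply \ref{lem:hard-comp-rel-constr} to each $\ca{F}'_i$, and then pass to the filtered colimit; filtered colimits of almost isomorphisms in the derived category remain almost isomorphisms, because the mapping cone and cohomology sheaves commute with filtered colimits and the subcategory of almost zero sheaves is stable under filtered colimits.

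The essential input is that the sites $\falh_{Y \to X^Y}$ and $\falh_{Y' \to X'^{Y'}}$ are coherent in the sense of \cite[\href{https://stacks.math.columbia.edu/tag/0739}{0739}]{stacks-project}: by \ref{defn:falh-v-top} their covering families are indexed by finite sets, and by construction all their objects are quasi-compact. Consequently, filtered colimits of abelian sheaves on these sites commute with sections at any object, and more generally $H^q(\widetilde{X}, -)$ commutes with filtered colimits of abelian sheaves for every object $\widetilde{X}$ and every integer $q \geq 0$. It follows that $\rr^q f_{\falh *}$ commutes with filtered colimits of abelian sheaves, and combined with the fact that $\otimes_{\bb{Z}}$ and $\otimes_{\bb{Z}}^{\dl}$ commute with filtered colimits in each variable, the problem is reduced to showing that $\Psi'_* a'^{-1}$ commutes with filtered colimits of torsion abelian sheaves on $Y'_\et$.

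For this last point, $a'^{-1}$ is a left adjoint and therefore preserves all colimits. For $\Psi'_*$, one uses that for any $\widetilde{X'} \in \falh_{Y' \to X'^{Y'}}$, with $j \colon \widetilde{X'}^\circ \to Y'$ the structural morphism of the open subscheme $\widetilde{X'}^\circ = Y' \times_{X'^{Y'}} \widetilde{X'}$,
\begin{align*}
  \Psi'_* a'^{-1}\ca{F}'(\widetilde{X'}) = H^0_{\mrm{v}}(\widetilde{X'}^\circ, a'^{-1}\ca{F}') = H^0_{\et}(\widetilde{X'}^\circ, j^{-1}\ca{F}'),
\end{align*}
the second equality being \ref{cor:v-etale-coh} applied to the coherent scheme $\widetilde{X'}^\circ$. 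Since $\widetilde{X'}^\circ$ is coherent, $H^0_{\et}(\widetilde{X'}^\circ, -)$ commutes with filtered colimits of torsion abelian sheaves, and the section-wise description of filtered colimits on the coherent site $\falh_{Y' \to X'^{Y'}}$ promotes this to the sheaf-level identity $\Psi'_* a'^{-1}(\colim_i \ca{F}'_i) = \colim_i \Psi'_* a'^{-1}\ca{F}'_i$. The main technical obstacle is setting up the coherent-site/filtered-colimit formalism for $\falh$ and tracking how filtered colimits interact with the v-topology; once this is in place, the rest of the argument is formal.
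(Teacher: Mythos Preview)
Your proposal is correct and follows essentially the same approach as the paper: write $\ca{F}'$ as a filtered colimit of constructible abelian sheaves, use the quasi-compactness of all objects in the relevant sites to commute $\Psi'_*a'^{-1}$ and $\rr^q f_{\falh *}$ with filtered colimits, and then invoke \ref{lem:hard-comp-rel-constr}. The paper's version is terser (it cites \cite[\Luoma{6}.5.1]{sga4-2} directly for the commutation of $\Psi'_*a'^{-1}$ with filtered colimits rather than going through \ref{cor:v-etale-coh} and sections), but the substance is identical.
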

\begin{proof}
	We write $\ca{F}'$ as a filtered colimit of constructible abelian sheaves $\ca{F}'=\colim_{\lambda\in \Lambda}\ca{F}'_\lambda$ (\cite[\href{https://stacks.math.columbia.edu/tag/03SA}{03SA}]{stacks-project}, cf. \cite[\Luoma{9}.2.7.2]{sga4-3}). We set $\scr{F}'_\lambda=\Psi'_*a'^{-1}\ca{F}_\lambda$. We have $\scr{F}'=\colim_{\lambda\in \Lambda}\scr{F}'_\lambda$ by \cite[\Luoma{6}.5.1]{sga4-2} whose conditions are satisfied since each object in each concerned site is quasi-compact. Moreover, for each integer $q$, we have
	\begin{align}
		(\rr^q f_{\falh *}\scr{F}')\otimes_{\bb{Z}} \falhb=&\colim_{\lambda\in \Lambda} (\rr^q f_{\falh *}\scr{F}'_\lambda)\otimes_{\bb{Z}} \falhb,\\
		\rr^q f_{\falh *}(\scr{F}'\otimes_{\bb{Z}} \falhb')=& \colim_{\lambda\in \Lambda} \rr^q f_{\falh *}(\scr{F}'_\lambda\otimes_{\bb{Z}} \falhb').
	\end{align}
	The conclusion follows from \ref{lem:hard-comp-rel-constr}.
\end{proof}

\begin{mylem}\label{lem:falh-presheaf}
	With the notation in {\rm\ref{para:mor-rel-notation}} and  {\rm\ref{para:comparison-mor-rel-bc}}, let $\ca{F}'$ be a torsion abelian sheaf on $Y'_\et$, $\ca{H}=\rr f_{\et *}\ca{F}'$, and we set $\scr{F}'=\Psi'_*a'^{-1}\ca{F}'$, $\scr{H}=\rr\Psi_*a^{-1}\ca{H}$. Let $\widetilde{X}$ be an object of $\falh_{Y\to X^Y}$, $\widetilde{Y}=\overline{\eta}\times_{\overline{S}} \widetilde{X}$, $\widetilde{\ca{F}'}=g'^{-1}_{\et}\ca{F}'$.
	\begin{enumerate}
		\renewcommand{\labelenumi}{{\rm(\theenumi)}}
		\item The sheaf $\rr^q f_{\falh *}\scr{F}'$ is canonically isomorphic to the presheaf $\widetilde{X}\mapsto H^q_\et(\widetilde{Y'}, \widetilde{\ca{F}'})$ for each integer $q$.
		\item If $Y'\to Y$ is proper, then there exists a canonical isomorphism $\scr{H}\iso \rr f_{\falh *}\scr{F}' $.\label{lem:falh-presheaf-2}
	\end{enumerate}
\end{mylem}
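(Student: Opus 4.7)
The plan for part (1) is to compute $\rr^q f_{\falh *}\scr{F}'$ by localising at each object $\widetilde{X}$ of $\falh_{Y\to X^Y}$. The localisation of $\falh_{Y'\to X'^{Y'}}$ at $f_{\falh}^+(\widetilde{X})$ identifies with $\falh_{\widetilde{Y'}\to\widetilde{X'}^{\widetilde{Y'}}}$, and under this equivalence the restriction of $\scr{F}'=\Psi'_*a'^{-1}\ca{F}'$ becomes $\widetilde{\Psi'}_*\widetilde{a'}^{-1}\widetilde{\ca{F}'}$, where the tilded morphisms come from the instance of diagram \eqref{diam:sites} attached to the base-changed morphism $\widetilde{f}$ of \ref{para:comparison-mor-rel-bc}. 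The presheaf version of $\rr^q f_{\falh *}\scr{F}'$ evaluated at $\widetilde{X}$ is therefore $H^q(\falh_{\widetilde{Y'}\to\widetilde{X'}^{\widetilde{Y'}}},\widetilde{\Psi'}_*\widetilde{a'}^{-1}\widetilde{\ca{F}'})$. The Leray spectral sequence for $\widetilde{\Psi'}\circ\widetilde{a'}$, combined with the vanishing of $\rr^{>0}\widetilde{\Psi'}_*(\widetilde{a'}^{-1}\widetilde{\ca{F}'})$ furnished by \ref{prop:Psi-const-equal}.(\ref{prop:Psi-const-equal-torsion}) and the comparison $H^q_{\mrm{v}}(\widetilde{Y'},\widetilde{a'}^{-1}\widetilde{\ca{F}'})\iso H^q_\et(\widetilde{Y'},\widetilde{\ca{F}'})$ of \ref{cor:v-etale-coh}, rewrites this group as $H^q_\et(\widetilde{Y'},\widetilde{\ca{F}'})$; naturality in $\widetilde{X}$ then yields the claimed canonical isomorphism of presheaves.

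For part (2), the plan is a short chain of canonical isomorphisms starting from $\scr{H}=\rr\Psi_*a^{-1}\rr f_{\et *}\ca{F}'$. Since $Y'\to Y$ is proper, the v-\'etale proper base change \ref{cor:v-etale-bc} provides a canonical isomorphism $a^{-1}\rr f_{\et *}\ca{F}'\iso \rr f_{\mrm{v}*}a'^{-1}\ca{F}'$. The commutativity of diagram \eqref{diam:sites}, namely $\Psi\circ f_{\mrm{v}}=f_{\falh}\circ\Psi'$, then rewrites $\rr\Psi_*\rr f_{\mrm{v}*}a'^{-1}\ca{F}'$ as $\rr f_{\falh *}\rr\Psi'_*a'^{-1}\ca{F}'$. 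Finally \ref{prop:Psi-const-equal}.(\ref{prop:Psi-const-equal-torsion}), applied to the torsion sheaf $a'^{-1}\ca{F}'$, collapses $\rr\Psi'_*a'^{-1}\ca{F}'$ to $\Psi'_*a'^{-1}\ca{F}'=\scr{F}'$, yielding $\scr{H}\iso \rr f_{\falh *}\scr{F}'$.

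I expect no serious obstacle; the argument is a purely formal assembly of previously established results. The properness of $Y'\to Y$ is used exactly once, to invoke \ref{cor:v-etale-bc}, and the torsion assumption on $\ca{F}'$ enters only through \ref{prop:Psi-const-equal}.(\ref{prop:Psi-const-equal-torsion}). The one compatibility to unpack in part (1)---identifying the restriction of $\scr{F}'$ along localisation with $\widetilde{\Psi'}_*\widetilde{a'}^{-1}\widetilde{\ca{F}'}$---is the same base-change compatibility implicitly used in \ref{lem:comparison-mor-rel}.(\ref{lem:comparison-mor-rel-1}) and reflects only that $\Psi'^+$ and $a'^+$ are defined by elementary functors that commute with the base change along $(\widetilde{Y}\to\widetilde{X})\to (Y\to X)$.
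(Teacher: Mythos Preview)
Your proof is correct and follows essentially the same approach as the paper. The paper organizes part (1) slightly more compactly by first observing globally that $\scr{F}'\iso\rr\Psi'_*a'^{-1}\ca{F}'$ (your collapse of the Leray spectral sequence) and hence $\rr f_{\falh *}\scr{F}'=\rr(\Psi\circ f_{\mrm{v}})_*a'^{-1}\ca{F}'$, then identifies the presheaf values via \ref{cor:v-etale-coh}; your localise-first-then-Leray argument unwinds the same identifications in a different order, with identical inputs (\ref{prop:Psi-const-equal}.(\ref{prop:Psi-const-equal-torsion}) and \ref{cor:v-etale-coh}), and part (2) is verbatim the paper's argument made explicit.
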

\begin{proof}
	Note that the canonical morphism $\scr{F}'\to \rr\Psi'_*a'^{-1}\ca{F}'$ is an isomorphism by \ref{prop:Psi-const-equal}.(\ref{prop:Psi-const-equal-torsion}). Thus, $\rr f_{\falh *}\scr{F}'=\rr(\Psi \circ f_{\mrm{v}})_*a'^{-1}\ca{F}'$, whose $q$-th cohomology is the sheaf associated to the presheaf $\widetilde{X}\mapsto H^q_{\mrm{v}}(\widetilde{Y'}, \widetilde{a'}^{-1}\widetilde{\ca{F}'})=H^q_\et(\widetilde{Y'}, \widetilde{\ca{F}'})$ by \ref{cor:v-etale-coh}, and thus (1) follows. If $Y'\to Y$ is proper, then the base change morphism $a^{-1}\rr f_{\et *}\to \rr f_{\mrm{v}*}a'^{-1}$ induces an isomorphism $a^{-1} \ca{H}\iso \rr f_{\mrm{v}*}a'^{-1}\ca{F}'$ by \ref{cor:v-etale-bc}, and thus (2) follows.
\end{proof}

\begin{mythm}\label{thm:fal-comp-rel}
	With the notation in {\rm\ref{para:mor-rel-notation}}, let $\bb{F}'$ be a finite locally constant abelian sheaf on $Y'_\et$. Assume that 
	\begin{enumerate}
		\renewcommand{\theenumi}{\roman{enumi}}
		\renewcommand{\labelenumi}{{\rm(\theenumi)}}
		\item the morphism $X' \to X$ is proper of finite presentation, and that
		\item the sheaf $\rr^q f_{\et*} \bb{F}'$ is finite locally constant for each integer $q$ and nonzero for finitely many $q$, and that\label{thm:fal-comp-rel-loc}
		\item we have $\rr^q\psi_*\bb{H}=0$ (resp. $\rr^q\psi'_*\bb{H}=0$) for any finite locally constant abelian sheaf $\bb{H}$ on $Y_\et$ (resp. $Y'_\et$) and any integer $q>0$.\label{thm:fal-comp-rel-achinger}
	\end{enumerate}
	 Then, the relative Faltings' comparison morphism associated to $f$ and $\bb{F}'$ \eqref{eq:easy-fal-rel-comp} (which exists by {\rm(\ref{thm:fal-comp-rel-achinger})}) is an almost isomorphism in the derived category $\dd(\overline{\bb{Z}_p}\module_{\fal_{Y \to X}^\et})$ {\rm(\ref{para:almost-derived})}, and it induces an almost isomorphism 
	\begin{align}
		(\psi_*\rr^q f_{\et *} \bb{F}')\otimes_{\bb{Z}}\falb \longrightarrow \rr^q f_{\fal *} (\psi'_*\bb{F}'\otimes_{\bb{Z}}\falb')
	\end{align}
	of $\overline{\bb{Z}_p}$-modules for each integer $q$.
\end{mythm}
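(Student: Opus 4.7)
The plan is to exploit the commutative diagram \eqref{diam:comp-rel} of \ref{para:comparison-mor-rel}, which factors the relative Faltings' comparison morphism through the v-site $\falh_{Y\to X^Y}$ into concrete arrows that can be analysed individually. Under hypothesis (iii), applied with $\bb{H}=\bb{F}'$, we have $\psi'_*\bb{F}'=\rr\psi'_*\bb{F}'$, so $\alpha_1$ is an isomorphism and the comparison morphism \eqref{eq:easy-fal-rel-comp} is defined as $\alpha_2\circ\alpha_1^{-1}$. In \eqref{diam:comp-rel}, $\alpha_3$ is an isomorphism by \ref{cor:v-etale-coh}, $\alpha_5$ is an isomorphism by \ref{prop:Psi-const-equal}.(\ref{prop:Psi-const-equal-const}), and $\alpha_4$ is an almost isomorphism, being $\rr f_{\fal *}$ of the almost isomorphism provided by \ref{cor:falh-et}. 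By commutativity, it suffices to show that $\alpha_6$ is an almost isomorphism.

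Recall that $\alpha_6$ is the composition of \eqref{eq:alpha41} with \eqref{eq:alpha42}. The second arrow \eqref{eq:alpha42} is $\rr\varepsilon_*$ applied to the morphism of \ref{thm:hard-comp-rel} for $\ca{F}'=\bb{F}'$: hypothesis (i) provides the properness needed to invoke \ref{thm:hard-comp-rel}, yielding an almost isomorphism.

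For the first arrow \eqref{eq:alpha41}, the strategy is to identify $\rr f_{\falh *}\scr{F}'$ with the pullback $\varepsilon^{-1}\bb{L}$ of a bounded complex with finite locally constant cohomology on $\fal_{Y\to X}^{\et}$. Since $Y'\to Y$ is proper by base change from (i), \ref{lem:falh-presheaf}.(\ref{lem:falh-presheaf-2}) gives $\rr f_{\falh *}\scr{F}' \cong \rr\Psi_*a^{-1}\rr f_{\et *}\bb{F}'$. The functor $\Psi_*a^{-1}$ is exact on torsion sheaves on $Y_{\et}$ by \ref{prop:Psi-const-equal}.(\ref{prop:Psi-const-equal-torsion}), and the natural base change map, combined with \ref{lem:loc-sys-trans-2}, yields a quasi-isomorphism $\varepsilon^{-1}\bb{L}\iso \rr f_{\falh *}\scr{F}'$ with $\bb{L}=\rr\psi_*\rr f_{\et *}\bb{F}'$. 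Hypotheses (ii) and (iii) ensure that the Leray spectral sequence for $\psi\circ f_{\et}$ degenerates, so that $\bb{L}$ is a bounded complex on $\fal_{Y\to X}^{\et}$ whose cohomology sheaves $\psi_*\rr^q f_{\et *}\bb{F}'$ are finite locally constant (\ref{prop:loc-sys-trans}). Applying \ref{rem:falh-et} to $\bb{L}$ shows that $\bb{L}\otimes^{\dl}_{\bb{Z}}\falb \to \rr\varepsilon_*(\varepsilon^{-1}\bb{L}\otimes^{\dl}_{\bb{Z}}\falhb)$ is an almost isomorphism, and a short diagram chase using $\alpha_3$, $\alpha_5$ and the quasi-isomorphism above identifies this arrow with \eqref{eq:alpha41}, concluding the proof of the derived statement.

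The second assertion about individual cohomology groups follows from the derived statement by a standard truncation argument, using that $\bb{L}$ has finite locally constant cohomology concentrated in bounded range. The main obstacle in carrying out this plan is establishing the quasi-isomorphism $\varepsilon^{-1}\bb{L}\iso \rr f_{\falh *}\scr{F}'$ at the level of complexes: this requires combining \ref{lem:falh-presheaf}.(\ref{lem:falh-presheaf-2}) with the exactness of $\Psi_*a^{-1}$ on torsion sheaves and the Leray-degeneration afforded by hypothesis (iii) to transport the cohomological structure from the $\fal$-side to the $\falh$-side.
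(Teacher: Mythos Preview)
Your proposal is correct and follows essentially the same route as the paper: reduce to showing $\alpha_6$ is an almost isomorphism via the diagram \eqref{diam:comp-rel}, handle \eqref{eq:alpha42} by \ref{lem:hard-comp-rel-loc} (you cite the more general \ref{thm:hard-comp-rel}, which is fine), and handle \eqref{eq:alpha41} by identifying $\rr f_{\falh *}\scr{F}'\cong\varepsilon^{-1}\bb{L}$ with $\bb{L}=\rr\psi_*\rr f_{\et*}\bb{F}'$ bounded with finite locally constant cohomology, then applying \ref{rem:falh-et}. The paper phrases the last identification slightly differently---it uses that $\ca{L}\to\rr\varepsilon_*\varepsilon^{-1}\ca{L}$ is an isomorphism (via \ref{cor:v-etale-coh}) rather than invoking $\alpha_3,\alpha_5$ directly---but the content is the same.
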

\begin{proof}
	We follow the discussion of \ref{para:comparison-mor-rel} and set $\scr{F}'=\Psi'_*a'^{-1}\bb{F}'$. The canonical morphism \eqref{eq:alpha42}
	\begin{align}
		\rr \varepsilon_*((\rr f_{\falh *}\scr{F}') \otimes_{\bb{Z}}^{\dl} \falhb) \longrightarrow \rr \varepsilon_* \rr f_{\falh *}(\scr{F}'\otimes_{\bb{Z}} \falhb')
	\end{align}
	is an almost isomorphism by \ref{lem:hard-comp-rel-loc}. It remains to show that the canonical morphism \eqref{eq:alpha41}
	\begin{align}\label{eq:11.13.3}
		\rr \varepsilon_*(\rr f_{\falh *}\scr{F}') \otimes_{\bb{Z}}^{\dl} \falb \longrightarrow \rr \varepsilon_*((\rr f_{\falh *}\scr{F}') \otimes_{\bb{Z}}^{\dl} \falhb)
	\end{align}
	is also an almost isomorphism. With the notation in \ref{lem:falh-presheaf} by taking $\ca{F}'=\bb{F}'$, the complex $\ca{H}$ is a bounded complex whose cohomologies $H^q(\ca{H})$ are finite locally constant abelian sheaves by condition (\ref{thm:fal-comp-rel-loc}). Consider the commutative diagram \eqref{diag:beta-psi-2},
	\begin{align}\label{eq:11.13.4}
		\xymatrix{
			(\schqcqs_{/Y})_{\mrm{v}}\ar[r]^-{a}\ar[d]_-{\Psi} &Y_\et\ar[d]^-{\psi}\\
			\falh_{Y \to X^Y}\ar[r]^-{\varepsilon} & \fal_{Y \to X}^\et
		}
	\end{align} 
	We set $\ca{L}=\rr\psi_*\ca{H}$. Then, $H^q(\ca{L})=\psi_*H^q(\ca{H})$ by Cartan-Leray spectral sequence and condition (\ref{thm:fal-comp-rel-achinger}). Hence, $\ca{L}$ is a bounded complex of abelian sheaves whose cohomologies are finite locally constant by \ref{prop:loc-sys-trans} so that the canonical morphism 
	\begin{align}\label{eq:11.13.6}
		\ca{L} \otimes_{\bb{Z}}^{\dl} \falb \longrightarrow \rr \varepsilon_*(\varepsilon^{-1}\ca{L} \otimes_{\bb{Z}}^{\dl} \falhb)
	\end{align}
	is an almost isomorphism by \ref{rem:falh-et}.
	
	On the other hand, $H^q(\scr{H})=\Psi_*a^{-1}H^q(\ca{H})$ by Cartan-Leray spectral sequence and \ref{prop:Psi-const-equal}.(\ref{prop:Psi-const-equal-torsion}). Thus, the base change morphism $\varepsilon^{-1}\rr\psi_*\to \rr\Psi_*a^{-1}$ induces an isomorphism $\varepsilon^{-1}\ca{L}\iso \scr{H}$ by \ref{lem:loc-sys-trans-2}. Moreover, the canonical morphism $\ca{L}\to \rr\varepsilon_*\varepsilon^{-1}\ca{L}=\rr\varepsilon_*\scr{H}=\rr\psi_*\rr a_*a^{-1}\ca{H}$ is an isomorphism by \ref{cor:v-etale-coh}. Thus, the canonical morphism
	\begin{align}\label{eq:11.13.7}
		(\rr\varepsilon_*\varepsilon^{-1}\ca{L}) \otimes_{\bb{Z}}^{\dl} \falb \longrightarrow \rr \varepsilon_*(\varepsilon^{-1}\ca{L} \otimes_{\bb{Z}} \falhb)
	\end{align}
	is an almost isomorphism by \eqref{eq:11.13.6}. In conclusion, \eqref{eq:11.13.3} is an almost isomorphism by \eqref{eq:11.13.7} and by the canonical isomorphisms $\varepsilon^{-1}\ca{L}\iso \scr{H}\iso \rr f_{\falh *}\scr{F}'$.
\end{proof}

\begin{myrem}\label{rem:fal-comp-rel}
	We give two concrete situations where the conditions in \ref{thm:fal-comp-rel} are satisfied:
	\begin{enumerate}
		\renewcommand{\labelenumi}{{\rm(\theenumi)}}
		\item Let $\overline{\bb{Z}_p}$ be the integral closure of $\bb{Z}_p$ in an algebraic closure $\overline{\bb{Q}_p}$ of $\bb{Q}_p$, $X'\to X$ a proper and finitely presented morphism of coherent $\overline{\bb{Z}_p}$-schemes, $Y'\to Y$ the base change of $X'\to X$ by $\spec(\overline{\bb{Q}_p})\to \spec(\overline{\bb{Z}_p})$. Assume that $Y'\to Y$ is smooth. Then, the condition (\ref{thm:fal-comp-rel-loc}) is guaranteed by \cite[\Luoma{16}.2.2 and \Luoma{17}.5.2.8.1]{sga4-3}, and the condition (\ref{thm:fal-comp-rel-achinger}) is guaranteed by \ref{cor:achinger}.
		\item Let $\ca{O}_K$ be a strictly Henselian discrete valuation ring with fraction field $K$ of characteristic $0$ and residue field of characteristic $p$, $\overline{K}$ an algebraic closure of $K$, $X'\to X$ a proper morphism of $\ca{O}_K$-schemes of finite type, $Y'\to Y$ the base change of $X'\to X$ by $\spec(\overline{K})\to \spec(\ca{O}_K)$. Assume that $Y'\to Y$ is smooth. Then, the condition (\ref{thm:fal-comp-rel-loc}) is guaranteed by \cite[\Luoma{16}.2.2 and \Luoma{17}.5.2.8.1]{sga4-3}, and the condition (\ref{thm:fal-comp-rel-achinger}) is guaranteed by \ref{thm:achinger}.
	\end{enumerate}
\end{myrem}

\section{A Local Version of the Relative Hodge-Tate Filtration}\label{sec:loc-hodge-tate-fil}

\begin{mypara}\label{para:notation-ab}
	Let $K$ be a complete discrete valuation field of characteristic $0$ with valuation ring $\ca{O}_K$ whose residue field is algebraically closed (a condition required by \cite[4.1.3, 5.1.3]{abbes2020suite}) of characteristic $p>0$, $\overline{K}$ an algebraic closure of $K$, $\ca{O}_{\overline{K}}$ the integral closure of $\ca{O}_K$ in $\overline{K}$, $\eta= \spec (K)$, $\overline{\eta}=\spec(\overline{K})$, $S=\spec (\ca{O}_K)$, $\overline{S}=\spec(\ca{O}_{\overline{K}})$. Let $(f,g):(X'^{\triangleright}\to X') \to (X^\circ \to X)$ be a morphism of adequate open immersions of schemes (\ref{para:notation-log}) over $\eta \to S$ such that $g:X' \to X$ is projective and that the induced morphism $(X',\scr{M}_{X'^{\triangleright} \to X'})\to (X,\scr{M}_{X^\circ \to X})$ is smooth and saturated. We are in fact in the situation \ref{para:abbes-gros} but with a slightly different notation.
	
	Consider the morphisms of sites defined in \ref{para:psi-sigma-nu-breve}:
	\begin{align}\label{eq:4.1.1}
		\xymatrix{
			X'^{\triangleright,\bb{N}}_{\overline{\eta},\et}\ar[r]^-{\breve{f}_{\overline{\eta},\et}}&
			X^{\circ,\bb{N}}_{\overline{\eta},\et}\ar[r]^-{\breve{\psi}}& \fal_{X^\circ_{\overline{\eta}}\to X}^{\et,\bb{N}}\ar[r]^-{\breve{\sigma}}& X^{\bb{N}}_\et.
		}
	\end{align}
	We consider $\breve{\sigma}$ as a morphism of ringed sites $\breve{\sigma}: (\fal_{X^\circ_{\overline{\eta}}\to X}^{\et,\bb{N}},\breve{\falb})\to (X^{\bb{N}}_\et,\breve{\ca{O}}_{X_\et})$,
	and $\breve{f}_{\overline{\eta},\et}$ as a morphism of ringed sites $\breve{f}_{\overline{\eta},\et}:(X'^{\triangleright,\bb{N}}_{\overline{\eta},\et},\breve{\bb{Z}}_p)\to (X^{\circ,\bb{N}}_{\overline{\eta},\et},\breve{\bb{Z}}_p)$, where $\breve{\bb{Z}}_p=(\bb{Z}/p^{n}\bb{Z})_{n\geq 0}$.
\end{mypara}

\begin{mythm}[{\cite[6.7.5, 6.7.10, 6.7.13]{abbes2020suite}}]\label{thm:rel-hodge-tate}
	Under the assumptions in {\rm\ref{para:notation-ab}} and with the same notation, for each integer $n\geq 0$, there is a canonical $G_K$-equivariant finite decreasing filtration $(\mrm{Fil}^q)_{q\in \bb{Z}}$ on the $\breve{\falb}_{\bb{Q}}$-module $\breve{\psi}_*(\rr^n\breve{f}_{\overline{\eta},\et*}(\breve{\bb{Z}}_p))\otimes_{\bb{Z}_p}\breve{\falb}_{\bb{Q}}$ (cf. {\rm\ref{para:isogeny}}) and a canonical $G_K$-equivariant isomorphism for each $q\in \bb{Z}$,
	\begin{align}
		\mrm{Gr}^q(\breve{\psi}_*(\rr^n\breve{f}_{\overline{\eta},\et*}(\breve{\bb{Z}}_p))\otimes_{\breve{\bb{Z}}_p}\breve{\falb}_{\bb{Q}})\cong\breve{\sigma}^*(\rr^qg_*(\Omega^{n-q}_{(X',\scr{M}_{X'})/(X,\scr{M}_X)})\otimes_{\ca{O}_{X_\et}} \breve{\ca{O}}_{X_\et}(q-n))_{\bb{Q}},
	\end{align}
	where $\mrm{Gr}^q$ denotes the graded piece $\mrm{Fil}^q/\mrm{Fil}^{q+1}$.
\end{mythm}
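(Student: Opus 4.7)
The plan is to deduce this filtration by combining the two main inputs of Faltings' approach to $p$-adic Hodge theory recalled in the introduction: Faltings' main relative comparison theorem \ref{thm:abess-gros} and Faltings' computation of the higher direct images $\rr^q\sigma_*\falb$ in terms of logarithmic differentials (item (2) in the introduction, cf. \cite[6.3.8]{abbes2020suite}). First, I would apply \ref{thm:abess-gros} to $(f,g):(X'^{\triangleright}\to X')\to (X^\circ\to X)$ with coefficients $\bb{Z}/p^n\bb{Z}$ and pass to the limit over $n$: writing $\breve{g}_\fal:\fal^{\et,\bb{N}}_{X'^{\triangleright}_{\overline{\eta}}\to X'}\to\fal^{\et,\bb{N}}_{X^\circ_{\overline{\eta}}\to X}$ for the induced morphism of $p$-adic Faltings sites, this yields an almost isomorphism $(\breve{\psi}_*\rr^n\breve{f}_{\overline{\eta},\et*}\breve{\bb{Z}}_p)\otimes_{\breve{\bb{Z}}_p}\breve{\falb}\iso \rr^n\breve{g}_{\fal*}\breve{\falb}'$, which becomes a genuine isomorphism after tensoring with $\bb{Q}$ in the category of $\breve{\falb}_{\bb{Q}}$-modules. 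This reduces the problem to producing the required filtration and graded pieces on $\rr^n\breve{g}_{\fal*}\breve{\falb}'_{\bb{Q}}$.

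Next, I would analyze $\rr^n\breve{g}_{\fal*}\breve{\falb}'_{\bb{Q}}$ via the Leray spectral sequence for the factorization of $\breve{g}_{\fal}$ through the natural morphism $\breve{\sigma}':\fal^{\et,\bb{N}}_{X'^{\triangleright}_{\overline{\eta}}\to X'}\to X'^{\bb{N}}_\et$ followed by $g$. The key input is Faltings' logarithmic computation of $\rr\sigma_*\falb$: after inverting $p$, it supplies canonical isomorphisms $\rr^q\breve{\sigma}'_*\breve{\falb}'_{\bb{Q}}\cong \Omega^q_{(X',\scr{M}_{X'})/(S,\scr{M}_S)}\otimes \breve{\ca{O}}_{X'_\et}(-q)_{\bb{Q}}$, together with the analogous statement for $\breve{\sigma}$ over $(X,\scr{M}_X)$. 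The log-smoothness and saturatedness of $(X',\scr{M}_{X'})/(X,\scr{M}_X)$ produces a canonical locally split short exact sequence of relative differentials $0\to g^*\Omega^1_{(X,\scr{M}_X)/(S,\scr{M}_S)}\to \Omega^1_{(X',\scr{M}_{X'})/(S,\scr{M}_S)}\to \Omega^1_{(X',\scr{M}_{X'})/(X,\scr{M}_X)}\to 0$, which via Faltings' computation lifts to a filtration of $\rr^q\breve{\sigma}'_*\breve{\falb}'_{\bb{Q}}$ whose graded pieces involve only the relative differentials $\Omega^{\bullet}_{(X',\scr{M}_{X'})/(X,\scr{M}_X)}$ twisted by $\breve{\ca{O}}_{X'_\et}(-\bullet)$. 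Pushing down by $g$ (using the projection formula and the projectivity of $g$) yields an $E_2$-page $E_2^{p,q}=\breve{\sigma}^*(\rr^pg_*\Omega^q_{(X',\scr{M}_{X'})/(X,\scr{M}_X)}\otimes \breve{\ca{O}}_{X_\et}(-q))_{\bb{Q}}$ of a spectral sequence converging to $\rr^{p+q}\breve{g}_{\fal*}\breve{\falb}'_{\bb{Q}}$. The decreasing filtration $\mrm{Fil}^q$ is the abutment filtration indexed so that $\mrm{Gr}^q$ collects the column of Tate weight $-q$, giving the stated graded pieces after reindexing.

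The main obstacle is proving that this spectral sequence degenerates at $E_2$ and splits canonically after inverting $p$, which is exactly what upgrades the mere existence of an abutment filtration to the identification of its graded pieces with the predicted $G_K$-equivariant Tate-twisted direct images. Degeneration rests on the Tate-weight decomposition: each $E_r^{p,q}$ carries an intrinsic Tate twist of weight $-q$, the differentials $d_r$ are $G_K$-equivariant, and Tate's theorem provides the vanishing of $G_K$-equivariant morphisms between non-isomorphic Tate twists of the completed residue field $\widehat{\overline{K}}$. The technical heart of the Abbes-Gros argument lies in making this weight decomposition meaningful at the level of $\breve{\falb}_{\bb{Q}}$-modules on the Faltings site $\fal^{\et,\bb{N}}_{X^\circ_{\overline{\eta}}\to X}$ itself (not merely after passing to global sections on $X_\et$), and in constructing a canonical splitting, both of which require a careful relative reformulation of Faltings' Galois cohomology computation. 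The $G_K$-equivariance of the resulting filtration and of the graded isomorphisms is automatic, since every ingredient (Faltings' comparison, the Leray spectral sequence, Faltings' local computation, log-smoothness) is functorial with respect to the base change $S\to\overline{S}$.
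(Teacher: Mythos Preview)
The paper does not prove this theorem: it is quoted verbatim from Abbes--Gros \cite[6.7.5, 6.7.10, 6.7.13]{abbes2020suite} and used as a black-box input to the paper's own local result \ref{thm:main}. There is no in-paper proof to compare your proposal against.

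That said, your outline is broadly the shape of the Abbes--Gros argument. One point needs correction: $\breve{g}_{\fal}$ does not factor through $\breve{\sigma}'$. The morphisms $\breve{g}_{\fal}$, $\breve{\sigma}'$, $\breve{\sigma}$, and $\breve{g}_\et$ form a commuting square, and the spectral sequence you want is the Cartan--Leray type sequence arising from this square (together with a base-change identification along $\breve{\sigma}$), not a Leray sequence for a composite. Apart from this, the ingredients you list --- the relative comparison \ref{thm:abess-gros}, Faltings' computation of $\rr^q\sigma_*\falb$ in terms of logarithmic differentials, the exact sequence of relative log differentials coming from the smooth saturated morphism of log schemes, and degeneration via the distinct Tate weights of the columns --- are the correct ones, and you are right that the delicate point is making the weight argument work at the level of $\breve{\falb}_{\bb{Q}}$-modules on the Faltings site rather than after taking global sections.
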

We call this filtration the \emph{relative Hodge-Tate filtration} of the morphism $(f,g):(X'^{\triangleright}\to X')\to (X^\circ\to X)$.

\begin{myrem}\label{rem:rel-hodge-tate}
	We keep the notation and assumptions in {\rm\ref{para:notation-ab}} and {\rm\ref{thm:rel-hodge-tate}}.
	\begin{enumerate}
		\renewcommand{\labelenumi}{{\rm(\theenumi)}}
		\item If we set $\rr^n\breve{f}_{\overline{\eta},\et*}(\breve{\bb{Z}}_p)=\breve{\bb{L}}^{(n)}=(\bb{L}_{k}^{(n)})_{k\geq 0}$ where $\bb{L}_{k}^{(n)}=\rr^nf_{\overline{\eta},\et*}(\bb{Z}/p^k\bb{Z})$, then $\bb{L}_k^{(n)}$ is a finite locally constant abelian sheaf on $X^{\circ}_{\overline{\eta},\et}$ (\cite[2.2.25]{abbes2020suite}), and the inverse system $(\bb{L}_{k}^{(n)})_{k\geq 0}$ is Artin-Rees $p$-adic (\cite[10.1.18.(\luoma{3})]{fulei2015etale}).\label{item:rem-rel-hodge-tate-et}
		\item The $\ca{O}_X$-module $\ca{M}^{q,n-q}=\rr^qg_*(\Omega^{n-q}_{(X',\scr{M}_{X'})/(X,\scr{M}_X)})$ is coherent and its restriction to $X_\eta$ is locally free (\cite[7.2]{illusie2005riemannhilbert}, cf. the proof of \cite[6.7.13]{abbes2020suite}).\label{item:rem-rel-hodge-tate-diff}
	\end{enumerate}
\end{myrem}

\begin{mypara}\label{para:notation-final}
	Under the assumptions in {\rm\ref{para:notation-ab}}, {\rm\ref{thm:rel-hodge-tate}}, {\rm\ref{rem:rel-hodge-tate}} and with the same notation, assume further that $X=\spec(R)$ is affine. We remark that $R$ is $p$-torsion free (\cite[\Luoma{2}.6.3.(\luoma{2})]{abbes2016p}).
	Let $V\to U$ be an object of $\fal_{X^\circ_{\overline{\eta}}\to X}^\proet$ satisfying the following conditions:
	\begin{enumerate}
		\renewcommand{\labelenumi}{{\rm(\theenumi)}}
		\item The morphism $V\to U$ is Faltings acyclic (cf. \ref{defn:faltings-acyclic},  \ref{thm:acyclic}).
		\item For any integers $n\geq 0$ and $k\geq 0$, the pullback $\bb{L}_{k}^{(n)}|_{V_\et}$ is constant with value $H^n_{\et,k}$ .
	\end{enumerate}
	We denote by $A$ the $\ca{O}_K$-algebra $\falb(V\to U)$ (i.e. $U^V=\spec(A)$), and we set $H^n_{\et}=\lim_{k\to \infty}H^n_{\et,k}$.
\end{mypara}

\begin{myrem}\label{rem:const-value-et-coh}
	Let $\overline{x}$ be a geometric point of $V$. Then, there is a natural isomorphism $H^n_{\et,k}\cong H^n_{\et}(X'^{\triangleright}_{\overline{x}},\bb{Z}/p^k\bb{Z})$ (\cite[2.2.25]{abbes2020suite}). We remark that $H^n_{\et}(X'^{\triangleright}_{\overline{x}},\bb{Z}_p)= \lim_{k\to \infty} H^n_{\et}(X'^{\triangleright}_{\overline{x}},\bb{Z}/p^k\bb{Z})$ is a finitely generated $\bb{Z}_p$-module (thus so is $H^n_{\et}$), and that the morphism of inverse systems of abelian groups
	\begin{align}\label{eq:6.4.2}
		(H^n_{\et}(X'^{\triangleright}_{\overline{x}},\bb{Z}_p)/p^kH^n_{\et}(X'^{\triangleright}_{\overline{x}},\bb{Z}_p))_{k\geq 0}\longrightarrow (H^n_{\et,k})_{k\geq 0}
	\end{align}
	is an Artin-Rees isomorphism, by which we mean its kernel and cokernel are Artin-Rees zero (cf. \ref{rem:rel-hodge-tate}.(\ref{item:rem-rel-hodge-tate-et}) and \cite[10.1.4]{fulei2015etale}).
\end{myrem}

\begin{mypara}\label{para:notation-final-galois}
	Following \ref{para:notation-final}, notice that $U^\circ=X^\circ\times_X U$ is affine and geometrically normal over $K$ and that $V$ is affine and normal, since $X^\circ$ is smooth over $K$. We assume further that the following conditions hold: 
	\begin{enumerate}
		\setcounter{enumi}{2}
		\renewcommand{\labelenumi}{{\rm(\theenumi)}}
		\item The scheme $V$ is integral and lies over a connected component $U^{\circ}_{\star}$ of $U^\circ$.
		\item The function field $\ca{L}$ of $V$ is a Galois extension of the function field $\ca{K}$ of $U^{\circ}_{\star}$ with Galois group $\Gamma$.
	\end{enumerate}
	Let $U^{\circ}_{\overline{\eta},\star}$ be the connected component of $U^{\circ}_{\overline{\eta}}$ over which $V$ lies. Its function field is the composite $\overline{K}\ca{K}$ of $\overline{K}$ and $\ca{K}$ in $\ca{L}$, which is Galois over $\ca{K}$ whose Galois group identifies with the closed subgroup $G_L$ of $G_K$ where $L$ is the algebraic closure of $K$ in $\ca{K}$. We denote by $\Delta$ the Galois group of $\ca{L}$ over $\overline{K}\ca{K}$. It is clear that $G_L=\Gamma/\Delta$.
	\begin{align}
		\xymatrix{
			V\ar[d]^-{\Delta}\ar@/_2pc/[dd]_-{\Gamma}&\\
			U^{\circ}_{\overline{\eta},\star}\ar[r]\ar[d]^-{G_L}&\spec(\overline{K})\ar[d]^-{G_K}\\
			U^{\circ}_{\star}\ar[r]&\spec(K)
		}
	\end{align}
	
	Since $V$ is the integral closure of $U^{\circ}_{\star}$ in $\ca{L}$, the canonical homomorphism of groups
	\begin{align}
		\mrm{Aut}_{U^{\circ}_{\star}}(V)^\oppo\longrightarrow \mrm{Aut}_{\ca{K}}(\ca{L})=\Gamma
	\end{align}
	is an isomorphism. In particular, $\Gamma$ acts naturally on $V$ on the right. For $\gamma\in \Gamma$ with image $u\in G_K$, we denote by $f_\gamma:V\to V$ the right action of $\gamma$ on $V$, and for any $K$-scheme $Z$, we denote by $f_u:Z_{\overline{\eta}}\to Z_{\overline{\eta}}$ the base change of the automorphism of $\overline{\eta}$ induced by $u$. There is a commutative diagram 
	\begin{align}\label{diam:6.5.1}
		\xymatrix{
			V\ar[r]^-{f_\gamma}\ar[d]& V \ar[d]\\
			U^{\circ}_{\overline{\eta}}\ar[r]^-{f_u}& U^{\circ}_{\overline{\eta}}
		}
	\end{align}
	The natural isomorphism (induced by the base change)
	\begin{align}
		f_{\gamma,\et}^{-1}(\bb{L}_{k}^{(n)}|_{V_\et})\iso\bb{L}_{k}^{(n)}|_{V_\et}
	\end{align}
	defines a natural action of $\Gamma$ on $H^n_{\et,k}$, and thus an action on $H^n_{\et}=\lim_k H^n_{\et,k}$. On the other hand, $\Gamma$ also acts naturally on $A$ as $\spec(A)$ is the integral closure of $U$ in $\ca{L}$, and thus acts on the Tate twist $A(i)$ via the map $\Gamma\to G_K$.
\end{mypara}

\begin{mythm}\label{thm:main}
	Under the assumptions in {\rm\ref{para:notation-final}} and with the same notation, for each integer $n\geq 0$, there is a canonical finite decreasing filtration $(\mrm{fil}^q)_{q\in\bb{Z}}$ on $H^n_\et\otimes_{\bb{Z}_p} \widehat{A}[1/p]$ and a canonical isomorphism for each $q\in\bb{Z}$,
	\begin{align}\label{eq:6.6.1}
		\mrm{gr}^q(H^n_\et\otimes_{\bb{Z}_p} \widehat{A}[\frac{1}{p}])\cong H^q(X',\Omega^{n-q}_{(X',\scr{M}_{X'})/(X,\scr{M}_X)})\otimes_R \widehat{A}[\frac{1}{p}](q-n),
	\end{align}
	where $\mrm{gr}^q$ denotes the graded piece $\mrm{fil}^q/\mrm{fil}^{q+1}$. Moreover, under the assumptions in {\rm\ref{para:notation-final-galois}} and with the same notation, the filtration $(\mrm{fil}^q)_{q\in\bb{Z}}$ and the isomorphisms \eqref{eq:6.6.1} are $\Gamma$-equivariant.
\end{mythm}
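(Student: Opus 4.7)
The plan is to apply the functor $\rr\Gamma(\fal_{V\to U}^{\et,\bb{N}},-)_{\bb{Q}}$ to the global relative Hodge-Tate filtration of Abbes-Gros (Theorem \ref{thm:rel-hodge-tate}) after restriction to the localized site $\fal_{V\to U}^{\et,\bb{N}}$, and then identify each resulting term. Write
\begin{equation*}
	\scr{F}^n=\breve{\psi}_*(\rr^n\breve{f}_{\overline{\eta},\et*}\breve{\bb{Z}}_p)\otimes_{\breve{\bb{Z}}_p}\breve{\falb}_{\bb{Q}}, \qquad \ca{M}^{q,n-q}=\rr^qg_*\Omega^{n-q}_{(X',\scr{M}_{X'})/(X,\scr{M}_X)}.
\end{equation*}
Restricting $(\mrm{Fil}^q)_{q\in\bb{Z}}$ on $\scr{F}^n$ to $\fal_{V\to U}^{\et,\bb{N}}$ produces a finite decreasing filtration with graded pieces $\breve{\sigma}^*(\ca{M}^{q,n-q}\otimes_{\ca{O}_{X_\et}}\breve{\ca{O}}_{X_\et}(q-n))_{\bb{Q}}|_{V\to U}$. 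Taking $\Gamma(V\to U,-)$ term by term will yield the candidate filtration $(\mrm{fil}^q)_{q\in\bb{Z}}$ on $\Gamma(\fal_{V\to U}^{\et,\bb{N}},\scr{F}^n)$.

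The graded pieces are handled by Proposition \ref{prop:acyclic-diff}: since $X=\spec(R)$ is affine and $\ca{M}^{q,n-q}$ is an $\ca{O}_X$-module of finite presentation whose generic fibre is locally free (Remark \ref{rem:rel-hodge-tate}.(\ref{item:rem-rel-hodge-tate-diff})), setting $M^{q,n-q}=\Gamma(X,\ca{M}^{q,n-q})=H^q(X',\Omega^{n-q}_{(X',\scr{M}_{X'})/(X,\scr{M}_X)})$ and using the Faltings acyclicity of $V\to U$, we obtain canonically
\begin{equation*}
	\rr\Gamma(\fal_{V\to U}^{\et,\bb{N}},\breve{\sigma}^*\breve{\ca{M}}^{q,n-q})[\tfrac{1}{p}]\;\cong\; M^{q,n-q}\otimes_R\widehat{A}[\tfrac{1}{p}],
\end{equation*}
so that $\mrm{gr}^q$ becomes $H^q(X',\Omega^{n-q}_{(X',\scr{M}_{X'})/(X,\scr{M}_X)})\otimes_R\widehat{A}[1/p](q-n)$ after incorporating the Tate twist.

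For the filtered module itself, the hypothesis that $\bb{L}_k^{(n)}|_{V_\et}$ is constant with value $H^n_{\et,k}$, combined with Proposition \ref{prop:loc-sys-trans}, identifies $\psi_*\bb{L}_k^{(n)}|_{\fal_{V\to U}^\et}$ with the constant sheaf $\underline{H^n_{\et,k}}$. Since $H^n_\et$ is a finitely generated $\bb{Z}_p$-module and the inverse system $(H^n_{\et,k})_k$ is Artin-Rees isomorphic to $(H^n_\et/p^k)_k$ (Remark \ref{rem:const-value-et-coh}), a d\'evissage reduces computing $\rr\Gamma(\fal_{V\to U}^{\et,\bb{N}},\breve{\psi}_*\breve{\bb{L}}^{(n)}\otimes\breve{\falb})[1/p]$ to the case of cyclic factors $\bb{Z}/p^r$ and $\bb{Z}_p$; both cases reduce, using Lemma \ref{lem:acyclic-falsite}, to $\widehat{A}$, giving the required identification
\begin{equation*}
	\Gamma(\fal_{V\to U}^{\et,\bb{N}},\scr{F}^n)\;\cong\; H^n_\et\otimes_{\bb{Z}_p}\widehat{A}[\tfrac{1}{p}].
\end{equation*}

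The Galois equivariance in the setup of \ref{para:notation-final-galois} follows from functoriality: each $\gamma\in\Gamma$ is a $u$-semilinear automorphism of $V\to U$ over $X^\circ_{\overline{\eta}}\to X$ (diagram \eqref{diam:6.5.1}); since the Abbes-Gros filtration is $G_K$-equivariant, each of the identifications above is compatible with $\gamma$ acting via $f_\gamma^*$ on the left and via the combined action on $H^n_\et$, $\widehat{A}$, $R$, and the Tate twist on the right. \textbf{The main obstacle} is the bookkeeping in the third paragraph: one must verify that after passing to sections and inverting $p$, the derived tensor product of the pro-constant system $(\underline{H^n_{\et,k}})_k$ with the inverse system $\breve{\falb}$ produces $H^n_\et\otimes_{\bb{Z}_p}\widehat{A}[1/p]$ on the nose, with no spurious $\rr^1\!\lim$ contribution; this requires exploiting the Artin-Rees property of the coefficient system together with the flatness of $\widehat{A}[1/p]$ over $\bb{Z}_p$ to kill the higher derived limit after inverting $p$.
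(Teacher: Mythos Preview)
Your proposal is correct and follows essentially the same route as the paper: restrict the Abbes--Gros filtration to the localized Faltings site at $V\to U$, take $\rr\Gamma(\fal_{V\to U}^{\bullet,\bb{N}},-)$ and invert $p$, identify the total term using Faltings acyclicity and the Artin--Rees property of $(H^n_{\et,k})_k$ to kill the $\rr^1\lim$ contribution, identify the graded pieces via Proposition~\ref{prop:acyclic-diff}, and extract $\Gamma$-equivariance from the $G_K$-equivariance of the global filtration. The only cosmetic differences are that the paper passes to the pro-\'etale site via $\breve{\nu}^*$ (equivalent by Corollary~\ref{cor:nu-falb}) and handles the $\rr^1\lim$ directly via the Milnor exact sequence rather than d\'evissage; your identification of this as the ``main obstacle'' is exactly right, though the relevant input is the Mittag--Leffler property coming from Artin--Rees rather than flatness of $\widehat{A}[1/p]$.
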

\begin{proof}
	We set $Y=X^\circ_{\overline{\eta}}$. We start from the filtration of \ref{thm:rel-hodge-tate}. Consider the natural morphism of ringed sites \eqref{eq:breve-nu} $\breve{\nu}:(\fal_{Y\to X}^{\proet,\bb{N}},\breve{\falb})\to (\fal_{Y\to X}^{\et,\bb{N}},\breve{\falb})$. We obtain a filtration $\breve{\nu}^*\mrm{Fil}^q$ on $\breve{\nu}^*(\breve{\psi}_*\breve{\bb{L}}^{(n)}\otimes_{\breve{\bb{Z}}_p}\breve{\falb}_{\bb{Q}})$ with graded pieces $\breve{\nu}^*\mrm{Gr}^q=\breve{\nu}^*\breve{\sigma}^*\breve{\ca{M}}^{q,n-q}(q-n)_{\bb{Q}}$ (as $\breve{\nu}^*=\breve{\nu}^{-1}$ is exact, cf. \eqref{eq:5.3.6}). We apply the derived functor $\rr\Gamma(\fal_{V\to U}^{\proet,\bb{N}},-)$ to these modules.
	
	Consider the commutative diagram  
	\begin{align}
		\xymatrix{
		V_\et\ar[d]^-{j}\ar[r]^-{\psi} & \fal_{V\to U}^{\et} \ar[d]^-{j}& \fal_{V\to U}^{\proet}\ar[l]_-{\nu}\ar[d]^-{j}\\
		Y_\et\ar[r]^-{\psi} & \fal_{Y\to X}^{\et} & \fal_{Y\to X}^{\proet}\ar[l]_-{\nu}
		}
	\end{align}
	Since the canonical morphism $j^{-1}\psi_*\bb{L}^{(n)}_k\to \psi_* j^{-1}\bb{L}^{(n)}_k$ is an isomorphism by \ref{lem:loc-sys-trans-bc}, $j^{-1}\psi_*\bb{L}^{(n)}_k$ is constant with value $H^n_{\et,k}$ by \ref{prop:loc-sys-trans}. Thus, the restriction of $\breve{\nu}^{-1}\breve{\psi}_*\breve{\bb{L}}^{(n)}$ to $\fal_{V\to U}^{\proet,\bb{N}}$ is the inverse system of constant sheaves $(H^n_{\et,k})_{k\geq 0}$. Therefore, the canonical morphism
	\begin{align}\label{eq:6.6.2}
		H^n_{\et,k}\otimes_{\bb{Z}_p} A \longrightarrow \rr\Gamma(\fal_{V\to U}^{\proet}, \nu^*(\psi_*\bb{L}_k^{(n)}\otimes_{\bb{Z}_p}\falb))
	\end{align}
	is an almost isomorphism, since $V\to U$ is Faltings acyclic and $H^n_{\et,k}$ is a finite abelian group. Notice that for any integer $q\geq 0$ there exists a canonical exact sequence (\cite[\Luoma{6}.7.10]{abbes2016p})
	\begin{align}
		0\to \rr^1\lim_{k\to \infty} H^{q-1}(\fal^\proet_{V\to U},\nu^*(\psi_*\bb{L}_k^{(n)}\otimes_{\bb{Z}_p}\falb))\to& H^q(\fal^{\proet,\bb{N}}_{V\to U},\breve{\nu}^*(\breve{\psi}_*\breve{\bb{L}}^{(n)}\otimes_{\breve{\bb{Z}}_p}\breve{\falb}))\\
		&\to \lim_{k\to \infty} H^{q}(\fal^\proet_{V\to U},\nu^*(\psi_*\bb{L}_k^{(n)}\otimes_{\bb{Z}_p}\falb))\to 0.\nonumber
	\end{align}
	Since the inverse system $(H^n_{\et,k}\otimes_{\bb{Z}_p} A)_{k\geq 0}$ is Artin-Rees $p$-adic, $\rr^1\lim_{k\to \infty} H^0(\fal^\proet_{V\to U},\nu^*(\psi_*\bb{L}_k^{(n)}\otimes_{\bb{Z}_p}\falb))$ is almost zero by the almost isomorphism \eqref{eq:6.6.2}. Moreover, we deduce that $\lim_{k\to\infty} (H^n_{\et,k}\otimes_{\bb{Z}_p} A)\to H^0(\fal^{\proet,\bb{N}}_{V\to U},\breve{\nu}^*(\breve{\psi}_*\breve{\bb{L}}^{(n)}\otimes_{\breve{\bb{Z}}_p}\breve{\falb}))$ is an almost isomorphism and that $H^q(\fal^{\proet,\bb{N}}_{V\to U},\breve{\nu}^*(\breve{\psi}_*\breve{\bb{L}}^{(n)}\otimes_{\breve{\bb{Z}}_p}\breve{\falb}))$ is almost zero for $q>0$. Since $H^n_{\et}$ is a finitely generated $\bb{Z}_p$-module (\ref{rem:const-value-et-coh}), we have $\lim_{k\to\infty} (H^n_{\et,k}\otimes_{\bb{Z}_p} A)=H^n_{\et}\otimes_{\bb{Z}_p} \widehat{A}$. Therefore, the canonical morphism
	\begin{align}
		H^n_{\et}\otimes_{\bb{Z}_p} \widehat{A} \longrightarrow \rr\Gamma(\fal_{V\to U}^{\proet,\bb{N}},\breve{\nu}^*(\breve{\psi}_*\breve{\bb{L}}^{(n)}\otimes_{\breve{\bb{Z}}_p}\breve{\falb}))
	\end{align}
	is an almost isomorphism. Inverting $p$, we obtain a canonical isomorphism
	\begin{align}\label{eq:6.6.7}
		H^n_{\et}\otimes_{\bb{Z}_p} \widehat{A}[\frac{1}{p}] \iso \rr\Gamma(\fal_{V\to U}^{\proet,\bb{N}},\breve{\nu}^*(\breve{\psi}_*\breve{\bb{L}}^{(n)}\otimes_{\breve{\bb{Z}}_p}\breve{\falb}))[\frac{1}{p}].
	\end{align}
	By taking $H^0(\fal_{V\to U}^{\proet,\bb{N}},\breve{\nu}^*\mrm{Fil}^q)$, the filtration $\breve{\nu}^*\mrm{Fil}^q$ on $\breve{\nu}^*(\breve{\psi}_*\breve{\bb{L}}^{(n)}\otimes_{\breve{\bb{Z}}_p}\breve{\falb}_{\bb{Q}})$ induces a canonical filtration $\mrm{fil}^q$ on $H^n_\et(X'^{\triangleright}_{\overline{x}},\bb{Z}_p)\otimes_{\bb{Z}_p} \widehat{A}[1/p]$ (cf. \ref{para:isogeny}).
	
	On the other hand, recall that $\ca{M}^{q,n-q}$ is the coherent $\ca{O}_X$-module associated to the coherent $R$-module $M^{q,n-q}=H^q(X',\Omega^{n-q}_{(X',\scr{M}_{X'})/(X,\scr{M}_X)})$ and that $M^{q,n-q}[1/p]$ is a projective $R[1/p]$-module (\ref{rem:rel-hodge-tate}.(\ref{item:rem-rel-hodge-tate-diff})). By \ref{prop:acyclic-diff}, we see that the canonical morphism
	\begin{align}\label{eq:6.6.8}
		H^q(X',\Omega^{n-q}_{(X',\scr{M}_{X'})/(X,\scr{M}_X)})\otimes_R \widehat{A}[\frac{1}{p}]\longrightarrow \rr\Gamma(\fal_{V\to U}^{\proet,\bb{N}},\breve{\nu}^*\breve{\sigma}^*\breve{\ca{M}}^{q,n-q})[\frac{1}{p}]
	\end{align}
	is an isomorphism. Thus, the canonical isomorphisms for the graded pieces $\breve{\nu}^*\mrm{Gr}^q=\breve{\nu}^*\breve{\sigma}^*\breve{\ca{M}}^{q,n-q}(q-n)_{\bb{Q}}$ induce canonical isomorphisms \eqref{eq:6.6.1}. This completes the proof of the first statement.
	
	For the $\Gamma$-equivariance, let $\gamma\in \Gamma$ with image $u\in G_K$. We obtain from \ref{para:notation-final-galois} a commutative diagram
	\begin{align}
		\xymatrix{
			(V\to U)\ar[d]\ar[rr]^-{(f_\gamma,\id_U)} && (V\to U)\ar[d]\\
			(Y\to X)\ar[rr]^-{(f_u,\id_X)}&& (Y\to X)
		}
	\end{align}
	where the vertical arrows are the same pro-\'etale morphism. It induces a commutative diagram of fibred ringed sites over $\bb{N}$,
	\begin{align}
		\xymatrix{
			\fal^{\proet,\bb{N}}_{V\to U}\ar[r]^-{f_\gamma}\ar[d]_-{j}& \fal^{\proet,\bb{N}}_{V\to U}\ar[d]^-{j}\\
			\fal^{\proet,\bb{N}}_{Y\to X}\ar[r]^-{f_u}& \fal^{\proet,\bb{N}}_{Y\to X}
		}
	\end{align}
	where the vertical morphisms induce the same localization morphism of the associated topoi (\cite[\Luoma{3}.7.9]{abbes2016p}).	Recall that the $G_K$-actions on the $\breve{\falb}$-modules $\breve{\nu}^*(\breve{\psi}_*\breve{\bb{L}}^{(n)}\otimes_{\breve{\bb{Z}}_p}\breve{\falb})$, $\breve{\nu}^*\breve{\sigma}^*\breve{\ca{M}}^{q,n-q}(q-n)$ on $\fal^{\proet,\bb{N}}_{Y\to X}$ define isomorphisms
	\begin{align}
		\breve{\nu}^*(\breve{\psi}_*\breve{\bb{L}}^{(n)}\otimes_{\breve{\bb{Z}}_p}\breve{\falb})&\iso f_u^{-1}(\breve{\nu}^*(\breve{\psi}_*\breve{\bb{L}}^{(n)}\otimes_{\breve{\bb{Z}}_p}\breve{\falb})),\label{eq:6.7.10}\\
		\breve{\nu}^*\breve{\sigma}^*\breve{\ca{M}}^{q,n-q}(q-n)&\iso f_u^{-1}(\breve{\nu}^*\breve{\sigma}^*\breve{\ca{M}}^{q,n-q}(q-n))\label{eq:6.7.11}.
	\end{align}
	By \ref{thm:rel-hodge-tate}, up to isogenies, they are respectively compatible with the relative Hodge-Tate filtration $\breve{\nu}^*\mrm{Fil}^q$ and compatible with the canonical isomorphisms for the graded pieces $\breve{\nu}^*\mrm{Gr}^q$. Passing to localization by $j^{-1}$, we obtain from the isomorphisms \eqref{eq:6.7.10} and \eqref{eq:6.7.11} the isomorphisms
	\begin{align}
		j^{-1}(\breve{\nu}^*(\breve{\psi}_*\breve{\bb{L}}^{(n)}\otimes_{\breve{\bb{Z}}_p}\breve{\falb}))&\iso j^{-1}f_u^{-1}(\breve{\nu}^*(\breve{\psi}_*\breve{\bb{L}}^{(n)}\otimes_{\breve{\bb{Z}}_p}\breve{\falb}))\iso f_\gamma^{-1}j^{-1}(\breve{\nu}^*(\breve{\psi}_*\breve{\bb{L}}^{(n)}\otimes_{\breve{\bb{Z}}_p}\breve{\falb})),\label{eq:6.6.13}\\
		j^{-1}(\breve{\nu}^*\breve{\sigma}^*\breve{\ca{M}}^{q,n-q}(q-n))&\iso j^{-1}f_u^{-1}(\breve{\nu}^*\breve{\sigma}^*\breve{\ca{M}}^{q,n-q}(q-n))\iso f_\gamma^{-1}j^{-1}(\breve{\nu}^*\breve{\sigma}^*\breve{\ca{M}}^{q,n-q}(q-n)).\label{eq:6.6.14}
	\end{align}
	Applying the derived functor $\rr\Gamma(\fal_{V\to U}^{\proet,\bb{N}},-)[1/p]$ and combining with the canonical isomorphisms \eqref{eq:6.6.7} and \eqref{eq:6.6.8}, we obtain automorphisms of $H^n_{\et}\otimes_{\bb{Z}_p} \widehat{A}[1/p]$ and of $H^q(X',\Omega^{n-q}_{(X',\scr{M}_{X'})/(X,\scr{M}_X)})\otimes_R \widehat{A}[1/p](q-n)$, which are exactly the semi-linear actions of $\gamma\in \Gamma$ on these $\widehat{A}[1/p]$-modules defined in \ref{para:notation-final-galois} by going through the definitions. Thus, we see that the actions of $\Gamma$ are compatible with the relative Hodge-Tate filtration and the canonical isomorphisms for the graded pieces, which completes the proof.
\end{proof}

\begin{myrem}\label{rem:ab-assumption}
	The arguments for \ref{thm:main} does not make use of the assumption that the residue field of $K$ is algebraically closed.
\end{myrem}
\begin{myrem}\label{rem:ab-question}
	In \ref{para:notation-final-galois}, we take $U$ to be an \'etale neighborhood of a point of the special fibre of $X$ which is affine and admits an adequate chart in the sense of \cite[\Luoma{3}.4.4]{abbes2016p} (cf. \cite[\Luoma{3}.4.7]{abbes2016p}), and take $V$ to be the inverse limit of the normalized universal cover $(V_i)$ of $U^\circ_{\star}$ at $\overline{x}$ (cf. \cite[\Luoma{5}.7]{sga1}, \cite[\Luoma{6}.9.8]{abbes2016p}). We set $U^{V_i}=\spec(R_i)$ and $\overline{R}=\colim(R_i)$. Then, $\Gamma=\pi_1(U^\circ_{\star},\overline{x})$ and $A=\overline{R}$. We obtain from the adequate chart finitely many nonzero divisors $f_1,\dots,f_r$ of $\Gamma(U_\eta,\ca{O}_{U_\eta})$ such that the divisor $D=\sum_{i=1}^r \mrm{div}(f_i)$ has support $U_\eta\setminus U^\circ$ and that at each strict henselization of $U_\eta$ those elements $f_i$ contained in the maximal ideal form a subset of a regular system of parameters (cf. \cite[4.2.2.(\luoma{2})]{abbes2020suite}). Then, $A$ is almost pre-perfectoid and admits compatible $n$-th power roots of $f_i$ (\cite[\Luoma{2}.9.10]{abbes2016p}). Hence, $V\to U$ is Faltings acyclic by \ref{thm:acyclic}, and thus Theorem \ref{thm:main} holds in this setting, which gives a local version of the relative Hodge-Tate filtration answering the question of Abbes-Gros raised in the first version of \cite{abbes2020suite} (cf. \cite[1.2.3]{abbes2020suite}).
\end{myrem}

\nocite{abbes2010rigid}
\nocite{beilinson2012derham}
\nocite{achinger2015kpi1}
\nocite{dubois1981complex}
\nocite{tsuji2019saturated}
\nocite{voevodsky1996htop}
\nocite{deligne1974hodge3}
\nocite{huber2014h-diff}
\nocite{rydh2010v}
\nocite{dejong1997alt}
\bibliographystyle{myalpha}
\bibliography{bibli}


\end{document}